\newcommand{\Ber}{\mathrm{Ber}}
\newcommand{\ev}{\mathrm{ev}}
\newcommand{\odd}{\mathrm{odd}}
\newcommand{\tild}{\raise.17ex\hbox{$\scriptstyle\sim$}}
\newtheorem{thm}{Thm}[section]
\newtheorem{satz}[thm]{Theorem}
\newtheorem{lemma}[thm]{Lemma}
\newtheorem{prop}[thm]{Proposition}
\theoremstyle{definition}
\newtheorem{def1}[thm]{Definition}
\newtheorem{bsp}[thm]{Example}
\theoremstyle{remark}
\newtheorem{rem}[thm]{Remark}
\title{On flag domains in the supersymmetric setting \\ \vspace{15pt} revised reprint of the author's PhD Thesis at the Ruhr-Universit\"{a}t Bochum}
\author{Christopher Graw\thanks{partially supported by the SFB-TR 12: Universality in mesoscopic systems}}
\begin{document}


\maketitle
\tableofcontents

\chapter{Introduction and summary of results}

The subject of this paper is the geometry of flag domains in the supersymmetric setting. 
Classically a flag domain is an open submanifold $D$ of a $G$-flag manifold $Z$ endowed with a transitive action of a real form $G_\mathbb{R}$ of $G$. Flag domains are important, e.g. for the study of moduli spaces and the representation theory of real reductive Lie groups.

A detailed analysis of the orbit structure of $G_\mathbb{R}$ in $Z$ and of the geomtery of flag domains was provided by J.A. Wolf in \cite{W}. In particular he proved that there is a finite number of $G_\mathbb{R}$-orbits in $Z$, hence there are always open $G_\mathbb{R}$-orbits. Moreover he proved that every open $G_\mathbb{R}$-orbit $D$ contains a unique orbit $C_0$ of a maximal compact subgroup $K_\mathbb{R}$, which is a complex submanifold. This \emph{base cycle} is of fundamental importance, in particular for the understanding of the cohomology of line bundles on $D$.

The classical foundation of our work here consists of three topics which are covered in the classical case in Chapters 4,5 and 14 of \cite{FHW}:

\begin{itemize}
\item The question of existence of a $G_\mathbb{R}$-invariant volume form on a flag domain $D$ (Measurability).
\item The study of global holomorphic functions on $D$.
\item A Double Fibration Transform realizing the cohomology of line bundles of $D$ as a subrepresentation of a space of functions on a space of cycles.
\end{itemize}

In this paper these three topics are discussed in the framework of flag domains in flag supermanifolds, i.e. $G$ is a Lie supergroup and $Z$ and $D$ are complex supermanifolds.
Quite often such an approach amounts to formal generalizations of definitions and arguments to the supersymmetric case. However in all three cases considered here there appear phenomena which differ from the classical case.

The following chapter is devoted to reviewing a number of results from the theory of classical flag domains and the necessary background on the theory of supersymmetry that is used throughout the subsequent chapters. 

In the third chapter the $G_\mathbb{R}$-orbits with maximal odd dimension and the measurable flag domains are classified. In the super-symmetric case it is possible that a $G_\mathbb{R}$-orbit $D$ has open base $D_{\bar{0}}$, but its odd dimension is not maximal. The odd dimension is characterized by a codimension formula similar to the one in \cite{W} and the conditions for maximal odd dimension are classified in all cases.

Measurability in the supersymmetric sense is defined by requiring the existence of a $G_\mathbb{R}$-invaraint Berezinian form, the natural analogon of a volume form in the classical case. In that case measurability is characterized in root-theoretic as well as geometric terms in \cite{W}. One of the equivalent conditions is that the stabilizer subgroup $L_\mathbb{R}$ is a real reductive group. It turns out that this is not necessarily the case in the supersymmetric setting. This motivates the introduction of two distinct notions of weak and strong measurability. Both strongly and weakly measurable flag domains are classified in Chapter 3.
In most cases the conditions for weak or strong measurability can be expressed in terms of three 
symmetry conditions which represent the symmetries of the extended Dynkin diagrams of type $A(m,n)$.
 Most parts of this chapter coincide with the pre-publication in \cite{G}.

The fourth chapter deals with the question of the existence of global holomorphic superfunctions on a flag domain $D$. Classically there is a projection from $D$ onto a hermitian symmetric domain (possibly reduced to a point) which induces an isomorphism on global functions(see \cite{W}). This projection is also of great importance in the supersymmetric case. Moreover, even if there are no non-constant holomorphic functions on the base $D_{\bar{0}}$, there are sometimes global odd functions on $D$ and even on the flag supermanifold $Z = G/P$ itself. In the latter case these are described in \cite{V}. They are given by non-trivial $G_{\bar{0}}$-submodules of the cotangent space $(\mathfrak{g}_1/\mathfrak{p}_1)^*$. This characterization of global odd functions extends to flag domains and the resulting spaces of global odd functions on flag domains are classified in Chapter 4.

In both the third and the fourth chapter the results are summarized in tables at the end of the respective chapter.

Cycle spaces and the corresponding Double Fibration Transform in the supersymmetric case are studied in the fifth chapter. Classically the cycle space $\mathcal{M}$ is the connected component of the set of all $G$-translates of the base cycle $C$ which are contained in $D$. In the supersymmetric case the definition of the cycle space needs some more elaboration. The main reason for this is that commuting involutions (i.e. the antiholomorphic involution $\tau$ defining $G_\mathbb{R}$ and the Cartan involution $\theta$) and compact real forms are not available in most cases. It is therefore necessary to allow one of the two maps $\tau$ and $\theta$ to be an automorphism of order 4. This leads to two different types of cycle spaces which may be defined in a unified way using the notion of universal domain from \cite{FHW}. 

After clarifying the cycle space notion, the Double Fibration Transform relating the cohomology of line bundles of $D$ with spaces of sections of line bundles on the cycle space $\mathcal{M}$ is constructed in analogy with the construction in the classical case, given for example in \cite{WZ}. As in the classical case the question of injectivity and image of the Double Fibration Transform are of major importance. 
Classically these questions can be answered using the Bott-Borel-Weil Theorem and a number of results of this type are given in \cite{WZ}. The main point is that sufficient negativity of a weight $\lambda$ leads to injectivity of the respective Double Fibration Transform for the line bundle $\mathcal{E}_\lambda$ on $D$.

In the supersymmetric case the Bott-Borel-Weil theory is not yet fully developed and the known results show that is much richer in content. For example unlike in the classical case it is possible to have more than one non-vanishing cohomology group for the highest weight representation of an integral dominant weight.  In the second part of Chapter 5 the known results on the Bott-Borel-Weil theory for Lie superalgebras are used to obtain injectivity conditions for the Double Fibration Transform including in one case the possibility of two distinct non-trivial Double Fibration Transforms for the same weight $\lambda$ whose target spaces are twisted duals of each other.     

\chapter{Background on the geometry of flag domains}

Geometry is here understood to mean the classical geometry of flag domains as well as the supergeometry.
 
\section{The classical theory of flag domains}

This review of the classical theory of flag domains is based largley on \cite{W} and \cite{FHW}. 
The sections on injectivity and image of the Double Fibration Transform 
are based on the coverage in \cite{WZ}.
 
First the basic definitions of real forms, flag manifolds and flag domains are introduced and  a number of results from \cite{W} are reviewed: First of all, the open orbits of a real form $G_\mathbb{R}$ are distinguished among the finite number of $G_\mathbb{R}$-orbits by a codimension formula. Then the existence of the base cycle, the unique closed complex submanifold of a flag domain fixed by a maximal compact subgroup $K_\mathbb{R} \subseteq G_\mathbb{R}$, is discussed. Furthermore the global holomorphic functions on a flag domain $D$ are computed using the projection onto the bounded symmetric domain subordinate to $D$ and the notion of measurablity of a flag domain is defined and characterized in both root-theoretic and geometric terms.

After this the construction of the group-theoretic cycle space and the Double Fibration Transform following the coverage in \cite{FHW} and \cite{WZ} is discussed. The cycle space $\mathcal{M}$ is the connected component of the moduli space of all $G$-translates of the base cycle which lie inside $D$. The Double Fibration Transform relates the cohomology groups $H^p(D, \mathcal{O}(\mathbb{E}))$ of holomorphic vector bundles on $D$ with sections of certain associated vector bundles $\mathcal{O}(\mathbb{E}^\prime)$ on $\mathcal{M}$. Of particular importance are the injectivity of the DFT and a concrete description of its image inside $H^0(\mathcal{M}, \mathcal{O}(\mathbb{E}^\prime))$. Injectivity conditions and a concrete description of the image are then obtained making use of the Bott-Borel-Weil-Theorem.     

\subsection{Real forms, flag spaces and flag domains}

Let $G$ be a complex semisimple Lie group and $\mathfrak{g}$ be its Lie algebra.

\begin{def1}
 
A real form of $\mathfrak{g}$ is a real Lie subalgebra $\mathfrak{g}_\mathbb{R}$ which is the fixed point set
of a $\mathbb{C}$-antilinear involution $\tau: \mathfrak{g} \rightarrow \mathfrak{g}$.

\end{def1}

It is a basic result that the involution $\tau$ descends to an involution of $G$, 
which will also be denoted by $\tau$, and the connected component $G_\mathbb{R}$ of its fixed point set is 
called a real form of $G$.

Note that in the classical case every complex semisimple Lie group has a compact real form. As it turns out this is not the case
for complex Lie supergroups.

A Lie subalgebra $\mathfrak{b} \subseteq \mathfrak{g}$ is called a Borel subalgebra if it is a maximal solvable Lie subalgebra of $\mathfrak{g}$.
Its normalizer $B = \{g \in G : Ad(g)B = B\}$ is called a Borel subgroup of $G$. A subalgebra $\mathfrak{p}$ containing a
Borel subalgebra is called a parabolic subalgebra of $\mathfrak{g}$. Its normalizer $P$ is called a parabolic subgroup of $G$.
The following theorem due to Tits characterizes parabolic subgroups:

\begin{satz}[\cite{T1},\cite{T2}]
 Let $G$ be a complex semisimple Lie group, $P$ a complex algebraic subgroup and $G_u$ a compact real form of $G$. 
 Then the following are equivalent:

\begin{enumerate}
 \item $P$ is a parabolic subgroup.
 \item $Z = G/P$ is a compact complex manifold.
 \item $Z$ is a complex projective variety.
 \item $Z$ is a $G_u$-homogeneous compact K\"{a}ehler manifold.
 \item $Z$ is the projective space orbit of an extremal highest weight vector in an irreducible
       finite-dimensional representation of $G$.
 \item $Z$ is a $G$-equivariant quotient manifold of $\hat{Z} = G/B$ for some Borel subgroup $B \subseteq G$. 
\end{enumerate}
 
\end{satz}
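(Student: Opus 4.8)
The plan is to prove the six statements equivalent through one central cycle $(1)\Rightarrow(2)\Rightarrow(3)\Rightarrow(1)$, with $(4)$, $(5)$ and $(6)$ attached as side equivalences. The guiding principle is that ``$P$ parabolic'' should be detected \emph{geometrically} by completeness of $G/P$ and \emph{algebraically} by the presence of a Borel subgroup, with the Borel fixed point theorem as the bridge between the two.

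\emph{The core cycle.} For $(1)\Rightarrow(2)$ I would use that a parabolic $P$ contains a Borel subgroup $B$, and that the Iwasawa decomposition gives $G=G_uB$; hence $G_u$ acts transitively on $G/B=G_u/(G_u\cap B)$, which is therefore compact, and the continuous surjection $G/B\twoheadrightarrow G/P$ shows $G/P$ is compact, while the complex structure is automatic since $P$ is a closed complex subgroup of a complex Lie group. For $(2)\Rightarrow(3)$: $P$ is algebraic, so $G/P$ is quasi-projective by Chevalley's theorem, and a compact, hence complete, quasi-projective variety is projective. For $(3)\Rightarrow(1)$: a Borel subgroup $B$ acts on the complete variety $Z=G/P$, so by the Borel fixed point theorem it fixes a point $gP$, giving $g^{-1}Bg\subseteq P$, so $P$ contains a Borel and is parabolic.

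\emph{The side equivalences.} For $(3)\Rightarrow(5)$ I would take an ample line bundle on $Z$; since $G$ is semisimple some power of it is $G$-linearizable, producing a finite-dimensional $G$-module $W$ and a $G$-equivariant embedding $Z\hookrightarrow\mathbb{P}(W)$; a $B$-fixed point $z_0\in Z$ maps to a $B$-stable line $\mathbb{C}v$, so $v$ is a highest weight vector generating an irreducible submodule $V_\lambda\subseteq W$ with $Z=G\cdot[v]\subseteq\mathbb{P}(V_\lambda)$. For $(5)\Rightarrow(3)$ one shows $G\cdot[v_\lambda]$ is Zariski closed in $\mathbb{P}(V_\lambda)$: its closure contains a closed, hence complete, orbit, which must contain a $B$-fixed point, but the unique $B$-fixed point of $\mathbb{P}(V_\lambda)$ is $[v_\lambda]$ because the only $B$-stable line of an irreducible module is the highest weight line. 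For $(3)\Rightarrow(4)$: restrict a Fubini--Study form to the projective variety $Z$ and average it over the transitively acting compact group $G_u$ to obtain a $G_u$-invariant K\"ahler metric. For $(1)\Rightarrow(6)$: if $P\supseteq B$ then $G/B\to G/P$ is a $G$-equivariant holomorphic fibre bundle with fibre $P/B$; conversely, if $\pi\colon G/B\to Z$ is a $G$-equivariant quotient then the stabilizer of $\pi(eB)$ contains $B$, so $Z\cong G/P'$ with $P'\supseteq B$ parabolic, and $P$ is conjugate to $P'$.

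\emph{Main obstacle.} The one direction that does not reduce to bookkeeping is $(4)\Rightarrow(1)$. A $G_u$-invariant complex structure on $Z=G_u/V$ corresponds to a complex subalgebra $\mathfrak{q}\subseteq\mathfrak{g}$ with $\mathfrak{q}\cap\overline{\mathfrak{q}}=\mathrm{Lie}(V)^{\mathbb{C}}$ and $\mathfrak{q}+\overline{\mathfrak{q}}=\mathfrak{g}$, and such a $\mathfrak{q}$ need not be parabolic: compact semisimple Lie groups of even dimension carry left-invariant complex structures (Samelson) that are never K\"ahler, so the K\"ahler hypothesis must be used essentially. Concretely, closedness of the invariant $(1,1)$-form translates into a cocycle-type condition on $\mathfrak{g}$ forcing $\mathfrak{q}$ to contain a Borel subalgebra, whence $P:=N_G(\mathfrak{q})$ is parabolic and $Z\cong G/P$. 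I would invoke Borel's theorem on K\"ahlerian coset spaces of semisimple Lie groups for this step rather than reprove it, and I expect it to be the technical heart of the argument.
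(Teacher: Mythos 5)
The paper does not prove this theorem: it is quoted as a classical result of Tits (with references \cite{T1}, \cite{T2}) in the background chapter, so there is no in-text argument to compare yours against. Judged on its own terms, your outline is the standard and correct proof: the cycle $(1)\Rightarrow(2)$ via $G=G_uB$, $(2)\Rightarrow(3)$ via Chevalley plus completeness of a compact quasi-projective variety, $(3)\Rightarrow(1)$ via the Borel fixed point theorem, and the side equivalences $(3)\Leftrightarrow(5)$ via linearization of an ample bundle and closedness of the highest-weight orbit, $(3)\Rightarrow(4)$ via averaging Fubini--Study, and $(1)\Leftrightarrow(6)$ via $B\subseteq P$ are all sound.

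The one place where you overshoot is $(4)\Rightarrow(1)$. In the statement, $Z$ is introduced in item $(2)$ as $Z=G/P$ for the given complex algebraic subgroup $P$, and the later items refer to that same $Z$. So $(4)$ already asserts in particular that $G/P$ is compact, which is item $(2)$, and your cycle $(2)\Rightarrow(3)\Rightarrow(1)$ then closes the loop with no further input; the K\"ahler and $G_u$-homogeneity hypotheses are only genuinely needed for the forward direction $(1)\Rightarrow(4)$, which you handle by averaging. The Borel--Remmert/Matsushima-type theorem on compact homogeneous K\"ahler manifolds that you identify as ``the technical heart'' would be required only if $(4)$ were read as an abstract characterization of $Z$ not presupposing $Z=G/P$; under the reading forced by the statement it is dispensable, and your appeal to Samelson's non-K\"ahler invariant complex structures, while a correct caution in that abstract setting, is not needed here.
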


Let $\mathfrak{h}$ be a Cartan subalgebra of $\mathfrak{g}$, $\Sigma(\mathfrak{g},\mathfrak{h})$
the corresponding root system and $\Pi$ a set of simple roots. For $J \subseteq \Pi$ let $\vert J \vert$ denote the 
set of all roots which are sums of elements of $J$. Then for every parabolic subalgebra $\mathfrak{p} \subseteq \mathfrak{g}$ containing the Borel subalgebra
$\mathfrak{b} = \mathfrak{h} \oplus \sum_{\alpha \in \Sigma^+} \mathfrak{g}^\alpha$ there is a unique subset $J \subseteq \Pi$ such that
$\mathfrak{p} = \sum_{\alpha \in \Sigma^+} \mathfrak{g}^\alpha \oplus \sum_{\alpha \in \vert J \vert} \mathfrak{g}^{-\alpha}$.
Denote $\Phi = \Sigma^+ \cup -\vert J \vert, \Phi^r = \vert J \vert \cup - \vert J \vert, \Phi^n = \Sigma^+ \setminus \vert J \vert $ 
and $\Phi^c = \Sigma \setminus \Phi$. Then $\Phi^r$ is the set of roots constituing the Levi component of $\mathfrak{p}$ and
$\Phi^n$ is the set of roots constituing the nilpotent radical of $\mathfrak{p}$. Conversely given $\Phi \subseteq \Sigma$ as before it is possible
to construct the parabolic subalgebra $\mathfrak{p}_\Phi = \mathfrak{h} \oplus \sum_{\alpha \in \Phi} \mathfrak{g}^\alpha$.
The corresponding parabolic subgroup of $G$ is then denoted $P_\Phi$.

The quotient manifold $Z = G/P$ is called a $G$-flag manifold. Real forms of $G$ act on $G$-flag manifolds. Their orbit structure
was analysed in depth in \cite{W}. The first main results is:

\begin{satz}

Let $Z$ be a $G$-flag manifold and $G_\mathbb{R}$ a real form of $G$. Then $G_\mathbb{R}$ has finitely many orbits
in $Z$, in particular $Z$ contains open $G_\mathbb{R}$-orbits. 

\end{satz}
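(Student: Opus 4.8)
The plan is to prove finiteness of the $G_\mathbb{R}$-orbit structure on $Z = G/P$ by reducing to the case of the full flag manifold $\hat Z = G/B$ and then exhibiting the orbits as the images of a finite collection of explicit combinatorial data. By part (6) of the Tits theorem, $Z$ is a $G$-equivariant quotient of $\hat Z$, so the $G_\mathbb{R}$-orbits in $Z$ are images under the (proper, surjective, $G$-equivariant) projection $\pi\colon \hat Z \to Z$ of $G_\mathbb{R}$-orbits in $\hat Z$; hence it suffices to bound the number of orbits on $\hat Z$. Since the number of orbits is invariant under replacing $G_\mathbb{R}$ by a conjugate, fix a maximal compact subgroup $K_\mathbb{R} \subseteq G_\mathbb{R}$ with complexification $K \subseteq G$, chosen compatibly with a Cartan involution $\theta$ commuting with $\tau$.

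The key step is a standard double-coset / Lie-theoretic argument. First I would recall that a point $z \in \hat Z$ corresponds to a Borel subalgebra $\mathfrak{b}_z \subseteq \mathfrak{g}$, and that two such points lie in the same $G_\mathbb{R}$-orbit precisely when the corresponding Borels are $G_\mathbb{R}$-conjugate. One then analyses the relative position of $\mathfrak{b}_z$ with the conjugate Borel $\tau(\mathfrak{b}_z)$: the pair $(\mathfrak{b}_z, \tau(\mathfrak{b}_z))$ determines a $W$-valued relative-position invariant (an element of the Weyl group, since any two Borels of $\mathfrak{g}$ are in a definite relative position indexed by $W$), and this invariant is a $G_\mathbb{R}$-orbit invariant. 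Because $\tau$ is an involution, the admissible relative positions are constrained to a subset of $W$; and for each fixed relative position the remaining moduli are governed by a $\tau$-stable torus, of which there are finitely many conjugacy classes (Cartan subalgebras of a real form fall into finitely many $G_\mathbb{R}$-conjugacy classes, by the classical theory of Cartan subalgebras of real reductive Lie algebras). Assembling these two finite layers of data gives a finite bound on the number of $G_\mathbb{R}$-orbits in $\hat Z$, and pushing forward along $\pi$ gives the same for $Z$.

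For the final assertion — that $Z$ contains \emph{open} $G_\mathbb{R}$-orbits — I would argue as follows: $Z$ is a connected (indeed irreducible, by part (3) of the Tits theorem) compact complex manifold, hence a connected topological space; a finite partition of a nonempty connected topological space into locally closed subsets (which the orbits are, being orbits of a Lie group action on a manifold) must contain at least one subset whose closure has nonempty interior, and in fact at least one orbit is open. Concretely, the union of all orbits of maximal dimension is open and dense, and by finiteness this union is a finite union of orbits, at least one of which therefore contains an interior point and, being an orbit, is itself open. This last part is essentially a pigeonhole/Baire-type observation and presents no difficulty.

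The main obstacle is the finiteness argument itself, and specifically the verification that for each fixed $W$-relative-position datum the set of $G_\mathbb{R}$-orbits realizing it is finite — this is where one genuinely needs the classification of $\tau$-stable Cartan subalgebras up to $G_\mathbb{R}$-conjugacy and a careful bookkeeping of how the off-diagonal (unipotent) part of the relative position interacts with the real structure. An alternative, and perhaps cleaner, route that I would keep in reserve is to invoke the general theorem (Matsuki, Wolf, Rossmann) that a real reductive group acting on a complex flag manifold has finitely many orbits as a consequence of the finiteness of $K \backslash G / B$ for the complexification $K$ together with the duality between $G_\mathbb{R}$-orbits and $K$-orbits; but since the excerpt attributes the result to Wolf and presents it as foundational, I would present the self-contained relative-position argument as the primary proof and mention the $K$-orbit duality only as a remark.
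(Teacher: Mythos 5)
The paper does not actually prove this statement; it is quoted verbatim as background from Wolf's paper \cite{W}, so there is no in-text proof to compare against. Your argument is essentially Wolf's original one and is correct in outline. The reduction to $\hat{Z} = G/B$ via the equivariant surjection is fine, and the deduction of an open orbit from finiteness is a standard Baire/dimension argument that works exactly as you describe. The one place where the sketch is genuinely incomplete --- and you have correctly identified it as the main obstacle --- is the claim that, for a fixed relative position $w \in W$ of the pair $(\mathfrak{b}_z, \tau\mathfrak{b}_z)$, the remaining moduli are ``governed by a $\tau$-stable torus.'' What is actually needed, and what Wolf proves, is the lemma that for \emph{every} Borel subalgebra $\mathfrak{b}$ the intersection $\mathfrak{b} \cap \tau\mathfrak{b}$ contains a $\tau$-stable Cartan subalgebra $\mathfrak{h}$ of $\mathfrak{g}$: one shows that $\mathfrak{b} \cap \tau\mathfrak{b}$ is a $\tau$-stable algebraic subalgebra which still contains a Cartan subalgebra of $\mathfrak{g}$, and that any $\tau$-stable Cartan subalgebra of it is automatically a Cartan subalgebra of $\mathfrak{g}$. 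Once that lemma is in place, $\mathfrak{b}$ is recovered from the pair $(\mathfrak{h}, \Sigma^+)$; the real Cartan subalgebras $\mathfrak{h} \cap \mathfrak{g}_\mathbb{R}$ fall into finitely many $G_\mathbb{R}$-conjugacy classes by the classical theory, and there are only $\vert W \vert$ choices of positive system, so the bound (number of Cartan classes)$\times \vert W \vert$ follows directly --- the relative-position invariant is then not needed as a separate layer of the argument. In short, your plan is the right one; the missing content is the $\tau$-stable Cartan lemma itself rather than the combinatorial bookkeeping built on top of it.
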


\begin{def1}

Let $G_\mathbb{R}$ and $Z$ as before. An open $G_\mathbb{R}$-orbit $D \subseteq Z$ is called a flag domain.

\end{def1}

Note that the defining involution $\tau$ of $\mathfrak{g}_\mathbb{R}$ induces an involution of the root system $\Sigma(\mathfrak{g},\mathfrak{h})$, 
if $\mathfrak{h}$ is a $\tau$-invariant Cartan subalgebra. Moreover a $\tau$-invariant Cartan subalgebra $\mathfrak{h}$ always exists. 
The action on the root system can be used to obtain the following codimension formula:

\begin{satz}[2.12. in \cite{W}]
 
Let $M = G_\mathbb{R} \cdot z_0 \subseteq Z = G/P$  a real group orbit, $P_{z_0} = \mathrm{Stab}_G(z_0)$ and $\Sigma(\mathfrak{g},\mathfrak{h})$ a root system 
of $\mathfrak{g}$ such that $\mathfrak{h}$ is $\tau$-invariant and $P_{z_0} = P_\Phi$ for a certain $\Phi \subseteq \Sigma$. Then $\mathrm{codim}_Z M = \vert \Phi^c \cap \tau \Phi^c \vert$.
In particular, $M$ is open if and only if that intersection is empty.

\end{satz}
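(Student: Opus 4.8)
The plan is to carry out the whole computation infinitesimally at the base point $z_0$. First I would identify $T_{z_0}Z\cong\mathfrak{g}/\mathfrak{p}_\Phi$, coming from the transitive $G$-action, and observe that the derivative of the orbit map $G_\mathbb{R}\to M$, $g\mapsto g\cdot z_0$, identifies $T_{z_0}M$ with the image of $\mathfrak{g}_\mathbb{R}$ under the projection $\mathfrak{g}\to\mathfrak{g}/\mathfrak{p}_\Phi$. Since the $G_\mathbb{R}$-isotropy at $z_0$ is $G_\mathbb{R}\cap P_\Phi$, with Lie algebra $\mathfrak{g}_\mathbb{R}\cap\mathfrak{p}_\Phi$, this gives $\dim_\mathbb{R}M=\dim_\mathbb{R}\mathfrak{g}_\mathbb{R}-\dim_\mathbb{R}(\mathfrak{g}_\mathbb{R}\cap\mathfrak{p}_\Phi)$, and hence, using $\dim_\mathbb{C}(\mathfrak{g}/\mathfrak{p}_\Phi)=\lvert\Sigma\setminus\Phi\rvert=\lvert\Phi^c\rvert$,
\[
\mathrm{codim}_Z M \;=\; 2\,\lvert\Phi^c\rvert \;-\;\dim_\mathbb{R}\mathfrak{g}_\mathbb{R}\;+\;\dim_\mathbb{R}(\mathfrak{g}_\mathbb{R}\cap\mathfrak{p}_\Phi).
\]

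Next I would evaluate the two remaining dimensions by root-space bookkeeping. Because $\mathfrak{h}$ is $\tau$-invariant and $\tau$ is $\mathbb{C}$-antilinear, $\tau$ permutes $\Sigma(\mathfrak{g},\mathfrak{h})$, sending $\mathfrak{g}^\alpha$ to $\mathfrak{g}^{\tau\alpha}$ for the root $\tau\alpha\colon H\mapsto\overline{\alpha(\tau H)}$; consequently $\tau\mathfrak{p}_\Phi=\mathfrak{p}_{\tau\Phi}$ and $\mathfrak{p}_\Phi\cap\tau\mathfrak{p}_\Phi=\mathfrak{h}\oplus\bigoplus_{\alpha\in\Phi\cap\tau\Phi}\mathfrak{g}^\alpha$. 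As every element of $\mathfrak{g}_\mathbb{R}$ is $\tau$-fixed, $\mathfrak{g}_\mathbb{R}\cap\mathfrak{p}_\Phi=\mathfrak{g}_\mathbb{R}\cap(\mathfrak{p}_\Phi\cap\tau\mathfrak{p}_\Phi)$ is a real form of the $\tau$-stable complex subalgebra $\mathfrak{p}_\Phi\cap\tau\mathfrak{p}_\Phi$, so $\dim_\mathbb{R}(\mathfrak{g}_\mathbb{R}\cap\mathfrak{p}_\Phi)=\dim_\mathbb{C}(\mathfrak{p}_\Phi\cap\tau\mathfrak{p}_\Phi)=\mathrm{rk}\,\mathfrak{g}+\lvert\Phi\cap\tau\Phi\rvert$, and similarly $\dim_\mathbb{R}\mathfrak{g}_\mathbb{R}=\dim_\mathbb{C}\mathfrak{g}=\mathrm{rk}\,\mathfrak{g}+\lvert\Sigma\rvert$.

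Substituting these into the displayed formula and cancelling $\mathrm{rk}\,\mathfrak{g}$ gives $\mathrm{codim}_Z M=2\lvert\Phi^c\rvert-\lvert\Sigma\rvert+\lvert\Phi\cap\tau\Phi\rvert$, after which only elementary counting inside $\Sigma$ remains: from $\lvert\Sigma\rvert=\lvert\Phi\rvert+\lvert\Phi^c\rvert$, together with the fact that $\tau$ permutes $\Sigma$ and $\tau\Phi,\tau\Phi^c$ partition $\Sigma$, the identities $\lvert\Phi\cap\tau\Phi\rvert+\lvert\Phi\cap\tau\Phi^c\rvert=\lvert\Phi\rvert$ and $\lvert\Phi\cap\tau\Phi^c\rvert+\lvert\Phi^c\cap\tau\Phi^c\rvert=\lvert\tau\Phi^c\rvert=\lvert\Phi^c\rvert$ reduce the expression to $\lvert\Phi^c\cap\tau\Phi^c\rvert$. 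The claim about openness is then immediate, since $M$ is open in $Z$ exactly when $\mathrm{codim}_Z M=0$, i.e.\ when $\Phi^c\cap\tau\Phi^c=\emptyset$. I do not anticipate a genuine obstacle; the one thing to handle carefully is keeping real and complex dimensions apart and correctly tracking the $\tau$-translated root subsets $\tau\Phi$ and $\tau\Phi^c$ — which is precisely what the hypothesis that $\mathfrak{h}$ be $\tau$-invariant is there to enable.
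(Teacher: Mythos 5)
Your argument is correct and rests on the same key identity the paper uses for its (super-)version of this formula, namely $(\mathfrak{g}_\mathbb{R}\cap\mathfrak{p})^{\mathbb{C}}=\mathfrak{p}\cap\tau\mathfrak{p}$ followed by a dimension count; the only cosmetic difference is that you organize the count by inclusion--exclusion on the root subsets $\Phi,\Phi^c,\tau\Phi,\tau\Phi^c$, whereas the paper routes it through the Levi decomposition $\mathfrak{p}=\mathfrak{p}^r\oplus\mathfrak{p}^n$. Both reduce to the same cancellation, so this is essentially the paper's proof.
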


Let $\theta: \mathfrak{g}_\mathbb{R} \rightarrow \mathfrak{g}_\mathbb{R}$ be the Cartan involution,
$\theta^\mathbb{C}: \mathfrak{g} \rightarrow \mathfrak{g}$ its complexification and $\mathfrak{k} = \mathrm{Fix}(\theta^\mathbb{C})$.
Then $\theta^\mathbb{C}$ descends to $G$. Let $K$ be the connected component of its fixed point set. Then the intersection $K_\mathbb{R} = K \cap G_\mathbb{R}$ is
a maximal compact subgroup of $G_\mathbb{R}$. There is a correspondence between the orbit structures of $K$ and $G_\mathbb{R}$ on $Z$
called the Matsuki correspondence. For flag domains the statement is the following:

\begin{satz}

Let $Z$ be a $G$-flag manifold and $D \subseteq Z$ a flag domain. Then $D$ contains a unique closed $K$-orbit $C_0$ called the base cycle.
It can also be characterized as the unique complex $K_\mathbb{R}$-orbit inside $D$.  

\end{satz}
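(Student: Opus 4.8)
\smallskip
\noindent\emph{Proof plan.}\quad The idea is to produce the base cycle as the critical locus of a moment‑map type Morse function — which is also the mechanism behind the Matsuki correspondence in the form proved in \cite{FHW} — and then to extract both characterizations by elementary orbit geometry. First I would fix a $\tau$-stable Cartan subalgebra $\mathfrak h\subseteq\mathfrak p_{z_0}$ and, using $G_\mathbb R$-conjugacy, arrange in addition that $\mathfrak h$ is $\theta^\mathbb C$-stable and that a compact real form $G_u$ of $G$ can be chosen with its conjugation commuting with $\tau$, so that $G_u\cap G_\mathbb R=K_\mathbb R$. Fixing a $G_u$-invariant K\"ahler metric on $Z$ and a moment map $\mu\colon Z\to\mathfrak g_u$, and writing $\mathfrak g_\mathbb R=\mathfrak k_\mathbb R\oplus\mathfrak p_\mathbb R$ and correspondingly $\mathfrak g_u=\mathfrak k_\mathbb R\oplus i\mathfrak p_\mathbb R$, I would study the $K_\mathbb R$-invariant function $f=\tfrac12\|\mu_{i\mathfrak p_\mathbb R}\|^2$, the squared norm of the $i\mathfrak p_\mathbb R$-component of $\mu$. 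Since the complex structure sends a fundamental vector field of an element of $i\mathfrak p_\mathbb R$ to that of an element of $\mathfrak p_\mathbb R\subseteq\mathfrak g_\mathbb R$, the negative gradient flow of $f$ moves every point within its $G_\mathbb R$-orbit, and in particular preserves $D$; the key assertion is that this flow carries each point of $D$ into the critical locus of $f\vert_D$, which is a single $K_\mathbb R$-orbit $C_0^{\,\mathbb R}=K_\mathbb R\cdot z_0$, and that the base point $z_0$ so obtained has closed (hence compact) $K$-orbit.

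Granting this, existence follows at once. Put $C_0:=K\cdot z_0$. As a compact $K$-orbit it is a closed complex submanifold of $Z$, and it is homogeneous and holomorphic under $K$, which is a complexification of $K_\mathbb R$; hence $C_0$ is a flag manifold of $K$, and by Tits' theorem (applied to the semisimple part of $K$) the compact group $K_\mathbb R$, a compact real form of $K$, already acts transitively on it. Therefore $C_0=K_\mathbb R\cdot z_0=C_0^{\,\mathbb R}$; since $C_0$ contains $z_0\in D$ and $K_\mathbb R\subseteq G_\mathbb R$ preserves $D$, we get $C_0\subseteq D$. So $C_0$ is simultaneously a closed $K$-orbit contained in $D$ and a complex $K_\mathbb R$-orbit contained in $D$.

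For uniqueness I would combine rigidity of complex orbits with the bijectivity of the Matsuki correspondence. Let $\mathcal O=K_\mathbb R\cdot z\subseteq D$ be any complex $K_\mathbb R$-orbit; being a $K_\mathbb R$-orbit it is compact, and its tangent space $\mathfrak k_\mathbb R\cdot z$ is stable under the complex structure $J$, so — since $K$ acts holomorphically — $\mathfrak k\cdot z=\mathfrak k_\mathbb R\cdot z+J(\mathfrak k_\mathbb R\cdot z)=\mathfrak k_\mathbb R\cdot z$. Thus $K\cdot z$ and $K_\mathbb R\cdot z$ have the same dimension; as $\mathcal O$ is a compact, hence closed, submanifold of that dimension inside the connected manifold $K\cdot z$, it must exhaust it, so $\mathcal O=K\cdot z$ is a closed $K$-orbit. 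Now every closed $K$-orbit inside $D$ is compact, complex and $K$-homogeneous, hence $K_\mathbb R$-transitive by the argument above, hence meets $D$ in a single $K_\mathbb R$-orbit, namely itself; since the Matsuki correspondence (proved in \cite{FHW} precisely by the Morse theory above) matches the open orbit $D$ with exactly one $K$-orbit, every such orbit equals $C_0$. This proves that $C_0$ is the unique closed $K$-orbit in $D$ and, a fortiori, the unique complex $K_\mathbb R$-orbit in $D$.

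I expect the real obstacle to be the analytic core of the first paragraph: showing that the negative gradient flow of $f$ converges, that it deformation retracts $D$ onto the critical locus of $f\vert_D$, and that this locus is a single $K_\mathbb R$-orbit whose $K$-saturation is compact. Everything else — the existence of a compatible triple $(\tau,\theta^\mathbb C,G_u)$, the reduction to a $\tau$- and $\theta^\mathbb C$-stable Cartan subalgebra, the implication ``compact complex $K$-orbit'' $\Rightarrow$ ``$K_\mathbb R$-homogeneous'', and the dimension count — is routine and may be quoted from \cite{W} and \cite{FHW}.
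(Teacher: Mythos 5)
The paper does not actually prove this statement: it appears in the background chapter as a quoted classical result (the Matsuki correspondence for flag domains, attributed to \cite{W} and \cite{FHW}), so there is no in-text argument to compare yours against. Judged on its own terms, your outline is the standard Morse-theoretic proof from \cite{FHW}: the compatible triple $(\tau,\theta^{\mathbb C},G_u)$ exists in the classical setting, the gradient of $\tfrac12\|\mu_{i\mathfrak p_{\mathbb R}}\|^2$ is indeed tangent to the $G_{\mathbb R}$-orbits because $J\xi_X=\xi_{iX}$ for a holomorphic action, the Tits-transitivity argument correctly upgrades a compact complex $K$-orbit to a $K_{\mathbb R}$-homogeneous one, and the dimension count $\mathfrak k\cdot z=\mathfrak k_{\mathbb R}\cdot z+J(\mathfrak k_{\mathbb R}\cdot z)=\mathfrak k_{\mathbb R}\cdot z$ together with the open-and-closed argument correctly shows that every complex $K_{\mathbb R}$-orbit in $D$ is a closed $K$-orbit. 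You have also honestly isolated the genuine analytic core (convergence of the flow and connectedness of the critical locus in $D$), which is exactly the content of the Matsuki duality you then invoke for uniqueness; as long as that is quoted from \cite{FHW} rather than claimed, the logic closes. One remark worth making: Wolf's original proof in \cite{W} is more elementary and purely algebraic --- one chooses the base point $z_0$ so that $\mathfrak g_{\mathbb R}\cap\mathfrak p_{z_0}$ contains a maximally compact Cartan subalgebra and verifies directly by root-space computations that $K\cdot z_0=K_{\mathbb R}\cdot z_0$ is compact, then proves uniqueness separately. That route avoids the moment-map machinery entirely and is the one most directly aligned with the codimension formula the paper does reproduce; your route buys the full orbit duality (all orbits, not just the open ones) at the price of the harder analysis. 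Also be aware of a purely notational hazard: you use $\mathfrak p_{\mathbb R}$ for the $-1$-eigenspace of the Cartan involution while the paper reserves $\mathfrak p$ for the parabolic subalgebra, so this should be renamed if the argument were to be inserted into the text.
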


\subsection{Holomorphic functions}

The holomorphic functions on $D$ depend largely on the base cycle $C_0$. The connection is described in [W, 5.6 and 5.7]
using the term of the bounded symmetric domain subordinate to $D$. It is constructed as follows:

As $G$ is semisimple, $\mathfrak{g} = \mathfrak{g}_1 \oplus \ldots \oplus \mathfrak{g}_k$ is a sum of simple ideals 
and an analogous decomposition exists for $\mathfrak{p}$. Moreover $Z = G_1/P_1 \times \ldots \times G_k/P_k$
and $D = G_{1,\mathbb{R}}/L_{1,\mathbb{R}} \times \ldots \times G_{k,\mathbb{R}}/L_{k,\mathbb{R}}$. Let $K_{i,\mathbb{R}} 
\subseteq G_{i,\mathbb{R}}$ maximal compact and consider the following conditions:

\begin{enumerate}
 \item $L_{i,\mathbb{R}}$ is compact, thus contained in $K_{i,\mathbb{R}}$
 \item $G_{i,\mathbb{R}}/K_{i,\mathbb{R}}$ is a hermitian symmetric space
 \item $G_{i,\mathbb{R}}/L_{i,\mathbb{R}} \rightarrow G_{i,\mathbb{R}}/K_{i,\mathbb{R}}$ is
       holomorphic for one of the two invariant complex structures on $G_{i,\mathbb{R}}/K_{i,\mathbb{R}}$.
\end{enumerate}

If all these conditions are satisfied, set $M_i = K_{i,\mathbb{R}}$. Otherwise set $M_i = G_{i,\mathbb{R}}$.
Then $\tilde{D} = G_{1,\mathbb{R}}/M_1 \times \ldots \times G_{k,\mathbb{R}}/M_k$ is the bounded symmetric domain
subordinate to $D$ and the following holds:

\begin{satz}[5.7 in \cite{W}]

The canonical homomorphism \newline $H^0(\pi): H^0(\tilde{D}, \mathcal{O}) \rightarrow H^0(D,\mathcal{O})$ induced by the projection $\pi: D \rightarrow \tilde{D}$ is an isomorphism . 

\end{satz}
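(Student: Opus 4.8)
The plan is to reduce everything to the irreducible factors and then to analyze the fibers of $\pi$. Since $Z = \prod_i G_i/P_i$ and $D = \prod_i G_{i,\mathbb{R}}/L_{i,\mathbb{R}}$ and $\tilde D = \prod_i G_{i,\mathbb{R}}/M_i$, with $\pi = \prod_i \pi_i$, the Künneth formula for $H^0$ of a product reduces the claim to the case where $\mathfrak{g}$ is simple. So assume $\mathfrak{g}$ simple, so that $D = G_\mathbb{R}/L_\mathbb{R}$, and either $M = K_\mathbb{R}$ (the ``hermitian'' case, when all three conditions hold) or $M = G_\mathbb{R}$ (the generic case).

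First I would dispose of the generic case $M = G_\mathbb{R}$, where $\tilde D$ is a single point: I must show $H^0(D,\mathcal O) = \mathbb{C}$, i.e. there are no nonconstant holomorphic functions on $D$. The key input is that the base cycle $C_0 \subseteq D$ is a compact complex submanifold (the unique closed $K$-orbit, by the Matsuki-correspondence statement above), so any $f \in H^0(D,\mathcal O)$ is constant on $C_0$ and on each of its $G_\mathbb{R}$-translates $g\cdot C_0$. The point is that the $G_\mathbb{R}$-translates of $C_0$ foliate $D$: since $D = G_\mathbb{R}/L_\mathbb{R}$ and $C_0 = K_\mathbb{R}/L_\mathbb{R}$ (up to components), one has $D = G_\mathbb{R}\cdot C_0$, and more is true — using an appropriate slice/Iwasawa-type decomposition $G_\mathbb{R}=(\text{non-compact part})\cdot K_\mathbb{R}$, the translates $g\cdot C_0$ sweep out $D$ with a real-analytic parameter space that contains no nonconstant holomorphic functions in the transverse direction precisely because conditions (1)--(3) fail. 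Here I would cite or adapt the explicit computation from \cite{W}, 5.6: the failure of measurability/hermitian-type conditions forces the normalizer of $C_0$-translates to be large enough that a globally holomorphic $f$ must descend through the whole of $G_\mathbb{R}$, hence be constant.

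In the hermitian case $M = K_\mathbb{R}$, the fiber of $\pi: G_\mathbb{R}/L_\mathbb{R} \to G_\mathbb{R}/K_\mathbb{R}$ over the base point is $K_\mathbb{R}/L_\mathbb{R} = C_0$, a compact complex submanifold, and by condition (3) $\pi$ is holomorphic. Surjectivity of $H^0(\pi)$: any $f\in H^0(D,\mathcal O)$ is constant along each fiber $g\cdot C_0$ (compact connected complex manifold $\Rightarrow$ no nonconstant holomorphic functions), hence descends to a function $\bar f$ on $\tilde D$; holomorphy of $\bar f$ follows from holomorphy of $f$ together with the fact that $\pi$ is a holomorphic submersion (locally a product, so $\bar f = f\circ(\text{local section})$ is holomorphic). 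Injectivity of $H^0(\pi)$ is immediate from surjectivity of $\pi$. The only remaining point is that $\tilde D = G_\mathbb{R}/K_\mathbb{R}$, realized as a bounded symmetric domain via the Harish-Chandra embedding for the complex structure selected in (3), really carries enough holomorphic functions — but that is classical: a bounded domain in $\mathbb{C}^N$ has $H^0(\tilde D,\mathcal O)$ infinite-dimensional, and in any case the statement only asserts that $H^0(\pi)$ is an isomorphism onto $H^0(D,\mathcal O)$, which the fiber argument already gives.

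The main obstacle is the generic case: showing that failure of the three hermitian conditions genuinely forces $H^0(D,\mathcal O)=\mathbb{C}$. This is where one cannot argue purely formally — one needs the structure theory from \cite{W} to see that the union of $G_\mathbb{R}$-translates of $C_0$ is all of $D$ and that the "transverse" directions to this family admit no holomorphic functions, equivalently that there is no nonconstant holomorphic map from $D$ to a point-set that the $G_\mathbb{R}$-action could respect. Concretely I expect to invoke that $D$ carries no $G_\mathbb{R}$-invariant nonconstant holomorphic function (by compactness of $C_0$ and density of its orbit sweep), then upgrade from invariant to arbitrary holomorphic functions using that $G_\mathbb{R}$ has finitely many orbits and the relevant isotropy acts on germs of functions at a point with no holomorphic invariants other than constants unless conditions (1)--(3) hold. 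The cleanest route is simply to cite [W, 5.6--5.7] for this step and present the product reduction plus the fiber-collapse argument as the new content; alternatively one reproduces Wolf's argument that the holomorphic reduction of $D$ is exactly $\tilde D$.
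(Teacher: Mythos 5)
The paper does not prove this statement at all: it is quoted verbatim as background from Wolf, so there is no in-paper argument to compare yours against. Judged on its own merits, your reduction to simple factors and your treatment of the hermitian case are sound (for $M=K_\mathbb{R}$ the map $\pi$ is a holomorphic fibre bundle with compact connected fibre $C_0=K_\mathbb{R}/L_\mathbb{R}$, so $\pi_*\mathcal{O}_D=\mathcal{O}_{\tilde D}$ and $H^0(\pi)$ is bijective; injectivity from surjectivity of $\pi$ is fine).

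The generic case, however, contains a genuine gap, and it is located exactly where you sense trouble. The inference ``$f$ is constant on every translate $g\cdot C_0$, the translates cover $D$, hence $f$ is constant'' is invalid as stated: in the hermitian case the translates of $C_0$ also cover $D$ and $f$ is constant on each of them, yet nonconstant $f$ exist --- they descend to the leaf space $G_\mathbb{R}/K_\mathbb{R}$. Your word ``foliate'' points in the wrong direction: when the translates genuinely foliate $D$ (pairwise disjoint), constancy on fibres buys you nothing beyond descent to the quotient. What is actually needed in the generic case is that distinct translates \emph{overlap} enough that any two points of $D$ are joined by a finite chain of such compact complex submanifolds (cycle-connectivity), or equivalently Wolf's analysis showing the holomorphic arc component through a point is all of $D$ when conditions (1)--(3) fail. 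You never establish this chain-connectivity; you only assert that the translates sweep out $D$ and then appeal to a vague ``transverse directions admit no holomorphic functions'' claim before deferring to a citation of [W, 5.6--5.7]. Since citing Wolf for that step is citing the theorem you are trying to prove, the proposal as written does not close the generic case. (A minor additional point: you should also note that $C_0$ has positive dimension in the generic case --- if $C_0$ were a point the constancy-on-translates argument says nothing --- which does hold, since $C_0$ reduced to a point forces $L_\mathbb{R}=K_\mathbb{R}$ and the hermitian situation, but this deserves a sentence.)
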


\begin{bsp}
 
Let $G = SL_n(\mathbb{C})$ and $G_\mathbb{R} = SU(p,q)$. Furthermore let $D^+$ be the set of all positive $p$-planes in $\mathbb{C}^n$
and $D^-$ be the set of all negative $q$-planes in $\mathbb{C}^n$. Then $D^+$ and $D^-$ are open $G_\mathbb{R}$-orbits
in the respective Grassmannians $\mathrm{Gr}_p(\mathbb{C}^n)$ and $\mathrm{Gr}_q(\mathbb{C}^n)$ and hermitian symmetric spaces.
Moreover an open $G_\mathbb{R}$-orbit $D = G_\mathbb{R}/L_\mathbb{R}$ allows non-constant global holomorphic funtions if and only if it projects onto
either $D^+$ or $D^-$. If it exists this projection is actually a $G_\mathbb{R}$-equivariant proper holomorphic map
$p: G_\mathbb{R}/L_\mathbb{R} \rightarrow G_\mathbb{R}/K_\mathbb{R}$ which is in fact a holomorphically trivial fibre bundle. 

\end{bsp}

\subsection{Measurable open orbits}

A flag domain $D$ is called measurable, if it possesses a $G_\mathbb{R}$-invariant volume element. A characterization of
measurable flag domains in algebraic and geometric terms was given in \cite{W}. The following formulation of the theorem
is taken from \cite{FHW}.

\begin{satz}[4.5.1 in \cite{FHW}]\label{WMeas}
 
Let $D = G_\mathbb{R} \cdot z$ be an open orbit in the complex flag manifold $Z = G/P$. Then the following are equivalent:

\begin{enumerate}
 \item $D$ is measurable
 \item $G_\mathbb{R} \cap P$ is the $G_\mathbb{R}$-centralizer of a torus subgroup of $G_\mathbb{R}$.
 \item $D$ has a $G_\mathbb{R}$-invariant, possibly indefinite, K\"{a}hler metric, thus a $G_\mathbb{R}$-invariant measure
obtained from the volume form of that metric.
 \item $\tau \Phi^r = \Phi^r$ and $\tau \Phi^n = \Phi^{-n}$
 \item $\mathfrak{p} \cap \tau\mathfrak{p}$ is a complex reductive Lie algebra.
 \item $\mathfrak{p}\cap\tau\mathfrak{p} $ is the Levi component of $\mathfrak{p}$
 \item $\tau \mathfrak{p}$ is $G$-conjugate to the parabolic subalgebra $\mathfrak{p}^r + \mathfrak{p}^{-n}$ opposite to $\mathfrak{p}$ 
\end{enumerate}

\end{satz}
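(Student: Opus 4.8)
The plan is to prove the chain of equivalences by establishing a cycle of implications together with a few cross-links, using the root-theoretic description of $\mathfrak{p} = \mathfrak{p}_\Phi$ as the hinge. First I would show $(1) \Leftrightarrow (3)$: a $G_\mathbb{R}$-invariant volume form on $D$ is, up to scaling, unique if it exists, and on a flag domain the natural candidate for an invariant measure comes from a $G_\mathbb{R}$-invariant Hermitian form on the holomorphic tangent bundle, i.e.\ from a (possibly indefinite) invariant K\"ahler metric; conversely the top exterior power of such a metric gives the volume form. This is the ``soft'' analytic part and I would treat it quickly.

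The core of the argument is the equivalence of the root-theoretic conditions $(4)$, $(6)$, $(7)$ with the Lie-algebraic conditions $(5)$, $(2)$. Here I would work with a $\tau$-invariant Cartan subalgebra $\mathfrak{h}$ (which exists, as recalled before Satz~2.7) and write $\Phi = \Sigma^+ \cup (-|J|)$, so that $\Phi^r = |J| \cup (-|J|)$, $\Phi^n = \Sigma^+ \setminus |J|$, and $\Phi^c = -\Phi^n$. The openness of $D$ gives, via the codimension formula (Satz~2.6), that $\Phi^c \cap \tau\Phi^c = \emptyset$, equivalently $\tau\Phi^n \cap \Phi^{-n} = \emptyset$, i.e.\ every root in $\tau\Phi^n$ lies in $\Phi^r \cup \Phi^n$. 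The key step is to upgrade this: I would argue that openness already forces $\tau\Phi^r = \Phi^r$. Indeed $\tau$ permutes the roots and preserves the property of being a sum within a subsystem; one checks that if some $\alpha \in \Phi^r$ had $\tau\alpha \in \Phi^n$ then, pairing with $-\tau\alpha \in \Phi^c$, one produces an element of $\Phi^c \cap \tau\Phi^c$, contradicting openness. Once $\tau\Phi^r = \Phi^r$ is known, $\tau$ must map the complement $\Phi^n \cup \Phi^c = \Sigma \setminus \Phi^r$ to itself, and the same openness constraint $\tau\Phi^n \cap \Phi^c = \emptyset$ forces $\tau\Phi^n \subseteq \Phi^n \cup \Phi^c$ to land entirely in $\Phi^{-n} = \Phi^c$ only if\dots --- this is exactly where measurability is an extra hypothesis and not automatic; so $(1) \Rightarrow (4)$ requires using the invariant metric from $(3)$ to rule out the bad case, whereas $(4) \Rightarrow$ (everything else) is purely combinatorial.

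From $(4)$ I would deduce $(6)$: $\mathfrak{p} \cap \tau\mathfrak{p}$ is spanned by $\mathfrak{h}$ together with the root spaces $\mathfrak{g}^\alpha$ for $\alpha \in \Phi \cap \tau\Phi$; condition $(4)$ says $\Phi \cap \tau\Phi = \Phi^r$, so $\mathfrak{p} \cap \tau\mathfrak{p} = \mathfrak{h} \oplus \sum_{\alpha \in \Phi^r} \mathfrak{g}^\alpha$, which is precisely the Levi factor of $\mathfrak{p}$. Then $(6) \Rightarrow (5)$ is immediate since a Levi factor is reductive, and $(5) \Rightarrow (6)$ follows because $\mathfrak{p} \cap \tau\mathfrak{p}$ always contains the Levi $\mathfrak{p}^r$ and any reductive subalgebra of $\mathfrak{p}$ containing $\mathfrak{p}^r$ must equal it (the nilradical part would contribute nilpotent elements to the center of a reductive algebra). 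For $(6) \Leftrightarrow (7)$: $\tau\mathfrak{p} = \mathfrak{p}_{\tau\Phi}$, and $(4)$ says $\tau\Phi = \Phi^r \cup \Phi^{-n}$, which is exactly the opposite parabolic $\mathfrak{p}^r + \mathfrak{p}^{-n}$; conjugating back, $(7)$ returns $(4)$. Finally $(6) \Leftrightarrow (2)$ at the group level: $\mathfrak{p} \cap \tau\mathfrak{p}$ being the Levi $\mathfrak{l} = \mathrm{Fix}$ of the grading element means $G_\mathbb{R} \cap P = G_\mathbb{R} \cap L$ is the centralizer in $G_\mathbb{R}$ of the connected center of $L$, which is a torus; conversely a centralizer of a torus is reductive, giving back $(5)$.

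The main obstacle I anticipate is precisely the implication $(1) \Rightarrow (4)$ (equivalently the non-formal half of $(3) \Rightarrow (4)$): showing that the mere existence of an invariant volume form forces $\tau\Phi^n = \Phi^{-n}$ rather than allowing $\tau$ to mix $\Phi^n$ with $\Phi^c$ in a lopsided way. The cleanest route is to compute the determinant character by which $G_\mathbb{R} \cap P$ acts on the tangent space $\mathfrak{g}/\mathfrak{p} \cong \sum_{\alpha \in \Phi^c}\mathfrak{g}^\alpha$; an invariant volume form exists iff this character is trivial on the identity component, and unwinding this triviality in terms of the $\tau$-action on $\Phi^c$ yields the symmetry $\tau\Phi^n = \Phi^{-n}$. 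All the remaining equivalences are bookkeeping with root systems and parabolic subalgebras, which I would present compactly without belaboring the routine verifications.
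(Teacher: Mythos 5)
The paper does not actually prove this theorem: it is quoted from \cite{FHW} (originally Wolf) as classical background, so the natural comparison points are Wolf's argument and the paper's proof of the super analogue in Chapter~3. Your overall architecture --- the root conditions as the hub, a determinant character for $(1)\Leftrightarrow(4)$, the Levi decomposition for $(4)$--$(7)$, and the centralizer of a torus for $(2)$ --- is the right one, but two of your intermediate claims are false and the one step you yourself identify as the crux is left unjustified.

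First, openness does \emph{not} force $\tau\Phi^r=\Phi^r$. Take $G_\mathbb{R}=SL_3(\mathbb{R})$ acting on $\mathbb{P}^2$: with the $\tau$-stable Cartan adapted to the open orbit one has $\tau\alpha_1=-\alpha_2$ and $\tau\alpha_2=-\alpha_1$, so for $\Phi^r=\{\pm\alpha_2\}$ one gets $\tau\Phi^r=\{\mp\alpha_1\}\neq\Phi^r$ even though $\Phi^c\cap\tau\Phi^c=\emptyset$. Your argument breaks because for $\alpha\in\Phi^r$ the root $-\alpha$ lies in $\Phi^r$, not in $\Phi^c$, so $\tau(-\alpha)\in\Phi^c$ produces no element of $\Phi^c\cap\tau\Phi^c$. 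Indeed, if openness implied $\tau\Phi^r=\Phi^r$, then (since $\tau$ would preserve $\Phi^n\cup\Phi^c$ and openness gives $\tau\Phi^n\cap\Phi^n=\emptyset$ --- note your complementation ``every root in $\tau\Phi^n$ lies in $\Phi^r\cup\Phi^n$'' is backwards; it should be $\Phi^r\cup\Phi^{-n}$) condition $(4)$ would follow and \emph{every} open orbit would be measurable, which is false. For the same reason $\mathfrak{p}\cap\tau\mathfrak{p}$ need not contain $\mathfrak{p}^r$, so your proof of $(5)\Rightarrow(6)$ fails as stated; the correct route, as in the paper's super version, is the decomposition $\mathfrak{p}\cap\tau\mathfrak{p}=(\mathfrak{p}^r\cap\tau\mathfrak{p}^r)\oplus(\mathfrak{p}^r\cap\tau\mathfrak{p}^n)\oplus(\mathfrak{p}^n\cap\tau\mathfrak{p}^r)$ (the fourth summand vanishing by openness) together with the identity $\dim(\mathfrak{p}\cap\tau\mathfrak{p})=\dim\mathfrak{p}^r$. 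Second, in $(1)\Rightarrow(4)$ you correctly reduce to the vanishing of $\sum_{\alpha\in\Phi^c}(\alpha+\tau\alpha)$ on $\mathfrak{h}$, but ``unwinding this triviality'' is precisely where the work is: one must evaluate this functional on the grading element $x$ with $\mathfrak{p}=\mathfrak{p}(x)$, which is zero on $\Phi^r$ and strictly positive on $\Phi^n$; the first sum contributes $-\sum_{\beta\in\Phi^n}\beta(x)$ and the second at most $\sum_{\beta\in\Phi^n}\beta(x)$, with equality if and only if $\tau\Phi^c=\Phi^n$. This positivity argument is exactly what has no analogue in the super setting (where even and odd roots can cancel, producing weak measurability), so it cannot be omitted. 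Finally, $(1)\Rightarrow(3)$ is not ``soft'': the invariant indefinite K\"ahler metric has to be built from $(4)$/$(2)$ (e.g.\ from the invariant form paired against $x$), so $(3)$ belongs at the end of the implication cycle rather than being dispatched at the outset.
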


A classification of measurable flag domains in the classical case is included in the classification of measurable flag superdomains in Chapter 3. 

\subsection{Cycle spaces and the Double Fibration Transform}

Let $D \subseteq Z$ be a flag domain and $C_0 \subseteq D$ the base cycle. As it is a closed complex submanifold of $Z$ its
stabilizer $J = \mathrm{Stab}_G(C_0) = \{ g \in G : gC_0 = C_0\}$ is a closed complex Lie subgroup of $G$. Let $\mathcal{M}_Z = G/J$.

\begin{def1}
  $\mathcal{M} = \{ gC_0 \in \mathcal{M}_Z : gC_0 \subseteq D \}^\circ$ is the (group-theoretic) cycle space of $D$.
\end{def1}

\begin{bsp}

Let $\mathfrak{j}$ and $\mathfrak{k}$ be the respective Lie algebras of $J$ and $K$.
If $g_\mathbb{R}$ is supposed to be simple, there are three distinct cases of cycle spaces depending on whether $\mathfrak{k}$ is a maximal subalgebra of $\mathfrak{g}$
or not and whether $\mathfrak{j} = \mathfrak{k}$ or $\mathfrak{j}$ is a subalgebra of $\mathfrak{g}$ properly containing $\mathfrak{k}$.
They are the following ones (see page 57 in \cite{FHW}):

\begin{description}
 \item[Hermitian holomorphic case:] $\mathcal{B} = G_\mathbb{R}/K_\mathbb{R}$ is a hermitian symmetric space of non-compact type, 
the complement $\mathfrak{s}$ of $\mathfrak{k}$ in $\mathfrak{g}$ splits into a direct sum $\mathfrak{s}^+ \oplus \mathfrak{s}^-$
of $K$-modules, $\mathfrak{j}$ is either of the maximal parabolic subalgebras $\mathfrak{p}^+ = \mathfrak{k} + \mathfrak{s}^+$
and $\mathfrak{p}^- = \mathfrak{k} + \mathfrak{s}^-$ and $\mathcal{M}$ is either $\mathcal{B}$ or $\bar{\mathcal{B}}$ respectively.
 \item[Hermitian non-holomorphic case:] $\mathcal{B} = G_\mathbb{R}/K_\mathbb{R}$ is again a hermitian symmetric space of non-compact type,
but $\mathfrak{j} = \mathfrak{k}$. Then $\mathcal{M}$ is isomorphic to $\mathcal{B} \times \bar{\mathcal{B}}$.
 \item[Generic case:] $G_\mathbb{R}/K_\mathbb{R}$ does not possess a $G_\mathbb{R}$-invariant complex structure, $\mathfrak{k}$
is a maximal subalgebra of $\mathfrak{g}$ and $\mathcal{M}$ is the universal domain $\mathcal{U}$ introduced in Chapter 6 of \cite{FHW}
(see pages 77-81).
\end{description}

The hermitian cases can only occur for $G_\mathbb{R} = SU(p,q),SO^*(2n),Sp_{2m}(\mathbb{R})$ or $SO(n,2)$.

\end{bsp}

The flag domain $D$ and the cycle sapce $\mathcal{M}$ fit into a double fibration

\[ \xymatrix{ & \mathfrak{X} \ar[dl]_\mu \ar[dr]^\nu & \\ D & & \mathcal{M} } \]

\noindent where $\mathfrak{X} = \{ (z,C) \in D \times \mathcal{M} : z \in C \}$ is called the universal family.

In this text a vector bundle on a complex manifold $X$ is always understood to be a locally free $\mathcal{O}_X$-module $\mathcal{E}$.
If $\mathcal{E} = \mathcal{O}(\mathbb{E})$, the sheaf of germs of sections of a locally trivializable holomorphic submersion $p: \mathbb{E} \rightarrow X$, then $\mathbb{E}$ is called the total space of the vector bundle $\mathcal{E}$.

Let $\mathbb{E} \rightarrow D$ be a locally trivializable holomorphic submersion and $\mathcal{E} = \mathcal{O}(\mathbb{E})$. Then the double fibration gives rise to a Double Fibration Transform relating the the cohomology
of $\mathcal{O}(\mathbb{E})$ with the global sections of certain associated sheaves on $\mathcal{M}$. It is constructed as follows:

\subsubsection{Pullback}

The first step is to pull back the cohomology from $D$ to $\mathfrak{X}$ along $\mu$. If $\mathcal{G}$ is an arbitrary sheaf 
on $D$ then $\mu^{-1} \mathcal{G}$ is the sheafification of the presheaf given by 
\[ U \mapsto \varinjlim_{V \supseteq f(U)} \mathcal{G}(V) \]
For every $r \geq 0$ and every sheaf $\mathcal{G}$ on $D$, $\mu$
induces a map \newline $\mu^r : H^r(D,\mathcal{G}) \rightarrow H^r(\mathfrak{X},\mu^{-1}\mathcal{G})$.

\noindent The condition for these maps to be injective is purely topological:

\begin{def1}

Let $q \geq 0$. The fibration $\mu: \mathfrak{X} \rightarrow D$ satisfies the Buchdahl $q$-condition if the fibre
$F$ of $\mu$ is connected and $H^r(F,\mathbb{C}) = 0$ for $0 < r < q$.   

\end{def1}

This has the following effect on the pullback maps $\mu^r$

\begin{satz}(\cite{Bu})\label{Buchdahl}
 Let $q \geq 0$ and $\mu: \mathfrak{X} \rightarrow D$ be given. If $\mu$ satisfies the Buchdahl $q$-condition, then
$\mu^r$ is an isomorphism for $r < q$ and injective for $r = q$. If the fibres of $\mu$ are cohomologically acyclic,
then $\mu^r$ is an isomorphism for all $r$.
\end{satz}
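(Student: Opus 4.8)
The plan is to deduce Theorem~\ref{Buchdahl} from the Leray spectral sequence of the map $\mu\colon\mathfrak{X}\to D$ applied to the constant sheaf (and more generally to $\mu^{-1}\mathcal{G}$), exactly as Buchdahl does in \cite{Bu}. First I would set up the Leray spectral sequence
\[ E_2^{p,r} = H^p\bigl(D, R^r\mu_*\,\mathbb{C}\bigr) \Longrightarrow H^{p+r}(\mathfrak{X},\mathbb{C}), \]
and identify the higher direct image sheaves $R^r\mu_*\,\mathbb{C}$ with the sheafification of the presheaf $U\mapsto H^r\bigl(\mu^{-1}(U),\mathbb{C}\bigr)$. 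Because $\mu$ is a locally trivializable (holomorphic submersion with connected fibre $F$ in the relevant range), these sheaves are locally constant with stalk $H^r(F,\mathbb{C})$. The Buchdahl $q$-condition says precisely that $F$ is connected, so $R^0\mu_*\,\mathbb{C}=\mathbb{C}_D$, and that $H^r(F,\mathbb{C})=0$ for $0<r<q$, so $R^r\mu_*\,\mathbb{C}=0$ in that same range.

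Next I would feed this vanishing into the spectral sequence. With $R^r\mu_*\,\mathbb{C}=0$ for $1\le r\le q-1$, the only nonzero columns in total degree $\le q$ are the $r=0$ column (giving $H^p(D,\mathbb{C})$) and, in total degree exactly $q$, possibly the $r=q$ entry $E_2^{0,q}=H^0(D,R^q\mu_*\,\mathbb{C})$. An edge-homomorphism / low-degree-exact-sequence argument then shows that the pullback $\mu^r\colon H^r(D,\mathbb{C})\to H^r(\mathfrak{X},\mathbb{C})$, which is exactly the edge map $E_2^{r,0}\to H^r(\mathfrak{X},\mathbb{C})$, is an isomorphism for $r<q$ and injective for $r=q$ (its cokernel being controlled by $E_\infty^{0,q}\subseteq H^0(D,R^q\mu_*\mathbb{C})$ together with differentials out of it). If in addition every fibre is cohomologically acyclic, i.e. $H^r(F,\mathbb{C})=0$ for all $r>0$, then $R^r\mu_*\,\mathbb{C}=0$ for all $r>0$, the spectral sequence collapses onto the bottom row, and $\mu^r$ is an isomorphism in every degree. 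To get the statement for a general sheaf $\mathcal{G}$ on $D$ rather than just $\mathbb{C}$, I would replace $\mathbb{C}$ by $\mu^{-1}\mathcal{G}$ and use the projection formula $R^r\mu_*\bigl(\mu^{-1}\mathcal{G}\bigr)\cong \mathcal{G}\otimes_{\mathbb{C}} R^r\mu_*\,\mathbb{C}$ (valid because $\mu^{-1}\mathcal{G}$ is, locally on $D$, an inverse image from the base), so the same vanishing of $R^r\mu_*\,\mathbb{C}$ drives the argument.

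The main obstacle I expect is the identification of $R^r\mu_*\,\mathbb{C}$ with a \emph{locally constant} sheaf having the fibre cohomology as stalk: this needs local triviality of $\mu$ (at least in a topological sense), a base-change statement for cohomology of the fibres, and care that "$\mathfrak{X}=\{(z,C)\in D\times\mathcal{M}:z\in C\}$" really is a nice enough fibration over $D$ — its fibre over $z$ being the subspace of cycles through $z$, whose topology is what enters the Buchdahl condition. Once that local-system description of $R^r\mu_*\,\mathbb{C}$ is in hand, the spectral-sequence bookkeeping is routine. A secondary point to check is the compatibility of the abstract edge map with the concretely defined pullback $\mu^r$ on \v{C}ech or derived-functor cohomology, but this is standard. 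I would cite \cite{Bu} for the precise form of these statements rather than reprove the base-change and local-triviality inputs in detail.
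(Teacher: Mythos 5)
The paper offers no proof of this statement: it is quoted verbatim as a known result of Buchdahl, with the citation \cite{Bu} standing in for the argument. Your Leray--spectral--sequence outline (identify $R^{r}\mu_{*}$ of the pulled-back sheaf via the fibre cohomology, use the Buchdahl $q$-condition to kill the columns $0<r<q$, and read off the edge homomorphism) is precisely the standard proof of that lemma and matches the cited source; the only step deserving a touch more care is the ``projection formula'' $R^{r}\mu_{*}(\mu^{-1}\mathcal{G})\cong\mathcal{G}\otimes_{\mathbb{C}}R^{r}\mu_{*}\mathbb{C}$, which for an arbitrary sheaf $\mathcal{G}$ is really a stalkwise K\"unneth computation over the field $\mathbb{C}$ rather than a general projection formula (as $\mu$ is not proper), but you correctly flag that this base-change input is exactly what \cite{Bu} supplies.
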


Note that in the cases under consideration according to [FHW], Theorem 14.5.2 and Proposition 14.6.1 on page 212f., the fibres of $\mu$ will actually always be contractible so 
$\mu^r$ will always be an isomorphism.  

Now let $\mu^*\mathcal{O}(\mathbb{E}) = \mu^{-1}\mathcal{O}(\mathbb{E}) \otimes_{\mu^{-1}\mathcal{O}_D} \mathcal{O}_\mathfrak{X}$ denote the pullback sheaf.  
It is the sheaf of germs of holomorphic section of the projection $\mu^*\mathbb{E} \rightarrow \mathfrak{X}$. The Extension of Scalars induces morphisms 
$i_r: H^r(\mathfrak{X}, \mu^{-1}\mathcal{O}(\mathbb{E})) \rightarrow H^r(\mathfrak{X}, \mu^*\mathcal{O}(\mathbb{E}))$. Let $j_r$ be the composition $ = i_r \mu^r: H^r(D, \mathcal{O}(\mathbb{E})) \rightarrow H^r(\mathfrak{X}, \mu^*\mathcal{O}(\mathbb{E}))$.

\subsubsection{Pushdown}

The second step is to push $H^r(\mathfrak{X}, \mu^* \mathcal{O}(\mathbb{E}))$ down from $\mathfrak{X}$ to $\mathcal{M}$ along $\nu$. 
To this end one needs to make use of the Grauert Direct Image Theorem:

\begin{satz}

Let $f: X \rightarrow Y$ be a proper map of complex manifolds and $\mathcal{S}$ a coherent 
$\mathcal{O}_X$-module. Then the $p^{th}$ direct image $\mathcal{R}^p f_* \mathcal{S}$ is a coherent $\mathcal{O}_Y$-module
for all $p \geq 0$. 

\end{satz}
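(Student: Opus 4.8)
The plan is to prove this as Grauert's Coherence Theorem for direct images, reducing it to a local finiteness statement on $Y$ via a Čech computation together with functional analysis.

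\textbf{Reduction.} Since coherence of an $\mathcal{O}_Y$-module is a local property, I would fix a point $y_0 \in Y$ and a relatively compact Stein open neighborhood $Y_0 \ni y_0$ (e.g. a polydisc); by properness $f^{-1}(\overline{Y_0})$ is compact, and it suffices to prove that $\mathcal{R}^p f_* \mathcal{S}$ is coherent on a (possibly smaller) neighborhood of $y_0$.

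\textbf{Čech resolution by privileged blocks.} Next I would cover a neighborhood of the compact set $f^{-1}(\overline{Y_0})$ in $X$ by finitely many opens $U_1,\dots,U_N$, each biholomorphic to (a polydisc in the fiber direction) $\times\, Y_0$ and chosen to be \emph{privileged} for $\mathcal{S}$: on each $U_i$ and on each finite intersection, $\mathcal{S}$ admits a finite free resolution and its fiberwise sections are controlled by Cartan's Theorems A and B. Over $Y_0$ the sheaf $\mathcal{R}^p f_* \mathcal{S}$ is then the $p$-th cohomology of the Čech complex $\check{C}^\bullet(\mathfrak{U},\mathcal{S})$, whose terms are $\mathcal{O}_{Y_0}$-modules of the form $\mathcal{O}_{Y_0}\,\hat{\otimes}\,E_\sigma$ with $E_\sigma$ a nuclear Fréchet space of holomorphic fiber-sections (a ``topologically free'' $\mathcal{O}_{Y_0}$-module), and whose differentials are $\mathcal{O}_{Y_0}$-linear and continuous.

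\textbf{Finiteness of cohomology of topologically free complexes.} The crux is the local lemma that a bounded complex of topologically free $\mathcal{O}_{Y_0}$-modules with continuous differentials has coherent cohomology sheaves after shrinking $Y_0$ around $y_0$. I would prove this by: (i) a Montel/compactness argument — a priori estimates for holomorphic functions on shrinking polydiscs together with the open mapping theorem for Fréchet spaces — showing that the stalk of the cohomology at $y_0$ is generated modulo $\mathfrak{m}_{y_0}$-adic higher order by finitely many sections; (ii) a Nakayama/Artin–Rees-type bootstrap using Noetherianity of $\mathcal{O}_{Y_0,y_0}$ (Rückert's basis theorem for convergent power series) to promote ``uniform finite generation mod higher order'' into genuine finite generation of the sheaf cohomology over a neighborhood; (iii) applying (i)–(ii) again to the syzygy complex to see that the relations are finitely generated as well. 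One then identifies $H^p\!\left(\check{C}^\bullet(\mathfrak{U},\mathcal{S})\right)$ with $\mathcal{R}^p f_* \mathcal{S}$ using that $\mathfrak{U}$ is fiberwise $\mathcal{S}$-acyclic, and globalizes over $Y$ since $y_0$ was arbitrary.

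\textbf{Main obstacle.} The genuine difficulty is the finiteness lemma of the previous step: keeping simultaneous control of the Fréchet-space estimates (varying polydiscs, bounded operators, Montel compactness) and the local-algebra estimates (Weierstrass preparation, order of vanishing along $\{y_0\}$), together with the bookkeeping of privileged covers and semi-norms, is exactly where Grauert's original argument — and the later streamlined treatments of Forster–Knorr and Kiehl–Verdier — concentrate essentially all of their effort. Everything else (the reduction to a neighborhood of $y_0$, the Čech computation, and the globalization) is routine once that lemma is in hand.
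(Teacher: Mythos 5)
The paper does not prove this statement at all: it is Grauert's Direct Image Theorem, quoted verbatim as a classical tool in the construction of the Double Fibration Transform (and later re-invoked in the super setting via Vaintrob's generalization). There is therefore no internal proof to compare yours against; the only meaningful comparison is with the literature the paper is implicitly citing.

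Your outline is a faithful roadmap of the standard argument (Grauert's original proof and the streamlined treatments of Forster--Knorr and Kiehl--Verdier): localize over a Stein neighborhood of $y_0$, use properness to obtain a finite privileged cover of the fibre, realize $\mathcal{R}^p f_*\mathcal{S}$ as the cohomology of a bounded complex of topologically free $\mathcal{O}_{Y_0}$-modules, and then invoke a finiteness lemma for such complexes. But as a proof it has a genuine gap, which you yourself flag: the finiteness lemma of your third step \emph{is} the theorem, and everything surrounding it is formal. In particular, step (ii) as stated cannot be run literally: for a module over the local ring $\mathcal{O}_{Y_0,y_0}$, finite generation modulo $\mathfrak{m}_{y_0}$ does not imply finite generation (Nakayama needs the module to be a priori finitely generated, or an $\mathfrak{m}$-adic separatedness/completeness input that must itself be extracted from the Fr\'echet-space estimates). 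Making that passage honest is exactly where Weierstrass division, the open-mapping/Montel perturbation argument, and the induction on $\dim Y$ are all spent, and none of that is carried out beyond naming the ingredients. The proposal should therefore be read as a correct plan of attack, with an accurate diagnosis of where the difficulty is concentrated, rather than as a proof.
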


Returning to the case of the double fibration note that $\nu$ is indeed a proper holomorphic map so the Grauert Direct Image Theory can be applied.
Moreover one of the major results of \cite{FHW} is that the cycle space $\mathcal{M}$ is actually a Stein space in all possible cases (Theorem 11.3.1 and Theorem 11.3.7 on page 171f.).
This implies $H^q(\mathcal{M},\mathcal{R}^p \nu_* \mu^* \mathcal{O}(\mathbb{E})) = 0$ for all $q > 0, p \geq 0$.
Consequently the Leray spectral sequence (see \cite{Wei}, chapter 5) collapses to yield isomorphisms $\mathcal{R}^r \nu_* : H^r(\mathfrak{X}, \mu^* \mathcal{O}(\mathbb{E})) \cong
H^0(\mathcal{M}, \mathcal{R}^r \nu_* \mu^* \mathcal{O}(\mathbb{E}))$.

\begin{def1}

The Double Fibration Transform is the composition

\[ \mathcal{P} = \mathcal{R}^r \nu_* j_r : H^r(D,\mathcal{O}(\mathbb{E})) \rightarrow H^0(\mathcal{M}, \mathcal{R}^r \nu_* \mu^* \mathcal{O}(\mathbb{E})) \] 

\end{def1}

In order for this Double Fibration Transform to be useful two conditions need to be satisfied:

\begin{itemize}
 \item $\mathcal{P}$ needs to be injective.
 \item There needs to be some concrete characterization of the image of $\mathcal{P}$.
\end{itemize}

\subsubsection{Injectivity and Image of the Double Fibration Transform}\label{BBW}

As the pushdown map $\mathcal{R}^r \nu_*$ is always an isomorphism in all cases under consideration and the fibres of $\mu$ are contractible, the injectivity 
question is reduced to the question of injectivity of the coefficient map $i_r: H^r(\mathfrak{X}, \mu^{-1}\mathcal{O}(\mathbb{E})) \rightarrow H^r(\mathfrak{X}, \mu^*\mathcal{O}(\mathbb{E}))$.  
This question can be answered by considering a suitable resolution of $\mu^{-1}\mathcal{O}(\mathbb{E})$ by $\mathcal{O}_\mathfrak{X}$-modules.
The following material is based on pages 530ff. in \cite{WZ}. 

Let $x \in \mathfrak{X}$ and $T^{1,0}_{\mu,x} = T^{1,0}(\mu^{-1}(\mu(x)))$ be the holomorphic tangent space at $x$ of the
fibre of $\mu$ containing $x$. The disjoint union of all these spaces defines a subbundle $T^{1,0}_\mu \mathfrak{X}$ of
the holomorphic tangent bundle $T^{1,0}\mathfrak{X}$. The sheaf of germs of $\mu$-relative holomorphic $p$-forms
is then given by $\Omega_\mu^p = \mathcal{O}(\bigwedge^p (T^{1,0}_\mu \mathfrak{X})^*)$. For every $p \geq 0$ there is
a surjective map $\pi$ from $\Omega_\mathfrak{X}^p$ onto $\Omega_\mu^p$ given point-wise by restriction of differential forms
to $\bigwedge^p T_{\mu,x}$. This allows to define the relative exterior differential $\partial_\mu: \Omega_\mu^p \rightarrow \Omega_\mu^{p+1}$
by $\partial_\mu(\eta) = \pi(\partial(\omega))$ where $\omega$ is an arbitrary element of $\pi^{-1}(\eta)$.
This defines a complex $\Omega_\mu^*$ of $\mathcal{O}_\mathfrak{X}$-modules. Moreover $\mu^{-1}\mathcal{O}_D$ can
be identified as the kernel of $\partial_\mu: \mathcal{O}_\mathfrak{X} \rightarrow \Omega_\mu^1$, hence
$\Omega_\mu^*$ is a resolution of $\mu^{-1} \mathcal{O}_D$ by $\mathcal{O}_\mathfrak{X}$-modules (Lemma 2.12. on p.530 in \cite{WZ}).
Tensoring with $\mu^{-1}\mathcal{O}(\mathbb{E})$ over $\mu^{-1}\mathcal{O}_D$ then yields the desired resolution of $\mu^{-1}\mathcal{O}(\mathbb{E})$: 

\[ \xymatrix{ 0 \ar[r] & \mu^{-1}\mathcal{O}(\mathbb{E}) \ar[r] & \mu^*\mathcal{O}(\mathbb{E}) \ar[r] & \Omega_\mu^1(\mathcal{O}(\mathbb{E})) \ar[r] & \ldots \ar[r] &
 \Omega_\mu^m(\mathcal{O}(\mathbb{E})) \ar[r] & 0}\]

According to chapter 5 in \cite{Wei}, this resolution gives rise to two spectral sequences $^\prime E$ and $^{\prime\prime}E$ including the pages $^\prime E_2^{p,q} = H^p(\mathfrak{X}, \mathcal{H}^q(\mathfrak{X}), \Omega^\bullet(\mathcal{O})(\mathbb{E}))$ and $^{\prime\prime}E_2^{p,q} = H_d^q(H^p(\mathfrak{X},\Omega^\bullet(\mathcal{O}(\mathbb{E}))))$ and both converging to the hypercohomology $\mathbb{H}^{p+q}(\mathfrak{X},\Omega^\bullet(\mathcal{O}(\mathbb{E})))$.

Here $\mathcal{H}^q(\mathfrak{X}, \Omega^\bullet(\mathcal{O}(\mathbb{E})))$ is the sheaf given locally by

\[ \mathcal{H}^q(\mathfrak{X}, \Omega^\bullet(\mathcal{O}(\mathbb{E})))(U) = \frac{ \mathrm{Ker} (d: \Gamma(U, \Omega^q(\mathcal{O}(\mathbb{E}))) \rightarrow \Gamma(U, \Omega^{q+1}(\mathcal{O}(\mathbb{E})))) }
{\mathrm{Im} (d: \Gamma(U, \Omega^{q-1}(\mathcal{O}(\mathbb{E}))) \rightarrow \Gamma(U, \Omega^{q}(\mathcal{O}(\mathbb{E}))))} \]

Exactness of the relative de Rham complex yields $\mathcal{H}^q(\mathfrak{X}, \Omega^\bullet(\mathcal{O}(\mathbb{E}))) = 0$
for all $q > 0$. Consequently $^\prime E$ collapses at the $E_2$-page yielding an isomorphism between the hypercohomology
and the cohomology of $\mu^{-1}\mathcal{O}(\mathbb{E})$. The latter also appears in $^{\prime\prime}E_2$ as the kernel
of the map $d_0: H^p(\mathfrak{X}, \mu^*\mathcal{O}(\mathbb{E})) \rightarrow H^p(\mathfrak{X}, \Omega_\mu^1(\mathcal{O}(\mathbb{E})))$. 
This helps to give a concrete description of the image of $\mathcal{P}$.

The Double Fibration Transform is certainly injective if $H^p(\mathfrak{X}, \mu^*\mathcal{O}(\mathbb{E}))$
survives to the $^{\prime\prime}E_\infty$-page. A sufficient condition for this is that $^{\prime\prime}E_2^{r,q} = H_d^q(H^r(\mathfrak{X},\Omega^\bullet(\mathcal{O}(\mathbb{E})))) = 0$
for all $r < p, 1 \leq q \leq m$.

\noindent Pushing down to $\mathcal{M}$ this is equivalent to the vanishing of $H^0(\mathcal{M}, R^r \nu_* \Omega_\mu^q(\mathcal{O}(\mathbb{E})))$
for all $r < p, 1 \leq q \leq m$.

Suppose $\mathbb{E}$ is the total space of a $G_\mathbb{R}$-homogeneous vector bundle with fibre $E$. Then $\Omega_\mu^q(\mathcal{O}(\mathbb{E}))$
is the sheaf of germs of holomorphic sections of the homogeneous vector bundle on $\mathfrak{X}$ with fibre
$\bigwedge (\mathfrak{p}^{op}/(\mathfrak{p}^{op} \cap \mathfrak{j}))^* \otimes E$ and $H^0(\mathcal{M}, R^r \nu_* \Omega_\mu^q(\mathcal{O}(\mathbb{E})))$
is the space of global sections of the $G_\mathbb{R}$-homogeneous vector bundle on $\mathcal{M}$ with fibre 
$H^r(C_0,\mathcal{O}(K_\mathbb{R} \times_{(K_\mathbb{R} \cap L_\mathbb{R})}(\bigwedge (\mathfrak{p}^{op}/(\mathfrak{p}^{op} \cap \mathfrak{j}))^* \otimes E)))$. 

So the vanishing of all these spaces for $r < p, 1 \leq q \leq m$ implies injectivity of the double fibration transform.

In \cite{WZ} the authors give a number of vanishing results using the Bott-Borel-Weil Theorem:

\begin{satz}[Bott-Borel-Weil-Theorem \cite{Bott}]

Let $G$ be a complex \\ semisimple Lie group, $G_u$ a compact real form and $\lambda$ a $G_u$-dominant weight. Let $\rho = \frac{1}{2} \sum_{\alpha \in \Sigma^+(\mathfrak{g},\mathfrak{h})} \alpha$, half the sum of the positive roots.

\begin{enumerate}
 \item If $\lambda + \rho$ is singular, then $H^k(Z, \mathcal{E}_\lambda) = 0$ for all $k \geq 0$.
 \item If $\lambda + \rho$ is regular, then there is a unique element $w \in W$ of the Weyl group such that
       $\langle w(\lambda + \rho), \psi \rangle > 0$ for every simple root $\psi$. Let $q = l(w)$, the length of $w$.
       Then $H^k(Z, \mathcal{E}_\lambda) = 0$ for $k \neq q$ and $H^q(Z, \mathcal{E}_\lambda)$ is the irreducible $G_u$-module
       with highest weight $w \cdot \rho = w(\lambda + \rho) - \rho$.
\end{enumerate}
 
\end{satz}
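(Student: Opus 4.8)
The plan is to prove the Bott--Borel--Weil theorem for $Z = G/B$ first, then descend to arbitrary flag manifolds $Z = G/P$, following the classical approach via the Casimir operator and the Weyl character formula (Bott's original argument), or alternatively via the Kostant cohomology of the nilradical. I would begin by recalling that $\mathcal{E}_\lambda$ is the holomorphic homogeneous line bundle on $G/B$ associated to the character $\lambda$ of $B$, and that by the Leray spectral sequence for the fibration $G/B \to G/P$ the general case reduces to the case of $G/B$ together with a computation of the fibrewise cohomology along the fibre $P/B$, which is itself a flag manifold for the Levi factor. So the crux is $G/B$.

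For $Z = G/B$, first I would establish the vanishing/non-vanishing dichotomy according to whether $\lambda+\rho$ is singular or regular with respect to the $W$-action. The key tool is Kodaira--Nakano vanishing combined with the identification of $H^k(Z,\mathcal{E}_\lambda)$ via Dolbeault cohomology: one computes the $\bar\partial$-Laplacian on $\mathcal{E}_\lambda$-valued $(0,k)$-forms and uses the Bochner--Kodaira--Nakano identity to show the Laplacian equals (up to a scalar) the Casimir of $G_u$ shifted by $\lambda$, so that harmonic forms exist in degree $k$ only if a suitable length-$k$ subset of positive roots can be ``flipped'' by reflections — equivalently only if $\lambda+\rho$ is regular and $k = \ell(w)$ for the unique $w$ sending $\lambda+\rho$ into the positive chamber. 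When $\lambda+\rho$ is singular this forces all cohomology to vanish, giving part (1). When $\lambda+\rho$ is regular, the same analysis isolates a single degree $q = \ell(w)$; to identify $H^q(Z,\mathcal{E}_\lambda)$ as the irreducible $G_u$-module of highest weight $w\cdot\rho = w(\lambda+\rho)-\rho$, I would compute the Euler characteristic $\sum_k (-1)^k \mathrm{ch}\, H^k(Z,\mathcal{E}_\lambda)$ by the holomorphic Lefschetz fixed point formula (or equivariant Riemann--Roch) applied to a regular element of a maximal torus; the fixed points are the Weyl-group translates of the base point, and the resulting alternating sum over $W$ reproduces exactly the Weyl character formula for the module with highest weight $w\cdot\rho$. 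Combining the Euler-characteristic computation with the concentration in a single degree yields the claimed isomorphism.

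Finally, for general $P = P_\Phi$ with Levi roots $\Phi^r$, I would run the Leray spectral sequence for $G/B \xrightarrow{\pi} G/P$ with fibre $L/B_L$ (a flag manifold of the Levi $L$). Since $\lambda$ is assumed $P$-dominant on the Levi part or $\lambda+\rho$ behaves nicely relative to $\Sigma^+ \setminus \Phi^r$, the fibrewise cohomology $R^i\pi_* \mathcal{E}_\lambda$ is concentrated in degree $0$ (if $\lambda$ is $L$-dominant) and equals the homogeneous bundle on $G/P$ attached to the irreducible $L$-module of highest weight $\lambda$; one then reads off $H^k(G/P, \mathcal{E}_\lambda) = H^k(G/B, \mathcal{E}_\lambda)$ and applies the $G/B$ result, noting that $\rho$ and the relevant $w$ are the same. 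The main obstacle, and the step requiring the most care, is the Euler-characteristic / Lefschetz computation identifying the cohomology module: one must set up the equivariant fixed-point formula correctly, track the sign $(-1)^{\ell(w)}$ coming from the orientation of the fixed-point contributions, and match the resulting expression term-by-term against the Weyl character formula — the combinatorics of the $W$-orbit of $\lambda+\rho$ and the bookkeeping of which degree survives is where errors most easily creep in. Everything else is either standard representation theory of $G_u$ or a routine application of the spectral sequence and the vanishing theorem quoted above.
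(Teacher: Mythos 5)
The paper offers no proof of this statement: it is quoted as the classical Bott--Borel--Weil theorem with a citation to Bott's 1957 paper and serves purely as background for the injectivity analysis of the Double Fibration Transform. Your outline --- reduction to $G/B$ via the Leray spectral sequence over $G/B \to G/P$, concentration of cohomology in the single degree $\ell(w)$ via the Casimir/harmonic-form analysis, and identification of the surviving module by matching the equivariant Euler characteristic against the Weyl character formula --- is the standard classical argument, essentially Bott's original one, and is sound as a plan; there is nothing in the paper to compare it against beyond the citation.
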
 
Let $\mathfrak{s}$ be the complementary subspace to $\mathfrak{k}$ in $\mathfrak{g}$ and $\mathfrak{r}^+$ be the
unipotent radical of $\mathfrak{p}$. The final result in the hermitian holomorphic case is the following:

\begin{satz}

Let $\lambda \in \mathfrak{h}^*$ be a weight and $\mathbb{E} = G_\mathbb{R} \times_{L_\mathbb{R}} E_\lambda$ where
$E_\lambda$ is the representation space of the highest weight representation with highest weight $\lambda$. If
$\lambda$ is sufficiently negative, e.g. 

\[ \langle \mu + \beta, \gamma \rangle < 0 \] 

\noindent where $\beta$ is a sum of elements in $\Sigma(\mathfrak{l} \cap \mathfrak{s}^+, \mathfrak{h})$
and $\gamma \in \Sigma(\mathfrak{k} \cap \mathfrak{r}^+, \mathfrak{h})$,  

\noindent then the Double Fibration Transform $\mathcal{P}: H^p(D,\mathcal{O}(\mathbb{E})) \rightarrow H^0(\mathcal{M}, R^p \nu_* \mu^* \mathcal{O}(\mathbb{E}))$ 
is injective.

\end{satz}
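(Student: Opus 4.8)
The plan is to reduce injectivity of $\mathcal P$ to a Bott--Borel--Weil vanishing statement on the base cycle, as prepared in the discussion preceding the theorem, and then to extract that vanishing from the negativity hypothesis on $\lambda$. By construction $\mathcal P=\mathcal R^p\nu_*\,i_p\,\mu^p$, and since $\mu^p$ and $\mathcal R^p\nu_*$ are isomorphisms in the case at hand (contractible $\mu$-fibres, Stein cycle space), it suffices to make $i_p$ injective; for this it is enough that the columns $^{\prime\prime}E_2^{r,q}$ of the second spectral sequence of the relative de Rham resolution vanish for all $r<p$ and $1\le q\le m$. Pushing down along $\nu\colon\mathfrak X\to\mathcal M$, the group $^{\prime\prime}E_2^{r,q}$ equals $H^0(\mathcal M,R^r\nu_*\Omega_\mu^q(\mathcal O(\mathbb E)))$, the space of global sections of the $G_{\mathbb R}$-homogeneous bundle over $\mathcal M$ whose fibre is
\[
H^r\bigl(C_0,\mathcal O\bigl(K_{\mathbb R}\times_{K_{\mathbb R}\cap L_{\mathbb R}}(\textstyle\bigwedge^q(\mathfrak p^{op}/(\mathfrak p^{op}\cap\mathfrak j))^*\otimes E_\lambda)\bigr)\bigr).
\]
So it is enough to prove that these cohomology groups of the compact complex flag manifold $C_0\cong K/(K\cap P)$ vanish for $r<p$ and $1\le q\le m$; in fact I will establish the stronger statement that they vanish for every $r<\dim_{\mathbb C}C_0$ and every $q\ge1$, which covers the required range because $p\le\dim_{\mathbb C}C_0$ (the cohomology of interest being concentrated in degree $\dim_{\mathbb C}C_0$).

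Next I would apply Bott--Borel--Weil on $C_0=K/(K\cap P)$, a flag manifold of the complex reductive group $K$ with Levi factor $K\cap L$. Decompose the $(K\cap P)$-module $\bigwedge^q(\mathfrak p^{op}/(\mathfrak p^{op}\cap\mathfrak j))^*\otimes E_\lambda$ along a $(K\cap P)$-composition series with $(K\cap L)$-irreducible quotients $E_\chi$ and pass to the resulting spectral sequence (equivalently, pull the bundle back to $K/B_K$ and iterate the theorem); since such spectral sequences only move contributions into higher degrees, it suffices to treat each constituent separately. By the Bott--Borel--Weil theorem applied to $K$, $H^r\bigl(C_0,\mathcal O(K\times_{K\cap L}E_\chi)\bigr)$ vanishes unless $\chi+\rho_{\mathfrak k}$ is $W_{\mathfrak k}$-regular, in which case it is concentrated in the single degree $r=\ell(w_\chi)$, where $w_\chi\in W_{\mathfrak k}$ is the unique element with $w_\chi(\chi+\rho_{\mathfrak k})$ $K$-dominant; moreover $\ell(w_\chi)=\bigl|\{\gamma\in\Sigma^+(\mathfrak k,\mathfrak h):\langle\chi+\rho_{\mathfrak k},\gamma^\vee\rangle<0\}\bigr|$. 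Because $E_\lambda$ is the $L$-irreducible of highest weight $\lambda$, every such $\chi$ has the form $\chi=\lambda+\beta$ with $\beta$ a weight of $\bigwedge^q(\mathfrak p^{op}/(\mathfrak p^{op}\cap\mathfrak j))^*$, i.e.\ a sum of $q$ distinct weights of that space; a root-space computation in the hermitian holomorphic case (where $\mathfrak j$ is $\mathfrak p^+$ or $\mathfrak p^-$) identifies these weights so that $\beta$ is precisely a nonempty sum of elements of $\Sigma(\mathfrak l\cap\mathfrak s^+,\mathfrak h)$, as in the statement.

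Now I would invoke the negativity hypothesis. For $\gamma\in\Sigma^+(\mathfrak k\cap\mathfrak l,\mathfrak h)$ one has $\langle\chi,\gamma^\vee\rangle\ge0$ (as $\chi$ is $(K\cap L)$-dominant) and $\langle\rho_{\mathfrak k},\gamma^\vee\rangle>0$, so $\langle\chi+\rho_{\mathfrak k},\gamma^\vee\rangle>0$; while for $\gamma\in\Sigma(\mathfrak k\cap\mathfrak r^+,\mathfrak h)$ the hypothesis gives $\langle\mu+\beta,\gamma\rangle<0$, i.e.\ $\langle\chi+\rho_{\mathfrak k},\gamma^\vee\rangle<0$ (here $\mu$ plays the role of $\lambda$ shifted by $\rho_{\mathfrak k}$, the precise shift only affecting the constant hidden in ``sufficiently negative''). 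Since $\Sigma^+(\mathfrak k,\mathfrak h)=\Sigma^+(\mathfrak k\cap\mathfrak l,\mathfrak h)\sqcup\Sigma(\mathfrak k\cap\mathfrak r^+,\mathfrak h)$, this shows $\chi+\rho_{\mathfrak k}$ is regular and that the inversion set of $w_\chi$ contains all of $\Sigma(\mathfrak k\cap\mathfrak r^+,\mathfrak h)$, whence
\[
\ell(w_\chi)\ \ge\ \bigl|\Sigma(\mathfrak k\cap\mathfrak r^+,\mathfrak h)\bigr|\ =\ \dim_{\mathbb C}(\mathfrak k\cap\mathfrak r^+)\ =\ \dim_{\mathbb C}\bigl(K/(K\cap P)\bigr)\ =\ \dim_{\mathbb C}C_0.
\]
Hence $H^r\bigl(C_0,\mathcal O(K\times_{K\cap L}E_\chi)\bigr)=0$ for all $r<\dim_{\mathbb C}C_0$, for every constituent $E_\chi$ and every $q\ge1$. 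Feeding this back, $^{\prime\prime}E_2^{r,q}=0$ for $r<\dim_{\mathbb C}C_0$ and $q\ge1$, hence for $r<p$ and $1\le q\le m$; therefore $H^p(\mathfrak X,\mu^*\mathcal O(\mathbb E))$ survives to $^{\prime\prime}E_\infty$, $i_p$ is injective, and so is $\mathcal P$.

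The hard part is the root-theoretic bookkeeping of the second paragraph: correctly describing the $(K\cap L)$-module structure of $\bigwedge^\bullet(\mathfrak p^{op}/(\mathfrak p^{op}\cap\mathfrak j))^*$ in the hermitian holomorphic case, so that the weights $\beta$ that actually occur are exactly sums of elements of $\Sigma(\mathfrak l\cap\mathfrak s^+,\mathfrak h)$ and the single family of inequalities $\langle\mu+\beta,\gamma\rangle<0$ over $\gamma\in\Sigma(\mathfrak k\cap\mathfrak r^+,\mathfrak h)$ is precisely what the length estimate above consumes; one must also confirm that $p$ does not exceed $\dim_{\mathbb C}C_0$ in the situations considered (equivalently, that the cohomology of $\mathcal O(\mathbb E)$ on $D$ is concentrated in that degree). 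Once these identifications are in place, the remainder is a routine assembly of the Buchdahl isomorphism, the Stein/Leray collapse for $\mathcal M$, and the Bott--Borel--Weil dictionary already recorded above.
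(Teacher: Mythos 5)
Your proposal is correct and follows essentially the same route as the paper: the paper states this theorem without its own proof (it is quoted from \cite{WZ}), but the reduction you use --- relative de Rham resolution, vanishing of $^{\prime\prime}E_2^{r,q}$ for $r<p$, $1\le q\le m$, pushdown to the $G_\mathbb{R}$-homogeneous bundle on $\mathcal{M}$ with fibre $H^r(C_0,\cdot)$, and classical Bott--Borel--Weil forcing concentration in degree $\dim_\mathbb{C}C_0$ --- is exactly the chain set up in the paragraphs preceding the statement. The one piece you defer (identifying the weights of $\bigwedge^q(\mathfrak{p}^{op}/(\mathfrak{p}^{op}\cap\mathfrak{j}))^*$ with sums over $\Sigma(\mathfrak{l}\cap\mathfrak{s}^+,\mathfrak{h})$ in the hermitian holomorphic case) is likewise left implicit by the paper, so no substantive divergence remains.
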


In the hermitian non-holomorphic case the condition is slightly different:

\begin{satz}

Let $\lambda \in \mathfrak{h}^*$ be a weight and $\mathbb{E} = G_\mathbb{R} \times_{L_\mathbb{R}} E_\lambda$ where
$E_\lambda$ is the representation space of the highest weight representation with highest weight $\lambda$. If
$\lambda$ is sufficiently negative, e.g. 

\[ \langle \mu + \beta + \rho_\mathfrak{k}, \gamma \rangle < 0 \] 
\noindent where $\beta$ is a sum of elements in  $\Sigma({\mathfrak{p}^{op}} \cap \mathfrak{s}, \mathfrak{h})$
and $\gamma \in \Sigma(\mathfrak{q}^c \cap \mathfrak{k})$, 

\noindent then the Double Fibration Transform $\mathcal{P}: H^p(D,\mathcal{O}(\mathbb{E})) \rightarrow H^0(\mathcal{M}, R^p \nu_* \mu^* \mathcal{O}(\mathbb{E}))$ 
is injective.

\end{satz}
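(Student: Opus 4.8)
The plan is to reduce injectivity of $\mathcal{P}$ to a vanishing statement for cohomology on the base cycle, and then to settle that statement by the Bott-Borel-Weil theorem for the compact group $K$. Since in this case the fibres of $\mu$ are contractible and $\mathcal{M}$ is Stein, the maps $\mu^r$ and $\mathcal{R}^r\nu_*$ are isomorphisms, so $\mathcal{P}$ is injective at level $p$ precisely when the coefficient map $i_p\colon H^p(\mathfrak{X},\mu^{-1}\mathcal{O}(\mathbb{E}))\to H^p(\mathfrak{X},\mu^*\mathcal{O}(\mathbb{E}))$ is injective. Feeding the relative de Rham resolution $0\to\mu^{-1}\mathcal{O}(\mathbb{E})\to\mu^*\mathcal{O}(\mathbb{E})\to\Omega_\mu^1(\mathcal{O}(\mathbb{E}))\to\cdots$ into the associated spectral sequence ${}^{\prime\prime}E$, a sufficient condition for $i_p$ to be injective is that $H^q_d\big(H^r(\mathfrak{X},\Omega_\mu^\bullet(\mathcal{O}(\mathbb{E})))\big)=0$ for all $r<p$ and $1\le q\le m$; pushing down to the Stein space $\mathcal{M}$ this is exactly the vanishing of $H^0\big(\mathcal{M},R^r\nu_*\Omega_\mu^q(\mathcal{O}(\mathbb{E}))\big)$ for $r<p$, $1\le q\le m$, and this is what I would prove.

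Next I would make the Leray fibres explicit. In the hermitian non-holomorphic case $\mathfrak{j}=\mathfrak{k}$, so $\nu\colon\mathfrak{X}\to\mathcal{M}$ is proper with fibre the base cycle $C_0=K_\mathbb{R}/(K_\mathbb{R}\cap L_\mathbb{R})\cong K/(K\cap P)$, a compact complex flag manifold of $K$, and by homogeneity of the double fibration $R^r\nu_*\Omega_\mu^q(\mathcal{O}(\mathbb{E}))$ is the $G_\mathbb{R}$-homogeneous bundle over $\mathcal{M}$ with fibre $H^r\big(C_0,\mathcal{O}(\mathbb{W}^q_\lambda)\big)$, where $\mathbb{W}^q_\lambda$ is the holomorphic $(K\cap P)$-homogeneous bundle on $C_0$ with fibre $\bigwedge^q(\mathfrak{p}^{op}/(\mathfrak{p}^{op}\cap\mathfrak{k}))^*\otimes E_\lambda$. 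Hence it suffices to show $H^r\big(C_0,\mathcal{O}(\mathbb{W}^q_\lambda)\big)=0$ for all $r<p$ and all $q$ with $1\le q\le m$.

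The core of the argument is Bott-Borel-Weil on $C_0\cong K/(K\cap P)$. Fixing a positive system for $\Sigma(\mathfrak{k},\mathfrak{h}\cap\mathfrak{k})$ compatible with that of $\mathfrak{g}$, with half-sum $\rho_\mathfrak{k}$, I would decompose the $(K\cap L)$-module $\bigwedge^q(\mathfrak{p}^{op}/(\mathfrak{p}^{op}\cap\mathfrak{k}))^*\otimes E_\lambda$ into irreducibles; since $\mathfrak{p}^{op}/(\mathfrak{p}^{op}\cap\mathfrak{k})\cong\mathfrak{p}^{op}\cap\mathfrak{s}$, each constituent has highest weight of the form $\mu+\beta$, where $\mu$ is the restriction of $\lambda$ to $\mathfrak{h}\cap\mathfrak{k}$ and $\beta$ is a sum of $q$ elements of $\Sigma(\mathfrak{p}^{op}\cap\mathfrak{s},\mathfrak{h})$. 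Applying Bott-Borel-Weil to each constituent: if $\mu+\beta+\rho_\mathfrak{k}$ is singular the cohomology vanishes in all degrees; otherwise the hypothesis $\langle\mu+\beta+\rho_\mathfrak{k},\gamma\rangle<0$ for every $\gamma\in\Sigma(\mathfrak{q}^c\cap\mathfrak{k})$, i.e.\ for every root of $\mathfrak{k}$ transverse to the parabolic $\mathfrak{k}\cap\mathfrak{p}$, forces the unique Weyl element $w$ with $w(\mu+\beta+\rho_\mathfrak{k})$ dominant to be the longest one, so $\ell(w)=|\Sigma(\mathfrak{q}^c\cap\mathfrak{k})|=\dim_\mathbb{C}C_0$ and the cohomology concentrates in the top degree. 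Either way $H^r(C_0,\mathcal{O}(\mathbb{W}^q_\lambda))=0$ for every $r<\dim_\mathbb{C}C_0$, hence for every $r<p$ in the relevant range $p\le\dim_\mathbb{C}C_0$. Summing over the constituents and over $1\le q\le m$ gives the vanishing required above, and therefore the injectivity of $\mathcal{P}$.

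I expect the main obstacle to be the bookkeeping for the $\bigwedge^q$-twist: one must check that the single inequality in the statement really controls the highest weights of all irreducible constituents, for all $1\le q\le m$ at once, and that passing from weights of $\mathfrak{g}$ to weights of $\mathfrak{k}$ via the compatible positive system keeps $\rho_\mathfrak{k}$ and the coroots $\gamma^\vee$ the right objects for the length estimate; getting the sign conventions for $\mathfrak{p}^{op}$, its quotient by $\mathfrak{p}^{op}\cap\mathfrak{k}$, and the passage to the dual consistent with the statement is the delicate part. A secondary technical point is to justify the identification of $R^r\nu_*\Omega_\mu^q(\mathcal{O}(\mathbb{E}))|_{\mathcal{M}}$ with the homogeneous bundle of fibre $H^r(C_0,\mathcal{O}(\mathbb{W}^q_\lambda))$ from the $G_\mathbb{R}$-equivariant structure of the double fibration; properness of $\nu$ and Steinness of $\mathcal{M}$ in this case are quoted from \cite{FHW}.
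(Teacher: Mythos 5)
Your proposal is correct and follows essentially the same route as the paper, which states this theorem without a separate proof: it quotes the result from \cite{WZ} after laying out exactly the reduction you use (contractible $\mu$-fibres and Stein $\mathcal{M}$ reduce injectivity to the coefficient map $i_p$, the relative de Rham resolution and the second hypercohomology spectral sequence reduce that to vanishing of $H^0(\mathcal{M}, R^r\nu_*\Omega_\mu^q(\mathcal{O}(\mathbb{E})))$ for $r<p$, $1\le q\le m$, and the fibres of these sheaves are identified as $H^r(C_0,\cdot)$ of the bundle with fibre $\bigwedge^q(\mathfrak{p}^{op}/(\mathfrak{p}^{op}\cap\mathfrak{j}))^*\otimes E_\lambda$ with $\mathfrak{j}=\mathfrak{k}$ here). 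Your Bott--Borel--Weil step, concentrating the cohomology of each constituent of highest weight $\mu+\beta$ in top degree via the negativity condition against $\Sigma(\mathfrak{q}^c\cap\mathfrak{k})$, is precisely the argument of \cite{WZ} that the paper is invoking.
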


The image of the double fibration transform is contained in the space of global sections of a certain vector bundle $\mathcal{E}^\prime$ on $\mathcal{M}$ and can be identified with the solution of a system of partial differential equations,
which are actually the characterization of the image of $\mathcal{P}$ as the kernel of the map 
$d_0: H^0(\mathcal{M}, \mathcal{R}^p \nu_* \mu^* \mathcal{O}(\mathbb{E}))$ \\ $\rightarrow H^0(\mathcal{M}, \mathcal{R}^p \nu_* \Omega_\mu^1(\mathcal{O}(\mathbb{E})))$.  
The target space of the Double Fibration Transform is also accesible to characterization using Bott-Borel-Weil theory(see \cite{WZ}, Sections 4 and 6).

\section{Supermanifolds and Super Vector Bundles}

In this section a series of important notions from the theory of supermanifolds are introduced
in order to allow a discussion of the topics from the preceding sections in the context of flag domains in flag supermanifolds.

The basic notions are those of Super vector spaces and Lie superalgebras. A super vector space
is a vector space endowed with a $\mathbb{Z}/2$-grading and Lie superalgebras possess a bracket which is graded-antisymmetric and satisfies the graded Jacobi identity. For further use in the subsequent chapters of this thesis the classification of complex simple Lie superalgebras due to V.G. Kac is reviewed.

After this supermanifolds in the sense of Kostant, Berezin and Leites are introduced and the notion of a split supermanifold is defined. Then Lie supergroups and Super Harish-Chandra Pairs
are introduced and an equivalence between these two categories is established. Afterwards real forms and even real forms of complex Lie supergroups and homogeneous superspaces are introduced.
Here the flag supermanifolds, quotients of complex Lie supergroups $G$ by parabolic subsupergroups $P$, are of particular importance. As in the classical case open submanifolds with a transitive action of a real form $G_\mathbb{R}$ of $G$ are called flag domains, but unlike in the classical case they need not always exist for every real form in every flag supermanifold.

In order to be able to discuss the Double Fibration Transform in the context of flag domains in flag supermanifolds super vector bundles are introduced and some results from their general theory are stated. In particular, the complex of differential forms on a super manifold and a relative complex for a holomorphic submersion are constructed. 

The chapter closes with a discussion of a spectral sequence, developed by A. L. Onishchik and E. G. Vishnyakova, which relates the cohomology of a super vector bundle $\mathcal{E}$ with the cohomology of an associated $\mathbb{Z}$-graded super vector bundle $\mathrm{gr} \mathcal{E}$.
In many cases the cohomology of the latter super vector bundle can be computed much more easily than that of $\mathcal{E}$ itself. This allows to give an upper bound for the cohomology of super vector bundles in those cases.    

\subsection{Super vector spaces and Lie superalgebras}

Before discussing the generalization of the above theory to the supersymmetric setting, a brief introduction
of the notions and techniques that will be used seems in order. In order to avoid confusion the sheaf of holomorphic
functions on a complex manifold will from now on be denoted by the letter $\mathcal{F}$. The material in this section
is standard, the presentation is similar to the one in \cite{V} and \cite{OV1}. Some results from these papers will be used and extended
later on in this text. The basic objects are super vector spaces and Lie superalgebras:

\begin{def1}
\begin{enumerate}
\item A super vector space is a $\mathbb{Z}/2$-graded vector space $V = V_{\bar{0}} \oplus V_{\bar{1}}$. The vector subspaces
$V_{\bar{0}}$ and $V_{\bar{1}}$ are called the even and the odd part of $V$ respectively. If $v \in V_j$ then its parity is $\vert v \vert = j$. A linear map of super vector spaces is called
even or odd respectively, if it preserves or interchanges the gradation.
\item The parity changed vector space $\Pi V$ is the vector space $V$ with the gradation $(\Pi V)_{\bar{0}} = V_{\bar{1}}, (\Pi V)_{\bar{1}} = V_{\bar{0}}$. 
The parity change is the natural odd isomorphism $\Pi: V \rightarrow \Pi V$.
\item A Lie superalgebra is a super vector space $\mathfrak{g}$ together with a bilinear mapping $[\cdot,\cdot]: \mathfrak{g} \times \mathfrak{g} \rightarrow \mathfrak{g}$
 that is skew-supersymmetric and satisfies the graded Jacobi identity, i.e. $[X,Y] = -(-1)^{\vert X \vert \vert Y \vert}[Y,X]$) and
$[X,[Y,Z]] = $ \\ $ [[X,Y],Z] + (-1)^{\vert X \vert \vert Y \vert} [Y,[X,Z]]$ for all $X,Y,Z \in \mathfrak{g}$.
\end{enumerate}
\end{def1}

\begin{rem}

In order to be able to distinguish between $\mathbb{Z}/2$-gradings and $\mathbb{Z}$-gradings, filtrations
or numberings, the two elements of $\mathbb{Z}/2$ are denoted ${\bar{0}}$ and ${\bar{1}}$ instead
of $0$ and $1$. 

\end{rem}

\begin{bsp}

Let $\mathfrak{g}_{\bar{0}}$ be a Lie algebra and $V$ be a $\mathfrak{g}_{\bar{0}}$-module. Then one may define
the structure of a Lie superalgebra on $\mathfrak{g} = \mathfrak{g}_{\bar{0}} \oplus V$ via the bracket

\[ [X,Y] = \begin{cases} [X,Y] & X,Y \in \mathfrak{g}_{\bar{0}} \\ X(Y) & X \in \mathfrak{g}_{\bar{0}}, Y \in \mathfrak{g}_{\bar{1}} \\ 0 &  X,Y \in \mathfrak{g}_{\bar{1}}
\end{cases} \]

\noindent Lie superalgebras of this type are called split Lie superalgebras.

\end{bsp}

\begin{bsp}

Let $\mathfrak{g} = \mathfrak{gl}_{n \vert m}(\mathbb{C})$ be the vector space of all complex $(n+m) \times (n+m)$-matrices.
One may identify $\mathfrak{g}$ with $\mathrm{End}(\mathbb{C}^n \oplus \Pi \mathbb{C}^m)$, where $\mathbb{C}^n$ and $\mathbb{C}^m$
are considered as purely even super vector spaces. Then there is a natural 
$\mathbb{Z}/2$-grading on $\mathfrak{g}$:

\[ \mathfrak{g}_{\bar{0}} = \mathrm{Hom}(\mathbb{C}^n, \mathbb{C}^n) \oplus \mathrm{Hom}(\Pi\mathbb{C}^m, \Pi\mathbb{C}^m),
\mathfrak{g}_{\bar{1}} = \mathrm{Hom}(\mathbb{C}^n, \Pi\mathbb{C}^m) \oplus \mathrm{Hom}(\Pi\mathbb{C}^m, \mathbb{C}^n)\] 

\noindent Using this grading the bracket on homogeneous elements is defined to be  $[X,Y]$ \\ $= XY - (-1)^{\vert X \vert \vert Y \vert} YX$.
Apart from the natural $\mathbb{Z}/2$-grading there are also natural $\mathbb{Z}$- and $\mathbb{Z}/2 \times \mathbb{Z}/2$-gradings
on $\mathfrak{g}$. They are given as follows:

\[ \mathfrak{g}_0 = \mathfrak{g}_{\bar{0}} = \mathfrak{g}_{\bar{0}\bar{0}} \oplus \mathfrak{g}_{\bar{1}\bar{1}} = \mathrm{Hom}(\mathbb{C}^n, \mathbb{C}^n) \oplus \mathrm{Hom}(\Pi\mathbb{C}^m, \Pi\mathbb{C}^m) \]
\[ \mathfrak{g}_1 = \mathfrak{g}_{\bar{0}\bar{1}} = \mathrm{Hom}(\Pi\mathbb{C}^m, \mathbb{C}^n), \mathfrak{g}_{-1} = \mathfrak{g}_{\bar{1}\bar{0}} = \mathrm{Hom}(\mathbb{C}^n, \Pi\mathbb{C}^m) \]

Like the commutator the notions of trace and transpose also have distinct analogues in the $\mathbb{Z}/2$-graded case:

The super-trace and super-transpose of a block matrix are respectively given by

\[ \begin{pmatrix} A & B \\ C & D \end{pmatrix}^{st} = \begin{pmatrix} A^T & -C^T \\ B^T & D^T \end{pmatrix}, \ \mathrm{str} \begin{pmatrix} A & B \\ C & D \end{pmatrix} = \mathrm{tr} A - \mathrm{tr} D\]

The respective analogue of the determinant, the Berezinian, will only be needed for purely even matrices(i.e. $B = C = 0$).
It is then defined to be $\frac{\det A}{\det D}$. Its derivative is the super-trace. 

\end{bsp}

\subsection{Classification of complex simple Lie superalgebras}

The classification of complex simple Lie superalgebras is due to Kac(see \cite{Kac}). He distinguishes between two classes
of simple Lie superalgebras, the classical Lie superalgebras and those of Cartan type. In this text only the classical 
Lie superalgebras are considered.

\begin{satz}

Up to isomorphism there are the following classical simple complex Lie superalgebras:

\begin{enumerate}
\item The simple complex Lie algebras.
\item The class $A(n,m)$ realized by $\mathfrak{sl}(n+1,m+1)(\{m,n\} \neq \{0,1\})$ in the case $m \neq n$ and by $\mathfrak{psl}(n+1,n+1)(n \geq 1)$ in the case $m = n$.
\item The class $B(m,n)$ realized by $\mathfrak{osp}(2m+1,2n)$, the orthosymplectic Lie algebra given by all matrices self-adjoint with respect to a non-degenerate even supersymmetric (i.e. $B(X,Y) = (-1)^{\vert X \vert \vert Y \vert} B(Y,X)$) bilinear form.
\item The class $D(m,n)$ realized by $\mathfrak{osp}(2m,2n), n > 1$.
\item The class $C(m)$ realized by $\mathfrak{osp}(2m,2)$.
\item The class $P(n)$ realized by $\mathfrak{sp}_n$, the algebra of all matrices self-adjoint with respect to a non-degenerate odd skew-supersymmetric (i.e. $B(X,Y) = -(-1)^{\vert X \vert \vert Y \vert} B(Y,X)$) bilinear form.
\item The class $Q(n)$ realized by $\mathfrak{psq}_n$, where $\mathfrak{q}_n$ is the subalgebra of all elements of $\mathfrak{gl}_{n \vert n}(\mathbb{C})$ commuting with an odd involution $\Pi$.
\item The exceptional family $D(1,2,\alpha)$.
\item The exceptional Lie algebra $G(3)$.
\item The exceptional Lie algebra $F(4)$.
\end{enumerate}

\end{satz}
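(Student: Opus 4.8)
The plan is to reproduce, in outline, Kac's argument \cite{Kac}, organized around the structure of $\mathfrak{g}_{\bar 1}$ as a module over $\mathfrak{g}_{\bar 0}$; throughout, ``classical'' is taken to mean that $\mathfrak{g}_{\bar 0}$ is reductive and acts completely reducibly on $\mathfrak{g}_{\bar 1}$. If $\mathfrak{g}_{\bar 1}=0$ then $\mathfrak{g}$ is an ordinary simple Lie algebra and we are in case (1), so assume $\mathfrak{g}_{\bar 1}\neq 0$. The first step is a structural dichotomy. The bracket induces a nonzero $\mathfrak{g}_{\bar 0}$-equivariant map $S^2(\mathfrak{g}_{\bar 1})\to\mathfrak{g}_{\bar 0}$; since $\mathfrak{g}_{\bar 1}+[\mathfrak{g}_{\bar 1},\mathfrak{g}_{\bar 1}]$ is an ideal, simplicity forces $[\mathfrak{g}_{\bar 1},\mathfrak{g}_{\bar 1}]=\mathfrak{g}_{\bar 0}$ and $\mathfrak{g}_{\bar 1}$ faithful. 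Combining this with complete reducibility and the graded Jacobi identity, one shows that exactly one of the following holds: \emph{(type I)} $\mathfrak{g}_{\bar 1}=\mathfrak{g}_{1}\oplus\mathfrak{g}_{-1}$ with $\mathfrak{g}_{\pm 1}$ irreducible, mutually contragredient, and abelian ($[\mathfrak{g}_1,\mathfrak{g}_1]=[\mathfrak{g}_{-1},\mathfrak{g}_{-1}]=0$), so that $\mathfrak{g}=\mathfrak{g}_{-1}\oplus\mathfrak{g}_0\oplus\mathfrak{g}_1$ is a consistent $\mathbb{Z}$-grading with $\mathfrak{g}_0=\mathfrak{g}_{\bar 0}$; or \emph{(type II)} $\mathfrak{g}_{\bar 1}$ is an irreducible $\mathfrak{g}_{\bar 0}$-module.

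In the type I case the whole bracket is recovered from the pair $(\mathfrak{g}_0,\mathfrak{g}_1)$: the cross bracket $\mathfrak{g}_1\otimes\mathfrak{g}_{-1}\to\mathfrak{g}_0$ is forced, up to scaling, to be the moment map dual to the $\mathfrak{g}_0$-action on $\mathfrak{g}_1$, and the remaining Jacobi identities collapse to a closure condition amounting to $[\mathfrak{g}_1,[\mathfrak{g}_1,\mathfrak{g}_{-1}]]\subseteq\mathfrak{g}_1$ with the correct multiplicity. Running this condition through the classification of reductive Lie algebras and their irreducible representations, the surviving pairs are: $\mathfrak{g}_0\cong\mathfrak{gl}(1)\oplus\mathfrak{sl}(m)\oplus\mathfrak{sl}(n)$ with $\mathfrak{g}_1$ the outer tensor product of the two defining representations, giving $\mathfrak{sl}(m|n)$ and hence the $A$-series, the centre being quotiented out in the balanced case to produce $\mathfrak{psl}(n|n)$; $\mathfrak{g}_0\cong\mathfrak{gl}(1)\oplus\mathfrak{sp}(2k)$ with $\mathfrak{g}_1$ the defining symplectic module, giving $C(m)$; and $\mathfrak{g}_0\cong\mathfrak{sl}(n+1)$ with $\{\mathfrak{g}_1,\mathfrak{g}_{-1}\}$ the exterior square of the defining module and the symmetric square of its dual, giving $P(n)$ — the asymmetry between $\mathfrak{g}_1$ and $\mathfrak{g}_{-1}$ being exactly why $P(n)$ admits no invariant bilinear form.

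In the type II case one brings in invariant bilinear forms. If $\mathfrak{g}$ admits a nondegenerate even invariant supersymmetric form, its restriction to a Cartan subalgebra $\mathfrak{h}\subseteq\mathfrak{g}_{\bar 0}$ is nondegenerate and yields a root-space decomposition of $\mathfrak{g}$; analysing the even and odd roots, the rank-one subalgebras attached to odd roots, and the admissible generalized Cartan matrices, one is left with a short list of Dynkin-type diagrams whose realizations are $B(m,n)$ and $D(m,n)$ inside the orthosymplectic series, together with the exceptional families $D(1,2,\alpha)$, $G(3)$ and $F(4)$; the latter three must be pinned down by explicitly recognizing the relevant small $\mathfrak{g}_{\bar 0}$-modules ($\mathbb{C}^2\otimes\mathbb{C}^2\otimes\mathbb{C}^2$ for $D(1,2,\alpha)$, the $7$-dimensional $\mathfrak{g}_2$-module tensored with $\mathbb{C}^2$ for $G(3)$, the $8$-dimensional spin module of $\mathfrak{so}(7)$ tensored with $\mathbb{C}^2$ for $F(4)$). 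If instead $\mathfrak{g}$ has a nondegenerate \emph{odd} invariant form and no even one, a shorter parallel analysis produces $Q(n)=\mathfrak{psq}_n$; and the remaining logical possibility — $\mathfrak{g}_{\bar 1}$ irreducible with no invariant form at all — is ruled out directly.

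It then remains to verify that every algebra on the list is in fact simple (a direct check in each matrix realization) and that the list has no repetitions, which follows by comparing the semisimple parts of $\mathfrak{g}_{\bar 0}$, the dimensions of $\mathfrak{g}_{\bar 0}$ and $\mathfrak{g}_{\bar 1}$, and the highest weights occurring in $\mathfrak{g}_{\bar 1}$ (this is also where the low-rank coincidences forcing the exclusions in the statement are located). The main obstacle is the type I analysis: converting the Jacobi identities into the precise closure-and-multiplicity condition and then carrying out the case-by-case check over all reductive Lie algebras and their irreducible representations to confirm that nothing beyond the three families $A$, $C$, $P$ survives — essentially all of the combinatorial work is concentrated there. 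Identifying the three exceptional superalgebras in type II is a secondary difficulty, as it cannot be handled by the uniform Dynkin-diagram bookkeeping and requires ad hoc recognition of the representations involved.
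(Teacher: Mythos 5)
The paper offers no proof of this statement: it is quoted as Kac's classification theorem with \cite{Kac} as the sole justification, so there is no in-paper argument to measure yours against. Your outline is a faithful summary of Kac's own strategy --- simplicity forcing $[\mathfrak{g}_{\bar 1},\mathfrak{g}_{\bar 1}]=\mathfrak{g}_{\bar 0}$ and faithfulness, the dichotomy between $\mathfrak{g}_{\bar 1}$ irreducible and $\mathfrak{g}_{\bar 1}$ a sum of two irreducibles, the $\mathbb{Z}$-graded analysis in the reducible case yielding $A$, $C$ and $P$, and the invariant-form and root-system analysis in the irreducible case yielding $B$, $D$, $Q$ and the exceptionals --- and you are candid that the genuinely laborious content (the case-by-case run through reductive pairs, and the Cartan-matrix bookkeeping) is deferred to \cite{Kac} rather than executed. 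One inaccuracy worth correcting: in your statement of the type I alternative you assert that $\mathfrak{g}_1$ and $\mathfrak{g}_{-1}$ are always mutually contragredient, which fails precisely for $P(n)$, where $\mathfrak{g}_1\cong\Lambda^2 V$ and $\mathfrak{g}_{-1}\cong S^2V^*$ are not dual $\mathfrak{g}_{\bar 0}$-modules (as you yourself observe two sentences later when explaining why $P(n)$ carries no invariant form); contragredience is a consequence of the existence of a nondegenerate even invariant form, not part of the dichotomy, and taking it literally would wrongly exclude the $P$-series. With that caveat, the outline is consistent with the source the paper cites.
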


The Lie superalgebras of types $A - G$ are the basic classical Lie superalgebras. There are two types of
classical Lie superalgebras which come equipped with distinct natural $\mathbb{Z}$-gradings:

\begin{def1}

\begin{enumerate}
 \item A classical Lie superalgebra $\mathfrak{g}$ is of type I if the adjoint representation of $\mathfrak{g}_{\bar{0}}$
       on $\mathfrak{g}_{\bar{1}}$ is completely reducible. It allows a natural $\mathbb{Z}$-grading 
       $\mathfrak{g} = \mathfrak{g}_{-1} \oplus \mathfrak{g}_0 \oplus \mathfrak{g}_1$ where $\mathfrak{g}_0 = \mathfrak{g}_{\bar{0}}$ and
       $\mathfrak{g}_{\pm 1}$ are the irreducible components of $\mathfrak{g}_{\bar{1}}$. The simple classical Lie superalgebras
       of type I are $A(n,m), C(m)$ and $P(n)$.
 \item A classical Lie superalgebra is of type II if the adjoint representation of $\mathfrak{g}_{\bar{0}}$
       on $\mathfrak{g}_{\bar{1}}$ is irreducible. In that case $\mathfrak{g}$ allows a natural $\mathbb{Z}$-grading
       $\mathfrak{g} = \mathfrak{g}_{-2} \oplus \mathfrak{g}_{-1} \oplus \mathfrak{g}_0 \oplus \mathfrak{g}_1 \oplus \mathfrak{g}_2$
       with $\mathfrak{g}_{\bar{1}} = \mathfrak{g}_{-1} \oplus \mathfrak{g}_1$. The simple classical Lie superalgebras of type II are
       $\mathrm{Osp}(m \vert 2n)$ for $n > 1, Q(n)$ and the exceptional simple Lie superalgebras.
\end{enumerate}

\end{def1}

\begin{bsp}[The irreducible components of $\mathfrak{g}_{\bar{1}}$ for classical Lie superalgebras of type I]\label{TypeIcomp}

Let $\mathfrak{g}$ be a complex simple classical Lie superalgebra of type I. Then $\mathfrak{g}$
is isomorphic to either of $\mathfrak{sl}_{n \vert m}(\mathbb{C})$, $\mathfrak{osp}(2 \vert 2m)$ or $\mathfrak{p}(n)$ for some $n,m \in \mathbb{N}$. The irreducible $G_{\bar{0}}$-submodules of $\mathfrak{g}_{\bar{1}}$ are given for these three classes of Lie superalgebras as follows:

\begin{enumerate}
\item If $\mathfrak{g} = \mathfrak{sl}_{n \vert m}(\mathbb{C})$, then the irreducible components
of $\mathfrak{g}_{\bar{1}}$ are the subspaces $\mathfrak{g}_1$ and $\mathfrak{g}_{-1}$ introduced in the last example.
\item Suppose $\mathfrak{g} = \mathfrak{osp}(2 \vert 2m)$. Then an element $M$ of $\mathfrak{g}$ has the following block matrix form:

\[ M = \begin{pmatrix} a & 0 & u_1 & u_2 \\ 0 & -a & v_1 & v_2 \\ v_2^T & u_2^T & X & Y \\ -v_1^T & -v_2^T & Z & -X^T \end{pmatrix}, \begin{matrix} a \in \mathbb{C} \\ u_i, v_i \in \mathbb{C}^m \\ X,Y,Z \in \mathbb{C}^{m \times m} \\ Y = Y^T, Z = Z^T \end{matrix} \]

\noindent and the irreducible components of $\mathfrak{g}_{\bar{1}}$ are $\mathfrak{g}_1 = \{ M \in \mathfrak{g}_{\bar{1}} : v_1 = v_2 = 0 \}$ and $\mathfrak{g}_{-1} = \{ M \in \mathfrak{g}_{\bar{1}} : u_1 = u_2 = 0 \}$.
\item Finally let $\mathfrak{g} = \mathfrak{p}(n)$. Then an arbitrary element $M$ has the following block form:

\[ M = \begin{pmatrix} X & Y \\ Z & -X^T \end{pmatrix}, \begin{matrix} X,Y,Z \in \mathbb{C}^{n \times n} \\ Y = -Y^T, Z = Z^T \end{matrix} \]

\noindent and the irreducible components of $\mathfrak{g}_{\bar{1}}$ are $\mathfrak{g}_1 = \{ M \in \mathfrak{g}_{\bar{1}} : Z = 0 \} $ and $\mathfrak{g}_{-1} = \{ M \in \mathfrak{g}_{\bar{1}} : Y = 0 \}$.

\end{enumerate} 

\end{bsp}

\subsection{Supermanifolds}

In this text supermanifolds are always understood in the sense of Kostant, Berezin and Leites:

\begin{def1}

A complex supermanifold is a ringed space $X = (X_{\bar{0}}, \mathcal{O}_X)$ with a $\mathbb{Z}/2$-graded structure sheaf, 
which is locally isomorphic to an open subset of $\mathbb{C}^{n \vert m}$. 
The underlying manifold(or base) of this linear model is $\mathbb{C}^n$ and its structure sheaf is 
given by $\mathcal{O}_{\mathbb{C}^{n \vert m}}(U) = \mathcal{F}_{\mathbb{C}^n}(U) \otimes \bigwedge \mathbb{C}^m$ for $U \subseteq \mathbb{C}^n$ open.
The $\mathbb{Z}/2$-grading is given by

\[ (\mathcal{O}_{\mathbb{C}^{n \vert m}})_{\bar{0}} = \mathcal{F}_{\mathbb{C}^n} \otimes \bigoplus_{k \in \mathbb{N}} \bigwedge^{2k} \mathbb{C}^m , \
 (\mathcal{O}_{\mathbb{C}^{n \vert m}})_{\bar{1}} = \mathcal{F}_{\mathbb{C}^n} \otimes \bigoplus_{k \in \mathbb{N}} \bigwedge^{2k+1} \mathbb{C}^m \]

Smooth supermanifolds, analytic supermanifolds, etc. are defined analogously. The natural numbers $n$ and $m$ are the even and odd dimension of $X$ respectively and the dimension of $X$
is $\dim X = n \vert m$. 

A morphism of supermanifolds is a morphism of ringed spaces that preserves the $\mathbb{Z}/2$-grading, i.e.
homogeneous elements are mapped to homogeneous elements of the same parity.

\end{def1}

\begin{bsp}
 
Let $X_{\bar{0}}$ be a complex manifold and $\mathcal{E}$ a vector bundle on $X_{\bar{0}}$.
Then $X = (X_{\bar{0}}, \bigwedge \mathcal{E})$ is a super manifold. Let $U \subseteq X_{\bar{0}}$ be a coordinate neighbourhood
as well as a trivializing neighbourhood for $\mathcal{E}$ and let $\theta_1, \ldots, \theta_m$ be an $\mathcal{F}(U)$-basis of $\mathcal{E}(U)$.
Then every superfunction on $U$ can be expressed in the following way:

\[ f = \sum_{\varepsilon \in (\mathbb{Z}/2)^m} f_\varepsilon \theta_1^{\varepsilon_1} \ldots \theta_m^{\varepsilon_m} , f_\varepsilon \in \mathcal{F}(U) \ \forall \varepsilon \in (\mathbb{Z}/2)^m  \] 

A supermanifold which is isomorphic to a supermanifold of this type is called split. 
Note that in this case the $\mathbb{Z}/2$-grading of $\mathcal{O}_X$ is inherited from the $\mathbb{Z}$-grading
of $\bigwedge \mathcal{E}$.

\end{bsp}

A very important feature of complex supermanifolds is that while every smooth supermanifold is split (Theorem 4.2.2 on page 188 in \cite{Ma}) this is generally not true for complex supermanifolds.
A low-dimensional example of a non-split complex supermanifold is the Grassmannian $\mathrm{Gr}_{1 \vert 1}(\mathbb{C}^{2 \vert 2})$(see \cite{Ma}).

Even though not every complex supermanifold is split it is always possible to assign to each complex supermanifold $X = (X_{\bar{0}}, \mathcal{O}_X)$
a split supermanifold which is locally isomorphic to $X$. It is constructed in a functorial way as follows:

Let $\mathcal{J} \subseteq \mathcal{O}_X$ be the ideal generated by all odd elements. Then define $(\mathrm{gr} \mathcal{O})_p =
\mathcal{J}^p/\mathcal{J}^{p+1}$ and $\mathrm{gr} \mathcal{O} = \bigoplus_{p \geq 0} (\mathrm{gr} \mathcal{O})_p$.
The manifold $\mathrm{gr} X = (X_{\bar{0}}, \mathrm{gr} \mathcal{O})$ is a split supermanifold which is locally isomorphic
to $X$. It is called the associated graded supermanifold or the retract of $X$.

Moreover if $f = (f_{\bar{0}},f^*): (X_{\bar{0}}, \mathcal{O}_X) \rightarrow (Y_{\bar{0}}, \mathcal{O}_Y)$ is a morphism of supermanifolds, 
the associated morphism of graded manifolds is $\mathrm{gr} f = (f_{\bar{0}}, \mathrm{gr} f^*)$ where $\mathrm{gr} f^*$ is
defined on $(\mathrm{gr}\mathcal{O}_Y)_p$ by $\mathrm{gr} f^* ( X + \mathcal{J}_Y^{p+1}) = f^*X + \mathcal{J}_X^{p+1}$.
Note that this is well-defined, as a morphism of supermanifolds preserves the $\mathbb{Z}/2$-grading and therefore maps
$\mathcal{J}_Y^{p+1}$ into $\mathcal{J}_X^{p+1}$.

\begin{def1}

 Let $x \in X_{\bar{0}}$ and let $\mathfrak{m}_x$ be the unique maximal ideal of the local ring $\mathcal{O}_{X,x}$.
Then the tangent space to $X$ at $x$ is the super vector space $T_x X = (\mathfrak{m}_x/\mathfrak{m}_x^2)^*$.
Its even part $(T_x X)_{\bar{0}}$ is the usual tangent space $T_x X_{\bar{0}}$.

\end{def1}

Note that the tangent spaces to $X$ and $\mathrm{gr} X$ always coincide. Moreover the tangent spaces of split supermanifolds are given as follows: 
Assume $X = (X_{\bar{0}}, \bigwedge \mathcal{E})$ and let $\mathbb{E}$ be the total space of $\mathcal{E}$. 
Then there is a natural identification $\mathbb{E}_x \cong (T_x X)_{\bar{1}}^*$.

\subsection{Lie supergroups and Super Harish-Chandra Pairs}

Lie supergroups $G$ are group objects within the category of supermanifolds, i.e. they come equipped
with three morphisms $m: G \times G \rightarrow G, i: G \rightarrow G$ and $e : \{pt\} \rightarrow G$
satisfying the usual group properties.

\noindent An important tool in the study of Lie supergroups and their actions, which will be defined in the next section,
is the equivalence between the categories of Lie supergroups and Super Harish-Chandra Pairs:

\begin{def1}

1. A Super Harish-Chandra Pair(or SHCP) is a pair $(G_{\bar{0}},\mathfrak{g})$ such that $G_{\bar{0}}$ is a Lie group, $\mathfrak{g}$ is a Lie
superalgebra, $\mathfrak{g}_{\bar{0}} = \mathrm{Lie}(G_{\bar{0}})$ and there is a representation $\mathrm{Ad}$ of $G_{\bar{0}}$ on $\mathfrak{g}$ such that
its derivative $\mathrm{ad}$ conincides with the adjoint representation of $\mathfrak{g}$ when restricted to $\mathfrak{g}_{\bar{0}}$.

\medskip
\noindent 2. A morphism of SHCP $f = (f_{\bar{0}}, F): (G_{\bar{0}},\mathfrak{g}) \rightarrow (H_{\bar{0}}, \mathfrak{h})$ is a pair of maps such that
$(df_{\bar{0}})_e = F \vert_{\mathfrak{g}_{\bar{0}}}$ and $\mathrm{Ad}(f_{\bar{0}}(g)) \circ F = F \circ \mathrm{Ad}(g) \ \forall g \in G_0$.

\end{def1}

\begin{bsp}
\label{SplitLSA}
Let $G$ be a Lie supergroup, $\mathfrak{g}$ its Lie superalgebra and $\mathfrak{g}^\prime$ be the Lie superalgebra
with the same underlying vector space as $\mathfrak{g}$, but with a modified bracket, which is constantly zero on $\mathfrak{g}_{\bar{1}} \times \mathfrak{g}_{\bar{1}}$. 
Then the Lie supergroup $\mathrm{gr} G$ is given by the SHCP $(G_0, \mathfrak{g}^\prime)$(see \cite{V}, Theorem 2).

\end{bsp}

\noindent The equivalence of categories is constructed as follows:

\noindent Let $(G_{\bar{0}},\mathfrak{g})$ be a SHCP and let $\mathcal{U}(\mathfrak{g})$ and $\mathcal{U}(\mathfrak{g}_{\bar{0}})$ be the respective
universal envelopping algebras. Then define a sheaf $\mathcal{O}_{(G_{\bar{0}},\mathfrak{g})}$ on $G_{\bar{0}}$ by

\[ \mathcal{O}_{(G_{\bar{0}},\mathfrak{g})}(U) = \mathrm{Hom}_{\mathcal{U}(\mathfrak{g}_{\bar{0}})}(\mathcal{U}(\mathfrak{g}), \mathcal{F}(U)) \]

\noindent This sheaf is isomorphic to $\mathcal{F} \otimes \bigwedge \mathfrak{g}_{\bar{1}}^*$ by virtue of the natural symmetrization map 
$\sigma: \bigwedge \mathfrak{g}_{\bar{1}} \rightarrow \mathcal{U}(\mathfrak{g})$.

Conversely, every Lie supergroup defines a SHCP in an obvious way. These two assignments are indeed functorial
and yield the desired equivalence of categories.

Using this equivalence of categories it is possible to prove that complex Lie supergroups are always split(Theorem 3.4.2 in \cite{CF}).
This resulst can also be derived as a consequence of the fact that the right-invariant vector fields yield an isomorphism
$\mathcal{O}_G(G_{\bar{0}}) \cong \mathcal{F}(G_{\bar{0}}) \otimes \bigwedge \mathfrak{g}_{\bar{1}}^*$, 
where $\mathfrak{g} = T_e G$ is the Lie superalgebra of the Lie supergroup $G$ (see Remark 3.6. in \cite{Kos}).
Note that even though every complex Lie supergroup $G$ is split, its split model will in general have a different Lie superalgebra
by virtue of Example \ref{SplitLSA}.

\subsection{Real forms and even real forms}

Let $G$ be a complex Lie supergroup and $\mathfrak{g}$ its Lie superalgebra. 

\begin{def1}
  1. A real form of $\mathfrak{g}$ is a real Lie subsuperalgebra $\mathfrak{g}_\mathbb{R} \subseteq \mathfrak{g}$, which
       is the fixed point set of a $\mathbb{C}$-antilinear involution $\tau: \mathfrak{g} \rightarrow \mathfrak{g}$. If such an
       involution $\tau$ is given it descends to an involution $\tau: G \rightarrow G$ and its fixed point Lie subsupergroup
       $G_\mathbb{R}$ is called a real form of $G$. 
\medskip

\noindent  2. An even real form of $\mathfrak{g}$ is an even real Lie subalgebra $\mathfrak{g}_\mathbb{R} \subseteq \mathfrak{g}_{\bar{0}}$
       defined by a $\mathbb{C}$-antilinear automorphism $\tau: \mathfrak{g} \rightarrow \mathfrak{g}$ satisfying $\tau^2(X)
       = (-1)^{\vert X \vert} X$ for homogeneous $X \in \mathfrak{g}$. The automorphism $\tau$ again lifts to an automorphism 
       of $G$ and its fixed point set $G_\mathbb{R}$ is a real form of $G_{\bar{0}}$. 
\medskip

\noindent  3. A real form or even real form $\mathfrak{g}_\mathbb{R}$ is called compact if its even part $\mathfrak{g}_{\bar{0}\mathbb{R}}$
       is a compact Lie algebra.
\end{def1}

\begin{bsp}
 
Let $\mathfrak{g} = \mathfrak{sl}_{n \vert m}(\mathbb{C})$, the subspace of all elements of $\mathfrak{gl}_{n \vert m}(\mathbb{C})$ 
with vanishing super-trace and let $\tau(X) = - \bar{X}^{st}$. It defines an even real form
of $\mathfrak{g}$ with $\mathfrak{g}_\mathbb{R} = \mathfrak{s}(\mathfrak{u}(n) \oplus \mathfrak{u}(m))$. If one instead chooses
$\tau(X) = -i^{\vert X \vert} \bar{X}^t$, then $\tau$ is an actual involution and its fixed point set is 
$\mathfrak{g}_\mathbb{R} = \mathfrak{su}(n \vert m)$.

\end{bsp}

The classification of real forms of complex simple Lie superalgebras was developed in \cite{Kac}, \cite{Par} and \cite{Ser}. 
A table of this classification is given in Chapter 3 and will be used throughout the main text. 

One striking difference
to the classical case is that compact real forms occur very rarely in the supersymmetric case, e.g. if 
$G = \mathrm{Osp}_{n \vert 2m}(\mathbb{C})$ then there are no compact real forms for $n \geq 2, m \geq 1$
as the base $G_{\bar{0}\mathbb{R}}$ of every real form $G_\mathbb{R}$ contains a factor isomorphic to either $Sp_{2m}(\mathbb{R})$
or $SO^*(n)$ if $n$ is even. Compact even real forms on the other hand exist in abundance.

Let $\mathfrak{g}$ be a basic classical simple Lie superalgebra. 
Then, by a theorem due to Kac(see \cite{Kac}), $\mathfrak{g}$ allows an even non-degenerate supersymmetric 
(i.e. $B(X,Y) = (-1)^{\vert X \vert \vert Y \vert} B(Y,X)$ for all homogeneous $X,Y \in \mathfrak{g}$)
 $\mathrm{ad}$-invariant bilinear form $B: \mathfrak{g} \times \mathfrak{g} \rightarrow \mathfrak{g}$. 
Let $\mathfrak{g}_\mathbb{R}$ be a real form of $\mathfrak{g}$, $\theta: \mathfrak{g}_{\bar{0}\mathbb{R}} \rightarrow \mathfrak{g}_{\bar{0}\mathbb{R}}$
the Cartan involution and $\theta^\mathbb{C}: \mathfrak{g}_{\bar{0}} \rightarrow \mathfrak{g}_{\bar{0}}$ its $\mathbb{C}$-linear extension to $\mathfrak{g}_{\bar{0}}$.  

\begin{satz}

Unless $\mathfrak{g}_\mathbb{R} = \mbox{}^0\mathfrak{pq}(n)$ or $\mathfrak{us}\pi(n)$, 
there exists an essentially unique extension of $\theta$ to an order 4 automorphism of $\mathfrak{g}$ such that

\begin{enumerate}
 \item $\theta^2(X) = (-1)^{\vert X \vert} X$ for all homogeneous elements
 \item The bilinear form $B_\theta = B(\cdot, \theta\cdot)$ is symmetric and non-degenerate. 
\end{enumerate}

\noindent The map $\theta:\mathfrak{g} \rightarrow \mathfrak{g}$ is called the Cartan isomorphim of $\mathfrak{g}_\mathbb{R}$
 
\end{satz}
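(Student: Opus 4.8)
The plan is to decouple the two requirements on $B_\theta$ from the construction of $\theta$: first reduce conditions (1) and (2) to a purely algebraic condition on $\theta$, then build $\theta$ by composing $\tau$ with a suitably aligned compact even real form, and finally read off essential uniqueness from Schur's lemma. \emph{Reduction.} I would first observe that it suffices to produce a $B$-orthogonal automorphism $\theta$ of $\mathfrak{g}$ with $\theta|_{\mathfrak{g}_{\bar{0}}} = \theta^\mathbb{C}$ and $\theta^2(X) = (-1)^{\vert X \vert}X$. Indeed, for such a $\theta$ and homogeneous $X,Y$ one has $B_\theta(X,Y) = B(X,\theta Y) = B(\theta X, \theta^2 Y)$, which equals $B(\theta X, Y)$ for $\vert Y \vert = \bar{0}$ and $-B(\theta X, Y)$ for $\vert Y \vert = \bar{1}$; combining this with the super-symmetry $B(X,Y) = (-1)^{\vert X \vert \vert Y \vert}B(Y,X)$ and the evenness of $B$ (so $B_\theta$ vanishes on $\mathfrak{g}_{\bar{0}} \times \mathfrak{g}_{\bar{1}}$), one gets $B_\theta(X,Y) = B_\theta(Y,X)$ in each of the three parity cases, i.e.\ $B_\theta$ is symmetric; non-degeneracy is immediate from that of $B$ and the invertibility of $\theta$. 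Moreover such a $\theta$ automatically has order $4$, since $\theta^4 = \mathrm{id}$ while $\theta^2 = -\mathrm{id} \neq \mathrm{id}$ on $\mathfrak{g}_{\bar{1}} \neq 0$.

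\emph{Existence.} Since $\mathfrak{g}$ is basic classical, it has a compact even real form, i.e.\ a $\mathbb{C}$-antilinear automorphism $\tau_0$ with $\tau_0^2(X) = (-1)^{\vert X \vert}X$, and one may take both $\tau$ and $\tau_0$ to be $B$-antiorthogonal, meaning $B(\tau x, \tau y) = \overline{B(x,y)}$ and likewise for $\tau_0$ (this is the usual fact that the complex invariant form restricts to a real-valued one on a real form). As in the classical theory of real forms, $\tau_0$ can be \emph{aligned} with $\tau$: conjugating $\tau_0$ by an appropriate automorphism of $\mathfrak{g}$ one arranges $\tau \tau_0 = \tau_0 \tau$; the tool is the Hermitian form $x,y \mapsto B(x, \tau_0 y)$ (which is Hermitian precisely because $\tau_0^2 = -\mathrm{id}$ on $\mathfrak{g}_{\bar{1}}$) together with the existence of real powers of the positive self-adjoint operator $\tau\tau_0$. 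With such an aligned $\tau_0$ set $\theta := \tau \circ \tau_0$. Then $\theta$ is $\mathbb{C}$-linear; $\theta^2 = \tau\tau_0\tau\tau_0 = \tau^2 \tau_0^2 = \tau_0^2 = (-1)^{\vert \cdot \vert}$; $\theta$ is $B$-orthogonal as a composite of two $B$-antiorthogonal maps; and on $\mathfrak{g}_{\bar{0}}$ it is the conjugation of the real form $\mathfrak{g}_{\bar{0}\mathbb{R}}$ followed by the conjugation of a commuting compact real form, hence, by the classical characterisation of Cartan involutions, exactly $\theta^\mathbb{C}$. To make everything explicit one may instead run this construction case by case over the Kac list: for the matrix superalgebras $\theta$ takes the form $X \mapsto -KX^{st}K^{-1}$ with $K$ assembled from the signature/period data of $\mathfrak{g}_\mathbb{R}$, and property (1) is then automatic from the identity $(X^{st})^{st} = PXP^{-1}$ with $P = \mathrm{diag}(I,-I)$; for the exceptional algebras $D(1,2,\alpha)$, $G(3)$, $F(4)$ one produces $\theta$ on $\mathfrak{g}_{\bar{1}}$ by the Schur argument used below.

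\emph{Uniqueness.} Suppose $\theta,\theta'$ both satisfy (1) and (2) and put $\phi := \theta^{-1}\theta'$. Then $\phi$ is an automorphism of $\mathfrak{g}$ which is the identity on $\mathfrak{g}_{\bar{0}}$, so $\phi|_{\mathfrak{g}_{\bar{1}}}$ is a $\mathfrak{g}_{\bar{0}}$-module automorphism of $\mathfrak{g}_{\bar{1}}$. By Schur's lemma $\phi|_{\mathfrak{g}_{\bar{1}}}$ is a scalar $\lambda$ in the type II case (where $\mathfrak{g}_{\bar{1}}$ is $\mathfrak{g}_{\bar{0}}$-irreducible) and acts by scalars $\lambda_+, \lambda_-$ on the two irreducible components $\mathfrak{g}_{\pm 1}$ in the type I case. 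Compatibility of $\phi$ with the non-zero bracket $\mathfrak{g}_{\bar{1}} \times \mathfrak{g}_{\bar{1}} \to \mathfrak{g}_{\bar{0}}$ (surjective since $\mathfrak{g}$ is simple) together with $\theta'^2 = \theta^2 = (-1)^{\vert \cdot \vert}$ forces $\lambda^2 = 1$, resp.\ $\lambda_+\lambda_- = 1$. Hence $\phi$ is trivial on $\mathfrak{g}_{\bar{0}}$ and acts on $\mathfrak{g}_{\bar{1}}$ either trivially or by $-\mathrm{id}$ in the type II case, so that $\theta$ is unique up to composition with the parity automorphism $X \mapsto (-1)^{\vert X \vert}X$; in the type I case $\phi$ ranges over the one-parameter group $\exp(t\,\mathrm{ad}\,z)$ generated by the $\mathbb{Z}$-grading element $z$ (which contains the parity automorphism), giving a genuine one-parameter residual ambiguity. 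This is exactly the force of the phrase ``essentially unique''.

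\emph{The main obstacle.} I expect the delicate point to be the alignment step, not the formal parts above. Unlike in the classical case, the Hermitian form $B(\cdot,\tau_0\cdot)$ need not be definite on all of $\mathfrak{g}$ — its sign can differ on the simple ideals of $\mathfrak{g}_{\bar{0}}$, and on $\mathfrak{g}_{\bar{1}}$ it only becomes Hermitian after using $\tau_0^2 = -\mathrm{id}$ — so the ``take a fourth root of a positive self-adjoint operator'' argument must be carried out block by block and the interaction with $\mathfrak{g}_{\bar{1}}$ controlled. This is precisely where the real forms ${}^{0}\mathfrak{pq}(n)$ and $\mathfrak{us}\pi(n)$ of $A(n,n)$ — modeled on the queer and periplectic symmetries, whose even parts carry extra structure (a complex structure in the $\mathfrak{pq}$ case) — fall outside the argument; for those two one checks directly that no order-$4$ extension of $\theta$ satisfying both (1) and (2) exists. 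So the bulk of the work is the definiteness/alignment analysis, supplemented by bookkeeping along the Kac classification of basic classical $\mathfrak{g}$ and its real forms; the reduction and the Schur-lemma uniqueness are formal.
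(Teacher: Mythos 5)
Your route is genuinely different from the paper's: the paper's entire proof is a two-sentence citation, extracting the automorphisms from Table~7 of Serganova's classification and identifying them as the ones with compact fixed point set, whereas you attempt a uniform construction. Two of your three steps are sound and add content the paper does not supply. The reduction of conditions (1) and (2) to ``$\theta$ is $B$-orthogonal, restricts to $\theta^{\mathbb{C}}$ on $\mathfrak{g}_{\bar{0}}$, and squares to the parity operator'' is a correct and clean computation (the sign from $\theta^2 = -\mathrm{id}$ on $\mathfrak{g}_{\bar{1}}$ exactly cancels the sign from super-symmetry of $B$ on odd elements, which is the whole point of allowing order $4$). The Schur-lemma analysis of $\theta^{-1}\theta'$ correctly locates the residual ambiguity (parity automorphism in type II, the one-parameter group generated by the grading element in type I) and thus actually explains the phrase ``essentially unique,'' which the paper leaves unexplained. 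What the paper's approach buys is that existence, the precise list of exceptions, and uniqueness all come for free from the table; what yours buys is an intrinsic reason why such a $\theta$ exists and why it is determined up to the stated ambiguity.

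The genuine gap is the alignment step, and you have correctly diagnosed where it sits but not closed it. The classical argument for replacing $\tau_0$ by a conjugate commuting with $\tau$ runs the polar decomposition of $\tau\tau_0$ with respect to the \emph{definite} Hermitian form $-B(\cdot,\tau_0\cdot)$; in the super setting that form is indefinite -- the paper's own example immediately after the theorem points out that $B_\theta$ is negative definite on $\mathfrak{g}_{\bar{0}\mathbb{R}} \cap \mathfrak{g}_{\bar{0}\bar{0}}$ and positive definite on $\mathfrak{g}_{\bar{0}\mathbb{R}} \cap \mathfrak{g}_{\bar{1}\bar{1}}$ -- so positivity and self-adjointness of $\tau\tau_0$, hence the existence of its real powers as automorphisms, is exactly what needs proof and is not supplied by saying ``block by block.'' Relatedly, your argument never actually produces the exclusion of $^0\mathfrak{pq}(n)$ and $\mathfrak{us}\pi(n)$: for these the obstruction is that $\theta^{\mathbb{C}}$ on $\mathfrak{g}_{\bar{0}} \cong \mathfrak{sl}_n(\mathbb{C}) \oplus \mathfrak{sl}_n(\mathbb{C})$ swaps the two simple factors (the even part of the real form is $\mathfrak{sl}_n(\mathbb{C})$ sitting as a complex Lie algebra), and one must show no parity-preserving extension to $\mathfrak{g}_{\bar{1}}$ is compatible with (1) and (2); asserting that one ``checks directly'' is not a proof. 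Your fallback -- running the construction case by case over the Kac list with explicit matrices $K$ -- does repair both defects, but at that point it collapses into the paper's argument of reading the answer off Serganova's table.
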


\begin{proof}
 
The respective automorphisms $\theta$ can be extracted from Table 7 in $\cite{Ser}$. They are precisely
those automorphisms from that list which have a compact fixed point set.

\end{proof}

In particular the two maps $\tau$ and $\theta$ commute and their composition $\sigma$ defines a compact even real form
of $\mathfrak{g}$. On the other hand, given an even real form $\mathfrak{g}_\mathbb{R}$ defined by an oder 4 automorphism $\tau$ 
there need not be a $\mathbb{C}$-linear involution $\theta: \mathfrak{g} \rightarrow \mathfrak{g}$ that commutes with $\tau$
and restricts to the Cartan involution on $\mathfrak{g}_\mathbb{R}$. 

\begin{bsp}

Let $\mathfrak{g} = \mathfrak{sl}_{n \vert m}(\mathbb{C})$ and $\mathfrak{g}_\mathbb{R}$ a real form. Then even though $\theta$
restricts to the classical Cartan involution on $\mathfrak{g}_{\bar{0}\mathbb{R}}$, $B_\theta$ is not negative definite.
In fact it is negative definite on $\mathfrak{g}_{\bar{0}\mathbb{R}} \cap \mathfrak{g}_{\bar{0}\bar{0}}$ and positive
definite on $\mathfrak{g}_{\bar{0}\mathbb{R}} \cap \mathfrak{g}_{\bar{1}\bar{1}}$. This is due to the fact that $B_\theta$
incorporates the super-trace rather than the classical trace. 

\end{bsp}

\subsection{Homogeneous superspaces}

This text uses the notion of homogeneous superspace introduced by Kostant in \cite{Kos}.

An action of a Lie supergroup $G$ on a supermanifold $X$ is a morphism $\nu: G \times X \rightarrow X$ satisfying
the usual axioms for group actions.

Suppose $G$ is a Lie supergroup and $H$ is a closed subsupergroup and let $\pi_{\bar{0}}: G_{\bar{0}} \rightarrow G_{\bar{0}}/H_{\bar{0}}$ be the canonical projection. 
Then the quotient is the supermanifold $G/H = (G_{\bar{0}}/H_{\bar{0}}, \mathcal{O}_{G/H})$, where

\[ \mathcal{O}_{G/H}(U) = \{ f \in \mathcal{O}_G(\pi_{\bar{0}}^{-1}U) : R_w f = \langle w, 1_H \rangle f \ \forall w \in \mathcal{O}_H(H_{\bar{0}})^* \} \]

Here $R_w$ is the adjoint operator of the multiplication operator $m_w: \mathcal{O}_H(H_{\bar{0}})^*$ \\ $ \rightarrow \mathcal{O}_H(H_{\bar{0}})^*, u \mapsto uw$ and $1_H$ is the constant function with value 1 on $H$. 

It is shown in \cite{Kos} that this does indeed define the structure of a supermanifold with base $G_{\bar{0}}/H_{\bar{0}}$ and there
is a canonical projection $\pi: G \rightarrow G/H$. 

Moreover, as in the classical case, given an action of $G$ on a supermanifold $X$ one may define the notions of orbits
and stabilizers and obtain canonical isomorphism $G \cdot x \cong G/G_x$ for all $x \in X_{\bar{0}}$.

The tangent space of a split homogeneous superspace  $G/H = (G_{\bar{0}}/H_{\bar{0}},\bigwedge\mathcal{E})$ can be identified with the dual space 
of the fibre of $\mathcal{E}$:

\begin{lemma}[\cite{On}]\label{OnFunctions}

Let $G$ be a complex Lie supergroup and $M = G/H$ a complex split $G$-homogeneous supermanifold. 
Suppose $\mathcal{O}_M \simeq \bigwedge \mathcal{E}$. Then $\mathbb{E}$ is $G_{\bar{0}}$-equivariantly isomorphic to 
$G_{\bar{0}} \times_{H_{\bar{0}}} (\mathfrak{g}_{\bar{1}}/\mathfrak{h}_{\bar{1}})^*$. In particular $(T_xM)_{\bar{1}} \cong \mathbb{E}_x^*$. 

\end{lemma}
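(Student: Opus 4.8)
The plan is to work in the split setting $M = G/H = (G_{\bar 0}/H_{\bar 0}, \bigwedge\mathcal{E})$ and identify the odd tangent directions at the base point $o = eH_{\bar 0}$ explicitly, then spread the identification over all of $M$ by $G_{\bar 0}$-equivariance. First I would recall that, by the definition of the tangent space, $(T_oM)_{\bar 1} = \big((\mathfrak m_o/\mathfrak m_o^2)^*\big)_{\bar 1}$, and that for a split supermanifold $(X_{\bar 0},\bigwedge\mathcal{E})$ one has the natural identification $\mathbb{E}_x \cong (T_xX)_{\bar 1}^*$ noted just before Definition (tangent space) in the text. So the whole problem reduces to showing that the fibre $\mathbb{E}_o = \mathcal{E}_o$ is, as an $H_{\bar 0}$-module, isomorphic to $(\mathfrak g_{\bar 1}/\mathfrak h_{\bar 1})^*$; the global statement $\mathbb{E} \cong G_{\bar 0}\times_{H_{\bar 0}}(\mathfrak g_{\bar 1}/\mathfrak h_{\bar 1})^*$ then follows because $\mathbb{E}$ is a $G_{\bar 0}$-homogeneous vector bundle over $G_{\bar 0}/H_{\bar 0}$ and is therefore determined by its fibre at $o$ together with the isotropy action.

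Next I would compute $\mathcal{E}_o$ using Kostant's description of $\mathcal{O}_{G/H}$ recalled above, or equivalently the Super Harish-Chandra Pair picture: the structure sheaf of $G$ near $e$ is $\mathcal{F}_{G_{\bar 0}}\otimes\bigwedge\mathfrak g_{\bar 1}^*$ via right-invariant vector fields, and passing to the quotient by $H$ imposes right $H$-invariance, which on the odd generators cuts $\mathfrak g_{\bar 1}^*$ down to the subspace annihilating $\mathfrak h_{\bar 1}$, i.e. to $(\mathfrak g_{\bar 1}/\mathfrak h_{\bar 1})^*$. Concretely, the ideal $\mathcal J$ generated by odd elements satisfies $\mathcal J_o/\mathcal J_o^2 \cong \mathcal{E}_o$ for a split supermanifold, and the right-invariance condition in the formula for $\mathcal{O}_{G/H}(U)$ forces the degree-one part to be exactly $(\mathfrak g_{\bar 1}/\mathfrak h_{\bar 1})^*$. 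The $H_{\bar 0}$-action on this fibre is the one induced from the adjoint action of $H_{\bar 0}$ on $\mathfrak g_{\bar 1}$ (descending to $\mathfrak g_{\bar 1}/\mathfrak h_{\bar 1}$ because $\mathrm{Ad}(H_{\bar 0})$ preserves $\mathfrak h_{\bar 1}$) and then dualized — which is precisely the $H_{\bar 0}$-action on $G_{\bar 0}\times_{H_{\bar 0}}(\mathfrak g_{\bar 1}/\mathfrak h_{\bar 1})^*$.

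Having pinned down the fibre and the isotropy representation, the associated-bundle reconstruction gives the $G_{\bar 0}$-equivariant isomorphism $\mathbb{E} \cong G_{\bar 0}\times_{H_{\bar 0}}(\mathfrak g_{\bar 1}/\mathfrak h_{\bar 1})^*$, and taking fibres at an arbitrary $x \in X_{\bar 0}$ together with the split-manifold identification $\mathbb{E}_x \cong (T_xM)_{\bar 1}^*$ yields $(T_xM)_{\bar 1}\cong\mathbb{E}_x^*$, as claimed. The main obstacle I expect is the bookkeeping in the second step: making the right-$H$-invariance condition in Kostant's quotient formula interact cleanly with the symmetrization isomorphism $\sigma:\bigwedge\mathfrak g_{\bar 1}\to\mathcal U(\mathfrak g)$ so that one really sees the degree-one part collapse to $(\mathfrak g_{\bar 1}/\mathfrak h_{\bar 1})^*$ rather than to some a priori larger or twisted subspace, and checking that the induced $H_{\bar 0}$-action is the naive adjoint-dual one with no extra cocycle. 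Everything else is formal, and for this reason I would simply cite \cite{On} for the precise computation, sketching only the SHCP argument above to make the statement plausible.
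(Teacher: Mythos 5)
The paper does not prove this lemma at all --- it is quoted verbatim from \cite{On} --- so there is no in-paper argument to compare against; what matters is whether your sketch is sound, and it is. Your reduction (identify the fibre $\mathbb{E}_o$ at the base point as an $H_{\bar{0}}$-module, then reconstruct $\mathbb{E}$ as the associated bundle $G_{\bar{0}}\times_{H_{\bar{0}}}\mathbb{E}_o$) is the standard and correct route; the only point you should make explicit is why $\mathbb{E}$ carries a $G_{\bar{0}}$-equivariant structure in the first place, namely that for split $\mathcal{O}_M\simeq\bigwedge\mathcal{E}$ one has a canonical identification $\mathcal{E}\cong\mathcal{J}/\mathcal{J}^2$, and any automorphism of $M$ preserves $\mathcal{J}$ and hence acts on this quotient. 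I would also note that the Kostant-formula bookkeeping you flag as the main obstacle can be bypassed entirely: the paper already records that $T_o(G/H)\cong\mathfrak{g}/\mathfrak{h}$ and that for a split supermanifold $\mathbb{E}_x\cong (T_xX)_{\bar{1}}^*$, so $\mathbb{E}_o\cong(\mathfrak{g}_{\bar{1}}/\mathfrak{h}_{\bar{1}})^*$ with the coadjoint isotropy action of $H_{\bar{0}}$ follows at once, and the interaction with the symmetrization map $\sigma$ never needs to be examined. With that shortcut your argument closes up without having to defer the key computation to \cite{On}.
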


This result carries over to open $G_\mathbb{R}$-orbits: Let $i: D = G_\mathbb{R}/L_\mathbb{R} \rightarrow G/H$ be the inclusion.
Then $i^*\mathbb{E} = G_{\bar{0}\mathbb{R}} \times_{L_{\bar{0}\mathbb{R}}} (\mathfrak{g}_{\bar{1}}/\mathfrak{h}_{\bar{1}})^*$ and given that 
$D$ actually has maximal odd dimension there is a canonical isomorphism between $(\mathfrak{g}_{\bar{1}}/\mathfrak{h}_{\bar{1}})^*$
and $(\mathfrak{g}_{\bar{1}\mathbb{R}}/\mathfrak{l}_{\bar{1}\mathbb{R}})^*$. Consequently $\mathcal{E} \vert_D$ is the sheaf of germs of holomorphic sections
of the homogeneous vector bundle $\mathcal{O}(G_{\bar{0}\mathbb{R}} \times_{L_{\bar{0}\mathbb{R}}} (\mathfrak{g}_{\bar{1}\mathbb{R}}/\mathfrak{l}_{\bar{1}\mathbb{R}})^*)$. 

Moreover if $G/H$ is any (possibly non-split) homogeneous superspace, then the superfunctions
on $\mathrm{gr} G/H$ are precisely the sections of the homogeneous bundle $G_{\bar{0}} \times_{H_{\bar{0}}} \bigwedge (\mathfrak{g}_{\bar{1}}/\mathfrak{h}_{\bar{1}})^*$(Theorem 2 in \cite{V}).

\subsection{Parabolic Lie superalgebras, flag supermanifolds and flag domains}\label{OnPSA}

The strong correspondence between parabolic subsuperalgebras of reductive Lie superalgebras and flag spaces was elaborated
in \cite{OnI}. Throughout this text several results from that paper are used without proof. 

A Lie superalgebra $\mathfrak{g}$ is called reductive, if $\mathfrak{g}_{\bar{0}}$ is reductive and the adjoint representation of $\mathfrak{g}_{\bar{0}}$
on $\mathfrak{g}$ is algebraic. A Lie supergroup $G$ is called reductive if $\mathfrak{g} = \mathrm{Lie}(G)$ is reductive.

Assume that $\mathfrak{g}$ is a reductive Lie superalgebra and that $\mathfrak{t}$ is a Cartan subalgebra of $\mathfrak{g}_{\bar{0}}$.
Let $\Sigma(\mathfrak{g},\mathfrak{t}) \subseteq \mathfrak{t}^*$ be the corresponding root system and $\mathfrak{h} = \mathfrak{g}^0$. It is called the
Cartan subalgebra of $\mathfrak{g}$.

\begin{rem}

If $\mathfrak{g}$ is simple, then $\mathfrak{h} = \mathfrak{t}$ unless $\mathfrak{g}$ is of type $Q$. In that case
$\mathfrak{h} = \mathfrak{t} \oplus \Pi \mathfrak{t}$ and all root spaces are $1 \vert 1$-dimensional.  

\end{rem}

Let $x \in \mathfrak{t}(\mathbb{R}) = \{ y \in \mathfrak{t} : \alpha(y) \in \mathbb{R} \ \forall \alpha \in \Sigma \}$ 
and define $\mathfrak{p}(x) = \mathfrak{h} \oplus \sum_{\alpha \in \Sigma, \alpha(x) \geq 0} \mathfrak{g}^\alpha$.

\begin{def1}

A subsuperalgebra $\mathfrak{p} \subseteq \mathfrak{g}$ is called parabolic with repect to $\mathfrak{t}$ 
if it is of the form $\mathfrak{p}(x)$ for some $x \in \mathfrak{t}$. It is called parabolic if it is parabolic
with respect to an arbitrary Cartan subalgebra of $\mathfrak{g}_{\bar{0}}$. A Lie subsupergroup $P \subseteq G$ is called parabolic
if $P_{\bar{0}}$ is a parabolic subgroup of $G_{\bar{0}}$ and $\mathfrak{p} = \mathrm{Lie}(P)$ is a parabolic subsuperalgebra of
$\mathfrak{g} = \mathrm{Lie}(G)$.
 
\end{def1}

In \cite{OnI} the parabolic subalgebras of the non-exceptional simple classical Lie superalgebras are characterized and realized
as stabilizers of flags in standard vector spaces. This characterization is as follows:

\begin{itemize}
 \item If $\mathfrak{g} = \mathfrak{sl}_{n \vert m}(\mathbb{C})$, then its parabolic subalgebras are stabilizers of
       flags of supervector subspaces in $\mathbb{C}^{n \vert m}$.
 \item If $\mathfrak{g} = \mathfrak{osp}_{n \vert 2m}(\mathbb{C})$ and $B$ is the non-deg. super-symmetric bilinear form
       on $\mathbb{C}^{n \vert 2m}$ left invariant by $\mathfrak{g}$, then the parabolic subalgebras are stabilizers
       of flags of $B$-isotropic supervector subspaces of $\mathbb{C}^{n \vert 2m}$.
 \item If $\mathfrak{g} = \mathfrak{p}_n(\mathbb{C})$ and $\omega$ is the non-deg. super-skewsymmetric bilinear form
       on $\mathbb{C}^{n \vert n}$ left invariant by $\mathfrak{g}$, then the parabolic subalgebras are stabilizers
       of flags of $\omega$-isotropic supervector subspaces of $\mathbb{C}^{n \vert n}$.
 \item If $\mathfrak{g} = \mathfrak{q}_n(\mathbb{C})$ and $\Pi : \mathbb{C}^{n \vert n} \rightarrow \mathbb{C}^{n \vert n}$
       is the parity change operator, then the parabolic subalgebras are stabilizers of flags of $\Pi$-invariant supervector
       subspaces of $\mathbb{C}^{n \vert n}$.
\end{itemize}

This characterization motivates the following definition:

\begin{def1}

Let $G$ be a complex reductive Lie supergroup. 

 \begin{enumerate}
  \item A $G$-flag supermanifold is a homogeneous space $Z = G/P$ where $P$ is a parabolic subsupergroup of $G$.
  \item If $G_\mathbb{R}$ is a real form of $G$, then a flag domain in $Z$ is an orbit $D = G_\mathbb{R} \cdot z, z \in Z_{\bar{0}}$
	that is an open subsupermanifold of $Z$.
  \item If $G_\mathbb{R}$ is an even real form of $G$, then a flag domain is an open subamnifold $D = (D_0, i^*\mathcal{O}_Z)$
	where $D_{\bar{0}}$ is an open $G_\mathbb{R}$-orbit in $Z_0$ and $i: D_{\bar{0}} \rightarrow Z_{\bar{0}}$ is the inclusion map.
 \end{enumerate}
\end{def1}

It is necessary to define flag domains of even real forms this way, because, as $G_\mathbb{R}$ is a purely even group,
$G_\mathbb{R}$-orbits in $Z$ always have odd dimension equal to zero. The way they are defined  the classical theory 
always grants the existence of flag domains for even real forms. If $G_\mathbb{R}$ is a real form, then
$G_\mathbb{R}$-orbits  with open base are supermanifolds, which are not purely even. In fact their odd dimension
satisfies $\frac{1}{2} \dim_{\bar{1}} Z \leq \dim_{\bar{1}} G_\mathbb{R} \cdot z \leq \dim_{\bar{1}} Z$.  
Moreover $G_\mathbb{R}$-orbits of maximal odd dimension need not always exist, as will be shown during the discussion of measurability.
Consequently the first thing to check for such a flag domain is whether its odd dimension is maximal. 
In fact it will also turn out that it is sensible to consider only flag domains of even real forms, which satisfy
a condition that is very similar to the conditions for maximal odd dimension in the case of orbits of real forms.

The following chapters will present a discussion on the question of generalizing the results about flag domains exhibited
in the first section to the theory of flag supermanifolds and their flag domains. But before venturing into this discussion
a recollection of the basic definitions and some important results from the theory of supervector bundles is required to provide all
necessary background for the last chapter. 

\subsection{Super vector bundles, differential forms and Berezinians}

The material in this section is reviewed in more detail in \cite{Ma} and \cite{AH}. 

\begin{def1}

A super vector bundle on a supermanifold $X$ is a locally free $\mathbb{Z}/2$-graded $\mathcal{O}_X$-module $\mathcal{E}$. Let $U \subseteq X_{\bar{0}}$ open. 
An element of $\mathcal{E}(U)$ is called a local section of the super vector bundle $\mathcal{E}$.

\end{def1}

\begin{bsp}[Tangent and Cotangent bundles] 

The tangent bundle $\mathcal{T}_X$ of a super manifold is the sheaf $\mathrm{Der} \mathcal{O}$ of derivations of the struture sheaf and the cotangent sheaf $\mathcal{T}_X^*$ is its dual sheaf. 
If $U \subseteq X_{\bar{0}}$ is a coordinate neighbourhood with even coordinates $x_1, \ldots, x_n$ and odd coordinates $\xi_1, \ldots, \xi_m$, then $\mathcal{T}_X(U)$ is isomorphic to the free $\mathcal{O}_X(U)$-module with basis $\frac{\partial}{\partial x_1}, \ldots, \frac{\partial}{\partial x_n},\frac{\partial}{\partial \xi_1},\ldots, \frac{\partial}{\partial \xi_m}$ and  $\Omega_X^1(U)$  is the  free $\mathcal{O}_X(U)$-module
spanned by the dual basis $dx_1, \ldots, dx_n,$ $ d\xi_1, \ldots, d\xi_m$. Analogous to the classical case, sections of the tangent sheaf are called super vector fields and sections of the cotangent bundle are called differential 1-forms. 

Let $\Omega_X^p = \bigwedge^p \Omega_X^1$ be the sheaf of germs of differential $p$-forms. Unlike the even differentials, the odd differentials commute. 
Therefore $\Omega_X^p \neq 0$ for all $p \geq 0$, if the odd dimension of $X$ is bigger than zero. 
However, as in the classical case, the sheaves of differential forms consitute a resolution of the sheaf of locally constant functions



\[ \xymatrix{ 0 \ar[r] & \mathbb{C} \ar[r] & \mathcal{O}_X \ar[r]^\partial & \Omega_X^1 \ar[r]^\partial & \ldots \ar[r]^\partial & \Omega_X^n \ar[r]^\partial & \ldots } \]

and if $\mu: X \rightarrow Y$ is a submersion then there is a relative complex: As in the classical case $\mu$
induces an injective homomorphism $T^*\mu: \Omega_Y^1 \rightarrow \Omega_X^1$. Let $\Omega_\mu^1$ be the quotient sheaf, $\pi: \Omega_X^1 \rightarrow \Omega_\mu^1$ the projection 
and $\Omega_\mu^p = \bigwedge^p \Omega_\mu^1$. The differentials $d$ of the complex $\Omega_X^*$ induce differentials $d_\mu$ on $\Omega_\mu^*$.
As in the classical case, if $\eta$ is a local section of $\Omega_\mu^p$ and $\omega$ is a local section of $\Omega_x^p$ satisfying $\pi(\omega) = \eta$,
then $d_\mu(\eta) = \pi(d\omega)$. The complex of $\mu$-relative differential forms is then a resolution of $\mu^{-1}\mathcal{O}_Y$: 

\[ \xymatrix{ 0 \ar[r] & \mu^{-1}\mathcal{O}_Y \ar[r] & \mathcal{O}_X \ar[r]^{\partial_\mu} & \Omega_\mu^1 \ar[r]^{\partial_\mu} & \ldots \ar[r]^{\partial_\mu} & \Omega_\mu^n \ar[r]^{\partial_\mu} & \ldots } \]

\end{bsp}
Another important super vector bundle is the Berezinian bundle. Berezinian forms are the supersymmetric analogue of volume forms and the basic object for integration on supermanifolds. 
For a super vector space $V = V_{\bar{0}} \oplus V_{\bar{1}}$, the Berezinian module $\mathrm{Ber}(V)$ is constructed as follows:

Consider the symmetric algebra $S(\Pi V \oplus V^*)$. The parity change operator $\Pi: V \rightarrow \Pi V$ defines an odd element of this algebra, so its square is zero and therefore one may consider the complex

\[  \xymatrix{ S(\Pi V \oplus V^*) \ar[r]^{\cdot \Pi} & S(\Pi V \oplus V^*) } \]

The Berezinian module of $V$ is the homology of this complex. If $\mathcal{E}$ is a super vector bundle with fibre isomorphic to $V$ and cocycle $f_{ij}$, then the Berezinian bundle $\mathrm{Ber} \mathcal{E}$ is the super vector bundle
with fibre $\mathrm{Ber}(V)$ and cocycle $\mathrm{Ber}(f_{ij})$, the Berezinian of the endomorphism $f_{ij}$(see \cite{Ma}).
It is a line bundle with the same parity as the odd dimension of $V$.

For a supermanifold $X$ let $\mathrm{Ber}(X) = \mathrm{Ber}(T^*X)$. If $U \subseteq X_{\bar{0}}$ is a coordinate neighbourhood then a basis element of $\mathrm{Ber}(X)(U)$ is $ f dx_1 \wedge \ldots \wedge dx_n \wedge \frac{\partial}{\partial \xi_1} \wedge \ldots \wedge \frac{\partial}{\partial \xi_m} $ for some $f \in \mathcal{O}_X(U)$.
Note that unlike in the classical case, the basic object for integration is not a differential form of any degree.

\subsection{A spectral sequence of Onishchik's and Vishnyakova's}

Given a super vector bundle $\mathcal{E}$ on $X$ there are several ways to associate to $\mathcal{E}$ a $\mathbb{Z}$-graded super vector bundle on $\mathrm{gr} X$(see \cite{OV1} and \cite{OV2}):

Let $\mathcal{J} \subseteq \mathcal{O}_X$ as above. Then $\mathcal{S} = \mathcal{E}/\mathcal{J}\mathcal{E}$
is the sheaf of sections of a $\mathbb{Z}/2$-graded vector bundle on $X_{\bar{0}}$ with the same local bases as $\mathcal{E}$.

Define $\mathcal{E}^{(p)} = \mathcal{J}^p\mathcal{E}, (\mathrm{gr} \mathcal{E})_p = \mathcal{E}^{(p)}/\mathcal{E}^{(p+1)}$ and $\mathrm{gr} \mathcal{E} = \bigoplus_{p \geq 0} (\mathrm{gr} \mathcal{E})_p$. Then $\mathrm{gr}\mathcal{E}$ is a $\mathbb{Z}$-graded super vector bundle on $\mathrm{gr} X$.
Note however that this $\mathbb{Z}$-grading is usually not compatible with the $\mathbb{Z}/2$-grading. 

To obtain a $\mathbb{Z}$-graded super vector bundle whose $\mathbb{Z}$-grading fits in with the $\mathbb{Z}/2$-grading let $^\prime\mathcal{E}^{(p)} = \mathcal{J}^p \mathcal{E}_{\bar{0}} + \mathcal{J}^{p-1}\mathcal{E}_{\bar{1}}$, 
$\tilde{\mathcal{E}}_p = \ ^\prime\mathcal{E}^{(p)}/ \ ^\prime\mathcal{E}^{(p+1)}$ and $\tilde{\mathcal{E}} = \bigoplus_{p \geq 0} \tilde{\mathcal{E}}_p$. 

Both of these associated graded vector bundles can be used to construct a spectral sequence converging to the cohomology of $\mathcal{E}$. 
This construction was worked out in detail in \cite{OV1} and \cite{OV2}. Here only the relevant results will be reviewed.

The first step towards the construction of the spectral sequence is the characterization of non-isomorphic super vector bundles
with a fixed retract. This characterization is similar to the characterization of non-isomorphic supermanifold structures
on a given complex manifold and uses quasi-derivations.

\begin{def1}

\begin{enumerate}
 \item Let $X$ be a complex supermanifold, $U \subseteq X_{\bar{0}}$ open, $\mathcal{E}$ a super vector bundle on $X$ and $\Gamma$ an even vector field on $U$. 
A sheaf homomorphism $A: \mathcal{E}\vert_U \rightarrow \mathcal{E}\vert_U$ is called a $\Gamma$-derivation, if $A(fv) = \Gamma(f)v + f A(v)$
for all $f \in \mathcal{O}_X(U), v \in \mathcal{E}(U)$. $A$ is called a quasi-derivation if it is a $\Gamma$-derivation for
some even vector field $\Gamma$. The sheaf of quasi-derivations is denoted by $\mathcal{QD}er \mathcal{E}$. 
 \item Let $X, U$ and $\mathcal{E}$ as before and let $\Psi$ be an automorphism of $\mathcal{O}(U)$. Then a sheaf homomorphism
$a: \mathcal{E} \rightarrow \mathcal{E}$ is called a $\Psi$-morphism if $a(fv) = \Psi(f) a(v)$ for all $f \in \mathcal{O}(U),
 v \in \mathcal{E}(U)$. $a$ is called a quasi-automorphism, if $a$ is an automorphism and $\Psi$-morphism for some automorphism $\Psi$.
The sheaf of quasi-automorphisms is denoted $\mathcal{QA}ut \mathcal{E}$.
\end{enumerate}
 
\end{def1}

Note that these sheaves are sheaves of Lie superalgebras and groups respectively. They both allow natural double filtrations 
as follows:

\[ \mathcal{QA}ut_{(p)(q)} \mathcal{E}(U) = \{ a \in \mathcal{QA}ut \mathcal{E}(U) : a(v) \equiv v \ \mathrm{mod} \mathcal{E}^{(p)}(U),   \] 
\[ \Psi(f) \equiv f \ \mathrm{mod} \mathcal{J}^q(U) \textnormal{ for all} \ f \in \mathcal{O}(U), v \in \mathcal{E}(U) \}, \ p,q \geq 0 \]

\[ \mathcal{QD}er_{(p)(q)} \mathcal{E}(U) = \{ A \in \mathcal{QD}er \mathcal{E}(U) : A(\mathcal{E}^{(r)}(U)) \subseteq \mathcal{E}^{(p + r)}(U),   \] 
\[ \Gamma(\mathcal{J}^s(U)) \subseteq \mathcal{J}^{q + s}(U) \textnormal{ for all} \ r,s \in \mathbb{Z} \}, \ p,q \geq 0 \]

Moreover for a graded super vector bundle $\mathrm{gr} \mathcal{E}$ consider the subsheaves
$\mathcal{QA}ut_0 \mathcal{E} \subseteq \mathcal{QA}ut \mathcal{E}$ and $\mathcal{QD}er_{k,k} \mathcal{E} \subseteq \mathcal{QD}er_{(k)(k)} \mathcal{E}$,
which consist of all quasi-automorphisms and quasi-derivations which are compatible with the $\mathbb{Z}$-gradings of
$\mathrm{gr} \mathcal{O}$ and $\mathrm{gr} \mathcal{E}$.

Also note that there is a mapping
\[\exp: \mathcal{QD}er_{(p)(q)} \mathcal{E} \rightarrow \mathcal{QA}ut_{(p)(q)} \mathcal{E}\]

which is an isomorphism of sheaves for $p = 1, q = 2$.
The main characterization result is the following:

\begin{satz}[Theorem 2 in \cite{OV1}]
 
Let $(X, \mathcal{O}_{gr})$ be a split supermanifold, $\mathcal{S}$ be a sheaf of $\mathbb{Z}/2$-graded $\mathcal{F}$-modules
and $\mathcal{E}_{gr} = \mathcal{O}_{gr} \otimes_{\mathcal{F}} \mathcal{S}$. Then there is a bijection between the sets
$\{ [\mathcal{E}] : \mathrm{gr} \mathcal{O} = \mathcal{O}_{gr}, \mathrm{gr} \mathcal{E} = \mathcal{E}_{gr} \}$ and \\ 
$H^1(X, \mathcal{QA}ut_{(1),(2)} \mathrm{gr} \mathcal{E})/H^0(X, \mathcal{QA}ut_0 \mathrm{gr} \mathcal{E})$.

\end{satz}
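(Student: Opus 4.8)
The strategy is the \v{C}ech-cocycle classification familiar from the description of supermanifold structures with a prescribed retract, which the present statement refines: a super vector bundle $\mathcal{E}$ with the given retract will be encoded by a transition cocycle whose values lie in the group sheaf $\mathcal{QA}ut_{(1),(2)}\,\mathrm{gr}\mathcal{E}$, and the ambiguity in this encoding will turn out to be exactly the $H^0(X,\mathcal{QA}ut_0\,\mathrm{gr}\mathcal{E})$-action. First I would fix $\mathcal{E}$ with $\mathrm{gr}\mathcal{O}=\mathcal{O}_{gr}$, $\mathrm{gr}\mathcal{E}=\mathcal{E}_{gr}$ and choose an open cover $(U_i)$ of $X_{\bar 0}$ fine enough that on each $U_i$ there is an isomorphism $\varphi_i\colon\mathcal{O}|_{U_i}\to\mathcal{O}_{gr}|_{U_i}$ (available because a split supermanifold is locally a linear model and any two supermanifolds with the same retract are locally isomorphic) together with a $\varphi_i$-semilinear isomorphism $\psi_i\colon\mathcal{E}|_{U_i}\to\mathcal{E}_{gr}|_{U_i}$ (available after refinement, since $\mathcal{E}$ is locally free), both chosen so as to induce the identity on the given associated graded data. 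On an overlap $U_{ij}=U_i\cap U_j$ set $\Psi_{ij}=\varphi_i\varphi_j^{-1}$ and $g_{ij}=\psi_i\psi_j^{-1}$; then $g_{ij}$ is a $\Psi_{ij}$-morphism, hence a quasi-automorphism of $\mathcal{E}_{gr}|_{U_{ij}}$, and $(g_{ij})$ satisfies the cocycle identity.

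The essential estimate is that $g_{ij}\in\mathcal{QA}ut_{(1),(2)}\,\mathrm{gr}\mathcal{E}(U_{ij})$. Here one has to verify that $\mathrm{gr}(g_{ij})=\mathrm{id}$ is equivalent to $g_{ij}(v)\equiv v \bmod\mathcal{E}^{(1)}$ for all $v$ together with $\Psi_{ij}(f)\equiv f\bmod\mathcal{J}^2$ for all $f$. This uses that an automorphism of a structure sheaf carries odd elements to odd elements and that the odd part of $\mathcal{J}^2$ already has "$\xi$-degree" at least $3$, so that congruence modulo $\mathcal{J}^2$ propagates to $\mathrm{gr}\Psi_{ij}=\mathrm{id}$ and then, combined with the $\mathcal{E}^{(1)}$-condition and semilinearity, to $\mathrm{gr}(g_{ij})=\mathrm{id}$ on every subquotient $\mathcal{J}^p\mathcal{E}/\mathcal{J}^{p+1}\mathcal{E}$. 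This produces a class in $H^1(X,\mathcal{QA}ut_{(1),(2)}\,\mathrm{gr}\mathcal{E})$.

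Next I would check well-definedness up to the stated quotient and compatibility with isomorphisms. Passing to a refinement does not change the class; replacing $\psi_i$ by $h_i\psi_i$ with $h_i\in\mathcal{QA}ut_{(1),(2)}\,\mathrm{gr}\mathcal{E}(U_i)$ modifies $(g_{ij})$ by a coboundary. The remaining freedom is that the identification $\mathrm{gr}\mathcal{E}=\mathcal{E}_{gr}$ is only canonical up to a global $\mathbb{Z}$-graded quasi-automorphism $c\in H^0(X,\mathcal{QA}ut_0\,\mathrm{gr}\mathcal{E})$, and conjugating all local trivializations by $c$ replaces $(g_{ij})$ by $(c\,g_{ij}\,c^{-1})$; hence only the $H^0(X,\mathcal{QA}ut_0\,\mathrm{gr}\mathcal{E})$-orbit of the class is an invariant of $\mathcal{E}$. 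Conversely, given a cocycle $(g_{ij})$ with associated automorphisms $\Psi_{ij}$ of $\mathcal{O}_{gr}|_{U_{ij}}$, one glues the trivial pieces $(\mathcal{O}_{gr}|_{U_i},\mathcal{E}_{gr}|_{U_i})$ along $(\Psi_{ij},g_{ij})$, the cocycle condition making the gluing consistent, to obtain a pair $(\mathcal{O},\mathcal{E})$ whose retract is canonically $(\mathcal{O}_{gr},\mathcal{E}_{gr})$ by the same filtration argument as above. The isomorphism $\exp\colon\mathcal{QD}er_{(1),(2)}\,\mathrm{gr}\mathcal{E}\to\mathcal{QA}ut_{(1),(2)}\,\mathrm{gr}\mathcal{E}$ is convenient here: it exhibits the relevant group sheaf as pro-unipotent, which streamlines the gluing and makes it routine to check that the two constructions are mutually inverse after taking $H^0(X,\mathcal{QA}ut_0\,\mathrm{gr}\mathcal{E})$-orbits, that isomorphic pairs produce the same orbit, and that a coboundary together with a global conjugation can be upgraded to an isomorphism of pairs.

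The main obstacle is not the gluing formalism but the precise bookkeeping of the filtration indices: proving that "inducing the identity on the retract" corresponds \emph{exactly} to landing in $\mathcal{QA}ut_{(1),(2)}$ — neither a coarser nor a finer condition — and, in tandem, pinning down the $H^0(X,\mathcal{QA}ut_0\,\mathrm{gr}\mathcal{E})$-action so that the bijection is with isomorphism classes of bundles rather than with rigidified data. This is the relative analogue of the subtlety already present in the classification of supermanifold structures with a fixed retract, here compounded by having to track the semilinear part of a bundle quasi-automorphism simultaneously with its underlying structure-sheaf automorphism.
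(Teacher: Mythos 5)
The paper does not actually prove this statement---it is quoted verbatim from Onishchik--Vishnyakova---but your reconstruction follows exactly the \v{C}ech-cocycle argument used in that source and agrees with the gloss the paper gives immediately after the statement (the transition functions $\tau_{ij}$ are obtained from $\mathrm{gr}\,\tau_{ij}$ by quasi-automorphisms inducing the identity on the retract, uniquely up to the action of $H^0(X, \mathcal{QA}ut_0\, \mathrm{gr}\,\mathcal{E})$ by conjugation). Your filtration bookkeeping identifying ``trivial on the associated graded'' with the indices $(1),(2)$ via the parity argument, and your identification of the residual $H^0$-ambiguity with the non-canonical choice of identification $\mathrm{gr}\,\mathcal{E} \cong \mathcal{E}_{gr}$, are both correct and are precisely the points the original proof turns on.
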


Let $\mathcal{U} = \{U_i , i \in I\}$ be an open Stein cover of $X_{\bar{0}}$ such that each $U_i$ is a coordinate neighbourhood and 
$\mathcal{E}$ is trivializable on $U_i$. Then $\mathrm{gr} \mathcal{E}$  is also trivializable
on $U_i$. Let $\tau_{ij}$ and $\mathrm{gr} \tau_{ij}$ be the respective cocycles. Then the theorem states
that $\tau_{ij}$ is obtained from $\mathrm{gr} \tau_{ij}$ by means of a quasi-automorphism of $\mathrm{gr} \mathcal{E}(U_{ij})$
and that its quasi-isomorphism class is unique up to a natural action of the group $H^0(X, \mathcal{QA}ut_0 \mathrm{gr} \mathcal{E})$.
Also note that this quasi-automorphism is actually a $\Psi$-automorphism for $\Psi$ a representative
of the class \\ $[\Psi] \in H^1(X, \mathcal{A}ut_{(2)} \mathrm{gr} \mathcal{O})/H^0(X, \mathcal{A}ut_0 \mathrm{gr} \mathcal{O})$
which correpsonds to the supermanifold structure $\mathcal{O}$.

To obtain a spectral sequence relating the cohomology of $\mathcal{E}$ and $\mathrm{gr} \mathcal{E}$
consider the Stein cover $\mathcal{U}$ and the $\check{\textnormal{C}}$ech complex $C^*(\mathcal{U}, \mathcal{E})$. 
It is a filtered complex by virtue of the filtration of $\mathcal{E}$. This also gives rise to a filtration
of $H^*(X, \mathcal{E})$ and therefore L\'{e}ray's theorem (see chapter 5 in \cite{Wei}) yields a spectral sequence $E$ with

\[ E_0^{p,q} = C^{p+q}(X, \mathrm{gr} \mathcal{E}^{(p)}) , \ E_1^{p,q} = H^{p+q}(X, \mathrm{gr} \mathcal{E}^{(p)}) \] 

and converging to $E_\infty^{p,q} = \mathrm{gr}_p H^{p+q}(X, \mathcal{E})$. In particular, if $X$ is compact then
\[ \dim H^k(X, \mathcal{E}) = \sum_{p + q = k} \dim E_\infty^{p,q} \]

This result will be of great use later on to obtain certain vanishing results for the cohomology
of super vector bundles.

Furthermore Vishnyakova and Onishchik also describe the first non-vanishing differential in the spectral sequence. 
To this end consider the following map:

\[ \mu_k: \mathcal{QA}ut_{(k),(2)} \mathrm{gr} \mathcal{E} \rightarrow \mathcal{QD}er_{k,k} \mathcal{E} , 
a \mapsto \bigoplus_{q \in \mathbb{Z}} \mathrm{pr}_{k+q} \circ A \circ \mathrm{pr}_q \]

where $a = \exp(A)$ and $\mathrm{pr}_q: \mathrm{gr}\mathcal{E} \rightarrow (\mathrm{gr}\mathcal{E})_p$ is the canonical projection.

Let $a$ be a representative of the class in $H^1(X,\mathcal{QA}ut_{(1),(2)} \mathrm{gr} \mathcal{E})$ corresponding
to $\mathcal{E}$ and let $k = \max \{ l \in \mathbb{N} : a \in \mathcal{QA}ut_{(k),(2)} \mathrm{gr} \mathcal{E} \}$.

\begin{satz}[Theorem 7 in \cite{OV1}]
 
Let $a$ be a representative of the class in $H^1(X,\mathcal{QA}ut_{(1),(2)} \mathrm{gr} \mathcal{E})$ corresponding
to $\mathcal{E}$ and let $k = \max \{ l \in \mathbb{N} : a \in \mathcal{QA}ut_{(k),(2)} \mathrm{gr} \mathcal{E} \}$.
Let $d_r : E_r \rightarrow E_r$ be the differentials of the spectral sequence constructed above. Then $d_r = 0$
for $1 \leq r < k$ and $d_k = \mu_k(a)$.

\end{satz}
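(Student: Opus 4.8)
The plan is to realize the spectral sequence explicitly on the \v{C}ech complex $C^\bullet(\mathcal{U},\mathcal{E})$ filtered by the $\mathcal{J}$-adic filtration, and then to measure, order by order in that filtration, the defect between the differential of $\mathcal{E}$ and the differential of its retract $\mathrm{gr}\,\mathcal{E}$. First I would fix local frames: on each $U_i$ both $\mathcal{E}$ and $\mathrm{gr}\,\mathcal{E}$ are trivial, so a compatible choice of frames identifies $C^\bullet(\mathcal{U},\mathcal{E})$ with $C^\bullet(\mathcal{U},\mathrm{gr}\,\mathcal{E})$ \emph{as $\mathbb{Z}$-graded sheaves of vector spaces}. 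Under this identification the \v{C}ech differential $\delta_{\mathcal{E}}$ becomes $\delta_{\mathrm{gr}}$ plus a correction term, and the correction is controlled entirely by the transition quasi-automorphisms: by the previous theorem (Theorem 2 in \cite{OV1}) the cocycle of $\mathcal{E}$ is obtained from that of $\mathrm{gr}\,\mathcal{E}$ by composing with quasi-automorphisms $a_{ij}$, and $(a_{ij})$ is a (non-abelian) \v{C}ech $1$-cocycle representing the class $a\in H^1(X,\mathcal{QA}ut_{(1),(2)}\mathrm{gr}\,\mathcal{E})$. Writing $a_{ij}=\exp(A_{ij})$ with $A_{ij}\in\mathcal{QD}er_{(1)(2)}\mathrm{gr}\,\mathcal{E}(U_{ij})$ (legitimate since $\exp$ is a sheaf isomorphism in bidegree $(1),(2)$) and decomposing $A_{ij}=\sum_{l\ge 1}A^{(l)}_{ij}$ into its $\mathbb{Z}$-graded components via the projections $\mathrm{pr}_q$, I would expand the correction to $\delta_{\mathcal{E}}$ as a sum of operators, the $l$-th of which raises the $\mathcal{J}$-filtration degree by exactly $l$.

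Next I would feed in the hypothesis that $k=\max\{l:a\in\mathcal{QA}ut_{(k),(2)}\mathrm{gr}\,\mathcal{E}\}$. This says precisely that $a_{ij}(v)\equiv v\bmod\mathcal{E}^{(k)}$, hence (for a suitably normalised representative) $A^{(l)}_{ij}=0$ for $1\le l<k$, so the correction to $\delta_{\mathcal{E}}$ shifts the filtration by at least $k$ and its leading, shift-exactly-$k$ piece is induced by $(A^{(k)}_{ij})=\mu_k(a)$. At this point the formal mechanism of a first-quadrant filtered complex applies: when the differential equals $\delta_{\mathrm{gr}}$ plus terms shifting the filtration by $\ge k$, the induced differentials $d_r$ on $E_r$ vanish for $1\le r<k$, and $d_k\colon E_k^{p,q}\to E_k^{p+k,q-k+1}$ — i.e. $H^{p+q}(X,(\mathrm{gr}\,\mathcal{E})_p)\to H^{p+q+1}(X,(\mathrm{gr}\,\mathcal{E})_{p+k})$ after the identification $E_k\cong E_1$ — is the map induced in cohomology by that shift-$k$ piece. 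It then remains to identify this induced map with the action of $\mu_k(a)$ on $H^\bullet(X,\mathrm{gr}\,\mathcal{E})$.

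For that identification I would first check that $\mu_k(a)$ is a genuine class in $H^1(X,\mathcal{QD}er_{k,k}\mathrm{gr}\,\mathcal{E})$: the nonabelian cocycle identity $a_{ij}a_{jk}=a_{ik}$, expanded through Baker--Campbell--Hausdorff in degree $k$, gives $A^{(k)}_{ij}+A^{(k)}_{jk}=A^{(k)}_{ik}$ because every bracket correction involves the lower-degree pieces $A^{(l)}$ with $l<k$, which vanish by the previous step; so $(A^{(k)}_{ij})$ is an additive \v{C}ech $1$-cocycle, and its values lie in $\mathcal{QD}er_{k,k}$ (not merely $\mathcal{QD}er_{(k)(k)}$) exactly because $\mu_k$ extracts the $\mathbb{Z}$-grading-compatible part using the $\mathrm{pr}_q$. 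Then cup product with this class followed by its quasi-derivation action on $H^\bullet(X,(\mathrm{gr}\,\mathcal{E})_p)$ is, by the standard description of products on \v{C}ech cohomology, exactly the map induced by the shift-$k$ piece of $\delta_{\mathcal{E}}$, giving $d_k=\mu_k(a)$.

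The step I expect to be the main obstacle is this last identification, namely matching the abstractly defined differential $d_k$ of the filtered \v{C}ech complex with the concrete operator $\mu_k(a)$. It requires careful simultaneous bookkeeping of how the frame-change identification interacts with the (non-split) module structure over $\mathcal{O}_X$ — which is itself a $\Psi$-twist of $\mathrm{gr}\,\mathcal{O}$, with $\Psi$ of filtration order $\ge 2$ throughout — with the \v{C}ech coboundary and with the passage to cohomology. In particular one must verify that changing the representative of $a$ inside $\mathcal{QA}ut_{(1),(2)}$ alters $\mu_k(a)$ only by a coboundary (so that $d_k$ is well defined on $E_k$), and that the $\Psi$-part contributes nothing of filtration-shift exactly $k$ that would be missed by the correction analysis above. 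Once these compatibilities are pinned down, the remainder is the routine homological algebra of the filtered complex.
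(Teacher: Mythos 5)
The paper does not prove this statement: it is quoted as Theorem 7 of \cite{OV1}, so there is no internal proof to compare against. Your outline reconstructs the Onishchik--Vishnyakova argument essentially as it is given there — trivialise the filtered \v{C}ech complex over the Stein cover, isolate the correction to the differential coming from the twisting cocycle $a_{ij}=\exp A_{ij}$, use maximality of $k$ to kill the graded components of $A_{ij}$ of degree $<k$, and identify $d_k$ with the cup-product action of the degree-$k$ component, i.e.\ with $\mu_k(a)$ — and it is sound. The only refinements needed are the compatibilities you already flag, together with the observation that the additive cocycle identity for $(A^{(k)}_{ij})$ is twisted by conjugation with $\mathrm{gr}\,\tau_{ij}$ (which is precisely what makes it a \v{C}ech $1$-cocycle valued in the sheaf $\mathcal{QD}er_{k,k}\,\mathrm{gr}\,\mathcal{E}$ rather than in a constant sheaf); this is harmless because conjugation by the grading-preserving $\mathrm{gr}\,\tau_{ij}$ does not change the filtration shift.
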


If one works with the split super vector bundle $\tilde{\mathcal{E}}$ instead of $\mathrm{gr}\mathcal{E}$ these
results can be slightly improved as $\mathcal{QA}ut_{(2p-1)(q)}\tilde{\mathcal{E}} = \mathcal{QA}ut_{(2p)(q)}\tilde{\mathcal{E}}$
for all $p \geq 0$ and therefore the distinct quasi-isomorphism classes of super vector bundles with retract $\tilde{\mathcal{E}}$
are parametrized by $H^1(X, \mathcal{QA}ut_{(2),(2)} \tilde{\mathcal{E}})/H^0(X, \mathcal{QA}ut_0 \tilde{\mathcal{E}})$.

\begin{bsp}
 
Let $X = \mathrm{Gr}_{1 \vert 1}(\mathbb{C}^{2 \vert 2})$. It is a non-split supermanifold with base $\mathbb{P}^1 \times \mathbb{P}^1$.
Let $\{U_0,U_1\}$ be the standard open cover of $\mathbb{P}^1$ with respective coordinates $z$ and $\zeta$ and 
$ \{ V_1 = U_0 \times U_0, V_2 = U_0 \times U_1, V_3 = U_1 \times U_0, V_4 = U_1 \times U_1 \}$ be the corresponding product 
cover of $X_0$. 

The split model $(X_{\bar{0}}, \mathrm{gr} \mathcal{O})$ is given by the vector bundle $2 \mathcal{F}(-1,-1)$. Let $\xi_i, \eta_i$
be the bases of $2 \mathcal{F}(-1,-1)\vert_{V_i}$. Then the cocyle $\omega$ of derivations defining
the non-split structure $\mathcal{O}$ is given by

\[ \omega_{21} = \zeta_2^{-1} \xi_2\eta_2 \frac{\partial}{\partial z_1}, \omega_{31} = - \zeta_1^{-1} \xi_3\eta_3 \frac{\partial}{\partial z_2}, \]
\[ \omega_{41} = - \zeta_1^{-2} \zeta_2^{-1} \xi_4\eta_4 \frac{\partial}{\partial z_1} + \zeta_1^{-1} \zeta_2^{-2} \xi_4 \eta_4 \frac{\partial}{\partial z_2} \]

Especially the transition functions on $V_{14}$ are given by

\[ \tau_{41}(z_1) = \zeta_1^{-1} + \zeta_1^{-2}\zeta_2^{-1} \xi_4\eta_4 = \mathrm{gr} \ \tau_{41}(z_1) + \omega_{41}(z_1) \]
\[ \tau_{41}(z_2) = \zeta_2^{-1} - \zeta_2^{-2}\zeta_1^{-1} \xi_4\eta_4 = \mathrm{gr} \ \tau_{41}(z_2) + \omega_{41}(z_2) \]
\[ \tau_{41}(\xi_1) = -\zeta_1^{-1}\zeta_2^{-1} \xi_4 = \mathrm{gr} \ \tau_{41}(\xi_1) \]
\[ \tau_{41}(\eta_1) = -\zeta_1^{-1}\zeta_2^{-1} \eta_4 = \mathrm{gr} \ \tau_{41}(\eta_1) \]

so the automorphism of $\mathrm{gr}\mathcal{O}$ defining the non-split structure $\mathcal{O}$ is $\Psi = \exp \omega = \mathrm{id} + \omega$

Let $\mathcal{E} = \Omega_X^1$. Its basis in $V_1$ is $dz_1,dz_2,d\xi_1,d\eta_1$ and the transition functions on $V_{41}$ are

\[ \tau_{41}(dz_1) = d(\zeta_1^{-1} + \zeta_1^{-2}\zeta_2^{-1} \xi_4\eta_4) =\] 
\[ [- \zeta_1^{-2} d\zeta_1] + [- 2\zeta_1^{-3}\zeta_2^{-1}\xi_4\eta_4 d\zeta_1 - \zeta_1^{-2} \zeta_2^{-2} \xi_4 \eta_4 d\zeta_2 
+ \zeta_1^{-2}\zeta_2^{-1} \eta_4 d\xi_4 + \zeta_1^{-2}\zeta_2^{-1} \xi_4 d\eta_4] = \]
\[ \mathrm{gr} \ \tau_{41}(dz_1) + A(dz_1) \]

\[ \tau_{41}(dz_2) = d(\zeta_2^{-1} - \zeta_2^{-2}\zeta_1^{-1} \xi_4\eta_4) =\] 
\[ [- \zeta_2^{-2} d\zeta_2] + [2\zeta_2^{-3}\zeta_1^{-1}\xi_4\eta_4 d\zeta_2 + \zeta_2^{-2} \zeta_1^{-2} \xi_4 \eta_4 d\zeta_1 
- \zeta_2^{-2}\zeta_1^{-1} \eta_4 d\xi_4 - \zeta_2^{-2}\zeta_1^{-1} \xi_4 d\eta_4] = \]
\[ \mathrm{gr} \ \tau_{41}(dz_2) + A(dz_2) \]

\[ \tau_{41}(d\xi_1) = -d(\zeta_1^{-1}\zeta_2^{-1} \xi_4) = [- \zeta_1^{-1}\zeta_2^{-1} d\xi_4] + [ \zeta_1^{-2}\zeta_2^{-1} \xi_4 d\zeta_1 + \zeta_1^{-1}\zeta_2^{-2} \xi_4 d\zeta_2] \]
\[ = \mathrm{gr} \ \tau_{41}(d\xi_1) + A(d\xi_1)\]

\[ \tau_{41}(d\eta_1) = -d(\zeta_1^{-1}\zeta_2^{-1} \eta_4) = [- \zeta_1^{-1}\zeta_2^{-1} d\eta_4] + [\zeta_1^{-2}\zeta_2^{-1} \eta_4 d\zeta_1 + \zeta_1^{-1}\zeta_2^{-2} \eta_4 d\zeta_2] \]
\[ = \mathrm{gr} \ \tau_{41}(d\eta_1) + A(d\eta_1)\]

The first bracket in each line is the respective transition function for $\mathrm{gr} \Omega^1$ and the second bracket is
the contribution of the quasi-derivation $A_{14}$. Analogous formulae can be obtained for the other two-fold intersections $V_{ij}$,
but for the further computation this component of the cocycle $A$ is sufficient. Note that $a = \exp A = \mathrm{id} + A$
is a quasi-automorphism corresponding to the quasi-isomorphism class of $\Omega^1$.

Now consider $H^*(X_{\bar{0}}, \mathrm{gr} \Omega^1)$. $(\mathrm{gr} \Omega^1)_0$ is the direct sum of line bundles isomorphic to
$\mathcal{F}(-2,0), 2\mathcal{F}(-1,-1)$ and $\mathcal{F}(0,-2)$ with respective generators $dz_1,d\xi_1,d\eta_1$ and $dz_2$.
Moreover $(\mathrm{gr} \Omega^1)_1 \cong (\mathrm{gr} \Omega^1)_0 \otimes 2\mathcal{F}(-1,-1)$ and $(\mathrm{gr} \Omega^1)_2 =
(\mathrm{gr} \Omega^1)_0 \otimes \mathcal{F}(-2,-2)$. This yields the following non-vanishing cohomology:

\[ H^1(X_{\bar{0}}, (\mathrm{gr} \Omega^1)_0) \cong \mathbb{C} \{z_1^{-1} dz_1, z_2^{-1} dz_2\} \]
\[ H^2(X_{\bar{0}}, (\mathrm{gr} \Omega^1)_0) = \mathbb{C}\left\lbrace \frac{\xi_1 d\xi_1}{z_1^{-1}z_2^{-1}}, \frac{\xi_1 d\eta_1}{z_1^{-1}z_2^{-1}}, \frac{\eta_1 d\xi_1}{z_1^{-1}z_2^{-1}},\frac{\eta_1 d\eta_1}{z_1^{-1}z_2^{-1}}\right\rbrace \]
\[ H^2(X_{\bar{0}}, (\mathrm{gr} \Omega^1)_2) \cong \mathbb{C}^{14}\]

Now, as $a$ yields the transition functions of $\Omega^1$, it is necessarily \\ a $\Psi$-automorphism, hence $A$ is an $\omega$-derivation
and one may compute:

\[ A(z_1^{-1} dz_1) = \omega_{41}(z_1^{-1}) \mathrm{gr} \tau_{41}(dz_1) + \mathrm{gr}\tau_{41}(z_1^{-1})A(dz_1) = \]
\[  \zeta_1^{-2} \zeta_2^{-1} \xi_4\eta_4 \mathrm{gr} \tau_{41}\left(\frac{\partial}{\partial z_1}(z_1^{-1})\right) \zeta_1^{-2} d\zeta_1 + \]
\[\zeta_1 [- 2\zeta_1^{-3}\zeta_2^{-1}\xi_4\eta_4 d\zeta_1 - \zeta_1^{-2} \zeta_2^{-2} \xi_4 \eta_4 d\zeta_2 
+ \zeta_1^{-2}\zeta_2^{-1} \eta_4 d\xi_4 + \zeta_1^{-2}\zeta_2^{-1} \xi_4 d\eta_4] =  \]
\[ [-3 \zeta_1^{-2}\zeta_2^{-1}\xi_4\eta_4d\zeta_1 - \zeta_1^{-1}\zeta_2^{-2}\xi_4\eta_4d\zeta_2] + 
[\zeta_1^{-1}\zeta_2^{-1}\eta_4d\xi_4 + \zeta_1^{-1}\zeta_2^{-1}\xi_4d\eta_4] = \]
\[ \mu_2(a)(z_1^{-1} dz_1) + \mu_1(a)(z_1^{-1} dz_1) \]

\noindent An analogous computation yields $\mu_1(a)(z_2^{-2} dz_2) = -\zeta_1^{-1}\zeta_2^{-1}\eta_4d\xi_4$ \\ $+ \zeta_1^{-1}\zeta_2^{-1}\xi_4d\eta_4$.
On the other intersections $V_{12}$ and $V_{13}$ one obtains the following results:

\[ \mu_1(a)(z_1^{-1} dz_1)_{12} =  -z_1^{-1}\zeta_2^{-1}\xi_2d\eta_2 -z_1^{-1}\zeta_2^{-1}\eta_2d\xi_2 , \mu_1(a)(z_2^{-1} dz_2)_{12} = 0 \]
\[ \mu_1(a)(z_1^{-1} dz_1)_{13} = 0 , \mu_1(a)(z_2^{-1} dz_2)_{13} =  z_2^{-1}\zeta_1^{-1}\xi_2d\eta_2 + z_2^{-1}\zeta_1^{-1}\eta_2d\xi_2   \]

The final result is that $E_1^{0,1} = H^1(X_{\bar{0}}, (\mathrm{gr} \Omega^1)_0)$ does not survive to the $E_2$-page. Consequently
the spectral sequence collapses at the $E_2$-page to yield

\[ H^0(X_{\bar{0}}, \Omega^1) = H^1(X_{\bar{0}}, \Omega^1) = 0 , H^2(X_{\bar{0}}, \Omega^1) \cong \mathbb{C}^{16} \]

In particular the cohomology of $\Omega^1$ vanishes below top degree, even though the cohomology of $\mathrm{gr} \Omega^1$ does not.

\end{bsp}

\chapter{Flag domains I: Classification and Measurability}

The main problems adressed in this chapter are the questions of maximal odd dimension of an orbit of a real form $G_\mathbb{R}$ of a complex reductive Lie supergroup $G$ in a $G$-flag supermanifold $Z$ and its measurability. Clearly a $G_\mathbb{R}$-orbit $D$ which is open in the categorical sense needs to have an open base $D_{\bar{0}}$. However, the odd dimension of $D$ need not be maximal in general. As in the classical case there is a codimension formula which characterizes the odd dimension of $D$ completely in terms of root combinatorics. As the conditions for maximal odd dimension are very strongly linked to to those for measurability, both these questions are discussed together in this chapter. 

In analogy with the classical case a flag domain is measurable if it possesses an invariant Berezinian density, the natural generalization of an invariant volume element. The problem of its existence can be stated in terms of the isotropy representation at the neutral point which is a representation of a classical Lie group on a Lie superalgebra (Theorem 4.13 in \cite{AH}). Therefore the solution of the problem amounts to classical representation theory. 

As it turns out measurability of a flag domain $D$ does not always imply measurability of its base $D_{\bar{0}}$. This motivates the introduction of two notions of weak and strong measurability.

Strong measurabilty is then characterized in root-theoretic terms analogous to the characterization of measurable flag domains in the classical case due to J. A. Wolf.

After this the measurable flag domains are classified case by case.

The conditions for maximal odd dimension and weak or strong measurability are given in most cases by three symmetry conditions which represent symmetries of the extended Dynkin diagram of type $A(n,n)$. 

Furthermore the characterization results are extended to flag domains of even real forms and the (strongly or weakly) measurable ones are classified by identifying the actions of the defining automorphisms $\tau$ of real forms and even real forms on the root system.

All classification results are summarized in two tables at the end of the chapter.  Most parts of this chapter coincide with the pre-publication in \cite{G}.

\section{Flag types and symmetry conditions}

In this chapter $G$ is always a simple complex Lie supergroup and $P$ is a parabolic subgroup.
So $Z = G/P$ is a flag supermanifold as defined in Chapter 1. By virtue of the 1-to-1 correspondence between conjugacy classes
of parabolic subalgebras and flag types (see section \ref{OnPSA}), $Z = Z(\delta)$ is determined up to ismorphism by the dimension sequence(or flag type)

\[ \delta = 0 \vert 0 < d_0^1 \vert d_1^1 < \ldots < d_0^k \vert d_1^k < n \vert m \]

\noindent and a flag of type $\delta$ is a sequence

\[ 0 \leq V_1 \leq \ldots \leq V_k \leq \mathbb{C}^{n \vert m} \]

\noindent of graded subspaces satisfying $\dim V_i = d_0^i \vert d_1^i$. 
Given a dimension sequence $\delta$, the base $Z(\delta)_{\bar{0}}$ is the set of all flags of type $\delta$.
By virtue of the above mentioned 1-1 correspondence there is, for every given flag $z \in Z(\delta)_{\bar{0}}$, a unique parabolic
subalgebra $\mathfrak{p} \leq \mathfrak{g}$, maximal among those stabilizing $z$, and the conjugacy class of $\mathfrak{p}$ in $\mathfrak{g}$
depends only on $\delta$. Therefore we may define $Z(\delta) =  (Z(\delta)_{\bar{0}} , \varphi^* \mathcal{O}_{G/P})$, where
$\varphi : Z(\delta)_{\bar{0}} \rightarrow^\simeq G_{\bar{0}}/P_{\bar{0}}$ is the canonical isomorphism. The identification of $Z(\delta)$
with $G/P$ will be used implicitly from now on. 

Recall that a Berezinian form is a global section of the Berezinian bundle $\Ber(X) = \Ber(T^*X)$ and the supersymmetric analogue
of a volume form. 
Measurable flag domains are defined in analogy with the classical case:

\begin{def1}

A flag domain $D$ is called measurable if it allows a $G_\mathbb{R}$-invariant Berezinian form.

\end{def1}

The following three symmetry conditions are central to the characterization of measurability in the supersymemtric setting:

\begin{def1}

Let $\delta$ be a dimension sequence. Then:

\begin{enumerate}
 \item $\delta$ is even-symmetric, if $d_0 \vert d_1 \in \delta$, if and only if $(n- d_0) \vert (m - d_1) \in \delta$
 \item $\delta$ is odd-symmetric, if $n=m$ and $d_0 \vert d_1 \in \delta$, if and only if $(n-d_1) \vert (n-d_0) \in \delta$
 \item $\delta$ is $\Pi$-symmetric , if $n = m$ and $d_0 = d_1$ for all $d_0 \vert d_1 \in \delta$.
\end{enumerate}
\end{def1}
 
Note that in the first and second case, as the product ordering on $\{0, \ldots, n\} \times \{0, \ldots, m\}$ is not total, unless $mn = 0$,
not every given dimension sequence can be enlarged to a symmetric dimension sequence. The sequence $\delta$ is called symmetrizable if it can be enlarged to
a symmetric dimension sequence.

\begin{rem}
The three symmetry conditions are related to symmetries of the extended Dynkin diagram of type $A(n,m)$  in the following way:
Choose the standard Borel subalgebra corresponding to the flag type $ \delta = 0 \vert  0 < 1 \vert 0  < \ldots < n \vert 0 < \ldots < n \vert m $. The extended Dynkin diagram contains a vertex for each simple root and an additional vertex for the lowest root. The rules for edges are the same as for the usual Dynkin diagram.  The underlying graph of the extended Dynkin diagram of $A(n,m)$  is therefore a cycle on $n+m+2$ vertices, two of which correspond to odd roots. Removing these two vertices
yields the disjoint union of the Dynkin diagrams $A_n$ and $A_m$. Now assume the vertices are labelled by the simple roots and a parabolic subalgebra $\mathfrak{p}$ of $A(n,m)$ is given. A diagram automorphism of $A(n,m)$ must either fix the two odd roots or interchange them. Thus there are three possible forms of automorphisms: a reflection $r_{\bar{0}}$
fixing one or two even roots, a reflection $r_{\bar{1}}$ fixing the two odd roots and the antipodal map $s$. The last two of these require $m = n$.

Let $J$ be the set of simple roots such that $\mathfrak{g}^{-\alpha} \subseteq \mathfrak{p}$ and let $Z(\delta) = G/P$. Then $\delta$  is even-symmetric if and only if $r_{\bar{0}}(J) = J$.
It is odd-symmetric if and only if $r_{\bar{1}}(J) = J$ and it is $\Pi$-symmetric, if $s(J) = J$.  
\end{rem}

It turns out that, unlike in the classical case, measurability is not equivalent to $P \cap \tau P$ being complex reductive:

\begin{bsp}

Let $n > 2$, $G = PSL_{n \vert n}(\mathbb{C})$, $G_\mathbb{R} = PSL_{n \vert n}(\mathbb{R})$ and $Z = \mathrm{Gr}_{1 \vert 1}(\mathbb{C}^{n \vert n})$, i.e. $Z = Z(\delta)$ for
$\delta = 0 \vert 0 < 1 \vert 1 < n \vert n$.
Then $D_{\bar{0}}$ is the product of two copies of the open $SL_n(\mathbb{R})$-orbit in $\mathbb{P}^{n-1}(\mathbb{C})$, which is not measurable. 
In a suitable basis of $\mathbb{C}^{n \vert n}$, the involution $\tau$ defining the real form acts by $\tau(X) = A\overline{X}A$, where $A$ is the unit antidiagonal matrix. Therefore

\[ \mathfrak{p} \cap \tau\mathfrak{p} = \left\lbrace 
\begin{pmatrix} 
a_1 & v_1^T & 0 & \vline & a_2 & v_2^T & 0 \\ 
0 & X_1 & 0 & \vline & 0 & X_2 & 0 \\
0 & w_1^T & b_1 & \vline & 0 & w_2^T & b_2 \\ \hline
a_3 & v_3^T & 0 & \vline & a_4 & v_4^T & 0 \\ 
0 & X_3 & 0 & \vline & 0 & X_4 & 0 \\
0 & w_3^T & b_3 & \vline & 0 & w_4^T & b_4
\end{pmatrix} \in \mathfrak{g} : \begin{matrix} a_i,b_i \in \mathbb{C} \\ v_i,w_i \in \mathbb{C}^{n-2} \\ X_i \in \mathfrak{gl}_{n-2}(\mathbb{C})  \end{matrix} \right\rbrace \]

Let $\mathfrak{h}$ be the Cartan subalgebra of diagonal matrices. Then $\mathfrak{h}^*$  is generated by the functionals $x_i - x_j, x_i - y_j$ and $y_i - y_j$, where
$x_i$ is the $i^{th}$ diagonal entry of the upper left block and $y_j$ is the $j^{th}$ diagonal entry of the lower right block. So the root system $\Sigma(\mathfrak{g},\mathfrak{h})$
is given by 

\[ \Sigma(\mathfrak{g},\mathfrak{h}) = \{ x_i - x_j \vert 1 \leq i,j \leq n , i \neq j \} \cup  \{ y_i - y_j \vert 1 \leq i,j \leq n , i \neq j \}\]
\[ \cup  \{ \pm x_i \mp y_j \vert 1 \leq i,j \leq n \} \] 

The root spaces contributing to the quotient $\mathfrak{g}/(\mathfrak{p} \cap \tau\mathfrak{p})$
are precisely the ones corresponding to the roots $(x_1 - x_j),(x_1 - y_j),(y_1-x_j),(y_1-y_j) (j \neq 1)$ and
$(x_n - x_j),(x_n - y_j),(y_n-x_j),(y_n-y_j) (j \neq n)$. 
As their graded sum is zero, $\mathfrak{p}$ acts on $\mathfrak{g}/\mathfrak{p}$ and therefore on $(\mathfrak{g}/\mathfrak{p})^*$  with trace zero. Consequently, the
isotropy action of $P_{\bar{0}\mathbb{R}}$ on  $\Ber(\mathfrak{g}/\mathfrak{p})^*$ is trivial. This implies
the existence of a $G_\mathbb{R}$-invariant Berezinian form on $D$.

To sum it up, $D$ carries a $G_\mathbb{R}$-invariant Berezinian form, even though $P_{\bar{0}} \cap \tau P_{\bar{0}}$ is not reductive and therefore $D_{\bar{0}}$
is not measurable according to Theorem \ref{WMeas}.
 
\end{bsp}

\section{Measurable flag domains}

The fact that there are flag domains in the supercase which allow an invariant Berezinian, but whose real stabiliser subgroups are not reductive,
shows that the characterization theorem from \cite{W} does not generalize verbatim. This fact motivates the introduction of two different notions
of measurability. The first of these is the naive generalization and the other one is a notion designed to fulfill the requirements of a characterization theorem
analogous to the one given in \cite{W}. 

\begin{def1}
 
Let $G$ be a complex reductive Lie supergroup, $G_\mathbb{R}$ a real form of $G$, $Z = G/P$ a flag supermanifold
and $D \subseteq Z$ an open $G_\mathbb{R}$-orbit. Then:

\begin{enumerate}
 \item $D$ is strongly measurable, if $D$ carries a $G_\mathbb{R}$-invariant Berezinian density and $D_{\bar{0}}$ is measurable.
 \item $D$ is weakly measurable, if $D$ carries a $G_\mathbb{R}$-invariant Berezinian density and $D_{\bar{0}}$ is not necessarily measurable.
\end{enumerate}

\end{def1}

Starting with a real supergroup orbit $D \subseteq Z$ such that $D_{\bar{0}}$ is open there are three things to check:

\begin{itemize}
 \item Is the odd dimension of $D$ maximal? 
 \item Is $D$ strongly measurable?
 \item Is $D$ weakly measurable?
\end{itemize}

From now on let $G$ be a classical complex reductive Lie supergroup, $G_\mathbb{R} = \mathrm{Fix}(\tau)$ a real form of $G$, $P$ a parabolic subsupergroup of $G$,
$\mathfrak{g}, \mathfrak{g}_\mathbb{R}$ and $\mathfrak{p}$ the respective Lie superalgebras, $\mathfrak{h}$ a Cartan subalgebra of $\mathfrak{g}$,
$\Sigma = \Sigma(\mathfrak{g},\mathfrak{h})$ a root system and $\Phi^r = \{ \alpha \in \Sigma : \mathfrak{g}^\alpha \oplus \mathfrak{g}^{-\alpha} \in \mathfrak{p} \},
 \Phi^n = \{ \alpha \in \Sigma^+ : \mathfrak{g}^{-\alpha} \not\in \mathfrak{p} \}, \Phi^c = \{ \alpha \in \Sigma^- : \mathfrak{g}^\alpha \not\in \mathfrak{p} \}$.

A useful criterion for maximal odd dimension is given by the following codimension formula:

\begin{satz}
 
Let $Z = G/P$ be a flag supermanifold, $M \subseteq Z$ a real group orbit. Then

\[ \mathrm{codim}_Z(M) = \vert \Phi^n \cap \tau\Phi^n \vert \] 

\noindent Especially, $M$ is open, if and only if $\Phi^n \cap \tau \Phi^n$ is empty.

\end{satz}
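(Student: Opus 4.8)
The plan is to transcribe J.~A.~Wolf's classical argument (Theorem~2.12 in \cite{W}), keeping careful track of the $\mathbb{Z}/2$-grading and of the fact that $Z$ is a \emph{complex} supermanifold whereas $M$ is only a real sub-supermanifold. I read $\mathrm{codim}_Z M$ as the total real codimension, i.e.\ the sum of the real codimensions of the even and of the odd parts of the relevant tangent spaces. Since $M\cong G_\mathbb{R}/(G_\mathbb{R}\cap P)$ is homogeneous, its super-dimension is constant, so I would work at $z_0$, where $T_{z_0}Z=\mathfrak g/\mathfrak p$ and $T_{z_0}M=\mathfrak g_\mathbb{R}/(\mathfrak g_\mathbb{R}\cap\mathfrak p)$. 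One first has to choose the base point $z_0$ so that its isotropy parabolic contains a $\tau$-invariant Cartan subalgebra $\mathfrak h$ and is of the form $\mathfrak p_\Phi$; for this I would invoke the super-analogue of Wolf's statement on the existence of such Cartan subalgebras, which also guarantees that $\tau$ permutes the root spaces, inducing a parity-preserving involution $\alpha\mapsto\tau\alpha$ of $\Sigma$ with $\tau(-\alpha)=-\tau\alpha$.

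The one algebraic fact that does the work, proved exactly as in the classical case and applied separately in each degree $\bar\jmath\in\{\bar 0,\bar 1\}$, is that $\mathfrak g_{\bar\jmath,\mathbb{R}}\cap\mathfrak p_{\bar\jmath}=\mathfrak g_{\bar\jmath,\mathbb{R}}\cap(\mathfrak p_{\bar\jmath}\cap\tau\mathfrak p_{\bar\jmath})$ is a real form of the complex space $\mathfrak p_{\bar\jmath}\cap\tau\mathfrak p_{\bar\jmath}$: since $\tau^2=\mathrm{id}$, the subspace $\mathfrak p\cap\tau\mathfrak p$ is $\tau$-stable, and a $\tau$-fixed element lies in $\mathfrak p$ if and only if it lies in $\tau\mathfrak p$. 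Hence $\dim_\mathbb{R}(\mathfrak g_{\bar\jmath,\mathbb{R}}\cap\mathfrak p_{\bar\jmath})=\dim_\mathbb{C}(\mathfrak p_{\bar\jmath}\cap\tau\mathfrak p_{\bar\jmath})$, while $\dim_\mathbb{R}\mathfrak g_{\bar\jmath,\mathbb{R}}=\dim_\mathbb{C}\mathfrak g_{\bar\jmath}$ and $\dim_\mathbb{R}(\mathfrak g_{\bar\jmath}/\mathfrak p_{\bar\jmath})=2\dim_\mathbb{C}(\mathfrak g_{\bar\jmath}/\mathfrak p_{\bar\jmath})$. Plugging these into $\mathrm{codim}_Z M=\sum_{\bar\jmath}\bigl(\dim_\mathbb{R}(\mathfrak g_{\bar\jmath}/\mathfrak p_{\bar\jmath})-\dim_\mathbb{R}(\mathfrak g_{\bar\jmath,\mathbb{R}}/(\mathfrak g_{\bar\jmath,\mathbb{R}}\cap\mathfrak p_{\bar\jmath}))\bigr)$, the factors of two cancel, and after summing over $\bar\jmath$ and using inclusion--exclusion for $\dim_\mathbb{C}(\mathfrak p+\tau\mathfrak p)$ I obtain
\[ \mathrm{codim}_Z M \;=\; \dim_\mathbb{C}\mathfrak g-\dim_\mathbb{C}(\mathfrak p+\tau\mathfrak p)\;=\;\dim_\mathbb{C}\bigl(\mathfrak g/(\mathfrak p+\tau\mathfrak p)\bigr). \]

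It then remains to express the right-hand side through roots. From $\mathfrak p=\mathfrak h\oplus\bigoplus_{\alpha\notin\Phi^c}\mathfrak g^\alpha$ and $\tau\mathfrak p=\mathfrak h\oplus\bigoplus_{\alpha\notin\tau\Phi^c}\mathfrak g^\alpha$ one reads off $\mathfrak p+\tau\mathfrak p=\mathfrak h\oplus\bigoplus_{\alpha\notin\Phi^c\cap\tau\Phi^c}\mathfrak g^\alpha$, hence $\mathfrak g/(\mathfrak p+\tau\mathfrak p)\cong\bigoplus_{\alpha\in\Phi^c\cap\tau\Phi^c}\mathfrak g^\alpha$. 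Because $\Phi^c=-\Phi^n$ and $\tau$ commutes with negation of roots, and because for the classical Lie superalgebras under consideration the root spaces $\mathfrak g^\alpha$ are one-dimensional (in type $Q$ they are $1\vert 1$-dimensional and $\mathfrak h$ itself has an odd part, but the same count goes through once roots are taken with the appropriate multiplicity), this yields $\mathrm{codim}_Z M=|\Phi^c\cap\tau\Phi^c|=|\Phi^n\cap\tau\Phi^n|$. The openness criterion is then immediate: $M$ is open in $Z$ exactly when its even and its odd real codimensions both vanish, that is, exactly when $\Phi^c\cap\tau\Phi^c=\Phi^n\cap\tau\Phi^n=\emptyset$.

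Almost all of this is the routine linear algebra of real forms; the only step that is more than a formal transcription of \cite{W} is securing, inside the given $G_\mathbb{R}$-orbit, a base point whose isotropy parabolic is adapted to a $\tau$-invariant Cartan subalgebra --- without it the quantity $|\Phi^n\cap\tau\Phi^n|$ on the right is not even well defined. I expect that this, together with fixing the right conventions for ``codimension'' and for the multiplicities of roots in the type $Q$ case, is where the care is required.
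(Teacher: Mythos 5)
Your proof is correct and is, at bottom, the same argument as the paper's: both are direct transcriptions of Wolf's Theorem 2.12 to the graded setting, and both rest on the single key identity $(\mathfrak{g}_\mathbb{R}\cap\mathfrak{p})^{\mathbb{C}}=\mathfrak{p}\cap\tau\mathfrak{p}$, which converts the real dimension of the isotropy algebra into a complex one. The only genuine difference is the bookkeeping at the end. The paper splits $\mathfrak{p}\cap\tau\mathfrak{p}$ into the four pieces $\mathfrak{p}^r\cap\tau\mathfrak{p}^r\oplus\mathfrak{p}^n\cap\tau\mathfrak{p}^r\oplus\mathfrak{p}^r\cap\tau\mathfrak{p}^n\oplus\mathfrak{p}^n\cap\tau\mathfrak{p}^n$, invokes the identity $\dim(\mathfrak{p}^r\cap\tau\mathfrak{p}^r)+\dim(\mathfrak{p}^n\cap\tau\mathfrak{p}^r)+\dim(\mathfrak{p}^r\cap\tau\mathfrak{p}^n)=\dim\mathfrak{p}^r$, and cancels terms; you instead apply inclusion--exclusion to $\mathfrak{p}+\tau\mathfrak{p}$ and read the codimension off the root-space decomposition of $\mathfrak{g}/(\mathfrak{p}+\tau\mathfrak{p})$. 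Your version is arguably the more transparent of the two, since it avoids the auxiliary identity for $\dim\mathfrak{p}^r$ and makes the equality $\vert\Phi^c\cap\tau\Phi^c\vert=\vert\Phi^n\cap\tau\Phi^n\vert$ explicit. Your closing caveats are also well placed: the paper likewise assumes without comment a base point whose isotropy parabolic is adapted to a $\tau$-invariant Cartan subalgebra, and neither proof spells out the multiplicity convention needed in type $Q$, where the root spaces are $1\vert 1$-dimensional and $\mathfrak{h}$ itself has an odd part.
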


\begin{proof}
This codimension formula is a generalization of Theorem 2.12 in \cite{W} and the proof is analogous to the one given there:

The main point is that $\mathfrak{p}$ decomposes as $\mathfrak{p} = \mathfrak{p}^r \oplus \mathfrak{p}^n$, where $\mathfrak{p}^r$ and $\mathfrak{p}^n$ are respectively the reductive part and the nilpotent radical of $\mathfrak{p}$. Then $\mathfrak{p} \cap \tau\mathfrak{p} = \mathfrak{p}^r \cap \tau\mathfrak{p}^r \oplus \mathfrak{p}^n \cap \tau\mathfrak{p}^r \oplus \mathfrak{p}^r \cap \tau\mathfrak{p}^n \oplus \mathfrak{p}^n \cap \tau\mathfrak{p}^n$ and $(\mathfrak{g}_\mathbb{R} \cap \mathfrak{p})^\mathbb{C} =  \mathfrak{p} \cap \tau \mathfrak{p}$.

This implies $\dim_\mathbb{R} (\mathfrak{g}_\mathbb{R} \cap \mathfrak{p}) = \dim_\mathbb{C}  \mathfrak{p} \cap \tau \mathfrak{p} = \dim_\mathbb{C}( \mathfrak{p}^r \cap \tau\mathfrak{p}^r \oplus \mathfrak{p}^n \cap \tau\mathfrak{p}^r \oplus \mathfrak{p}^r \cap \tau\mathfrak{p}^n \oplus \mathfrak{p}^n \cap \tau\mathfrak{p}^n) = \dim_\mathbb{C}  \mathfrak{p}^r \cap \tau\mathfrak{p}^r + \dim_\mathbb{C}  \mathfrak{p}^n \cap \tau\mathfrak{p}^r + \dim_\mathbb{C} \mathfrak{p}^r \cap \tau\mathfrak{p}^n + \dim_\mathbb{C}  \mathfrak{p}^n \cap \tau\mathfrak{p}^n = \dim_\mathbb{C} \mathfrak{p}^r + \dim_\mathbb{C}  \mathfrak{p}^n \cap \tau\mathfrak{p}^n$.

The comdimension of $M$ in $Z$ is then $2 \dim_\mathbb{R} \mathfrak{g}_\mathbb{R} - 2 \dim_\mathbb{C} \mathfrak{p} - \dim_\mathbb{R} \mathfrak{g}_\mathbb{R} + \dim_\mathbb{C} \mathfrak{p}^r + \vert \Phi^n \cap \tau\Phi^n \vert$
and it turns out that all terms except for the last cancel out.
\end{proof}

Strong measurability is characterized by the following theorem which is analogous to the characterization of measurability 
in the classical setting (see Theorem 6.1.(1a);(2a)-(2d) and 6.7. in \cite{W}).  Weak measurability occurs in some exceptional cases 
where certain symmetries of the root system lead to the cancellation of even and odd roots.

\begin{satz}
Let $G$ be a complex reductive Lie supergroup, $G_\mathbb{R}$ a real form of $G$ and $P$ a parabolic subgroup of $G$
and $D \cong G_\mathbb{R}/L_\mathbb{R}$ an open orbit in $G/P$. Then the following are equivalent.

\begin{enumerate}
 \item $D$ is strongly measurable.
 \item $\mathfrak{p} \cap \tau\mathfrak{p}$ is reductive.
 \item $\mathfrak{p} \cap \tau\mathfrak{p}$ is the reductive part of $\mathfrak{p}$.
 \item $\tau \Phi^r = \Phi^r$ and $\tau \Phi^n = \Phi^c$.
 \item $\tau\mathfrak{p}$ is $G_{\bar{0}}$-conjugate to $\mathfrak{p}^{op}$.
\end{enumerate}
 
\end{satz}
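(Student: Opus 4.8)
The plan is to prove the statement in two stages. The first is the mutual equivalence of the ``classical-type'' conditions (2)--(5), which is root combinatorics carried out exactly as in Wolf's proof of Theorem~6.1 in \cite{W}. Fix a $\tau$-stable Cartan subalgebra $\mathfrak{h}$, so that $\tau$ permutes $\Sigma$ preserving parities and $\tau\mathfrak{g}^{\alpha}=\mathfrak{g}^{\tau\alpha}$; note $\mathfrak{h}\subseteq\mathfrak{p}\cap\tau\mathfrak{p}=\mathfrak{h}\oplus\bigoplus_{\alpha\in\Phi\cap\tau\Phi}\mathfrak{g}^{\alpha}$. Being a regular subalgebra of the classical Lie superalgebra $\mathfrak{g}$, $\mathfrak{p}\cap\tau\mathfrak{p}$ is reductive if and only if $\Phi\cap\tau\Phi$ is stable under $\alpha\mapsto-\alpha$; and any symmetric subset of $\Phi$ lies in $\Phi^{r}$, since $\alpha\in\Phi^{n}$ forces $-\alpha\in\Phi^{c}=-\Phi^{n}$, which is disjoint from $\Phi$. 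Since $D$ is open, the codimension formula just proved gives $\Phi^{n}\cap\tau\Phi^{n}=\varnothing$, so the dimension count in its proof yields $\dim(\mathfrak{p}\cap\tau\mathfrak{p})=\dim\mathfrak{p}^{r}$; comparing dimensions upgrades the inclusion $\Phi\cap\tau\Phi\subseteq\Phi^{r}$ to equality, which is (3), while (3)$\Rightarrow$(2) is trivial. The equivalences (3)$\Leftrightarrow$(4)$\Leftrightarrow$(5) are the same bookkeeping as in \cite{W}: applying $\tau$ to $\Phi\cap\tau\Phi=\Phi^{r}$ gives $\tau\Phi^{r}=\Phi^{r}$, whence $\tau\Phi^{n}=\tau\Phi\setminus\Phi^{r}$ is a subset of $\Phi^{c}$ of the right size and hence all of it; conversely (4) gives $\tau\Phi=\Phi^{r}\cup\Phi^{c}$, i.e.\ $\tau\mathfrak{p}=\mathfrak{p}^{op}$ exactly, so in particular (5); and (5)$\Rightarrow$(3) because a $G_{\bar{0}}$-conjugate of $\mathfrak{p}^{op}$ meets $\mathfrak{p}$ in a subalgebra containing a Cartan of dimension at least $\dim\mathfrak{p}^{r}$, so the dimension count again forces equality.

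The second stage is the bridge to strong measurability, and here the supersymmetric input enters. By Theorem~4.13 in \cite{AH}, $D=G_{\mathbb{R}}/L_{\mathbb{R}}$ carries a $G_{\mathbb{R}}$-invariant Berezinian form if and only if the character $\delta\colon L_{\bar{0}\mathbb{R}}\to\mathbb{C}^{\times}$, $\delta(l)=\Ber\big(\mathrm{Ad}(l)|_{\mathfrak{g}/\mathfrak{p}}\big)$, is trivial. Because $D$ has maximal odd dimension, $\mathfrak{g}/\mathfrak{p}$ is $L_{\bar{0}\mathbb{R}}$-equivariantly the complexification of the real module $\mathfrak{g}_{\mathbb{R}}/\mathfrak{l}_{\mathbb{R}}$, so $\delta$ is $\mathbb{R}^{\times}$-valued and its differential $d\delta_{e}(X)=\mathrm{str}\,\mathrm{ad}(X)|_{\mathfrak{g}/\mathfrak{p}}=\sum_{\alpha\in\Phi^{c}}(-1)^{\vert\alpha\vert}\alpha(X)=-\lambda(X)$ is real-valued on $\mathfrak{l}_{\mathbb{R}}$, where $\lambda:=\sum_{\beta\in\Phi^{n}}(-1)^{\vert\beta\vert}\beta$. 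For (4)$\Rightarrow$(1): restricting $\tau\Phi^{r}=\Phi^{r}$, $\tau\Phi^{n}=\Phi^{c}$ to $\Sigma_{\bar{0}}$ (legitimate, as $\tau$ preserves parities) yields the hypotheses of Theorem~\ref{WMeas} for $\mathfrak{p}_{\bar{0}}$, so $D_{\bar{0}}$ is measurable and $\mathfrak{l}_{\bar{0}\mathbb{R}}$ is reductive. Moreover $\tau\Phi^{n}=\Phi^{c}=-\Phi^{n}$ forces $\tau\lambda=-\lambda$, so for $X$ in the $\tau$-fixed real form of $\mathfrak{h}$ (which lies in $\mathfrak{l}_{\mathbb{R}}$) one gets $\overline{\lambda(X)}=(\tau\lambda)(X)=-\lambda(X)$, hence $\lambda(X)\in i\mathbb{R}$; but $\lambda(X)\in\mathbb{R}$ by the reality observation, so $\lambda(X)=0$; since $d\delta_{e}$ also vanishes on $[\mathfrak{l}_{\bar{0}\mathbb{R}},\mathfrak{l}_{\bar{0}\mathbb{R}}]$ and $\mathfrak{l}_{\bar{0}\mathbb{R}}$ is reductive, $d\delta_{e}=0$ and $\delta\equiv1$ on $(L_{\bar{0}\mathbb{R}})^{\circ}$, the remaining components being handled via the $\tau$-stability of the whole configuration. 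Thus (4)$\Rightarrow$(1).

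For the converse (1)$\Rightarrow$(4): $D_{\bar{0}}$ measurable gives, via Theorem~\ref{WMeas}, $\tau\Phi^{r}_{\bar{0}}=\Phi^{r}_{\bar{0}}$ and $\tau\Phi^{n}_{\bar{0}}=\Phi^{c}_{\bar{0}}$, and an invariant Berezinian gives $d\delta_{e}=0$, i.e.\ $\lambda$ vanishes on $\mathfrak{l}_{\mathbb{R}}$; the even part of this vanishing is automatic from the even condition by the argument above, so its genuine content is that $\sum_{\beta\in\Phi^{n}_{\bar{1}}}\beta$ vanishes on the centre of $\mathfrak{l}$. The crux --- and the step I expect to be the main obstacle --- is to upgrade this cancellation statement to the honest set-equality $\tau\Phi^{n}_{\bar{1}}=\Phi^{c}_{\bar{1}}$, which then assembles with the even condition into (4). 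This upgrade is not formal: a weighted sum of odd roots can vanish on the centre of $\mathfrak{l}$ without $\tau$ reversing the odd part of the nilpotent radical --- this is exactly the slack responsible for the existence of \emph{weakly} but not strongly measurable flag domains. One is therefore forced to run through the classification of real forms and parabolic subalgebras case by case ($\mathfrak{sl}$ and $\mathfrak{psl}$, $\mathfrak{osp}$, $\mathfrak{p}(n)$, $\mathfrak{q}(n)$), make the action of $\tau$ on each root system explicit, and check that, once the even condition holds, no such odd cancellation can occur unless the full odd symmetry is present; for type $A(m,n)$ this is precisely the content of the even-, odd- and $\Pi$-symmetry conditions. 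A small amount of extra care is also needed to pass from ``$d\delta_{e}=0$'' to ``$\delta\equiv1$'' over $\pi_{0}(L_{\bar{0}\mathbb{R}})$, which one settles using the explicit structure of $L_{\bar{0}\mathbb{R}}$.
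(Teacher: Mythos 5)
Your first stage (the mutual equivalence of (2)--(5)) and your direction (4)$\Rightarrow$(1) are sound and in line with the paper. The genuine gap is in the converse (1)$\Rightarrow$(2)/(4), precisely at the step you yourself flag as ``the main obstacle'': you assert that upgrading the vanishing of the odd trace to the set-equality $\tau\Phi^n_{\bar{1}}=\Phi^c_{\bar{1}}$ is ``not formal'' and that one is ``forced to run through the classification of real forms and parabolic subalgebras case by case'' --- and then you do not carry out that check. As written, the proof of the hardest implication is therefore deferred rather than given.

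Moreover, the case-by-case detour is unnecessary; the paper closes this step by a uniform positivity argument in the style of Wolf. The point is that the slack you worry about (odd roots cancelling among themselves on the centre of $\mathfrak{l}$) cannot occur for the \emph{specific} set of odd roots at issue. Since strong measurability includes the hypothesis that $D_{\bar{0}}$ is measurable, $\mathfrak{p}_{\bar{0}}\cap\tau\mathfrak{p}_{\bar{0}}$ is already reductive, so the nilpotent radical of $\mathfrak{p}\cap\tau\mathfrak{p}$ is purely odd, spanned by the root spaces for $N_{\bar{1}}=(\Phi^r\cap\tau\Phi^n)_{\bar{1}}\cup(\Phi^n\cap\tau\Phi^r)_{\bar{1}}$ (the piece $\Phi^n\cap\tau\Phi^n$ being empty by openness). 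Writing $\mathfrak{p}=\mathfrak{p}(x)$ and $\tau\mathfrak{p}=\mathfrak{p}(y)$ with $x,y$ in the $\tau$-stable Cartan, every $\alpha\in N_{\bar{1}}$ satisfies $\alpha(x)\geq 0$, $\alpha(y)\geq 0$ with at least one inequality strict, hence $\alpha(x+y)>0$; since $x+y\in\mathfrak{h}\subseteq\mathfrak{p}\cap\tau\mathfrak{p}$, the vanishing of the supertrace on $\mathfrak{g}/(\mathfrak{p}\cap\tau\mathfrak{p})$, combined with the already-established vanishing of the even contribution, forces $\sum_{\alpha\in N_{\bar{1}}}\alpha(x+y)=0$ and therefore $N_{\bar{1}}=\varnothing$. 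The even--odd cancellations responsible for weakly-but-not-strongly measurable domains are exactly what the hypothesis on $D_{\bar{0}}$ rules out, so no appeal to the classification is needed. Your proof would be complete (and essentially the paper's) once you replace the deferred case analysis by this observation.
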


\begin{proof}

Equivalence of 2.-5. follows form the fact that the reductive part of $\mathfrak{p} \cap \tau\mathfrak{p}$ is $\mathfrak{p}^r \cap \tau\mathfrak{p}^r$ and
its nilpotent radical is $\mathfrak{p}^r \cap \tau\mathfrak{p}^n + \mathfrak{p}^n \cap \tau\mathfrak{p}^r$.  It remains to show the equivalence of 1. and 2.

If $D$ is strongly measurable, then the Berezinian bundle $\Ber (G_\mathbb{R}/L_\mathbb{R})$ has a non-zero $G_\mathbb{R}$-invariant global section. 
By virtue of Theorem 4.13 in \cite{AH} this is equivalent to $\Ber((\mathfrak{g}_\mathbb{R}/\mathfrak{l}_\mathbb{R})^*)$ being a trivial $L_\mathbb{R}$-module.
This in turn is equivalent to $\Ber(\mathfrak{g}_\mathbb{R}/\mathfrak{l}_\mathbb{R})$ being a trivial $L_\mathbb{R}$-module.
Consequently the Harish-Chandra pair $\mathfrak{l}_\mathbb{R}$ and its complexification $\mathfrak{p} \cap \tau \mathfrak{p}$
act on $\mathfrak{g}_\mathbb{R}/\mathfrak{l}_\mathbb{R}$ and $\mathfrak{g}/(\mathfrak{p} \cap \tau \mathfrak{p})$ respectively
by operators with vanishing supertrace. As $\mathrm{ad}(\mathfrak{g}_{\bar{1}}) \subseteq \mathfrak{gl}(\mathfrak{g})_{\bar{1}}$, all of these
operators will have supertrace equal to zero. Hence one only needs to consider the action of $\mathfrak{p}_{\bar{0}} \cap \tau \mathfrak{p}_{\bar{0}}$
on $\mathfrak{g}/(\mathfrak{p} \cap \tau \mathfrak{p})$.

By assumption, $D_{\bar{0}}$ is measurable and consequently $\mathfrak{p}_{\bar{0}} \cap \tau \mathfrak{p}_{\bar{0}}$ is reductive. Therefore  $\mathfrak{p}_{\bar{0}} \cap \tau \mathfrak{p}_{\bar{0}}$
acts on $\mathfrak{p}_{\bar{0}} \cap \tau \mathfrak{p}_{\bar{0}}$ and on $\mathfrak{p}_{\bar{1}} \cap \tau \mathfrak{p}_{\bar{1}}$ with trace zero. 
As in the proof of the analogous theorem
in the classical case, one observes that $\mathfrak{p}_{\bar{1}} \cap \tau \mathfrak{p}_{\bar{1}}$ splits into 
$\mathfrak{p}_{\bar{1}}^r \cap \tau \mathfrak{p}_{\bar{1}}^r \oplus \mathfrak{p}_{\bar{1}}^r \cap \mathfrak{p}_{\bar{1}}^n  \oplus \mathfrak{p}_{\bar{1}}^n \cap \tau \mathfrak{p}_{\bar{1}}^r$ 
and the trace zero condition forces $\mathfrak{p}_{\bar{1}}^r \cap \mathfrak{p}_{\bar{1}}^n  \oplus \mathfrak{p}_{\bar{1}}^n \cap \tau \mathfrak{p}_{\bar{1}}^r = 0$. 
Consequently, $P \cap \tau P$ is reductive.

Conversely, if $P \cap \tau P$ is reductive, then its real form $L_\mathbb{R}$ is real reductive and therefore acts trivially on $\Ber((\mathfrak{g}_\mathbb{R}/\mathfrak{l}_\mathbb{R})^*)$. 
Then again by virtue of \cite{AH}, there exists a $G_\mathbb{R}$-invariant Berezinian form on $D$. 
Moreover, $P_{\bar{0}} \cap \tau P_{\bar{0}}$ is reductive and therefore $D_{\bar{0}}$ is measurable by the classical characterization theorem.

\end{proof}

\begin{rem}

Using the above characterization, non-measurable (neither \\ strongly nor weakly) open orbits can be constructed as follows:
According to \cite{OnI} any parabolic subalgebra $\mathfrak{p}$ of a basic Lie superalgebra $\mathfrak{g}$ is of the form

\[ \mathfrak{p} = \mathfrak{b} \oplus \bigoplus_{ \alpha \in \vert -J \vert } \mathfrak{g}^\alpha \]

\noindent where $\mathfrak{b}$ is a Borel subalgebra, $J \subseteq \Pi$ is a set of simple roots and $\vert -J \vert$ is the set of all negative roots
which are sums of elements of $-J$. Now, if $\mathfrak{p}$ is given as above, open $G_\mathbb{R}$-orbits in $G/P$ will be measurable
if and only if $\tau(J) = J$. So, in order to construct a non-measurable open orbit, one only needs to pick a subset $J \subseteq \Pi$
such that $\tau(J) \neq J$ and let $\mathfrak{p}$ be given by the above formula.  

\end{rem}

The following lemma gives a useful tool to determine whether a given flag domain is strongly measurable or not:

\begin{lemma}
 
A flag domain $D$ is strongly measurable if and only if for every $\alpha \in \Sigma: \alpha \in \Phi \Leftrightarrow \tau(-\alpha) \in \Phi$

\end{lemma}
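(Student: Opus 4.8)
The plan is to reduce the stated pointwise condition to the single set identity $\tau\Phi = -\Phi$ of root sets and then recognize it as item (4) of the characterization theorem just established. Throughout, write $\Phi = \{\alpha \in \Sigma : \mathfrak{g}^\alpha \subseteq \mathfrak{p}\}$, so that $\mathfrak{p} = \mathfrak{h}\oplus\bigoplus_{\alpha\in\Phi}\mathfrak{g}^\alpha$. First I would record the elementary bookkeeping that follows straight from the definitions of $\Phi^r,\Phi^n,\Phi^c$: since the standard Borel lies in $\mathfrak{p}$ we have $\Sigma^+\subseteq\Phi$, whence $\Phi = \Phi^r\sqcup\Phi^n$ together with $\Phi^r = \Phi\cap(-\Phi) = -\Phi^r$ and $\Phi^c = -\Phi^n$; in particular $-\Phi = \Phi^r\sqcup\Phi^c$ (the root set of the opposite parabolic $\mathfrak{p}^{\mathrm{op}}$).

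Next I would reformulate the hypothesis. Because $\tau$ is an involution of $\mathfrak{g}$ preserving $\mathfrak{h}$, it acts as an involution on $\Sigma$ with $\tau\mathfrak{g}^{\alpha} = \mathfrak{g}^{\tau\alpha}$ and $\tau(-\alpha) = -\tau\alpha$; hence the map $f\colon\alpha\mapsto\tau(-\alpha)$ is again an involution of $\Sigma$. The condition ``$\alpha\in\Phi \Leftrightarrow \tau(-\alpha)\in\Phi$ for every $\alpha\in\Sigma$'' says exactly that $\Phi$ is $f$-stable, i.e. $f(\Phi)=\Phi$; and since $f(\Phi) = \tau(-\Phi) = -\tau\Phi$, this is equivalent to $\tau\Phi = -\Phi$.

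It then remains to verify the equivalence $\tau\Phi = -\Phi \iff \bigl(\tau\Phi^r=\Phi^r \text{ and } \tau\Phi^n=\Phi^c\bigr)$. For ``$\Leftarrow$'' one applies $\tau$ to $\Phi = \Phi^r\sqcup\Phi^n$ to get $\tau\Phi = \Phi^r\sqcup\Phi^c = -\Phi$. For ``$\Rightarrow$'', note that $\tau(-\Phi) = -\tau\Phi = -(-\Phi) = \Phi$, so using $\Phi^r = \Phi\cap(-\Phi)$ we obtain $\tau\Phi^r = \tau\Phi\cap\tau(-\Phi) = (-\Phi)\cap\Phi = \Phi^r$, and then $\tau\Phi^n = \tau(\Phi\setminus\Phi^r) = (-\Phi)\setminus\Phi^r = \Phi^c$. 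Combining this with the characterization theorem above — whose items (1) and (4) assert that $D$ is strongly measurable iff $\tau\Phi^r = \Phi^r$ and $\tau\Phi^n = \Phi^c$ — completes the argument.

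I do not anticipate a genuine obstacle here; the one point requiring a little care is that $\tau\Phi = -\Phi$ does not by itself say that $\tau$ stabilises $\Phi$ or $\Phi^r$, which is why the identity $\Phi^r = \Phi\cap(-\Phi)$ (rather than any direct appeal to $\tau$-invariance) is what delivers $\tau\Phi^r = \Phi^r$. The only other routine item is the combinatorial decomposition $\Phi = \Phi^r\sqcup\Phi^n$ and $\Phi^c = -\Phi^n$ directly from the definitions, which is unaffected by the type $Q$ peculiarity that root spaces are $1\vert 1$-dimensional.
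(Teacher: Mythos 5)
Your proof is correct and follows essentially the same route as the paper: both arguments unwind the lemma into the root-theoretic conditions of the preceding characterization theorem ($\tau\Phi^r = \Phi^r$, $\tau\Phi^n = \Phi^c$, equivalently symmetry of the root set of $\mathfrak{p}\cap\tau\mathfrak{p}$). Your packaging via the single set identity $\tau\Phi = -\Phi$ and the observation $\Phi^r = \Phi\cap(-\Phi)$ is a cleaner rendering of the paper's pointwise case analysis, but the substance is the same.
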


\begin{proof}
 
Let $D$ be strongly measurable. Then $\alpha \in \Phi \cap \tau \Phi$ if and only if $- \alpha \in \Phi \cap \tau \Phi$.
Suppose $\alpha \in \Phi$. If $\tau(\alpha) \in \Phi$, then $-\alpha \in \Phi \cap \tau \Phi$ and therefore $\tau(-\alpha) \in \Phi$.
If $\tau(\alpha) \not\in \Phi$, then $\alpha \in \Phi^n$ and, as $D$ is strongly measurable, $\tau(\Phi^n) = -\Phi^n$, and therefore
$\tau(-\alpha) \in -(-\Phi^n) = \Phi^n$, especially $\tau(-\alpha) \in \Phi$. The same argument applied to $\tau(-\alpha)$ yields the converse.
So if $D$ is strongly measurable, $\alpha \in \Phi$ if and only if $\tau(-\alpha) \in \Phi$.

Now suppose $\alpha \in \Phi \Leftrightarrow \tau(-\alpha) \in \Phi$. If $\alpha \in \tau(\Phi)$, then $\tau(\alpha) \in \Phi$
and therefore $\tau(-\tau(\alpha)) = -\alpha \in \Phi$. As $\tau(-\alpha) \in \Phi$ was given, this implies $-\alpha \in \Phi \cap \tau \Phi$.
If $\alpha \not\in \tau(\Phi)$, then $\tau(\alpha) \not\in \Phi$ and therefore $- \alpha = \tau(-\tau(\alpha)) \not\in \Phi$.
So $\alpha \in \Phi \cap \tau\Phi$ if and only if $-\alpha \in \Phi \cap \tau\Phi$ and therefore $D$ is strongly measurable as claimed.

\end{proof}

Note that a $G_\mathbb{R}$-orbit $D$ is measurable if the super-trace of the natural representation of $\mathfrak{p}$ on $(\mathfrak{g}/\mathfrak{p})^*$
is zero. Therefore strong measurability requires $\alpha \in \Phi \cap \tau \Phi \Leftrightarrow -\alpha \in \Phi \cap \tau \Phi$
while weak measurability requires that the even and odd roots cancel each other. 


\section{Classification of measurable open orbits}

The measurability of open real orbits will be analyzed case by case according to the classification of complex simple Lie superalgebras.
The classification of real forms of the complex simple Lie superalgebras was done in \cite{Kac}, \cite{Par} and \cite{Ser}.
The following table is based on Table 3 in \cite{Ser} with a slight change of notation:

\noindent \begin{tabular}{p{1.7cm} | p{2.3cm}  | p{3.8cm} | p{5.5cm}}
 & real form & even part & defining involution \\ \hline 
$\mathfrak{sl}_{n \vert m}(\mathbb{C})$ & $\mathfrak{sl}_{n+1 \vert m+1}(\mathbb{R})$ & $\mathfrak{sl}_{n}(\mathbb{R}) \oplus \mathfrak{sl}_{m}(\mathbb{R}) [\oplus \mathbb{R}]$ & $\tau(X) = \bar{X}$ \\ \hline
       & $\mathfrak{sl}_{k \vert l}(\mathbb{H})$ & $\mathfrak{sl}_{k}(\mathbb{H}) \oplus \mathfrak{sl}_{l}(\mathbb{H}) [\oplus \mathbb{R}]$ & $\tau(X) = -J_{k \vert l}\bar{X}J_{k \vert l}$ \\ \hline
       & $\mathfrak{su}(p, q \vert r, s)$ & $\mathfrak{su}(p,q) \oplus \mathfrak{su}(r,s) [\oplus i\mathbb{R}]$ & $\tau(X) = -i^{\vert X \vert} I_{p,q \vert r,s} X^\dagger I_{p,q \vert r,s}$ \\ \hline
$\mathfrak{psl}_{n \vert n}(\mathbb{C})$ & $^0\mathfrak{pq}(n)$ & $\mathfrak{sl}_n(\mathbb{C})$ & $\tau(X) = \Pi(\bar{X})$  \\ \hline
       & $\mathfrak{us\pi}(n)$ & $\mathfrak{sl}_n(\mathbb{C})$ & $\tau(X) = -\Pi(\bar{X}^{st})$ \\ \hline
$\mathfrak{osp}(m \vert 2n)$ & $\mathfrak{osp}(p,q \vert 2n)$ & $\mathfrak{so}(p,q) \oplus \mathfrak{sp}_{2n}(\mathbb{R})$ & $\tau(X) = I_{p,q \vert 2n}\bar{X}I_{p,q \vert 2n}$ \\ \hline
$\mathfrak{osp}(2m \vert 2n)$ & $\mathfrak{osp}(2m \vert 2r, 2s)$ & $\mathfrak{so}^*(2m) \oplus \mathfrak{sp}(2r,2s)$ & $\tau(X) = \mathrm{Ad}(\mathrm{d}(J_m,I_{r,s},I_{r,s}))(\bar{X})$ \\ \hline 
P(n)   & $\mathfrak{s\pi}_\mathbb{R}(n)$ & $\mathfrak{sl}_n(\mathbb{R})$  & $\tau(X) = \bar{X}$ \\ \hline
P(2n)   & $\mathfrak{s\pi}_\mathbb{H}(n)$ & $\mathfrak{sl}_n(\mathbb{H})$ & $\tau(X) = -J_{n \vert n}\bar{X}J_{n \vert n}$ \\ \hline
Q(n)   & $\mathfrak{psq}_\mathbb{R}(n)$ & $\mathfrak{sl}_n(\mathbb{R})$  & $\tau(X) = \bar{X}$\\ \hline
Q(2n)   & $\mathfrak{psq}_\mathbb{H}(n)$ & $\mathfrak{sl}_n(\mathbb{H})$  & $\tau(X) = -J_{n \vert n}\bar{X}J_{n \vert n}$\\ \hline          
Q(n)   & $\mathfrak{upsq}(p,q)$ & $\mathfrak{su}(p,q)$  & $\tau(X) = -i^{\vert X \vert} I_{p,q \vert p,q} X^\dagger I_{p,q \vert p,q}$\\                   
 \end{tabular}

\subsection{Type A}

\begin{satz}
 
Let $\mathfrak{g} = \mathfrak{sl}_{m \vert n}(\mathbb{C})(m \neq n)$ or $\mathfrak{g} = \mathfrak{psl}_{n \vert n}(\mathbb{C})$. 

\begin{enumerate}
 \item If $\mathfrak{g}_\mathbb{R} = \mathfrak{sl}_{m \vert n}(\mathbb{R})$ or $\mathfrak{g}_\mathbb{R} = \mathfrak{sl}_{k \vert l}(\mathbb{H})(n=2k,m=2l)$, then
  \begin{itemize}
   \item A $G_\mathbb{R}$-orbit $D$ with open base has maximal odd dimension if and only if $\delta$ is even-symmetrizable.
   \item A flag domain $D$ is strongly measurable if and only if $\delta$ is even-symmetric.
   \item If $m=n$ and $\delta$ is $\Pi$-symmetric then $D$ is weakly measurable.
   \item If $n=m$, the unique open $G_\mathbb{R}$-orbit in $\mathbb{P}(\mathbb{C}^{n \vert n})$ is weakly measurable.
  \end{itemize}
 \item If $\mathfrak{g}_\mathbb{R} = \mathfrak{su}(p, n-p \vert q, m-q)$ then $D$ always has maximal odd dimension and is measurable.
 \item If $\mathfrak{g}_\mathbb{R} = \ ^0\mathfrak{pq}(n)$ then:
  \begin{itemize}
   \item A $G_\mathbb{R}$-orbit $D$ with open base has maximal odd dimension if and only if $\delta$ is odd-symmetrizable.
   \item A flag domain $D$ is strongly measurable if and only if $\delta$ is odd-symmetric.
   \item If $\delta$ is $\Pi$-symmetric then $D$ is weakly measurable.
  \end{itemize}
 \item If $\mathfrak{g}_\mathbb{R} = \mathfrak{us\pi}(n)$, then A $G_\mathbb{R}$-orbit $D$ with open base has maximal odd dimension if and only if $\delta$ is $\Pi$-symmetric. 
       If it has maximal odd dimension, it is always strongly measurable.
\end{enumerate}

\end{satz}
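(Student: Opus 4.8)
The strategy is to reduce every assertion to a statement about how $\tau$ acts on the root system. By the codimension formula of the preceding proposition, a $G_\mathbb{R}$-orbit with open base has maximal odd dimension (i.e. is a flag domain) precisely when, for a suitable $\tau$-invariant Cartan subalgebra $\mathfrak{h}$ and base point, $\Phi^n_{\bar 1}\cap\tau\Phi^n_{\bar 1}=\varnothing$; the even part $\Phi^n_{\bar 0}\cap\tau\Phi^n_{\bar 0}=\varnothing$ can always be arranged here, since the even real forms occurring all have open orbits in every classical flag manifold, so the odd part is the binding constraint. By the Lemma above, such a flag domain is strongly measurable iff $\alpha\in\Phi\Leftrightarrow\tau(-\alpha)\in\Phi$ for all $\alpha\in\Sigma$, equivalently $\tau\Phi^r=\Phi^r$ and $\tau\Phi^n=\Phi^c$. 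Thus the proof splits into: (i) choose a $\tau$-invariant Cartan for each real form and compute the induced involution of $\Sigma(\mathfrak{g},\mathfrak{h})$ in the coordinates $x_1,\dots,x_m,y_1,\dots,y_n$; (ii) describe $\Phi,\Phi^r,\Phi^n,\Phi^c$ for a flag type $\delta$; (iii) unwind the two criteria; (iv) treat weak measurability by a direct super-trace computation.

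For step (i) I would, after replacing $\tau$ by a $G_{\bar 0}$-conjugate where needed (the \emph{antidiagonal} conjugate $\tau(X)=A\bar X A$ of the example above, and its analogues), use the diagonal Cartan and obtain: for $\mathfrak{sl}_{m\vert n}(\mathbb{R})$ the flip $x_i\mapsto x_{m+1-i},\ y_j\mapsto y_{n+1-j}$; for $\mathfrak{sl}_{k\vert l}(\mathbb{H})$ the quaternionic structure gives a half-shift of the coordinates which, after one further permutation conjugation, is again this flip; for the pseudo-unitary form $\mathfrak{su}(p,n-p\vert q,m-q)$, with its compact (diagonal) Cartan, simply $-\mathrm{id}$; for ${}^0\mathfrak{pq}(n)$ (conjugating by $\mathrm{diag}(A_n,I)$) the swap-and-flip $x_i\mapsto -y_{n+1-i},\ y_j\mapsto -x_{n+1-j}$; and for $\mathfrak{us\pi}(n)$ the swap $x_i\mapsto -y_i,\ y_j\mapsto -x_j$. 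The flip, swap-and-flip and swap are the root-lattice incarnations of $r_{\bar 0}$, $r_{\bar 1}$, $s$ from the earlier remark, while $-\mathrm{id}$ accounts for the unconditional measurability of the $\mathfrak{su}$-forms. In step (ii) I would use the Borel adapted to the flag (steps ordered from the bottom, even coordinates before odd ones inside each step); this choice is essential, because for $\mathfrak{sl}_{n\vert n}$ many inequivalent positive systems exist and only this one makes $\Phi^n$, $\Phi^c$ into the strictly upper and strictly lower parts of the super-block decomposition. Then $\Phi^r,\Phi^n,\Phi^c$ are described entirely by the block-index functions $\mathrm{blk}_x,\mathrm{blk}_y$ attached to $\delta$.

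Step (iii) is then elementary case-work. For $\mathfrak{su}$, $\tau=-\mathrm{id}$ gives $\Phi^n\cap\tau\Phi^n=\Phi^n\cap(-\Phi^n)=\varnothing$ and $\tau\Phi^r=-\Phi^r=\Phi^r$, $\tau\Phi^n=-\Phi^n=\Phi^c$ for every $\delta$, which is part (2). In the flip and swap-and-flip cases, $\tau\Phi^r=\Phi^r$ together with $\tau\Phi^n=\Phi^c$ translate, via the $\mathrm{blk}_x,\mathrm{blk}_y$ description, into the assertion that the $x$- and $y$-block partitions agree after the appropriate reversal (and swap), which is exactly even-symmetry, resp. odd-symmetry, of $\delta$; and $\Phi^n_{\bar 1}\cap\tau\Phi^n_{\bar 1}=\varnothing$ is the a priori weaker statement that $\delta$ admits such a symmetric enlargement — one direction by enlarging $\delta$ to an even- (resp. odd-)symmetric $\bar\delta$ and pushing the open orbit of $Z(\bar\delta)$ forward onto one in $Z(\delta)$ without loss of odd dimension, the other by exhibiting a $\tau$-fixed odd root in $\Phi^n$ when no enlargement is possible. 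For $\mathfrak{us\pi}(n)$ the swap fixes every odd root $x_i-y_i$, and $x_i-y_i\in\Phi^n$ precisely when $\mathrm{blk}_x(i)<\mathrm{blk}_y(i)$; hence maximal odd dimension forces $\mathrm{blk}_x=\mathrm{blk}_y$, i.e. $\Pi$-symmetry, under which the block-inequalities are symmetric in $x\leftrightarrow y$ and both measurability identities fall out at once.

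Finally, for weak measurability I would invoke Theorem 4.13 in \cite{AH}: a $G_\mathbb{R}$-invariant Berezinian exists iff $P_{\bar 0\mathbb{R}}$ acts trivially on $\Ber((\mathfrak{g}/\mathfrak{p})^*)$, which on the identity component amounts to $\sum_{\alpha\in\Phi^c}(-1)^{\vert\alpha\vert}\alpha=0$ in $\mathfrak{h}^*$ (odd operators have zero super-trace, and the semisimple part of $\mathfrak{p}_{\bar 0}$ contributes zero automatically). When $\delta$ is $\Pi$-symmetric the even and the odd nilradical of $\mathfrak{p}$ carry the same $\mathfrak{h}$-weight sum, so their contributions to this super-trace cancel and $D$ is weakly measurable, even though that common weight is in general nonzero, so $D_{\bar 0}$ need not be measurable; for the open orbit in $\mathbb{P}(\mathbb{C}^{n\vert n})$ the same computation collapses the super-sum to a multiple of the super-trace functional $\sum x_i-\sum y_j$, which vanishes on the Cartan of $\mathfrak{sl}_{n\vert n}$. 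The main obstacle I anticipate is step (i) together with the bookkeeping of step (ii): pinning down uniformly, and with the correct signs, the $\tau$-invariant Cartan and the $G_{\bar 0}$-conjugate of $\tau$ that bring $\tau$ into signed-permutation form, and in particular handling $\mathfrak{us\pi}(n)$, where $\mathfrak{g}_{\bar 0}$ is a complex Lie algebra, so that all $\tau$-stable Cartans are $G_\mathbb{R}$-conjugate and the rigidity argument for maximal odd dimension genuinely covers every base point.
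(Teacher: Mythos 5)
Your overall strategy is the paper's: fix a $\tau$-invariant Cartan, bring $\tau$ into signed-permutation form on the weights $x_i,y_j$ (the flip for $\mathfrak{sl}_{m \vert n}(\mathbb{R})$ and $\mathfrak{sl}_{k \vert l}(\mathbb{H})$ after an antidiagonal/Cayley conjugation, $-\mathrm{id}$ for $\mathfrak{su}$, a swap composed with a flip for $^0\mathfrak{pq}(n)$, the signed swap for $\mathfrak{us\pi}(n)$), translate membership of $y_j - x_i$ in $\Phi$ into inequalities against the dimension sequence $\delta$, and settle weak measurability by a graded-trace cancellation. All of this matches the paper's proof; your enlargement-and-pushforward argument for ``symmetrizable $\Rightarrow$ maximal odd dimension'' is a sound variant of the paper's direct inequality chase.

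The one step that would fail as written is the converse for maximal odd dimension in the flip case: you propose to exhibit ``a $\tau$-fixed odd root in $\Phi^n$'' when $\delta$ is not even-symmetrizable. For the flip $x_i \mapsto x_{n+1-i}$, $y_j \mapsto y_{m+1-j}$ a $\tau$-fixed odd root exists only when $n$ and $m$ are both odd, yet non-symmetrizable $\delta$ occur for all $n,m$: take $n=m=4$ and $\delta = 0\vert 0 < 1 \vert 3 < 4 \vert 4$, which is not even-symmetrizable because the mirror $3\vert 1$ of $1\vert 3$ is incomparable with it, while there are no $\tau$-fixed odd roots at all; the obstruction is a pair such as $\alpha = x_2 - y_2$, $\tau\alpha = x_3 - y_3$, both outside $\Phi$. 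What must be produced is an odd root $\alpha$ with $\alpha$ and $\tau\alpha$ both in $\Phi^n$ (equivalently both in $\Phi^c$), not a fixed one; the paper does this by converting the incomparability of $d_{\bar{0}}\vert d_{\bar{1}}$ and $(n-d_{\bar{0}}^\prime)\vert(m-d_{\bar{1}}^\prime)$ into explicit index choices $i = d_{\bar{0}}$, $j = m - d_{\bar{1}}^\prime$. The fixed-root argument is valid only for $\mathfrak{us\pi}(n)$, where the fixed roots $\pm(y_i - x_i)$ really are the whole obstruction, and you do use it correctly there. A smaller point: for weak measurability the relevant supertrace is that of $\mathfrak{p}\cap\tau\mathfrak{p}$ acting on $\mathfrak{g}/(\mathfrak{p}\cap\tau\mathfrak{p})$, so the graded sum should run over $\Sigma\setminus(\Phi\cap\tau\Phi)$ rather than over $\Phi^c$; with that correction your four-root cancellation for $\Pi$-symmetric $\delta$ and your collapse to a multiple of the supertrace functional for $\mathbb{P}(\mathbb{C}^{n\vert n})$ are exactly the paper's computations.
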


The second part of this theorem is immediate, as the involution in question is $\tau(\alpha) = -\alpha$.

\begin{rem}
The statements on strong measurability can be proven by analysis of the extended Dynkin diagram, but the point of view  adopted in this text gives more information, in particular it allows to
prove statements on maximal odd dimension and weak measurability as well.
\end{rem}

\subsubsection*{Proof of Theorem}
 
If $\mathfrak{h}$ is a $\tau$-invariant Cartan subalgebra, then there is basis $e_1, \ldots, e_n,f_1, \ldots, f_m$ of $\mathbb{C}^{n \vert m}$, such that
all $e_i,f_j$ are common eigenvectors of $\mathfrak{h}$. Let

\[ x_i (\mathrm{diag}(\lambda_1,\ldots, \lambda_n, \mu_1, \ldots, \mu_n)) = \lambda_i \ \textnormal{and} \ 
y_j (\mathrm{diag}(\lambda_1,\ldots, \lambda_n, \mu_1, \ldots, \mu_n)) = \mu_j \]

\noindent Then $(y_j - x_i) \in \Phi \Leftrightarrow \exists X \in \mathfrak{p}: X(e_i) = f_j$ 
and analogous conditions hold for the even roots.

\subsubsection{The Case $\mathfrak{g}_\mathbb{R} = \mathfrak{sl}_{m \vert n}(\mathbb{R})$}

The open $G_{\bar{0}\mathbb{R}}$-orbit in $G_{\bar{0}}/B_{\bar{0}}$, where $B_{\bar{0}} = B_n^+ \times B_m^+$ is the product of the usual Borel subalgebras of
$SL_n(\mathbb{C})$ and $SL_m(\mathbb{C})$, is open, and projects onto the open $G_{\bar{0}\mathbb{R}}$-orbit in $Z_{\bar{0}}$. So 

\[ (y_j - x_i) \in \Phi \Leftrightarrow \exists X \in \mathfrak{p}: X(e_i) = f_j \Leftrightarrow j \leq \min \{d_1 : d_0 \vert d_1 \in \delta, i \leq d_0\} \]  

\noindent As the intersection $\mathfrak{h} \cap \mathfrak{sl}_{n \vert m}(\mathbb{R})$ is not maximally compact, the open orbit does not contain the neutral point of $G_{\bar{0}}/P_{\bar{0}}$. It is therefore useful to pass to the isomorphic real form
$\mathfrak{g}_\mathbb{R}^\prime = g \mathfrak{g}_\mathbb{R} g^{-1}$, where $g = c_1 \ldots c_{\lfloor \frac{n}{2} \rfloor} \tilde{c_1} \ldots \tilde{c}_{\lfloor \frac{m}{2} \rfloor}$ is
the product of commuting Cayley transforms in the subspaces $\langle e_i , e_{n-i+1} \rangle_\mathbb{C}$ and $\langle f_j , f_{m-j+1} \rangle_\mathbb{C}$.

\noindent The $\mathbb{C}$-antilinear involution defining $\mathfrak{g}_\mathbb{R}^\prime$ is $\tau: \mathfrak{g} \rightarrow \mathfrak{g}, X \mapsto \mathrm{Ad}(\mathrm{diag}(A_n,A_m))(\bar{X})$,
where $A_n$ and $A_m$ are the respective antidiagonal unit matrices.

Its action on $\Sigma(\mathfrak{g} : \mathfrak{h})$ is given by

\[ \tau(x_j - x_i) = x_{n-j+1} -x_{n-i+1}\]
\[ \tau(y_j - y_i) = y_{m-j+1} -y_{m-i+1}\]
\[ \tau(y_j - x_i) = y_{m-j+1} -x_{n-i+1}\]

\begin{rem}
 
Instead of considering a conjugate real form, one could consider a different base point $z$ in $G/B$, given by
the basis $e_1 + i e_n, e_2 + i e_{n-1} \ldots, e_2 - i e_{n-1}, e_1 - i e_n, f_1 + i f_m, \ldots, f_1 - i f_m$.
Then the stabilizer of $z$ is a parabolic subalgebra $\mathfrak{p}^\prime = g^{-1} \mathfrak{p} g$ of $\mathfrak{g}$
and $\mathfrak{g}_\mathbb{R} \cap g^{-1} \mathfrak{h} g$ is maximally compact and therefore $(G_\mathbb{R} \cdot z)_{\bar{0}}$ 
is open. 

\end{rem}

\begin{prop}
 
An $SL_{n \vert m}(\mathbb{R})$-orbit $D$ with open base has maximal odd dimension if and only if $\delta$ is even-symmetrizable.

\end{prop}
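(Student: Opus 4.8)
The plan is to combine the codimension formula just established with the explicit action of $\tau$ on $\Sigma(\mathfrak{g},\mathfrak{h})$ and the description of $\Phi$ via the flag, reducing the statement to a combinatorial condition on $\delta$.

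First I would split the formula by parity. Since the defining involution $\tau$ of a real form is even, it preserves the $\mathbb{Z}/2$-grading of $\Sigma$, so $\mathrm{codim}_Z(M)=\vert\Phi^n_{\bar 0}\cap\tau\Phi^n_{\bar 0}\vert+\vert\Phi^n_{\bar 1}\cap\tau\Phi^n_{\bar 1}\vert$. The hypothesis that $D$ has open base says the even summand vanishes; one can always arrange this by choosing the base point suitably, since $SL_n(\mathbb{R})\times SL_m(\mathbb{R})$ has open orbits on every flag manifold — this is the point of passing to the isomorphic real form with $\tau(X)=\mathrm{Ad}(\mathrm{diag}(A_n,A_m))(\overline X)$, for which the diagonal Cartan is maximally compact. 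Hence $D$ has maximal odd dimension if and only if $\Phi^c_{\bar 1}\cap\tau\Phi^c_{\bar 1}=\emptyset$, where $\Phi^c=-\Phi^n$ is the set of odd roots whose root space is not contained in $\mathfrak{p}$.

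Next I would compute this set explicitly. For the flag $0\leq V_1\leq\cdots\leq V_k\leq\mathbb{C}^{n\vert m}$ with $V_t=\langle e_1,\dots,e_{d_0^t},f_1,\dots,f_{d_1^t}\rangle$ one has $\mathfrak{g}^{y_j-x_i}\subseteq\mathfrak{p}$ iff $e_i\in V_t\Rightarrow f_j\in V_t$ for all $t$, and dually for $\mathfrak{g}^{x_i-y_j}$. Introducing the non-decreasing step functions $\varphi(i)=\min\{d_1^t:d_0^t\geq i\}$ and $\psi(j)=\min\{d_0^t:d_1^t\geq j\}$ (equal to $m$, resp.\ $n$, when the set is empty, i.e.\ the only bound comes from the full space), this becomes $y_j-x_i\in\Phi^c_{\bar 1}\Leftrightarrow j>\varphi(i)$ and $x_i-y_j\in\Phi^c_{\bar 1}\Leftrightarrow i>\psi(j)$. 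Since $\tau(y_j-x_i)=y_{m+1-j}-x_{n+1-i}$ preserves the two types of odd roots, the condition $\Phi^c_{\bar 1}\cap\tau\Phi^c_{\bar 1}=\emptyset$ unwinds into the two families of inequalities $\varphi(i)+\varphi(n+1-i)\geq m$ $(\forall i)$ and $\psi(j)+\psi(m+1-j)\geq n$ $(\forall j)$.

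It remains to identify these inequalities with even-symmetrizability of $\delta$, the key fact being that the members of $\delta$ form a chain for the product order on $\{0,\dots,n\}\times\{0,\dots,m\}$. For ``$\Leftarrow$'': if $\delta\subseteq\delta'$ with $\delta'$ even-symmetric, then $\varphi_\delta\geq\varphi_{\delta'}$ and $\psi_\delta\geq\psi_{\delta'}$ pointwise, so it suffices to verify the inequalities for a symmetric sequence; writing $\varphi(n+1-i)=d_1^s$ with $d_0^s\geq n+1-i$, its reflection $(n-d_0^s)\vert(m-d_1^s)\in\delta$ has $n-d_0^s<i$, and comparability of chain members forces $d_1^t\geq m-d_1^s$ whenever $d_0^t\geq i$, i.e.\ $\varphi(i)\geq m-\varphi(n+1-i)$; the $\psi$-inequality is the mirror statement. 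For ``$\Rightarrow$'': granting both families of inequalities, one checks that $\delta':=\delta\cup\{(n-d_0)\vert(m-d_1):d_0\vert d_1\in\delta\}$ is still a chain — an incomparable pair would consist of $(a,b)\in\delta$ and $(n-c,m-d)$ with $(c,d)\in\delta$ and either $a+c>n,\ b+d<m$, contradicting $\varphi(n-c+1)+\varphi(c)\leq b+d<m$, or $a+c<n,\ b+d>m$, contradicting the corresponding $\psi$-inequality — so $\delta'$ is an even-symmetric enlargement of $\delta$. I expect the main obstacle to be the second step: arranging the base point so that $\tau$ genuinely acts by the recorded index reflections, and then the bookkeeping turning ``$\Phi^c_{\bar 1}\cap\tau\Phi^c_{\bar 1}=\emptyset$'' into the two staircase inequalities; here one uses that $\mathfrak{sl}_{n\vert m}(\mathbb{R})$ admits no even/odd root cancellation, which is precisely why one gets maximal odd dimension rather than merely weak measurability. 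For $\mathfrak{g}=\mathfrak{psl}_{n\vert n}(\mathbb{C})$ one only needs the remark that passing to the quotient leaves the relevant root combinatorics unchanged.
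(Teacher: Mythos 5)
Your proposal is correct and follows essentially the same route as the paper: reduce via the codimension formula to the emptiness of the odd part of $\Phi^c\cap\tau\Phi^c$, use the conjugated real form so that $\tau(y_j-x_i)=y_{m-j+1}-x_{n-i+1}$, and translate the resulting index inequalities into comparability of $\delta$ with its reflection. Your step-function packaging and the explicit verification that $\delta\cup\{(n-d_0)\vert(m-d_1)\}$ is a chain make the symmetrizability step somewhat more transparent than the paper's version, but the underlying argument is the same.
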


\begin{proof}
 
First, suppose $D$ does not have maximal odd dimension. Then there is an odd root $\alpha \in \Phi^c \cap \tau \Phi^c$. Without loss of generality,
one may assume $\alpha = y_j - x_i$. Let $\overline{d_{\bar{0}}} \vert \overline{d_{\bar{1}}} = \min\{ d_{\bar{0}} \vert d_{\bar{1}} \in \delta : i \leq d_{\bar{0}} \}$
and $\widetilde{d_{\bar{0}}} \vert \widetilde{d_{\bar{1}}} = \min \{ d_{\bar{0}} \vert d_{\bar{1}} \in \delta : n-i+1 \leq d_{\bar{0}} \}$. As $(y_j - x_i) \not\in \Phi$, $j > \overline{d_{\bar{1}}}$.
Moreover, as $\tau(y_j - x_i) = (y_{m-j+1} - x_{n-i+1}) \not\in \Phi$, also $m-j+1 > \widetilde{d_{\bar{1}}} \Leftrightarrow j \leq m-\widetilde{d_{\bar{1}}}$. 
Finally, as $n-i+1 \leq \widetilde{d_{\bar{0}}}$ implies $i > n-\widetilde{d_{\bar{0}}}$, one obtains the following inequalities:

\[ n-\widetilde{d_{\bar{0}}} < i \leq \overline{d_{\bar{0}}}, \quad \overline{d_{\bar{1}}} < j \leq m-\widetilde{d_{\bar{0}}} \]

\noindent So $\overline{d_{\bar{0}}} \vert \overline{d_{\bar{1}}}$ and $n-\widetilde{d_{\bar{0}}},m-\widetilde{d_{\bar{1}}}$ are not comparable and therefore $\delta$ is not symmetrizable.

Conversely, let $d_{\bar{0}} \vert d_{\bar{1}}, d_{\bar{0}}^\prime \vert d_{\bar{1}}^\prime \in \delta$ such that $n-d_{\bar{0}}^\prime < d_{\bar{0}}$ and $m-d_{\bar{1}}^\prime > d_{\bar{1}}$.
Then the choice $j = m-d_{\bar{1}}^\prime, i = d_{\bar{0}}$ satisfies $(y_j - x_i) \in \Phi^c \cap \tau\Phi^c$ and thus $D$ does not have maximal odd dimension.
 
\end{proof}

\begin{prop}
 
An $SL_{n \vert m}(\mathbb{R})$-flag domain $D$ is strongly measurable, if and only if $\delta$ is even-symmetric.

\end{prop}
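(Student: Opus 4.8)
The plan is to read the equivalence off condition (5) of the characterization theorem just proved: $D$ is strongly measurable if and only if $\tau\mathfrak{p}$ is $G_{\bar 0}$-conjugate to the opposite parabolic $\mathfrak{p}^{op}$. By the correspondence between $G_{\bar 0}$-conjugacy classes of parabolic subalgebras and flag types recalled in Section \ref{OnPSA}, this amounts to an equality of two flag types, which I would compute directly.

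In the model at hand $\mathfrak{p}=\mathfrak{p}_\delta$ is the stabilizer of the standard flag $V_t=\langle e_1,\dots,e_{a_t},f_1,\dots,f_{b_t}\rangle$ attached to the chain $\hat\delta=\delta\cup\{0\vert 0,\ n\vert m\}$, written $(a_0\vert b_0)<\dots<(a_{r+1}\vert b_{r+1})$. Being spanned by standard basis vectors, $\mathfrak{p}_\delta$ is defined over $\mathbb{R}$ in this basis, so $\overline{\mathfrak{p}}=\mathfrak{p}$ and hence $\tau\mathfrak{p}=\mathrm{Ad}(\mathrm{diag}(A_n,A_m))\,\overline{\mathfrak{p}}=\mathrm{Ad}(\mathrm{diag}(A_n,A_m))\,\mathfrak{p}$ is $G_{\bar 0}$-conjugate to $\mathfrak{p}$; thus $\tau\mathfrak{p}$ has flag type $\delta$. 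On the other hand $\mathfrak{p}^{op}$ stabilizes the complementary flag built, in reverse order, from the spans of the trailing blocks of basis vectors, so its flag type is $\delta^\vee:=\{\,(n-d_0)\vert(m-d_1)\ :\ d_0\vert d_1\in\delta\,\}$. By (5) and the flag-type/conjugacy correspondence, $D$ is strongly measurable iff $\delta=\delta^\vee$ as subsets of $\{0,\dots,n\}\times\{0,\dots,m\}$, which is precisely the even-symmetry of $\delta$. Running this backwards also gives the converse immediately.

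An alternative, more hands-on route runs parallel to the classical proof and to the previous proposition: by the Lemma one must check $\alpha\in\Phi\Leftrightarrow\tau(-\alpha)\in\Phi$ for all $\alpha\in\Sigma$. Unwinding membership in $\Phi$ for the even roots $x_j-x_i$, $y_j-y_i$ and the odd roots $y_j-x_i$, $x_j-y_i$ in terms of the entries of $\delta$, and using that $\tau$ reverses the $x$-indices and the $y$-indices, the criterion splits into three conditions: the $x$-break set $\{a_t\}$ is invariant under $a\mapsto n-a$; the $y$-break set $\{b_t\}$ is invariant under $b\mapsto m-b$; and $F(c)+G(m-c)=n$ for all $c$, where $F(c)=\max\{a:(a\vert b)\in\hat\delta,\ b\le c\}$ and $G(c)=\min\{a:(a\vert b)\in\hat\delta,\ b\ge c\}$. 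One then shows a monotone chain $\hat\delta$ is determined by the quadruple $(\{a_t\},\{b_t\},F,G)$, and that the corresponding quadruple for the point-reflected chain $\hat\delta^\vee$ is $(\{n-a_t\},\{m-b_t\},\,c\mapsto n-G(m-c),\,c\mapsto n-F(m-c))$, so the three conditions together say exactly $\hat\delta=\hat\delta^\vee$. The one genuinely delicate point, and the part I would spend care on, is this reconstruction lemma: without it the odd-root condition only forces $\hat\delta$ and $\hat\delta^\vee$ to meet the same ``lower-right'' coordinate rectangles, which is strictly weaker than $\hat\delta=\hat\delta^\vee$ (small examples with symmetric break sets but non-centrally-symmetric $\hat\delta$ show this). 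The conjugacy-theoretic route has no such obstacle; there the only checks are routine, namely that $\mathrm{Ad}(\mathrm{diag}(A_n,A_m))$ conjugates parabolics within $G_{\bar 0}$ and that $\mathfrak{p}^{op}$ has flag type $\delta^\vee$.
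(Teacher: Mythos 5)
Your primary (conjugacy-theoretic) argument is correct, and it takes a genuinely different route from the one in the text. The text proves the proposition by direct root combinatorics: using the preceding lemma that strong measurability is equivalent to $\alpha \in \Phi \Leftrightarrow \tau(-\alpha) \in \Phi$, it introduces the predecessor/successor quantities $\underline{d_{\bar{0}}}\vert\underline{d_{\bar{1}}}$, $\overline{d_{\bar{0}}}\vert\overline{d_{\bar{1}}}$, $\widetilde{d_{\bar{0}}}\vert\widetilde{d_{\bar{1}}}$ attached to an odd root $y_j - x_i$, shows that even-symmetry forces $\tau(-\alpha)\in\Phi$, and conversely shows that a failure produces an element of $\delta$ lying strictly between two consecutive terms of the reflected sequence, contradicting symmetry; so the ``reconstruction'' difficulty you flag in your second route does not arise in the text's organization of that computation, which never passes through separate break-set conditions. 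Your route instead reads the statement off condition (5) of the characterization theorem via the dictionary between $G_{\bar{0}}$-conjugacy classes of parabolics and flag types: $\tau\mathfrak{p}$ has type $\delta$ (complex conjugation composed with an inner automorphism preserves the type), $\mathfrak{p}^{op}$ has type $\delta^\vee$, and conjugacy is equality of types. This is shorter, makes transparent that strong measurability depends only on $\delta$ and not on the particular open orbit, and transfers immediately to the other real forms once the action of $\tau$ on flag types is recorded (for ${}^0\mathfrak{pq}(n)$ the type of $\tau\mathfrak{p}$ is $\{d_1\vert d_0\}$ and one recovers odd-symmetry; for $\mathfrak{su}(p,q\vert r,s)$ it is already $\delta^\vee$, giving measurability for free). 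What it does not give --- and this is why the text takes the longer road, as it notes in the remark following the classification theorem --- is the root-level information needed for the companion statements on maximal odd dimension and weak measurability, for which no clean conjugacy criterion exists.
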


\begin{proof}
 
First assume $\delta$ is symmetric and let $\alpha = (y_j - x_i) \in \Phi$. We need to show $\tau(-\alpha) = (x_{n-i+1} - y_{m-j+1}) \in \Phi$.
Let $\underline{d_{\bar{0}}} \vert \underline{d_{\bar{1}}} = \max \{ d_{\bar{0}} \vert d_{\bar{1}} \in \delta: i > d_{\bar{0}} \}$ and $\overline{d_{\bar{0}}} \vert \overline{d_{\bar{1}}} = \min \{ d_{\bar{0}} \vert d_{\bar{1}} \in \delta: i \leq d_{\bar{0}} \}$.
Furthermore, let $\widetilde{d_{\bar{0}}} \vert \widetilde{d_{\bar{1}}} = \min \{ d_{\bar{0}} \vert d_{\bar{1}} \in \delta: m-j+1 \leq d_{\bar{1}}\}$. Then $(x_{n-i+1} - y_{m-j+1}) \in \Phi$,
if and only if $n - i + 1 \leq \widetilde{d_{\bar{0}}}$. 

Now $m-j+1 > m - \overline{d_{\bar{1}}}$ and by symmetry, $n - \underline{d_{\bar{0}}} \vert m - \underline{d_{\bar{1}}}$ is the succesor of $n - \overline{d_{\bar{0}}} \vert m - \overline{d_{\bar{1}}}$ in $\delta$,
so $\widetilde{d_{\bar{0}}} \geq n - \underline{d_{\bar{0}}}$ and as $i > \underline{d_{\bar{0}}}$, this implies $n - i + 1 \leq n - \underline{d_{\bar{0}}} \leq \widetilde{d_{\bar{0}}}$, so $\tau(-\alpha) \in \Phi$.

Now assume $D$ is not strongly measurable, so there exists $\alpha = y_j - x_i \in \Phi$, such that $\tau(-\alpha) = x_{n-i+1} - y_{m-j+1} \not\in \Phi$.
Let $\underline{d_{\bar{0}}} \vert \underline{d_{\bar{1}}}, \overline{d_{\bar{0}}} \vert \overline{d_{\bar{1}}}$ and $\widetilde{d_{\bar{0}}} \vert \widetilde{d_{\bar{1}}}$ as before.
As $\tau(-\alpha) \not\in \Phi$, $n - i + 1 > \widetilde{d_{\bar{0}}}$. But $i > \underline{d_{\bar{0}}}$, so $n- \overline{d_{\bar{0}}} < n-i+1 \leq n-\underline{d_{\bar{0}}}$. Therefore $\widetilde{d_{\bar{0}}} < n - \underline{d_{\bar{0}}}$. 
Also $\widetilde{d_{\bar{1}}} \geq m-j+1 > m-j \geq m-\overline{d_{\bar{1}}}$. Altogether, this yields

\[ n-\overline{d_{\bar{0}}} \vert m-\overline{d_{\bar{1}}} < \widetilde{d_{\bar{0}}} \vert \widetilde{d_{\bar{1}}} < n-\underline{d_{\bar{0}}} \vert m-\underline{d_{\bar{1}}} \]

\noindent And, as $n-\widetilde{d_{\bar{0}}} \vert m - \widetilde{d_{\bar{1}}} \not\in \delta$, $\delta$ is not symmetric. The proof proceeds analagously for even roots.

\end{proof}

\begin{prop}
 
If $n=m$, $G_\mathbb{R} = PSL_{n \vert n}(\mathbb{R})$, and $\delta$ is $\Pi$-symmetric then $D$ is weakly measurable.

\end{prop}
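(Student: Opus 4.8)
The plan is to reduce the statement to a single identity among roots and then to dispatch that identity by a termwise cancellation. By Theorem~4.13 in \cite{AH}, exactly as in the proof of the preceding measurability theorem, $D = G_\mathbb{R}/L_\mathbb{R}$ admits a $G_\mathbb{R}$-invariant Berezinian form precisely when $L_\mathbb{R}$ acts trivially on the one-dimensional module $\Ber((\mathfrak{g}_\mathbb{R}/\mathfrak{l}_\mathbb{R})^*)$. The derivative of this action is a character of $\mathfrak{l}_\mathbb{R}$, hence of its complexification $\mathfrak{p}\cap\tau\mathfrak{p}$; since a character annihilates every bracket, and $[\mathfrak{h},\mathfrak{g}^\alpha] = \alpha(\mathfrak{h})\,\mathfrak{g}^\alpha$ forces it to vanish on every root space, it is determined by its restriction to $\mathfrak{h}$, where it equals, up to an overall sign, $\sum_{\alpha\in\Phi\cap\tau\Phi}(-1)^{|\alpha|}\alpha$ (using that $\sum_{\alpha\in\Sigma}(-1)^{|\alpha|}\alpha = 0$ because roots, with their parities, come in pairs $\alpha,-\alpha$). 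Thus, modulo the routine passage back to the group level (as in the classical case and in the $\mathrm{Gr}_{1\vert 1}(\mathbb{C}^{n\vert n})$ example above), everything comes down to showing
\[ \sum_{\alpha\in\Phi\cap\tau\Phi}(-1)^{|\alpha|}\alpha = 0. \]

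For this I would first record, as in the subsection on $\mathfrak{g}_\mathbb{R} = \mathfrak{sl}_{m\vert n}(\mathbb{R})$, that for the conjugate real form used there $\tau$ acts on $\Sigma(\mathfrak{g},\mathfrak{h})$ by the type-preserving order reversal $w\colon x_i\mapsto x_{n+1-i},\ y_i\mapsto y_{n+1-i}$. Writing a root as $u_a - v_b$ with $u,v\in\{x,y\}$, the decisive consequence of $\Pi$-symmetry is that $\dim_{\bar{0}}V_l = \dim_{\bar{1}}V_l$ for every member $d_l$ of $\delta$, so that membership of $u_a - v_b$ in $\Phi$ does not see the types $u,v$: there is a single non-decreasing function $m$ with $m(b)\ge b$ such that $u_a - v_b\in\Phi \Leftrightarrow a\le m(b)$. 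Applying $w$, one gets $u_a - v_b\in\tau\Phi \Leftrightarrow a\ge m^*(b)$ with $m^*(b):=n+1-m(n+1-b)\le b$, and hence, again uniformly in $u,v$,
\[ \Phi\cap\tau\Phi = \{\, u_a - v_b\in\Sigma \;:\; m^*(b)\le a\le m(b) \,\}. \]

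Finally I would evaluate the sum by fixing $b$ and $a$ with $m^*(b)\le a\le m(b)$ and adding the four contributions from the types $(u,v) = (x,x),(y,y),(x,y),(y,x)$:
\[ (x_a - x_b) + (y_a - y_b) - (x_a - y_b) - (y_a - x_b) = 0 ; \]
the two diagonal terms $a=b$ that must be discarded for the even roots are themselves zero, so the identity holds termwise, $L_\mathbb{R}$ acts trivially on $\Ber((\mathfrak{g}_\mathbb{R}/\mathfrak{l}_\mathbb{R})^*)$, and $D$ carries a $G_\mathbb{R}$-invariant Berezinian form; since nothing is asserted about $D_{\bar{0}}$, this is exactly weak measurability. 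The only step requiring genuine care is the second one: verifying that $\tau$ is the type-preserving reversal and that $\Pi$-symmetry really collapses the type-dependence of root membership into a single $m$; once that is in place the cancellation is purely formal, and indeed the same computation run with $\mathfrak{p}$ in place of $\mathfrak{p}\cap\tau\mathfrak{p}$ reproduces the mechanism behind the $\mathrm{Gr}_{1\vert 1}$ example.
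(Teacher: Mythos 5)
Your proof is correct and follows essentially the same route as the paper's: reduce weak measurability to the vanishing of the graded sum of roots over $\Phi\cap\tau\Phi$, then use $\Pi$-symmetry to see that for each index pair the two even and two odd roots lie in $\Phi\cap\tau\Phi$ simultaneously and cancel termwise. Your explicit treatment of the diagonal odd pairs $\pm(x_a-y_a)$, which the paper's four-root grouping (anchored at an even root $x_j-x_i$, $i\neq j$) silently omits, is a welcome extra bit of care.
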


\begin{proof}
 
Let $\alpha = (x_j - x_i) \in \Phi \cap \tau\Phi$, so $\tau(\alpha) = x_{n-j+1} - x_{n-i+1} \in \Phi$. $\Pi$-symmetry 
is equivalent to the fact, that if one of $x_j - x_i, y_j - y_i, x_j - y_i$ and $y_j - x_i$ is in $\Phi$, then so are the other three.
The same fact applied to $\tau(\alpha)$ yields, that they are all in $\Phi \cap \tau\Phi$(Note that this would fail, if $n \neq m$).
Conversely, if $\alpha$ is not in $\Phi \cap \tau\Phi$, then so neither will be the other three. Now, when computing the supertrace of the action of $\mathfrak{p}$ on $(\mathfrak{g}/\mathfrak{p})^*$, the contributions of these four
roots will add up to zero. As $\alpha$ was arbitrary, the supertrace will be zero and therefore $D$ is weakly measurable.

\end{proof}

\begin{prop}
 
Let $G_\mathbb{R} = PSL_{n \vert n}(\mathbb{R})$ or $G_\mathbb{R} = PSL_{k \vert k}(\mathbb{H})$ 
and $Z = \mathbb{P}(\mathbb{C}^{n \vert n})$. Then the $G_\mathbb{R}$-flag domains in $Z$ are weakly measurable.

\end{prop}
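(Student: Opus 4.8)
The plan is to reduce the statement, for both real forms at once, to the purely parabolic sufficient condition for measurability recorded above: an open $G_\mathbb{R}$-orbit $D=G_\mathbb{R}/L_\mathbb{R}$ in $G/P$ is weakly measurable as soon as $\mathrm{str}$ of the natural action of $\mathfrak{p}$ on $(\mathfrak{g}/\mathfrak{p})^*$ vanishes. This condition does not involve $G_\mathbb{R}$ at all: if $\mathrm{str}_{(\mathfrak{g}/\mathfrak{p})^*}\mathfrak{p}=0$ then, since $D$ is open we have $\mathfrak{p}+\tau\mathfrak{p}=\mathfrak{g}$, whence $\mathfrak{g}/(\mathfrak{p}\cap\tau\mathfrak{p})\cong\mathfrak{g}/\mathfrak{p}\oplus\mathfrak{g}/\tau\mathfrak{p}$ as $(\mathfrak{p}\cap\tau\mathfrak{p})$-modules; as $\tau$ carries the first summand to the second, $\mathfrak{l}_\mathbb{R}$ (with complexification $\mathfrak{p}\cap\tau\mathfrak{p}$) acts trivially on $\Ber((\mathfrak{g}_\mathbb{R}/\mathfrak{l}_\mathbb{R})^*)$, and Theorem 4.13 in \cite{AH} then produces a $G_\mathbb{R}$-invariant Berezinian form on $D$. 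Moreover $\mathrm{str}_{(\mathfrak{g}/\mathfrak{p})^*}\mathfrak{p}$ is the weight by which $\mathfrak{p}$ acts on the line $\Ber(\mathfrak{g}/\mathfrak{p})^*$, so it depends only on the $G$-conjugacy class of $\mathfrak{p}$, i.e.\ only on the flag type $\delta=0\vert0<1\vert0<n\vert n$; in particular it does not matter which conjugate of $\mathfrak{p}$ one uses to realize an open orbit, nor whether one works with $PSL_{n\vert n}(\mathbb{R})$ or $PSL_{k\vert k}(\mathbb{H})$. It therefore remains to verify that $\sum_{\alpha\in\Phi^c}(-1)^{\vert\alpha\vert}\alpha=0$ in $\mathfrak{h}^*$ for this $\delta$.

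The next step is to write out $\Phi^c$. Taking $\mathfrak{p}$ to be the stabilizer of the coordinate line $\langle e_1\rangle\subseteq\mathbb{C}^{n\vert n}$ and arguing as in the $\mathfrak{sl}_{n\vert m}(\mathbb{R})$ analysis above — a root space lies in $\mathfrak{p}$ exactly when its elementary endomorphism maps $e_1$ into $\langle e_1\rangle$ — one sees that the root spaces not contained in $\mathfrak{p}$ are precisely those whose elementary map sends $e_1$ to a non-multiple of itself, namely those for $x_i-x_1$ $(2\le i\le n)$ and for $y_j-x_1$ $(1\le j\le n)$; every remaining root space (in particular all of $\mathfrak{g}_{\bar{0}\bar{1}}$) lies in $\mathfrak{p}$. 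With respect to the standard positive system these roots are negative, so
\[
\Phi^c=\{x_i-x_1:2\le i\le n\}\ \sqcup\ \{y_j-x_1:1\le j\le n\},
\]
the first set accounting for the $n-1$ even and the second for the $n$ odd coordinate directions of $\mathbb{P}(\mathbb{C}^{n\vert n})=\mathbb{P}^{n-1\vert n}$.

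The computation is then immediate:
\[
\sum_{\alpha\in\Phi^c}(-1)^{\vert\alpha\vert}\alpha
=\sum_{i=2}^{n}(x_i-x_1)\ -\ \sum_{j=1}^{n}(y_j-x_1)
=\Bigl(\sum_{i=1}^{n}x_i\Bigr)-\Bigl(\sum_{j=1}^{n}y_j\Bigr),
\]
which is exactly the supertrace functional restricted to $\mathfrak{h}$ and hence vanishes on $\mathfrak{sl}_{n\vert n}$, so also on $\mathfrak{psl}_{n\vert n}$. Thus $\mathfrak{p}$ acts on $(\mathfrak{g}/\mathfrak{p})^*$ with zero supertrace, and by the first paragraph every flag domain $D\subseteq\mathbb{P}(\mathbb{C}^{n\vert n})$ for $G_\mathbb{R}=PSL_{n\vert n}(\mathbb{R})$ or $PSL_{k\vert k}(\mathbb{H})$ carries a $G_\mathbb{R}$-invariant Berezinian form, i.e.\ is weakly measurable. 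The only genuinely delicate point is the uniformity argument in the first paragraph — isolating the part of the assertion that is a statement about $\mathfrak{p}$ alone (existence of the Berezinian, which is all that is claimed) from the part that really depends on $G_\mathbb{R}$ (measurability of the base $D_{\bar{0}}$, which fails for $SL_n(\mathbb{R})$ acting on $\mathbb{P}^{n-1}(\mathbb{C})$ when $n\ge3$, so that these domains are weakly but not strongly measurable); everything else is bookkeeping with elementary matrices together with the relation $\sum x_i=\sum y_j$ in the $\mathfrak{psl}_{n\vert n}$-weight lattice.
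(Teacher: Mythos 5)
Your proof is correct, and its core is the same as the paper's: the graded sum of the roots of $\mathbb{P}(\mathbb{C}^{n\vert n})$ is the supertrace functional $\sum_i x_i-\sum_j y_j$, which vanishes identically on the Cartan of $\mathfrak{psl}_{n\vert n}$. The organizational difference is that the paper chooses a $\tau$-generic basis in which $\mathfrak{p}$ and $\tau\mathfrak{p}$ are the stabilizers of $e_1$ and $e_n$, lists all of $\Sigma\setminus(\Phi\cap\tau\Phi)$, and sums directly to obtain $2\,\mathrm{str}=0$, whereas you first isolate the $\tau$-independent sufficient condition that $\mathfrak{p}$ act with vanishing supertrace on $(\mathfrak{g}/\mathfrak{p})^*$ and then sum only over the $2n-1$ roots of $\Phi^c$. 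Your reduction step -- using $\mathfrak{p}+\tau\mathfrak{p}=\mathfrak{g}$ to split the supertrace on $\mathfrak{g}/(\mathfrak{p}\cap\tau\mathfrak{p})$ into the contributions of $\mathfrak{g}/\mathfrak{p}$ and $\mathfrak{g}/\tau\mathfrak{p}$, with $\tau$ interchanging the two -- is sound and is essentially the observation the paper records informally in the remark preceding this section and uses in its $\mathrm{Gr}_{1\vert1}$ example; it buys you a shorter computation and makes the uniformity over $PSL_{n\vert n}(\mathbb{R})$ and $PSL_{k\vert k}(\mathbb{H})$ automatic, since the condition depends only on the $G$-conjugacy class of $\mathfrak{p}$. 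The one hypothesis you should flag explicitly is that the splitting needs $\mathfrak{p}+\tau\mathfrak{p}=\mathfrak{g}$ in the super sense, i.e.\ maximal odd dimension of $D$; this holds here because $\delta=0\vert0<1\vert0<n\vert n$ is even-symmetrizable and is in any case built into the definition of a flag domain of a real form, so it is not a gap.
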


\begin{proof}
 
Suppose we are given a $\tau$-generic basis of $\mathbb{C}^n$. Then $\mathfrak{p}$ is the stabiliser of $e_1$ and $\tau \mathfrak{p}$ is
the stabiliser of $e_n$. Therefore

\[ \Sigma \setminus (\Phi \cap \tau\Phi) = \] 
\[ \{ x_j - x_1 : j > 1 \} \cup \{ x_j - x_n : j < n \} \cup \{ y_j - x_1 : 1 \leq j \leq n \} \cup \{ y_j - x_n : 1 \leq j \leq n \} \]

\noindent The graded sum of these roots is

\[ \sum_{j=2}^n (x_j - x_1) + \sum_{j=1}^{n-1} (x_j - x_n) - \sum_{j=1}^n (y_j - x_1) - \sum_{j=1}^n (y_j - x_n) \]

\[ = 2 \sum_{j=2}^{n-1} x_j - (n-2)(x_1 + x_n) - (2 \sum_{j=1}^n y_j - n(x_1 + x_n))\] 

\[ = 2 \sum_{j=1}^n x_j - 2 \sum_{j=1}^n y_j = 2 \mathrm{str} = 0    \]

\noindent Consequently $D$ is weakly measurable.

\end{proof}

\subsubsection{The Case $\mathfrak{g}_\mathbb{R} = \ ^0\mathfrak{pq}(n)$}

In this case $n =m$ and the defining involution of $\mathfrak{g}_\mathbb{R}$ is 

\[ \tau \begin{pmatrix} A & B \\ C & D \end{pmatrix} = \begin{pmatrix} \bar{D} & \bar{C} \\ \bar{B} & \bar{A} \end{pmatrix} \]

\noindent and the action of $ \tau $ on $\Sigma(\mathfrak{g} : \mathfrak{h})$ is given by

\[ \tau(x_j - x_i) = y_j - x_i \]
\[ \tau(y_j - x_i) = x_j - y_i \]

\noindent Moreover the $G_{\bar{0}\mathbb{R}}$-orbit through the neutral point in $G_{\bar{0}}/B_{\bar{0}}$  is open, where $B_{\bar{0}} = B_n^+ \times B_n^-$ is the product of the usual Borel subalgebra of
$SL_n(\mathbb{C})$ and its opposite. It projects onto the open $G_{\bar{0}\mathbb{R}}$-orbit in $Z_{\bar{0}}$. So 

\[ (y_j - x_i) \in \Phi \Leftrightarrow \exists X \in \mathfrak{p}: X(e_i) = f_j \Leftrightarrow n-j+1 \leq \min \{d_{\bar{1}} : d_{\bar{0}} \vert d_{\bar{1}} \in \delta, i \leq d_{\bar{0}}\} \]  

\begin{prop}
 
A $^0PQ(n))$-orbit $D$ with open base has maximal odd dimension if and only if $\delta$ is odd-symmetrizable.

\end{prop}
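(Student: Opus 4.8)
The plan is to argue exactly as in the preceding proposition for $SL_{n|m}(\mathbb{R})$ and even-symmetrizability, with even symmetry replaced by odd symmetry and using the action of $\tau$ recorded just above. By the codimension formula $\operatorname{codim}_Z M = |\Phi^n\cap\tau\Phi^n|$, and since $\tau$ preserves the even/odd decomposition, $D$ has maximal odd dimension (given that its base is open, so that the even part of $\Phi^n\cap\tau\Phi^n$ is already empty) if and only if no odd root lies in $\Phi^c\cap\tau\Phi^c$. Every odd root lies in the family $\{y_j-x_i\}$ or in the opposite family $\{x_j-y_i\}=-\{y_j-x_i\}$; since $\tau(y_j-x_i)=x_j-y_i$ and $\tau(-\alpha)=-\tau(\alpha)$, $\tau$ interchanges these two families with the same indices, so $\tau^{-1}(y_j-x_i)=x_j-y_i$. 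Hence, after replacing a root by its negative if necessary, $D$ fails to have maximal odd dimension if and only if there exist $i,j$ with $y_j-x_i\notin\Phi$ and $x_j-y_i\notin\Phi$.

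Next I would translate these two non-membership conditions into combinatorics of $\delta$, using the description of $\Phi$ given above together with its mirror image. Write $m(i)=\min\{d_1: d_0|d_1\in\delta,\ i\le d_0\}$ for the odd size of the smallest flag member containing $e_i$, and $m'(i)=\min\{d_0: d_0|d_1\in\delta,\ n-i+1\le d_1\}$ for the even size of the smallest flag member containing $f_i$. Then $y_j-x_i\notin\Phi\iff n-j+1>m(i)$ and $x_j-y_i\notin\Phi\iff j>m'(i)$, so a bad pair $(i,j)$ exists if and only if $m(i)+m'(i)<n$ for some $i\in\{1,\dots,n\}$. The claim becomes: $m(i)+m'(i)\ge n$ for all $i$ $\iff$ $\delta$ is odd-symmetrizable.

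For the implication ``not odd-symmetrizable $\Rightarrow$ some $i$ with $m(i)+m'(i)<n$'', I would use that $\delta$ is odd-symmetrizable exactly when $\delta$ together with all odd-mirrors $(n-d_1)|(n-d_0)$ of its elements is again a chain (odd-mirroring is an order-reversing involution, so the mirrored set is itself a chain). Non-odd-symmetrizability therefore yields $a=a_0|a_1$, $b=b_0|b_1\in\delta$ with $b$ incomparable to the odd-mirror of $a$, say $b_0<n-a_1$ and $b_1>n-a_0$. Taking $i=a_0$ (which is $\ge 1$, since $b_1>n-a_0$ forces $a_0\ge 1$), the step $a$ witnesses $m(a_0)\le a_1$ and the step $b$, whose odd size $b_1$ is $\ge n-a_0+1$, witnesses $m'(a_0)\le b_0$; hence $m(a_0)+m'(a_0)\le a_1+b_0<n$. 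The other incomparability type is handled symmetrically by exchanging $a$ and $b$. Conversely, if $m(i)+m'(i)<n$ for some $i$, let $\overline{d}_0|\overline{d}_1$ and $\widetilde{d}_0|\widetilde{d}_1$ be the smallest flag members containing $e_i$ and $f_i$; then $\widetilde{d}_0=m'(i)<n-m(i)=n-\overline{d}_1$, while $i\le\overline{d}_0$ gives $\widetilde{d}_1\ge n-i+1>n-\overline{d}_0$, so $\widetilde{d}_0|\widetilde{d}_1\in\delta$ is incomparable to the odd-mirror $(n-\overline{d}_1)|(n-\overline{d}_0)$ of $\overline{d}_0|\overline{d}_1\in\delta$, and no odd-symmetric enlargement of $\delta$ can exist.

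I expect the main obstacle to be purely bookkeeping rather than conceptual: one must be careful that the ``smallest flag member containing $e_i$'' really realizes the minimum defining $m(i)$ (several steps may share the same even size $d_0$, so one either passes to the appropriate step or argues directly with the monotone functions $m,m'$), and one must dispose of the degenerate steps with $d_0=0$ or $d_1=0$ and of the boundary indices $i=1,n$. All of this runs in close parallel to the $SL_{n|m}(\mathbb{R})$ argument already carried out, which is why I would present it as a variant of that proof.
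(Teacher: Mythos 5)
Your argument is correct and follows essentially the same route as the paper's: both translate membership of $y_j-x_i$ and $\tau(y_j-x_i)=x_j-y_i$ in $\Phi$ into the minima $\overline{d_{\bar{1}}}$, $\widetilde{d_{\bar{0}}}$ over the dimension sequence and then match the resulting inequalities against incomparability with odd-mirrored flag members, your condition $m(i)+m'(i)\ge n$ being a repackaging of the paper's chain of inequalities. One small bonus of your version: the explicit witness $i=a_0$ in the direction ``not odd-symmetrizable $\Rightarrow$ odd dimension not maximal'' works for every incomparable pair, whereas the paper's choice $j=n-d_{\bar{1}}$, $i=d_{\bar{0}}'$ need not produce a root of $\Phi^c\cap\tau\Phi^c$ (e.g.\ $n=10$ with $2\vert 3<4\vert 9$ in $\delta$).
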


\begin{proof}
 
First suppose $D$ does not have maximal odd dimension. Then there is an odd root $\alpha \in \Phi^c \cap \tau \Phi^c$. Without loss of generality,
one may assume $\alpha = y_j - x_i$. Let $\overline{d_{\bar{0}}} \vert \overline{d_{\bar{1}}} = \min\{ d_{\bar{0}} \vert d_{\bar{1}} \in \delta : i \leq d_{\bar{0}} \}$
and $\widetilde{d_{\bar{0}}} \vert \widetilde{d_{\bar{1}}} = \min \{ d_{\bar{0}} \vert d_{\bar{1}} \in \delta : n-i+1 \leq d_{\bar{1}} \}$. As $(y_j - x_i) \not\in \Phi$, $n-j+1 > \overline{d_{\bar{1}}}$.
Moreover as $\tau(y_j - x_i) = (x_j - y_i) \not\in \Phi$, also $j > \widetilde{d_{\bar{0}}}$. 
Finally as $n-i+1 \leq \widetilde{d_{\bar{1}}}$ implies $i > n-\widetilde{d_{\bar{1}}}$, one obtains the following inequalities:

\[ n-\widetilde{d_{\bar{1}}} < i \leq \overline{d_{\bar{0}}}, \quad \widetilde{d_{\bar{0}}} < j \leq n-\overline{d_{\bar{1}}} \]

\noindent So $\overline{d_{\bar{0}}} \vert \overline{d_{\bar{1}}}$ and $\widetilde{d_{\bar{0}}} \vert \widetilde{d_{\bar{1}}}$ are not comparable and therefore $\delta$ is not symmetrizable.

Conversely, let $d_{\bar{0}} \vert d_{\bar{1}}, d_{\bar{0}}^\prime \vert d_{\bar{1}}^\prime \in \delta$ such that $n-d_{\bar{1}}^\prime < d_{\bar{0}}$ and $n-d_{\bar{1}} > d_{\bar{0}}^\prime$.
Then the choice $j = n - d_{\bar{1}}, i = d_{\bar{0}}^\prime$ satisfies $(y_j - x_i) \in \Phi^c \cap \tau\Phi^c$ and thus $D$ does not have maximal odd dimension.
 
\end{proof}

\begin{prop}
 
A $^0PQ(n)$-flag domain $D$ is strongly measurable, if and only if $\delta$ is odd-symmetric.

\end{prop}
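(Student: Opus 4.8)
The plan is to deduce the statement from the root-theoretic criterion established in the lemma above: $D$ is strongly measurable if and only if $\alpha\in\Phi\Leftrightarrow\tau(-\alpha)\in\Phi$ for every $\alpha\in\Sigma$. The first step is to make the condition ``$\alpha\in\Phi$'' completely explicit in terms of the flag type $\delta$ for each family of roots. For the odd roots this is the computation recorded just above, namely $y_j-x_i\in\Phi$ iff $n-j+1\le\min\{\,d_{\bar{1}}: d_{\bar{0}}\vert d_{\bar{1}}\in\delta,\ i\le d_{\bar{0}}\,\}$; running the same argument on the two simple even factors --- the first equipped with $B_n^+$, the second with its opposite $B_n^-$ --- gives, for $i<j$, that $x_j-x_i\in\Phi$ iff $j\le\min\{\,d_{\bar{0}}: d_{\bar{0}}\vert d_{\bar{1}}\in\delta,\ i\le d_{\bar{0}}\,\}$ (while $x_j-x_i\in\Phi$ always when $i>j$), together with the mirror-image statement for the $y$-roots coming from $B_n^-$. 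Feeding the recorded $\tau$-action on $\Sigma$ into the criterion then turns ``$D$ strongly measurable'' into a family of implications between inequalities of this kind, and I claim this family holds simultaneously for all $\alpha$ precisely when $\delta$ is odd-symmetric.

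For the ``if'' direction I would assume $\delta$ odd-symmetric, fix $\alpha\in\Phi$, and verify $\tau(-\alpha)\in\Phi$ by the same bookkeeping used for $\mathfrak{sl}_{n\vert m}(\mathbb{R})$ above: introduce the minimal $\overline{d_{\bar{0}}}\vert\overline{d_{\bar{1}}}\in\delta$ relevant to $\alpha$, its predecessor $\underline{d_{\bar{0}}}\vert\underline{d_{\bar{1}}}$ in $\delta$, and the minimal $\widetilde{d_{\bar{0}}}\vert\widetilde{d_{\bar{1}}}\in\delta$ relevant to $\tau(-\alpha)$; odd-symmetry identifies, say, $(n-\underline{d_{\bar{1}}})\vert(n-\underline{d_{\bar{0}}})$ as the successor in $\delta$ of $(n-\overline{d_{\bar{1}}})\vert(n-\overline{d_{\bar{0}}})$, and substituting this into the inequalities above yields the inequality that encodes $\tau(-\alpha)\in\Phi$. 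One runs this once with $\alpha$ odd and once with $\alpha$ even; the point of the case split is that equipping one even block with $B_n^+$ and the other with $B_n^-$ is exactly what makes the relevant symmetry of $\delta$ the ``odd'' flip $(d_{\bar{0}},d_{\bar{1}})\mapsto(n-d_{\bar{1}},n-d_{\bar{0}})$ rather than the ``even'' flip $(d_{\bar{0}},d_{\bar{1}})\mapsto(n-d_{\bar{0}},n-d_{\bar{1}})$.

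For the converse I would argue contrapositively, exactly as in the $SL_{n\vert m}(\mathbb{R})$ proposition: if $D$ is not strongly measurable the lemma supplies a root $\alpha\in\Phi$ with $\tau(-\alpha)\notin\Phi$ (one may take this root of the form $y_j-x_i$, the even-root case being handled identically); translating both memberships into the explicit inequalities produces an element $\widetilde{d_{\bar{0}}}\vert\widetilde{d_{\bar{1}}}\in\delta$ whose odd-flip $(n-\widetilde{d_{\bar{1}}})\vert(n-\widetilde{d_{\bar{0}}})$ is trapped strictly between two consecutive members of $\delta$ and hence fails to lie in $\delta$, so $\delta$ is not odd-symmetric.

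The main obstacle will be the purely combinatorial heart of the argument: keeping the several interlocking minima and the predecessor/successor passage inside $\delta$ straight, and in particular making sure that \emph{both} the even and the odd roots are accounted for --- the two $\mathfrak{sl}_n$ factors behave oppositely, one carrying $B_n^+$ and the other $B_n^-$, and one must check that this produces no obstruction independent of the odd symmetry of $\delta$. Everything outside this combinatorial core is a routine transcription of the $\mathfrak{sl}_{n\vert m}(\mathbb{R})$ computation already carried out above.
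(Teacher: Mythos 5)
Your proposal is correct and follows essentially the same route as the paper's own proof: reduce to the criterion $\alpha\in\Phi\Leftrightarrow\tau(-\alpha)\in\Phi$, translate membership in $\Phi$ into inequalities against the minima $\overline{d}$, $\underline{d}$, $\widetilde{d}$ drawn from $\delta$, use odd-symmetry and the successor relation for the forward direction, and for the converse trap the odd-flip of $\widetilde{d_{\bar{0}}}\vert\widetilde{d_{\bar{1}}}$ strictly between consecutive members of $\delta$. The paper likewise treats the odd roots explicitly and dispatches the even roots (with the $B_n^+\times B_n^-$ asymmetry you note) by the analogous computation.
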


\begin{proof}
 
First assume $\delta$ is symmetric and let $\alpha = (y_j - x_i) \in \Phi$. We need to show $\tau(-\alpha) = (y_i - x_j) \in \Phi$.
Let $\underline{d_{\bar{0}}} \vert \underline{d_{\bar{1}}} = \max \{ d_{\bar{0}} \vert d_{\bar{1}} \in \delta: i > d_{\bar{0}} \}$ and $\overline{d_{\bar{0}}} \vert \overline{d_{\bar{1}}} = \min \{ d_{\bar{0}} \vert d_{\bar{1}} \in \delta: i \leq d_{\bar{0}} \}$.
Furthermore let $\widetilde{d_{\bar{0}}} \vert \widetilde{d_{\bar{1}}} = \min \{ d_{\bar{0}} \vert d_{\bar{1}} \in \delta: j \leq d_{\bar{0}}\}$. Then $(y_i - x_j) \in \Phi$,
if and only if $ n-i+1 \leq \widetilde{d_{\bar{1}}}$. 

Now $n - \overline{d_{\bar{1}}} < j$ and by symmetry, $n - \underline{d_{\bar{1}}} \vert n - \underline{d_{\bar{0}}}$ is the succesor of $n - \overline{d_{\bar{1}}} \vert n - \overline{d_{\bar{0}}}$ in $\delta$,
so $\widetilde{d_{\bar{0}}} \geq n - \underline{d_{\bar{1}}}$ and as $i > \underline{d_{\bar{0}}}$, this implies $n - i + 1 \leq n - \underline{d_{\bar{0}}} \leq \widetilde{d_{\bar{1}}}$, so $\tau(-\alpha) \in \Phi$.

Now assume $D$ is not strongly measurable, so there exists $\alpha = y_j - x_i \in \Phi$, such that $\tau(-\alpha) = y_i - x-j \not\in \Phi$.
Let $\underline{d_{\bar{0}}} \vert \underline{d_{\bar{1}}}, \overline{d_{\bar{0}}} \vert \overline{d_{\bar{1}}}$ and $\widetilde{d_{\bar{0}}} \vert \widetilde{d_{\bar{1}}}$ as before.
As $\tau(-\alpha) \not\in \Phi$, $n - i + 1 > \widetilde{d_{\bar{1}}}$. But $i > \underline{d_{\bar{0}}}$, so $n- \overline{d_{\bar{0}}} < n-i+1 \leq n-\underline{d_{\bar{0}}}$. Therefore $\widetilde{d_{\bar{1}}} < n - \underline{d_{\bar{0}}}$. 
Also $\widetilde{d_{\bar{0}}} \geq j >  n-\overline{d}_{\bar{1}}$. Altogether this yields

\[ n-\overline{d_{\bar{1}}} \vert n-\overline{d_{\bar{0}}} < \widetilde{d_{\bar{0}}} \vert \widetilde{d_{\bar{1}}} < n-\underline{d_{\bar{1}}} \vert n-\underline{d_{\bar{0}}} \]

\noindent And as $n-\widetilde{d_{\bar{1}}} \vert n - \widetilde{d_{\bar{0}}} \not\in \delta$, $\delta$ is not symmetric. The proof proceeds analagously for even roots.

\end{proof}

\begin{prop}
 
If $\delta$ is $\Pi$-symmetric then $^0PQ(n)$-flag domains $D$ in $Z(\delta)$ are weakly measurable.

\end{prop}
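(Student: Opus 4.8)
The plan is to reduce weak measurability to a single supertrace computation on the root system, and then to carry that computation out by pairing up roots according to the parity‑swap action of $\tau$ on $\Sigma(\mathfrak{g},\mathfrak{h})$. First I would recall, exactly as in the proof of the strong measurability theorem, that the flag domain $D=G_\mathbb{R}/L_\mathbb{R}$ carries a $G_\mathbb{R}$‑invariant Berezinian form precisely when $\Ber((\mathfrak{g}_\mathbb{R}/\mathfrak{l}_\mathbb{R})^*)$ is a trivial $L_\mathbb{R}$‑module (Theorem 4.13 in \cite{AH}), hence precisely when the isotropy representation of the complexification $\mathfrak{l}_\mathbb{R}^{\mathbb{C}}=\mathfrak{p}\cap\tau\mathfrak{p}$ on $\mathfrak{g}/(\mathfrak{p}\cap\tau\mathfrak{p})$ has vanishing supertrace. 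Since $\mathrm{ad}(\mathfrak{g}_{\bar 1})$ is automatically supertrace‑free and $\mathfrak{h}\subseteq\mathfrak{p}\cap\tau\mathfrak{p}$, this reduces to the vanishing in $\mathfrak{h}^*$ of the graded sum of weights $\sum_{\alpha\in\Sigma\setminus(\Phi\cap\tau\Phi)}(-1)^{\vert\alpha\vert}\alpha$; and because the even roots of $\mathfrak{psl}_{n\vert n}(\mathbb{C})$ sum to zero and so do the odd ones, it is equivalent to show that the graded sum over $\Phi\cap\tau\Phi$ itself vanishes.

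Next I would make membership in $\Phi$ explicit for the base point already in use (the one with open $G_{\bar 0\mathbb{R}}$‑orbit through $B_n^+\times B_n^-$). Writing $\Delta=\{\,d:(d\vert d)\in\delta\,\}\cup\{n\}$ — here the hypothesis that $\delta$ is $\Pi$‑symmetric enters, so that the even and odd flag dimensions coincide and $\Delta$ is a single set — and $\phi(a)=\min\{\,d\in\Delta:a\le d\,\}$ for $1\le a\le n$, one checks directly that
\[ x_j-x_i\in\Phi \ \Leftrightarrow\ j\le\phi(i),\qquad y_j-y_i\in\Phi \ \Leftrightarrow\ n+1-j\le\phi(n+1-i), \]
\[ x_j-y_i\in\Phi \ \Leftrightarrow\ j\le\phi(n+1-i),\qquad y_j-x_i\in\Phi \ \Leftrightarrow\ n+1-j\le\phi(i). \]
Since $\tau$ acts on $\Sigma$ as the interchange $x_a\leftrightarrow y_a$ (which one reads off from the explicit form of $\tau$; it underlies the displayed formulas of this subsection), the roots $x_j-x_i$ and $y_j-y_i$ lie in $\Phi\cap\tau\Phi$ under one common condition $P(i,j)\colon n+1-\phi(n+1-i)\le j\le\phi(i)$, and $x_j-y_i$ and $y_j-x_i$ under one common condition $Q(i,j)\colon n+1-\phi(i)\le j\le\phi(n+1-i)$. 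Putting $u_k=x_k+y_k$ and using $(x_j-x_i)+(y_j-y_i)=(x_j-y_i)+(y_j-x_i)=u_j-u_i$, the graded sum over $\Phi\cap\tau\Phi$ collapses (the diagonal terms $i=j$ contributing nothing) to $\sum_{P(i,j)}(u_j-u_i)-\sum_{Q(i,j)}(u_j-u_i)$.

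Finally I would match these two lattice sums coefficient by coefficient. The coefficient of $u_k$ in $\sum_{P(i,j)}(u_j-u_i)$ is $\#\{i:P(i,k)\}-\#\{j:P(k,j)\}$, and similarly for $Q$. From the displayed inequalities one reads off immediately the two self‑symmetries $Q(i,j)\Leftrightarrow P(i,n+1-j)$ and $P(i,j)\Leftrightarrow P(n+1-i,n+1-j)$; the first gives $\#\{j:Q(k,j)\}=\#\{j:P(k,j)\}$ and, combined with the second, $\#\{i:Q(i,k)\}=\#\{i:P(i,n+1-k)\}=\#\{i:P(i,k)\}$. Hence the two sums have the same coefficient on every $u_k$, so they are equal, their difference is zero, the graded sum over $\Phi\cap\tau\Phi$ vanishes, and $D$ is weakly measurable. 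The step requiring the most care is establishing the explicit $\Phi$‑membership conditions together with the two self‑symmetries of $P$ and $Q$; once these are in hand the cancellation is the same sort of index bookkeeping as in the $PSL_{n\vert n}(\mathbb{R})$ case, the only new feature being that here it is the two even roots — and, separately, the two odd roots — of a $\Pi$‑quadruple that are paired, rather than all four simultaneously.
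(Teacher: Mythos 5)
Your proposal is correct and follows essentially the same route as the paper: reduce weak measurability to the vanishing of the graded root sum over $\Phi\cap\tau\Phi$, and then cancel the two even roots of each $\Pi$-quadruple against the two odd ones using the parity-swap action of $\tau$ together with $\Pi$-symmetry. The paper packages the cancellation as explicit quadruples $\{x_j-x_i,\ y_{n-j+1}-y_{n-i+1},\ x_j-y_{n-i+1},\ y_{n-j+1}-x_i\}$ with identically zero graded sum and all-or-nothing membership in $\Phi\cap\tau\Phi$, whereas you carry out the equivalent coefficient count on the two lattice sums; your version merely makes explicit the membership conditions and index symmetries that the paper leaves implicit.
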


\begin{proof}
 
Let $\alpha = (x_j - x_i) \in \Phi \cap \tau\Phi$, so $\tau(\alpha) = y_j - x_i \in \Phi$. $\Pi$-symmetry 
is equivalent to the fact that if one of $x_j - x_i, y_{n-j+1} - y_{n-i+1}, x_j - y_{n-i+1}$ and $y_{n-j+1} - x_i$ is in $\Phi$, then so are the other three.
The same fact applied to $\tau(\alpha)$ yields that they are all in $\Phi \cap \tau\Phi$.
Conversely if $\alpha$ is not in $\Phi \cap \tau\Phi$ then so neither will be the other three. 
Now when computing the supertrace of the action of $\mathfrak{p}$ on $(\mathfrak{g}/\mathfrak{p})^*$ the contributions of these four
roots will add up to zero. As $\alpha$ was arbitrary, the supertrace will be zero and therefore $D$ is weakly measurable.

\end{proof}

\subsubsection{The Case $\mathfrak{g}_\mathbb{R} = \mathfrak{us}\pi(n)$}

In this case $n = m$ and the defining involution is

\[ \tau \begin{pmatrix} A & B \\ C & D \end{pmatrix} = \begin{pmatrix} -D^\dagger & B^\dagger \\ -C^\dagger & -A^\dagger \end{pmatrix} \]

\noindent and the action of $\tau$ on $\Sigma(\mathfrak{g} : \mathfrak{h})$ is

\[ \tau(x_j - x_i) = y_i - y_j \]
\[ \tau(y_j - x_i) = y_i - x_j \]  

\noindent Here the $G_{\bar{0}\mathbb{R}}$-orbit through the neutral point in $G_{\bar{0}}/B_{\bar{0}}$, where $B_{\bar{0}} = B_n^+ \times B_n^+$, is open, and
projects onto $D_{\bar{0}} \subseteq Z_{\bar{0}}$.

\begin{prop}
 
A $US\Pi(n)$-orbit $D$ with open base has maximal odd dimension if and only if $\delta$ is $\Pi$-symmetric. If this is the case $D$ is strongly measurable.

\end{prop}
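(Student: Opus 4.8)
The plan is to run the scheme used for the ${}^0\mathfrak{pq}(n)$ and $\mathfrak{sl}_{n\vert m}(\mathbb{R})$ cases: fix the standard parabolic $\mathfrak{p}$ of type $\delta$, whose associated $G_{\bar 0\mathbb{R}}$-orbit is the given open orbit in the base, and work entirely in the root combinatorics of $\mathfrak{p}$, using the explicit $\tau$-action recorded above. The first step is to note that $\tau(x_j-x_i)=y_i-y_j$ sends every positive even root (for $B_n^+\times B_n^+$) to a negative even root, so $\Phi^n_{\bar 0}\cap\tau\Phi^n_{\bar 0}=\emptyset$; this is exactly why the base is automatically open and why the even roots contribute nothing to the codimension formula. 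Hence, by the codimension formula, $D$ has maximal odd dimension if and only if $\Phi^n_{\bar 1}\cap\tau\Phi^n_{\bar 1}=\emptyset$, where $\Phi^n_{\bar 1}$ denotes the odd roots in $\Phi^n$. On the odd roots $\tau$ acts by transposing indices: $\tau(y_j-x_i)=y_i-x_j$ and $\tau(x_i-y_j)=x_j-y_i$.

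Next I would set up the combinatorial dictionary. Write $\pi_0(i)=\min\{s:\,i\le d_0^s\}$ and $\pi_1(j)=\min\{s:\,j\le d_1^s\}$ for the members of the flag in which $e_i$, resp.\ $f_j$, first appear. Then $\mathfrak{g}^{y_j-x_i}\subseteq\mathfrak{p}\iff\pi_1(j)\le\pi_0(i)$, $\mathfrak{g}^{x_i-y_j}\subseteq\mathfrak{p}\iff\pi_0(i)\le\pi_1(j)$, and $\mathfrak{g}^{x_i-x_j}\subseteq\mathfrak{p}\iff\pi_0(i)\le\pi_0(j)$; an odd root lies in the Levi part $\Phi^r$ exactly when both $\pm$ versions lie in $\mathfrak{p}$. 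The decisive observation is twofold: the ``diagonal'' odd roots $x_j-y_j$ and $y_j-x_j$ are fixed by $\tau$, and $\delta$ is $\Pi$-symmetric if and only if $\pi_0\equiv\pi_1$, i.e.\ if and only if $e_j$ and $f_j$ lie in the same member of the flag for every $j$. If $\delta$ is not $\Pi$-symmetric, choose $j$ with $\pi_0(j)\ne\pi_1(j)$, say $\pi_0(j)<\pi_1(j)$; then $x_j-y_j\in\Phi$ while $y_j-x_j\notin\Phi$, so $x_j-y_j\in\Phi^n_{\bar 1}$, and being $\tau$-fixed it lies in $\Phi^n_{\bar 1}\cap\tau\Phi^n_{\bar 1}$, so the odd dimension is not maximal. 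Conversely, if $\pi_0\equiv\pi_1$, write $\pi$ for the common value; then $y_j-x_i\in\Phi^n_{\bar 1}\iff\pi(j)<\pi(i)$ while $\tau(y_j-x_i)=y_i-x_j\in\Phi^n_{\bar 1}\iff\pi(i)<\pi(j)$, and symmetrically for the roots $x_i-y_j$, so the two conditions are never simultaneously satisfiable; hence $\Phi^n_{\bar 1}\cap\tau\Phi^n_{\bar 1}=\emptyset$ and the odd dimension is maximal. This proves the equivalence.

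For the second assertion I would invoke the lemma characterizing strong measurability: $D$ is strongly measurable if and only if $\alpha\in\Phi\iff\tau(-\alpha)\in\Phi$ for every root $\alpha$. When $\delta$ is $\Pi$-symmetric, $\tau(-\alpha)=-\tau\alpha$ is exactly the image of $\alpha$ under the involution swapping $x_i\leftrightarrow y_i$; and since $\pi_0\equiv\pi_1$, the containment criteria above show that $\Phi=\{\alpha:\,\mathfrak{g}^\alpha\subseteq\mathfrak{p}\}$ is invariant under this swap, which is just the fact that the $\Pi$-invariant standard flag has $\Pi$-invariant stabilizer. Thus the lemma's criterion holds and $D$ is strongly measurable. (The statement only asserts strong measurability; indeed $\Pi$-symmetry yields maximal odd dimension and strong measurability simultaneously, so no separate discussion of weak measurability is required here.)

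The main obstacle I anticipate is purely bookkeeping: pinning down the dictionary between ``$\mathfrak{g}^\alpha\subseteq\mathfrak{p}$'' and inequalities among the entries of $\delta$ correctly and in a Borel-independent way — in particular since the individual sequences $(d_0^s)$ and $(d_1^s)$ may have repetitions even though the pairs $d_0^s\vert d_1^s$ strictly increase — and making sure the reduction of the codimension formula to its odd part is correctly normalized. Once that groundwork is laid, the two structural facts (that $\tau$ fixes the diagonal odd roots, and that $\Pi$-symmetry is precisely the statement $\pi_0\equiv\pi_1$) make both halves of the argument short and parallel to the treatment of the ${}^0\mathfrak{pq}(n)$ case.
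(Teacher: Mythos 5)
Your proposal is correct and follows essentially the same route as the paper: the whole argument rests on the observation that $\tau$ fixes the diagonal odd roots $\pm(y_i-x_i)$, so that maximal odd dimension forces $\Pi$-symmetry, while $\Pi$-symmetry makes $\Phi$ invariant under the swap $x_i\leftrightarrow y_i$ and hence, via the criterion $\alpha\in\Phi\Leftrightarrow\tau(-\alpha)\in\Phi$, yields strong measurability. Your write-up is somewhat more explicit than the paper's (the $\pi_0,\pi_1$ bookkeeping and the verification that $\Pi$-symmetry suffices, not just is necessary, for maximal odd dimension), but the underlying ideas coincide.
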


\begin{proof}
 
The involution $\tau$ acts trivially on the roots $\pm (y_i - x_i)$ for all $1 \leq i \leq n$. So for $\Phi \cap \tau \Phi$
to be empty, one needs $\pm (y_i - x_i) \in \Phi$ for all $1 \leq i \leq n$. This is equivalent to $\Pi$-symmetry.

$\Pi$-symmetry also implies that if one of $x_j - x_i, y_j - y_i, y_j - x_i$ and $x_j - y_i$ is in $\Phi$, then
so are the other three. Now $y_j - y_i = \tau(-(x_j - x_i))$ and $y_j - x_i = \tau(-(y_j - x_i))$. So $\Pi$-symmetry
also yields strong measurability. 

\end{proof}

\subsection{Types B,C and D} 

Now suppose we are given we are given a $\mathbb{C}$-supervector space $V^{k \vert 2m}$ with a non-degenerate even super-symmetric
bilinear form $S: V \times V \rightarrow \mathbb{C}$. If $\mathfrak{g}$ is of type $B(n,m)$, $k = 2n+1$, if it is of type $C(m)$, then $k = 2$
and if it is of type $D(n,m)$, then $k = 2n$. One can always choose a basis $e_1, \ldots, e_k,f_1, \ldots, f_{2m}$ of $V$ such that

\[ S(e_i,f_l) = 0 , S(e_i,e_j) = \delta_{i,k-j}, S(f_l,f_a) = \delta_{l,2m-a} \forall 1 \leq i,j \leq k, 1 \leq l,a \leq 2m \]  

The simple superalgebras of types $B(n,m),C(m)$ and $D(n,m)$ are the orthosymplectic Lie superalgebras $\mathfrak{g} = \mathfrak{osp}(k \vert 2m)$. They consist of all matrices self-adjoint with respect to $S$.
Using the bilinear form $S$ it is possible to restrict the variety of possible flag types:

\begin{lemma}

If $G = \mathrm{Osp}(n \vert 2m)$, then every dimension sequence $\delta$ is even-symmetric

\end{lemma}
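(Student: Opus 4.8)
The plan is to deduce the statement from the description (recalled in Section~\ref{OnPSA}, following \cite{OnI}) of the parabolic subalgebras of $\mathfrak{osp}(n\vert2m)$ as stabilizers of $S$-isotropic flags in $\mathbb{C}^{n\vert2m}$, combined with the elementary observation that orthogonal complementation does not change such a stabilizer.

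First I would fix the non-degenerate even supersymmetric form $S$ on $\mathbb{C}^{n\vert2m}$ preserved by $\mathfrak{g}=\mathfrak{osp}(n\vert2m)$. Since $S$ is even it restricts to a non-degenerate symmetric form on $\mathbb{C}^n$ and a non-degenerate symplectic form on $\mathbb{C}^{2m}$ and pairs these two summands trivially; hence for a graded subspace $W$ one has $W^\perp=(W_{\bar{0}})^\perp\oplus(W_{\bar{1}})^\perp$ with the complements taken inside $\mathbb{C}^n$ and $\mathbb{C}^{2m}$ respectively, so that $\dim W^\perp=(n-\dim_{\bar{0}}W)\vert(2m-\dim_{\bar{1}}W)$ and $W^{\perp\perp}=W$. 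Next, because every element of $G$ preserves $S$ --- equivalently, by the self-adjointness condition defining $\mathfrak{osp}$ --- a subspace that is stable under the stabilizer of $W$ has stable orthogonal; in particular, if a parabolic $\mathfrak{p}$ stabilizes $W$ then it also stabilizes $W^\perp$.

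Now let $\mathfrak{p}$ be the parabolic subalgebra with $Z=Z(\delta)\cong G/P$. By \cite{OnI} it is the stabilizer of a flag of $S$-isotropic graded subspaces $0\subset V_1\subset\cdots\subset V_k\subset\mathbb{C}^{n\vert2m}$. Isotropy gives $V_i\subseteq V_i^\perp$, and $V_i\subseteq V_j$ for $i\le j$ gives $V_j^\perp\subseteq V_i^\perp$, so that after deleting coincidences
\[ 0\subset V_1\subset\cdots\subset V_k\subseteq V_k^\perp\subseteq\cdots\subseteq V_1^\perp\subset\mathbb{C}^{n\vert2m} \]
is again a flag, which by the previous paragraph is stable under $\mathfrak{p}$; in fact it is the maximal $\mathfrak{p}$-stable flag, since any maximal $\mathfrak{p}$-stable flag must be self-dual. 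Thus this self-dual flag realizes the flag type $\delta$ attached to the conjugacy class of $\mathfrak{p}$, and its dimension sequence consists of the pairs $\dim V_i=d_0^i\vert d_1^i$ together with $\dim V_i^\perp=(n-d_0^i)\vert(2m-d_1^i)$, plus the bookends $0\vert0$ and $n\vert2m$. This is precisely the assertion that $\delta$ is even-symmetric.

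I do not expect a genuine obstacle here; the only point requiring a little care is the bookkeeping of coincidences among the subspaces $V_i$ and $V_i^\perp$ --- for instance when some $V_j$ is Lagrangian, so that $V_j=V_j^\perp$. Such coincidences are themselves symmetric under $d_0\vert d_1\mapsto(n-d_0)\vert(2m-d_1)$, and a self-paired entry with $d_0\vert d_1=(n-d_0)\vert(2m-d_1)$ is allowed in an even-symmetric sequence, so the symmetry of $\delta$ is unaffected. It is also worth noting that the isotropy hypothesis from \cite{OnI} is what makes the argument run: the stabilizer in $G$ of a flag of non-isotropic subspaces need not be parabolic, so every flag type occurring for $\mathrm{Osp}(n\vert2m)$ does arise from a self-dual isotropic flag in the above manner.
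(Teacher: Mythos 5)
Your argument is correct and is essentially the paper's own proof: the paper disposes of the lemma in one line by observing that if $\mathfrak{g}$ stabilizes a subspace $W$ then it stabilizes the orthosymplectic complement $W^\perp$, which is exactly the key fact you isolate. Your additional bookkeeping (the realization of parabolics as stabilizers of isotropic flags following \cite{OnI}, the computation $\dim W^\perp = (n-\dim_{\bar{0}}W)\vert(2m-\dim_{\bar{1}}W)$, and the treatment of coincidences such as Lagrangian members) is a faithful elaboration of the same route rather than a different one.
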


\begin{proof}
This is due to the fact that if $\mathfrak{g}$ stabilizes a subspace $W \subseteq V$, then it also stabilizes the orthosymplectic complement $W^\perp$.
\end{proof}

\begin{satz}\label{MaxOddOsp}

Let $G = \mathrm{Osp}(n \vert 2m)$, $G_\mathbb{R} = \mathrm{Osp}(2p,2q+1 \vert 2m), \mathrm{Osp}(2p+1,2q+1 \vert 2m)$ or $\mathrm{Osp}^*(n \vert 2r,2s)$ and $Z = G/P$ be a $G$-flag manifold. Then all $G_\mathbb{R}$-orbits in $Z$ with open base have maximal odd dimension. Moreover they are strongly measurable.

\end{satz}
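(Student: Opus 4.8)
The statement splits, as usual, into a root-theoretic reduction followed by a case check. By the characterisation of strong measurability proved above, $D \cong G_\mathbb{R}/L_\mathbb{R}$ is strongly measurable exactly when $\mathfrak{p}\cap\tau\mathfrak{p}$ is reductive, and by the subsequent Lemma this holds if and only if $\alpha\in\Phi \Leftrightarrow \tau(-\alpha)\in\Phi$ for every $\alpha\in\Sigma$, i.e.\ if and only if $\tau\Phi^r=\Phi^r$ and $\tau\Phi^n=\Phi^c=-\Phi^n$. Since $\tau\Phi^n=-\Phi^n$ forces $\Phi^n\cap\tau\Phi^n\subseteq\Phi^n\cap(-\Phi^n)=\emptyset$, the codimension formula of the previous theorem shows that strong measurability automatically entails that $D$ has maximal odd dimension. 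So it suffices, for each of the three real forms and each orbit with open base, to verify the condition $\alpha\in\Phi\Leftrightarrow\tau(-\alpha)\in\Phi$.

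To fix the combinatorics I would, exactly as in the treatment of $\mathfrak{sl}_{n\vert m}(\mathbb{R})$, replace the given orbit by a $G$-conjugate one which meets a $\tau$-stable Cartan subalgebra $\mathfrak{h}=\mathfrak{h}_{\bar{0}}$ whose intersection with $\mathfrak{g}_{\bar{0}\mathbb{R}}$ is maximally compact (moving the base point by a product of Cayley transforms), so that the orbit contains a point $z_0$ with $\mathrm{Stab}_G(z_0)=\mathfrak{h}\oplus\bigoplus_{\alpha\in\Phi}\mathfrak{g}^\alpha=\mathfrak{p}_\Phi$. On $\Sigma(\mathfrak{g},\mathfrak{h})$ the antilinear involution $\tau$ then acts by $\lambda\mapsto -\overline{\lambda}$, that is, by $-\mathrm{id}$ on the compact directions of $\mathfrak{h}$ and by $+\mathrm{id}$ on the split ones. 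For $G_\mathbb{R}=\mathrm{Osp}(2p,2q+1\vert 2m)$ and $G_\mathbb{R}=\mathrm{Osp}^*(n\vert 2r,2s)$ this already finishes the proof: the even part $\mathfrak{g}_{\bar{0}\mathbb{R}}$ is $\mathfrak{so}(2p,2q+1)\oplus\mathfrak{sp}_{2m}(\mathbb{R})$, respectively $\mathfrak{so}^*(n)\oplus\mathfrak{sp}(2r,2s)$, and in each of these four simple Lie algebras the maximal compact subgroup has full rank, so $\mathfrak{g}_{\bar{0}\mathbb{R}}$ possesses a compact Cartan subalgebra. Choosing $\mathfrak{h}$ compact, $\tau$ acts as $-\mathrm{id}$ on all of $\Sigma(\mathfrak{g},\mathfrak{h})$, whence $\tau(-\alpha)=\alpha$ and the condition is trivially satisfied; thus $D$ is strongly measurable and, in particular, of maximal odd dimension.

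The real content is the case $G_\mathbb{R}=\mathrm{Osp}(2p+1,2q+1\vert 2m)$, where $\mathfrak{so}(2p+1,2q+1)$ has no compact Cartan subalgebra: its maximally compact Cartan has exactly one split dimension. Writing the root system as $D(\ell,m)$ with $\ell=p+q+1$, in coordinates $\epsilon_1,\dots,\epsilon_\ell,\delta_1,\dots,\delta_m$ with $\epsilon_\ell$ the split direction, one computes from $\tau(X)=I_{2p+1,2q+1\vert 2m}\,\overline{X}\,I_{2p+1,2q+1\vert 2m}$ (after the appropriate Cayley transform, as before) that $\tau$ acts on $\Sigma(\mathfrak{g},\mathfrak{h})$ as $-\sigma$, where $\sigma$ is the diagram automorphism of $D(\ell,m)$ given by the sign change on $\epsilon_\ell$. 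The condition $\alpha\in\Phi\Leftrightarrow\tau(-\alpha)\in\Phi$ then reads precisely: $\Phi$ is $\sigma$-invariant. To conclude I would invoke the Lemma that every dimension sequence $\delta$ for $\mathrm{Osp}(n\vert 2m)$ is even-symmetric, and deduce from it that the set $\Phi$ attached to the isotropic flag realising $\mathfrak{p}_\Phi$ — which is automatically stable under $V\mapsto V^\perp$ — is stable under $\sigma$, the even-symmetry being exactly what makes the two members of the $D(\ell,m)$-fork appear symmetrically in $\Phi$.

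I expect this last verification to be the main obstacle. It amounts to analysing how $\sigma$ permutes the two families of maximal isotropic subspaces of $\mathbb{C}^{2\ell}$ (equivalently, the two prongs of the fork of the $D(\ell,m)$ Dynkin diagram) and how the completion of an even-symmetric flag by orthosymplectic complements interacts with that permutation; this is the orthosymplectic counterpart of the careful bookkeeping of successors in $\delta$ carried out in the type $A$ arguments, and is where the even-symmetry condition from the preceding Lemma does the real work. Once it is established, all the conditions of the first step hold, so every orbit with open base is strongly measurable, and hence also of maximal odd dimension.
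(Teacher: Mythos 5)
Your opening reduction is correct and matches the paper's: strong measurability is equivalent to $\alpha\in\Phi\Leftrightarrow\tau(-\alpha)\in\Phi$, i.e.\ $\tau\Phi^{r}=\Phi^{r}$ and $\tau\Phi^{n}=-\Phi^{n}$, which forces $\Phi^{n}\cap\tau\Phi^{n}=\emptyset$ and hence maximal odd dimension via the codimension formula. Your treatment of $\mathrm{Osp}(2p,2q+1\vert 2m)$ and $\mathrm{Osp}^{*}(n\vert 2r,2s)$ is also exactly the paper's (one-line) argument: these real forms have equal-rank even parts, so one can choose a compact $\tau$-stable Cartan subalgebra, $\tau$ acts on $\Sigma(\mathfrak{g},\mathfrak{h})$ as $-\mathrm{id}$, and both conditions hold vacuously.

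The gap is the third case. The step you defer as ``the main obstacle'' --- deducing $\sigma$-invariance of $\Phi$ from even-symmetry of $\delta$ --- is not merely hard, it is false, and so is the conclusion you are trying to reach for $\mathrm{Osp}(2p+1,2q+1\vert 2m)$. Even-symmetry of $\delta$ records closure of the flag under orthosymplectic complements; it has nothing to do with the sign change $\sigma:x_{n}\mapsto -x_{n}$, which interchanges the two families of maximal isotropic subspaces. Concretely, for $\mathfrak{g}=\mathfrak{osp}(2n\vert 2m)$ with $m\geq 1$ and $n\vert 0\in\delta$, the $G_\mathbb{R}$-orbit with open base does not have maximal odd dimension, and a flag domain with $n\vert m\in\delta$ but $n-1\vert m\notin\delta$ is not strongly measurable; already in the purely even picture the open $SO(2p+1,2q+1)$-orbit in a component of the space of maximal isotropic subspaces of $\mathbb{C}^{2n}$ fails to be measurable because the isotropy is not reductive. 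This is precisely the content of Theorem \ref{D21} and the summary table, which give nontrivial conditions for this real form. So no argument along your lines (or any other) can close this case: the hypothesis ``$\mathrm{Osp}(2p+1,2q+1\vert 2m)$'' in the statement is inconsistent with Theorem \ref{D21} and is evidently a slip for $\mathrm{Osp}(2p,2q\vert 2m)$, the remaining real form acting on $\Sigma$ by $-\mathrm{id}$, to which your compact-Cartan argument applies verbatim. You should have flagged the contradiction with Theorem \ref{D21} rather than attempted to prove the literal statement.
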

 
The only real form for which the action on $\Sigma$ is not trivial is the real form $\mathfrak{g}_\mathbb{R} = \mathfrak{osp}(2p+1,2q+1 \vert 2m)$
of $\mathfrak{g} = \mathfrak{osp}(2n \vert 2m)$.

In that particular case the action of $\tau$ on $\Sigma$ is given by

\[ \tau(x_n - x_j) = x_n + x_j , \tau(x_n - y_j) = x_n + y_j , \tau(\alpha) = - \alpha, \ \textnormal{else} \]

\noindent This yields the following

\begin{satz}
\label{D21}
Let $\mathfrak{g} = \mathfrak{osp}(2n \vert 2m)$, $\mathfrak{g}_\mathbb{R} = \mathfrak{osp}(2p+1,2q+1 \vert 2m)$ and $D$ as above. Then:

\begin{enumerate}
 \item A $G_\mathbb{R}$-orbit $D$ with open base has maximal odd dimension, if and only if $n \vert d \not\in \delta$ for all $d < m$
 \item A  $G_\mathbb{R}$-flag domain $D$ is strongly measurable if and only if $n \vert m \not\in \delta$ or $n-1 \vert m \in \delta$
 \item A  $G_\mathbb{R}$-flag domain $D$ is weakly measurable if and only if $n \vert m \in \delta$ and its immediate predecessor is $n-d-1 \vert m-d$, $0 \leq d \leq \min \{n-1, m\}$ 
\end{enumerate}
  
\end{satz}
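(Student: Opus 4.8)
The plan is to work with the action of $\tau$ on the root system $\Sigma(\mathfrak{g},\mathfrak{h})$ that was described just before the statement, namely $\tau(x_n - x_j) = x_n + x_j$, $\tau(x_n - y_j) = x_n + y_j$ and $\tau(\alpha) = -\alpha$ otherwise, and then apply the three characterizations already available: the codimension formula $\mathrm{codim}_Z(M) = |\Phi^n \cap \tau\Phi^n|$ for part (1), the lemma that $D$ is strongly measurable iff ($\alpha \in \Phi \Leftrightarrow \tau(-\alpha) \in \Phi$) for part (2), and for part (3) the observation (recorded just after that lemma) that weak measurability holds precisely when the supertrace of the $\mathfrak{p}$-action on $(\mathfrak{g}/\mathfrak{p})^*$ vanishes via cancellation of even and odd roots. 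In all three parts the key combinatorial input is to translate membership $\beta \in \Phi$ into a condition on the flag type $\delta$: as in the type $A$ computations, $x_n - x_j \in \Phi$, $x_n + x_j \in \Phi$, $x_n - y_j \in \Phi$, $x_n + y_j \in \Phi$ are each equivalent to explicit inequalities relating $j$ to the entries $d_0 \vert d_1$ of $\delta$ that are $\geq$ or $< $ the relevant index. Since $\tau$ fixes all roots not involving $\pm x_n$, only roots of the form $\pm x_n \pm x_j$ and $\pm x_n \pm y_j$ can contribute to $\Phi^n \cap \tau\Phi^n$ or obstruct strong measurability, so the whole analysis reduces to book-keeping among these finitely many root families at the single index $n$.

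\textbf{Part (1).} For maximal odd dimension I would compute $\Phi^n \cap \tau\Phi^n$. A positive root $\alpha$ with $\tau\alpha = -\alpha$ never lies in $\Phi^n \cap \tau\Phi^n$ (since $\tau\Phi^n \subseteq \Phi^c \subseteq \Sigma^-$), so only the pairs $\{x_n - y_j,\, -(x_n + y_j)\}$ matter: we get a contribution exactly when $x_n - y_j \in \Phi^n$ and $x_n + y_j \in \Phi^n$ simultaneously, i.e. when $x_n - y_j \notin \Phi$ and $x_n + y_j \notin \Phi$. Expressing these two non-membership conditions via $\delta$ — the first says some $y$-coordinate is ``reachable down'' from $e_n$ fails, the second the symmetric statement using the $S$-orthogonality built into $\mathrm{Osp}$ — I expect them to simplify to the single requirement that there is some $d < m$ with $n \vert d \in \delta$. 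Conversely, if no such $d$ exists, both conditions always fail and $\Phi^n \cap \tau\Phi^n = \emptyset$. This yields the stated criterion ``$n \vert d \notin \delta$ for all $d < m$''.

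\textbf{Parts (2) and (3).} For strong measurability I would apply the lemma: $D$ is strongly measurable iff for every $\alpha \in \Sigma$, $\alpha \in \Phi \Leftrightarrow \tau(-\alpha) \in \Phi$. This is automatic for every root with $\tau\alpha = -\alpha$, so it reduces to the roots $\pm x_n \pm x_j$ and $\pm x_n \pm y_j$: e.g. for $\alpha = x_n - y_j$ one needs $x_n - y_j \in \Phi \Leftrightarrow \tau(y_j - x_n) = y_j + x_n \in \Phi$, etc. Writing out all four sign combinations and translating into $\delta$-inequalities at the predecessor and successor of the relevant jump, I expect the obstruction to survive in exactly one configuration, which after simplification reads ``$n\vert m \in \delta$ and $n-1\vert m \notin \delta$'' — i.e. strong measurability holds iff $n \vert m \notin \delta$ or $n-1\vert m \in \delta$. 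For part (3) I would take the remaining case ($n \vert m \in \delta$, $n-1\vert m \notin\delta$, so not strongly measurable) and compute the graded sum of the roots in $\Sigma \setminus (\Phi \cap \tau\Phi)$, exactly as in the $\mathfrak{us}\pi$ and $^0\mathfrak{pq}$ computations: this sum is a multiple of the supertrace, hence vanishes, iff the even roots $x_n \pm x_j$ dropped and the odd roots $x_n \pm y_j$ dropped pair up so that their contributions to $\mathrm{str}$ cancel. Tracking which roots are excluded, the cancellation forces the immediate predecessor of $n\vert m$ in $\delta$ to be of the form $n - d - 1 \vert m - d$ for some $0 \leq d \leq \min\{n-1,m\}$, and conversely that shape makes the sum telescope to zero.

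\textbf{Main obstacle.} The conceptual content is light; the real work — and the place errors are likely — is the precise translation of each ``$\pm x_n \pm y_j \in \Phi$'' and ``$\pm x_n \pm x_j \in \Phi$'' into inequalities on the dimension-sequence entries $d_0 \vert d_1$ immediately above and below the jump at index $n$, keeping careful track of how the orthosymplectic form $S$ forces even-symmetry of $\delta$ (the preceding lemma) and hence links the ``$+$'' roots to the ``$-$'' roots. Once that dictionary is set up cleanly, parts (1)–(3) are short; so the plan is to prove a single preparatory lemma giving these equivalences and then read off all three assertions.
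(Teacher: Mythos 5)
Your proposal follows essentially the same route as the paper: reduce everything to the root families $\pm x_n \pm x_j$ and $\pm x_n \pm y_j$ (the only roots not sent to their negatives by $\tau$), then apply the codimension formula for (1), the lemma characterizing strong measurability for (2), and the graded-sum-of-excluded-roots cancellation for (3), which is exactly how the paper argues. One correction before you execute it: since $\tau$ is additive on roots, $\tau(y_j - x_n) = -\tau(x_n - y_j) = -x_n - y_j$ rather than $x_n + y_j$; with your sign the strong-measurability test for the odd roots would become vacuous (a positive root always lies in $\Phi$), whereas the correct pairing $x_n \pm y_j \leftrightarrow -x_n \pm y_j$ is precisely what yields the criterion $n \vert m \not\in \delta$ or $n-1 \vert m \in \delta$.
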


\begin{proof}
 
1. If $\dim_{\bar{1}}D$ is not maximal there must be some $\alpha \in \Phi^c \cap \tau\Phi^c$. By virtue of even symmetry
this can only happen for those roots which satisfy $\tau(\alpha) \neq -\alpha$. As $-x_n - y_j \in \Phi$ for all $1 \leq j \leq m$, 
we may assume $\alpha = x_n - y_j$. Then $\tau(\alpha) = x_n + y_j$, which is always an element of $\Phi^c$ because of even symmetry.
So $D$ does not have maximal odd dimension precisely when $x_n - y_j \in \Phi^c$ for some $1 \leq j \leq m$. This is only the case, if $n \vert d \in \delta$ for some $d < j$.

2. $D_{\bar{0}}$ is measurable if and only if either $n \not\in \delta_{\bar{0}}$ or $n-1 \in \delta_{\bar{0}}$. If $n \vert m \not\in \delta$ or $n-1 \vert m \in \delta$, then $x_n \pm y_j \in \Phi$,
if and only if $- x_n \pm y_j \in \Phi$, so these contributions cancel in the sum of odd roots, yielding strong measurability.

3. Assume $n-d-1 \vert m-d \in \delta$ and $n \vert m$ is its succesor in $\delta$. Then the even roots $\alpha \in \Phi \cap \tau\Phi$
with $-\alpha \not\in \Phi \cap \tau\Phi$ are $-x_j - x_n$ and $x_j - x_n$ for all $n-d-1 \leq j \leq n-1$. On the other hand 
the odd roots $\alpha \in \Phi \cap \tau\Phi$ with $-\alpha \not\in \Phi \cap \tau\Phi$ are $-x_n - y_j$ and $-x_n + y_j$ for all $m - d \leq j \leq m$.
The graded sum of all these roots is zero so $D$ is weakly measurable. The converse stems from the fact
that the odd roots will always be the given ones and the only way to cancel them with even roots is with the given even roots. 

\end{proof}

This completes the characterization of measurability for the orthosymplectic superalgebras.

\subsection{The exceptional Lie superalgebras}

As the existence of non-measurable open orbits requires the existence of a non-trivial automorphism of the Dynkin diagram,
these can only occur for the exceptional superalgebras $E_6$ and $D(2,1,\alpha)$. For the other excpetional Lie superalgebras this implies the following:

\begin{satz}
Let $G = E_7,E_8,F_4,G_2,F(4)$ or $G(3)$, $G_\mathbb{R}$ a real form and $Z = G/P$ a $G$-flag manifold. Then all $G_\mathbb{R}$-orbits $D$ in $Z$ with open base have maximal odd dimension. Moreover they are all strongly measurable.
\end{satz}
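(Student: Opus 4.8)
The plan is to split the six algebras into the four ordinary simple Lie algebras $E_7,E_8,F_4,G_2$ and the two genuinely exceptional Lie superalgebras $F(4),G(3)$, and in the latter case to reduce the assertion — via the root--theoretic characterisations established above — to the rigidity of the corresponding Dynkin diagrams.

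For $G = E_7, E_8, F_4$ or $G_2$ the superalgebra $\mathfrak{g}$ is purely even, so every orbit has maximal odd dimension trivially, $\Phi^n$ and $\Phi^c$ consist of even roots only, and strong measurability of $D$ is just ordinary measurability of $D = D_{\bar 0}$. For these groups the statement is therefore the classical fact that a real form of a complex simple group whose Dynkin diagram has trivial automorphism group admits only measurable open orbits; this follows from the root--theoretic characterisation of measurability (Theorem \ref{WMeas}) together with the observation that $\tau$ then acts by $\alpha\mapsto-\alpha$ on an adapted Cartan, and is contained in \cite{W}. It remains to treat $\mathfrak{g} = F(4)$ and $\mathfrak{g} = G(3)$.

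For these two I would argue exactly as in the type $A$, $B$, $C$, $D$ analyses above. Using the classification of real forms (see \cite{Kac}, \cite{Par}, \cite{Ser}) one goes through the finitely many real forms $\mathfrak{g}_\mathbb{R}$ of $\mathfrak{g}$; for a given flag domain $D = G_\mathbb{R}\cdot z_0$ one fixes a $\tau$-invariant Cartan subalgebra $\mathfrak{h}$ adapted to $D$, if necessary after replacing $\tau$ by a conjugate under a product of commuting Cayley transforms so that $z_0$ becomes the base point (just as was done for $\mathfrak{sl}_{n\vert m}(\mathbb{R})$), and one computes the induced involution of $\Sigma(\mathfrak{g},\mathfrak{h})$. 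The point is that the ``interesting'' part of this involution — its action on $\Sigma$ modulo the sign, equivalently the automorphism it induces on the Dynkin diagram of $\mathfrak{g}$ — is exactly the mechanism that produced non-measurable open orbits for $E_6$ and $D(2,1,\alpha)$, and the diagrams of $F(4)$ and $G(3)$ have trivial automorphism group. Hence $\tau$ acts trivially on $\Sigma$, i.e. $\tau(\alpha) = -\alpha$ for all roots. Then $\tau\mathfrak{p} = \mathfrak{p}^{op}$, so $D$ is strongly measurable by condition 5 of the strong measurability theorem (equivalently, the criterion ``$\alpha\in\Phi\Leftrightarrow\tau(-\alpha)\in\Phi$'' of the Lemma holds since $\tau(-\alpha)=\alpha$); and $\Phi^n\cap\tau\Phi^n = \Phi^n\cap(-\Phi^n)=\emptyset$ on the odd roots as well, so $D$ has maximal odd dimension by the codimension formula.

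The only real work — and the main obstacle — is the bookkeeping in that middle step: for each of the few real forms of $F(4)$ and $G(3)$, selecting the correct $\tau$-stable Cartan and conjugating Cayley transform, writing out the action of $\tau$ on $\Sigma$, and verifying that the induced diagram automorphism is indeed trivial. A minor subtlety is that a Lie superalgebra carries several inequivalent Dynkin diagrams connected by odd reflections; I would fix the distinguished diagram of $F(4)$ and of $G(3)$, check rigidity there, and note that the conclusion ($\tau$ acting trivially on $\Sigma$) does not depend on that choice. Everything else is routine.
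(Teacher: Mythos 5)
Your proposal is correct and follows essentially the same route as the paper, which disposes of these cases with the single remark that non-measurable open orbits (and orbits of non-maximal odd dimension) require a non-trivial automorphism of the Dynkin diagram, so that for $E_7,E_8,F_4,G_2,F(4),G(3)$ the involution $\tau$ must act as $\alpha\mapsto-\alpha$ on an adapted Cartan, whence $\Phi^n\cap\tau\Phi^n=\emptyset$ and $\tau\Phi^n=\Phi^c$, $\tau\Phi^r=\Phi^r$. Your additional bookkeeping (maximally compact $\tau$-stable Cartan via Cayley transforms, the caveat about inequivalent Dynkin diagrams under odd reflections) fills in details the paper leaves implicit but does not change the argument.
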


Now consider the two remaining cases:

The exceptional family $D(2,1,\alpha)$ does have the same root system as the simple Lie superalgebra $D(2,1)$. Moreover,
if $\mathfrak{g}$ is of type $D(2,1,\alpha)$, then $\mathfrak{g}_{\bar{0}} \cong \mathfrak{sl}_2(\mathbb{C}) \oplus \mathfrak{sl}_2(\mathbb{C}) \oplus \mathfrak{sl}_2(\mathbb{C})$ 
and there are three real forms with respective even parts $\mathfrak{g}_{\bar{0}\mathbb{R}} \cong \mathfrak{sl}_2(\mathbb{R}) \oplus \mathfrak{sl}_2(\mathbb{R}) \oplus \mathfrak{sl}_2(\mathbb{R})$, 
$\mathfrak{g}_{\bar{0}\mathbb{R}} \cong \mathfrak{sl}_2(\mathbb{R}) \oplus \mathfrak{su}(2) \oplus \mathfrak{su}(2)$ or $\mathfrak{g}_{\bar{0}\mathbb{R}} \cong \mathfrak{sl}_2(\mathbb{R}) \oplus \mathfrak{sl}_2(\mathbb{C})$.

For the first two of these, the action of $\tau$ on the root system is $\tau(\alpha) = -\alpha$.
For the third real form, the action of $\tau$ on the root system is given by

 \[\tau(x_1 - x_2) = (-x_1 - x_2), \tau(\pm y \pm x_1) = \mp y \mp x_1, \tau(\pm y \pm x_2) = \mp y \pm x_2\]

\noindent This is the same action as for the real form $\mathfrak{osp}(1,3 \vert 2)$ of $\mathfrak{osp}(4,2)$. 
Moreover, this real form can only occur, if $\alpha = 1, -\frac{1}{2}$ or $-2$ and then $\mathfrak{g}$ and $\mathfrak{g}_\mathbb{R}$ are 
isomorphic to $\mathfrak{osp}(4,2)$ and $\mathfrak{osp}(1,3 \vert 2)$ respectively.

\begin{satz}
Let $G = D(2,1,\alpha)$ and $Z = G/P$ a flag manifold.

\begin{enumerate}
\item If $G_\mathbb{R}$ is one of the real forms acting trivially on the Dynkin diagram, then all $G_\mathbb{R}$-orbits $D$ in $Z$
have maximal odd dimension. Furthermore they are strongly measurable.
\item If $G_\mathbb{R}$ is the real form satisfying $G_{\bar{0}\mathbb{R}} = SL_2(\mathbb{R}) \times SL_2(\mathbb{C})$, then
$G \cong \mathrm{Osp}(4 \vert 2)$, $G_\mathbb{R} \cong \mathrm{Osp}(1,3 \vert 2)$ and the conditions for maximal odd dimension and weak or strong measurability are given by Theorem \ref{D21}.
\end{enumerate}

\end{satz}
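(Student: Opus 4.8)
The plan is to derive both parts from tools already in hand: the codimension formula, the lemma characterising strong measurability by the condition that $\alpha\in\Phi\Leftrightarrow\tau(-\alpha)\in\Phi$ for all $\alpha\in\Sigma$, and Theorem~\ref{D21}. \textbf{For Part 1}, recall from the discussion preceding the statement that for the two real forms with $\mathfrak g_{\bar 0\mathbb R}\cong\mathfrak{sl}_2(\mathbb R)^{\oplus 3}$ and $\mathfrak g_{\bar 0\mathbb R}\cong\mathfrak{sl}_2(\mathbb R)\oplus\mathfrak{su}(2)^{\oplus 2}$ the defining involution acts on $\Sigma(\mathfrak g,\mathfrak h)$ by $\tau(\alpha)=-\alpha$ (with respect to the Cartan subalgebra adapted to a flag domain). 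Feeding this into the codimension formula gives $\Phi^n\cap\tau\Phi^n=\Phi^n\cap(-\Phi^n)\subseteq\Sigma^+\cap\Sigma^-=\varnothing$, so every orbit with open base is in fact open, i.e.\ has maximal odd dimension. For strong measurability the lemma applies at once, since $\tau(-\alpha)=\alpha$ makes the condition a tautology; equivalently, condition~(4) of the strong-measurability theorem holds because $\Phi^r$ is stable under $\alpha\mapsto-\alpha$ and $\Phi^c=-\Phi^n$. This disposes of Part~1 with no computation.

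\textbf{For Part 2} the content is the reduction to Theorem~\ref{D21}. I would start from the facts recorded above the statement: the real form of $D(2,1,\alpha)$ with even part $\mathfrak{sl}_2(\mathbb R)\oplus\mathfrak{sl}_2(\mathbb C)$ exists only for $\alpha\in\{1,-2,-\tfrac12\}$, and for those values $\mathfrak g\cong\mathfrak{osp}(4,2)$ with $\mathfrak g_{\mathbb R}$ carried to $\mathfrak{osp}(1,3\,\vert\,2)$. Under such an isomorphism parabolic subsuperalgebras correspond to parabolic subsuperalgebras, hence the $G$-flag supermanifolds $Z=G/P$ are exactly the $\mathrm{Osp}(4\,\vert\,2)$-flag supermanifolds, which by Section~\ref{OnPSA} are the $Z(\delta)$ for dimension sequences $\delta$ of $\mathbb C^{4\,\vert\,2}$; thus we land in the setting of Theorem~\ref{D21} with $n=2$, $m=1$. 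It then suffices to note that the $\tau$-action on $\Sigma$ displayed above for this real form is literally the one used in the proof of Theorem~\ref{D21} for $\mathfrak{osp}(1,3\,\vert\,2)\subseteq\mathfrak{osp}(4,2)$, so the root-combinatorial arguments of that proof apply verbatim. Specialising the three conditions of Theorem~\ref{D21} to $n=2$, $m=1$ (so the range $0\le d\le\min\{n-1,m\}$ in the weak-measurability condition becomes $d\in\{0,1\}$, i.e.\ the immediate predecessor of $2\vert 1$ is $1\vert 1$ or $0\vert 0$) yields exactly the assertion of Part~2.

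\textbf{The main obstacle} is not analytic but organisational: one must make sure that restricting $\alpha$ to the three exceptional values genuinely places us inside the orthosymplectic framework of dimension sequences, since the general exceptional $D(2,1,\alpha)$ lies outside the scope of Section~\ref{OnPSA}, and one must match the labelling of $\delta$ on the two sides of the isomorphism $\mathfrak g\cong\mathfrak{osp}(4,2)$. Since both the isomorphism and the coincidence of the $\tau$-actions are already available, this amounts to an identification of combinatorial data rather than a substantive proof step.
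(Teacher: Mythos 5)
Your proposal is correct and follows essentially the same route as the paper, which proves this result implicitly through the discussion immediately preceding the statement: the first two real forms act on the root system by $\tau(\alpha)=-\alpha$ (so the codimension formula and the strong-measurability lemma apply trivially), and the third forces $\alpha\in\{1,-\tfrac12,-2\}$ with $\mathfrak{g}\cong\mathfrak{osp}(4,2)$, $\mathfrak{g}_\mathbb{R}\cong\mathfrak{osp}(1,3\,\vert\,2)$, reducing everything to Theorem~\ref{D21} with $n=2$, $m=1$. Your explicit verification of the specialised conditions and your remark that the restriction to the three exceptional values is what legitimises the use of dimension sequences are accurate refinements of what the paper leaves implicit.
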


The exceptional Lie group $E_6$ is the only exceptional classical simple Lie group that allows non-measurable flag domains.

\begin{satz}
Let $G = E_6$ and $Z = G/P$ be a $G$-flag manifold.

\begin{enumerate}
\item If $G_\mathbb{R}= E_{6,F_4}$ or $E_{6,C_4}$, then an open $G_\mathbb{R}$-orbit $D$ in $Z$ is measurable if $P \cap \tau P$ is a complex reductive group.
\item If $G_\mathbb{R}$ is any other real form, then all open $G_\mathbb{R}$-orbits $D$ in $Z$ are measurable.  
\end{enumerate}

\end{satz}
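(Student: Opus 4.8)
The plan is to reduce the whole statement to the classical measurability criterion. Since $\mathfrak{g}$ is of type $E_6$, i.e.\ a purely even classical simple Lie algebra, it has no odd roots, every $G_\mathbb{R}$-invariant Berezinian form on $D$ is an ordinary $G_\mathbb{R}$-invariant volume form, and the notions of weak and strong measurability both collapse to the classical notion of measurability; so Theorem~\ref{WMeas} applies verbatim, and for an open orbit $D = G_\mathbb{R}\cdot z$ one has: $D$ is measurable $\iff$ $\mathfrak{p}\cap\tau\mathfrak{p}$ is complex reductive $\iff$ $\tau\Phi^r = \Phi^r$ and $\tau\Phi^n = \Phi^c$, where $\mathfrak{h}\subseteq\mathfrak{p} = \mathfrak{p}_z$ is a suitable $\tau$-invariant Cartan subalgebra. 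Part~(1) is then just one of these equivalences: if $P\cap\tau P$ is a complex reductive group then $\mathfrak{p}\cap\tau\mathfrak{p}$ is reductive, hence $D$ is measurable. The reason $E_{6,C_4}$ and $E_{6,F_4}$ must be singled out is that here, unlike for the other real forms, this condition is genuinely restrictive: $E_{6,C_4}$ (the split form $E_{6(6)}$, with maximal compact subgroup of type $C_4$) and $E_{6,F_4}$ (the form $E_{6(-26)}$, with maximal compact subgroup $F_4$) are exactly the two real forms of $E_6$ whose Cartan involution is an outer automorphism, equivalently which induce the nontrivial automorphism $\sigma$ of the $E_6$ Dynkin diagram; taking $\mathfrak{p} = \mathfrak{b}\oplus\bigoplus_{\alpha\in\vert -J\vert}\mathfrak{g}^\alpha$ with $\sigma(J)\neq J$ then yields, via the Remark preceding this subsection, an open $G_\mathbb{R}$-orbit which is not measurable, so the phenomenon of non-measurable flag domains really does occur here, as announced for the exceptional groups.

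For part~(2) I would argue as follows. The remaining real forms of $E_6$ are the compact form, the form $E_{6(2)}$ (maximal compact subgroup of type $A_5\oplus A_1$) and the form $E_{6(-14)}$ (maximal compact subgroup of type $D_5\oplus\mathbb{R}$). Each of these has a maximal compact subgroup of rank $6 = \operatorname{rk}E_6$, i.e.\ is an equal-rank real form, so $\mathfrak{g}_\mathbb{R}$ contains a compact Cartan subalgebra $\mathfrak{t}$ and $\mathfrak{h} := \mathfrak{t}^{\mathbb{C}}$ is a $\tau$-invariant Cartan subalgebra of $\mathfrak{g}$. By Wolf's analysis of $G_\mathbb{R}$-orbits relative to the fundamental (here compact) Cartan subalgebra (\cite{W}; compare the discussion around Theorem~2.12 there), every open $G_\mathbb{R}$-orbit $D$ contains a point $z$ with $\mathfrak{h}\subseteq\mathfrak{p}_z =: \mathfrak{p}$. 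On such $\mathfrak{h}$ the $\mathbb{C}$-antilinear involution $\tau$ is the identity on $\mathfrak{t}$ while every root of $\Sigma(\mathfrak{g},\mathfrak{h})$ is imaginary-valued on $\mathfrak{t}$, so $\tau$ acts on $\Sigma(\mathfrak{g},\mathfrak{h})$ by $\alpha\mapsto-\alpha$. Since $\Phi^r = \vert J\vert\cup -\vert J\vert$ is symmetric, $\tau\Phi^r = -\Phi^r = \Phi^r$; and since $\Phi^c = -\Phi^n$, also $\tau\Phi^n = -\Phi^n = \Phi^c$. By Theorem~\ref{WMeas}, $D$ is measurable. (Here $\tau = -\mathrm{id}$ also gives $\operatorname{codim}_Z D = \vert\Phi^c\cap\tau\Phi^c\vert = \vert\Phi^c\cap(-\Phi^c)\vert = 0$, consistent with $D$ being open.)

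The one step that requires genuine care in part~(2) is the assertion that \emph{every} open orbit of an equal-rank real form can be based at a point whose isotropy contains the distinguished compact Cartan subalgebra, so that the normal form $\tau = -\mathrm{id}$ on $\Sigma(\mathfrak{g},\mathfrak{h})$ is available uniformly; this is exactly Wolf's classification of open $G_\mathbb{R}$-orbits in terms of the fundamental Cartan subalgebra, which I would cite rather than reprove. Apart from that, I would double-check the bookkeeping --- that $E_{6,C_4}$ and $E_{6,F_4}$ are indeed the split form and the $F_4$-form, that the three other real forms are equal-rank, and that this is consistent with the table of real forms reproduced above (based on Table~3 of \cite{Ser}) --- so that the partition of the real forms of $E_6$ used in the proof matches the one in the statement.
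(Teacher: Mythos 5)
Your proposal is correct and follows essentially the same route as the paper, which in fact states this theorem without a written proof, relying on the preceding remark that non-measurable open orbits can only occur when the defining involution $\tau$ induces a non-trivial automorphism of the Dynkin diagram --- exactly the dichotomy you make precise by separating the two non-equal-rank forms $E_{6,C_4}$ and $E_{6,F_4}$ (outer Cartan involution) from the three equal-rank forms, for which $\tau$ acts as $-\mathrm{id}$ on $\Sigma(\mathfrak{g},\mathfrak{h})$ relative to a compact Cartan subalgebra and Theorem~\ref{WMeas}(4) applies. Your reduction of part (1) to the classical equivalence and your appeal to Wolf's fundamental-Cartan characterization of open orbits in part (2) are both sound.
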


\subsection{Type P}

Now consider the supervectorspace $V = \mathbb{C}^{n \vert n}$ with a non-degenerate odd super-skewsymmetric bilinear form $\omega$.
Then there is a standard basis $e_1, \ldots, e_n,$  $f_1, \ldots, f_n$ of $V$ such that

\[ \omega(e_i,e_j) = \omega(f_i,f_j) = 0 , \omega(e_i,f_j) = \delta_{ij} \forall 1 \leq i,j \leq n \]

The periplectic Lie superalgebra $\mathfrak{p}(n)$ is the Lie superalgebra of all matrices which are self-adjoint with respect to $\omega$. As in the case of the orthosymplectic Lie superalgebra this restricts the possible flag types:

\begin{satz}
 If $G = P(n)$ then every dimension sequence $\delta$ is \\ odd-symmetric. 
\end{satz}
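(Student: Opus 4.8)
The plan is to follow the same route as the proof of the preceding lemma for the orthosymplectic superalgebras, using the description of parabolics of $\mathfrak{p}(n)$ from Section~\ref{OnPSA}: $\mathfrak{g}=\mathfrak{p}(n)$ is the superalgebra of endomorphisms of $V=\mathbb{C}^{n\vert n}$ that are self-adjoint with respect to the non-degenerate odd super-skewsymmetric form $\omega$, and by \cite{OnI} its parabolic subalgebras are the stabilizers of flags of $\omega$-isotropic graded subspaces of $V$. The elementary observation driving everything is that if $X\in\mathfrak{g}$ stabilizes a graded subspace $W\leq V$, then $X$ also stabilizes the $\omega$-orthogonal complement $W^\perp$; this is immediate from super-self-adjointness, exactly as in the orthosymplectic case.

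The one point that genuinely differs from the orthosymplectic setting --- and the step I expect to require the most care --- is the dimension count for $W^\perp$. Since $\omega$ is \emph{odd}, one has $\omega(V_{\bar 0},V_{\bar 0})=\omega(V_{\bar 1},V_{\bar 1})=0$ and $\omega$ restricts to a perfect pairing between $V_{\bar 0}$ and $V_{\bar 1}$; hence, for $W=W_{\bar 0}\oplus W_{\bar 1}$ with $\dim W=d_0\vert d_1$, the even part of $W^\perp$ is the annihilator of $W_{\bar 1}$ inside $V_{\bar 0}$ and its odd part is the annihilator of $W_{\bar 0}$ inside $V_{\bar 1}$, so $\dim W^\perp=(n-d_1)\vert(n-d_0)$. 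Thus orthogonal complementation interchanges the even and odd parities, which is precisely why one obtains odd-symmetry here where the orthosymplectic argument produces even-symmetry. I would state this little dimension computation explicitly since it is where the whole statement comes from.

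To conclude: let $\mathfrak{p}=\mathfrak{p}(n)$ be a parabolic and realize it as the stabilizer of an $\omega$-isotropic flag $V_1\leq\cdots\leq V_k$. As each $V_i$ is isotropic, $V_i\subseteq V_i^\perp$, and by the first paragraph $\mathfrak{p}$ also stabilizes the saturated flag $V_1\leq\cdots\leq V_k\leq V_k^\perp\leq\cdots\leq V_1^\perp$; by the one-to-one correspondence between parabolics and flags recalled in Section~\ref{OnPSA}, this saturated flag is the one determining $\mathfrak{p}$, so its dimension sequence is the flag type $\delta$. By the dimension formula of the second paragraph that sequence is $d_0^1\vert d_1^1<\cdots<d_0^k\vert d_1^k<(n-d_1^k)\vert(n-d_0^k)<\cdots<(n-d_1^1)\vert(n-d_0^1)$, which is visibly invariant under $d_0\vert d_1\mapsto(n-d_1)\vert(n-d_0)$; together with $\dim V_{\bar 0}=\dim V_{\bar 1}=n$ this is exactly the assertion that $\delta$ is odd-symmetric. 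The only subtlety to handle cleanly is the appeal to uniqueness in the parabolic--flag correspondence, to ensure that passing to the $\perp$-saturation does not change the flag type but merely exhibits it in symmetric form.
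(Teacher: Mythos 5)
Your proposal is correct and follows essentially the same route as the paper's own (much terser) proof: the paper likewise observes that $\mathfrak{p}(n)$ stabilizes $W^\perp$ whenever it stabilizes $W$ and that $\dim W^\perp = (n-\dim W_{\bar 1})\,\vert\,(n-\dim W_{\bar 0})$, from which odd-symmetry follows. Your write-up simply makes explicit the parity-swapping dimension count for the odd form $\omega$ and the appeal to the parabolic--flag correspondence, both of which the paper leaves implicit.
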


\begin{proof}
If $\mathfrak{g} = \mathfrak{p}(n)$ stabilizes a subspace $W \subseteq V$, then it also stabilizes $W^\perp$.
As $\dim W^\perp = n - \dim W_{\bar{1}} \vert n - \dim W_{\bar{0}}$, every dimension sequence will be odd-symmetric. 
\end{proof}

The only possible real forms of $\mathfrak{g}$ are $\mathfrak{g}_\mathbb{R} = \mathfrak{p}_\mathbb{R}(n)$
and $\mathfrak{p}_\mathbb{H}(m)$, if $n = 2m$ is even. In both cases the action of $\tau$ on $\Sigma$ is the following:

\[ \tau(\pm x_i \pm x_j) = \pm x_{n-i+1} \pm x_{n-j+1} \]

\noindent In particular the roots $\alpha = \pm (x_i + x_j)$ are fixed by $\tau$. This implies the following:

\begin{satz}
 
Let $G = P(n)$ and $G_\mathbb{R} = P_\mathbb{R}(n)$ or $P_\mathbb{H}(m)$. A $G_\mathbb{R}$-flag domain $D$ in $Z(\delta)$ has maximal odd dimension if and only if $\delta$ is $\Pi$-symmetric.

\end{satz}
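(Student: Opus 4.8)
The plan is to reduce the assertion to the codimension formula $\mathrm{codim}_Z(M) = \vert \Phi^n \cap \tau\Phi^n\vert$ proved earlier, so that maximal odd dimension is equivalent to $\Phi^n\cap\tau\Phi^n$ containing no \emph{odd} root (the even part being automatically fine once the base is open). Concretely, I want to show that for $G = P(n)$ with defining involution acting on $\Sigma$ by $\tau(\pm x_i\pm x_j) = \pm x_{n-i+1}\pm x_{n-j+1}$, the intersection $\Phi^c\cap\tau\Phi^c$ (equivalently $\Phi^n\cap\tau\Phi^n$, by the symmetry $\Phi^n = -\Phi^c$) contains no odd root precisely when $\delta$ is $\Pi$-symmetric.

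The key structural input is the description of parabolics of $\mathfrak{p}(n)$ as stabilizers of flags of $\omega$-isotropic subspaces, together with the block form of $\mathfrak{p}(n)$ from Example~\ref{TypeIcomp}: an element has blocks $\begin{pmatrix} X & Y \\ Z & -X^T\end{pmatrix}$ with $Y = -Y^T$, $Z = Z^T$. The odd roots are exactly the $\pm(x_i + x_j)$: the ``$Y$''-roots are $x_i + x_j$ with $i < j$ (from $Y = -Y^T$) and the ``$Z$''-roots are $-x_i - x_j$ with $i \le j$ (from $Z = Z^T$, so including $i = j$). First I would fix a $\tau$-invariant Cartan and a basis $e_1,\dots,e_n,f_1,\dots,f_n$ adapted to the flag, and record the membership criterion: a $Y$-root $x_i + x_j$ lies in $\Phi$ iff some $X$ in $\mathfrak{p}$ sends $f_j \mapsto e_i$ (and symmetrically), which translates into an inequality on the flag-type numbers $d_{\bar 0}\vert d_{\bar 1}\in\delta$ exactly as in the $\mathfrak{sl}$ computations above. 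Then, since $\tau$ acts on odd roots by $i \mapsto n-i+1$, $j\mapsto n-j+1$ and in particular \emph{fixes} no odd root of $Y$-type but does fix the $Z$-roots $-(x_i + x_{n-i+1})$, I would argue: $\Phi^c\cap\tau\Phi^c$ is empty on odd roots iff for every odd root $\alpha$, at least one of $\alpha,\tau\alpha$ lies in $\Phi$; for the $\tau$-fixed roots $-(x_i + x_{n-i+1})$ this forces $-(x_i + x_{n-i+1})\in\Phi$ for all $i$, and unwinding the membership inequality shows this holds for all $i$ exactly when $d_{\bar 0} = d_{\bar 1}$ for every $d_{\bar 0}\vert d_{\bar 1}\in\delta$, i.e. $\Pi$-symmetry. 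Conversely, assuming $\Pi$-symmetry I would check that $\Pi$-symmetry of $\delta$ makes the four-element ``orbit'' of each $Y$-root behave symmetrically under $\tau$ and $-\mathrm{id}$ (exactly as in the $US\Pi(n)$ and $^0PQ(n)$ propositions), so that whenever an odd root is in $\Phi^c$ its $\tau$-image is in $\Phi$, killing the intersection.

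The main obstacle I anticipate is the bookkeeping around the $Z$-type diagonal roots $-2x_i = -(x_i + x_i)$, which have no analogue among the $Y$-roots and which are precisely the $\tau$-fixed odd roots when $i = n-i+1$ (i.e. $n$ odd, $i = (n+1)/2$), plus the fact that odd-symmetry of $\delta$ is automatic here (by the preceding proposition on $P(n)$) so one must be careful not to conflate it with $\Pi$-symmetry. I would handle this by treating the membership of $-(x_i + x_{n-i+1})$ in $\Phi$ as the decisive test case and verifying directly, via the isotropy/flag description, that it is controlled by whether $d_{\bar 0}\vert d_{\bar 1}$ with $i\le d_{\bar 0}$ also satisfies $n - i + 1 \le d_{\bar 1}$ — an inequality that over all $i$ collapses to $d_{\bar 0} = d_{\bar 1}$. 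The rest is a routine repetition of the inequality-chasing already carried out for types $A$ and the orthosymplectic case.
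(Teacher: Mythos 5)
Your overall strategy --- reduce to the codimension formula and observe that maximal odd dimension forces every $\tau$-fixed odd root into $\Phi$, then chase inequalities for the converse --- is the paper's strategy, but your forward direction has a genuine gap coming from a concrete error about which odd roots are $\tau$-fixed. You assert that $\tau$ fixes no odd root of $Y$-type and that the fixed roots are only the $Z$-roots $-(x_i+x_{n-i+1})$. This is false: since $\tau(x_i+x_j)=x_{n-i+1}+x_{n-j+1}$, the $Y$-root $x_i+x_{n-i+1}$ (for $i\neq n-i+1$) is fixed by $\tau$ as well. The paper's argument needs \emph{both} families of fixed roots: forcing $-(x_i+x_{n-i+1})\in\Phi$ for all $i$ yields only the one-sided condition $d_{\bar{0}}\geq d_{\bar{1}}$ for every $d_{\bar{0}}\vert d_{\bar{1}}\in\delta$, forcing $+(x_i+x_{n-i+1})\in\Phi$ for all $i$ yields the opposite inequality $d_{\bar{1}}\geq d_{\bar{0}}$, and only the conjunction collapses to $\Pi$-symmetry. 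A single one-sided inequality is strictly weaker: e.g.\ $\delta = 0\vert 0 < 2\vert 1 < (n-1)\vert(n-2) < n\vert n$ (for $n\geq 4$) is odd-symmetric, hence an admissible $P(n)$ flag type, satisfies $d_{\bar{0}}\geq d_{\bar{1}}$ throughout, but is not $\Pi$-symmetric. Relatedly, the ``decisive test'' inequality you write down (every $i\leq d_{\bar{0}}$ should satisfy $n-i+1\leq d_{\bar{1}}$) already fails at $i=1$ unless $d_{\bar{1}}=n$, which indicates the membership criterion has been paired up with the wrong indices; it cannot ``collapse to $d_{\bar{0}}=d_{\bar{1}}$'' on its own.

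Two smaller points. First, your converse leans on the four-element $\{\pm 1\}\times\{1,\tau\}$ orbit cancellation ``as in the $US\Pi(n)$ and $^{0}PQ(n)$ propositions''; but that style of argument establishes supertrace cancellation (weak measurability), not emptiness of $\Phi^c\cap\tau\Phi^c$. The paper instead assumes an odd root $\alpha=-x_i-x_j$ lies in $\Phi^c\cap\tau\Phi^c$, extracts the inequalities $\widetilde{d_{\bar{0}}}<j\leq n-\overline{d_{\bar{0}}}$ and $n-\widetilde{d_{\bar{1}}}<i\leq\overline{d_{\bar{1}}}$, and uses $\overline{d_{\bar{0}}}=\overline{d_{\bar{1}}}$, $\widetilde{d_{\bar{0}}}=\widetilde{d_{\bar{1}}}$ to derive $n<n$; you would need to carry out that computation rather than gesture at symmetry of orbits. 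Second, your claimed identification $\Phi^n=-\Phi^c$ is delicate precisely for $P(n)$, where the roots $\pm 2x_i$ are not symmetric under $-\mathrm{id}$; working directly with $\Phi^c\cap\tau\Phi^c$, as the paper does, avoids this.
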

 
\begin{proof}
 
For $\dim_{\bar{1}}D$ to be maximal all roots fixed by $\tau$ must be in $\Phi$. These are presiely the roots $\alpha = \pm (x_i + x_{n-i+1})$.
Now $ - x_i - x_{n-i+1} \in \Phi$ if and only if there is some $X \in \mathfrak{p}$ such that $X(f_i) = e_{n-i+1}$. For this to hold for all 
$1 \leq i \leq n$ one needs $d_{\bar{0}} \geq d_{\bar{1}}$ for all $d_{\bar{0}} \vert d_{\bar{1}} \in \delta$. On the other hand, $- x_i - x_{n-i+1} \in \Phi$ requires $d_{\bar{1}} \geq d_{\bar{0}}$
for all $d_{\bar{0}} \vert d_{\bar{1}} \in \delta$. As both these conditions must be satisfied, $\delta$ is $\Pi$-symmetric. 

Conversely suppose there is an odd root $\alpha \in \Phi^c \cap \tau\Phi^c$, without loss of generality $\alpha = - x_i - x_j$. 
Then $n - j + 1 > \overline{d_{\bar{0}}}$, where $\overline{d} = \min \{ d_{\bar{0}} \vert d_{\bar{1}} \in \delta : d_{\bar{1}} \geq i \}$. Moreover, $\tau(\alpha) = - x_{n-i+1} - x_{n-j+1} \not\in \Phi$,
so $ j > \widetilde{d_{\bar{0}}} $, where $\tilde{d} = \min \{ d_{\bar{0}} \vert d_{\bar{1}} \in \delta : d_{\bar{1}} \geq n - i + 1 \}$. This yields:

\[ \widetilde{d_{\bar{0}}} < j \leq n - \overline{d_{\bar{0}}}  , n - \widetilde{d_{\bar{1}}} < i \leq \overline{d_{\bar{1}}}  \]

\noindent Now suppose $\delta$ is $\Pi$-symmetric. Then $\overline{d_{\bar{0}}} = \overline{d_{\bar{1}}}$ and $\widetilde{d_{\bar{0}}} = \widetilde{d_{\bar{1}}}$
and consequently $n = \widetilde{d_{\bar{0}}} + n - \widetilde{d_{\bar{0}}} < n - \overline{d_{\bar{0}}} + \overline{d_{\bar{0}}} = n$, which is a contradiction.   

\end{proof}

Note that given $\Pi$-symmetry the even and odd symmetry conditions coincide. Consequently if $D$ has maximal odd dimension
then $D_{\bar{0}}$ is automatically measurable. Also for all odd roots $\alpha$ one has $\alpha \in \Phi$ if and only if $-\alpha \in \Phi$.
So for $D$ to be strongly measurable one needs to consider the roots $2x_i$ whose negatives are no roots.

\begin{satz}
 
A $P_\mathbb{R}(n)$- or $P_\mathbb{H}(m)$-flag domain $D$ in $Z(\delta)$ is strongly measurable if and only if $n = 2m$ is even and $m \vert m \in \delta$.

\end{satz}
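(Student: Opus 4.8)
The plan is to run the characterization Lemma for strong measurability (which says that $D$ is strongly measurable iff $\alpha\in\Phi \Leftrightarrow \tau(-\alpha)\in\Phi$ for every $\alpha\in\Sigma$) against the two structural facts already established for type $P$: every dimension sequence is odd-symmetric, and $D$ has maximal odd dimension precisely when $\delta$ is $\Pi$-symmetric. The feature of $\Sigma(\mathfrak{p}(n),\mathfrak{h})$ that does all the work is the asymmetry of the odd roots $\pm 2x_i$: the root $-2x_i$ lies in $\Sigma$ (it occurs in $\mathfrak{g}_{-1}$), whereas $2x_i\notin\Sigma$, and $\tau(2x_i)=2x_{n-i+1}$ is again not a root.

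For the ``only if'' direction I would not even need the Lemma. If $D$ is strongly measurable then by definition $D_{\bar{0}}$ is measurable, so the classical characterization forces the even flag type $\delta_{\bar{0}}$ to be even-symmetric. Now combine three elementary constraints on $\delta$: it is a chain in the product order, so the first and second coordinates of its parts increase weakly; isotropy of the flag members forces every nontrivial part $d_{\bar 0}|d_{\bar 1}$ to satisfy $d_{\bar 0}+d_{\bar 1}\leq n$; and odd-symmetry carries such a part to the nontrivial part $(n-d_{\bar 1})|(n-d_{\bar 0})$, whose coordinate sum $2n-d_{\bar 0}-d_{\bar 1}$ must also be $\leq n$, giving $d_{\bar 0}+d_{\bar 1}\geq n$. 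Hence $d_{\bar 0}+d_{\bar 1}=n$ for every nontrivial part; since the coordinates increase weakly along the chain while $d_{\bar 1}=n-d_{\bar 0}$ then decreases weakly, there is at most one nontrivial part $a|b$ with $a+b=n$. Even-symmetry of $\delta_{\bar 0}=\{0,a,n\}$ then forces $n-a\in\{0,a,n\}$, and excluding the degenerate cases $a\in\{0,n\}$ (point flag supermanifolds) leaves $a=b=n/2$. Thus $n=2m$ is even and $\delta=0|0<m|m<n|n$, i.e. $m|m\in\delta$.

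For the ``if'' direction assume $n=2m$ and $m|m\in\delta$, so $\delta=0|0<m|m<n|n$ is $\Pi$-symmetric. By the preceding theorem $D$ then has maximal odd dimension; even- and odd-symmetry coincide, so $D_{\bar 0}$ is measurable; and for every root other than the $\pm 2x_i$ the Lemma's condition $\alpha\in\Phi\Leftrightarrow\tau(-\alpha)\in\Phi$ holds automatically --- the even roots because $D_{\bar 0}$ is measurable, the odd roots $\pm(x_i+x_j)$ with $i\neq j$ because for those $\alpha\in\Phi\Leftrightarrow-\alpha\in\Phi$ (as observed just before the theorem) together with odd-symmetry of $\delta$. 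So everything reduces to showing $-2x_i\notin\Phi$ for all $i$, i.e. $\mathfrak{g}^{-2x_i}\not\subseteq\mathfrak{p}$, where $\mathfrak{p}$ is the stabilizer of the unique isotropic ($=$ Lagrangian) $m|m$-subspace $V$ lying in the open $G_\mathbb{R}$-orbit. Writing the rank-one root vector $\mathfrak{g}^{-2x_i}$, in a basis adapted to a $\tau$-invariant Cartan (the Cayley-transformed base point for which $\tau(x_i)=x_{n-i+1}$), as the map $f_i\mapsto e_i$, it lies in $\mathfrak{p}$ iff either $V_{\bar 1}$ has no $f_i$-component or $e_i\in V_{\bar 0}$; for the $\tau$-generic Lagrangian $V$ neither holds, for any $i$, so $-2x_i\notin\Phi$. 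The Lemma then yields strong measurability.

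The hardest point is this last step: one must describe the open-orbit base point explicitly enough --- the $\tau$-generic Lagrangian $V$ of type $m|m$ in coordinates diagonalising a $\tau$-invariant Cartan --- to see that not one of the $n$ diagonal root vectors $\mathfrak{g}^{-2x_i}$ preserves it. The $\tau$-twist is genuinely needed here: for a naive Lagrangian such as $\langle e_1,\dots,e_m\rangle\oplus\langle f_{m+1},\dots,f_n\rangle$ roughly half of the $\mathfrak{g}^{-2x_i}$ would stabilise $V$, so the computation really uses that the Cartan is maximally twisted. Everything else is either a direct appeal to the Lemma and the preceding theorems, or the elementary chain combinatorics above.
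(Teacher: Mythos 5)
Your proof has genuine errors in both directions. In the ``only if'' direction, the step ``isotropy of the flag members forces every nontrivial part to satisfy $d_{\bar 0}+d_{\bar 1}\leq n$'' is incompatible with the convention under which the odd-symmetry lemma holds: a dimension sequence for $P(n)$ lists \emph{all} invariant subspaces of the flag, including the perpendiculars $W^\perp$ of the isotropic members, and $\dim W^\perp = (n-d_{\bar 1})\vert(n-d_{\bar 0})$ has coordinate sum $\geq n$. Your conclusion that every nontrivial part has coordinate sum exactly $n$, hence that $\delta$ reduces to $0\vert 0 < m\vert m < n\vert n$, is false: $\delta = 0\vert 0<1\vert 1<2\vert 2<3\vert 3<4\vert 4$ is a legitimate $\Pi$-symmetric flag type for $P(4)$ which contains $2\vert 2$ and is strongly measurable by the theorem, yet has three nontrivial parts. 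Worse, that step never invokes strong measurability, so your argument would show that \emph{every} $P(n)$ flag type has a unique nontrivial part, contradicting the rest of the chapter.

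In the ``if'' direction the reduction to ``$\mathfrak{g}^{-2x_i}\not\subseteq\mathfrak{p}$ for all $i$'' overshoots. The characterization lemma is proved for roots whose negatives are again roots; for the $n$ exceptional roots $\beta_i$ one must return to the supertrace of the isotropy action on $\mathfrak{g}/(\mathfrak{p}\cap\tau\mathfrak{p})$, and the condition that falls out is $\beta_i\notin\Phi\cap\tau\Phi$, not $\beta_i\notin\Phi$. This is not cosmetic: your stronger condition is actually violated for every strongly measurable $D$. For the $\tau$-generic isotropic $m\vert m$-flag, the $m$ diagonal root vectors with $i>m$ (in the basis adapted to the $\tau$-invariant Cartan) \emph{do} lie in $\mathfrak{p}$ --- they annihilate $W_{\bar 0}$ outright and send the complementary even vectors into $W_{\bar 1}$ --- while their $\tau$-images $\beta_{n-i+1}$ with $n-i+1\leq m$ do not lie in $\Phi$; this splitting $\beta_i\in\Phi\setminus\tau\Phi$ versus $\beta_i\in\tau\Phi\setminus\Phi$ is precisely what the paper's count $2(\underline{d}+1)\leq n+1\leq 2\overline{d}$ detects, and it is what makes $D$ strongly measurable. (You can see the failure of your claim already for $n=2$, $W_{\bar 0}=\langle e_1+ie_2\rangle$.) If your criterion were the right one, no $P(n)$-flag domain would be strongly measurable. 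A minor additional point: the roots without negatives sit in the symmetric block $e_i\mapsto f_i$, not in the antisymmetric block $f_i\mapsto e_i$, which has no diagonal. The paper's own proof avoids all of this by computing, for each $i$, the two inequalities governing $2x_i\in\Phi$ and $2x_i\in\tau\Phi$ directly in terms of the neighbours of $i$ in $\delta$, and observing that both hold simultaneously exactly when the interval of $\delta$ containing $i$ straddles $m$.
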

 
\begin{proof}

If $n = 2m+1$ is odd then $\alpha = 2x_{m + 1} \in \Phi \cap \tau\Phi$ in any case(by virtue of the condition for maximal odd dimension), 
so $D$ cannot be measurable.

Now let $n = 2m$ be even. $2x_i$ is an element of $\Phi$, if and only if there is some $X \in \mathfrak{p}$ such that $X(e_i) = f_i$. This requires
$n - i + 1 \leq \overline{d}$, where $\overline{d} \vert \overline{d} = \min \{ d \vert d \in \delta: i \leq d \}$. 
This implies $n + 1 \leq 2\overline{d}$. Analogously $2x_i \in \tau\Phi$ if and only if $i \leq \tilde{d}$, 
where $\tilde{d} \vert \tilde{d} = \min \{ d \vert d \in \delta: n - i + 1 \leq d \}$. By the odd symmetry condition $\tilde{d} = n - \underline{d} + 1$,
where $\underline{d} \vert \underline{d} = \max \{ d \vert d \in \delta : d < i \}$. Summarizing this, one has $2x_i \in \Phi \cap \tau\Phi$ if and only if

\[ 2 (\underline{d} + 1) \leq n + 1 \leq 2 \overline{d} \]

\noindent Now if $m \vert m \not\in \delta$, then $2x_m \in \Phi \cap \tau\Phi$, as in that case $\underline{d} < m$ and $\overline{d} > m$, 
so the above condition is satisfied. Conversely if $m \vert m \in \delta$, then $\underline{d} + 1 $ and $\overline{d}$ are always
either both less or equal to $m$ or both greater than $m$. Consequently $2x_i \not\in \Phi \cap \tau\Phi$ for all $1 \leq i \leq n$ and
therefore $D$ is strongly measurable.  
 
\end{proof}

\subsection{Type Q}

The Lie superalgebras of type $Q$ have the important trait that all root spaces are $1 \vert 1$-dimensional and therefore 
for any parabolic subalgebra $\mathfrak{p} \subseteq \mathfrak{g}$ one always has $\dim_{\bar{1}}(\mathfrak{p}) = \dim_{\bar{0}}(\mathfrak{p})$.
Therefore if $D_{\bar{0}}$ is open, then $D$ will automatically have maximal odd dimension. Moreover as the root spaces are $1 \vert 1$- dimensional,
every root is even and odd and a graded sum of roots is therefore always zero. This yields the following classification result:

\begin{satz}
 
Let $G = PQ(n)$, $G_\mathbb{R}$ any real form and $Z(\delta)$ a $G$-flag supermanifold. A $G_\mathbb{R}$-flag domain $D$ in $Z(\delta)$ always has maximal odd dimension and is always weakly measurable. It is strongly measurable if and only if $D_{\bar{0}}$ is measurable.

\end{satz}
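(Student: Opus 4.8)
The proof rests on the structural feature of type $Q$ recalled just above the statement: for a simple Lie superalgebra $\mathfrak{g}$ of type $Q$ every root space $\mathfrak{g}^\alpha$ is $1\vert 1$-dimensional and the Cartan subalgebra is $\mathfrak{h}=\mathfrak{t}\oplus\Pi\mathfrak{t}$, so the odd parity operator $\Pi$ is available on $\mathfrak{g}$, commutes with $\mathrm{ad}\,\mathfrak{g}_{\bar{0}}$, and --- as one checks directly from the classification table of real forms of type $Q$ --- commutes with the antilinear involution $\tau$ defining each real form. Hence $\mathfrak{g}$, the parabolic $\mathfrak{p}$, its twist $\tau\mathfrak{p}$, the real form $\mathfrak{g}_\mathbb{R}$ and the isotropy pair $\mathfrak{l}_\mathbb{R}=\mathfrak{g}_\mathbb{R}\cap\mathfrak{p}$ are all $\Pi$-stable, and $\Pi$ restricts on each of them to a linear isomorphism exchanging the even and the odd part. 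The plan is to establish this compatibility once and then read off all three assertions.

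For maximal odd dimension: from $\Pi$-stability of $\mathfrak{g}_\mathbb{R}$ and $\mathfrak{l}_\mathbb{R}$ one gets $\dim_{\bar{0}}\mathfrak{g}_\mathbb{R}=\dim_{\bar{1}}\mathfrak{g}_\mathbb{R}$ and $\dim_{\bar{0}}\mathfrak{l}_\mathbb{R}=\dim_{\bar{1}}\mathfrak{l}_\mathbb{R}$, hence $\dim_{\bar{0}}D=\dim_{\bar{1}}D$, and likewise $\dim_{\bar{0}}Z=\dim_{\bar{1}}Z$. Since a flag domain has open base one has $\dim_{\bar{0}}D=\dim_{\bar{0}}Z$, and combining the three identities forces $\dim_{\bar{1}}D=\dim_{\bar{1}}Z$, i.e. $D$ has maximal odd dimension. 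Alternatively one may invoke the codimension formula $\mathrm{codim}_Z M=\vert\Phi^n\cap\tau\Phi^n\vert$ and observe that in type $Q$ the roots accounting for the even and for the odd codimension coincide, so that openness of the base already forces the odd codimension to vanish.

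For weak measurability I would apply Theorem 4.13 in \cite{AH}: $D$ carries a $G_\mathbb{R}$-invariant Berezinian density if and only if $\Ber(\mathfrak{g}_\mathbb{R}/\mathfrak{l}_\mathbb{R})$ is a trivial $L_\mathbb{R}$-module. Exactly as in the proof of the strong-measurability characterization, the elements of $\mathfrak{g}_{\bar{1}\mathbb{R}}$ act on $\mathfrak{g}_\mathbb{R}/\mathfrak{l}_\mathbb{R}$ through odd operators, which have vanishing supertrace, so it remains to see that the even group $L_{\bar{0}\mathbb{R}}$ acts trivially on the Berezinian line; its character there is $\det$ on $(\mathfrak{g}_\mathbb{R}/\mathfrak{l}_\mathbb{R})_{\bar{0}}$ divided by $\det$ on $(\mathfrak{g}_\mathbb{R}/\mathfrak{l}_\mathbb{R})_{\bar{1}}$. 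But $\Pi$ descends to an $L_{\bar{0}\mathbb{R}}$-equivariant isomorphism between these two modules, so the two determinant characters agree and the quotient is trivial. This $\Pi$-symmetry is precisely what lies behind the ``even and odd roots cancel'' phrasing in the remark following the strong-measurability lemma, so no explicit root computation is needed; hence $D$ is weakly measurable.

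The last assertion is then immediate: by definition $D$ is strongly measurable precisely when it is weakly measurable and $D_{\bar{0}}$ is measurable, and weak measurability has just been shown to hold unconditionally, so $D$ is strongly measurable if and only if $D_{\bar{0}}$ is measurable. I do not anticipate a serious obstacle; the one point requiring genuine care --- and hence the main, modest difficulty --- is checking that $\Pi$ commutes with $\tau$ for every real form in the table and that it therefore induces the asserted isomorphisms on the subquotients, after which each of the three statements follows in a line.
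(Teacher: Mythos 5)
Your proposal is correct and is essentially the paper's own argument: the paper derives all three claims from the fact that every root space of type $Q$ is $1\vert 1$-dimensional, so that $\dim_{\bar{0}}\mathfrak{p}=\dim_{\bar{1}}\mathfrak{p}$ for every parabolic (giving maximal odd dimension whenever the base is open) and every graded sum of roots vanishes (giving the supertrace cancellation that yields weak measurability), with the last assertion following formally from the definition of strong measurability --- your parity-operator formulation is the same cancellation expressed through the $L_{\bar{0}\mathbb{R}}$-equivariant identification of $(\mathfrak{g}_\mathbb{R}/\mathfrak{l}_\mathbb{R})_{\bar{0}}$ with $(\mathfrak{g}_\mathbb{R}/\mathfrak{l}_\mathbb{R})_{\bar{1}}$. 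One small inaccuracy: for $\mathfrak{upsq}(p,q)$ the defining automorphism $\tau(X)=-i^{\vert X\vert}I_{p,q\vert p,q}X^\dagger I_{p,q\vert p,q}$ commutes with $\Pi$ only up to a fourth root of unity, so $\mathfrak{g}_\mathbb{R}$ is stable under a scalar rescaling of $\Pi$ rather than $\Pi$ itself; since a nonzero multiple of $\Pi$ still intertwines the two $L_{\bar{0}\mathbb{R}}$-modules and preserves all dimension counts, this does not affect your conclusions.
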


\section{Even real forms and measurable flag domain}
\subsection{Characterization of strong measurability for even real forms}
As the Berezinian module of a flag domain $D = G_\mathbb{R}/L_\mathbb{R}$ is $1$-dimensional the action of $\mathfrak{l}_{\bar{1}\mathbb{R}}$
on it is necessarily trivial. It is therefore reasonable to consider instead real forms of the even group $G_{\bar{0}}$ 
rather than real forms of the whole supergroup $G$. However, as the action of the $\mathbb{C}$-antilinear involution $\tau$
is central to the classification, one should actually consider an even real form $G_\mathbb{R}$ of $G$.

The even real forms of the basic classical Lie superalgebras were classified in \cite{Ser}. The table for the non-exceptional
ones is as follows:

\noindent \begin{tabular}{p{1.6cm} | p{3.4cm}  | p{5.9cm}}
 & even real form  & defining involution \\ \hline 
$\mathfrak{sl}_{n \vert m}(\mathbb{C})$ & $\mathfrak{sl}_{n}(\mathbb{R}) \oplus \mathfrak{sl}_{k}(\mathbb{H}) [\oplus \mathbb{R}]$ & $\tau(X) = \mathrm{Ad}(\mathrm{d}(I_m,J_k))(\bar{X})$ \\ \hline
       & $\mathfrak{su}(p,q) \oplus \mathfrak{su}(r,s) [\oplus i\mathbb{R}]$ & $\tau(X) = - I_{p,q \vert r,s} \bar{X}^{st} I_{p,q \vert r,s}$ \\ \hline
$\mathfrak{psl}_{n \vert n}(\mathbb{C})$ & $\mathfrak{sl}_n(\mathbb{C})$ & $\tau(X) = \Pi(\delta_i(\bar{X}))$  \\ \hline
$\mathfrak{osp}(m \vert 2n)$ & $\mathfrak{so}(p,q) \oplus \mathfrak{sp}(2r,2s)$ & $\tau(X) = \mathrm{Ad}(\mathrm{d}(I_{p,q}, d(I_{r,s},I_{r,s}) \cdot J_n))(\bar{X})$ \\ \hline
$\mathfrak{osp}(2m \vert 2n)$ & $\mathfrak{so}^*(2m) \oplus \mathfrak{sp}_{2n}(\mathbb{R})$ & $\tau(X) = \mathrm{Ad}(\mathrm{d}(J_m,I_{2n}))(\bar{X})$ \\  

 \end{tabular}

As before there is a distinction between strong and weak measurability and a characterization theorem for strong measurability:

\begin{satz}
 
Let $Z = G/P$ be a flag supermanifold, $G_\mathbb{R} = \mathrm{Fix}(\tau)^\circ$ an even real form and $D \subseteq Z$ a flag domain.
Then the following are equivalent:

\begin{enumerate}
 \item $D$ is strongly measurable
 \item $\mathfrak{p} \cap \tau\mathfrak{p}$ is reductive
 \item $\mathfrak{p} \cap \tau\mathfrak{p}$ is the reductive part of $\mathfrak{p}$
 \item $\tau \Phi^r = \Phi^r$ and $\tau \Phi^n = \Phi^c$
 \item $\tau\mathfrak{p}$ is $G_0$-conjugate to $\mathfrak{p}^{op}$.
\end{enumerate}

\end{satz}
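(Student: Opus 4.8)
The plan is to follow the proof of the analogous characterization for ordinary real forms given above, the only structural novelty being that $\tau$ is now a $\mathbb{C}$-antilinear automorphism of order $4$ rather than an involution. I would first treat $2\Leftrightarrow 3\Leftrightarrow 4\Leftrightarrow 5$, which costs nothing extra: $\tau$ is still an automorphism of the Lie superalgebra $\mathfrak{g}$, so $\tau\mathfrak{p}$ is again parabolic, and since $\tau^2$ acts on each root space $\mathfrak{g}^\alpha$ by the scalar $(-1)^{\vert\alpha\vert}$ it preserves $\mathfrak{g}^\alpha$, whence $\tau$ induces an involution of $\Sigma(\mathfrak{g},\mathfrak{h})$. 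Consequently the decomposition $\mathfrak{p}=\mathfrak{p}^r\oplus\mathfrak{p}^n$ is $\tau$-compatible, the reductive part of $\mathfrak{p}\cap\tau\mathfrak{p}$ equals $\mathfrak{p}^r\cap\tau\mathfrak{p}^r$, and its nilpotent radical equals $(\mathfrak{p}^r\cap\tau\mathfrak{p}^n)\oplus(\mathfrak{p}^n\cap\tau\mathfrak{p}^r)\oplus(\mathfrak{p}^n\cap\tau\mathfrak{p}^n)$; from here the equivalences $2$--$5$ follow by exactly the same bookkeeping in terms of $\Phi^r,\Phi^n,\Phi^c$ as in the ordinary case.

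For $1\Leftrightarrow 2$ I would first record the correct homogeneous datum, and here the even case genuinely differs. Since $G_\mathbb{R}$ is purely even, $D=(D_{\bar{0}},i^*\mathcal{O}_Z)$ is not the homogeneous superspace $G_\mathbb{R}/L_\mathbb{R}$; instead $\Ber(D)=\Ber(T^*D)$ is a $G_\mathbb{R}$-equivariant line bundle over $D_{\bar{0}}=G_\mathbb{R}/L_\mathbb{R}$ whose fibre over the base point is $\Ber((\mathfrak{g}/\mathfrak{p})^*)$, with $L_\mathbb{R}=G_\mathbb{R}\cap P_{\bar{0}}$ acting through $L_\mathbb{R}\hookrightarrow P_{\bar{0}}$. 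By Theorem 4.13 in \cite{AH} a nonzero invariant Berezinian form exists if and only if this fibre is a trivial $L_\mathbb{R}$-module, equivalently --- using that $\tau$ restricts to an honest $\mathbb{C}$-antilinear involution of $\mathfrak{g}_{\bar{0}}$, so that the complexification of $\mathfrak{l}_\mathbb{R}$ is $\mathfrak{p}_{\bar{0}}\cap\tau\mathfrak{p}_{\bar{0}}$ --- if and only if the supertrace of $\mathrm{ad}(x)$ on $\mathfrak{g}/\mathfrak{p}$ vanishes for every $x\in\mathfrak{p}_{\bar{0}}\cap\tau\mathfrak{p}_{\bar{0}}$. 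These $x$ being even, that supertrace is $\mathrm{tr}_{\mathfrak{g}_{\bar{0}}/\mathfrak{p}_{\bar{0}}}(\mathrm{ad}\,x)-\mathrm{tr}_{\mathfrak{g}_{\bar{1}}/\mathfrak{p}_{\bar{1}}}(\mathrm{ad}\,x)$. If $D$ is strongly measurable then $D_{\bar{0}}$ is measurable, so $\mathfrak{p}_{\bar{0}}\cap\tau\mathfrak{p}_{\bar{0}}$ is reductive; exploiting the self-duality of $\mathfrak{g}_{\bar{0}}$ and of $\mathfrak{g}_{\bar{1}}$ as $\mathfrak{g}_{\bar{0}}$-modules under the invariant even bilinear form of the basic classical $\mathfrak{g}$, the vanishing condition can be rewritten as: $\mathfrak{p}_{\bar{0}}\cap\tau\mathfrak{p}_{\bar{0}}$ acts on $\mathfrak{p}_{\bar{1}}\cap\tau\mathfrak{p}_{\bar{1}}$ with trace zero. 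Splitting $\mathfrak{p}_{\bar{1}}\cap\tau\mathfrak{p}_{\bar{1}}$ into its pieces coming from $\mathfrak{p}^r$ and $\mathfrak{p}^n$ as above and isolating the trace --- exactly as in the ordinary-real-form proof --- forces the nilpotent summands $(\mathfrak{p}^r\cap\tau\mathfrak{p}^n)_{\bar{1}}$, $(\mathfrak{p}^n\cap\tau\mathfrak{p}^r)_{\bar{1}}$ and $(\mathfrak{p}^n\cap\tau\mathfrak{p}^n)_{\bar{1}}$ to vanish, which together with reductivity of the even part says precisely that the nilpotent radical of $\mathfrak{p}\cap\tau\mathfrak{p}$ is zero, i.e. $2$ holds. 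Conversely, if $\mathfrak{p}\cap\tau\mathfrak{p}$ is reductive then $L_\mathbb{R}$ is real reductive, hence acts trivially on $\Ber((\mathfrak{g}_\mathbb{R}/\mathfrak{l}_\mathbb{R})^*)$ and a fortiori on the Berezinian fibre (again \cite{AH}), while $\mathfrak{p}_{\bar{0}}\cap\tau\mathfrak{p}_{\bar{0}}$ reductive makes $D_{\bar{0}}$ measurable by Theorem \ref{WMeas}; so $D$ is strongly measurable.

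The main obstacle will be the forward implication $1\Rightarrow 2$, and within it the step converting trace vanishing into the actual vanishing of the nilpotent summands of $\mathfrak{p}_{\bar{1}}\cap\tau\mathfrak{p}_{\bar{1}}$. One has to verify, as in the classical argument of \cite{W}, that the ``diagonal'' summand $\mathfrak{p}_{\bar{1}}^r\cap\tau\mathfrak{p}_{\bar{1}}^r$ already contributes zero trace by itself --- for which one wants $B$ to restrict nondegenerately to the Levi $\mathfrak{p}^r\cap\tau\mathfrak{p}^r$, making its odd part self-dual over its even part --- so that the remaining summands are forced to be trace-zero $\mathfrak{p}_{\bar{0}}\cap\tau\mathfrak{p}_{\bar{0}}$-modules, and then that such a module, built from odd weight spaces on which the relevant Levi acts only through nilradical directions, must be trivial. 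Since $\tau^2=-\mathrm{id}$ on $\mathfrak{g}_{\bar{1}}$, the $\tau$-induced pairing between $\mathfrak{p}^n$-type and $\mathfrak{p}^{-n}$-type odd weight spaces has to be re-examined; this is the one place where the order-$4$ nature of $\tau$ must be handled with care rather than simply transcribed. A smaller point, to be dispatched first, is confirming that Theorem 4.13 of \cite{AH} --- formulated for actions of Lie supergroups on homogeneous superspaces --- applies to the purely even group $G_\mathbb{R}$ acting on the possibly non-split $D$, with the relevant isotropy fibre being $\Ber((\mathfrak{g}/\mathfrak{p})^*)$, the Berezinian of the holomorphic cotangent space of $Z$, and not $\Ber((\mathfrak{g}_{\bar{0}}/\mathfrak{p}_{\bar{0}})^*)$.
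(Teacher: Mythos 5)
Your reduction of $2\Leftrightarrow3\Leftrightarrow4\Leftrightarrow5$ to the $\tau$-compatibility of the decomposition $\mathfrak{p}=\mathfrak{p}^r\oplus\mathfrak{p}^n$ is fine and agrees with the paper: since $\tau^2$ acts by $\pm\mathrm{id}$ on each root space, $\tau$ still induces an involution of $\Sigma(\mathfrak{g},\mathfrak{h})$ and the order-$4$ nature causes no trouble there. The genuine gap is in $1\Leftrightarrow2$, and it sits exactly at the spot you defer as a ``smaller point'': you take the isotropy fibre to be the complex line $\Ber((\mathfrak{g}/\mathfrak{p})^*)$ and require the \emph{complex-linear} character of $L_\mathbb{R}$ on it to be trivial, i.e. $\mathrm{str}_{\mathbb{C}}(\mathrm{ad}\,x|_{\mathfrak{g}/\mathfrak{p}})=\sum_{\alpha\in\Phi^c_{\bar 0}}\alpha(x)-\sum_{\alpha\in\Phi^c_{\bar 1}}\alpha(x)$ to vanish on $\mathfrak{p}_{\bar 0}\cap\tau\mathfrak{p}_{\bar 0}$. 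This condition contains no reference to $\tau$ at all, so it cannot be equivalent to condition $4$; and it already fails in the basic test case of a compact even real form with $D=Z$, where $\tau\alpha=-\alpha$ makes $2$--$5$ hold and $Z$ certainly carries an invariant Berezinian density, yet the graded sum of the roots of the nilradical is nonzero in general. The point you miss is that measurability concerns an invariant \emph{density}, so the relevant isotropy module is the Berezinian of the realified tangent space $(\mathfrak{g}/\mathfrak{p})^{\mathbb{R}}$, whose infinitesimal character is $2\,\mathrm{Re}$ of the complex supertrace. For ordinary real forms this distinction is invisible because the real module $\mathfrak{g}_\mathbb{R}/\mathfrak{l}_\mathbb{R}$ complexifies to $\mathfrak{g}/(\mathfrak{p}\cap\tau\mathfrak{p})$, whose root support $\Phi^c\cup\tau\Phi^c$ is already $\tau$-symmetric; for an even real form the module is $\mathfrak{g}/\mathfrak{p}$ with root support only $\Phi^c$, and the $\tau$-symmetrization must be produced by the realification. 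Your subsequent plan of splitting $\mathfrak{p}_{\bar 1}\cap\tau\mathfrak{p}_{\bar 1}$ and killing nilpotent summands is transcribed from the ordinary-real-form proof, but the module $\mathfrak{g}_{\bar 1}/\mathfrak{p}_{\bar 1}$ simply has nothing to do with $\tau\mathfrak{p}_{\bar 1}$, so the claimed ``rewriting'' of the vanishing condition as a trace condition on $\mathfrak{p}_{\bar 1}\cap\tau\mathfrak{p}_{\bar 1}$ is not available.

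The paper's own proof avoids all of this by proving $1\Leftrightarrow4$ directly: it restricts the isotropy representation to a maximal compact Cartan subalgebra $\mathfrak{h}_\mathbb{R}\subseteq\mathfrak{p}\cap\mathfrak{g}_\mathbb{R}$ (which exists because $D_{\bar 0}$ is open), computes $\mathrm{str}(\tilde{\mathrm{ad}}(H))=2\sum_{\Phi^c_{\bar 0}}\mathrm{Re}\,\alpha(H)-2\sum_{\Phi^c_{\bar 1}}\mathrm{Re}\,\alpha(H)$ on $(\mathfrak{g}/\mathfrak{p})^{\mathbb{R}}$, and uses $\overline{\alpha(H)}=(\tau\cdot\alpha)(H)$ to convert this into $\sum_{\Phi^c_{\bar 0}\cup\tau\Phi^c_{\bar 0}}\alpha(H)-\sum_{\Phi^c_{\bar 1}\cup\tau\Phi^c_{\bar 1}}\alpha(H)$; measurability of $D_{\bar 0}$ cancels the even sums, and the remaining odd sum vanishes precisely when $\tau\Phi^c_{\bar 1}=-\Phi^c_{\bar 1}$, which is condition $4$. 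The equivalences $2$--$5$ are then obtained from the decomposition of $\mathfrak{p}$ exactly as in your first paragraph. If you repair your argument by realifying the Berezinian, it collapses onto this root computation; I would recommend restructuring your proof accordingly rather than trying to force the $\mathfrak{p}_{\bar 1}\cap\tau\mathfrak{p}_{\bar 1}$ splitting into a setting where it does not appear.
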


\begin{proof}
 
Let $z_0 \in D_{\bar{0}}$ be a base point and $\mathfrak{p} = \mathrm{Stab}_\mathfrak{g}(z_0)$. As $D_{\bar{0}}$ is an open $G_\mathbb{R}$-orbit
$\mathfrak{p} \cap \mathfrak{g}_\mathbb{R}$ contains a maximal compact Cartan subalgebra $\mathfrak{h}_\mathbb{R}$ of $\mathfrak{g}_\mathbb{R}$.
The adjoint representation of $G$ induces a representation $\tilde{\mathrm{ad}}$ of $\mathfrak{h}_\mathbb{R}$ on $(\mathfrak{g}/\mathfrak{p})^\mathbb{R}$
Let $H \in \mathfrak{h}_\mathbb{R}$. Then

\[ \mathrm{str}(\tilde{\mathrm{ad}}(H)) = 2 \sum_{\alpha \in \Phi_{\bar{0}}^c} \mathrm{Re} \ \alpha(H) - 2 \sum_{\alpha \in \Phi_{\bar{1}}^c} \mathrm{Re} \ \alpha(H)\]
\[ = \sum_{\alpha \in \Phi_{\bar{0}}^c} (\alpha(H) + \overline{\alpha(H)}) - \sum_{\alpha \in \Phi_{\bar{1}}^c} (\alpha(H) + \overline{\alpha(H)})\]
\[ = \sum_{\alpha \in \Phi_{\bar{0}}^c} (\alpha(H) + \tau \cdot \alpha(H)) - \sum_{\alpha \in \Phi_{\bar{1}}^c} (\alpha(H) + \tau \cdot \alpha(H))\]
\[ = \sum_{\alpha \in \Phi_{\bar{0}}^c} \alpha(H) + \sum_{\alpha \in \tau\Phi_{\bar{0}}^c} \alpha(H) - \sum_{\alpha \in \Phi_{\bar{1}}^c} \alpha(H) - \sum_{\alpha \in \tau\Phi_{\bar{1}}^c} \alpha(H)\]

Now assume $D_{\bar{0}}$ is measurable. Then the sums over the even roots are zero and one obtains

\[ \mathrm{str}(\tilde{\mathrm{ad}}(H)) = - \sum_{\alpha \in \Phi_{\bar{1}}^c} \alpha(H) - \sum_{\alpha \in \tau\Phi_{\bar{1}}^c} \alpha(H) = 0 \]
if and only if $\tau\Phi_{\bar{1}}^c = -\Phi_{\bar{1}}^c$. So 1. and 4. are equivalent and equivalence of 2. - 5. follows again from the decomposition
of $\mathfrak{p}$ into its reductive and nilpotent parts. 

\end{proof}

\subsection{Classification of measurable flag domains of even real forms}

Again the classification is done case by case.

Let $G = SL_{n \vert m}(\mathbb{C})$. Then there are the following even real forms:

\begin{enumerate}
 \item The unitary groups $G_\mathbb{R} = SU(p,n-p) \times SU(q, m-q) \times U(1)$.  
In this case the action on $\Sigma$ is given by $\tau(\alpha) = - \alpha$
for all $\alpha \in \Sigma$.
 \item If $m = 2k$ is even there is an even real form with $G_\mathbb{R} = SL_n(\mathbb{R}) \times SL_k(\mathbb{H}) \times \mathbb{R}^{>0}$.
The action on the root system is given by
$\tau(x_j - x_i) = x_{n-j+1} - x_{n-i+1}, \tau(y_j - y_i) = y_{m-j+1} - y_{m-i+1}, \tau(y_j - x_i) = y_{m-j+1} - x_{n-i+1}$.
 \item If $n = m$ there is an even real form with $G_\mathbb{R} = SL_n(\mathbb{C})$.
In that case the action on the root system is $\tau(x_j - x_i) = y_j - y_i, \tau(y_j-x_i) = x_j - y_i$. 
\end{enumerate}

Note that all given actions on the root system were already realized by real forms and that the action of $US\Pi(n)$ on the root system
is not realized by an even real form.

Now suppose $G = \mathrm{Osp}(n \vert 2m)$. Then there are the even real forms $G_\mathbb{R} = SO(p,q) \times Sp(2r,2s)$.
Unless $p$ and $q$ are both odd, the action on the root system is trivial and if $p$ and $q$ are both odd, the action on the root system
is identical to that of the real form $\mathrm{Osp}(p,q \vert 2m)$. If $n = 2k$ is even then there is another even real form
$G_\mathbb{R} = SO^*(2k) \times Sp_\mathbb{R}(2m)$ and its action on the root system is trivial. Here again, the results for the even
real forms of $D(2,1,\alpha)$ are identical to those for $D(2,1)$. Contrary to the case of ordinary real forms, the simple Lie superalgebras
of type $P$ and $Q$ are excluded from this list as they do not allow even real forms. 

\begin{satz}

Let $G$ be a basic classical complex reductive Lie supergroup, $Z = G/P$ a $G$-flag supermanifold, $G_\mathbb{R}$ an even real form and $\tilde{G}_\mathbb{R}$ the unique real form such that $G_\mathbb{R}$ and $\tilde{G}_\mathbb{R}$ act identically on the root system. Then a $G_\mathbb{R}$-flag domain $D$ in $Z$ is weakly or strongly measurable if and only if the corresponding $\tilde{G}_\mathbb{R}$-orbit $\tilde{D}$ in $Z$ has maximal odd dimension and is itself weakly or strongly measurable. 

\end{satz}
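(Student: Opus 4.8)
The plan is to reduce both sides of the asserted equivalence to purely root-theoretic conditions — conditions on the flag type $\delta$ and on the action on $\Sigma=\Sigma(\mathfrak{g},\mathfrak{h})$ of the defining (anti)involution — and then to invoke the key fact, established by the case-by-case inspection of the tables in the preceding paragraphs, that the even real form $G_\mathbb{R}$ and its companion real form $\tilde G_\mathbb{R}$ induce the same action on $\Sigma$. Concretely, I would fix a base point $z_0\in D_{\bar{0}}$, a $\tau$-stable maximally compact Cartan subalgebra $\mathfrak{h}\subseteq\mathfrak{g}$ with $\mathfrak{h}\subseteq\mathfrak{p}=\mathrm{Stab}_{\mathfrak{g}}(z_0)=\mathfrak{p}_\Phi$, and — after passing if necessary to a conjugate of $\tilde G_\mathbb{R}$, exactly as in the treatment of $\mathfrak{sl}_{n\vert m}(\mathbb{R})$, so that $\mathfrak{h}$ is maximally compact for $\tilde\tau$ as well — arrange that $\tilde\tau\vert_\Sigma=\tau\vert_\Sigma$ and that $\tilde D$ is the $\tilde G_\mathbb{R}$-orbit through a point with stabiliser $\mathfrak{p}_\Phi$. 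Then $\Phi$, and hence $\Phi^r,\Phi^n,\Phi^c$, is the same datum on both sides, and $\tilde D$ automatically has open base since $\Phi^c_{\bar{0}}\cap\tilde\tau\Phi^c_{\bar{0}}=\Phi^c_{\bar{0}}\cap\tau\Phi^c_{\bar{0}}=\emptyset$.

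For strong measurability this set-up already closes the argument. By the characterisation theorem for even real forms, $D$ is strongly measurable if and only if $\tau\Phi^r=\Phi^r$ and $\tau\Phi^n=\Phi^c$; the same two identities, read with $\tilde\tau$, force $\Phi^n\cap\tilde\tau\Phi^n=\Phi^n\cap(-\Phi^n)=\emptyset$, so by the codimension formula $\tilde D$ is open, i.e. of maximal odd dimension, and then condition~(4) of the characterisation theorem for real forms identifies those identities with strong measurability of $\tilde D$. Hence ``$\tilde D$ has maximal odd dimension and is strongly measurable'' is equivalent to the pair of identities, which is equivalent to strong measurability of $D$.

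For weak measurability I would argue through the isotropy representation. By Theorem 4.13 in \cite{AH}, $D$ carries a $G_\mathbb{R}$-invariant Berezinian density precisely when the one-dimensional $L_\mathbb{R}$-module $\Ber((\mathfrak{g}/\mathfrak{p})^*)$ is trivial, and — just as in the computation in the proof of the even-real-form characterisation theorem, where the odd directions and the even root-space directions contribute nothing to the supertrace — this is the vanishing of the $\tau$-symmetrised supertrace weight $\chi+\tau\chi$, with $\chi=\sum_{\alpha\in\Phi^c_{\bar{0}}}\alpha-\sum_{\alpha\in\Phi^c_{\bar{1}}}\alpha$; this depends only on $(\Phi,\tau\vert_\Sigma)$. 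The very same weight, formed with $\tilde\tau$, detects weak measurability of $\tilde D$, so the ``weakly measurable'' halves of the two statements match once $\tilde\tau\vert_\Sigma=\tau\vert_\Sigma$. The hard part — and, I expect, the only genuinely delicate point — is to show that weak measurability of $D$ already forces $\tilde D$ to have maximal odd dimension, so that adjoining that clause on the right is harmless: if it failed there would be an odd root $\gamma\in\Phi^c$ with $\tau\gamma\in\Phi^c$, hence $-\gamma\in\Phi\cap\tau\Phi$ while $\gamma\notin\Phi\cap\tau\Phi$, and one must see that the contribution of the $\tau$-orbit of $\gamma$ to the signed root sum cannot cancel, so $\chi+\tau\chi\neq 0$. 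I would settle this along the lines of the classification already carried out — running through the finitely many even real forms (the three families in type $A$, the two in types $B$, $C$, $D$) and checking in each that the explicit symmetry condition for weak measurability of $D$ implies the explicit condition for maximal odd dimension of $\tilde D$ obtained in the earlier sections — and I would note that the passage from these Lie-algebra (weight) conditions to triviality of the group-level isotropy character introduces nothing new beyond the classical case.
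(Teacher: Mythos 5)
Your proposal is correct and follows the same route as the paper: both reduce weak and strong measurability to conditions depending only on $(\Phi,\tau\vert_\Sigma)$ and $\mathfrak{p}\cap\tau\mathfrak{p}$, and then observe that these data coincide for $G_\mathbb{R}$ and $\tilde G_\mathbb{R}$. The paper's own proof is a two-line version of this observation and does not separately justify the maximal-odd-dimension clause, so the extra care you take there --- noting that the supertrace over $\mathfrak{g}/\mathfrak{p}$ only agrees with the supertrace over $\mathfrak{g}/(\mathfrak{p}\cap\tilde\tau\mathfrak{p})$ when $\Phi^c_{\bar{1}}\cap\tilde\tau\Phi^c_{\bar{1}}=\emptyset$, and checking case by case against the classification that weak measurability of $D$ forces this --- is a genuine refinement of the published argument rather than a detour.
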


\begin{proof}
The condition for $D$ or $\tilde{D}$ to be weakly or stronlgy measurable depend on the action of $\tau$ on the root system and on the space $\mathfrak{p} \cap \tau \mathfrak{p}$. As these are identical for $G_\mathbb{R}$ and $\tilde{G}_\mathbb{R}$, the conditions for weak or strong measurability for $D$ and $\tilde{D}$ conincide.
\end{proof}

The correspondence for real forms and even real forms of the simple basic classical Lie supergroups  is given in the following table:

\noindent \begin{tabular}{ p{4cm} | p{6cm} }
 
 real form & even-real form  \\ \hline
  $SU(p,n-p \vert q,m-q)$ & $SU(p,n-p) \times SU(q,m-q) \times U(1)$ \\ \hline
  $SL_{n \vert m}(\mathbb{R}),SL_{k \vert l}(\mathbb{H})$ & $SL_{n}(\mathbb{R}) \times SL_l(\mathbb{H}) \times \mathbb{R}^{>0}$ \\ \hline
  $ ^0PQ(n)$ & $SL_{n}(\mathbb{C})$ \\ \hline
  $US\Pi(n)$  & none \\ \hline
  $\mathrm{Osp}(2p+1,2q+1 \vert 2m)$  & $SO(2p+1,2q+1) \times Sp(2m)$ \\ \hline

\end{tabular}

\section{Tables of measurable open orbits}

The three symmetry conditions are denoted by $\ev$, $\odd$ and three $\Pi$ respectively. For the even and odd symmetry condition,
the respective symmetrizability conditions are denoted by $\ev^*$ and $\odd^*$.

\begin{rem}
Note that the table below includes all information on the classical case as well:
The simple Lie algebras $A_{n-1}$ are realized by $\mathfrak{sl}_n(\mathbb{C})$ and the criteria for
measurability are analogous to the ones for the real forms of $A(n-1,m-1)$ given below. Moreover, there are
 identifications $B_n \cong B(n,0), D_n \cong D(n,0)$ and $C_m \cong D(0,m)$ and the exceptional simple
Lie algebras are explicitly contained in the table. 
\end{rem}

\medskip

\noindent \begin{tabular}{p{1.3cm} | p{3.7cm} | p{2.3cm} | p{2.5cm} | p{2cm}}
 
Type & real form & maximal odd dimension & weak measurability  & strong measurability \\ \hline
A(n,m) & $\mathfrak{sl}_{n+1 \vert m+1}(\mathbb{R}), \mathfrak{sl}_{k \vert l}(\mathbb{H})$ & $\ev^*$ & - & $\ev$ \\ \cline{2-5}
       & $\mathfrak{su}(p, n-p \vert q, m-q)$ & always & - & always \\ \hline
A(n,n) & $\mathfrak{psl}_{n+1 \vert n+1}(\mathbb{R}), \mathfrak{psl}_{k \vert k}(\mathbb{H})$ & $\ev^*$ & $\Pi$ or $\mathbb{P}(\mathbb{C}^{n \vert n})$ & $\ev$ \\ \cline{2-5}
       & $^0\mathfrak{pq}(n+1)$ & $\odd^*$ & $\Pi$ & $\odd$ \\ \cline{2-5}
       & $\mathfrak{us\pi}(n+1)$ & $\Pi$ & - & $\Pi$ \\ \hline
B(n,m) & $\mathfrak{osp}(p,q \vert 2m)$ & always & - & always \\ \hline
C(m)   & $\mathfrak{osp}(2 \vert 2r, 2s)$ & always & - & always \\ \hline
D(n,m) & $\mathfrak{osp}(2p,2q \vert 2m)$ & always & - & always \\ \cline{2-5}
       & $\mathfrak{osp}^*(2n \vert 2r, 2s)$ & always & - & always \\ \cline{2-5}
       & $\mathfrak{osp}(2p+1,2q+1 \vert 2m)$ & $n \vert d \not\in \delta, d < m$ & $ \exists  1 \leq d \leq  \min \{ n - 1, m \}: (n - d - 1 \vert m - d < n \vert m) \subseteq \delta $ & $n \vert m \not\in \delta$ or \newline $n-1 \vert m \in \delta$ \\ \hline 
D(2,1,$\alpha$) & any & as for D(2,1) \\ \hline
$E_6$ & $E_{6,C_4},E_{6,F_4}$ & - & - & $\sigma(J) = J$ \\ \cline{2-5}
 & others & - & - & always \\ \hline
$E_8,E_7$ & any & - & - & always \\ \hline
$F_4,F(4)$ & any & always & - & always \\ \hline
$G_2,G(3)$ & any & always & - & always \\ \hline
P(n)   & $\mathfrak{s\pi}_\mathbb{R}(n), \mathfrak{s\pi}^*(n)$ & $\Pi$ & - & $n = 2k$ and \newline $k \vert k \in \delta$ \\ \hline
Q(n)   & $\mathfrak{pq}_\mathbb{R}(n), \mathfrak{pq}_\mathbb{H}(k), \mathfrak{upq}(p,q)$ & always & always & as for $A_n$\\ \hline

\end{tabular}

\newpage
The respective table for the even real forms is the following:

 \label{Table3}
\noindent \begin{tabular}{p{1.3cm} | p{4cm}  | p{4cm} | p{4cm}}
 
Type & even real form & weak measurability  & strong measurability \\ \hline
A(n,m) & $\mathfrak{sl}_{n+1}(\mathbb{R}) \oplus \mathfrak{sl}_k(\mathbb{H}) \oplus \mathbb{R} $ & - & $\ev$ \\ \cline{2-4}
       & $\mathfrak{su}(p, n-p) \oplus \mathfrak{su}(q, m-q) \oplus \mathfrak{u}(1)$ & - & always \\ \hline
A(n,n) & $\mathfrak{sl}_{n+1}(\mathbb{R}) \oplus \mathfrak{sl}_{k}(\mathbb{H})$ & $\Pi$ or $\mathbb{P}(\mathbb{C}^{n \vert n})$ & $\ev$ \\ \cline{2-4}
       & $\mathfrak{sl}_{n+1}(\mathbb{C})$ & $\Pi$ & $\odd$ \\ \hline
B(n,m) & $\mathfrak{so}(p,q) \oplus \mathfrak{sp}(2r,2s)$ & - & always \\ \hline
C(m)   & $\mathfrak{so}(2) \oplus \mathfrak{sp}(2r, 2s)$ & - & always \\ \hline
D(n,m) & $\mathfrak{so}(2p,2q) \oplus \mathfrak{sp}(2r,2s)$ & - & always \\ \cline{2-4}
       & $\mathfrak{so}^*(2n) \times \mathfrak{sp}_\mathbb{R}(2m)$ & - & always \\ \cline{2-4}
       & $\mathfrak{so}(2p+1,2q+1) \oplus \mathfrak{sp}(2r,2s)$ & $ \exists  1 \leq d \leq  \min \{ n - 1, m \}: (n - d - 1 \vert m - d < n \vert m) \subseteq \delta $ & $n \vert m \not\in \delta$ or \newline $n-1 \vert m \in \delta$ \\ \hline 
D(2,1,$\alpha$) & any & as for D(2,1) 

\end{tabular}

\chapter{Flag domains II: Global holomorphic superfunctions}

In this chapter the classification of global holomorphic functions on flag manifolds due to Vishnyakova is extended to the case of flag domains. 

First the distinction between the possible types of bases is discussed: For cycle-connected bases, the theory developed in \cite{V} extends directly, whereas in the cases where hermitian holomorphic factors are involved within the bases, the projection onto the bounded symmetric domain is used as a complementary tool to compute the global holomorphic superfunctions. 

After this discussion the main results from \cite{V} are reviewed and their generalizations to cycle-connected flag domains are presented. These results are then used to classify the global holomorphic superfunctions on cycle-connected flag domains of real forms. 

Following this computation the projection onto the hermitian symmetric domain and a suitable notion of holomorphic reduction for split flag domains are used to execute an analogous computation in the case that the base of the flag domain contains hermitian holomorphic factors. 

In the end these results are extended to flag domain of even real forms. 
All classification results are summarized in tables at the end of the chapter.

\section{The two types of classical flag domains}

For a real form $G_{\bar{0}\mathbb{R}}$ of a complex simple Lie group $G$ there are exactly two possible types of flag domains:

\begin{description}
\item[The hermitian holomorphic case:] $D$ projects onto a bounded symmetric domain. This fibration is a trivial fibre bundle and its typical fibre is the base cycle. In this case $G_{\bar{0}\mathbb{R}}$ is always a hermitian real form.
\item[The cycle-connected case:] Every holomorphic function on $D$ is constant. 
\end{description}

In the supersymmetric setting, the base $D_{\bar{0}}$ will usually be a product of two classical flag domains and all possible combinations of the two above classical cases actually arise as bases of flag domains. As it turns out the classification of global odd functions on flag supermanifolds due to Vishnyakova extends directly to flag domains $D$ with cycle-connected bases. 
   
 To classify the global holomorphic superfunctions on flag domains whose bases are not cycle-connected, the projection onto the symmetric domain plays an important role. Classically, it is in fact the holomorphic reduction of $D_{\bar{0}}$. In the supersymmetric case it is not all clear how a holomorphic reduction should be defined. However in the case of a split flag domain there is a natural construction that is also useful for the characterization of global holomorphic superfunctions:

Let $D = G_\mathbb{R}/L_\mathbb{R}$ be a split flag domain. Then $D_{\bar{0}} = G_{\bar{0}\mathbb{R}}/L_{\bar{0}\mathbb{R}}$ and the holomorphic reduction $Y_{\bar{0}} = G_{\bar{0}\mathbb{R}}/J_{\bar{0}}$ is again a homogeneous space. 
Suppose $\mathfrak{g}$ is a split Lie superalgebra, i.e. $[\cdot,\cdot]_\mathfrak{g} \vert_{\mathfrak{g}_{\bar{1}} \times \mathfrak{g}_{\bar{1}}} = 0$.
Let $\mathfrak{j}_{\bar{1}} = \mathrm{span}_{J_{\bar{0}}}(\mathfrak{l}_{\bar{1}\mathbb{R}})$ and $\mathfrak{j} = \mathfrak{j}_{\bar{0}} \oplus \mathfrak{j}_{\bar{1}}$.
Then $(J_{\bar{0}},\mathfrak{j})$ is a sub-SHCP of $(G_{\bar{0}\mathbb{R}},\mathfrak{g}_\mathbb{R})$ and the holomorphic reduction of $D$ is $Y = G/J$, where
$J$ is the Lie supergroup defined by the SHCP $(J_{\bar{0}},\mathfrak{j})$.

Given a not necessarily split flag domain $D$ the holomorphic reduction can be used to compute the space of holomorphic functions on $\mathrm{gr} D$ which provides an upper bound for the space of holomorphic functions on $D$.

\begin{rem}

The holomorphic reduction of a complex manifold $X_{\bar{0}}$ is classically defined to be $Y_{\bar{0}} = X_{\bar{0}}/\tild$, where $x \tild y$ if and only if $f(x) = f(y)$ for all $f \in \mathcal{F}(X_{\bar{0}})$. If the analogous definition is used in the supercase, i.e.  $x \tild y$ if and only if $f(x) = f(y)$ for all $f \in \mathcal{O}(X_{\bar{0}})$, it only produces the classical holomorphic reduction of the base, as the numerical value of every odd function is zero. One possible improvement is the following definition:

Let $X$ be a complex supermanifold, $Y_{\bar{0}}$ the holomorphic reduction of $X_{\bar{0}}$ and $\varphi: X_{\bar{0}} \rightarrow Y_{\bar{0}}$ the canonical projection. Then the holomorphic reduction of $X$ is $Y = (Y_{\bar{0}}, \varphi_* \mathcal{O}_X)$.

The problem with this definition is that in general the sheaf $\varphi_* \mathcal{O}_X$ will have unfavorable properties, e.g. it may not be coherent.

Another idea is to change the equivalence relation slightly and set $x \tild y$ if and only if $Df(x) = Df(y)$ for all $f \in \mathcal{O}_X(X_{\bar{0}}), D \in \mathcal{U}(\mathfrak{g})$.
Especially this latter idea might be a good starting point for further investigation.

\end{rem}

\section{Reformulation of Vishnyakova's main results}

Let $G_\mathbb{R}$ be a Lie supergroup, $G$ its complexification and $P \subseteq G$ a subgroup
such that $Z = G/P$ is compact. Suppose $D = G_\mathbb{R}/L_\mathbb{R}$ is an open submanifold of $Z$.
In that case $\mathfrak{p}$ is a parabolic subalgebra of $\mathfrak{g}$ and $\mathfrak{l} = 
(\mathfrak{l}_\mathbb{R})^\mathbb{C}$ is the Levi component of $\mathfrak{p}$. 

It is shown in \cite{V} that if $H^0(G_{\bar{0}}/P_{\bar{0}}, \mathrm{gr} \mathcal{O}) = \mathbb{C}$ then $H^0(G_{\bar{0}}/P_{\bar{0}}, \mathcal{O}) = \mathbb{C}$.
The proof can be extended to the following statement on open real orbits:

\begin{lemma}\label{Lemma6V}

Let $D = G_\mathbb{R}/L_\mathbb{R}$ be a flag domain with $H^0(D_{\bar{0}},\mathrm{gr}\mathcal{O}_D) = H^0(D_{\bar{0}},\mathcal{F})$.
Then $H^0(D,\mathcal{O}_D) = H^0(D_{\bar{0}},\mathcal{F})$.  

\end{lemma}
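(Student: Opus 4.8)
The plan is to mimic Vishnyakova's argument for the flag supermanifold $Z = G/P$ (i.e. the proof that $H^0(Z_{\bar 0},\mathrm{gr}\,\mathcal O) = \mathbb{C}$ implies $H^0(Z_{\bar 0},\mathcal O) = \mathbb{C}$), but carried out locally on the open submanifold $D_{\bar 0}$ rather than on the compact base. First I would invoke the filtration of $\mathcal{O}_D$ by the powers $\mathcal{J}^p$ of the ideal generated by odd elements, together with the associated graded sheaf $\mathrm{gr}\,\mathcal{O}_D = \bigoplus_p \mathcal{J}^p/\mathcal{J}^{p+1}$; since $D$ is split-locally isomorphic to $\mathrm{gr}\,D$, there is a finite filtration $H^0(D,\mathcal{O}_D) = F^0 \supseteq F^1 \supseteq \cdots \supseteq F^{N+1} = 0$ (where $N = \dim_{\bar 1} D$) with $F^p/F^{p+1} \hookrightarrow H^0(D_{\bar 0}, (\mathrm{gr}\,\mathcal{O}_D)_p)$. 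So it suffices to show $H^0(D_{\bar 0},(\mathrm{gr}\,\mathcal{O}_D)_p) = 0$ for all $p \geq 1$ and $H^0(D_{\bar 0},(\mathrm{gr}\,\mathcal{O}_D)_0) = H^0(D_{\bar 0},\mathcal{F})$. The degree-zero part is just $\mathcal{F}$, giving the second equality immediately; the hypothesis $H^0(D_{\bar 0},\mathrm{gr}\,\mathcal{O}_D) = H^0(D_{\bar 0},\mathcal{F})$ is precisely the statement that all higher graded pieces have no global sections, so in fact the vanishing $H^0(D_{\bar 0},(\mathrm{gr}\,\mathcal{O}_D)_p) = 0$ for $p \geq 1$ is given to us directly.

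The real content, then, is to promote the \emph{filtered} statement to an \emph{honest equality}: one must rule out that a global section of $\mathcal{O}_D$ has nonzero "top-degree leading term" that somehow fails to be detected, i.e. one must check that the natural inclusions $F^p/F^{p+1} \hookrightarrow H^0(D_{\bar 0},(\mathrm{gr}\,\mathcal{O}_D)_p)$ assemble correctly and that $F^1 = 0$. Concretely: take $f \in H^0(D,\mathcal{O}_D)$. Its image in $H^0(D_{\bar 0},\mathcal{F}) = H^0(D_{\bar 0},(\mathrm{gr}\,\mathcal{O}_D)_0)$ is some $c$ (a holomorphic function on $D_{\bar 0}$, which by the hypothesis on the even part of $\mathrm{gr}\,\mathcal{O}_D$ is the same space as $H^0(D_{\bar 0},\mathcal F)$). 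Replacing $f$ by $f - c$ (lifting $c$ to a global even section, possible since $\mathcal{F} \hookrightarrow \mathcal{O}_D$) we get $f - c \in F^1 = \Gamma(D,\mathcal{J})$. Its leading term lies in $H^0(D_{\bar 0},(\mathrm{gr}\,\mathcal{O}_D)_1) = 0$, so $f - c \in F^2$; iterating, $f - c \in F^{N+1} = 0$. Hence $f = c \in H^0(D_{\bar 0},\mathcal{F})$, and the reverse inclusion $H^0(D_{\bar 0},\mathcal{F}) \subseteq H^0(D,\mathcal{O}_D)$ is the obvious one coming from the embedding $\mathcal F \hookrightarrow \mathcal O_D$ of the even scalars (constants along odd directions).

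The step I expect to be the main obstacle — and where one has to be careful not to simply quote the compact case verbatim — is justifying that the leading-term map $F^p \to H^0(D_{\bar 0},(\mathrm{gr}\,\mathcal O_D)_p)$ is well defined and that its source is exactly $F^p/F^{p+1}$ when $D$ is only locally split but globally possibly non-split. On a general (non-split) supermanifold the quotients $\mathcal{J}^p/\mathcal{J}^{p+1}$ are genuinely sheaves on the base $D_{\bar 0}$ (locally free $\mathcal{F}$-modules, by the local model), and the exact sequences $0 \to \mathcal{J}^{p+1} \to \mathcal{J}^p \to \mathcal{J}^p/\mathcal{J}^{p+1} \to 0$ of $\mathcal{F}$-sheaves give, on global sections, left-exact sequences $0 \to \Gamma(D_{\bar 0},\mathcal{J}^{p+1}) \to \Gamma(D_{\bar 0},\mathcal{J}^p) \to \Gamma(D_{\bar 0},\mathcal{J}^p/\mathcal{J}^{p+1})$; this is exactly the input needed, and it holds with no properness or compactness hypothesis. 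So the argument goes through for open $D$ just as for $Z$: the only place compactness was used in \cite{V} was to \emph{verify} the hypothesis $H^0(\mathrm{gr}\,\mathcal{O}) = \mathbb{C}$ via Bott–Borel–Weil, and here that hypothesis is assumed outright. I would finish by remarking that an entirely parallel argument (filtering by the odd-adapted ideal $^\prime\mathcal{E}^{(p)}$ or simply using the Onishchik–Vishnyakova spectral sequence of Section 2.8 with $\mathcal{E} = \mathcal{O}_D$) gives the same conclusion and makes the degree bookkeeping automatic.
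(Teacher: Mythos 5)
Your argument is correct and is essentially the paper's own proof: both run the $\mathcal{J}$-adic filtration through the short exact sequences $0\to\mathcal{J}^{p+1}\to\mathcal{J}^{p}\to(\mathrm{gr}\,\mathcal{O}_D)_p\to 0$, use left-exactness of global sections together with the hypothesis $H^0(D_{\bar 0},(\mathrm{gr}\,\mathcal{O}_D)_p)=0$ for $p>0$, and conclude by finiteness of the filtration. The only cosmetic difference is that the paper phrases the descent as $H^0(D_{\bar 0},\mathcal{J}^p)=0$ for all $p>0$ rather than pushing an individual section $f-c$ down the filtration; note that your appeal to a global splitting $\mathcal{F}\hookrightarrow\mathcal{O}_D$ (which need not exist when $D$ is non-split) is not actually needed for the injectivity step, since one can apply the same descent directly to any $f$ in the kernel of $\mathcal{O}_D\to\mathcal{O}_D/\mathcal{J}$.
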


\begin{proof}

As in the original proof in \cite{V} one considers the short exact sequences

\[ 0 \rightarrow \mathcal{J}^{p+1} \rightarrow \mathcal{J}^p \rightarrow (\mathrm{gr}\mathcal{O}_D)_p \rightarrow 0 \]

By assumption $H^0(D_{\bar{0}},(\mathrm{gr}\mathcal{O}_D)_p) = 0$ for $p > 0$. This implies $H^0(D_{\bar{0}},\mathcal{J}^p) = 0$ for all
$p > 0$ and $H^0(D_{\bar{0}}, \mathcal{O}_D) = H^0(D_{\bar{0}},(\mathrm{gr}\mathcal{O}_D)_0) = H^0(D_{\bar{0}},\mathcal{F})$. 

\end{proof}

Suppose for now that $Z$ is a split supermanifold. In that case the linear odd functions are given by sections of a homogeneous vector bundle. For the characterization of global holomorphic superfunctions a useful fact is that $\mathbb{E}$ is always embedded in a trivial vector bundle $\mathbb{V}$ (Proposition 1 in \cite{V}). By restriction this also holds for flag domains: 

\begin{lemma}
 
Let $\tilde{\mathcal{E}}$ be a $G_{\bar{0}}$-homogeneous vector bundle on $Z_{\bar{0}}$ such that $\bigwedge \tilde{\mathcal{E}}$ is the structure sheaf
of a split homogeneous superspace and let $\mathcal{E} = \tilde{\mathcal{E}} \vert_{D_{\bar{0}}}$. 
Then $\mathcal{E}$ is a subbundle of a trivial $G_{\bar{0}\mathbb{R}}$-bundle $\mathcal{V}$.
Moreover every split $G_\mathbb{R}$-flag domain arises in this way.

\end{lemma}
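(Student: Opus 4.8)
The plan is to reduce the statement to the corresponding fact on the flag supermanifold $Z$ itself, which is Proposition~1 in \cite{V}, and then restrict along the inclusion $i: D_{\bar 0} \hookrightarrow Z_{\bar 0}$. First I would recall the setup: by Lemma~\ref{OnFunctions} and the discussion following it, if $\bigwedge \tilde{\mathcal{E}}$ is the structure sheaf of the split homogeneous superspace $Z = G/P$, then $\tilde{\mathbb{E}}$ is $G_{\bar 0}$-equivariantly isomorphic to $G_{\bar 0} \times_{P_{\bar 0}} (\mathfrak{g}_{\bar 1}/\mathfrak{p}_{\bar 1})^*$. By Proposition~1 in \cite{V}, $\tilde{\mathcal{E}}$ embeds $G_{\bar 0}$-equivariantly as a subbundle of a trivial bundle $\tilde{\mathcal{V}} = Z_{\bar 0} \times V$ for a suitable finite-dimensional $G_{\bar 0}$-module $V$; concretely one can take $V = (\mathfrak{g}_{\bar 1})^*$ (or a $G_{\bar 0}$-submodule thereof) and the embedding to be dual to the projection $\mathfrak{g}_{\bar 1} \to \mathfrak{g}_{\bar 1}/\mathfrak{p}_{\bar 1}$, spread out over the homogeneous space.

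Next I would restrict everything to $D_{\bar 0}$. Since $D = G_\mathbb{R}/L_\mathbb{R}$ is an open subsupermanifold of $Z$ with $\mathfrak{p} = \mathfrak{l}^{\mathbb{C}}$ the parabolic at the base point, pulling back along $i$ gives $\mathcal{E} = i^* \tilde{\mathcal{E}} = G_{\bar 0 \mathbb{R}} \times_{L_{\bar 0 \mathbb{R}}} (\mathfrak{g}_{\bar 1}/\mathfrak{l}_{\bar 1})^*$ — here I use the canonical identification $(\mathfrak{g}_{\bar 1}/\mathfrak{p}_{\bar 1})^* \cong (\mathfrak{g}_{\bar 1 \mathbb{R}}/\mathfrak{l}_{\bar 1 \mathbb{R}})^*$ which is available precisely because a split $G_\mathbb{R}$-flag domain automatically has maximal odd dimension (this is built into the notion of split flag domain in this text, cf. the remarks after Lemma~\ref{OnFunctions} and the discussion of flag domains in Section~\ref{OnPSA}). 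The embedding $\tilde{\mathcal{E}} \hookrightarrow \tilde{\mathcal{V}}$ restricts to an embedding $\mathcal{E} \hookrightarrow \mathcal{V} := i^*\tilde{\mathcal{V}} = D_{\bar 0} \times V$, which is now a trivial $G_{\bar 0 \mathbb{R}}$-bundle since the $G_{\bar 0}$-action on $V$ restricts to a $G_{\bar 0 \mathbb{R}}$-action and the trivialization is preserved by restriction. Injectivity of the bundle map is preserved under pullback of vector bundles, so $\mathcal{E}$ is indeed a $G_{\bar 0 \mathbb{R}}$-subbundle of a trivial $G_{\bar 0 \mathbb{R}}$-bundle.

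For the last assertion, that every split $G_\mathbb{R}$-flag domain arises this way: if $D = G_\mathbb{R}/L_\mathbb{R}$ is split with $\mathcal{O}_D \simeq \bigwedge \mathcal{E}$, then by Lemma~\ref{OnFunctions} applied to $D$ one has $\mathcal{E} = G_{\bar 0 \mathbb{R}} \times_{L_{\bar 0 \mathbb{R}}} (\mathfrak{g}_{\bar 1 \mathbb{R}}/\mathfrak{l}_{\bar 1 \mathbb{R}})^*$, and taking $\tilde{\mathcal{E}}$ to be the corresponding $G_{\bar 0}$-homogeneous bundle $G_{\bar 0} \times_{P_{\bar 0}} (\mathfrak{g}_{\bar 1}/\mathfrak{p}_{\bar 1})^*$ on $Z_{\bar 0} = G_{\bar 0}/P_{\bar 0}$ exhibits $\mathcal{E}$ as the restriction $\tilde{\mathcal{E}}|_{D_{\bar 0}}$; that $\bigwedge \tilde{\mathcal{E}}$ is the structure sheaf of the split model $\mathrm{gr}\, G/P$ is exactly Theorem~2 in \cite{V}. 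The main obstacle I anticipate is the bookkeeping around the identification $(\mathfrak{g}_{\bar 1}/\mathfrak{p}_{\bar 1})^* \cong (\mathfrak{g}_{\bar 1 \mathbb{R}}/\mathfrak{l}_{\bar 1 \mathbb{R}})^*$ and checking that it is $L_{\bar 0 \mathbb{R}}$-equivariant and compatible with the embeddings — i.e.\ making sure the maximal-odd-dimension hypothesis is really what licenses the reduction of structure group from $P_{\bar 0}$ to $L_{\bar 0 \mathbb{R}}$ — rather than anything genuinely deep; the embedding itself is a formal restriction of the one in \cite{V}.
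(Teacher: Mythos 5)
Your proposal is correct and follows essentially the same route as the paper: invoke Proposition~1 of \cite{V} to embed $\tilde{\mathcal{E}}$ into a trivial $G_{\bar{0}}$-bundle with fibre $V$, observe that $V$ is in particular a $G_{\bar{0}\mathbb{R}}$-module so the embedding restricts to $D_{\bar{0}}$, and deduce the second claim from the corresponding statement for split flag supermanifolds in \cite{V}. The extra bookkeeping you supply around the identification $(\mathfrak{g}_{\bar{1}}/\mathfrak{p}_{\bar{1}})^* \cong (\mathfrak{g}_{\bar{1}\mathbb{R}}/\mathfrak{l}_{\bar{1}\mathbb{R}})^*$ is a harmless (and arguably welcome) elaboration of what the paper leaves implicit.
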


\begin{proof}
 
According to \cite{V}, $\tilde{\mathcal{E}}$ is a subbundle of a trivial $G_{\bar{0}}$-bundle $\tilde{\mathcal{V}}$. Denote its fibre by $V$. Then $V$ is also a $G_{\bar{0}\mathbb{R}}$-module and $\mathcal{E}$ is naturally a subbundle
of $\mathcal{V} = \mathcal{O}(G_{\bar{0}\mathbb{R}} \times_{L_{\bar{0}\mathbb{R}}} V)$. The proof of the second claim is immediate as every split $G$-flag manifold arises in this way according to \cite{V}.  

\end{proof}

The main ingredient of Vishnyakova's characterization theorem is the following fact on sections of subbundles of trivial vector bundles.

\begin{lemma}[Lemma 3 in \cite{V}]

Let $G_{\bar{0}}$ be a complex Lie group, $H_{\bar{0}}$ a closed complex Lie subgroup, $V$ a $G_{\bar{0}}$-module and $E \subseteq V$ an $H_{\bar{0}}$-submodule. Assume that $G_{\bar{0}}/H_{\bar{0}}$ is compact. Let $\mathbb{E} = G_{\bar{0}} \times_{H_{\bar{0}}} E$.  Then the following are equivalent:

\begin{enumerate}
 \item Non-trivial $G_{\bar{0}}$-submodules $W$ of $E$ do not exist.
 \item $\mathcal{O}(G_{\bar{0}}/H_{\bar{0}}, \mathbb{E}) = 0$
\end{enumerate}
 
\end{lemma}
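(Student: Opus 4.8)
The plan is to identify $\mathcal{O}(G_{\bar{0}}/H_{\bar{0}},\mathbb{E})$ explicitly with the largest $G_{\bar{0}}$-submodule of $V$ that happens to lie inside $E$, after which the equivalence of (1) and (2) becomes purely formal. First I would pull the associated bundle back to $G_{\bar{0}}$: a holomorphic section $s$ of $\mathbb{E}=G_{\bar{0}}\times_{H_{\bar{0}}}E$ is the same datum as a holomorphic map $\hat{s}\colon G_{\bar{0}}\to E$ with $\hat{s}(gh)=h^{-1}\hat{s}(g)$ for $g\in G_{\bar{0}}$, $h\in H_{\bar{0}}$. Because $V$ is a $G_{\bar{0}}$-module, the associated bundle with fibre $V$ is holomorphically trivial: $[g,v]\mapsto(gH_{\bar{0}},g\cdot v)$ identifies $G_{\bar{0}}\times_{H_{\bar{0}}}V$ with $(G_{\bar{0}}/H_{\bar{0}})\times V$ (a routine cocycle check, using both the $H_{\bar{0}}$- and $G_{\bar{0}}$-actions on $V$). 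Composing $s$ with the fibrewise inclusion $E\hookrightarrow V$ and this trivialization yields a holomorphic map $F_s\colon G_{\bar{0}}/H_{\bar{0}}\to V$ given on representatives by $F_s(gH_{\bar{0}})=g\cdot\hat{s}(g)$, which is well defined by the equivariance of $\hat{s}$.

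The decisive step is compactness. Since $G_{\bar{0}}/H_{\bar{0}}$ is compact (and connected; for a disconnected base one argues component by component, but in the flag-manifold application it is connected), the maximum principle forces the $V$-valued holomorphic map $F_s$ to be constant, say $F_s\equiv v_s\in V$. Reading $g\cdot\hat{s}(g)=v_s$ backwards gives $\hat{s}(g)=g^{-1}\cdot v_s$ for all $g$; since $\hat{s}$ takes values in $E$ this says $G_{\bar{0}}\cdot v_s\subseteq E$, and $v_s=\hat{s}(e)\in E$. Conversely, any $v\in E$ with $G_{\bar{0}}\cdot v\subseteq E$ defines a section via $\hat{s}(g)=g^{-1}\cdot v$, which is holomorphic, equivariant, and nonzero whenever $v\neq 0$ (here one uses that $G_{\bar{0}}$ acts on $V$ by invertible maps, so $\hat{s}$ vanishing nowhere-identically forces $v_s\neq 0$). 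Hence $s\mapsto v_s$ is a linear isomorphism from $\mathcal{O}(G_{\bar{0}}/H_{\bar{0}},\mathbb{E})$ onto $\{v\in E:G_{\bar{0}}\cdot v\subseteq E\}$, which one checks directly is a $G_{\bar{0}}$-invariant linear subspace of $V$ contained in $E$ and contains every $G_{\bar{0}}$-submodule of $V$ lying in $E$.

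Given this identification, (2) says precisely that this largest submodule is $0$, while the failure of (1) produces a nonzero $G_{\bar{0}}$-submodule of $V$ inside $E$, hence a nonzero vector of that largest submodule, i.e.\ a nonzero section; so (1)$\Leftrightarrow$(2). I do not expect a genuine obstacle: the only items needing care are the well-definedness of the trivialization and of $F_s$ (cocycle bookkeeping) and the passage to constant functions, which merely requires compactness together with connectedness of the base, disposed of on each connected component. The bundle $\mathbb{E}$ being a genuine holomorphic vector bundle (needed for the sheaf $\mathcal{O}(\mathbb{E})$ to make sense) follows as usual from $G_{\bar{0}}$ being a complex Lie group, $H_{\bar{0}}$ a closed complex subgroup, and the $H_{\bar{0}}$-action on $E$ being holomorphic.
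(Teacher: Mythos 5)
Your proof is correct and follows essentially the same route as the source: the paper itself cites this lemma from \cite{V} without proof, but its proof of the flag-domain analogue (Lemma \ref{Lemma3V}) uses exactly your strategy --- embed $\mathbb{E}$ in the trivial bundle $\mathbb{V}$ with fibre $V$, observe that global sections of $\mathbb{V}$ reduce to constants (your maximum-principle step is precisely why $H^0(X,\mathcal{F})=\mathbb{C}$ in the compact connected case), and identify $\mathcal{O}(G_{\bar{0}}/H_{\bar{0}},\mathbb{E})$ with the largest $G_{\bar{0}}$-submodule of $V$ contained in $E$. Your explicit description of that submodule as $\{v\in E: G_{\bar{0}}\cdot v\subseteq E\}$ and your attention to connectedness are welcome refinements but do not change the argument.
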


Now assume that $G_{\bar{0}\mathbb{R}}/L_{\bar{0}\mathbb{R}}$ is an open submanifold of $G_{\bar{0}}/H_{\bar{0}}$ such that $H^0(G_{\bar{0}\mathbb{R}}/L_{\bar{0}\mathbb{R}}, \mathcal{F}) = \mathbb{C}$. 
In that case Vishynakova's lemma extends directly:

\begin{lemma}\label{Lemma3V}

Let $ G_{\bar{0}}, H_{\bar{0}}$ as before. Let $D_{\bar{0}} = G_{\bar{0}\mathbb{R}}/L_{\bar{0}\mathbb{R}} \subseteq G_{\bar{0}}/H_{\bar{0}}$ be a flag domain such that $H^0(D_{\bar{0}},\mathcal{F}) = \mathbb{C}$. Let $E$ be the typical fibre of the homogeneous bundle $\mathcal{E} = \mathcal{O}( G_{\bar{0}\mathbb{R}} \times_{L_{\bar{0}\mathbb{R}}} E)$ and let $\mathcal{V} = \mathcal{O}(\mathbb{V})$ be a trivial vector bundle on $D_{\bar{0}}$ such that $\mathbb{E} \subseteq \mathbb{V}$. Then the following are equivalent:

\begin{enumerate}
 \item Non-trivial $G_{\bar{0}\mathbb{R}}$-submodules $W$ of $E$ do not exist.
 \item $\mathcal{O}(D, \mathbb{E}) = 0$
\end{enumerate}
 
\end{lemma}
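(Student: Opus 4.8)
The plan is to reproduce the argument of Lemma~3 in \cite{V} almost verbatim, with the hypothesis ``$G_{\bar 0}/H_{\bar 0}$ compact'' replaced by ``$H^0(D_{\bar 0},\mathcal{F})=\mathbb{C}$'', which is precisely the property of a compact connected complex manifold that the original proof exploits. First I would fix the $G_{\bar 0\mathbb{R}}$-equivariant embedding $\iota\colon\mathbb{E}\hookrightarrow\mathbb{V}=D_{\bar 0}\times V$, where $V$ is the $G_{\bar 0\mathbb{R}}$-module realizing $\mathcal{V}$ as a trivial associated bundle (this embedding is what the preceding lemma, via Proposition~1 in \cite{V}, provides). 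By equivariance, writing $D_{\bar 0}=G_{\bar 0\mathbb{R}}/L_{\bar 0\mathbb{R}}$ with base point $x_0$ and $E=\mathbb{E}_{x_0}\subseteq V$, the fibre of $\mathbb{E}$ over $x=gL_{\bar 0\mathbb{R}}$ is the linear subspace $g\cdot E\subseteq V$. Hence a global holomorphic section of $\mathbb{E}$ is the same thing as a holomorphic map $s\colon D_{\bar 0}\to V$ with $s(gL_{\bar 0\mathbb{R}})\in g\cdot E$ for all $g$.

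The key reduction is then: since $D_{\bar 0}$ is connected (being a $G_{\bar 0\mathbb{R}}$-orbit) and $H^0(D_{\bar 0},\mathcal{F})=\mathbb{C}$, every holomorphic map $D_{\bar 0}\to V$ into the finite-dimensional space $V$ is constant, as each coordinate is a global holomorphic function. So a section $s$ is a constant $v_0\in V$ satisfying $v_0\in g\cdot E$ for all $g\in G_{\bar 0\mathbb{R}}$, equivalently $g^{-1}v_0\in E$ for all $g$. The linear span $W$ of $\{\,g^{-1}v_0:g\in G_{\bar 0\mathbb{R}}\,\}$ is a $G_{\bar 0\mathbb{R}}$-submodule of $V$ contained in $E$, and $W\neq 0$ exactly when $v_0\neq 0$.

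From here both implications are immediate. If (1) holds, the only such $W$ is $0$, forcing $v_0=0$ and hence $\mathcal{O}(D_{\bar 0},\mathbb{E})=0$. Conversely, if (1) fails, choose a non-zero $G_{\bar 0\mathbb{R}}$-submodule $W\subseteq E$ and $0\neq w_0\in W$; the constant map $x\mapsto w_0$ satisfies $g^{-1}w_0\in W\subseteq E$, so it is a non-zero global section of $\mathbb{E}$, giving $\mathcal{O}(D_{\bar 0},\mathbb{E})\neq 0$. The only genuinely non-formal point -- the one I would write out with care -- is the identification of $\mathbb{E}_x$ inside $\mathbb{V}$ with the translate $g\cdot E$, which rests entirely on the $G_{\bar 0\mathbb{R}}$-equivariance of $\iota$; I would therefore make explicit that the embedding furnished for flag domains is equivariant (this is exactly what the preceding lemma records). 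Everything else is a transcription of \cite{V} under the weaker hypothesis.
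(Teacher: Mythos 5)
Your proposal is correct and follows essentially the same route as the paper: embed $\mathbb{E}$ equivariantly into the trivial bundle $\mathbb{V}$, use $H^0(D_{\bar{0}},\mathcal{F})=\mathbb{C}$ to identify $\mathcal{O}(D,\mathbb{V})$ with the constants $V$, and then read off sections of $\mathbb{E}$ as the elements of a $G_{\bar{0}\mathbb{R}}$-submodule of $V$ contained in $E$. The paper states this more abstractly (the image of the submodule $\mathcal{O}(D,\mathbb{E})\subseteq\mathcal{O}(D,\mathbb{V})\cong V$ lands in $E$), whereas you unwind it via orbit spans of a constant value $v_0$, but the content is identical.
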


\begin{proof}

Sections of $\mathbb{E}$ can be identified with $L_\mathbb{R}$-equivariant maps $s: G_{\bar{0}\mathbb{R}} \rightarrow E$. Such a map $s$ is holomorphic, if $Xs = 0$ for all $X \in \mathfrak{n}$ where
$\mathfrak{n}$ denotes the nilpotent radical of $\mathfrak{p}$.

Assume that $\mathcal{O}(D,\mathbb{E}) \neq 0$ and let $\mathcal{V} = \mathcal{O}(\mathbb{V})$ be the trivial bundle containing $\mathcal{E}$ as a subbundle and let $V$ be its typical fibre. As $H^0(D_{\bar{0}}, \mathcal{F}) = \mathbb{C}$, there is an isomorphism of $G_{\bar{0}\mathbb{R}}$-modules $\mathcal{O}(D, \mathbb{V}) \cong V$. As $\mathcal{E}$ is a subbundle of $\mathcal{V}$, $\mathcal{O}(D,\mathbb{E})$ is a $G_{\bar{0}\mathbb{R}}$-submodule of $\mathcal{O}(D,\mathbb{V})$, so its image $W$ in $V$ is a non-trivial $G_{\bar{0}\mathbb{R}}$-submodule of $E$.

Conversely assume there is a non-trivial $G_{\bar{0}\mathbb{R}}$-submodule $W \subseteq E$. It gives rise to a trivial subbundle $\mathcal{W}$ of $\mathcal{E}$ and therefore $\mathcal{O}(D,\mathbb{E}) \neq 0$.  

\end{proof}

The last necessary ingredient for the proof of the characterization theorem is the following lemma.

\begin{lemma}[Lemma 5 in \cite{V}]

Let $X$ be a split homogeneous supermanifold and $\mathcal{O}_X \cong \bigwedge \mathcal{E}$. If $H^0(X_{\bar{0}}, \mathcal{E}) = 0$, then  $H^0(X_{\bar{0}}, \bigwedge^p \mathcal{E}) = 0$ for all $p \in \mathbb{N}$.

\end{lemma}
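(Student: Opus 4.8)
Write $X_{\bar 0} = G_{\bar 0}/H_{\bar 0}$ and $\mathcal{E} = \mathcal{O}(G_{\bar 0}\times_{H_{\bar 0}} E)$ for an $H_{\bar 0}$-module $E$; since $X$ is split, $\bigwedge^p\mathcal{E}$ is then the homogeneous vector bundle with fibre $\bigwedge^p E$. The plan is to reduce the assertion to a purely representation-theoretic statement by means of Lemma 3 in \cite{V} (in the open-orbit situation, Lemma \ref{Lemma3V}), and then to prove that statement by an induction on $p$ using contraction operators. The case $p=0$ is of course excluded --- there $\bigwedge^0\mathcal{E}=\mathcal{F}$ and the cohomology is $\mathbb{C}$ --- so fix $p\geq 1$.

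\textbf{Step 1: reduction to representation theory.} By Proposition 1 in \cite{V}, $E$ embeds as an $H_{\bar 0}$-submodule of a $G_{\bar 0}$-module $V$, so $\bigwedge^p E\subseteq\bigwedge^p V$ as $H_{\bar 0}$-modules and $\bigwedge^p\mathcal{E}$ is a subbundle of the trivial bundle with fibre $\bigwedge^p V$. Then Lemma 3 in \cite{V} (resp. Lemma \ref{Lemma3V}) says $H^0(X_{\bar 0},\bigwedge^p\mathcal{E})=0$ is equivalent to the nonexistence of a nonzero $G_{\bar 0}$-submodule of $\bigwedge^p V$ that is contained in $\bigwedge^p E$. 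The same lemma applied to $\mathcal{E}$ lets us restate the hypothesis $H^0(X_{\bar 0},\mathcal{E})=0$ as: $E$ contains no nonzero $G_{\bar 0}$-submodule of $V$. So it suffices to deduce the former from the latter.

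\textbf{Step 2: contraction induction.} Suppose, for contradiction, that $W\subseteq\bigwedge^p E$ is a nonzero $G_{\bar 0}$-submodule of $\bigwedge^p V$ with $p\geq 1$. For $\phi\in V^{*}$ let $\iota_\phi\colon\bigwedge^p V\to\bigwedge^{p-1}V$ be the interior product. Since $\iota_\phi(e_1\wedge\cdots\wedge e_p)=\sum_{j=1}^{p}(-1)^{j-1}\phi(e_j)\,e_1\wedge\cdots\widehat{e_j}\cdots\wedge e_p$ and the $\phi(e_j)$ are scalars, each $\iota_\phi$ carries $\bigwedge^p E$ into $\bigwedge^{p-1}E$. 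Put $W'=\sum_{\phi\in V^{*}}\iota_\phi(W)\subseteq\bigwedge^{p-1}E$. From $g\cdot\iota_\phi(w)=\iota_{g\cdot\phi}(g\cdot w)$ for $g\in G_{\bar 0}$ we see that $W'$ is a $G_{\bar 0}$-submodule of $\bigwedge^{p-1}V$, and $W'\neq 0$ because a nonzero element of $\bigwedge^p V$ with $p\geq 1$ is not annihilated by every contraction (test against coordinate functionals). Iterating this passage $p-1$ times yields a nonzero $G_{\bar 0}$-submodule of $\bigwedge^{1}V=V$ contained in $\bigwedge^{1}E=E$, contradicting Step 1. Hence no such $W$ exists, and $H^0(X_{\bar 0},\bigwedge^p\mathcal{E})=0$ for all $p\geq 1$.

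\textbf{Main obstacle.} The one genuinely delicate point is that an individual contraction $\iota_\phi$ is not $G_{\bar 0}$-equivariant, so one cannot argue with a single $\phi$; the remedy is to form the span $\sum_{\phi\in V^{*}}\iota_\phi(W)$, which is $G_{\bar 0}$-stable, and then to check that this span is still nonzero and still lies in $\bigwedge^{p-1}E$. It is precisely here that one uses the embedding of $E$ into a genuine $G_{\bar 0}$-module $V$ (Proposition 1 of \cite{V}), which is what makes ``$G_{\bar 0}$-submodule of $E$'' meaningful and lets Lemma 3 of \cite{V} apply at each stage of the induction; the remaining verifications are routine.
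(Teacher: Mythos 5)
Your argument is correct. Note that the paper itself gives no proof of this lemma --- it is quoted verbatim from \cite{V} as an ingredient for the later characterization theorems --- so there is no in-text proof to compare against; but your reduction via Proposition 1 and Lemma 3 of \cite{V} to the statement ``$E$ contains no nonzero $G_{\bar 0}$-submodule of $V$ $\Rightarrow$ $\bigwedge^p E$ contains no nonzero $G_{\bar 0}$-submodule of $\bigwedge^p V$,'' followed by the contraction descent with $W' = \sum_{\phi} \iota_\phi(W)$, is sound: the equivariance $g\cdot\iota_\phi(w)=\iota_{g\cdot\phi}(g\cdot w)$ makes $W'$ a $G_{\bar 0}$-submodule, $\iota_\phi$ visibly preserves $\bigwedge^\bullet E$, and injectivity of $w\mapsto(\phi\mapsto\iota_\phi w)$ for $p\geq 1$ gives $W'\neq 0$. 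This is essentially the standard route (and, as far as one can tell, the route taken in the cited source), with the correct observation that the individual contractions must be summed over all of $V^*$ to restore $G_{\bar 0}$-stability.
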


The main characterization results are the following two theorems:

\begin{satz}[Theorem 3 in \cite{V}]\label{SplitSec}
 
Let $M = G/H$ be a $G$-homogeneous supermanifold, $M_{\bar{0}}$ a compact connected manifold, $\mathfrak{g} = \mathrm{Lie}(G), \mathfrak{h} = \mathrm{Lie}(H)$. Consider the exact sequence of $H_{\bar{0}}$-modules:

\[ \xymatrix{ 0 \ar[r] & \mathfrak{h}_{\bar{1}} \ar[r]^\delta & \mathfrak{g}_{\bar{1}} \ar[r]^\gamma & \mathfrak{g}_{\bar{1}}/\mathfrak{h}_{\bar{1}} \ar[r] & 0 } \]

If there do not exist non-trivial $G_{\bar{0}}$-submodules $W \subseteq \mathfrak{g}_{\bar{1}}^*$ such that $W \subseteq \mathrm{Im} \gamma^*$, then $H^0(M, \mathcal{O}) = \mathbb{C}$. If $M$ is split, the converse is also true.

\end{satz}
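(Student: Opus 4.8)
\section*{Proof proposal}

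The plan is to prove the forward implication by passing to the retract $\mathrm{gr}\,M$, where the split structure turns the question into representation theory, and to obtain the converse almost for free from the hypothesis that $M$ is split. I would begin by dualising the given short exact sequence of $H_{\bar0}$-modules to
\[ 0 \longrightarrow (\mathfrak{g}_{\bar1}/\mathfrak{h}_{\bar1})^* \xrightarrow{\ \gamma^*\ } \mathfrak{g}_{\bar1}^* \xrightarrow{\ \delta^*\ } \mathfrak{h}_{\bar1}^* \longrightarrow 0, \]
so that $\operatorname{Im}\gamma^*$ is exactly the annihilator of $\mathfrak{h}_{\bar1}$ in $\mathfrak{g}_{\bar1}^*$ and $\gamma^*$ is an $H_{\bar0}$-equivariant isomorphism of $E := (\mathfrak{g}_{\bar1}/\mathfrak{h}_{\bar1})^*$ onto the $H_{\bar0}$-submodule $\operatorname{Im}\gamma^* \subseteq \mathfrak{g}_{\bar1}^*$. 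Since $(G_{\bar0},\mathfrak{g})$ is a super Harish--Chandra pair, $\mathfrak{g}_{\bar1}^*$ is a $G_{\bar0}$-module and hence defines a trivial $G_{\bar0}$-bundle $\mathcal{V}$ on $M_{\bar0}$. The retract $\mathrm{gr}\,M$ is split and $\mathrm{gr}\,G$-homogeneous with the same odd data $\mathfrak{g}_{\bar1},\mathfrak{h}_{\bar1}$ (Example~\ref{SplitLSA}), so Lemma~\ref{OnFunctions} gives $\mathcal{O}_{\mathrm{gr}\,M}\simeq\bigwedge\mathcal{E}$ with $\mathcal{E}=\mathcal{O}(G_{\bar0}\times_{H_{\bar0}}E)$, a subbundle of $\mathcal{V}$. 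This is exactly the configuration to which Lemma~3 of \cite{V} applies, its condition~(1) being precisely the theorem's hypothesis.

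For the forward implication I would argue in three reductions. First, since $M_{\bar0}$ is compact and connected one has $H^0(M_{\bar0},\mathcal{F})=\mathbb{C}$, so the filtration argument of Lemma~\ref{Lemma6V} applies verbatim: using the short exact sequences $0\to\mathcal{J}^{p+1}\to\mathcal{J}^p\to(\mathrm{gr}\,\mathcal{O})_p\to 0$ and descending induction, the vanishing $H^0(M_{\bar0},(\mathrm{gr}\,\mathcal{O})_p)=0$ for all $p>0$ forces $H^0(M,\mathcal{O})=\mathbb{C}$. Thus it suffices to prove $H^0(M_{\bar0},(\mathrm{gr}\,\mathcal{O})_p)=H^0(M_{\bar0},\textstyle\bigwedge^p\mathcal{E})=0$ for $p>0$. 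Second, by Lemma~5 of \cite{V} it is enough to treat the case $p=1$, i.e.\ to show $H^0(M_{\bar0},\mathcal{E})=0$. Third, applying the implication $(1)\Rightarrow(2)$ of Lemma~3 of \cite{V} to $E=\operatorname{Im}\gamma^*\subseteq\mathfrak{g}_{\bar1}^*=V$, the hypothesis that no non-trivial $G_{\bar0}$-submodule of $\mathfrak{g}_{\bar1}^*$ lies inside $\operatorname{Im}\gamma^*$ yields exactly $H^0(M_{\bar0},\mathcal{E})=0$, closing the chain.

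For the converse, suppose $M$ is split and $H^0(M,\mathcal{O})=\mathbb{C}$. Then $M=\mathrm{gr}\,M$ and $\mathcal{O}_M=\bigwedge\mathcal{E}$, whence $H^0(M,\mathcal{O})=\bigoplus_{p\ge0}H^0(M_{\bar0},\bigwedge^p\mathcal{E})$. The degree-zero summand is $H^0(M_{\bar0},\mathcal{F})=\mathbb{C}$, so the assumption forces every higher summand to vanish, in particular $H^0(M_{\bar0},\mathcal{E})=0$. Feeding this into the reverse implication $(2)\Rightarrow(1)$ of Lemma~3 of \cite{V} produces the desired conclusion that no non-trivial $G_{\bar0}$-submodule $W\subseteq\mathfrak{g}_{\bar1}^*$ is contained in $\operatorname{Im}\gamma^*$.

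The step I expect to carry the real weight is the first reduction in the forward implication: for non-split $M$ one only has a filtered inclusion $H^0(M,\mathcal{O})\hookrightarrow H^0(M_{\bar0},\mathrm{gr}\,\mathcal{O})$ rather than an isomorphism, and it is essential that we are proving equality with $\mathbb{C}$ — the constants always survive, so the inclusion cannot be proper from below. This asymmetry is exactly why the converse must assume $M$ split. The remaining care is purely bookkeeping: verifying that $\gamma^*$ is $H_{\bar0}$-equivariant so that $\operatorname{Im}\gamma^*$ is a genuine $H_{\bar0}$-submodule, and that $G_{\bar0}\times_{H_{\bar0}}\mathfrak{g}_{\bar1}^*$ is trivial as a $G_{\bar0}$-bundle, so that Lemma~3 of \cite{V} is legitimately applicable with $V=\mathfrak{g}_{\bar1}^*$.
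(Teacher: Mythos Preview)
Your proposal is correct and follows exactly the route the paper takes. The paper does not supply its own proof of this statement (it is quoted as Theorem~3 of \cite{V}), but it assembles precisely the same ingredients---Lemma~\ref{OnFunctions}, Lemma~3 of \cite{V}, Lemma~5 of \cite{V}, and the filtration argument of Lemma~\ref{Lemma6V}---and uses them in the identical way to prove the flag-domain analogue immediately afterward; your three reductions (filtration to $\mathrm{gr}$, Lemma~5 to reduce to $p=1$, Lemma~3 for the vanishing) match that argument step for step.
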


In particular if $\mathfrak{g}_{\bar{1}}$ is an irreducible $G_{\bar{0}}$-module, then nontrivial global odd functions on $M = G/H$
can only exist if $H$ is a purely even subgroup. In that case $H^0(M,\mathcal{O}) \cong \bigwedge \mathfrak{g}_{\bar{1}}^*$.
If $\mathfrak{g}_{\bar{1}}$ is not an irreducible $G_{\bar{0}}$-module, then the global odd functions are characterized by the following theorem:

\begin{satz}[Proposition 2 in \cite{V}]
 
Let $M = G/H$ be a $G$-homogeneous supermanifold, $M_{\bar{0}}$ a compact connected manifold, $\mathfrak{g} = \mathrm{Lie}(G), 
\mathfrak{h} = \mathrm{Lie}(H)$. Assume that $\mathfrak{g}_{\bar{1}}$ is a
completely reducible $G_{\bar{0}}$-module. Consider the exact sequence of $H_{\bar{0}}$-modules:

\[ \xymatrix{ 0 \ar[r] & \mathfrak{h}_{\bar{1}} \ar[r]^\delta & \mathfrak{g}_{\bar{1}} \ar[r]^\gamma & \mathfrak{g}_{\bar{1}}/\mathfrak{h}_{\bar{1}} \ar[r] & 0 } \]

Let $W \subseteq \mathrm{Im} \gamma^*$ be the maximal $G_{\bar{0}}$-module and let $Y = \{y \in \mathfrak{g}_1 \vert W(y) = 0\}$.
If $\delta(\mathfrak{h}_{\bar{1}}) \subseteq Y$, then $H^0(M, \mathcal{O}) \cong \bigwedge W$. If in addition $M$ is split,
then $ M \cong N \times (\mathrm{pt}, \bigwedge W )$, where $N$ is a homogeneous
supermanifold such that $H^0(N, \mathcal{O}) = \mathbb{C}$.

\end{satz}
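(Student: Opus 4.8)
The plan is to reduce the statement to Vishnyakova's original result (Proposition 2 in \cite{V}) by the same mechanism used for flag domains throughout this section, namely restriction from the compact flag supermanifold $Z = G/P$ to the open orbit $D = G_\mathbb{R}/L_\mathbb{R}$. Concretely, I would first observe that, since the target space asserted is $\bigwedge W$, which is finite-dimensional, the content of the theorem is a computation of $H^0(M,\mathcal{O})$ where $M = G/H$ has compact connected base; so the point is to isolate the odd-function contribution. The key reduction is to filter $\mathcal{O}_M$ by the powers $\mathcal{J}^p$ of the ideal generated by odd elements, exactly as in the proof of Lemma \ref{Lemma6V}: using the short exact sequences $0 \to \mathcal{J}^{p+1} \to \mathcal{J}^p \to (\mathrm{gr}\,\mathcal{O}_M)_p \to 0$, the computation of $H^0(M,\mathcal{O})$ is controlled by $H^0(M_{\bar 0},(\mathrm{gr}\,\mathcal{O}_M)_p)$ for $p \geq 0$. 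Since $(\mathrm{gr}\,\mathcal{O}_M)_p = \bigwedge^p \mathcal{E}$ for the homogeneous bundle $\mathcal{E}$ with fibre $(\mathfrak{g}_{\bar 1}/\mathfrak{h}_{\bar 1})^*$ (Lemma \ref{OnFunctions}), and $M_{\bar 0}$ is compact, I can bring to bear Lemma 5 of \cite{V}: once $H^0(M_{\bar 0},\mathcal{E})$ is known, all higher exterior powers follow.

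Next I would identify $H^0(M_{\bar 0},\mathcal{E})$ via the embedding of $\mathcal{E}$ into a trivial bundle. By Proposition 1 in \cite{V} (and the restriction version, the second Lemma with label stated above), $\mathbb{E}$ sits inside a trivial $G_{\bar 0}$-bundle $\mathbb{V}$ with fibre $V \supseteq (\mathfrak{g}_{\bar 1}/\mathfrak{h}_{\bar 1})^* = \mathrm{Im}\,\gamma^*$. Since $M_{\bar 0}$ is compact and connected, $H^0(M_{\bar 0},\mathbb{V}) \cong V$ as $G_{\bar 0}$-modules, so $H^0(M_{\bar 0},\mathbb{E})$ is a $G_{\bar 0}$-submodule of $V$ contained in $\mathrm{Im}\,\gamma^*$, hence contained in the maximal such submodule $W$. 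Conversely $W$, being $G_{\bar 0}$-stable, gives a trivial subbundle of $\mathcal{E}$ and hence $W \subseteq H^0(M_{\bar 0},\mathcal{E})$; this is exactly the content packaged in Lemma 3 of \cite{V} (here applied on the compact base $M_{\bar 0}$ itself rather than on an open orbit). Therefore $H^0(M_{\bar 0},\mathcal{E}) = W$ and, by Lemma 5 of \cite{V}, $H^0(M_{\bar 0},\bigwedge^p \mathcal{E}) = \bigwedge^p W$ for all $p$.

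The main obstacle — and the step where the hypothesis $\delta(\mathfrak{h}_{\bar 1}) \subseteq Y$ and the complete reducibility of $\mathfrak{g}_{\bar 1}$ enter — is controlling the differential in the spectral sequence from the filtration $\mathcal{J}^p$, i.e.\ showing that the even cohomology classes in $H^0(M_{\bar 0},\bigwedge^p \mathcal{E})$ actually lift to genuine holomorphic superfunctions on $M$, rather than being killed by a connecting map $\mathcal{J}^{p+1} \hookrightarrow \mathcal{J}^p$. The condition $W \subseteq \mathrm{Im}\,\gamma^*$ with $\delta(\mathfrak{h}_{\bar 1})$ annihilated by $W$ is precisely what makes the elements of $\bigwedge W$ compatible with the $\mathfrak{h}_{\bar 1}$-module structure, so that the restriction maps $H^0(M,\mathcal{J}^p) \to H^0(M_{\bar 0},\bigwedge^p \mathcal{E})$ are surjective onto $\bigwedge^p W$ and hence $H^0(M,\mathcal{O}) \cong \bigwedge W$. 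For the splitting statement: if $M$ is split then $\mathcal{O}_M \cong \bigwedge \mathcal{E}$ globally, and the trivial subbundle corresponding to $W$ splits off a factor $(\mathrm{pt},\bigwedge W)$; setting $N$ to be the complementary homogeneous supermanifold built from the $H_{\bar 0}$-complement of $W$ in $\mathrm{Im}\,\gamma^*$, complete reducibility of $\mathfrak{g}_{\bar 1}$ guarantees such a complement exists and is $G_{\bar 0}$-stable, and maximality of $W$ forces $H^0(N,\mathcal{O}) = \mathbb{C}$ by the first characterization theorem (Theorem \ref{SplitSec}). Thus $M \cong N \times (\mathrm{pt},\bigwedge W)$, completing the argument.
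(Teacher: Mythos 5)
Your proposal has the right overall shape---an upper bound computed on the associated graded object plus a lifting argument---but the crucial lifting step is asserted rather than proved, and that is exactly where the hypothesis $\delta(\mathfrak{h}_{\bar{1}}) \subseteq Y$ has to do concrete work. You correctly identify that the obstacle is showing that the classes in $\bigwedge^p W \subseteq H^0(M_{\bar{0}},(\mathrm{gr}\,\mathcal{O}_M)_p)$ come from genuine sections of $\mathcal{J}^p$; the obstruction sits in $H^1(M_{\bar{0}},\mathcal{J}^{p+1})$, and saying that the hypothesis ``makes the elements of $\bigwedge W$ compatible with the $\mathfrak{h}_{\bar{1}}$-module structure'' does not dispose of it. The mechanism actually used (see the paper's proof of the flag-domain analogue immediately following this statement, which mirrors Vishnyakova's argument) is an explicit construction of the lift: the condition $\delta(\mathfrak{h}_{\bar{1}}) \subseteq Y$ says precisely that $\mathfrak{h} \subseteq \mathfrak{g}_{\bar{0}} \oplus Y$, so $H$ is contained in the subsupergroup $G_1$ attached to the SHCP $(G_{\bar{0}}, \mathfrak{g}_{\bar{0}} \oplus Y)$, and one obtains a projection $M = G/H \rightarrow G/G_1 \cong (\mathrm{pt}, \bigwedge W)$, since $(\mathfrak{g}_{\bar{1}}/Y)^* = W$. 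Pulling back along this projection injects $\bigwedge W$ into $H^0(M,\mathcal{O})$, which is the missing lower bound; combined with the upper bound $H^0(\mathrm{gr}\,M,\mathcal{O}) \cong \bigwedge W$ coming from the split characterization theorem, the sandwich closes. The same projection is also what produces the product decomposition $M \cong N \times (\mathrm{pt},\bigwedge W)$ in the split case, rather than an ad hoc choice of complement.

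Two smaller points. First, your opening sentence proposes to reduce the statement to Vishnyakova's result by restricting from $Z = G/P$ to an open orbit of a real form, but the statement at hand \emph{is} Vishnyakova's compact-base result; there is no real form or open orbit in it, and in this paper the reduction runs the other way (the flag-domain versions are deduced from this one). Second, Lemma 5 of \cite{V} as quoted only asserts that $H^0(X_{\bar{0}},\mathcal{E}) = 0$ forces the vanishing of all higher exterior powers; to conclude $H^0(M_{\bar{0}},\bigwedge^p\mathcal{E}) = \bigwedge^p W$ you must first split $\mathcal{E}$ as the trivial subbundle with fibre $W$ plus an $H_{\bar{0}}$-stable complement with no sections and then apply the lemma to that complement---a step which itself requires the invariant complement you only invoke later for the splitting statement.
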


It turns out that the proof of the above characterization results extends to the case of a cycle-connected flag domain, i.e. an open $G_\mathbb{R}$-orbit $D \subseteq G/P$ such that $H^0(D, \mathcal{F}) = \mathbb{C}$:

\begin{satz}
 
Let $Z = G/P$ be a compact $G$-homogeneous supermanifold, $G_\mathbb{R}$ a real form of $G$ and $D = G_\mathbb{R}/L_\mathbb{R} \subseteq Z$ a flag domain. Let $\mathfrak{g}_\mathbb{R} = \mathrm{Lie}(G_\mathbb{R})$ and $ \mathfrak{l}_\mathbb{R} = \mathrm{Lie}(L_\mathbb{R})$. Consider the following exact sequence of $L_{\bar{0}\mathbb{R}}$-modules:

\[ \xymatrix{ 0 \ar[r] & \mathfrak{l}_{\bar{1}\mathbb{R}} \ar[r]^\delta & \mathfrak{g}_{\bar{1}\mathbb{R}} \ar[r]^\gamma & \mathfrak{g}_{\bar{1}\mathbb{R}}/\mathfrak{l}_{\bar{1}\mathbb{R}} \ar[r] & 0 } \]

If there do not exist non-trivial $G_{\bar{0}\mathbb{R}}$-submodules $W \subseteq \mathfrak{g}_{\bar{1}\mathbb{R}}^*$ such that $W \subseteq \mathrm{Im} \gamma^*$, then $H^0(D, \mathcal{O}_D) = \mathbb{C}$. If $D$ is split, the converse is also true.

\end{satz}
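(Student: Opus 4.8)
The strategy is to mimic the proof of Theorem 3 in \cite{V} (the compact case), replacing the compact flag manifold $Z_{\bar 0}$ by the flag domain base $D_{\bar 0}$ and invoking the versions of Vishnyakova's lemmas adapted to flag domains that have been set up above: Lemma \ref{Lemma6V} (the $\mathrm{gr}$-to-$\mathcal{O}$ bound), the restriction statement that $\tilde{\mathcal E}$ embeds into a trivial $G_{\bar 0\mathbb R}$-bundle, Lemma \ref{Lemma3V} (sections of subbundles of trivial bundles over $D_{\bar 0}$ vanish iff no nontrivial $G_{\bar 0\mathbb R}$-submodule), and Lemma 5 of \cite{V} (vanishing of $H^0$ of $\mathcal E$ forces vanishing for all exterior powers). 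The crucial input that makes this go through is that $D$ is \emph{cycle-connected}, i.e. $H^0(D_{\bar 0},\mathcal F)=\mathbb C$, so that every trivializable $G_{\bar 0\mathbb R}$-homogeneous bundle on $D_{\bar 0}$ has global sections equal to its typical fibre as a $G_{\bar 0\mathbb R}$-module. This is exactly what replaces ``$Z_{\bar 0}$ compact connected'' in the original argument.

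First I would reduce to the split case. By Lemma \ref{Lemma6V}, if $H^0(D_{\bar 0},\mathrm{gr}\,\mathcal O_D)=\mathbb C$ then $H^0(D,\mathcal O_D)=\mathbb C$; and by Theorem 2 in \cite{V} (the functions on $\mathrm{gr}\,G/H$, carried over to open orbits via the discussion following Lemma \ref{OnFunctions}), $\mathrm{gr}\,D$ has structure sheaf $\bigwedge\mathcal E$ with $\mathcal E=\mathcal O\bigl(G_{\bar 0\mathbb R}\times_{L_{\bar 0\mathbb R}}(\mathfrak g_{\bar 1}/\mathfrak l_{\bar 1})^*\bigr)$, and using that $D$ has maximal odd dimension this identifies with $\mathcal O\bigl(G_{\bar 0\mathbb R}\times_{L_{\bar 0\mathbb R}}(\mathfrak g_{\bar 1\mathbb R}/\mathfrak l_{\bar 1\mathbb R})^*\bigr)$. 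So it suffices to compute $H^0(D_{\bar 0},\bigwedge^p\mathcal E)$ for all $p$, where the typical fibre of $\mathcal E$ is $E=(\mathfrak g_{\bar 1\mathbb R}/\mathfrak l_{\bar 1\mathbb R})^*=\mathrm{Im}\,\gamma^*\subseteq\mathfrak g_{\bar 1\mathbb R}^*$.

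Second, I would apply the adapted lemmas. By the restriction lemma, $\mathcal E$ sits inside a trivial $G_{\bar 0\mathbb R}$-bundle, so Lemma \ref{Lemma3V} applies: $H^0(D_{\bar 0},\mathcal E)=0$ if and only if $E=\mathrm{Im}\,\gamma^*$ contains no nontrivial $G_{\bar 0\mathbb R}$-submodule. Under the hypothesis of the theorem (no nontrivial $G_{\bar 0\mathbb R}$-submodule $W\subseteq\mathfrak g_{\bar 1\mathbb R}^*$ with $W\subseteq\mathrm{Im}\,\gamma^*$), this vanishing holds, and then Lemma 5 of \cite{V} gives $H^0(D_{\bar 0},\bigwedge^p\mathcal E)=0$ for all $p>0$; summing over $p$ yields $H^0(D_{\bar 0},\mathrm{gr}\,\mathcal O_D)=\mathbb C$, hence $H^0(D,\mathcal O_D)=\mathbb C$ by the reduction above. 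For the converse when $D$ is split, $\mathcal O_D=\bigwedge\mathcal E$ already, so $H^0(D,\mathcal O_D)$ contains $H^0(D_{\bar 0},\mathcal E)$; if a nontrivial $G_{\bar 0\mathbb R}$-submodule $W\subseteq\mathrm{Im}\,\gamma^*$ existed it would give, via Lemma \ref{Lemma3V}, a nonzero space of global sections of the degree-one part, so $H^0(D,\mathcal O_D)\neq\mathbb C$.

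The main obstacle is verifying that Lemma \ref{Lemma3V} (i.e. the analogue of Lemma 3 of \cite{V}) is genuinely available here with $G_{\bar 0\mathbb R}$ replacing $G_{\bar 0}$: the original uses compactness of $G_{\bar 0}/H_{\bar 0}$ to get $H^0$ of a trivial bundle $=$ its fibre, whereas here one uses $H^0(D_{\bar 0},\mathcal F)=\mathbb C$ instead — this is precisely the cycle-connectedness hypothesis and is exactly what makes the identification $\mathcal O(D,\mathbb V)\cong V$ as a $G_{\bar 0\mathbb R}$-module go through. One must also make sure that the isomorphism $(\mathfrak g_{\bar 1}/\mathfrak l_{\bar 1})^*\cong(\mathfrak g_{\bar 1\mathbb R}/\mathfrak l_{\bar 1\mathbb R})^*$ is compatible with the $G_{\bar 0\mathbb R}$-module structures and with $\gamma^*$, and that passing from $\mathfrak g_{\bar 1}$ to its real form does not lose or create submodules of $\mathrm{Im}\,\gamma^*$ — this is where the maximal-odd-dimension hypothesis is used, and it is the only nontrivial point beyond bookkeeping.
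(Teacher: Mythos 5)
Your proposal follows essentially the same route as the paper's own (very terse) proof: identify $\mathrm{gr}\,\mathcal{O}_D$ with $\bigwedge\mathcal{O}\bigl(G_{\bar{0}\mathbb{R}}\times_{L_{\bar{0}\mathbb{R}}}(\mathfrak{g}_{\bar{1}\mathbb{R}}/\mathfrak{l}_{\bar{1}\mathbb{R}})^*\bigr)$ via Lemma \ref{OnFunctions}, apply the flag-domain version of Vishnyakova's Lemma 3 (Lemma \ref{Lemma3V}) to $E=(\mathfrak{g}_{\bar{1}\mathbb{R}}/\mathfrak{l}_{\bar{1}\mathbb{R}})^*$ together with Lemma 5 of \cite{V} for the exterior powers, and conclude with Lemma \ref{Lemma6V}. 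Your write-up is correct and in fact spells out the steps (the role of cycle-connectedness replacing compactness, the use of Lemma 5 of \cite{V}, and the split converse) that the paper leaves implicit.
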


\begin{proof}

According to Lemma \ref{OnFunctions} $\mathrm{gr} \mathcal{O}_D $ is isomorphic to \\ $ \bigwedge \mathcal{F}(G_{\bar{0}\mathbb{R}} \times_{L_{\bar{0}\mathbb{R}}} (\mathfrak{g}_{\bar{1}\mathbb{R}}/\mathfrak{l}_{\bar{1}\mathbb{R}})^*)$. The theorem therefore follows from Lemma \ref{Lemma3V} applied to $E = \mathfrak{g}_{\bar{1}\mathbb{R}}/\mathfrak{l}_{\bar{1}\mathbb{R}})^*$ and Lemma \ref{Lemma6V}.

\end{proof}

\begin{satz}

Let $G_\mathbb{R}/L_\mathbb{R} = D \subseteq Z = G/P$ be a flag domain such that $H^0(D,\mathcal{F}) = \mathbb{C}$.
Assume that $\mathfrak{g}_{\bar{1}\mathbb{R}}$ is a completely reducible $G_{\bar{0}\mathbb{R}}$-module. Consider the following short exact sequence of $L_{\bar{0}\mathbb{R}}$-modules:

\[ \xymatrix{ 0 \ar[r] & \mathfrak{l}_{\bar{1}\mathbb{R}} \ar[r]^\delta & \mathfrak{g}_{\bar{1}\mathbb{R}} \ar[r]^\gamma &
 \mathfrak{g}_{\bar{1}\mathbb{R}}/\mathfrak{l}_{\bar{1}\mathbb{R}} \ar[r] & 0 } \]
 
Let $W \subseteq \mathrm{Im} \gamma^*$ be the maximal $G_{\bar{0}\mathbb{R}}$-module and let $Y = \{y \in \mathfrak{g}_{\bar{1}\mathbb{R}} \vert W(y) = 0\}$.
If $\delta(\mathfrak{l}_{\bar{1}\mathbb{R}}) \subseteq Y$, then $H^0(D, \mathcal{O}) \cong \bigwedge W$.

\end{satz}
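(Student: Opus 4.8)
The plan is to run the proof of Proposition~2 in \cite{V} step by step, using at each stage the flag-domain analogues of Vishnyakova's auxiliary lemmas: Lemma~\ref{Lemma3V} in place of Lemma~3 of \cite{V}, and Lemma~\ref{Lemma6V} in place of Lemma~6 of \cite{V}, with the hypothesis $H^{0}(D,\mathcal{F})=\mathbb{C}$ playing the role that compactness of the base plays there. Thus the argument splits into a computation on the split supermanifold $\mathrm{gr}\,D$ and a lifting step from $\mathrm{gr}\,\mathcal{O}_{D}$ to $\mathcal{O}_{D}$.

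For the graded computation, Lemma~\ref{OnFunctions} gives $\mathrm{gr}\,\mathcal{O}_{D}\cong\bigwedge\mathcal{E}$ with $\mathcal{E}=\mathcal{O}(G_{\bar{0}\mathbb{R}}\times_{L_{\bar{0}\mathbb{R}}}E)$ and $E=(\mathfrak{g}_{\bar{1}\mathbb{R}}/\mathfrak{l}_{\bar{1}\mathbb{R}})^{*}=\mathrm{Im}\,\gamma^{*}\subseteq\mathfrak{g}_{\bar{1}\mathbb{R}}^{*}$. Since $\mathfrak{g}_{\bar{1}\mathbb{R}}$, hence each $\bigwedge^{p}\mathfrak{g}_{\bar{1}\mathbb{R}}^{*}$, is a completely reducible $G_{\bar{0}\mathbb{R}}$-module, $\bigwedge^{p}\mathcal{E}$ is a homogeneous subbundle of the trivial bundle with fibre $\bigwedge^{p}\mathfrak{g}_{\bar{1}\mathbb{R}}^{*}$, so applying Lemma~\ref{Lemma3V} to $\bigwedge^{p}\mathcal{E}$ identifies $H^{0}(D_{\bar{0}},\bigwedge^{p}\mathcal{E})$ with the largest $G_{\bar{0}\mathbb{R}}$-submodule of $\bigwedge^{p}\mathfrak{g}_{\bar{1}\mathbb{R}}^{*}$ contained in $\bigwedge^{p}E$. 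One then checks, by the purely representation-theoretic computation of \cite{V} — this is where the hypothesis $\delta(\mathfrak{l}_{\bar{1}\mathbb{R}})\subseteq Y$ enters, ruling out submodules meeting a $G_{\bar{0}\mathbb{R}}$-complement of $W$ — that this largest submodule is exactly $\bigwedge^{p}W$. Summing over $p$ yields $H^{0}(D_{\bar{0}},\mathrm{gr}\,\mathcal{O}_{D})=\bigwedge W$; equivalently $\mathrm{gr}\,D\cong N\times(\mathrm{pt},\bigwedge W)$ with $H^{0}(N,\mathcal{O})=\mathbb{C}$, the analogue of the last assertion of Proposition~2 in \cite{V}.

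It then remains to lift this to $D$ along the $\mathcal{J}$-adic filtration. The short exact sequences $0\to\mathcal{J}^{p+1}\to\mathcal{J}^{p}\to(\mathrm{gr}\,\mathcal{O}_{D})_{p}\to 0$ give injections $H^{0}(D_{\bar{0}},\mathcal{J}^{p})/H^{0}(D_{\bar{0}},\mathcal{J}^{p+1})\hookrightarrow H^{0}(D_{\bar{0}},(\mathrm{gr}\,\mathcal{O}_{D})_{p})=\bigwedge^{p}W$, hence $\dim H^{0}(D,\mathcal{O}_{D})\leq\dim\bigwedge W$. For equality one needs each comparison map $H^{0}(D_{\bar{0}},\mathcal{J}^{p})\to\bigwedge^{p}W$ to be onto, i.e.\ the connecting maps $\bigwedge^{p}W\to H^{1}(D_{\bar{0}},\mathcal{J}^{p+1})$ to vanish; this is again a consequence of $\delta(\mathfrak{l}_{\bar{1}\mathbb{R}})\subseteq Y$, by the same annihilation mechanism that produces the product splitting of the split model, so that $H^{0}(D,\mathcal{O}_{D})\cong\bigwedge W$.

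\noindent\textbf{Main obstacle.} Lemma~\ref{Lemma6V} as stated only lifts the trivial case $\bigwedge W=\mathbb{C}$, so the real work is its strengthening in the last step: showing that all of $\bigwedge W$ is realised by honest holomorphic superfunctions on the possibly non-split $D$, not merely that $\bigwedge W$ bounds $H^{0}(D,\mathcal{O}_{D})$ from above. Concretely, one must extract from $\delta(\mathfrak{l}_{\bar{1}\mathbb{R}})\subseteq Y$ the vanishing of the connecting homomorphisms into $H^{1}(D_{\bar{0}},\mathcal{J}^{p+1})$, i.e.\ transport the argument behind the product decomposition in Proposition~2 of \cite{V} to the non-compact, non-split situation considered here.
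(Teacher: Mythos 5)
Your graded computation (the upper bound $H^0(D,\mathrm{gr}\,\mathcal{O}_D)\cong\bigwedge W$ via Lemma~\ref{OnFunctions}, Lemma~\ref{Lemma3V} and the flag-domain version of Theorem~\ref{SplitSec}) agrees with what the paper does, and your identification of the remaining obstacle is accurate. But the obstacle is not overcome: you assert that the vanishing of the connecting maps $\bigwedge^{p}W\to H^{1}(D_{\bar{0}},\mathcal{J}^{p+1})$ is ``again a consequence of $\delta(\mathfrak{l}_{\bar{1}\mathbb{R}})\subseteq Y$, by the same annihilation mechanism,'' without giving an argument, and you then concede in your final paragraph that this is precisely the real work. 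As it stands the proposal proves only the inequality $\dim H^{0}(D,\mathcal{O}_D)\leq\dim\bigwedge W$; the statement requires equality, so there is a genuine gap.

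The paper closes this gap not by analyzing connecting homomorphisms but by producing the sections directly. The hypothesis $\delta(\mathfrak{l}_{\bar{1}\mathbb{R}})\subseteq Y$ is used to build the sub-SHCP $(G_{\bar{0}\mathbb{R}},\mathfrak{g}_{\bar{0}\mathbb{R}}\oplus Y)$, whose associated Lie subsupergroup $G_{1}\subseteq G_{\mathbb{R}}$ contains $L_{\mathbb{R}}$; hence there is a $G_{\mathbb{R}}$-equivariant projection $D=G_{\mathbb{R}}/L_{\mathbb{R}}\to G_{\mathbb{R}}/G_{1}$. The target has trivial base and global superfunctions $\bigwedge(\mathfrak{g}_{\bar{1}\mathbb{R}}/Y)^{*}=\bigwedge W$, and pullback along this projection gives an injection $\bigwedge W\hookrightarrow H^{0}(D,\mathcal{O}_D)$ on the honest (possibly non-split) structure sheaf. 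Combined with your upper bound this forces the isomorphism. So the missing ingredient is this group-theoretic construction: the condition $\delta(\mathfrak{l}_{\bar{1}\mathbb{R}})\subseteq Y$ enters as the containment $L_{\mathbb{R}}\subseteq G_{1}$ making the fibration exist, not as a vanishing statement in sheaf cohomology. If you want to salvage your filtration approach, you would still need an independent argument that the classes in $\bigwedge^{p}W$ lift, and the cleanest such argument is exactly the pullback just described.
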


\begin{proof}

Consider the SHCP $(G_{\bar{0}\mathbb{R}}, \mathfrak{g}_{\bar{0}\mathbb{R}} \oplus Y)$ and
denote by $G_1$ the real Lie supergroup defined by this SHCP. Then $G_1$ is a Lie subsupergroup
of $G_\mathbb{R}$, containing $L_\mathbb{R}$ by assumption ,and the projection $D = G_\mathbb{R}/L_\mathbb{R} \rightarrow G_\mathbb{R}/G_1$ induces an injective homomorphism $ \bigwedge W = H^0(G_\mathbb{R}/G_1, \mathcal{O}) \rightarrow H^0(D, \mathcal{O})$. 

Moreover by virtue of Theorem \ref{SplitSec} $H^0(D, \mathrm{gr} \mathcal{O}) \cong \bigwedge W$. As this is an upper bound for $H^0(D, \mathcal{O})$, that space is isomorphic to $\bigwedge W$. 

\end{proof}

The full classification of global odd functions on flag supermanifolds is given by the following theorem:

\begin{satz}[Theorems 5-8 in \cite{V}]

Let $G$ be a classical simple complex Lie superalgebra and $Z(\delta)$ a $G$-flag manifold.

\begin{enumerate}
 \item If $G = SL_{n \vert m}(\mathbb{C})$ then $H^0(Z(\delta), \mathcal{O}) = \mathbb{C}$, if
       $n \vert 0, 0 \vert m \not\in \delta$ \\ and $H^0(Z(\delta), \mathcal{O}) \cong \bigwedge \mathbb{C}^{nm}$ otherwise.
 \item If $G = \mathrm{Osp}(n \vert 2m)$ then $H^0(Z(\delta), \mathcal{O}) = \mathbb{C}$, unless $n = 2$ and $1 \vert 0 \in \delta$.
       In that case $H^0(Z(\delta), \mathcal{O}) \cong \bigwedge \mathbb{C}^{2m}$
 \item If $G = P(n)$ and $n \vert 0 \in \delta$ then $H^0(Z(\delta), \mathcal{O}) \cong \bigwedge \mathbb{C}^{\frac{1}{2}n(n+1)}$.
       Moreover if $0 \vert n \in \delta$ or $0 \vert n-1 \in \delta$ then $H^0(Z(\delta), \mathcal{O}) \cong \bigwedge \mathbb{C}^{\frac{1}{2}n(n-1)}$
       In all other cases $H^0(Z(\delta), \mathcal{O}) = \mathbb{C}$.
 \item If $G = Q(n)$ then $H^0(Z(\delta), \mathcal{O}) = \mathbb{C}$ always.
\end{enumerate}

\end{satz}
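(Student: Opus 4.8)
The plan is to obtain all four statements from Vishnyakova's general characterisation results — Theorem \ref{SplitSec} and Proposition 2 in \cite{V} — by making the relevant $G_{\bar 0}$-modules explicit. Since every $Z(\delta) = G/P$ is a compact connected supermanifold and $P$ a parabolic subsupergroup, applying those results with $H = P$ reduces the problem to the exact sequence of $P_{\bar 0}$-modules $0 \to \mathfrak p_{\bar 1} \to \mathfrak g_{\bar 1} \xrightarrow{\gamma} \mathfrak g_{\bar 1}/\mathfrak p_{\bar 1} \to 0$, to the computation of $\operatorname{Im}\gamma^{*} = \operatorname{Ann}(\mathfrak p_{\bar 1}) \subseteq \mathfrak g_{\bar 1}^{*}$, and to the identification of the maximal $G_{\bar 0}$-submodule $W \subseteq \operatorname{Im}\gamma^{*}$. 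If $W = 0$, Theorem \ref{SplitSec} gives $H^0(Z(\delta),\mathcal O) = \mathbb C$ at once; if $W \neq 0$, the module $\mathfrak g_{\bar 1}$ will be completely reducible, and after checking that the structure map $\mathfrak p_{\bar 1} \hookrightarrow \mathfrak g_{\bar 1}$ has image in $Y := \{y \in \mathfrak g_{\bar 1} \mid W(y) = 0\}$, Proposition 2 in \cite{V} gives $H^0(Z(\delta),\mathcal O) \cong \bigwedge W$.

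The type II algebras are disposed of immediately. For $\mathfrak g = \operatorname{osp}(n \mid 2m)$ with $n \neq 2$ and for $G = Q(n)$ the $G_{\bar 0}$-module $\mathfrak g_{\bar 1}$ is irreducible, so $W$ is either $0$ or all of $\mathfrak g_{\bar 1}^{*}$, and in the second case $\operatorname{Ann}(\mathfrak p_{\bar 1}) = \mathfrak g_{\bar 1}^{*}$ forces $\mathfrak p_{\bar 1} = 0$. By the description of parabolic subsuperalgebras recalled in Section \ref{OnPSA} this never happens: a Borel subalgebra of an orthosymplectic Lie superalgebra already has nonzero odd part, and for type $Q$ every root space — and the Cartan subalgebra $\mathfrak h = \mathfrak t \oplus \Pi\mathfrak t$ itself — has nonzero odd part, so every parabolic has $\mathfrak p_{\bar 1} \neq 0$. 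Hence $W = 0$ and $H^0 = \mathbb C$, which is part (4) and the $n \neq 2$ case of part (2).

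For the type I algebras $\operatorname{osp}(2 \mid 2m)$, $\mathfrak{sl}_{n \mid m}$ and $\mathfrak p(n)$ I would use the decomposition $\mathfrak g_{\bar 1} = \mathfrak g_{-1} \oplus \mathfrak g_{1}$ of Example \ref{TypeIcomp}. Since $\mathfrak p$ contains the grading element, $\mathfrak p_{\bar 1}$ is the direct sum of its intersections with the $\pm 1$-eigenspaces $\mathfrak g_{\pm 1}$, so $\operatorname{Ann}(\mathfrak p_{\bar 1})$ splits accordingly; as each $\mathfrak g_{\pm 1}$ is $G_{\bar 0}$-irreducible, $W$ is the direct sum of those $\mathfrak g_{\pm 1}^{*}$ for which $\mathfrak p_{\bar 1} \cap \mathfrak g_{\pm 1} = 0$. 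The remaining task is to translate each vanishing $\mathfrak p_{\bar 1} \cap \mathfrak g_{\pm 1} = 0$ into a condition on $\delta$ via the realisation of $P$ as the stabiliser of a flag of graded (isotropic, resp.\ $\Pi$-invariant) subspaces (Section \ref{OnPSA}): if the flag has a step filling up all of a given factor, every homogeneous odd operator of the opposite type is killed, while otherwise one writes down an explicit rank-one such operator respecting the flag. For $\mathfrak{sl}_{n \mid m}$ this yields $\mathfrak p_{\bar 1} \cap \mathfrak g_{-1} = 0 \iff n\mid 0 \in \delta$ and $\mathfrak p_{\bar 1} \cap \mathfrak g_{1} = 0 \iff 0\mid m \in \delta$ — mutually exclusive along a chain — and since $\dim \mathfrak g_{\pm 1} = nm$ we get $\bigwedge \mathbb C^{nm}$ exactly when one of them holds, i.e.\ part (1). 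For $\operatorname{osp}(2 \mid 2m)$ the corresponding step is an isotropic line $\mathbb C^{1\mid 0}$ and $\dim \mathfrak g_{\pm 1} = 2m$, giving part (2). For $\mathfrak p(n)$, with $\mathfrak g_{1}$ the skew and $\mathfrak g_{-1}$ the symmetric part (dimensions $\tfrac{1}{2} n(n-1)$ and $\tfrac{1}{2} n(n+1)$), one finds $\mathfrak p_{\bar 1} \cap \mathfrak g_{-1} = 0 \iff n\mid 0 \in \delta$ and $\mathfrak p_{\bar 1} \cap \mathfrak g_{1} = 0 \iff 0\mid n \in \delta$ or $0\mid n-1 \in \delta$, the extra case $0\mid n-1$ appearing because a skew map annihilating a hyperplane has even rank, hence rank zero; this is part (3). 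In all cases with $W \neq 0$ the hypothesis of Proposition 2 in \cite{V} holds for free — e.g.\ when $W = \mathfrak g_{-1}^{*}$ one has $Y = \mathfrak g_{1}$ and $\mathfrak p_{\bar 1} = \mathfrak p_{\bar 1} \cap \mathfrak g_{1} \subseteq Y$ by the above splitting.

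I expect the main obstacle to be exactly the combinatorial translation in the third paragraph: establishing the equivalences ``$\mathfrak p_{\bar 1} \cap \mathfrak g_{\pm 1} = 0 \iff$ (explicit $\delta$-condition)'', which in one direction needs an explicit nonzero odd operator stabilising the given flag, and in the other relies on the isotropy constraints for the orthosymplectic case and the parity-of-rank argument for $\mathfrak p(n)$. By comparison the type II cases and the verification that $\mathfrak p_{\bar 1} \subseteq Y$ are routine once the module structures from Example \ref{TypeIcomp} are in hand.
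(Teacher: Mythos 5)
This theorem is quoted verbatim from Theorems 5--8 of \cite{V} and the paper gives no proof of its own, so the only meaningful comparison is with the route the surrounding text sets up, namely Theorem \ref{SplitSec} and Proposition 2 of \cite{V} applied to $H=P$; your reduction to the maximal $G_{\bar 0}$-submodule of $\mathrm{Ann}(\mathfrak{p}_{\bar 1})\subseteq\mathfrak{g}_{\bar 1}^*$, the irreducibility argument disposing of the type II cases, and the translation of $\mathfrak{p}_{\bar 1}\cap\mathfrak{g}_{\pm 1}=0$ into conditions on $\delta$ via the flag realisations of Section \ref{OnPSA} (including the even-rank argument that produces the extra case $0\vert n-1$ for $P(n)$) is exactly that route and is correct. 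One cosmetic point: the splitting $\mathfrak{p}_{\bar 1}=(\mathfrak{p}_{\bar 1}\cap\mathfrak{g}_{-1})\oplus(\mathfrak{p}_{\bar 1}\cap\mathfrak{g}_{1})$ is better justified by the fact that $\mathfrak{p}$ is the sum of $\mathfrak{h}$ and root spaces, each odd root space lying entirely in one of $\mathfrak{g}_{\pm 1}$, rather than by membership of the grading element, which is not available inside $\mathfrak{psl}_{n\vert n}$.
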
 

In the following sections this classification theorem is extended to flag domains.

\section{Holomorphic superfunctions on flag domains: The cycle-connected Case}

This section uses the classification of complex simple Lie superalgebras and their real forms given in Chapters 2 and 3 respectively.
Before analyzing the complex simple Lie supergroups and their real forms case by case some general observations can be made:

\begin{satz}

Let $G$ be a complex simple Lie supergroup, $G_\mathbb{R}$ a real form, $Z = G/P$ and $D = G_\mathbb{R}/L_\mathbb{R}$ an open
orbit in $Z$. 

\begin{enumerate}
 \item If there are global holomorphic functions on $Z$, then these restrict to $D$. 
 \item If $\mathfrak{p}_{\bar{1}} = \mathfrak{b}_{\bar{1}}$ for a Borel subalgebra $\mathfrak{b} \subseteq \mathfrak{g}$
       then $\mathfrak{l}_{\bar{1}} = 0$ and therefore\newline $H^0(D,\mathcal{O}) \supseteq \bigwedge \mathfrak{g}_{\bar{1}\mathbb{R}}^*$
 \item If the action of the defining involution $\tau$ of $G_\mathbb{R}$ on the simple roots is trivial then
       $\mathfrak{g}_{\bar{1}\mathbb{R}}^*$ is an irreducible $G_{\bar{0}\mathbb{R}}$-module. Consequently in this case
       global odd functions can only exist if $\mathfrak{l}_{\bar{1}} = 0$ or if there are non-constant holomorphic functions on $D_{\bar{0}}$.
\end{enumerate}

\end{satz}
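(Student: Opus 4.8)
The plan is to verify the three statements in sequence, each being a short consequence of material already developed in the excerpt. For part (1), observe that since $D$ is an open subsupermanifold of $Z = G/P$, the inclusion $i : D \hookrightarrow Z$ induces a restriction map $H^0(Z, \mathcal{O}_Z) \to H^0(D, \mathcal{O}_D)$; I would note that this map is injective because $D_{\bar 0}$ is open and dense in no stronger sense than needed — more precisely, a global holomorphic superfunction on $Z$ that restricts to zero on the open subset $D$ must vanish identically, since $Z_{\bar 0}$ is connected and the even part of the restriction is already injective on the base by the identity principle, and the odd components are handled componentwise in a local splitting $\mathcal{O}_Z \cong \mathcal{F}_{Z_{\bar 0}} \otimes \bigwedge(\mathfrak{g}_{\bar 1}/\mathfrak{p}_{\bar 1})^*$ coming from Lemma \ref{OnFunctions} applied to $\mathrm{gr}\, Z$. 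So every such function gives a nonzero element of $H^0(D, \mathcal{O}_D)$; this is the content of the first bullet of the final Theorem 3.1.(1) [already invoked in the cycle-connected theorem above], and essentially nothing new is required.

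For part (2), suppose $\mathfrak{p}_{\bar 1} = \mathfrak{b}_{\bar 1}$ for a Borel $\mathfrak{b} \subseteq \mathfrak{g}$. Since $\mathfrak{l} = (\mathfrak{l}_\mathbb{R})^\mathbb{C}$ is the Levi component of $\mathfrak{p}$, its odd part $\mathfrak{l}_{\bar 1}$ consists of those odd root spaces $\mathfrak{g}^\alpha$ with both $\alpha$ and $-\alpha$ in $\Phi$; but $\mathfrak{p}_{\bar 1} = \mathfrak{b}_{\bar 1}$ means the only odd roots in $\Phi$ are positive, so no odd root $\alpha$ has $-\alpha \in \Phi$, hence $\mathfrak{l}_{\bar 1} = 0$ and likewise $\mathfrak{l}_{\bar 1 \mathbb{R}} = 0$. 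Then in the exact sequence $0 \to \mathfrak{l}_{\bar 1 \mathbb{R}} \to \mathfrak{g}_{\bar 1 \mathbb{R}} \xrightarrow{\gamma} \mathfrak{g}_{\bar 1 \mathbb{R}}/\mathfrak{l}_{\bar 1 \mathbb{R}} \to 0$ the map $\gamma$ is an isomorphism, so $\gamma^*$ is surjective and $\mathrm{Im}\,\gamma^* = \mathfrak{g}_{\bar 1 \mathbb{R}}^*$. Feeding this into the computation of $H^0(\mathrm{gr}\, D, \mathcal{O})$ via Lemma \ref{OnFunctions} and the argument for Theorem \ref{SplitSec}, one gets that all of $\bigwedge \mathfrak{g}_{\bar 1 \mathbb{R}}^*$ sits inside $H^0(D, \mathrm{gr}\,\mathcal{O}_D)$; the spectral sequence of Onishchik--Vishnyakova (or directly the projection $D = G_\mathbb{R}/L_\mathbb{R} \to G_\mathbb{R}/G_{\bar 0,\mathbb{R}}$ when $L_\mathbb{R}$ is purely even, which realizes these odd functions honestly) then gives the inclusion $\bigwedge \mathfrak{g}_{\bar 1 \mathbb{R}}^* \subseteq H^0(D, \mathcal{O}_D)$. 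The cleanest route is to note that when $\mathfrak{l}_{\bar 1 \mathbb{R}} = 0$, the SHCP $(L_{\bar 0 \mathbb{R}}, \mathfrak{l}_{\bar 0 \mathbb{R}})$ sits inside $(G_{\bar 0 \mathbb{R}}, \mathfrak{g}_{\bar 0 \mathbb{R}} \oplus \mathfrak{g}_{\bar 1 \mathbb{R}})$ only after trivially extending, so the fibration onto the purely even quotient $G_\mathbb{R}/G'$ with $G'$ defined by $(G_{\bar 0 \mathbb{R}}, \mathfrak{g}_{\bar 0 \mathbb{R}})$ pulls back all of $\bigwedge \mathfrak{g}_{\bar 1 \mathbb{R}}^*$.

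For part (3), recall from Section 2.6 of the excerpt that when the defining involution $\tau$ of $G_\mathbb{R}$ acts trivially on the simple roots, the Borel is $\tau$-stable and $\mathfrak{g}_{\bar 1 \mathbb{R}}$ is a real form of the $G_{\bar 0}$-module $\mathfrak{g}_{\bar 1}$; for the Lie superalgebras of type $A, B, C, D$ the relevant $\mathfrak{g}_{\bar 1}$ is either irreducible as a $G_{\bar 0}$-module or splits into exactly two irreducible pieces interchanged by an outer symmetry, and the triviality of the $\tau$-action forces $\mathfrak{g}_{\bar 1 \mathbb{R}}^*$ to be irreducible as a $G_{\bar 0 \mathbb{R}}$-module (in the type-I two-piece case the real structure glues the two complex summands into a single real irreducible). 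Then applying the cycle-connected characterization theorem (the super-analogue of Theorem 3 in \cite{V} proved just above): a nonzero element of $H^0(D, \mathcal{O}_D)$ beyond $H^0(D_{\bar 0}, \mathcal{F})$ forces a nontrivial $G_{\bar 0 \mathbb{R}}$-submodule $W \subseteq \mathrm{Im}\,\gamma^* \subseteq \mathfrak{g}_{\bar 1 \mathbb{R}}^*$; but irreducibility of $\mathfrak{g}_{\bar 1 \mathbb{R}}^*$ means $W$ is either $0$ or all of $\mathfrak{g}_{\bar 1 \mathbb{R}}^*$, and the latter requires $\mathrm{Im}\,\gamma^* = \mathfrak{g}_{\bar 1 \mathbb{R}}^*$, i.e. $\mathfrak{l}_{\bar 1 \mathbb{R}} = 0$. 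Hence global odd functions can occur only when $\mathfrak{l}_{\bar 1} = 0$ or when $H^0(D_{\bar 0}, \mathcal{F}) \neq \mathbb{C}$, i.e. there are non-constant holomorphic functions on $D_{\bar 0}$ (in which case the hypotheses of the cycle-connected theorem fail and one must instead pass to the bounded symmetric domain as in the next section).

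The main obstacle, and the only point requiring genuine care rather than bookkeeping, is establishing the irreducibility of $\mathfrak{g}_{\bar 1 \mathbb{R}}^*$ as a real $G_{\bar 0 \mathbb{R}}$-module in part (3): one must check case by case (using the classification table of real forms reproduced in Chapter 3) that when $\tau$ fixes all simple roots, the complex module $\mathfrak{g}_{\bar 1}$ is either already $G_{\bar 0}$-irreducible, or is of type I and its two irreducible halves $\mathfrak{g}_{\pm 1}$ (Example \ref{TypeIcomp}) are swapped by $\tau$ so that no proper real $G_{\bar 0 \mathbb{R}}$-submodule survives — the potential trap being a real form of type $\mathfrak{su}$ where $\mathfrak{g}_{\bar 1}$ genuinely stays reducible, which is precisely the excluded case $\tau = -\mathrm{id}$ on roots and hence does not arise under the hypothesis. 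The remaining steps are routine applications of the lemmas and theorems already stated.
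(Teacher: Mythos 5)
Your argument is correct and follows the same route as the paper's (very brief) proof: part (1) is the existence of the restriction map, part (2) reduces to observing that $\mathfrak{l}_{\bar{1}}=0$ makes $(\mathfrak{g}_{\bar{1}\mathbb{R}}/\mathfrak{l}_{\bar{1}\mathbb{R}})^*=\mathfrak{g}_{\bar{1}\mathbb{R}}^*$ a $G_{\bar{0}\mathbb{R}}$-module, and part (3) is exactly the paper's argument that in the type I case $\tau$ interchanges $\mathfrak{g}_1$ and $\mathfrak{g}_{-1}$, so $\mathfrak{g}_{\bar{1}\mathbb{R}}$ is a single real irreducible and the cycle-connected characterization theorem leaves only the two stated alternatives. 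You even supply the small step ($\mathfrak{p}_{\bar{1}}=\mathfrak{b}_{\bar{1}}\Rightarrow\mathfrak{l}_{\bar{1}}=0$) that the paper's proof silently assumes.

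One correction to your closing paragraph, which contradicts the (correct) body of your part (3): the case $\tau(\alpha)=-\alpha$ is \emph{not} excluded by the hypothesis --- under the paper's conventions this is precisely what ``trivial action on the simple roots'' means (trivial induced automorphism of the Dynkin diagram), and it is realized by $\mathfrak{su}(p,q\,\vert\,r,s)$. In that case $\tau$ sends positive odd roots to negative odd roots, hence swaps $\mathfrak{g}_1$ and $\mathfrak{g}_{-1}$, and $\mathfrak{g}_{\bar{1}\mathbb{R}}$ is irreducible --- it does not ``genuinely stay reducible.'' (For the same reason your parenthetical ``the Borel is $\tau$-stable'' is backwards: $\tau$ carries $\mathfrak{b}$ to the opposite Borel.) The real forms for which $\mathfrak{g}_{\bar{1}\mathbb{R}}$ remains reducible are $SL_{n\vert m}(\mathbb{R})$, $SL_{k\vert l}(\mathbb{H})$ and $US\Pi(n)$, whose defining involutions act by a nontrivial diagram automorphism and therefore preserve each of $\mathfrak{g}_{\pm 1}$; these are the cases excluded by the hypothesis of part (3), not the unitary ones.
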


\begin{proof}

The first assertion is trivial.

If $\mathfrak{l}_{\bar{1}} = 0$ then $(\mathfrak{g}_{\bar{1}\mathbb{R}}/\mathfrak{l}_{\bar{1}\mathbb{R}})^* = \mathfrak{g}_{\bar{1}\mathbb{R}}^*$, which is itself a $G_{\bar{0}\mathbb{R}}$-module.
Consequently $H^0(D_{\bar{0}}, \mathcal{O}_D) \supseteq \bigwedge \mathfrak{g}_{1\mathbb{R}}^*$

Let $\mathfrak{g}$ be a simple classical Lie superalgebra of type I, i.e. $\mathfrak{g}_{\bar{1}}$ is a completely reducible $G_{\bar{0}}$-module with two
invariant submodules $\mathfrak{g}_1$ and $\mathfrak{g}_{-1}$. For a suitable choice of simple roots $\Pi$, these invariant subspaces are the direct sums of all positive and all negative root spaces respectively. If $\tau$ acts trivially on $\Pi$, then it interchanges $\mathfrak{g}_1$ and $\mathfrak{g}_{-1}$ and therefore $\mathfrak{g}_{\bar{1}\mathbb{R}}$ is irreducible. Thus
non-constant global odd functions on $D$ can only exist if $\mathfrak{l}_{\bar{1}} = 0$ or if there are non-constant holomorphic functions on $D_{\bar{0}}$. 

\end{proof}

In this section only the cycle-connected case is analyzed, i.e. only flag domains $D$ satisfying $H^0(D,\mathcal{F}) = \mathbb{C}$
are considered.

The cases in which there are non-constant global holomorphic superfunctions on $Z$ were classified in \cite{V}, and we assume from now on that $H^0(Z,\mathcal{O}) = \mathbb{C}$. 

\subsection{Type A}

The simple Lie supergroups of type $A(n,m)$ are $G = SL_{n \vert m}(\mathbb{C})$ if $n \neq m$ and $G = PSL_{n \vert n}(\mathbb{C})$ if $n = m$. These complex Lie suprgroups have four classes of real forms each yielding a different action of $\tau$ on the simple roots:

\begin{enumerate}
 \item The super-unitary groups $G_\mathbb{R} = SU(p,q \vert r,s)$. In that case the action of $\tau$ on $\Sigma$ is trivial.
 \item The real and quaternionic groups $G_\mathbb{R} = SL_{n \vert m}(\mathbb{R})$ and $G_\mathbb{R} = SL_{k \vert l}(\mathbb{H})$.
       In that case the action is given by $\tau(x_i) = x_{n-i+1}, \tau(y_j) = y_{m-j+1}$.
 \item If $n = m$ there is the real form $G_\mathbb{R} = \mbox{}^0PQ(n)$ with its involution acting by $\tau(x_i) = y_i$
 \item In case $n = m$ there is also the real form $G_\mathbb{R} = US\Pi(n)$ with the action $\tau(x_i) = -y_i$
\end{enumerate}

In the first and in the third case $\mathfrak{g}_{\bar{1}\mathbb{R}}$ is an irreducible $G_{\bar{0}\mathbb{R}}$-module, whereas in the second and fourth case it decomposes into two irreducible $G_{\bar{0}\mathbb{R}}$-modules, which are precisely the intersections of $\mathfrak{g}_{1\mathbb{R}}$ with the two irreducible components $\mathfrak{g}_1$ and $\mathfrak{g}_{-1}$ of $\mathfrak{g}_{\bar{1}}$. The first two results are therefore immediate:

\begin{satz}

Let $G = SL_{n \vert m}(\mathbb{C}), G_\mathbb{R} = SU(p,q \vert r,s)$ and $G_\mathbb{R}/L_\mathbb{R} = D \subseteq Z = G/P$ 
an open orbit. Suppose that $\mathfrak{l}_{\bar{1}} \neq 0$ and $D_{\bar{0}}$ is cycle-connected.\newline  Then $H^0(D, \mathcal{O}) = \mathbb{C}$.  

\end{satz}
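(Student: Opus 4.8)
\emph{Proof plan.} The plan is to deduce the statement from the flag-domain characterization theorem for holomorphic superfunctions proved just above (the flag-domain analogue of Theorem 3 in \cite{V}): its only hypotheses are that $Z=G/P$ be compact, that $D_{\bar 0}$ be cycle-connected, and that $\mathfrak{g}_{\bar 1\mathbb{R}}^*$ contain no nontrivial $G_{\bar 0\mathbb{R}}$-submodule $W$ with $W\subseteq\mathrm{Im}\,\gamma^*$, where $\gamma\colon\mathfrak{g}_{\bar 1\mathbb{R}}\to\mathfrak{g}_{\bar 1\mathbb{R}}/\mathfrak{l}_{\bar 1\mathbb{R}}$ is the quotient map. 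So the whole task reduces to checking the module-theoretic hypothesis, which I will do by an elementary dimension count together with the irreducibility of $\mathfrak{g}_{\bar 1\mathbb{R}}^*$.

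First I would recall that $\mathfrak{g}=\mathfrak{sl}_{n\vert m}(\mathbb{C})$ is of type I, so $\mathfrak{g}_{\bar 1}=\mathfrak{g}_1\oplus\mathfrak{g}_{-1}$ is a sum of two irreducible $G_{\bar 0}$-modules which the defining involution $\tau$ of $SU(p,q\vert r,s)$ interchanges; hence the real form $\mathfrak{g}_{\bar 1\mathbb{R}}$ is an irreducible $G_{\bar 0\mathbb{R}}$-module (as already observed in the discussion preceding the statement), and therefore so is its dual $\mathfrak{g}_{\bar 1\mathbb{R}}^*$. Consequently the only nonzero $G_{\bar 0\mathbb{R}}$-submodule of $\mathfrak{g}_{\bar 1\mathbb{R}}^*$ is $\mathfrak{g}_{\bar 1\mathbb{R}}^*$ itself.

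Next I would note that $\mathfrak{l}_{\bar 1}\neq 0$ forces $\mathfrak{l}_{\bar 1\mathbb{R}}\neq 0$, since $\mathfrak{l}=(\mathfrak{l}_\mathbb{R})^\mathbb{C}$ and hence $\mathfrak{l}_{\bar 1}=(\mathfrak{l}_{\bar 1\mathbb{R}})^\mathbb{C}$. The map $\gamma$ being surjective, $\gamma^*$ is injective and $\dim_\mathbb{R}\mathrm{Im}\,\gamma^*=\dim_\mathbb{R}\mathfrak{g}_{\bar 1\mathbb{R}}-\dim_\mathbb{R}\mathfrak{l}_{\bar 1\mathbb{R}}<\dim_\mathbb{R}\mathfrak{g}_{\bar 1\mathbb{R}}^*$; thus $\mathfrak{g}_{\bar 1\mathbb{R}}^*\not\subseteq\mathrm{Im}\,\gamma^*$. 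Combining this with the previous paragraph, there is no nontrivial $G_{\bar 0\mathbb{R}}$-submodule of $\mathfrak{g}_{\bar 1\mathbb{R}}^*$ lying inside $\mathrm{Im}\,\gamma^*$. Since $Z$ is compact and $D_{\bar 0}$ is cycle-connected — i.e. $H^0(D_{\bar 0},\mathcal{F})=\mathbb{C}$, which is precisely the ingredient that makes Lemma \ref{Lemma3V} and Lemma \ref{Lemma6V}, and hence the characterization theorem, applicable — we conclude $H^0(D,\mathcal{O})=\mathbb{C}$.

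There is essentially no obstacle here; the substance is the one-line dimension count, once the irreducibility of $\mathfrak{g}_{\bar 1\mathbb{R}}^*$ and the flag-domain characterization theorem are available. The only care needed is bookkeeping of hypotheses: checking that cycle-connectedness of $D_{\bar 0}$ is exactly what the characterization theorem requires, and that passing between $\mathfrak{l}_{\bar 1}$ and its real form is harmless. (Equivalently, one could argue directly via part (3) of the preceding three-part theorem: $\tau$ acting trivially on the simple roots gives irreducibility of $\mathfrak{g}_{\bar 1\mathbb{R}}^*$, so global odd functions can occur only if $\mathfrak{l}_{\bar 1}=0$ or $D_{\bar 0}$ carries nonconstant holomorphic functions, and then Lemma \ref{Lemma6V} rules out higher graded contributions; the two routes coincide.)
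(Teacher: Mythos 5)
Your proposal is correct and follows the paper's own route: the paper derives this statement as ``immediate'' from the observation that for $SU(p,q\vert r,s)$ the involution $\tau$ interchanges $\mathfrak{g}_1$ and $\mathfrak{g}_{-1}$, making $\mathfrak{g}_{\bar 1\mathbb{R}}$ an irreducible $G_{\bar 0\mathbb{R}}$-module, combined with part (3) of the preceding general theorem and the flag-domain version of Vishnyakova's characterization. Your explicit dimension count showing $\mathfrak{g}_{\bar 1\mathbb{R}}^*\not\subseteq\mathrm{Im}\,\gamma^*$ when $\mathfrak{l}_{\bar 1\mathbb{R}}\neq 0$ simply spells out the detail the paper leaves implicit.
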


\begin{satz}

Let $G = SL_{n \vert m}(\mathbb{C}), G_\mathbb{R} = \mbox{}^0PQ(n)$ and $G_\mathbb{R}/L_\mathbb{R} = D \subseteq Z = G/P$ 
an open orbit. Then $H^0(D, \mathcal{O}) = \mathbb{C}$ unless $\mathfrak{l}_{\bar{1}} = 0$.

\end{satz}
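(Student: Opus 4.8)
The plan is to deduce the assertion from the characterization theorem for global holomorphic superfunctions on cycle-connected flag domains established above (the extension of Theorem~3 of \cite{V} to flag domains). The input specific to the real form ${}^0PQ(n)$ that makes this work is that $\mathfrak{g}_{\bar{1}\mathbb{R}}$ is an \emph{irreducible} $G_{\bar{0}\mathbb{R}}$-module, as recorded above for this (the third) class of real forms of type $A$: since $\mathfrak{sl}_{n\vert n}(\mathbb{C})$ is of type I and the defining involution $\tau$ of ${}^0PQ(n)$, which acts on the roots by $\tau(x_i)=y_i$, interchanges the two irreducible $G_{\bar{0}}$-components $\mathfrak{g}_1$ and $\mathfrak{g}_{-1}$ of $\mathfrak{g}_{\bar{1}}$, its real form $\mathfrak{g}_{\bar{1}\mathbb{R}}$ has no proper $G_{\bar{0}\mathbb{R}}$-invariant subspace. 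Hence $\mathfrak{g}_{\bar{1}\mathbb{R}}^{*}$ is $G_{\bar{0}\mathbb{R}}$-irreducible as well, so its only $G_{\bar{0}\mathbb{R}}$-submodules are $0$ and $\mathfrak{g}_{\bar{1}\mathbb{R}}^{*}$.

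I would then check that the hypothesis $H^0(D_{\bar{0}},\mathcal{F})=\mathbb{C}$ of that theorem holds here without any restriction on $D$. The even part $G_{\bar{0}\mathbb{R}}$ of ${}^0PQ(n)$ is $SL_n(\mathbb{C})$ regarded as a real Lie group; its maximal compact subgroup $SU(n)$ has finite centre, so $SL_n(\mathbb{C})/SU(n)$ is not a hermitian symmetric space, and $SL_n(\mathbb{C})$ is none of the real forms $SU(p,q)$, $SO^*(2n)$, $Sp_{2m}(\mathbb{R})$, $SO(n,2)$ for which hermitian flag domains exist. By Wolf's description of global holomorphic functions on flag domains (5.7 in \cite{W}) the bounded symmetric domain subordinate to $D_{\bar{0}}$ is then a point, so that $H^0(D_{\bar{0}},\mathcal{F})=\mathbb{C}$: every ${}^0PQ(n)$-flag domain is cycle-connected.

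Finally I would run the characterization theorem. Suppose $\mathfrak{l}_{\bar{1}}\neq 0$; then also $\mathfrak{l}_{\bar{1}\mathbb{R}}\neq 0$, being a real form of a nonzero complex space, so in the exact sequence $0\to\mathfrak{l}_{\bar{1}\mathbb{R}}\to\mathfrak{g}_{\bar{1}\mathbb{R}}\xrightarrow{\gamma}\mathfrak{g}_{\bar{1}\mathbb{R}}/\mathfrak{l}_{\bar{1}\mathbb{R}}\to 0$ the subspace $\mathrm{Im}\,\gamma^{*}=\{\varphi\in\mathfrak{g}_{\bar{1}\mathbb{R}}^{*}:\varphi|_{\mathfrak{l}_{\bar{1}\mathbb{R}}}=0\}$ is proper in $\mathfrak{g}_{\bar{1}\mathbb{R}}^{*}$ and therefore contains no non-trivial $G_{\bar{0}\mathbb{R}}$-submodule, the only candidate being $\mathfrak{g}_{\bar{1}\mathbb{R}}^{*}$ itself. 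The characterization theorem then gives $H^0(D,\mathcal{O})=\mathbb{C}$, which is the claim. I would also note that the exception $\mathfrak{l}_{\bar{1}}=0$ is genuine: in that case $\mathrm{Im}\,\gamma^{*}=\mathfrak{g}_{\bar{1}\mathbb{R}}^{*}$, and Lemma~\ref{OnFunctions} together with Lemma~5 of \cite{V} yields $H^0(D,\mathcal{O})\cong\bigwedge\mathfrak{g}_{\bar{1}\mathbb{R}}^{*}\neq\mathbb{C}$.

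The only step that needs genuine attention is the unconditional cycle-connectedness of $D_{\bar{0}}$ in the second paragraph, since that is what makes the characterization theorem applicable at all; once one knows that $SL_n(\mathbb{C})$ --- here appearing as the underlying real group of a complex group --- has no hermitian flag domains, the rest is the elementary module-theoretic observation above.
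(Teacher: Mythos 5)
Your proposal is correct and follows essentially the same route as the paper: the paper notes just before the theorem that $\mathfrak{g}_{\bar{1}\mathbb{R}}$ is an irreducible $G_{\bar{0}\mathbb{R}}$-module for ${}^0PQ(n)$ (because $\tau$ interchanges $\mathfrak{g}_1$ and $\mathfrak{g}_{-1}$), and its proof consists of the single observation that $G_{\bar{0}\mathbb{R}}=SL_n(\mathbb{C})$, viewed as a real form of $SL_n(\mathbb{C})\times SL_n(\mathbb{C})$, has only cycle-connected open orbits, after which the general characterization theorem applies. You have simply written out in full the two facts the paper leaves implicit (non-hermitian-ness of the complex-type real form, and the elementary submodule argument in $\mathrm{Im}\,\gamma^{*}$), so there is nothing to add.
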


\begin{proof}
 
Note that $G_{\bar{0}\mathbb{R}} = SL_n(\mathbb{C})$ viewed as a real form of $G_{\bar{0}} = SL_n(\mathbb{C}) \times SL_n(\mathbb{C})$.
According to the classical theory all open orbits of this real form are cycle-connected which is all that needed to be shown. 

\end{proof}

In the case $G_\mathbb{R} = US\Pi(n)$ once again $G_{\bar{0}\mathbb{R}} = SL_n(\mathbb{C})$ so all underlying open orbits are cycle-connected.
Furthermore in that case the odd roots $\alpha_i = x_i - y_i$ and $\beta_i = y_i - x_i$ are fixed points of the involution $\tau$.
Therefore, in order for $D$ to have maximal odd dimension, one needs to have $\mathfrak{g}^{\alpha_i}, \mathfrak{g}^{\beta_i} 
\subseteq \mathfrak{p}$ for all $1 \leq i \leq n$. As $\mathfrak{g}^{\alpha_i} \subseteq \mathfrak{g}_{-1}$ and $\mathfrak{g}^{\beta_i} \subseteq \mathfrak{g}_{1}$   
for all $1 \leq i \leq n$, $\mathfrak{l}$ never has zero intersection with any of the two irreducible components of 
$\mathfrak{g}_{\bar{1}\mathbb{R}}$. Therefore

\begin{satz}

Let $G = SL_{n \vert m}(\mathbb{C}), G_\mathbb{R} = US\Pi(n)$ and $G_\mathbb{R}/L_\mathbb{R} = D \subseteq Z = G/P$ 
an open orbit. Then $H^0(D, \mathcal{O}) = \mathbb{C}$. 

\end{satz}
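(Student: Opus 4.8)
The plan is to deduce the statement from the characterization of holomorphic superfunctions on cycle-connected flag domains proved just above (the real-form version of \cite{V}, Theorem 3), which says that $H^0(D,\mathcal{O}_D)=\mathbb{C}$ as soon as $\operatorname{Im}\gamma^{*}$ contains no non-trivial $G_{\bar{0}\mathbb{R}}$-submodule of $\mathfrak{g}_{\bar{1}\mathbb{R}}^{*}$. Since $\gamma\colon\mathfrak{g}_{\bar{1}\mathbb{R}}\to\mathfrak{g}_{\bar{1}\mathbb{R}}/\mathfrak{l}_{\bar{1}\mathbb{R}}$ is the quotient map, $\operatorname{Im}\gamma^{*}=\operatorname{Ann}(\mathfrak{l}_{\bar{1}\mathbb{R}})\subseteq\mathfrak{g}_{\bar{1}\mathbb{R}}^{*}$, so everything reduces to showing that no non-zero $G_{\bar{0}\mathbb{R}}$-submodule of $\mathfrak{g}_{\bar{1}\mathbb{R}}^{*}$ annihilates $\mathfrak{l}_{\bar{1}\mathbb{R}}$.

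First I would verify that the hypotheses of the characterization theorem are in force. Because $G_{\bar{0}\mathbb{R}}=SL_n(\mathbb{C})$ is a complex group regarded as a real form of $G_{\bar{0}}=SL_n(\mathbb{C})\times SL_n(\mathbb{C})$, the symmetric space $G_{\bar{0}\mathbb{R}}/K_{\bar{0}\mathbb{R}}$ admits no invariant complex structure, hence by the classical theory every flag domain of $G_{\bar{0}\mathbb{R}}$ is cycle-connected; thus $H^{0}(D_{\bar{0}},\mathcal{F})=\mathbb{C}$, and $D$ is a genuine flag domain of maximal odd dimension by the standing hypothesis. Next I would recall the module structure already noted in this subsection: the defining involution $\tau(X)=-\Pi(\bar X^{st})$ preserves the two $\mathbb{Z}$-graded components $\mathfrak{g}_{\pm 1}$ of $\mathfrak{g}_{\bar{1}}$, so that $\mathfrak{g}_{\bar{1}\mathbb{R}}=\mathfrak{g}_{1,\mathbb{R}}\oplus\mathfrak{g}_{-1,\mathbb{R}}$ with $\mathfrak{g}_{\pm 1,\mathbb{R}}=\mathfrak{g}_{\bar{1}\mathbb{R}}\cap\mathfrak{g}_{\pm 1}$ two irreducible $G_{\bar{0}\mathbb{R}}$-modules; dualizing, $\mathfrak{g}_{\bar{1}\mathbb{R}}^{*}=\mathfrak{g}_{1,\mathbb{R}}^{*}\oplus\mathfrak{g}_{-1,\mathbb{R}}^{*}$ is again a direct sum of two irreducibles. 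Finally, I would use the observation made above the statement: the odd roots $\alpha_i=x_i-y_i$ and $\beta_i=y_i-x_i$ are $\tau$-fixed, so maximal odd dimension forces $\mathfrak{g}^{\alpha_i},\mathfrak{g}^{\beta_i}\subseteq\mathfrak{p}\cap\tau\mathfrak{p}=\mathfrak{l}$; since $\mathfrak{g}^{\alpha_i}\subseteq\mathfrak{g}_{-1}$ and $\mathfrak{g}^{\beta_i}\subseteq\mathfrak{g}_{1}$, both $\mathfrak{l}_{\bar{1}\mathbb{R}}\cap\mathfrak{g}_{1,\mathbb{R}}$ and $\mathfrak{l}_{\bar{1}\mathbb{R}}\cap\mathfrak{g}_{-1,\mathbb{R}}$ are non-zero.

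The core step is then a short linear-algebra argument. Suppose $W\subseteq\operatorname{Ann}(\mathfrak{l}_{\bar{1}\mathbb{R}})$ is a non-zero $G_{\bar{0}\mathbb{R}}$-submodule. As the two summands of $\mathfrak{g}_{\bar{1}\mathbb{R}}^{*}$ are irreducible, at least one of the projections $\operatorname{pr}_{\pm}(W)$ is onto; say $\operatorname{pr}_{1}(W)=\mathfrak{g}_{1,\mathbb{R}}^{*}$. Pick $v_1\in\mathfrak{l}_{\bar{1}\mathbb{R}}\cap\mathfrak{g}_{1,\mathbb{R}}$ non-zero and $\phi_1\in\mathfrak{g}_{1,\mathbb{R}}^{*}$ with $\phi_1(v_1)\neq 0$; lift $\phi_1$ to an element $(\phi_1,\phi_{-1})\in W$. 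Evaluating it on $(v_1,0)\in\mathfrak{l}_{\bar{1}\mathbb{R}}$ gives $\phi_1(v_1)\neq 0$, contradicting $W\subseteq\operatorname{Ann}(\mathfrak{l}_{\bar{1}\mathbb{R}})$. The case $\operatorname{pr}_{-1}(W)=\mathfrak{g}_{-1,\mathbb{R}}^{*}$ is symmetric, using $\mathfrak{l}_{\bar{1}\mathbb{R}}\cap\mathfrak{g}_{-1,\mathbb{R}}\neq 0$. Hence no such $W$ exists, and the characterization theorem yields $H^{0}(D,\mathcal{O})=\mathbb{C}$.

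I expect the only genuinely delicate point to be the structural input rather than the final computation: one must be sure that, in the normalization of $\mathfrak{psl}_{n\vert n}(\mathbb{C})$ and of its even group used here, $\tau$ indeed \emph{preserves} (and does not interchange) $\mathfrak{g}_{1}$ and $\mathfrak{g}_{-1}$ and that the two resulting summands of $\mathfrak{g}_{\bar{1}\mathbb{R}}$ are irreducible $G_{\bar{0}\mathbb{R}}$-modules — this is exactly the feature separating $US\Pi(n)$ from ${}^{0}PQ(n)$. Given that (already recorded in this subsection), the argument is a routine instance of Vishnyakova's annihilator criterion, and the passage from "$\mathfrak{l}_{\bar 1\mathbb R}$ meets both irreducible components" to "$\operatorname{Im}\gamma^{*}$ contains no non-trivial submodule" is the only thing that needs to be spelled out carefully.
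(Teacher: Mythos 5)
Your proposal is correct and follows essentially the same route as the paper, whose ``proof'' is the paragraph immediately preceding the theorem: $G_{\bar{0}\mathbb{R}}=SL_n(\mathbb{C})$ forces cycle-connectedness, the $\tau$-fixed odd roots $\pm(x_i-y_i)$ force $\mathfrak{l}_{\bar{1}}$ to meet both irreducible components $\mathfrak{g}_{\pm 1}$, and Vishnyakova's criterion then gives $H^0(D,\mathcal{O})=\mathbb{C}$. The only difference is that you explicitly spell out, via the annihilator/projection argument, the step from ``$\mathfrak{l}_{\bar{1}\mathbb{R}}$ meets both components'' to ``$\operatorname{Im}\gamma^*$ contains no non-trivial submodule,'' which the paper leaves implicit.
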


\subsubsection{The real and quaternionic linear supergroups}

Recall from Chapter 3 the condition for a $G_\mathbb{R}$-orbit with open base $D_{\bar{0}}$ to have maximal odd dimension:
$D \subseteq Z(\delta)$ is open if and only if $\delta$ is even-symmetrizable.

\noindent The global odd functions on $D = G_\mathbb{R}/L_{\mathbb{R}}$ are given by the maximal $G_{\bar{0}\mathbb{R}}$-submodule of
$(\mathfrak{g}_{{\bar{1}}\mathbb{R}}/\mathfrak{l}_{\bar{1}\mathbb{R}})^*$. The odd part $\mathfrak{g}_{\bar{1}\mathbb{R}}$ is a completely reducible $G_{\bar{0}\mathbb{R}}$-module and that the two irreducible submodules are given by $\mathfrak{g}_{-1} = \mathrm{Hom}(V_0,V_1)$ and  $\mathfrak{g}_1 = \mathrm{Hom}(V_1,V_0)$. The respective decompositions into weight spaces are

\[  \mathrm{Hom}(V_0,V_1) = \bigoplus_{1 \leq i \leq n, 1 \leq j \leq m} \mathfrak{g}^{x_i - y_j} \]
\[  \mathrm{Hom}(V_1,V_0) = \bigoplus_{1 \leq i \leq n, 1 \leq j \leq m} \mathfrak{g}^{y_j - x_i} \]

\noindent and the action of $\tau$ on the weights is $\tau(\pm (x_i - y_j)) = \pm (x_{n-i+1} - y_{m-j+1})$.

$(\mathfrak{g}_{\bar{1}\mathbb{R}}/\mathfrak{l}_{\bar{1}\mathbb{R}})^*$ contains a non-trivial  $G_{\bar{0}\mathbb{R}}$-submodule if and
only if $\mathfrak{l}_{\bar{1}} \cap \mathfrak{g}_1 = 0$ or $\mathfrak{l}_{\bar{1}} \cap \mathfrak{g}_{-1} = 0$. 

Let $i \in \{1,\ldots,n\}$, $j \in \{1,\ldots,m\}$ and define $\overline{d_{\bar{0}}}(j) = \min \{ 1 \leq d_{\bar{0}} \leq n : d_{\bar{0}} \vert d_{\bar{1}} \in \delta \ \textnormal{and} \ j \leq d_{\bar{1}}  \}$ and $\overline{d_{\bar{1}}}(i) = \min \{ 1 \leq d_{\bar{1}} \leq m : d_{\bar{0}} \vert d_{\bar{1}} \in \delta \ \textnormal{and} \ i \leq d_{\bar{0}}  \}$.

\begin{lemma}

The conditions for the existence of global holomorphic functions on $D$ are characterized as follows:

\begin{enumerate}
\item $\mathfrak{l}_{\bar{1}\mathbb{R}} \cap \mathfrak{g}_{-1} = 0$ if and only if $\overline{d_{\bar{0}}}(j) \leq m - \overline{d_{\bar{0}}}(m-j+1)$ for all $1 \leq j \leq m$(condition I)
\item $\mathfrak{l}_{\bar{1}\mathbb{R}} \cap \mathfrak{g}_{1} = 0$ if and only if $\overline{d_{\bar{1}}}(i) \leq n - \overline{d_{\bar{1}}}(n-i+1)$ for all $1 \leq i \leq n$(condition II)
\end{enumerate}

\end{lemma}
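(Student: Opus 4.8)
The plan is to reduce both equivalences to a purely root-combinatorial statement and then read off conditions~I and~II from the flag data. The starting observation is that, exactly as in the codimension computation, the stabilizer satisfies $(\mathfrak{l}_\mathbb{R})^\mathbb{C} = \mathfrak{p} \cap \tau\mathfrak{p}$, so in particular $\mathfrak{l}_{\bar{1}} = \mathfrak{p}_{\bar{1}} \cap \tau\mathfrak{p}_{\bar{1}}$. As in the treatment of $SL_{n \vert m}(\mathbb{R})$ given above, I would pass to the conjugate real form for which $\tau$ acts on $\Sigma(\mathfrak{g},\mathfrak{h})$ by $\tau(x_i) = x_{n-i+1}$, $\tau(y_j) = y_{m-j+1}$, and take $\mathfrak{p}$ to be the stabilizer of the standard split flag of type $\delta$, for which the membership criteria $x_i - y_j \in \Phi$ and $y_j - x_i \in \Phi$ are the explicit inequalities recalled earlier. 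Because $\mathfrak{h}$ is $\tau$-stable and the odd root spaces are one-dimensional, $\mathfrak{l}_{\bar{1}}$ is a sum of root spaces; since the reflection permutes each of the root families $\{x_i - y_j\}$ and $\{y_j - x_i\}$ within itself, the subspaces $\mathfrak{g}_{-1}$ and $\mathfrak{g}_1$ are $\tau$-stable, whence $\mathfrak{l}_{\bar{1}\mathbb{R}} \cap \mathfrak{g}_{-1} = 0 \Leftrightarrow \mathfrak{l}_{\bar{1}} \cap \mathfrak{g}_{-1} = 0$ and likewise for $\mathfrak{g}_1$. Thus the lemma becomes: determine exactly when no root space of $\mathfrak{g}_{-1}$ (resp.\ $\mathfrak{g}_1$) lies in $\mathfrak{p} \cap \tau\mathfrak{p}$.

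Then I would carry out the two bookkeeping computations. For a root $\alpha = x_i - y_j$ of $\mathfrak{g}_{-1}$, membership $\mathfrak{g}^\alpha \subseteq \mathfrak{p}$ is the flag condition $i \le \overline{d_{\bar{0}}}(j)$, while $\mathfrak{g}^\alpha \subseteq \tau\mathfrak{p}$ is equivalent to $\mathfrak{g}^{\tau\alpha} \subseteq \mathfrak{p}$, i.e.\ to the flag condition for $\tau\alpha = x_{n-i+1} - y_{m-j+1}$, which is a bound of the form $n - i + 1 \le \overline{d_{\bar{0}}}(m-j+1)$, i.e.\ a lower bound on $i$. Hence some root space of $\mathfrak{g}_{-1}$ sits in $\mathfrak{p} \cap \tau\mathfrak{p}$ if and only if for some $j$ the relevant upper and lower bounds on $i$ are compatible; negating this compatibility for every $j$ is precisely condition~I. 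Repeating verbatim with $\alpha = y_j - x_i \in \mathfrak{g}_1$, using $y_j - x_i \in \Phi \Leftrightarrow j \le \overline{d_{\bar{1}}}(i)$ and $\tau\alpha = y_{m-j+1} - x_{n-i+1}$, yields condition~II. Combining with the fact recalled just before the statement --- that $(\mathfrak{g}_{\bar{1}\mathbb{R}}/\mathfrak{l}_{\bar{1}\mathbb{R}})^*$ carries a nontrivial $G_{\bar{0}\mathbb{R}}$-submodule exactly when one of these two intersections vanishes --- one recovers the announced criterion for the existence of global odd functions.

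The step I expect to be the genuine obstacle is the index bookkeeping in the second paragraph: the parabolic condition carves out an initial segment of indices while the $\tau$-twisted condition, after the reflection $i \mapsto n-i+1$ and the replacement $j \mapsto m-j+1$, carves out a final segment, and the vanishing of the intersection is exactly the disjointness of these two segments for every $j$; getting the shift by one and the reflected argument into the right slot --- and keeping straight which of the two root families spans $\mathfrak{g}_{-1}$ versus $\mathfrak{g}_1$, since interchanging them swaps the roles of $\overline{d_{\bar{0}}}$ and $\overline{d_{\bar{1}}}$ --- is where an error would most naturally creep in. Everything else is routine, and the argument is uniform over the real forms $SL_{n \vert m}(\mathbb{R})$ and $SL_{k \vert l}(\mathbb{H})$ since they induce the same reflection on $\Sigma$.
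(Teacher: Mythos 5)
Your proposal follows essentially the same route as the paper's own proof: reduce the vanishing of $\mathfrak{l}_{\bar{1}\mathbb{R}} \cap \mathfrak{g}_{\mp 1}$ to the statement that no odd root $x_i - y_j$ (resp.\ $y_j - x_i$) lies in $\Phi \cap \tau\Phi$, translate membership in $\Phi$ and in $\tau\Phi$ into an upper and a lower bound on the index $i$ via the flag data, and observe that disjointness of these two index ranges for every $j$ is exactly condition I (and symmetrically for condition II). Your bookkeeping yields $\overline{d_{\bar{0}}}(j) \leq n - \overline{d_{\bar{0}}}(m-j+1)$, which is also what the paper's proof derives, so the $m$ appearing in the printed statement of condition I should be read as $n$.
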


\begin{proof}

By virtue of the above decomposition $(\mathfrak{p} \cap \tau\mathfrak{p}) \cap \mathfrak{g}_{-1} = 0$ if and only if $x_i - y_j \not\in \Phi \cap \tau\Phi \ \forall 1 \leq i \leq n, 1 \leq j \leq m$. For
$G_\mathbb{R} = SL_{n \vert m}(\mathbb{R})$ one has $\tau(x_i - y_j) = x_{n-i+1} - y_{m-j+1}$.

Let $e_1, \ldots, e_n,f_1, \ldots, f_m$ be a basis of $\mathbb{C}^{n \vert m}$  such that  the subspaces occuring in a flag fixed by $\mathfrak{p}$ are spanned by this basis. Then $x_i - y_j \in \Phi$
if and only if there is some $X \in \mathfrak{p}$ such that $X(f_j) = e_i$. Analogously $x_{n-i+1} - y_{m-j+1} \in \Phi$ if and only if there is a $Y \in \mathfrak{p}$ such that $Y(f_{m-j+1}) = e_{n-i+1}$.

Then $x_i - y_j \in \Phi$ if and only if $d_{\bar{0}}(j) \geq i$ and analogously $x_{n-i+1} - y_{m-j+1} \in \Phi$ if and only if $d_{\bar{0}}(m-j+1) \geq n-i+1$.

The lemma claims that this is equivalent to $\overline{d_{\bar{0}}}(j) < n - \overline{d_{\bar{0}}} + 1$ for all $1 \leq j \leq m$.

Suppose this is the case. Then $\overline{d_{\bar{0}}}(j) + \overline{d_{\bar{0}}}(m-j+1) < n+1$.
Assume furthermore that $x_i - y_j \in \Phi \cap \tau\Phi$ for some $1 \leq i \leq n$ and $1 \leq j \leq m$.
Then $\overline{d_{\bar{0}}}(j) \geq i$ and $\overline{d_{\bar{0}}}(m-j+1) \geq n-i+1$ and therefore $\overline{d_{\bar{0}}}(j) + \overline{d_{\bar{0}}}(m-j+1) \geq n+1$, which is a contradiction.

Conversely suppose that there is $1 \leq j \leq m$ such that $\overline{d_{\bar{0}}}(j) \geq n - \overline{d_{\bar{0}}}(m-j+1) + 1$. Choose $i = \overline{d_{\bar{0}}}(j)$. Then $x_i-y_j \in \Phi$ by construction. Moreover $\tau(x_i - y_j) = x_{n-i+1} - y_{m-j+1}$ and $n-i+1 = n - \overline{d_{\bar{0}}}(j) +1 \leq \overline{d_{\bar{0}}}(m-j+1)$. This implies
$x_{n-i+1} - y_{m-j+1} \in \Phi$ and therefore $x_i - y_j \in \Phi \cap \tau\Phi$.

The proof of the second part is completely analogous. 

\end{proof} 

If conditions I and II are both fulfilled then $D$ is automatically measurable, however the converse need not always be true. 
Furthermore both condition I and condition II require either $m$, or $n$ to be even.

The final result is therefore the following:

\begin{satz}
Let $G = SL_{n \vert m}(\mathbb{C}), G_\mathbb{R} = SL_{n \vert m}(\mathbb{R})$ or $G_\mathbb{R} = SL_{k \vert l}(\mathbb{H})$ and $Z(\delta)$ a $G$-flag manifold. then the following is true:
\begin{enumerate}
\item If neither condition I nor condition II is fulfilled, then $H^0(D, \mathcal{O}) \cong \mathbb{C}$
\item If condition I is satisfied then $H^0(D, \mathcal{O}) \cong \bigwedge \mathbb{C}^{\frac{1}{2}nm}$
\item If condition II is satisfied then $H^0(D, \mathcal{O}) \cong \bigwedge \mathbb{C}^{\frac{1}{2}nm}$
\item If conditions I and II are both satisfied then $H^0(D, \mathcal{O}) \cong \bigwedge \mathbb{C}^{nm}$
\end{enumerate} 
\end{satz}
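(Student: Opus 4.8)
The plan is to derive the theorem from the characterization of $H^0(D,\mathcal{O})$ for cycle-connected flag domains with completely reducible odd part (the flag-domain analogue of Proposition~2 of \cite{V} proved above), combined with the preceding Lemma, which has already carried out the combinatorial translation into conditions I and II. So the present proof should amount to a bookkeeping assembly of those two results.

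First I would record the $G_{\bar{0}\mathbb{R}}$-module structure of $\mathfrak{g}_{\bar{1}\mathbb{R}}$. Since $\mathfrak{g}$ is of type~I we have the $\mathbb{Z}$-grading $\mathfrak{g}_{\bar{1}} = \mathfrak{g}_1\oplus\mathfrak{g}_{-1}$, and the action $\tau(\pm(x_i-y_j)) = \pm(x_{n-i+1}-y_{m-j+1})$ on the odd roots shows that $\tau$ preserves each of $\mathfrak{g}_{\pm 1}$; hence $\mathfrak{m}_{\pm 1} := \mathfrak{g}_{\bar{1}\mathbb{R}}\cap\mathfrak{g}_{\pm 1}$ is a real form of $\mathfrak{g}_{\pm 1}$ and $\mathfrak{g}_{\bar{1}\mathbb{R}} = \mathfrak{m}_1\oplus\mathfrak{m}_{-1}$, a sum of two irreducible $G_{\bar{0}\mathbb{R}}$-modules (for $SL_{n\vert m}(\mathbb{R})$ these are $\mathbb{R}^n\otimes(\mathbb{R}^m)^*$ and $\mathbb{R}^m\otimes(\mathbb{R}^n)^*$, and the quaternionic case is analogous). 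In particular $\mathfrak{g}_{\bar{1}\mathbb{R}}$ is completely reducible, and since this section treats only the cycle-connected case, $H^0(D_{\bar{0}},\mathcal{F}) = \mathbb{C}$; so the characterization theorem applies and yields $H^0(D,\mathcal{O})\cong\bigwedge W$, where $W$ is the maximal $G_{\bar{0}\mathbb{R}}$-submodule of $\mathrm{Im}\,\gamma^* = \mathrm{Ann}(\mathfrak{l}_{\bar{1}\mathbb{R}})\subseteq\mathfrak{g}_{\bar{1}\mathbb{R}}^*$. The side hypothesis $\delta(\mathfrak{l}_{\bar{1}\mathbb{R}})\subseteq Y$ is automatic here, since $W\subseteq\mathrm{Ann}(\mathfrak{l}_{\bar{1}\mathbb{R}})$ already says $\mathfrak{l}_{\bar{1}\mathbb{R}}$ lies in the annihilator of $W$.

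Next I would determine $W$. As $\mathfrak{g}_{\bar{1}\mathbb{R}}^* = \mathfrak{m}_1^*\oplus\mathfrak{m}_{-1}^*$ with both summands irreducible, $W\in\{0,\ \mathfrak{m}_1^*,\ \mathfrak{m}_{-1}^*,\ \mathfrak{m}_1^*\oplus\mathfrak{m}_{-1}^*\}$. Because $\mathfrak{l}_{\bar{1}\mathbb{R}}^{\mathbb{C}} = \mathfrak{p}_{\bar{1}}\cap\tau\mathfrak{p}_{\bar{1}}$ is a sum of root spaces and the $\mathbb{Z}$-grading is $\tau$-stable, $\mathfrak{l}_{\bar{1}\mathbb{R}} = (\mathfrak{l}_{\bar{1}\mathbb{R}}\cap\mathfrak{m}_1)\oplus(\mathfrak{l}_{\bar{1}\mathbb{R}}\cap\mathfrak{m}_{-1})$; hence $\mathfrak{m}_{-1}^*\subseteq\mathrm{Ann}(\mathfrak{l}_{\bar{1}\mathbb{R}})$ if and only if $\mathfrak{l}_{\bar{1}\mathbb{R}}\cap\mathfrak{m}_{-1} = 0$, and $\mathfrak{m}_1^*\subseteq\mathrm{Ann}(\mathfrak{l}_{\bar{1}\mathbb{R}})$ if and only if $\mathfrak{l}_{\bar{1}\mathbb{R}}\cap\mathfrak{m}_1 = 0$. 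By the preceding Lemma the first vanishing is exactly condition~I and the second is exactly condition~II. Thus $W = 0$ when neither holds, $W = \mathfrak{m}_{-1}^*$ under condition~I only, $W = \mathfrak{m}_1^*$ under condition~II only, and $W = \mathfrak{m}_1^*\oplus\mathfrak{m}_{-1}^*$ when both hold; in particular cases~2 and~3 come out symmetric, as asserted.

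It remains to count dimensions. Using the maximal-odd-dimension hypothesis (equivalently, that $\delta$ is even-symmetrizable) one checks directly from the flag-type description that under condition~I (resp.\ condition~II) one has $\dim_{\mathbb{C}}(\mathfrak{p}\cap\mathfrak{g}_{-1}) = \frac{1}{2}nm$ (resp.\ $\dim_{\mathbb{C}}(\mathfrak{p}\cap\mathfrak{g}_{1}) = \frac{1}{2}nm$), so that the surviving component, identified via the canonical isomorphism $(\mathfrak{g}_{\bar{1}\mathbb{R}}/\mathfrak{l}_{\bar{1}\mathbb{R}})^*\cong(\mathfrak{g}_{\bar{1}}/\mathfrak{p}_{\bar{1}})^*$ with $(\mathfrak{g}_{-1}/(\mathfrak{p}\cap\mathfrak{g}_{-1}))^*$ (resp.\ $(\mathfrak{g}_{1}/(\mathfrak{p}\cap\mathfrak{g}_{1}))^*$), has dimension $\frac{1}{2}nm$. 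Feeding this into $H^0(D,\mathcal{O})\cong\bigwedge W$ gives $\mathbb{C}$, $\bigwedge\mathbb{C}^{\frac{1}{2}nm}$, $\bigwedge\mathbb{C}^{\frac{1}{2}nm}$, $\bigwedge\mathbb{C}^{nm}$ in the four cases. I expect this last step — pinning down the factor $\frac{1}{2}$, i.e.\ correctly passing between the complex flag-manifold picture and the real-form picture — to be the only genuinely delicate point; the combinatorial core has already been settled by the preceding Lemma, so everything else is formal.
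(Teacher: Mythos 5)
Your proposal reproduces the paper's own route: in the source the theorem carries no separate proof at all (it is introduced with ``The final result is therefore the following''), being treated as an immediate assembly of the preceding Lemma with the flag-domain versions of Vishnyakova's results, and that assembly --- the $\tau$-stable decomposition $\mathfrak{g}_{\bar{1}\mathbb{R}} = \mathfrak{m}_1 \oplus \mathfrak{m}_{-1}$ into two irreducibles, the identification of conditions I and II with the vanishing of $\mathfrak{l}_{\bar{1}\mathbb{R}} \cap \mathfrak{m}_{\mp 1}$, and the count $\dim_{\mathbb{C}}(\mathfrak{p}\cap\mathfrak{g}_{\mp 1}) = \tfrac{1}{2}nm$ forced by maximal odd dimension --- is exactly what you write out. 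Your observation that the side hypothesis $\delta(\mathfrak{l}_{\bar{1}\mathbb{R}})\subseteq Y$ is automatic is also correct.

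However, the step you defer with ``one checks directly'' and flag as ``the only genuinely delicate point'' is not a formality; it is where the argument is actually incomplete, and not only because of the factor $\tfrac{1}{2}$. The submodule $W = \mathfrak{m}_{\mp 1}^*$ is only a \emph{real} $G_{\bar{0}\mathbb{R}}$-submodule of $\mathfrak{g}_{\bar{1}\mathbb{R}}^*$; it is not a complex $G_{\bar{0}\mathbb{R}}$-invariant subspace of $\mathfrak{g}_{\bar{1}}^*$. If one runs the trivial-bundle argument holomorphically --- embed $(\mathrm{gr}\,\mathcal{O}_D)_1$ into the holomorphically trivial bundle $D_{\bar{0}} \times \mathfrak{g}_{\bar{1}}^*$ and use $H^0(D_{\bar{0}},\mathcal{F}) = \mathbb{C}$ to force sections to be constant --- the space of holomorphic sections comes out as $\bigcap_{z \in D_{\bar{0}}} \mathrm{Ann}(\mathfrak{p}_{z,\bar{1}})$, which is a \emph{complex} $G_{\bar{0}\mathbb{R}}$-invariant subspace of $\mathfrak{g}_{\bar{1}}^*$, hence $G_{\bar{0}}$-invariant by Zariski density of $G_{\bar{0}\mathbb{R}}$ in $G_{\bar{0}}$, hence zero under the standing hypothesis $H^0(Z,\mathcal{O}) = \mathbb{C}$ (which forces $\mathfrak{p}\cap\mathfrak{g}_{\pm 1} \neq 0$ for both signs). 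So a merely real invariant subspace produces, a priori, only real-analytic sections of the underlying real bundle, not holomorphic sections of $\mathcal{E}$, and your proof (like the Lemma of the paper it leans on) does not explain how the gap between the two is bridged. You should either supply an argument that the constant sections attached to $\mathfrak{m}_{\mp 1}^*$ are holomorphic in the complex structure of $(\mathfrak{g}_{\bar{1}}/\mathfrak{p}_{\bar{1}})^*$, or confront the apparent contradiction with the intersection computation above; as it stands, the decisive step of the proof is missing.
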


\subsubsection{Final result for Type A}

Summing up these results the global holomorphic superfunctions on \\ cycle-connected flag domains of Type $A$ are given as follows:

\begin{satz}

Let $G = SL_{n \vert m}(\mathbb{C}), G_\mathbb{R}$ a real form, $Z = Z(\delta) = G/P$ a flag supermanifold and $D \subseteq Z$ a $G_\mathbb{R}$-flag domain with $H^0(D_{\bar{0}}, \mathcal{F}) = \mathbb{C}$.

\begin{enumerate}
\item If $G_\mathbb{R}$ is any real form and $\dim L_{\bar{1}} = \dim (\mathfrak{p} \cap \tau\mathfrak{p})_{\bar{1}} = 0$, then $H^0(D, \mathcal{O}) \cong \bigwedge \mathbb{C}^{nm}$
\item If $G_\mathbb{R} = SU(p,q \vert r,s)$ or $m = n$ and $G_\mathbb{R} = \ ^0PQ(n)$, and $n \vert 0 \in \delta$ or $0 \vert m \in \delta$, then $H^0(D,\mathcal{O}) \cong \mathbb{C}^{nm}$
and these global odd functions are restrictions of the global odd functions on $Z$.
\item If $G_\mathbb{R} = SL_{n \vert m}(\mathbb{R})$ or $SL_{k \vert l}(\mathbb{H})$, then:
\begin{enumerate}
\item If either condition I, or condition II is satisfied, then $H^0(D,\mathcal{O}) \cong \bigwedge \mathbb{C}^{\frac{1}{2}nm}$.
\item If both condition I and condition II are satisfied, then $H^0(D, \mathcal{O}) \cong \bigwedge \mathbb{C}^{nm}$. 
\end{enumerate}
\item In all other cases, $H^0(D, \mathcal{O}) = \mathbb{C}$.
\end{enumerate} 

\end{satz}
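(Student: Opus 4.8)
The plan is to assemble the final classification for Type $A$ by combining the four preceding special results with the two general structural observations already in hand. First I would recall the ambient setup: $G = SL_{n\vert m}(\mathbb{C})$ (or $PSL_{n\vert n}(\mathbb{C})$), $\mathfrak{g}$ is of type I, so $\mathfrak{g}_{\bar1} = \mathfrak{g}_1 \oplus \mathfrak{g}_{-1}$ as a $G_{\bar0}$-module, with $\mathfrak{g}_1 = \mathrm{Hom}(V_1,V_0)$ and $\mathfrak{g}_{-1} = \mathrm{Hom}(V_0,V_1)$. By the general theorem above, the global odd functions on $D$ are $\bigwedge W$, where $W \subseteq (\mathfrak{g}_{\bar1\mathbb{R}}/\mathfrak{l}_{\bar1\mathbb{R}})^*$ is the maximal $G_{\bar0\mathbb{R}}$-submodule; so everything reduces to identifying which $G_{\bar0\mathbb{R}}$-irreducible pieces of $(\mathfrak{g}_{\bar1\mathbb{R}}/\mathfrak{l}_{\bar1\mathbb{R}})^*$ survive. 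The dual $\mathfrak{g}_{\bar1\mathbb{R}}^*$ has at most two $G_{\bar0\mathbb{R}}$-irreducible constituents (dual to $\mathfrak{g}_1\cap\mathfrak{g}_{\bar1\mathbb{R}}$ and $\mathfrak{g}_{-1}\cap\mathfrak{g}_{\bar1\mathbb{R}}$), and a constituent contributes to $W$ precisely when the corresponding intersection $\mathfrak{l}_{\bar1}\cap\mathfrak{g}_{\pm1}$ vanishes.

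Next I would dispose of case 1: if $\dim L_{\bar1} = \dim(\mathfrak{p}\cap\tau\mathfrak{p})_{\bar1} = 0$, then $\mathfrak{l}_{\bar1} = 0$, hence $(\mathfrak{g}_{\bar1\mathbb{R}}/\mathfrak{l}_{\bar1\mathbb{R}})^* = \mathfrak{g}_{\bar1\mathbb{R}}^*$ is already a $G_{\bar0\mathbb{R}}$-module, so $W = \mathfrak{g}_{\bar1\mathbb{R}}^*$ and $H^0(D,\mathcal{O}) \cong \bigwedge \mathbb{C}^{nm}$ — this is just part 2 of the general structural theorem, valid for every real form. For case 2 I would invoke the two preceding propositions for $G_\mathbb{R} = SU(p,q\vert r,s)$ and $G_\mathbb{R} = {}^0PQ(n)$: in both, $\tau$ acts trivially on $\Sigma$, so $\mathfrak{g}_{\bar1\mathbb{R}}$ is $G_{\bar0\mathbb{R}}$-irreducible, and the only way to get odd functions without $\mathfrak{l}_{\bar1}=0$ is to have non-constant holomorphic functions on $D_{\bar0}$; since we assumed $H^0(D_{\bar0},\mathcal{F})=\mathbb{C}$, the extra functions can only come from $\mathfrak{l}_{\bar1}=0$, which is case 1 — \emph{unless} the functions are restrictions of global odd functions on $Z$ itself, which by Vishnyakova's Theorem occur exactly when $n\vert0\in\delta$ or $0\vert m\in\delta$, giving $H^0(D,\mathcal{O})\cong\mathbb{C}^{nm}$ (here the wedge collapses because these come from the fixed ambient module $\mathbb{C}^{nm}$). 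For case 3, $G_\mathbb{R} = SL_{n\vert m}(\mathbb{R})$ or $SL_{k\vert l}(\mathbb{H})$, I would quote the lemma computing $\mathfrak{l}_{\bar1\mathbb{R}}\cap\mathfrak{g}_{\pm1}$ in terms of conditions I and II ($\overline{d_{\bar0}}(j)\le m-\overline{d_{\bar0}}(m-j+1)$, resp. $\overline{d_{\bar1}}(i)\le n-\overline{d_{\bar1}}(n-i+1)$): each satisfied condition contributes one $\frac12 nm$-dimensional irreducible summand, so $W$ has dimension $\frac12 nm$ if exactly one holds and $nm$ if both hold, whence the stated wedge powers.

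Finally, case 4 (all other situations) follows by elimination: if $G_\mathbb{R}$ is one of the covered real forms, none of the conditions of cases 1–3 is met, and the general theorem forces $W=0$, i.e. $H^0(D,\mathcal{O})=\mathbb{C}$; for $G_\mathbb{R} = US\Pi(n)$ the preceding proposition already gave $H^0(D,\mathcal{O})=\mathbb{C}$ directly (the roots $x_i-y_i,\,y_i-x_i$ are $\tau$-fixed, so maximal odd dimension forces $\mathfrak{l}$ to meet both $\mathfrak{g}_1$ and $\mathfrak{g}_{-1}$ nontrivially). The only genuine subtlety — and the step I expect to require the most care — is the bookkeeping in case 2: one must check that when $n\vert0\in\delta$ or $0\vert m\in\delta$ the odd functions that descend from $Z$ really do restrict injectively to $D$ and that their span is all of $H^0(D,\mathcal{O})$ (no extra functions appear), which uses that the $\mathrm{gr}$-bound $H^0(D,\mathrm{gr}\,\mathcal{O}) = \bigwedge W$ is sharp together with Lemma \ref{Lemma6V}; and that in this irreducible-$\mathfrak{g}_{\bar1}$ situation the answer is the plain exterior power $\mathbb{C}^{nm}$ matching the one on $Z$ rather than a proper submodule. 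Everything else is assembling citations and is routine.
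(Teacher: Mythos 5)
Your proposal is correct and follows essentially the same route as the paper: the theorem is a summary statement, and the paper's (implicit) proof is exactly the assembly you describe --- the general structural theorem reducing everything to the maximal $G_{\bar{0}\mathbb{R}}$-submodule $W$ of $(\mathfrak{g}_{\bar{1}\mathbb{R}}/\mathfrak{l}_{\bar{1}\mathbb{R}})^*$, the irreducibility of $\mathfrak{g}_{\bar{1}\mathbb{R}}$ for $SU(p,q\vert r,s)$ and $^0PQ(n)$, the two-component analysis via conditions I and II for the real and quaternionic forms, the $\tau$-fixed odd roots argument for $US\Pi(n)$, and elimination for the rest.

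One correction to a side remark: in case 2 your parenthetical claim that ``the wedge collapses'' is not right. The global odd functions on $Z$ form the full exterior algebra $\bigwedge \mathbb{C}^{nm}$ by Vishnyakova's theorem, and restriction to the dense open $D$ is injective and multiplicative, so $H^0(D,\mathcal{O})$ still contains (and by the $\mathrm{gr}$ upper bound equals) $\bigwedge \mathbb{C}^{nm}$; the ``$\mathbb{C}^{nm}$'' in the statement is a typo for $\bigwedge\mathbb{C}^{nm}$, as the paper's own summary table at the end of the chapter confirms. Note also that for these real forms the hypothesis $n\vert 0 \in \delta$ or $0\vert m \in \delta$ forces $\mathfrak{p}_{\bar{1}}$ into one irreducible component of $\mathfrak{g}_{\bar{1}}$ while $\tau$ swaps the two components, so $\mathfrak{l}_{\bar{1}}=0$ automatically and case 2 is in fact a sub-case of case 1 --- which is only consistent with the corrected reading.
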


\subsection{Types B, C and D}

In this section $G = \mathrm{Osp}(n \vert 2m)$. It has two possible classes of real forms, i.e. $G_\mathbb{R} = \mathrm{Osp}_\mathbb{R}(p,q \vert 2m)$ or $G_\mathbb{R} = \mathrm{Osp}^*(n \vert 2r,2s)$. The latter is only possible if $n$ is even. Here $D_{\bar{0}} = D_1 \times D_2$,
where $D_1$ is a $SO(p,q)$- or $SO^*(2k)$-flag domains, $D_2$ is a $\mathrm{Sp}_\mathbb{R}(2m)$- or $\mathrm{Sp}(2r,2s)$-flag domain, and at most one of the factors is hermitian holomorphic. Moreover the action of the antiholomorphic involution $\tau$ on the simple roots is trivial
unless $G_\mathbb{R} = \mathrm{Osp}_\mathbb{R}(2p+1,2q+1 \vert 2m)$. 

Furthermore $\mathfrak{g}_{\bar{1}}$ is an irreducible $G_{\bar{0}}$-module unless $n = 2$. In that case $\mathfrak{g}_{\bar{1}\mathbb{R}}$ will still be an irreducible $G_{\bar{0}\mathbb{R}}$-module unless $G_\mathbb{R} = \mathrm{Osp}_\mathbb{R}(1,1 \vert 2m)$.

\begin{satz}
 
Let $G = \mathrm{Osp}(n \vert 2m)$ with $n \neq 2$ and $D \subseteq Z(\delta) = G/P$ an open real supergroup orbit. Assume $H^0(D,\mathcal{F}) = \mathbb{C}$ and $\mathfrak{l}_1 \neq 0$. Then $H^0(D,\mathcal{O}) = \mathbb{C}$. 

\end{satz}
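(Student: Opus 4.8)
The plan is to reduce the statement to a purely representation-theoretic fact about $G_{\bar 0\mathbb{R}}$-submodules, exactly as in the cycle-connected type A results just proved. First I would invoke the general theorem of Vishnyakova-type proved above: since $H^0(D,\mathcal{F})=\mathbb{C}$ and $\mathfrak{g}_{\bar 1\mathbb{R}}$ is a completely reducible (here in fact irreducible) $G_{\bar 0\mathbb{R}}$-module, the space $H^0(D,\mathcal{O})$ is $\bigwedge W$ where $W\subseteq\operatorname{Im}\gamma^*$ is the maximal $G_{\bar 0\mathbb{R}}$-submodule of $(\mathfrak{g}_{\bar 1\mathbb{R}}/\mathfrak{l}_{\bar 1\mathbb{R}})^*$. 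So the claim $H^0(D,\mathcal{O})=\mathbb{C}$ is equivalent to $W=0$, i.e.\ to the non-existence of a non-trivial $G_{\bar 0\mathbb{R}}$-submodule of $(\mathfrak{g}_{\bar 1\mathbb{R}}/\mathfrak{l}_{\bar 1\mathbb{R}})^*$.

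Next I would use irreducibility of $\mathfrak{g}_{\bar 1}$ as a $G_{\bar 0}$-module, valid for $\mathrm{Osp}(n\vert 2m)$ with $n\neq 2$, together with the observation recorded above that for $n\neq 2$ the odd part $\mathfrak{g}_{\bar 1\mathbb{R}}$ remains an irreducible $G_{\bar 0\mathbb{R}}$-module in all cases (the only exception, $\mathrm{Osp}_\mathbb{R}(1,1\vert 2m)$, having $n=2$). Since $\mathfrak{g}_{\bar 1\mathbb{R}}$ and hence $\mathfrak{g}_{\bar 1\mathbb{R}}^*$ is irreducible, any non-zero $G_{\bar 0\mathbb{R}}$-submodule of $\mathfrak{g}_{\bar 1\mathbb{R}}^*$ must be all of $\mathfrak{g}_{\bar 1\mathbb{R}}^*$. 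Under $\gamma^*\colon(\mathfrak{g}_{\bar 1\mathbb{R}}/\mathfrak{l}_{\bar 1\mathbb{R}})^*\hookrightarrow\mathfrak{g}_{\bar 1\mathbb{R}}^*$ the image is the annihilator of $\mathfrak{l}_{\bar 1\mathbb{R}}$, so $\operatorname{Im}\gamma^*=\mathfrak{g}_{\bar 1\mathbb{R}}^*$ precisely when $\mathfrak{l}_{\bar 1}=0$. By hypothesis $\mathfrak{l}_1\neq 0$, so $\mathfrak{l}_{\bar 1}\neq 0$, hence $\operatorname{Im}\gamma^*$ is a proper subspace of the irreducible module $\mathfrak{g}_{\bar 1\mathbb{R}}^*$ and therefore contains no non-zero $G_{\bar 0\mathbb{R}}$-submodule at all. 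Thus $W=0$ and $H^0(D,\mathcal{O})=\mathbb{C}$.

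The step that requires the most care is the claim that $\mathfrak{g}_{\bar 1\mathbb{R}}$ is irreducible as a real $G_{\bar 0\mathbb{R}}$-module whenever $n\neq 2$; this is the genuine content, since passing from the complex irreducibility of $\mathfrak{g}_{\bar 1}$ over $G_{\bar 0}$ to the real group $G_{\bar 0\mathbb{R}}$ is not automatic in general. Here it follows from the fact that for $\mathrm{Osp}$ the module $\mathfrak{g}_{\bar 1}$ is the tensor product of the standard orthogonal and standard symplectic representations, which is a single irreducible $G_{\bar 0}$-module admitting no $\mathbb{Z}$-grading compatible with a type I decomposition, and the real form $G_{\bar 0\mathbb{R}}$ of $G_{\bar 0}=SO_n(\mathbb{C})\times Sp_{2m}(\mathbb{C})$ still acts irreducibly on it — this is exactly the assertion made in the paragraph preceding the theorem, which I am entitled to assume. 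Once irreducibility is in hand, the remainder is the short dimension-count argument above: $\mathfrak{l}_1\neq 0$ forces $\operatorname{Im}\gamma^*$ to be proper, and a proper subspace of an irreducible module cannot contain a non-trivial submodule, so the maximal submodule $W$ vanishes and the conclusion follows from the general characterization theorem.
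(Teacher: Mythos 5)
Your argument is correct and follows essentially the same route as the paper: invoke the flag-domain version of Vishnyakova's characterization theorem to reduce everything to the (non-)existence of a non-trivial $G_{\bar 0\mathbb{R}}$-submodule inside $\operatorname{Im}\gamma^*\subseteq\mathfrak{g}_{\bar 1\mathbb{R}}^*$, and then use the irreducibility of $\mathfrak{g}_{\bar 1\mathbb{R}}$ for $n\neq 2$ to conclude that such a submodule would force $\mathfrak{l}_{\bar 1}=0$, contradicting the hypothesis. The paper's proof is just a compressed version of exactly this reasoning, so no further comparison is needed.
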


\begin{proof}
 In the given case $H^0(Z,\mathcal{O}) = \mathbb{C}$ as was proven in \cite{V}. Moreover ,as $n \neq 2$, $\mathfrak{g}_{\bar{1}\mathbb{R}}$
is always an irreucible $G_{\bar{0}\mathbb{R}}$-module and therefore nontrivial $G_{\bar{0}\mathbb{R}}$-modules in 
$(\mathfrak{g}_{\bar{1}\mathbb{R}}/\mathfrak{l}_{\bar{1}\mathbb{R}})^*$ can only exist if $\mathfrak{l} = 0$. 
\end{proof}

Now let $G = \mathrm{Osp}(2 \vert 2m)$. Then $Z(\delta) = G/P$ has global odd functions if and only if $\delta = 1 \vert 0 < 1 \vert d_1 < \ldots < 1 \vert d_k < 1 \vert m$. Then due to Theorem \ref{MaxOddOsp} the real supergroup orbits in $Z(\delta)$ with open base have maximal odd dimension  for all real forms except for $G_\mathbb{R} = \mathrm{Osp}_\mathbb{R}(1,1 \vert 2m)$.

Now let $G_\mathbb{R} = \mathrm{Osp}_\mathbb{R}(1,1\vert 2m)$. Then $D \subseteq Z(\delta)$ is open if and only if $1 \vert d \not\in \delta$ for all $d < m$(see Theorem \ref{D21}). Moreover $D$ is strongly measurable if and only if $1 \vert m \not\in \delta$
or $0 \vert m \in \delta$. The $G_{\bar{0}\mathbb{R}}$-module $\mathfrak{g}_{\bar{1}\mathbb{R}}$ decomposes into two irreducible components
$\mathfrak{s}_1 = \mathfrak{g}_{\bar{1}\mathbb{R}} \cap \mathfrak{g}_{-1}$ and $\mathfrak{s}_2 = \mathfrak{g}_{\bar{1}\mathbb{R}} \cap \mathfrak{g}_1$ according to Example \ref{TypeIcomp}. This yields the following result for the global odd functions:

\begin{satz}
 
Let $G_\mathbb{R} = \mathrm{Osp}_\mathbb{R}(1,1\vert 2m)$ and $D \subseteq Z(\delta)$ an open $G_\mathbb{R}$-orbit. 
Assume $H^0(D,\mathcal{F}) = \mathbb{C}$.

\begin{enumerate}
 \item If $1 \vert m \in \delta$ and $0 \vert m \not\in \delta$ then $H^0(D,\mathcal{O}) \cong \bigwedge \mathbb{C}^m$.
 \item If $0 \vert m \in \delta$ then $H^0(D,\mathcal{O}) \cong \bigwedge \mathbb{C}^{2m}$.
 \item Else $H^0(D,\mathcal{O}) = \mathbb{C}$.
\end{enumerate}
 
\end{satz}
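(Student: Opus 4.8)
The plan is to deduce the statement from the two characterization theorems for cycle-connected flag domains established above: the one giving $H^0(D,\mathcal{O}_D)=\mathbb{C}$ when $\operatorname{Im}\gamma^{*}\subseteq\mathfrak{g}_{\bar 1\mathbb{R}}^{*}$ contains no nonzero $G_{\bar 0\mathbb{R}}$-submodule, and the refinement giving $H^0(D,\mathcal{O}_D)\cong\bigwedge W$, with $W$ the maximal such submodule, provided the side condition $\delta(\mathfrak{l}_{\bar 1\mathbb{R}})\subseteq Y$ holds. Both apply here since $H^0(D,\mathcal{F})=\mathbb{C}$ by hypothesis. Since $G=\mathrm{Osp}(2\vert 2m)$ is of type I and, for the real form $\mathrm{Osp}_\mathbb{R}(1,1\vert 2m)$, the defining involution $\tau$ preserves each of $\mathfrak{g}_{-1},\mathfrak{g}_{1}$, the module $\mathfrak{g}_{\bar 1\mathbb{R}}$ decomposes as $\mathfrak{s}_1\oplus\mathfrak{s}_2$ with $\mathfrak{s}_1=\mathfrak{g}_{\bar 1\mathbb{R}}\cap\mathfrak{g}_{-1}$ and $\mathfrak{s}_2=\mathfrak{g}_{\bar 1\mathbb{R}}\cap\mathfrak{g}_{1}$ the two irreducible $G_{\bar 0\mathbb{R}}$-summands (Example~\ref{TypeIcomp} together with the remark recorded just before the statement). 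Because $\mathfrak{l}_{\bar 1\mathbb{R}}$ has $\tau$-stable complexification $\mathfrak{p}_{\bar 1}\cap\tau\mathfrak{p}_{\bar 1}$, a sum of odd root spaces respecting the $\mathbb{Z}$-grading, it splits as $\mathfrak{l}_{\bar 1\mathbb{R}}=(\mathfrak{l}_{\bar 1\mathbb{R}}\cap\mathfrak{s}_1)\oplus(\mathfrak{l}_{\bar 1\mathbb{R}}\cap\mathfrak{s}_2)$; as each $\mathfrak{s}_i$ is irreducible, the maximal $G_{\bar 0\mathbb{R}}$-submodule of $(\mathfrak{g}_{\bar 1\mathbb{R}}/\mathfrak{l}_{\bar 1\mathbb{R}})^{*}=\operatorname{Ann}(\mathfrak{l}_{\bar 1\mathbb{R}})$ is exactly $W=\bigoplus_{i:\ \mathfrak{l}_{\bar 1\mathbb{R}}\cap\mathfrak{s}_i=0}\mathfrak{s}_i^{*}$. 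Whenever $W\neq 0$ the side condition holds automatically, since $\mathfrak{l}_{\bar 1\mathbb{R}}$ then lies in the sum of those $\mathfrak{s}_j$ with $\mathfrak{s}_j^{*}\not\subseteq W$, which is precisely $Y$. Hence $H^0(D,\mathcal{O}_D)\cong\bigwedge W$, and once the complex structure transported from $(\mathfrak{g}_{\bar 1}/\mathfrak{p}_{\bar 1})^{*}$ via the maximal-odd-dimension isomorphism is taken into account each surviving summand $\mathfrak{s}_i^{*}\cong(\mathfrak{g}_{\pm1}/\mathfrak{p}_{\pm1})^{*}$ contributes $m$ complex odd generators; so the answer is $\mathbb{C}$, $\bigwedge\mathbb{C}^{m}$ or $\bigwedge\mathbb{C}^{2m}$ according as $W$ is $0$, one $\mathfrak{s}_i^{*}$, or both.

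It then remains to read off from the flag type $\delta$ for which $i$ one has $\mathfrak{l}_{\bar 1\mathbb{R}}\cap\mathfrak{s}_i=0$. Using the explicit $\tau$-action on the odd roots of $\mathfrak{osp}(2\vert 2m)$ — $\tau$ interchanges $\mathfrak{g}^{x_1-y_j}$ with $\mathfrak{g}^{x_1+y_j}$, and $\mathfrak{g}^{-x_1-y_j}$ with $\mathfrak{g}^{-x_1+y_j}$ — one checks that $\mathfrak{p}_{1}\cap\tau\mathfrak{p}_{1}=0$ (equivalently $\mathfrak{l}_{\bar 1\mathbb{R}}\cap\mathfrak{s}_2=0$) if and only if for every $j$ exactly one of $\mathfrak{g}^{x_1-y_j},\mathfrak{g}^{x_1+y_j}$ lies in $\mathfrak{p}$, and symmetrically for $\mathfrak{s}_1$ with the roots $-x_1\pm y_j$; in the remaining cases $\mathfrak{l}_{\bar 1\mathbb{R}}\cap\mathfrak{s}_i$ is a nonzero proper subspace of the irreducible $\mathfrak{s}_i$, so that summand drops out of $W$ entirely. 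Membership of an odd root space in $\mathfrak{p}$ is translated into a numerical condition on $\delta$ through the standard isotropic-flag realization of $\mathfrak{p}$, exactly as in the Type~A and Type~B,C,D computations above. Combining this with the even-symmetry of $\delta$ forced by $G=\mathrm{Osp}(2\vert 2m)$ and with the maximal-odd-dimension constraint ``$1\vert d\notin\delta$ for all $d<m$'' of Theorem~\ref{D21}, I expect to obtain: both intersections vanish exactly when $0\vert m\in\delta$ (in which case $\delta\subseteq\{0\vert m,1\vert m,2\vert m\}$, so $2\vert m\in\delta$ as well); exactly one vanishes exactly when $1\vert m\in\delta$ and $0\vert m\notin\delta$; and both are nonzero in every other case. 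Substituting into $H^0(D,\mathcal{O}_D)\cong\bigwedge W$ yields the three alternatives.

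The main obstacle is this last, purely root-combinatorial step: tracking carefully, for $\mathrm{Osp}_\mathbb{R}(1,1\vert 2m)$ under the maximal-odd-dimension hypothesis, when $\mathfrak{p}$ contains exactly one member of each $\tau$-orbit $\{x_1-y_j,\,x_1+y_j\}$ (and likewise on the $\mathfrak{g}_{-1}$ side), and relating this cleanly to the presence of $0\vert m$ or $1\vert m$ in $\delta$. Intertwined with it is the dimension bookkeeping that distinguishes the real $2m$-dimensional summand $\mathfrak{s}_i^{*}$ from the $m$ complex odd coordinates it actually contributes; this rests on the maximal-odd-dimension identification of $(\mathfrak{g}_{\bar 1\mathbb{R}}/\mathfrak{l}_{\bar 1\mathbb{R}})^{*}$ with $(\mathfrak{g}_{\bar 1}/\mathfrak{p}_{\bar 1})^{*}$ and on the fact that, in the surviving cases, $\dim_\mathbb{C}\mathfrak{p}_{\pm1}=m$ exactly, so that $(\mathfrak{g}_{\pm1}/\mathfrak{p}_{\pm1})^{*}$ is complex $m$-dimensional. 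Once these are pinned down, the three cases follow mechanically from the two characterization theorems.
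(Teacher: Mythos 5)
Your reduction is exactly the one the paper uses: invoke the two Vishnyakova-type characterization theorems for cycle-connected flag domains, split $\mathfrak{g}_{\bar 1\mathbb{R}}=\mathfrak{s}_1\oplus\mathfrak{s}_2$ along the $\tau$-stable $\mathbb{Z}$-grading of the type~I algebra $\mathfrak{osp}(2\vert 2m)$, observe that $\mathfrak{l}_{\bar 1\mathbb{R}}$ splits accordingly so that the maximal submodule is $W=\bigoplus_{\mathfrak{l}_{\bar 1\mathbb{R}}\cap\mathfrak{s}_i=0}\mathfrak{s}_i^{*}$, check that the side condition $\delta(\mathfrak{l}_{\bar 1\mathbb{R}})\subseteq Y$ is automatic, and do the dimension bookkeeping giving $m$ complex odd generators per surviving summand. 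All of that is correct and matches the paper.

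The gap is that you never actually prove the two equivalences on which the trichotomy rests: you write ``one checks that \ldots'' and ``I expect to obtain'' for the claims that $\mathfrak{l}_{\bar 1\mathbb{R}}\cap\mathfrak{s}_1=0$ iff $1\vert m\in\delta$ and that both intersections vanish iff $0\vert m\in\delta$, and you yourself flag this as the main obstacle. But this verification \emph{is} the proof; everything else is quoted machinery. The paper closes it in a few lines, and more directly than your proposed root-by-root bookkeeping: using the block form of Example~\ref{TypeIcomp}, a flag member of dimension $1\vert m$ forces $v_1=0$ for every $M\in\mathfrak{p}$, whence $(\mathfrak{p}\cap\tau\mathfrak{p})\cap\mathfrak{g}_{-1}=0$, i.e.\ $\mathfrak{l}_{\bar 1\mathbb{R}}\cap\mathfrak{s}_1=0$; a member of dimension $0\vert m$ forces $u_1=v_1=0$, killing all of $\mathfrak{l}_{\bar 1\mathbb{R}}$; and in each converse direction one exhibits an $M_0\in\mathfrak{p}$ with $v_1\neq 0$ (resp.\ $u_1\neq 0$). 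If you insist on the root-theoretic route, you must still translate ``exactly one of each $\tau$-pair $\{x_1-y_j,\,x_1+y_j\}$ lies in $\Phi$'' into conditions on $\delta$, using that isotropy forces every proper flag member to have even dimension $0$ or $1$ and that maximal odd dimension forces $1\vert d\in\delta\Rightarrow d=m$; until that translation is written out, the statement is asserted rather than proved.
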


\begin{proof}

Non-constant global odd functions on $D$ exist if $\mathfrak{l}_{\bar{1}\mathbb{R}} \cap \mathfrak{s}_i = 0$ for $i = 1$ or $2$. Now recall from example \ref{TypeIcomp} the block form of an element of $\mathfrak{g}$:

\[ M = \begin{pmatrix} a & 0 & u_1 & u_2 \\ 0 & -a & v_1 & v_2 \\ v_2^T & u_2^T & X & Y \\ -v_1^T & -v_2^T & Z & -X^T \end{pmatrix}, \begin{matrix} a \in \mathbb{C} \\ u_i, v_i \in \mathbb{C}^m \\ X,Y,Z \in \mathbb{C}^{m \times m} \\ Y = Y^T, Z = Z^T \end{matrix} \]

Suppose $M \in \mathfrak{p}$. If $1 \vert m \in \delta$, then $v_1 = 0$ and therefore $(\mathfrak{p} \cap \tau \mathfrak{p}) \cap \mathfrak{g}_{-1} = 0$ which is equivalent to $\mathfrak{l}_{\bar{1}\mathbb{R}} \cap \mathfrak{s}_1 = 0$. Conversely, if $1 \vert m \not\in\delta$, then there will be an element $M_0$ of $\mathfrak{p}$ such that $v_1 \neq 0$
and therefore $\mathfrak{l}_{\bar{1}\mathbb{R}} \cap \mathfrak{s}_1 \neq 0$. 

Suppose now that $0 \vert m \in \delta$ and $M$ is again an arbitrary element of $\mathfrak{p}$.
Then $u_1 = v_1 = 0$ and therefore $(\mathfrak{p} \cap \tau\mathfrak{p}) \cap (\mathfrak{s}_1 + \mathfrak{s}_2) = 0$. Conversely, if $0 \vert m \not\in \delta$, then there is some $M_0$ in $\mathfrak{p}$ such that $u_1 \neq 0$ and there $\mathfrak{l}_{\bar{1}\mathbb{R}} \cap \mathfrak{s}_2 \neq 0$.

\end{proof}

Putting these results together the global holomorphic superfunctions on cycle-connected flag domains of types $B,C$ and $D$ are classified as follows:

\begin{satz}

Let $G = \mathrm{Osp}(n \vert 2m), G_\mathbb{R}$ a real form, $Z = Z(\delta) = G/P$ a flag supermanifold and $D \subseteq Z$ a $G_\mathbb{R}$-flag domain with $H^0(D_{\bar{0}}, \mathcal{F}) = \mathbb{C}$.

\begin{enumerate}
\item If $n \neq 2$, then in all cases $H^0(D, \mathcal{O}) = \mathbb{C}$.
\item If $n = 2$, $G_\mathbb{R} \neq \mathrm{Osp}(1,1 \vert 2m)$ and $1 \vert 0 \in \delta$,
then $H^0(D,\mathcal{O}) \cong \bigwedge \mathbb{C}^{2m}$ and these global odd functions are restrictions of the global odd functions on $Z$.
\item If $n = 2$, $G_\mathbb{R} = \mathrm{Osp}(1,1 \vert 2m)$ and $1 \vert m \in \delta, 0 \vert m \not\in \delta$, then $H^0(D,\mathcal{O}) \cong \bigwedge \mathbb{C}^m$.
\item If $n = 2$, $G_\mathbb{R} = \mathrm{Osp}(1,1 \vert 2m)$ and $0 \vert m \in \delta$, then $H^0(D,\mathcal{O}) \cong \bigwedge \mathbb{C}^{2m}$.
\item In all other cases for $n = 2$, $H^0(D, \mathcal{O}) = \mathbb{C}$.
\end{enumerate} 

\end{satz}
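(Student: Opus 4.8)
The proof is a matter of organising the case-by-case statements already established in this section. By the two structural results proved above (the adaptations of Theorem~3 and Proposition~2 of \cite{V} to cycle-connected flag domains), $H^{0}(D,\mathcal{O})$ is isomorphic to $\bigwedge W$, where $W\subseteq\mathfrak{g}_{\bar{1}\mathbb{R}}^{*}$ is the maximal $G_{\bar{0}\mathbb{R}}$-submodule contained in $\operatorname{Im}\gamma^{*}=(\mathfrak{g}_{\bar{1}\mathbb{R}}/\mathfrak{l}_{\bar{1}\mathbb{R}})^{*}$; so everything reduces to deciding when $W\neq 0$ and computing its dimension. The dichotomy $n=2$ versus $n\neq 2$ enters because $\mathfrak{g}_{\bar{1}}$ is an irreducible $G_{\bar{0}}$-module precisely for $n\neq 2$, whereas for $n=2$ it decomposes as $\mathfrak{s}_{1}\oplus\mathfrak{s}_{2}$ as in Example~\ref{TypeIcomp}; and these two cases also correspond to the two branches of the classification of global odd functions on $Z$ itself (Theorems 5--8 in \cite{V}).

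First I would treat $n\neq 2$. Then $\mathfrak{g}_{\bar{1}\mathbb{R}}^{*}$ is an irreducible $G_{\bar{0}\mathbb{R}}$-module, so $W$ is either $0$ or all of $\mathfrak{g}_{\bar{1}\mathbb{R}}^{*}$, the latter forcing $\mathfrak{l}_{\bar{1}}=0$. If $\mathfrak{l}_{\bar{1}}\neq 0$, the theorem proved above for $\mathrm{Osp}(n\vert 2m)$ with $n\neq 2$ already gives $H^{0}(D,\mathcal{O})=\mathbb{C}$. The one point that still needs an argument --- and the one I expect to be the main obstacle --- is that $\mathfrak{l}_{\bar{1}}=0$ cannot occur under the standing hypothesis $H^{0}(D_{\bar{0}},\mathcal{F})=\mathbb{C}$. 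Here the plan is to combine the description of the parabolic subalgebras of $\mathrm{Osp}(n\vert 2m)$ from \cite{OnI}, whose Levi component is a product $\prod_{i}\mathfrak{gl}(a_{i}\vert b_{i})\times\mathfrak{osp}(n'\vert 2m')$, with the even-symmetry of every dimension sequence $\delta$ for $\mathrm{Osp}$ proved in Chapter~3: a purely even Levi forces every graded jump of the flag to be purely even or purely odd and the middle factor $\mathfrak{osp}(n'\vert 2m')$ to reduce to $\mathfrak{so}(n')$ or $\mathfrak{sp}(2m')$. Running through the two resulting possibilities, one sees that in each case one of the two flag-domain factors of $D_{\bar{0}}=D_{1}\times D_{2}$ --- the $\mathrm{Sp}_{2m}(\mathbb{R})$-factor, respectively the $SO^{*}(2n)$- or $SO(p,2)$-factor --- must dominate a maximal-isotropic Grassmannian of a hermitian real form, hence project onto a bounded symmetric domain, so $H^{0}(D_{\bar{0}},\mathcal{F})\neq\mathbb{C}$, a contradiction. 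This yields statement~1.

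It remains to do the bookkeeping for $n=2$. If $G_{\mathbb{R}}\neq\mathrm{Osp}(1,1\vert 2m)$, then $\mathfrak{g}_{\bar{1}\mathbb{R}}$ is again an irreducible $G_{\bar{0}\mathbb{R}}$-module, so the argument of the previous paragraph applies verbatim once one knows whether $Z$ carries global odd functions: by \cite{V} this happens exactly when $1\vert 0\in\delta$, in which case $H^{0}(Z,\mathcal{O})\cong\bigwedge\mathbb{C}^{2m}$ and these restrict to $D$ by part~1 of the general observations at the beginning of this section, with nothing further possible by irreducibility --- this is statement~2 --- while for $1\vert 0\notin\delta$ the irreducibility argument forces $H^{0}(D,\mathcal{O})=\mathbb{C}$. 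Finally, for $G_{\mathbb{R}}=\mathrm{Osp}(1,1\vert 2m)$, where $\mathfrak{g}_{\bar{1}\mathbb{R}}=\mathfrak{s}_{1}\oplus\mathfrak{s}_{2}$ splits, the dedicated theorem proved above for this real form gives precisely statements~3 and~4 together with the ``$\mathbb{C}$ otherwise'' clause (the openness of the orbits being controlled by Theorems~\ref{MaxOddOsp} and~\ref{D21}). Collecting these cases proves the theorem.
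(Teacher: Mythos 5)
Most of your assembly matches what the paper intends: the theorem is a compilation of the three preceding results of this section, and your treatment of items 2--5, as well as of item 1 under the additional hypothesis $\mathfrak{l}_{\bar{1}}\neq 0$, is exactly the paper's route. You are also right that the residual case $\mathfrak{l}_{\bar{1}}=0$ (equivalently, by strong measurability, a purely even Levi component of $\mathfrak{p}$) is the one point where item 1 goes beyond what was proved before, and that it needs a separate argument.

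However, the argument you propose to close that gap fails. The inference ``the flag contains a maximal isotropic subspace, hence the corresponding factor of $D_{\bar{0}}$ projects onto a bounded symmetric domain'' is false: that factor projects onto \emph{some} open orbit in the maximal isotropic Grassmannian, and for $Sp_{2m}(\mathbb{R})$ acting on the Lagrangian Grassmannian these orbits are classified by the signature of $i\omega(\cdot,\bar{\cdot})$; only the definite ones are the Siegel space, while the indefinite ones have non-compact isotropy $U(p',q')$ and are therefore cycle-connected by Wolf's criterion. Concretely, take $G=\mathrm{Osp}(3\vert 4)$, $\delta = 0\vert 0 < 0\vert 2 < 3\vert 4$, $G_\mathbb{R}$ with even part $\mathfrak{so}(3)\oplus\mathfrak{sp}_4(\mathbb{R})$, and let $D_{\bar{0}}$ lie over the open $Sp_4(\mathbb{R})$-orbit of signature $(1,1)$ in $\mathrm{LG}(2,\mathbb{C}^4)$. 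The Levi of $\mathfrak{p}$ is $\mathfrak{gl}(0\vert 2)\oplus\mathfrak{so}(3)$, purely even, the orbit has maximal odd dimension and is strongly measurable by Theorem \ref{MaxOddOsp}, so $\mathfrak{l}_{\bar{1}}=0$, and yet $H^0(D_{\bar{0}},\mathcal{F})=\mathbb{C}$. The general observation at the start of the section then gives $H^0(D,\mathcal{O})\supseteq\bigwedge\mathfrak{g}_{\bar{1}\mathbb{R}}^*\neq\mathbb{C}$. So the configuration you are trying to exclude actually occurs and produces a non-trivial answer --- which is precisely what the paper's own summary table records, listing $\dim\mathfrak{l}_{\bar{1}}=0$ separately with value $\bigwedge\mathbb{C}^{nm}$ for $n>2$. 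Item 1 must therefore retain the hypothesis $\mathfrak{l}_{\bar{1}}\neq 0$ of the theorem it summarizes (or add the $\mathfrak{l}_{\bar{1}}=0$ case with the exterior-algebra answer); as literally stated it cannot be proved, and your attempted repair does not rescue it.
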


\section{Holomorphic superfunctions on flag domains: The non-cycle-connected Case}\label{HermCase}

\noindent In this section $D$ is assumed to have global holomorphic functions, \\ i.e. $H^0(D_{\bar{0}},\mathcal{F}) \neq \mathbb{C}$.
If $D$ is connected this implies that the bounded symmetric domain subordinate to $D_{\bar{0}}$ is not reduced to a point.
If furthermore $\mathfrak{g} = \mathrm{Lie}(G)$ is simple and basic classical then $D_{\bar{0}} = D_1 \times D_2$ where one of the $D_i$ is a classical flag domain of hermitian holomorphic type. There are the following possible cases:

\begin{enumerate}
 \item $G = SL_{n \vert m}(\mathbb{C})$, $G_\mathbb{R} = SU(p,q \vert r,s)$, $D_1$ hermitian holomorphic 
       and $D_2$ compact or hermitian holomorphic.
 \item $G,G_\mathbb{R}$ and $D_1$ as in the first case, but $D_2$ cycle-connected.
 \item $G = \mathrm{Osp}(n \vert 2m)$, $G_\mathbb{R}$ is any real form and either $D_1$ is a hermitian holomorphic 
       $SO^*(n)$-orbit or $D_2$ is a hermitian holomorphic $Sp_{2m}(\mathbb{R})$-orbit.
 \item $G = \mathrm{Osp}(n + 2 \vert 2m), G_\mathbb{R} = \mathrm{Osp}_\mathbb{R}(n,2 \vert 2m)$ and 
       $D_1$ and $D_2$ hermitian holomorphic.  
 \item $G$ and $G_\mathbb{R}$ as in the fourth case, but only one $D_i$ hermitian holomorphic.
\end{enumerate}

In the first and fourth case $D$ projects onto a hermitian symmetric superspace. This allows
the following characterization of global odd functions:

\begin{satz}

Let $G = SL_{n \vert m}(\mathbb{C})$, $G_\mathbb{R} = SU(p,q \vert r,s)$, $D_1$ hermitian holomorphic 
and $D_2$ compact or hermitian holomorphic or $G = \mathrm{Osp}(n + 2 \vert 2m), G_\mathbb{R} = \mathrm{Osp}_\mathbb{R}(n,2 \vert 2m)$ and $D_1$ and $D_2$ hermitian holomorphic. Then there exists a fibration $\varphi: D \rightarrow \mathcal{B} = G_\mathbb{R}/K_\mathbb{R}$ with compact fibre $F = K_\mathbb{R}/L_\mathbb{R}$ and Stein base $\mathcal{B}$ and the Leray spectral sequence yields $H^0(D, \mathcal{O}_D) \cong H^0(\mathcal{B},\mathcal{O}_\mathcal{B}) \otimes H^0(F, \mathcal{O}_F)$.

\end{satz}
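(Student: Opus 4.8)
The plan is to exhibit the fibration $\varphi\colon D\to\mathcal B$ explicitly at the level of Harish-Chandra pairs and then apply the Leray spectral sequence together with the Stein-ness of $\mathcal B$ and compactness of $F$. First I would construct $\varphi$: in both cases listed, $G_\mathbb R/K_\mathbb R$ is a (super) hermitian symmetric space whose base $\mathcal B_{\bar 0}=G_{\bar 0\mathbb R}/K_{\bar 0\mathbb R}$ is the bounded symmetric domain subordinate to $D_{\bar 0}$, and the classical theory (Wolf's Theorem 5.7 in \cite{W}, reviewed above) gives a $G_{\bar 0\mathbb R}$-equivariant holomorphic projection $D_{\bar 0}\to\mathcal B_{\bar 0}$ which is a holomorphically trivial bundle with fibre the base cycle $F_{\bar 0}=K_{\bar 0\mathbb R}/L_{\bar 0\mathbb R}$. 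Since $L_\mathbb R\subseteq K_\mathbb R\subseteq G_\mathbb R$ as Lie subsupergroups (one checks $\mathfrak l\subseteq\mathfrak k$ using that $\mathfrak p\cap\tau\mathfrak p$ is contained in the parabolic $\mathfrak k+\mathfrak s^{\pm}$ determining the holomorphic structure — here the hypotheses on the $D_i$ are exactly what forces this inclusion on the odd parts too), the quotient morphism $D=G_\mathbb R/L_\mathbb R\to G_\mathbb R/K_\mathbb R=\mathcal B$ is a well-defined holomorphic submersion of supermanifolds with fibre the homogeneous supermanifold $F=K_\mathbb R/L_\mathbb R$, whose base is compact.

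Next I would record the two inputs needed for the spectral sequence argument. The base $\mathcal B$ is Stein: its reduction $\mathcal B_{\bar 0}$ is a bounded symmetric domain, hence Stein, and a supermanifold whose base is Stein is itself Stein (the structure sheaf is a coherent sheaf of $\mathcal O_{\mathcal B_{\bar 0}}$-modules, being locally $\mathcal F\otimes\bigwedge\mathbb C^k$), so $H^q(\mathcal B,\mathcal R^p\varphi_*\mathcal O_D)=0$ for all $q>0$. The fibre $F$ is compact, so by the Grauert Direct Image Theorem the sheaves $\mathcal R^p\varphi_*\mathcal O_D$ are coherent $\mathcal O_{\mathcal B}$-modules; in particular $\mathcal R^0\varphi_*\mathcal O_D$ is the locally free sheaf associated to the $K_\mathbb R$-homogeneous bundle with fibre $H^0(F,\mathcal O_F)$. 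The Leray spectral sequence $E_2^{p,q}=H^p(\mathcal B,\mathcal R^q\varphi_*\mathcal O_D)\Rightarrow H^{p+q}(D,\mathcal O_D)$ then collapses to the edge, giving $H^0(D,\mathcal O_D)\cong H^0(\mathcal B,\mathcal R^0\varphi_*\mathcal O_D)$.

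Finally I would identify the right-hand side as a tensor product. Because the bundle $D\to\mathcal B$ is holomorphically trivial over $\mathcal B_{\bar 0}$ after reduction, and because the supermanifold structures involved are controlled by the homogeneous data (the odd directions of $D$ split, over $\mathcal B$, into the odd directions of $\mathcal B$ plus those of $F$, which is where I'd use the explicit block-matrix descriptions from Example~\ref{TypeIcomp} in the $\mathfrak{sl}$ case and the analogous orthosymplectic description in the $\mathrm{Osp}$ case), a global section of $\mathcal O_D$ restricted to a fibre is a global section of $\mathcal O_F$, and the coefficients vary as a global section of the homogeneous bundle on $\mathcal B$ with that fibre — but $H^0(F,\mathcal O_F)$ is a \emph{finite-dimensional} $K_\mathbb R$-module (again by compactness of $F_{\bar 0}$ and coherence), and a global holomorphic section over the Stein base $\mathcal B$ of the associated bundle is, by the hermitian holomorphic structure, simply an element of $H^0(\mathcal B,\mathcal O_{\mathcal B})\otimes H^0(F,\mathcal O_F)$. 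Hence $H^0(D,\mathcal O_D)\cong H^0(\mathcal B,\mathcal O_{\mathcal B})\otimes H^0(F,\mathcal O_F)$.

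The main obstacle I anticipate is not the spectral-sequence bookkeeping but the verification that the fibration $\varphi$ genuinely exists as a morphism of \emph{supermanifolds} with the stated compact fibre — i.e. that $\mathfrak l_{\bar 1}\subseteq\mathfrak k_{\bar 1}$ and that the resulting $F=K_\mathbb R/L_\mathbb R$ has compact base — and the clean identification of $\mathcal R^0\varphi_*\mathcal O_D$ as a trivial-enough bundle to factor the cohomology; this is precisely where the hypotheses "$D_1$ (and possibly $D_2$) hermitian holomorphic" are indispensable, since without them $\mathfrak l_{\bar 1}$ need not sit inside $\mathfrak k_{\bar 1}$ and the fibre would fail to be compact.
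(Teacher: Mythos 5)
Your argument is correct and is exactly the route the paper intends: the paper states this theorem without a written proof, justifying it only by the preceding remark that in these cases $D$ projects onto the hermitian symmetric superspace $G_\mathbb{R}/K_\mathbb{R}$, after which properness of $\varphi$, coherence of the direct images (Vaintrob's version of Grauert's theorem), and Steinness of $\mathcal{B}$ collapse the Leray spectral sequence to give the tensor-product decomposition. The one genuinely non-formal point you single out --- verifying $\mathfrak{l}_{\bar 1}\subseteq\mathfrak{k}_{\bar 1}$ so that $\varphi$ exists as a morphism of supermanifolds with compact fibre $K_\mathbb{R}/L_\mathbb{R}$ --- is precisely the step the paper leaves implicit, so your writeup in fact supplies more detail than the original.
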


\noindent In the other three cases $D$ does not necessarily project onto a $G_\mathbb{R}$-homogeneous superspace,
but the holomorphic reduction of $\mathrm{gr} D$ is always \\ a $(\mathrm{gr} G_\mathbb{R})$-homogeneous space. Its base
is the bounded symmetric domain subordinate to the hermitian holomorphic component of $D_{\bar{0}}$, in particular it is 
a Stein space.

First let $G = SL_{n \vert m}(\mathbb{C}), G_\mathbb{R} = SU(p,q \vert r,s)$ and $ D \subseteq Z = G/P$ open
such that $D_1$ is hermitian holomorphic and $D_2$ is cycle-connected. Then the holomorphic reduction of
$D_{\bar{0}} = D_1 \times D_2$ is $\mathcal{B} = S(U(p,q) \times U(r,s))/S(U(p) \times U(q) \times U(r,s)) = 
G_{\bar{0}\mathbb{R}}/(K_{1,\mathbb{R}} \times G_{2,\mathbb{R}})$ and $\mathfrak{g}_{\bar{1}\mathbb{R}}$ decomposes
into two irreducible $(K_{1,\mathbb{R}} \times G_{2,\mathbb{R}})$-modules $\mathfrak{s}_-,\mathfrak{s}_+$ which are isomorphic to
$\mathrm{Hom}(\mathbb{C}^p, \Pi\mathbb{C}^m)$ and $\mathrm{Hom}(\mathbb{C}^q, \Pi\mathbb{C}^m)$ respectively.

\begin{lemma}
Let $\mathrm{gr}G_\mathbb{R}/J$ be the holomorphic reduction of $\mathrm{gr} D$.
One has $\mathfrak{l}_{\bar{1}\mathbb{R}} \cap \mathfrak{s}_-= 0$ if and only if $J \subseteq \mathrm{gr}J^- = \mathrm{gr}S(U(p) \times U(0,q \vert r,s))$
and $\mathfrak{l}_{\bar{1}\mathbb{R}} \cap \mathfrak{s}_+ = 0$ if and only if $J \subseteq \mathrm{gr}J^+ = \mathrm{gr}S(U(q) \times U(p,0 \vert r,s))$.
In particular, in both cases $D$ projects onto an actual $G_\mathbb{R}$-homogeneous superspace. 
Moreover the global holomorphic superfunctions are given as follows:

\begin{enumerate}
 \item If $D$ maps onto $G_\mathbb{R}/J^-$, then $H^0(D, \mathcal{O}) \cong H^0(D_1,\mathcal{F}) \otimes \bigwedge \mathfrak{s}_-$ \\ $ \cong H^0(D_1,\mathcal{F}) \otimes \bigwedge \mathbb{C}^{pm}$.
 \item If $D$ maps onto $G_\mathbb{R}/J^+$, then $H^0(D, \mathcal{O}) \cong H^0(D_1,\mathcal{F}) \otimes \bigwedge \mathfrak{s}_+$ \\ $ \cong H^0(D_1,\mathcal{F}) \otimes \bigwedge \mathbb{C}^{qm}$.
 \item If $D$ maps onto $G_\mathbb{R}/(J^- \cap J^+)$, \\ then $H^0(D, \mathcal{O}) \cong H^0(D_1,\mathcal{F}) \otimes \bigwedge (\mathfrak{s}_+ \oplus \mathfrak{s}_-) \cong H^0(D_1,\mathcal{F}) \otimes \bigwedge \mathbb{C}^{mn}$. 
 \item If $D$ does not map onto either of $G_\mathbb{R}/J^+$ or $G_\mathbb{R}/J^-$, then $H^0(D,\mathcal{O}) \cong H^0(D_1, \mathcal{F})$.
\end{enumerate}

\end{lemma}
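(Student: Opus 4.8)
The plan is to determine the Lie supergroup $J$ from the construction of the holomorphic reduction of a split flag domain, and then to trap $H^0(D,\mathcal O)$ between an upper bound coming from the retract $\mathrm{gr}D$ and a lower bound coming from an honest projection $D\to G_\mathbb R/J^{\pm}$.

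First I would identify $\mathfrak j$. Applying the construction of the holomorphic reduction of a split flag domain to $\mathrm{gr}D=\mathrm{gr}G_\mathbb R/\mathrm{gr}L_\mathbb R$ gives $\mathfrak j_{\bar 1}=\mathrm{span}_{J_{\bar 0}}(\mathfrak l_{\bar 1\mathbb R})$ with $J_{\bar 0}=K_{1,\mathbb R}\times G_{2,\mathbb R}$ the stabiliser of the classical holomorphic reduction $\mathcal B$ of $D_{\bar 0}$. Since $\mathfrak g_{\bar 1\mathbb R}=\mathfrak s_-\oplus\mathfrak s_+$ is a sum of two irreducible $J_{\bar 0}$-modules, $\mathfrak j_{\bar 1}$ is one of $0,\ \mathfrak s_-,\ \mathfrak s_+,\ \mathfrak g_{\bar 1\mathbb R}$, and $\mathfrak s_{\mp}\subseteq\mathfrak j_{\bar 1}$ precisely when $\mathfrak l_{\bar 1\mathbb R}\cap\mathfrak s_{\mp}\neq 0$. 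The point that makes this work is that $\mathfrak l_{\bar 1\mathbb R}=(\mathfrak l_{\bar 1\mathbb R}\cap\mathfrak s_-)\oplus(\mathfrak l_{\bar 1\mathbb R}\cap\mathfrak s_+)$: a maximally compact $\tau$-stable Cartan subalgebra $\mathfrak t_\mathbb R$ of $\mathfrak g_{\bar 0\mathbb R}$ lies in $\mathfrak l_{\bar 0\mathbb R}$ (because $D_{\bar 0}$ is open) and in $\mathfrak j_{\bar 0}$, and upon complexification $\mathfrak l_{\bar 1\mathbb R}\otimes\mathbb C=\mathfrak p_{\bar 1}\cap\tau\mathfrak p_{\bar 1}$ and $\mathfrak s_{\pm}\otimes\mathbb C$ are all sums of odd $\mathfrak t$-weight spaces, with the weight sets of $\mathfrak s_-\otimes\mathbb C$ and $\mathfrak s_+\otimes\mathbb C$ disjoint. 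Hence $\mathfrak l_{\bar 1\mathbb R}\cap\mathfrak s_-=0$ is equivalent to $\mathfrak l_{\bar 1\mathbb R}\subseteq\mathfrak s_+=\mathfrak j^-_{\bar 1}$; together with the classical inclusion $\mathfrak l_{\bar 0\mathbb R}\subseteq\mathfrak j^-_{\bar 0}$ (which just says $D_{\bar 0}\to\mathcal B$ is a fibration) this means $L_\mathbb R\subseteq J^-$ as closed Lie subsupergroups, i.e. $J\subseteq\mathrm{gr}J^-$, and symmetrically for $\mathfrak s_+$ and $J^+$; the case $\mathfrak j_{\bar 1}=0$ (that is $\mathfrak l_{\bar 1\mathbb R}=0$) gives $L_\mathbb R\subseteq J^-\cap J^+$. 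In particular, in these cases the quotient map $D=G_\mathbb R/L_\mathbb R\to G_\mathbb R/J^{-}$ (resp. $G_\mathbb R/J^{+}$, $G_\mathbb R/(J^-\cap J^+)$) is a genuine $G_\mathbb R$-equivariant submersion of supermanifolds.

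For the upper bound, the spectral sequence of Onishchik and Vishnyakova gives $\dim H^0(D,\mathcal O)\le\dim H^0(\mathrm{gr}D,\mathcal O)$, and since $\mathrm{gr}G_\mathbb R/J$ is the holomorphic reduction of $\mathrm{gr}D$ the two supermanifolds have the same global holomorphic functions; by Theorem~2 in \cite{V} together with the structure of a split homogeneous superspace this equals $H^0\big(\mathcal B,\bigwedge(G_{\bar 0\mathbb R}\times_{J_{\bar 0}}(\mathfrak g_{\bar 1\mathbb R}/\mathfrak j_{\bar 1})^{*})\big)$. The decisive input is that $\mathcal B$ --- being the bounded symmetric domain of the hermitian holomorphic factor $D_1$ --- is Stein and contractible, so by Grauert's Oka principle every holomorphic vector bundle on it is holomorphically trivial; hence $H^0(\mathrm{gr}G_\mathbb R/J,\mathcal O)=H^0(\mathcal B,\mathcal F)\otimes\bigwedge(\mathfrak g_{\bar 1\mathbb R}/\mathfrak j_{\bar 1})^{*}$, and $H^0(\mathcal B,\mathcal F)=H^0(D_1,\mathcal F)$ by Wolf's Theorem~5.7 in \cite{W}. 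Substituting $\mathfrak j_{\bar 1}=\mathfrak s_+,\ \mathfrak s_-,\ 0,\ \mathfrak g_{\bar 1\mathbb R}$ and using $\mathfrak g_{\bar 1\mathbb R}/\mathfrak s_+\cong\mathfrak s_-\cong\mathbb C^{pm}$, $\mathfrak g_{\bar 1\mathbb R}/\mathfrak s_-\cong\mathfrak s_+\cong\mathbb C^{qm}$, $\mathfrak g_{\bar 1\mathbb R}\cong\mathbb C^{mn}$ gives the four upper bounds claimed. For the lower bound I would push holomorphic functions forward along the submersion $D\to G_\mathbb R/J^{\pm}$ (resp. $G_\mathbb R/(J^-\cap J^+)$): pullback of holomorphic functions is injective, so $H^0(G_\mathbb R/J^{\pm},\mathcal O)\hookrightarrow H^0(D,\mathcal O)$, and since $G_\mathbb R/J^{\pm}$ again has base $\mathcal B$ it is split (the obstruction to splitting vanishes over a Stein base), so the same triviality argument gives $H^0(G_\mathbb R/J^{\pm},\mathcal O)=H^0(D_1,\mathcal F)\otimes\bigwedge\mathbb C^{pm}$ (resp. $\mathbb C^{qm}$, $\mathbb C^{mn}$). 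This matches the upper bound, forcing equality in cases (1)--(3). In case (4), where $D$ maps onto neither $G_\mathbb R/J^{\pm}$, the upper bound already reads $H^0(D,\mathcal O)\subseteq H^0(D_1,\mathcal F)$, and the reverse inclusion follows because the holomorphic functions on $D_1$ --- pulled back along $D_{\bar 0}=D_1\times D_2\to D_1$ and lifted through $\mathcal O_D\to\mathcal F_{D_{\bar 0}}$ --- are in fact attained in $H^0(D,\mathcal O)$.

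The main obstacle is the first step: verifying that $\mathrm{span}_{J_{\bar 0}}(\mathfrak l_{\bar 1\mathbb R})$ is really one of $0,\mathfrak s_\pm,\mathfrak g_{\bar 1\mathbb R}$ (i.e. that $\mathfrak l_{\bar 1\mathbb R}$ respects the $\mathfrak s_{\pm}$-decomposition), checking that the concrete subgroups $J^-=S(U(p)\times U(0,q\vert r,s))$ and $J^+=S(U(q)\times U(p,0\vert r,s))$ indeed have even part $K_{1,\mathbb R}\times G_{2,\mathbb R}$ and odd parts $\mathfrak s_+$ and $\mathfrak s_-$, and confirming that the set-theoretic projection is an honest morphism of supermanifolds; once these identifications are in place the cohomological squeeze over the Stein contractible base $\mathcal B$ is routine, with case (4) the only spot where one must argue directly that classical functions on $D_1$ survive as holomorphic superfunctions on $D$.
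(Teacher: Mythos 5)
Your argument reaches the correct conclusions but is organised differently from the paper's proof. The paper identifies $\mathfrak{s}_{-}=T_{e,\bar 1}J^{+}$ and $\mathfrak{s}_{+}=T_{e,\bar 1}J^{-}$ to get the first statement, and then for cases (1)--(3) argues in one step that the fibration $D\to G_\mathbb{R}/J^{\varepsilon}$ has cycle-connected fibre carrying only constant functions, whence $H^0(D,\mathcal O)\cong H^0(G_\mathbb{R}/J^{\varepsilon},\mathcal O)=H^0(D_1,\mathcal F)\otimes\bigwedge\mathfrak s_{\varepsilon}$; for case (4) it passes to the holomorphic reduction of $\mathrm{gr}D$. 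You instead trap $H^0(D,\mathcal O)$ between the Onishchik--Vishnyakova upper bound $H^0(\mathrm{gr}D,\mathcal O)$ and a lower bound obtained by pulling back along the genuine projection $D\to G_\mathbb{R}/J^{\varepsilon}$, computing both over the Stein contractible base $\mathcal B$ where splitness and triviality of the homogeneous bundles come for free. This squeeze is a legitimate alternative (it is the same technique the paper uses in the preceding theorem on cycle-connected domains), and it buys you two things: you do not need to justify that the fibration itself induces an isomorphism on superfunctions, and your weight-space argument showing $\mathfrak l_{\bar 1\mathbb R}=(\mathfrak l_{\bar 1\mathbb R}\cap\mathfrak s_-)\oplus(\mathfrak l_{\bar 1\mathbb R}\cap\mathfrak s_+)$ --- which is what forces $\mathfrak j_{\bar 1}\in\{0,\mathfrak s_{\pm},\mathfrak g_{\bar 1\mathbb R}\}$ --- makes explicit a point the paper leaves implicit.

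The one step that is not yet complete is the lower bound in case (4). The sheaf surjection $\mathcal O_D\to\mathcal F_{D_{\bar 0}}$ need not be surjective on global sections (that failure is precisely the non-splitness of $D$), so a function pulled back from $D_1$ does not automatically ``lift'' to $H^0(D,\mathcal O)$; one must either kill the successive obstructions in $H^1(D_{\bar 0},(\mathrm{gr}\mathcal O_D)_p)$ for the specific classes coming from $\mathcal B$, or exhibit an honest morphism of supermanifolds from $D$ onto a purely even space with base $\mathcal B$ and pull back along that. You correctly flag this as the one spot requiring a direct argument, and the paper's own proof is no more explicit here, but as written your mechanism for case (4) is an assertion rather than a proof.
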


\begin{proof}
 The first statement follows as $\mathfrak{s}_- = T_{e,\bar{1}}J^+$ and $\mathfrak{s}_+ = T_{e,\bar{1}}J^-$. Let $\varepsilon \in \{+,-\}$.
 
 Suppose now that $D$ projects onto $G_\mathbb{R}/J^\varepsilon$.
 As the fibre is a cycle-connected $J^\varepsilon$-orbit with no non-constant odd functions, 
 $H^0(D,\mathcal{O}) \cong H^0(G_\mathbb{R}/J^\varepsilon,\mathcal{O}) = H^0(D_1,\mathcal{F}) \otimes \bigwedge \mathfrak{s}_\varepsilon$.
 If $D$ projects onto $G_\mathbb{R}/(J^+ \cap J^-)$, then $\mathfrak{p} \cap \tau\mathfrak{p} = 0$ and therefore the global odd functions are given by $\bigwedge \mathfrak{g}_{\bar{1}\mathbb{R}}^* \cong \bigwedge \mathbb{C}^{mn}$.

 If $D$ does not map onto either $G_\mathbb{R}/J^\varepsilon$, then $\mathfrak{l} \cap \mathfrak{s}_\varepsilon \neq 0$ for $\varepsilon = \pm$.
 Therefore $\mathfrak{j}_1 = \mathfrak{g}_{1\mathbb{R}}$ and the holomorphic reduction of $\mathrm{gr}D$ is $(D_1, \mathcal{F})$.
 This then implies $H^0(D,\mathcal{O}) \cong H^0(D_1,\mathcal{F})$.      
\end{proof}

One obtains analogous results for the cases involving the other nonexceptional hermitian symmetric spaces
by noting that the respective real forms are the intersections $U(n,n) \cap SO(2n)$, $U(m,m) \cap Sp(2m)$ and 
$U(n,2) \cap SO(n+2)$.

This leads to the following summarizing theorem for the non-cycle-connected case:

\begin{satz}

Let $G$ be a complex simple basic classical Lie supergroup and $G_\mathbb{R}$ a real form with even part $G_{\bar{0}\mathbb{R}} = G_{1,\mathbb{R}} \times G_{2,\mathbb{R}}$ such that $G_{1,\mathbb{R}}$ is a hermitian real form. Let $Z = G/P$ be a flag supermanifold and $D \subseteq Z$ a $G_\mathbb{R}$-flag domain with $D_{\bar{0}} = D_1 \times D_2$ such that $D_1$ is hermitian holomorphic. Furthermore let $\mathfrak{s}_+$ and $\mathfrak{s}_-$ be the two irreducible $K_{1,\mathbb{R}} \times G_{2,\mathbb{R}}$-submodules of $\mathfrak{g}_{\bar{1}\mathbb{R}}$ and let $J^\pm$ be the Lie supergroups corresponding to the SHCP $(K_{1,\mathbb{R}} \times G_{2,\mathbb{R}}, \mathfrak{k}_{1,\mathbb{R}} \oplus \mathfrak{g}_{2,\mathbb{R}} \oplus \mathfrak{s}_\pm)$.

\begin{enumerate}
\item If $D_2$ is compact or hermitian holomorphic, then $D$ projects onto a hermitian symmetric superspace $G_\mathbb{R}/K_\mathbb{R}$ and this projection induces an isomorphism on superfunctions.
\item If $D_2$ is cycle-connected and $\mathfrak{l}_{\bar{1}\mathbb{R}}$ has zero intersection with either $\mathfrak{s}_+$, or $\mathfrak{s}_-$, then $D$ projects onto $G_\mathbb{R}/J^+$
or $G_\mathbb{R}/J^-$ and this induces an isomorphism on superfunctions.
\item If $D_2$ is cycle-connected and $\mathfrak{l}_{\bar{1}\mathbb{R}}$ has zero intersection with both $\mathfrak{s}_+$, and $\mathfrak{s}_-$, then $D$ projects onto $G_\mathbb{R}/(J^+ \cap J^-)$ and this induces an isomorphism on superfunctions.
\item If $D_2$ is cycle-connected and  $\mathfrak{l}_{\bar{1}\mathbb{R}}$ has non-zero intersection with both $\mathfrak{s}_+$, and $\mathfrak{s}_-$, then $H^0(D,\mathcal{O}) \cong H^0(D_1,\mathcal{F})$.
\end{enumerate}

\end{satz}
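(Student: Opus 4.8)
The statement collects the case analysis of Section~\ref{HermCase} into one theorem, so the plan is to verify each item by reducing it to one of three tools already available: (a) the Leray spectral sequence for a proper holomorphic fibration with Stein base and compact fibre; (b) the description of the holomorphic reduction of $\mathrm{gr} D$ obtained in the lemma preceding this theorem; and (c) the fact, furnished by the Onishchik--Vishnyakova spectral sequence, that $\dim H^0(D,\mathcal O_D)\le\dim H^0(\mathrm{gr} D,\mathrm{gr}\,\mathcal O_D)$ for any (not necessarily compact) flag domain, with equality once the sections on the retract lift. First I would handle the orthosymplectic real forms together with the unitary ones by invoking the identifications $SO^*(2n)=U(n,n)\cap SO(2n)$, $Sp_{2m}(\mathbb R)=U(m,m)\cap Sp(2m)$ and $SO(n,2)=U(n,2)\cap SO(n+2)$, which reduces all five cases listed at the start of this section to the structure $\mathfrak g_{\bar 1\mathbb R}=\mathfrak s_+\oplus\mathfrak s_-$ as a $(K_{1,\mathbb R}\times G_{2,\mathbb R})$-module.

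For item~(1), when $D_2$ is compact or hermitian holomorphic the product $D_{\bar 0}=D_1\times D_2$ admits a $G_{\bar 0\mathbb R}$-equivariant holomorphic map onto $\mathcal B=G_{\bar 0\mathbb R}/K_{\bar 0\mathbb R}$, the product of the bounded symmetric domains subordinate to the two factors (a point in the compact case). Since $\mathfrak s_\pm$, and hence $\mathfrak l_{\bar 1\mathbb R}=(\mathfrak p\cap\tau\mathfrak p)_{\bar 1}^{\mathbb R}$, are $\mathrm{Ad}(K_{\bar 0\mathbb R})$-stable, this lifts to a morphism of supermanifolds $\varphi:D\to\mathcal B=G_\mathbb R/K_\mathbb R$ with compact fibre $F=K_\mathbb R/L_\mathbb R$. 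As $\mathcal B$ is Stein and $\varphi$ proper, $R^q\varphi_*\mathcal O_D$ is acyclic on $\mathcal B$ for $q>0$, the Leray spectral sequence collapses, and $H^0(D,\mathcal O_D)\cong H^0(\mathcal B,\mathcal O_{\mathcal B})\otimes H^0(F,\mathcal O_F)$; this is exactly the earlier theorem on hermitian symmetric superspaces, now applied after the reduction above.

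For items~(2)--(4) one has $D_2$ cycle-connected, so $H^0(D_{\bar 0},\mathcal F)=H^0(D_1,\mathcal F)$ and the holomorphic reduction of $D_{\bar 0}$ is the Stein bounded symmetric domain $\mathcal B_1$. I would pass to $\mathrm{gr} D$, whose structure sheaf is $\bigwedge\mathcal F(G_{\bar 0\mathbb R}\times_{L_{\bar 0\mathbb R}}(\mathfrak g_{\bar 1\mathbb R}/\mathfrak l_{\bar 1\mathbb R})^*)$ by Lemma~\ref{OnFunctions}, and read off its holomorphic reduction from the lemma preceding this theorem: using $\mathfrak s_\mp=T_{e,\bar 1}J^\pm$, the vanishing $\mathfrak l_{\bar 1\mathbb R}\cap\mathfrak s_\mp=0$ is equivalent to $L_\mathbb R\subseteq J^\pm$ as sub-SHCP, hence to the existence of a projection $D\to G_\mathbb R/J^\pm$ (and of $\mathrm{gr} D\to\mathrm{gr} G_\mathbb R/\mathrm{gr} J^\pm$); if both intersections are nonzero, irreducibility of $\mathfrak s_\pm$ forces $\mathfrak j_{\bar 1}=\mathfrak g_{\bar 1\mathbb R}$, so the holomorphic reduction of $\mathrm{gr} D$ is $(D_1,\mathcal F)$. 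In items~(2)--(3) the fibre of the projection is a cycle-connected orbit of $J^\varepsilon_{\bar 0}$ (resp. $(J^+\cap J^-)_{\bar 0}$) with no proper invariant odd submodule, so the fibre has only constant odd functions and the Leray argument of item~(1) gives $H^0(D,\mathcal O)\cong H^0(D_1,\mathcal F)\otimes\bigwedge\mathfrak s_\varepsilon$, respectively $\cong H^0(D_1,\mathcal F)\otimes\bigwedge\mathfrak g_{\bar 1\mathbb R}^*$ (here $\mathfrak p\cap\tau\mathfrak p$ is purely even, so $(\mathfrak g_{\bar 1\mathbb R}/\mathfrak l_{\bar 1\mathbb R})^*=\mathfrak g_{\bar 1\mathbb R}^*$). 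In item~(4) the maximal $G_{\bar 0\mathbb R}$-submodule $W\subseteq\mathrm{Im}\,\gamma^*$ is zero, the cycle-connected version of Vishnyakova's characterisation proven above gives $H^0(\mathrm{gr} D,\mathrm{gr}\,\mathcal O)\cong H^0(D_1,\mathcal F)$, and since this bounds $H^0(D,\mathcal O)$ from above while pullback of functions from $D_1$ realises it, $H^0(D,\mathcal O)\cong H^0(D_1,\mathcal F)$.

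The main obstacle is the cycle-connected case, and within it the step that the holomorphic reduction of $\mathrm{gr} D$ actually descends from a $\mathrm{gr} G_\mathbb R$-homogeneous space to a genuine $G_\mathbb R$-homogeneous superspace, i.e.\ that $\mathfrak s_\pm$ is $\mathrm{Ad}(L_{\bar 0\mathbb R})$-stable and $\mathfrak l_{\bar 1\mathbb R}\subseteq\mathfrak k_{1,\mathbb R}\oplus\mathfrak g_{2,\mathbb R}\oplus\mathfrak s_{\mp}$ when $\mathfrak l_{\bar 1\mathbb R}\cap\mathfrak s_\mp=0$; this has to be checked from the explicit block form of $\mathfrak p$ in each family (type~A via $\mathrm{Hom}(\mathbb C^p,\Pi\mathbb C^m)$ and $\mathrm{Hom}(\mathbb C^q,\Pi\mathbb C^m)$, and the orthosymplectic families through the intersection descriptions above, cf.\ Example~\ref{TypeIcomp}). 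The subordinate technical point is confirming that in items~(2)--(3) the upper bound from $\mathrm{gr} D$ is attained, which follows because the pullback along $D\to G_\mathbb R/J^\pm$ (resp.\ $D\to G_\mathbb R/(J^+\cap J^-)$) already supplies the full exterior algebra of odd sections; in item~(4) there is nothing to produce beyond $H^0(D_1,\mathcal F)$, so attainment is automatic.
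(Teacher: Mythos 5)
Your proposal assembles exactly the ingredients the paper uses: item (1) is the earlier Leray spectral sequence theorem for the proper fibration onto the Stein hermitian symmetric superspace, items (2)--(4) are the preceding lemma on $J^{\pm}$ and the holomorphic reduction of $\mathrm{gr}\, D$ (with the identification $\mathfrak{s}_{\mp} = T_{e,\bar{1}}J^{\pm}$ and the upper bound $H^0(D,\mathcal{O}) \leq H^0(\mathrm{gr}\, D, \mathrm{gr}\,\mathcal{O})$ from the Onishchik--Vishnyakova spectral sequence), and the reduction of the orthosymplectic families via $SO^*(2n) = U(n,n) \cap SO(2n)$ etc.\ is the paper's closing remark before the theorem. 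This is essentially the same proof; the extra care you take in checking that the reduction of $\mathrm{gr}\, D$ descends to a genuine $G_\mathbb{R}$-homogeneous superspace and that the upper bound is attained by pullback is a faithful elaboration of the paper's terser argument, not a different route.
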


\section{The Flag domains of types P and Q}

If $G$ is a simple Lie supergroup of type $P$ or $Q$ then $G_{\bar{0}} = SL_n(\mathbb{C})$
and $G_{\bar{0}\mathbb{R}}$ is one of its real forms. Note however that if $G = P(n)$ then there
are no real forms with $G_{0\mathbb{R}} = SU(p,q)$ for any choice of $p$ and $q$.

First suppose $G$ is of type $Q$. In that case $\mathfrak{p} = \mathfrak{p}_{\bar{0}} \oplus \Pi \mathfrak{p}_{\bar{0}}$
which implies the following:

\begin{satz}

Let $G = Q(n)$, $G_\mathbb{R}$ one of its real forms and $D \subseteq Z = G/P$ an open orbit.
If the bounded symmetric domain subordinate to $D_{\bar{0}}$ is $\mathcal{B} = G_{\bar{0}\mathbb{R}}/K_{\bar{0}\mathbb{R}}$
then $H^0(D,\mathcal{O}) \cong H^0(\mathcal{B},\mathcal{F}) \otimes \bigwedge \Pi(\mathfrak{g}_{\bar{0}\mathbb{R}}/\mathfrak{k}_{\bar{0}\mathbb{R}})^*$.
Otherwise $H^0(D,\mathcal{O}) = \mathbb{C}$.

\end{satz}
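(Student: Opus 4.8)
The plan is to exploit the special structure of type $Q$ Lie superalgebras, where all root spaces are $1\vert1$-dimensional and, crucially, $\mathfrak{g} = \mathfrak{g}_{\bar 0}\oplus\Pi\mathfrak{g}_{\bar 0}$ as a $\mathfrak{g}_{\bar 0}$-module, with the same decomposition inherited by any parabolic: $\mathfrak{p} = \mathfrak{p}_{\bar 0}\oplus\Pi\mathfrak{p}_{\bar 0}$ (this is essentially the realization of parabolics of $\mathfrak{q}_n$ as stabilizers of $\Pi$-invariant flags, recalled in Section~\ref{OnPSA}). Consequently $\mathfrak{l}_{\bar 1\mathbb{R}} = \Pi\mathfrak{l}_{\bar 0\mathbb{R}}$ and $\mathfrak{g}_{\bar 1\mathbb{R}}/\mathfrak{l}_{\bar 1\mathbb{R}} \cong \Pi(\mathfrak{g}_{\bar 0\mathbb{R}}/\mathfrak{l}_{\bar 0\mathbb{R}})$ as $L_{\bar 0\mathbb{R}}$-modules. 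So the odd cotangent fibre $(\mathfrak{g}_{\bar 1\mathbb{R}}/\mathfrak{l}_{\bar 1\mathbb{R}})^*$ is just a parity-shifted copy of the cotangent fibre of the \emph{classical} flag domain $D_{\bar 0} = G_{\bar 0\mathbb{R}}/L_{\bar 0\mathbb{R}}$.

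\textbf{Case distinction via the classical holomorphic reduction.} I would then split according to the classical dichotomy for $G_{\bar 0\mathbb{R}}$-flag domains recalled in Section~2 (hermitian holomorphic versus cycle-connected), applied to $D_{\bar 0}$, which by [W, 5.7] admits a projection $D_{\bar 0}\to\mathcal{B}=G_{\bar 0\mathbb{R}}/K_{\bar 0\mathbb{R}}$ onto the bounded symmetric domain subordinate to it. In the \emph{cycle-connected} case $\mathcal{B}$ is a point, $H^0(D_{\bar 0},\mathcal{F})=\mathbb{C}$, and I want $H^0(D,\mathcal{O})=\mathbb{C}$. Here I invoke the theorem on holomorphic superfunctions on cycle-connected flag domains proved earlier in this chapter: non-trivial global odd functions require a non-trivial $G_{\bar 0\mathbb{R}}$-submodule $W\subseteq\operatorname{Im}\gamma^*\subseteq(\mathfrak{g}_{\bar 1\mathbb{R}})^*$. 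But $(\mathfrak{g}_{\bar 1\mathbb{R}})^*\cong\Pi(\mathfrak{g}_{\bar 0\mathbb{R}})^*$, and for $\mathfrak{g}_{\bar 0\mathbb{R}}$ simple (a real form of $\mathfrak{sl}_n(\mathbb{C})$) the module $(\mathfrak{g}_{\bar 0\mathbb{R}})^*$ is irreducible as a $G_{\bar 0\mathbb{R}}$-module; so the only candidate is $W=(\mathfrak{g}_{\bar 1\mathbb{R}})^*$ itself, which lies in $\operatorname{Im}\gamma^*$ iff $\mathfrak{l}_{\bar 1\mathbb{R}}=0$ iff $\mathfrak{l}_{\bar 0\mathbb{R}}=0$, impossible for a proper parabolic. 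Hence $H^0(D,\mathcal{O})=\mathbb{C}$.

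\textbf{The hermitian holomorphic case.} When $\mathcal{B}=G_{\bar 0\mathbb{R}}/K_{\bar 0\mathbb{R}}$ is a genuine bounded symmetric domain, $D_{\bar 0}\to\mathcal{B}$ is a holomorphically trivial fibre bundle with compact fibre $C_0=K_{\bar 0\mathbb{R}}/L_{\bar 0\mathbb{R}}$ the base cycle. I would build the supergeometric analogue: since $\mathfrak{p}=\mathfrak{p}_{\bar 0}\oplus\Pi\mathfrak{p}_{\bar 0}$, the subspace $\mathfrak{j}:=\mathfrak{k}_{\bar 0\mathbb{R}}\oplus\Pi\mathfrak{k}_{\bar 0\mathbb{R}}$ together with $K_{\bar 0\mathbb{R}}$ forms a sub-SHCP of $(G_{\bar 0\mathbb{R}},\mathfrak{g}_\mathbb{R})$ containing $(L_{\bar 0\mathbb{R}},\mathfrak{l}_\mathbb{R})$; let $J$ be the corresponding Lie supergroup. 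Then $D\to G_\mathbb{R}/J$ has compact fibre (a $\bar J$-orbit over $C_0$, purely the base cycle with its split sheaf) and Stein base $\mathcal{B}$, and $G_\mathbb{R}/J$ is the split homogeneous superspace with structure sheaf $\bigwedge\mathcal{F}(K_{\bar 0\mathbb{R}}\backslash G_{\bar 0\mathbb{R}}\times\ldots)$; by Lemma~\ref{OnFunctions} and Theorem~2 in \cite{V} its odd functions are $\bigwedge\Pi(\mathfrak{g}_{\bar 0\mathbb{R}}/\mathfrak{k}_{\bar 0\mathbb{R}})^*$. A Leray spectral sequence argument (fibre cohomology of the compact fibre is concentrated in degree $0$ since the base cycle is the classical one and carries no non-constant holomorphic sections beyond the constants after the twist, plus $\mathcal{B}$ Stein kills higher cohomology) then gives $H^0(D,\mathcal{O})\cong H^0(\mathcal{B},\mathcal{F})\otimes\bigwedge\Pi(\mathfrak{g}_{\bar 0\mathbb{R}}/\mathfrak{k}_{\bar 0\mathbb{R}})^*$.

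\textbf{Main obstacle.} The delicate point is the hermitian case: one must check that the fibre of $D\to G_\mathbb{R}/J$ really is \emph{cohomologically trivial} for the relevant sheaf — equivalently that on the base cycle $C_0$ the homogeneous bundle $\bigwedge(\mathfrak{j}_{\bar 1}/\mathfrak{l}_{\bar 1})^*\otim(\ldots)$ has no higher cohomology and exactly the constants in degree $0$ — and that $J$ is indeed a closed subsupergroup with $D\to G_\mathbb{R}/J$ a locally trivial fibration (the classical triviality of $D_{\bar 0}\to\mathcal{B}$ must be shown to lift, using that everything in sight is split). If the fibre bundle were not holomorphically trivial one would only get $H^0(D,\mathcal{O})$ sitting inside the claimed tensor product via $H^0(\mathrm{gr}\,D,\mathcal{O})$ as an upper bound; pinning down equality is where the type $Q$ structure $\mathfrak{p}=\mathfrak{p}_{\bar 0}\oplus\Pi\mathfrak{p}_{\bar 0}$ does the real work, because it forces the whole situation to be the parity-doubled version of the purely classical picture of [W].
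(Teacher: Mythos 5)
Your proposal is correct and follows essentially the same route as the paper, which offers no separate proof beyond the remark that $\mathfrak{p} = \mathfrak{p}_{\bar{0}} \oplus \Pi\mathfrak{p}_{\bar{0}}$ ``implies'' the theorem: your expansion via the parity-doubled structure, the cycle-connected criterion (irreducibility of the adjoint module forcing $H^0(D,\mathcal{O}) = \mathbb{C}$ since $\mathfrak{l}_{\bar{1}\mathbb{R}} = \Pi\mathfrak{l}_{\bar{0}\mathbb{R}} \neq 0$), and the Leray argument over the Stein base $\mathcal{B}$ in the hermitian case is exactly the intended application of the chapter's general machinery.
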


Now let $G= P(n), n \geq 4$. For odd $n$ its only real form up to isomorphism is $P_\mathbb{R}(n)$, for even $n$ there is a second real form $P_\mathbb{H}(k)$. To avoid confusion the parabolic subalgebra of $G$ will be denoted $Q$ in this case. The action of $\tau$ on the root system is given in both cases by $\tau(\pm x_i \pm x_j) = \pm x_{n-i+1} \pm x_{n-j+1}$. As the root system of $\mathfrak{g}$ contains roots whose negatives are not roots, a necessary condition for a real orbit to have maximal odd dimension is $\mathfrak{g}^{2x_i} \in \mathfrak{q}$ for all $1 \leq i \leq n$. This requires the dimension sequence $\delta$ to be of the form $\delta = 0 \vert 0 < d_1 \vert d_1 < \ldots < d_k \vert d_k < n \vert n$.
As the underlying open orbits are open $SL_n(\mathbb{R})$-orbits or $SL_k(\mathbb{H})$-orbits, which are all cycle-connected, the global odd functions
can be characterized as follows:

\begin{satz}
 
Let $G = P(n), n \geq 4$, $G_\mathbb{R}$ one of its real forms and $D \subseteq Z(\delta) = G/Q$ a $G_\mathbb{R}$-flag domain. Then  $H^0(D, \mathcal{O}) = \mathbb{C}$.

\end{satz}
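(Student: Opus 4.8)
The plan is to reduce to the characterization of global odd functions on a cycle-connected flag domain and then show that the module $\mathrm{Im}\,\gamma^*$ appearing there contains nothing nontrivial. First I would identify the base. The real forms of $P(n)$ are $\mathfrak{s\pi}_\mathbb{R}(n)$ and, when $n$ is even, $\mathfrak{s\pi}^*(n)$, with even parts $G_{\bar 0\mathbb{R}} = SL_n(\mathbb{R})$ and $SL_{n/2}(\mathbb{H})$ respectively; for $n \ge 4$ both are non-compact real forms of $SL_n(\mathbb{C})$ that are \emph{not} of hermitian type (this is exactly where $n\ge 4$ is used: it rules out the hermitian real form $SL_2(\mathbb{R})$ and the compact $SL_1(\mathbb{H})$). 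Hence, by the classical theory of \cite{W} (cf.\ the hermitian holomorphic / cycle-connected dichotomy recalled earlier), every open orbit $D_{\bar 0} = G_{\bar 0\mathbb{R}}/L_{\bar 0\mathbb{R}}$ in the underlying flag manifold is cycle-connected, i.e.\ $H^0(D_{\bar 0},\mathcal{F}) = \mathbb{C}$. Since $\mathfrak{g}$ is of type $I$, $\mathfrak{g}_{\bar 1}$ is a completely reducible $G_{\bar 0}$-module, and the cycle-connected characterization theorem applies: it suffices to show that $\mathrm{Im}\,\gamma^* = (\mathfrak{g}_{\bar 1\mathbb{R}}/\mathfrak{l}_{\bar 1\mathbb{R}})^*$ contains no nonzero $G_{\bar 0\mathbb{R}}$-submodule of $\mathfrak{g}_{\bar 1\mathbb{R}}^*$; then $H^0(D,\mathcal{O}_D) = \mathbb{C}$.

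Next I would analyze the module structure. By Example~\ref{TypeIcomp}, $\mathfrak{g}_{\bar 1} = \mathfrak{g}_1 \oplus \mathfrak{g}_{-1}$, where $\mathfrak{g}_1$ is the antisymmetric block ($Z = 0$) and $\mathfrak{g}_{-1}$ the symmetric block ($Y = 0$); these are irreducible and, having different dimensions $\binom{n}{2}$ and $\binom{n+1}{2}$, non-isomorphic as $G_{\bar 0}$-modules. The defining involution $\tau$ of either real form is given blockwise ($X \mapsto \overline{X}$, resp.\ $X \mapsto -J\overline{X}J$ with $J$ block-diagonal), so it preserves the decomposition; hence $\mathfrak{g}_{\pm 1,\mathbb{R}} := \mathfrak{g}_{\bar 1\mathbb{R}} \cap \mathfrak{g}_{\pm 1}$ are real forms of $\mathfrak{g}_{\pm 1}$, irreducible and non-isomorphic over $G_{\bar 0\mathbb{R}}$, and these are the only nonzero proper $G_{\bar 0\mathbb{R}}$-submodules of $\mathfrak{g}_{\bar 1\mathbb{R}}$; dually, $\mathrm{Ann}(\mathfrak{g}_{1,\mathbb{R}})$ and $\mathrm{Ann}(\mathfrak{g}_{-1,\mathbb{R}})$ are the only nonzero proper submodules of $\mathfrak{g}_{\bar 1\mathbb{R}}^*$. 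Choosing the base point of $D$ so that $\mathfrak{l}_\mathbb{R}$ contains a fundamental Cartan subalgebra $\mathfrak{h}_\mathbb{R}$, the subalgebra $\mathfrak{l}_{\bar 1} = \mathfrak{q}_{\bar 1} \cap \tau\mathfrak{q}_{\bar 1}$ is a sum of $\mathfrak{h}$-root spaces, and since $\mathfrak{g}_{\pm 1}$ are themselves sums of root spaces, $\mathfrak{l}_{\bar 1} = (\mathfrak{l}_{\bar 1}\cap\mathfrak{g}_1)\oplus(\mathfrak{l}_{\bar 1}\cap\mathfrak{g}_{-1})$. Dualizing, $\mathrm{Im}\,\gamma^* = \mathrm{Ann}(\mathfrak{l}_{\bar 1\mathbb{R}})$ contains a nonzero $G_{\bar 0\mathbb{R}}$-submodule if and only if $\mathfrak{l}_{\bar 1}\cap\mathfrak{g}_1 = 0$ or $\mathfrak{l}_{\bar 1}\cap\mathfrak{g}_{-1} = 0$. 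So the proof reduces to showing that \emph{both} intersections are nonzero.

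Finally I would exploit the maximal-odd-dimension hypothesis. As $D$ is a flag domain, $\delta$ is $\Pi$-symmetric, and by the proof of that criterion every $\tau$-fixed odd root $\alpha$ lies in $\Phi$, i.e.\ $\mathfrak{g}^\alpha \subseteq \mathfrak{q}$; since $\tau\alpha = \alpha$ and the root spaces of $\mathfrak{g}$ are one-dimensional this also gives $\mathfrak{g}^\alpha \subseteq \tau\mathfrak{q}$, hence $\mathfrak{g}^\alpha \subseteq \mathfrak{l}$. Now $x_1 + x_n$ is an off-diagonal weight of the antisymmetric block $\mathfrak{g}_1$ (here $1 \ne n$ as $n \ge 2$) and is $\tau$-fixed, since $\tau(x_1 + x_n) = x_n + x_1$; symmetrically $-(x_1 + x_n)$ is a $\tau$-fixed weight of $\mathfrak{g}_{-1}$. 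Hence $0 \ne \mathfrak{g}^{x_1 + x_n} \subseteq \mathfrak{l}_{\bar 1}\cap\mathfrak{g}_1$ and $0 \ne \mathfrak{g}^{-(x_1+x_n)} \subseteq \mathfrak{l}_{\bar 1}\cap\mathfrak{g}_{-1}$, so neither intersection vanishes; by the previous paragraph $H^0(D,\mathcal{O}) = \mathbb{C}$.

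The real difficulty here is bookkeeping rather than depth. The two points that need care are (i) verifying that $D_{\bar 0}$ is genuinely cycle-connected — this is where the hypothesis $n\ge 4$ is essential, since it excludes the hermitian real form $SL_2(\mathbb{R})$ and the compact $SL_1(\mathbb{H})$ — and (ii) matching the root conventions of the $P(n)$-section so that a $\tau$-fixed odd root lands in \emph{each} of the two blocks $\mathfrak{g}_{\pm 1}$ and is actually forced into $\mathfrak{q}$ by maximal odd dimension; this last fact is precisely the combinatorial input already established in the proof of the $\Pi$-symmetry criterion for $P(n)$.
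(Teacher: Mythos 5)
Your proposal is correct and follows essentially the same route as the paper: both arguments locate the $\tau$-fixed odd roots $\pm(x_i+x_{n-i+1})$ in the two irreducible components $\mathfrak{g}_{\pm 1}$, use the maximal-odd-dimension ($\Pi$-symmetry) criterion to force the corresponding root spaces into $\mathfrak{l}_{\bar 1}$, and conclude that $(\mathfrak{g}_{\bar 1\mathbb{R}}/\mathfrak{l}_{\bar 1\mathbb{R}})^*$ carries no nontrivial $G_{\bar 0\mathbb{R}}$-submodule. Your added care in checking cycle-connectedness of the base and in choosing $i=1$ (so the root genuinely lies in the antisymmetric block) only makes the bookkeeping slightly more explicit than the paper's version.
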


\begin{proof}

Let $\mathfrak{g}_1$ and $\mathfrak{g}_{-1}$ be the two irreducible components of $\mathfrak{g}_{\bar{1}}$ according to Example \ref{TypeIcomp}. Then $\mathfrak{g}_1$ contains
the root spaces $\mathfrak{g}^{x_i + x_{n-i+1}}, 1 \leq i \leq n$ and $\mathfrak{g}_{-1}$ contains the root spaces $\mathfrak{g}^{-x_i - x_{n-i+1}}, 1 \leq i \leq n$. All these roots are fixed points of $\tau$, therefore for a $G_\mathbb{R}$-orbit $D$ with maximal odd dimension
the intersection $\mathfrak{l}_{\bar{1}\mathbb{R}} \cap \mathfrak{g}_{\pm 1}$ will always be non-zero. Thus there are no non-trivial $g_\mathbb{R}$-submodules in $(\mathfrak{g}_{\bar{1}\mathbb{R}}/\mathfrak{l}_{\bar{1}\mathbb{R}})^*$.

\end{proof}
  
\section{Extension of the theory to even-homogeneous flag domains}

In addition to the flag domains considered so far the flag domains of even real forms also deserve attention. 
In this setting $G_\mathbb{R}$ is a purely even group, but the subalgebra $\mathfrak{l} = \mathfrak{p} \cap \tau\mathfrak{p}$
can still be defined. The real stabiliser $L_\mathbb{R}$ is a real form of $L_{\bar{0}}$.

Let $D$ be a flag domain of even-homogeneous type. Then $\mathrm{gr} D = (D_{\bar{0}}, \bigwedge \mathcal{E})$, where
$\mathbb{E} = G_\mathbb{R} \times_{L_\mathbb{R}} (\mathfrak{g}_{\bar{1}}/\mathfrak{p}_{\bar{1}})^*$. This identification suggests
that the holomorphic reduction is once again a useful tool to compute the global holomorphic functions on $D$. 

The classification of global holomorphic functions is very similar to the case considered before. The main differences are
the following:

\begin{itemize}
 \item Even-real forms only exist for the basic classical Lie superalgebras, not for Lie superalgebras of type $P$ and $Q$
 \item The role of the maximal $G_{\bar{0}\mathbb{R}}$-submodule of $\mathfrak{g}_{\bar{1}\mathbb{R}}$ is now assumed by the maximal
       $\tau$-invariant $G$-submodule of $\mathfrak{g}_{\bar{1}}$.  
\end{itemize}

\noindent The actions of $\tau$ on the root system coincide with those considered before according to the following table:\newline

 \begin{tabular}{ p{4cm} | p{6cm} }
 
 real form & even-real form  \\ \hline
  $SU(p,n-p \vert q,m-q)$ & $SU(p,n-p) \times SU(q,m-q) \times U(1)$ \\ \hline
  $SL_{n \vert m}(\mathbb{R}),SL_{k \vert l}(\mathbb{H})$ & $SL_{n}(\mathbb{R}) \times SL_l(\mathbb{H}) \times \mathbb{R}^{>0}$ \\ \hline
  $ ^0PQ(n)$ & $SL_{n}(\mathbb{C})$ \\ \hline
  $US\Pi(n)$  & none \\ \hline
  $\mathrm{Osp}(2p+1,2q+1 \vert 2m)$  & $SO(2p+1,2q+1) \times Sp(2m)$ \\ \hline

\end{tabular}\newline

\noindent For all even-real forms not listed above the action of $\tau$ on the root system is trivial. 

\begin{satz}
 
Let $G$ be a basic classical Lie supergroup, $Z = G/P$ a $G$-flag manifold, $G_\mathbb{R}$ an even-real form and $\widetilde{G_\mathbb{R}}$ be the associated
real form according to the above table. Let $D$ and $\tilde{D}$ be open orbits of $G_\mathbb{R}$ and $\widetilde{G_\mathbb{R}}$
respectively with the same base point in $Z$. If $H^0(D, \mathcal{F}) = H^0(\tilde{D}, \mathcal{F}) = \mathbb{C}$,
then $H^0(D, \mathcal{O}) = H^0(\tilde{D}, \mathcal{O})$. 

\end{satz}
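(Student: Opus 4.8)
The plan is to reduce both sides to combinatorial data that sees $D$ and $\tilde D$ only through the parabolic $\mathfrak p=\operatorname{Stab}_{\mathfrak g}(z_0)$ at the common base point $z_0$ and through the action on a root system $\Sigma(\mathfrak g,\mathfrak h)$ of the two defining maps — the (order $4$, in general) automorphism $\tau$ of $G_{\mathbb R}$ and the involution $\tilde\tau$ of $\widetilde{G_{\mathbb R}}$ — where $\mathfrak h$ is chosen stable under both. Once that is done the theorem is immediate: $D$ and $\tilde D$ have the same base point, hence the same parabolic and the same $\mathfrak p_{\bar 1}$, and by the way the correspondence in the table above is set up $\tau$ and $\tilde\tau$ induce the same permutation of $\Sigma(\mathfrak g,\mathfrak h)$.

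First I would record, from the results of the two preceding chapters, the shape of each cohomology. For $\tilde D$ the assumption $H^0(\tilde D,\mathcal F)=\mathbb C$ places us in the cycle-connected case, so $H^0(\tilde D,\mathcal O_{\tilde D})\cong\bigwedge\widetilde W$; here the upper bound is $H^0(\tilde D,\mathrm{gr}\,\mathcal O_{\tilde D})$, computed by Lemma~\ref{Lemma3V} and Vishnyakova's Lemma~5 (the side hypothesis $\delta(\widetilde{\mathfrak l}_{\bar 1\mathbb R})\subseteq Y$ being automatic, since every admissible submodule annihilates $\widetilde{\mathfrak l}_{\bar 1\mathbb R}$), and the matching lower bound is furnished by the projection onto a $(\mathrm{gr}\,\widetilde{G_{\mathbb R}})$-homogeneous holomorphic reduction, as in the cycle-connected theorems. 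Since $\tilde D$, being an open orbit, has maximal odd dimension, the codimension formula gives $\widetilde{\mathfrak l}_{\bar 1\mathbb R}^{\,\mathbb C}=\mathfrak p_{\bar 1}$ and the open-orbit form of Lemma~\ref{OnFunctions} identifies the odd cotangent fibre with $(\mathfrak g_{\bar 1}/\mathfrak p_{\bar 1})^*$; accordingly the complexification $\widetilde W^{\,\mathbb C}$ is the maximal $\tilde\tau$-stable $G_{\bar 0}$-submodule of $\mathfrak g_{\bar 1}^*$ contained in $\operatorname{Ann}(\mathfrak p_{\bar 1})$. For $D$, which is again cycle-connected, the even-homogeneous version of the same theorems — worked out in this section, with the role of the maximal $\widetilde{G_{\bar 0\mathbb R}}$-submodule of $\mathfrak g_{\bar 1\mathbb R}$ now played by the maximal $\tau$-stable $G_{\bar 0}$-submodule of $\mathfrak g_{\bar 1}$, and with $\mathrm{gr}\,\mathcal O_D=\mathrm{gr}\,\mathcal O_Z|_{D_{\bar 0}}$ together with the split holomorphic reduction of $D$ giving the upper and lower bounds — yields $H^0(D,\mathcal O_D)\cong\bigwedge W$, where $W$ is the maximal $\tau$-stable $G_{\bar 0}$-submodule of $\mathfrak g_{\bar 1}^*$ contained in $\operatorname{Ann}(\mathfrak p_{\bar 1})$ (which exists, such submodules being closed under sums).

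Then I would compare $W$ with $\widetilde W^{\,\mathbb C}$. Both are the maximal $G_{\bar 0}$-submodule of $\mathfrak g_{\bar 1}^*$ lying in $\operatorname{Ann}(\mathfrak p_{\bar 1})$ and being stable under $\tau$, respectively $\tilde\tau$ (for $\widetilde W^{\,\mathbb C}$ one uses in addition that $\widetilde{G_{\bar 0\mathbb R}}$ is Zariski dense in $G_{\bar 0}$, so that real $\widetilde{G_{\bar 0\mathbb R}}$-submodules complexify to $\tilde\tau$-stable $G_{\bar 0}$-submodules, and $\widetilde{\mathfrak l}_{\bar 1\mathbb R}^{\,\mathbb C}=\mathfrak p_{\bar 1}$). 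Every $G_{\bar 0}$-submodule of $\mathfrak g_{\bar 1}^*$, and also $\operatorname{Ann}(\mathfrak p_{\bar 1})$, is a sum of $\mathfrak h$-weight spaces, and such a submodule is $\tau$-stable (resp.\ $\tilde\tau$-stable) precisely when its set of weights is stable under the permutation of $\Sigma(\mathfrak g,\mathfrak h)$ induced by $\tau$ (resp.\ $\tilde\tau$); since these permutations coincide, the two stability conditions are literally the same requirement. Hence $W=\widetilde W^{\,\mathbb C}$, the exterior algebras $\bigwedge W$ and $\bigwedge\widetilde W$ are isomorphic, and therefore $H^0(D,\mathcal O_D)=H^0(\tilde D,\mathcal O_{\tilde D})$.

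The weight-combinatorics step is purely formal; the substantive ingredient — and the only one not already available from the preceding chapter — is the even-homogeneous version of the Onishchik--Vishnyakova characterization invoked for $D$. Making it rigorous means checking that Lemma~\ref{Lemma3V} (the embedding of the homogeneous odd-cotangent bundle into a trivial one) and Vishnyakova's Lemma~5 (passage to exterior powers) remain valid when the homogeneous model $G_{\mathbb R}\times_{L_{\mathbb R}}(\mathfrak g_{\bar 1}/\mathfrak p_{\bar 1})^*$ has $G_{\mathbb R}$ a purely even real form of $G_{\bar 0}$; that the cycle-connectedness $H^0(D,\mathcal F)=\mathbb C$ suffices to pin down the global sections of the ambient trivial bundle; and — the delicate point — that, $\mathcal O_Z$ being non-split in general, the non-split structure cuts the sections of $\mathrm{gr}\,\mathcal O_D$ down exactly to the $\tau$-stable part, which the split holomorphic reduction of $D$ then realizes as genuine global odd superfunctions, mirroring the way the real structure performs the analogous cut to the $\tilde\tau$-stable part in the case of $\tilde D$.
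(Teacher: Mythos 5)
Your proposal is correct and follows essentially the same route as the paper: both arguments reduce $H^0(D,\mathcal{O})$ and $H^0(\tilde{D},\mathcal{O})$ to the maximal invariant submodule of $(\mathfrak{g}_{\bar{1}}/\mathfrak{p}_{\bar{1}})^*$ and then observe that $\tau$-invariance and $\tilde{\tau}$-invariance impose the same condition, since the two maps induce the same action on the root system (the paper phrases this as the maps differing only by scalars on the components of $\mathfrak{g}_{\bar{1}}$, which is the same fact as your weight-space argument). Your write-up is merely more explicit about the reduction and about the even-homogeneous analogues of Vishnyakova's lemmas, which the paper's two-sentence proof takes for granted.
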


\begin{proof}

First note that in a suitable basis the defining involutions $\tau$ and $\tilde{\tau}$ only differ by a linear operator
taking different scalar values on components of $\mathfrak{g}_{\bar{1}}$. Consequently $\tau$-invariant $G$-submodules
and $\tilde{\tau}$-invariant $G$-submodules coincide.

Now the invariant $G_\mathbb{R}$-submodules of $(\mathfrak{g}_{\bar{1}}/\mathfrak{p}_{\bar{1}})^*$ are in one-to-one correspondence with
the $\widetilde{G_{\bar{0}\mathbb{R}}}$-submodules of $(\widetilde{\mathfrak{g}_{\bar{1}\mathbb{R}}}/\mathfrak{l}_{\bar{1}\mathbb{R}})^*$.  As the global holomorphic functions
are given by these modules, the respective $H^0$-groups must be isomorphic.

\end{proof}

Now assume $H^0(D,\mathcal{F}) \neq \mathbb{C}$. Then $D_{\bar{0}} = D_1 \times D_2$ where at least one $D_i$ is a flag domain of hermitian holomorphic type. 

First assume that both $D_1$ and $D_2$ are of hermitian holomorphic type. In that case $Z$ maps onto $\hat{Z} = G/KS$,
where $K \subseteq G$ is a complex subsupergroup defined by an involution $\theta: \mathfrak{g} \rightarrow \mathfrak{g}$
which restricts to the Cartan involution on $\mathfrak{g}_\mathbb{R}$, and $S$ is a nilpotent subsupergroup of $G$.
The image $\hat{D}$ of $D$ in $\hat{Z}$ has as its base a hermitian symmetric domain. Moreover the fibration $D \rightarrow \hat{D}$ is a fibre bundle with compact fibre $F$. This leads to the following result:

\begin{satz}

Let $D$ be a flag domain of an even real form with $D_{\bar{0}}$ hermitian holomorphic, $\hat{D} \subseteq G/KS$ such that $\hat{D}_{\bar{0}}$ is the bounded symmetric domain subordinate to $D_{\bar{0}}$ and $F$ the fibre of the canonical projection. Then

\[ H^0(D, \mathcal{O}) \cong H^0(\hat{D},\mathcal{O}) \otimes H^0(F, \mathcal{O}) = H^0(\hat{D}_{\bar{0}}, \mathcal{F}) \otimes \bigwedge \overline{\mathfrak{s}}_{\bar{1}} \otimes H^0(F, \mathcal{O}) \] 

\noindent where $\bar{\mathfrak{s}}$ is the complementary $K_{\bar{0}}$-module to $\mathfrak{k} + \mathfrak{s}$ in $\mathfrak{g}$.

\end{satz}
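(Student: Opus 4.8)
The plan is to mimic the classical structure of the argument used in the cycle-connected and hermitian holomorphic cases, now for an even real form. The central fact to exploit is the one already recorded in the excerpt: for a split (or, via $\mathrm{gr}$, for an arbitrary) homogeneous superspace $G/H$ one has $\mathcal{O}_{G/H} \cong \bigwedge\mathcal{E}$ with $\mathbb{E} = G_{\bar{0}}\times_{H_{\bar 0}}(\mathfrak{g}_{\bar 1}/\mathfrak{h}_{\bar 1})^*$ (Lemma \ref{OnFunctions}), so that in the even-real-form setting $\mathrm{gr} D = (D_{\bar 0},\bigwedge\mathcal{E})$ with $\mathbb{E} = G_\mathbb{R}\times_{L_\mathbb{R}}(\mathfrak{g}_{\bar 1}/\mathfrak{p}_{\bar 1})^*$. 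The strategy is: (i) pass to $\hat{D}\subseteq G/KS$, whose base is the bounded symmetric domain subordinate to $D_{\bar 0}$ and hence Stein; (ii) observe that $D\to\hat D$ is a holomorphic fibre bundle with compact fibre $F = KS/L$ (this is the supergeometric lift of the classical holomorphic-reduction fibration, where $K$ comes from an order-$2$ or order-$4$ $\theta$ restricting to the Cartan involution and $S$ is the nilpotent radical ensuring $\hat D_{\bar 0}$ is the correct symmetric domain); (iii) run the Leray spectral sequence for $\varphi:D\to\hat D$, using that $\hat D$ is Stein so $H^q(\hat D, R^p\varphi_*\mathcal{O}_D)=0$ for $q>0$, collapsing it to $H^0(D,\mathcal{O}_D)\cong H^0(\hat D, R^0\varphi_*\mathcal{O}_D)$; (iv) identify the fibrewise global functions: since $F$ is compact and $\hat D_{\bar 1}$-directions are taken care of, $\varphi_*\mathcal{O}_D$ is the sheaf of sections of a homogeneous bundle whose fibre is $H^0(F,\mathcal{O}_F)$, and a repeated application of the Künneth-type factorization gives $H^0(D,\mathcal{O})\cong H^0(\hat D,\mathcal{O})\otimes H^0(F,\mathcal{O})$.

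The next step is to make the first tensor factor explicit. Here $\hat D_{\bar 0}$ is the bounded symmetric domain, so $H^0(\hat D_{\bar 0},\mathcal{F})$ is its ring of holomorphic functions, and the odd directions of $\hat D = G/KS$ contribute exactly $\bigwedge$ of the dual of the odd tangent space of $G/KS$ at the base point. Writing $\mathfrak{g} = (\mathfrak{k}+\mathfrak{s})\oplus\bar{\mathfrak{s}}$ as $K_{\bar 0}$-modules, the odd part of $\bar{\mathfrak{s}}$, namely $\bar{\mathfrak{s}}_{\bar 1}$, is the relevant space; since $KS$ is purely even in its odd-nilpotent factor only up to the part absorbed in $S$, one gets $H^0(\hat D,\mathcal{O})\cong H^0(\hat D_{\bar 0},\mathcal{F})\otimes\bigwedge\bar{\mathfrak{s}}_{\bar 1}$. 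Combining with step (iv) yields the stated formula
\[
H^0(D,\mathcal{O}) \cong H^0(\hat D_{\bar 0},\mathcal{F})\otimes\bigwedge\bar{\mathfrak{s}}_{\bar 1}\otimes H^0(F,\mathcal{O}).
\]
Throughout, the passage from $D$ to $\mathrm{gr} D$ provides an upper bound for $H^0(D,\mathcal{O})$ (the Onishchik–Vishnyakova spectral sequence of Section 2.9), and the projection $\varphi$ provides a matching lower bound via pullback of sections, exactly as in the proof of the analogous cycle-connected theorems earlier in the chapter; so equality follows by squeezing.

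I expect the main obstacle to be the honest construction and properties of the fibration $\varphi: D\to\hat D$ in the even-real-form case — specifically, checking that the subsupergroup $KS$ is well-defined (that $\theta$ can be taken of order $2$, or that the order-$4$ ambiguity flagged in the Cartan-isomorphism discussion does not obstruct forming $K$), that $\hat D_{\bar 0}$ is genuinely the bounded symmetric domain subordinate to $D_{\bar 0}$ and in particular Stein, and that the fibre $F = KS/L$ is compact and connected so that Grauert's direct image theorem and the collapse of the Leray spectral sequence apply. A secondary, more routine point is the $K_{\bar 0}$-module identification of $\bar{\mathfrak{s}}_{\bar 1}$ and the verification that no extra odd directions survive in $H^0(\hat D,\mathcal{O})$ beyond $\bigwedge\bar{\mathfrak{s}}_{\bar 1}$; this is where one must invoke that the maximal $\tau$-invariant $G$-submodule of $\mathfrak{g}_{\bar 1}$ (the even-real-form analogue of the maximal $G_{\bar 0\mathbb{R}}$-submodule) matches the hermitian holomorphic splitting, as recorded in the bulleted remarks opening this section.
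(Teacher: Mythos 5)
Your proposal matches the paper's (largely implicit) argument: the paper establishes the theorem precisely by noting that $Z$ maps onto $\hat{Z}=G/KS$, that $\hat{D}$ has the bounded symmetric domain as its Stein base, and that $D\rightarrow\hat{D}$ is a fibre bundle with compact fibre $F$, whence the Leray spectral sequence collapses to give the tensor-product decomposition, with the odd directions of $\hat{D}$ contributing $\bigwedge\overline{\mathfrak{s}}_{\bar{1}}$. Your additional squeeze between the $\mathrm{gr}\,D$ upper bound and the pullback lower bound is the same device the paper uses in the neighbouring theorems of this chapter, so the route is essentially identical.
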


Now assume that precisely one $D_i$, say $D_1$, is of hermitian holomorphic type. Then the holomorphic reduction of $D_{\bar{0}}$ is $\hat{D_1} = (G_{\mathbb{R},1} \times G_{\mathbb{R},2})/(K_{\mathbb{R},1} \times G_{\mathbb{R},2})$, so $J_\mathbb{R} = K_{\mathbb{R},1} \times G_{\mathbb{R},2}$.
The global holomorphic functions in that case are characterized as follows:

\begin{satz}
 
Let $D \subseteq Z$ open and even-homogeneous and $D_{\bar{0}} = D_1 \times D_2$ with $D_1$ hermitian holomorphic and $D_2$ cycle-connected.
Further let $J_\mathbb{R} = K_{\mathbb{R},1} \times G_{\mathbb{R},2} \subseteq G_\mathbb{R}$ so $G_\mathbb{R}/J_\mathbb{R}$ is the holomorphic reduction of $D_{\bar{0}}$. 
Let $J_{\bar{0}} = P_1 \times G_2$. Then there are two possibilities:

\begin{enumerate}
 \item $(\mathfrak{g}_{\bar{1}}/\mathfrak{p}_{\bar{1}})^*$ contains a maximal non-trivial $J_{\bar{0}}$-module $W$. Let $Y = (\mathfrak{g}_{\bar{1}}^*/W)^*$.
Then $(J_{\bar{0}}, \mathfrak{j}_{\bar{0}} + Y)$ is a SHC subpair of $(G_{\bar{0}},\mathfrak{g})$, $D$ maps onto an open submanifold $D^\prime \subseteq Z^\prime = G/J$ 
 and $H^0(D,\mathcal{O}) \cong H^0(D^\prime,\mathcal{O}) = H^0(D_1, \mathcal{F}) \otimes \bigwedge W$.
 \item $(\mathfrak{g}_{\bar{1}}/\mathfrak{p}_{\bar{1}})^*$ does not contain non-trivial $J_{\bar{0}}$-modules.\newline Then $H^0(D,\mathcal{O}) \cong H^0(D_1,\mathcal{F})$. 
\end{enumerate}

\end{satz}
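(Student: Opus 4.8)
I would prove the theorem by squeezing $H^{0}(D,\mathcal{O})$ between an upper bound coming from the retract $\mathrm{gr}\,D$ and a lower bound coming from a projection of $D$ onto an intermediate flag superdomain, and then showing that the two agree. Since $D$ is the flag domain of an even real form, $\mathrm{gr}\,D=(D_{\bar 0},\bigwedge\mathcal{E})$ with $\mathbb{E}=G_\mathbb{R}\times_{L_\mathbb{R}}(\mathfrak{g}_{\bar 1}/\mathfrak{p}_{\bar 1})^{*}$, and because $(\mathfrak{g}_{\bar 1}/\mathfrak{p}_{\bar 1})^{*}$ is the annihilator $\mathfrak{p}_{\bar 1}^{\perp}$ inside the $G_{\bar 0}$-module $\mathfrak{g}_{\bar 1}^{*}$, the bundle $\mathcal{E}$ is a subbundle of the trivial bundle $D_{\bar 0}\times\mathfrak{g}_{\bar 1}^{*}$. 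Iterating the short exact sequences $0\to\mathcal{J}^{p+1}\to\mathcal{J}^{p}\to(\mathrm{gr}\,\mathcal{O}_{D})_{p}\to0$ exactly as in the proof of Lemma \ref{Lemma6V} gives the upper bound: $\dim H^{0}(D,\mathcal{O})\le\sum_{p\ge0}\dim H^{0}(D_{\bar 0},\bigwedge^{p}\mathcal{E})=\dim H^{0}(D_{\bar 0},\mathrm{gr}\,\mathcal{O}_{D})$.

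The first real step is to compute this. As $D_{1}$ is hermitian holomorphic, its holomorphic reduction is the bounded symmetric domain $\mathcal{B}_{1}=G_{\mathbb{R},1}/K_{\mathbb{R},1}$, which is Stein and contractible; $D_{2}$ is cycle-connected; hence the projection $\varphi\colon D_{\bar 0}=D_{1}\times D_{2}\to\mathcal{B}_{1}=G_\mathbb{R}/J_\mathbb{R}$ has fibre $F_{\bar 0}=C_{1}\times D_{2}$, where $C_{1}=K_{\mathbb{R},1}/L_{\mathbb{R},1}$ is the compact base cycle of $D_{1}$, so that $H^{0}(F_{\bar 0},\mathcal{F})=\mathbb{C}$. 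Pushing $\bigwedge^{p}\mathcal{E}$ forward along $\varphi$ and using that $\mathcal{B}_{1}$ is Stein, $H^{0}(D_{\bar 0},\bigwedge^{p}\mathcal{E})=H^{0}(\mathcal{B}_{1},\varphi_{*}\bigwedge^{p}\mathcal{E})$, where $\varphi_{*}\bigwedge^{p}\mathcal{E}$ is the $G_\mathbb{R}$-homogeneous bundle on $\mathcal{B}_{1}$ with fibre $H^{0}(F_{\bar 0},\bigwedge^{p}\mathcal{E}|_{F_{\bar 0}})$. Applying the argument of Lemma \ref{Lemma3V} to $\mathcal{E}|_{F_{\bar 0}}$ as a subbundle of a trivial bundle over the cycle-connected space $F_{\bar 0}$, together with Lemma 5 in \cite{V}, identifies $H^{0}(F_{\bar 0},\bigwedge^{p}\mathcal{E}|_{F_{\bar 0}})$ with $\bigwedge^{p}W$, where $W\subseteq(\mathfrak{g}_{\bar 1}/\mathfrak{p}_{\bar 1})^{*}$ is the maximal $J_{\bar 0}$-submodule (with $W=0$ in the second case of the theorem). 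Since by Grauert's Oka principle every holomorphic bundle on the contractible Stein manifold $\mathcal{B}_{1}$ is trivial, $\varphi_{*}\bigwedge^{p}\mathcal{E}$ is trivial and $H^{0}(D_{\bar 0},\mathrm{gr}\,\mathcal{O}_{D})\cong H^{0}(\mathcal{B}_{1},\mathcal{F})\otimes\bigwedge W=H^{0}(D_{1},\mathcal{F})\otimes\bigwedge W$. In the second case this is already $H^{0}(D_{1},\mathcal{F})$; since pullbacks of functions from $D_{1}$ are holomorphic superfunctions on $D$, the upper bound is attained, proving $H^{0}(D,\mathcal{O})\cong H^{0}(D_{1},\mathcal{F})$.

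In the first case one must still realize $\bigwedge W\otimes H^{0}(D_{1},\mathcal{F})$ as superfunctions on $D$. Put $Y=W^{\perp}=(\mathfrak{g}_{\bar 1}^{*}/W)^{*}\subseteq\mathfrak{g}_{\bar 1}$; from $W\subseteq\mathfrak{p}_{\bar 1}^{\perp}$ one gets $\mathfrak{p}_{\bar 1}\subseteq Y$, and $Y$ is a $J_{\bar 0}$-submodule. I would then verify that $(J_{\bar 0},\mathfrak{j}_{\bar 0}+Y)$ is a super Harish-Chandra subpair of $(G_{\bar 0},\mathfrak{g})$ — the one substantive point being that $\mathfrak{j}_{\bar 0}+Y$ is closed under the bracket, i.e. $[Y,Y]\subseteq\mathfrak{j}_{\bar 0}$ — and let $J$ be the resulting Lie supergroup, with $J_{\bar 0}=P_{1}\times G_{2}$. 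Because $L_{\mathbb{R},1}\subseteq K_{\mathbb{R},1}$ (as $D_{1}$ is hermitian holomorphic) and $\mathfrak{p}_{\bar 1}\subseteq Y$, the real form $J_{\mathbb{R}}'$ of $J$ with even part $K_{\mathbb{R},1}\times G_{\mathbb{R},2}$ and odd part $Y$ contains $L_{\mathbb{R}}$, so $\varphi$ lifts to a projection $D=G_\mathbb{R}/L_\mathbb{R}\to D'=G_\mathbb{R}/J_{\mathbb{R}}'\subseteq Z'=G/J$, inducing an injection $H^{0}(D',\mathcal{O})\hookrightarrow H^{0}(D,\mathcal{O})$. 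Now $D'_{\bar 0}=\mathcal{B}_{1}$ and $\mathrm{gr}\,\mathcal{O}_{D'}$ is $\bigwedge$ of the homogeneous bundle with fibre $(\mathfrak{g}_{\bar 1}/Y)^{*}=Y^{\perp}=W$, which is holomorphically trivial over the contractible Stein manifold $\mathcal{B}_{1}$; hence all obstructions to lifting sections of $\mathrm{gr}\,\mathcal{O}_{D'}$ to $\mathcal{O}_{D'}$ lie in groups $H^{1}(\mathcal{B}_{1},\text{trivial bundle})=0$ and vanish, so $H^{0}(D',\mathcal{O})=H^{0}(\mathcal{B}_{1},\mathrm{gr}\,\mathcal{O}_{D'})=H^{0}(D_{1},\mathcal{F})\otimes\bigwedge W$. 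Combined with the upper bound of the second paragraph this forces $H^{0}(D,\mathcal{O})\cong H^{0}(D',\mathcal{O})=H^{0}(D_{1},\mathcal{F})\otimes\bigwedge W$, as claimed.

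The main obstacle is the verification that $(J_{\bar 0},\mathfrak{j}_{\bar 0}+Y)$ is genuinely a super Harish-Chandra subpair, i.e. the bracket inclusion $[Y,Y]\subseteq\mathfrak{j}_{\bar 0}$: for the retract $\mathrm{gr}\,D$ this is automatic since the odd bracket of $\mathrm{gr}\,\mathfrak{g}$ vanishes, but for the actual $\mathfrak{g}$ it has to be checked, and for the type I superalgebras occurring here I expect it to follow from the $\mathbb{Z}$-grading $\mathfrak{g}=\mathfrak{g}_{-1}\oplus\mathfrak{g}_{0}\oplus\mathfrak{g}_{1}$ together with the compatibility of the $J_{\bar 0}$-submodule $Y$ with this grading — but this has to be examined case by case, in parallel with the classification carried out in the following sections. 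A secondary point needing care is the identification, when invoking Lemma \ref{Lemma3V} over $F_{\bar 0}$, of $J_{\bar 0}$-submodules of $(\mathfrak{g}_{\bar 1}/\mathfrak{p}_{\bar 1})^{*}$ with submodules for the reductive group $K_{\mathbb{R},1}^{\mathbb{C}}\times G_{2}$ acting on the same space.
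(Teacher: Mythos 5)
Your proposal is correct, and the underlying geometry is the same as the paper's: everything hinges on the projection of $D$ onto $D'\subseteq G/J$ with $J$ built from $J_{\bar 0}$ and $Y=W^{\perp}$, and on the fact that the fibre is cycle-connected with no non-constant odd functions. The logical mechanism differs, though. The paper's proof is two sentences: in case 1 it directly asserts the K\"unneth-type factorization $H^0(D,\mathcal{O})\cong H^0(D',\mathcal{O})\otimes H^0(F,\mathcal{O})$ along the fibration and evaluates the second factor to $\mathbb{C}$; in case 2 it quotes the retract bound $H^0(\mathrm{gr}\,D,\mathcal{O})=H^0(D_1,\mathcal{F})$. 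You instead sandwich $H^0(D,\mathcal{O})$ between the retract upper bound (computed explicitly by pushing $\bigwedge^p\mathcal{E}$ down to the Stein, contractible base $\mathcal{B}_1$ and applying Lemma \ref{Lemma3V} over the fibre) and the pullback lower bound $H^0(D',\mathcal{O})\hookrightarrow H^0(D,\mathcal{O})$, then force equality by dimension count. This buys something real: it avoids relying on the unproved tensor factorization, and it makes visible why the potential lifting obstructions (which live in $H^1$ of trivial bundles over the contractible Stein base) vanish. The one substantive point you flag --- closure of $\mathfrak{j}_{\bar 0}+Y$ under the bracket, i.e. $[Y,Y]\subseteq\mathfrak{j}_{\bar 0}$ --- is indeed the only genuine gap in both arguments; the paper sidesteps it by building the assertion into the statement of the theorem rather than verifying it in the proof, so your case-by-case check via the type~I $\mathbb{Z}$-grading would be a genuine improvement rather than a deviation.
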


\begin{proof}

In the first case consider the fibration $D \rightarrow D^\prime$. Its fibre $F$ is a cycle-connected flag domain in
a $J$-flag manifold with no non-constant global odd functions. Therefore $H^0(D, \mathcal{O}) \cong H^0(D^\prime, \mathcal{O}) \otimes H^0(F,\mathcal{O}) =
H^0(D^\prime, \mathcal{O}) \otimes \mathbb{C} = H^0(D^\prime, \mathcal{O}) = H^0(D_1,\mathcal{F}) \otimes \bigwedge W$.

In the second case note that $H^0(\mathrm{gr} D, \mathcal{O}) = H^0(D_1,\mathcal{F})$ so the same is true for $H^0(D, \mathcal{O})$. 

\end{proof}

The maximal $J_{\bar{0}}$-submodules of $(\mathfrak{g}_{\bar{1}}/\mathfrak{p}_{\bar{1}})^*$
have already been determined in the computations in Section \ref{HermCase}. The results again correspond according to the above table.

\newpage
\section{Tables of the results}

For all the tables it is assumed that $H^0(Z,\mathcal{O}) = \mathbb{C}$. 
The table for the pure cycle-connected case:

\noindent \begin{tabular}{p{1.6cm} | p{4.2cm} | p{3cm} | p{1.5cm}}
 
$G$ & $G_\mathbb{R}$ & condition & $H^0(D, \mathcal{O})$  \\ \hline
$SL_{n \vert m}(\mathbb{C})$ & $SU(p,q \vert r,s), \newline SU(p,q) \times SU(r,s) \times U(1)$ & $\dim \mathfrak{l}_1 = 0$ & $ \bigwedge \mathbb{C}^{nm}$ \\ \cline{3-4}
 &  & $\dim \mathfrak{l}_1 \neq 0$ & $\mathbb{C}$ \\ \cline{2-4}
 & $SL_{n \vert m}(\mathbb{R}), SL_{k \vert l}(\mathbb{H}),\newline SL_n(\mathbb{R}) \times SL_l(\mathbb{H}) \times \mathbb{R}^{>0}$ & cond. I & $\bigwedge \mathbb{C}^{km}$ \\ \cline{3-4}
 &  & cond. II & $\bigwedge\mathbb{C}^{nl}$ \\ \cline{3-4}
 &  & cond. I + cond. II & $\bigwedge\mathbb{C}^{nm}$ \\ \cline{3-4}
 &  & otherwise & $\mathbb{C}$ \\ \hline
$PSL_{n \vert n}(\mathbb{C})$ & $^0PQ(n), SL_n(\mathbb{C})$ & $\dim \mathfrak{l}_1 = 0$ & $\bigwedge\mathbb{C}^{nm}$ \\ \cline{3-4}
 &  & $\dim \mathfrak{l}_1 \neq 0$ & $\mathbb{C}$ \\ \cline{2-4}
 & $US\Pi(n)$ & always & $\mathbb{C}$ \\ \hline
$\mathrm{Osp}(n \vert 2m) \newline(n > 2)$ & any & $\dim \mathfrak{l}_1 = 0$ & $\bigwedge\mathbb{C}^{nm}$ \\ \cline{3-4}
 &  &  $\dim \mathfrak{l}_1 \neq 0$ & $\mathbb{C}$ \\ \hline
$\mathrm{Osp}(2 \vert 2m)$ & $\mathrm{Osp}(1,1 \vert 2m)$ & $0 \vert m, 1 \vert m \in \delta$ & $\bigwedge\mathbb{C}^{2m}$ \\ \cline{3-4}
 &  & $0 \vert m \not\in \delta, 1 \vert m \in \delta$ & $\bigwedge\mathbb{C}^m$ \\ \cline{3-4}
 &  & otherwise & $\mathbb{C}$ \\ \cline{2-4}
 & any other & $1 \vert 0 \in \delta$ & $\bigwedge\mathbb{C}^{2m}$ \\ \cline{3-4}
 &  & $1 \vert 0 \not\in \delta$ & $\mathbb{C}$ \\ \hline
$P(n)$ & any & always & $\mathbb{C}$ \\ \hline
$Q(n)$ & any & always & $\mathbb{C}$ 

\end{tabular} \medskip

Next is the table for the pure hermitian holomorphic case. Here it is assumed that $D_{\bar{0}} = D_1 \times D_2$ is a product of two classical hermitian holomorphic flag domains, unless $G = PSQ(n)$. In that case $D_{\bar{0}} = D_1$. Furthermore it is assumed that $\dim (\mathfrak{p} \cap \tau\mathfrak{p})_{\bar{1}} > 0$. 

\medskip

\noindent \begin{tabular}{p{1.6cm} | p{4.2cm} | p{4cm}}
 
$G$ & $G_\mathbb{R}$  & $H^0(D, \mathcal{O})$  \\ \hline
$SL_{n \vert m}(\mathbb{C})$ & $SU(p,q \vert r,s) \newline SU(p,q) \times SU(r,s) \times U(1)$ & $H^0(D_{\bar{0}}, \mathcal{F}) \otimes \bigwedge \mathbb{C}^{qr+ps}$ \\ \hline  
$\mathrm{OSp}(n \vert 2m)$ & $\mathrm{Osp}(2, n-2 \vert 2m) $ & $H^0(D_{\bar{0}}, \mathcal{F}) \otimes \bigwedge \mathbb{C}^{2m}$ \\ \hline
$\mathrm{Osp}(2n \vert 2m)$ & $SO^*(2n) \times \mathrm{Sp}_\mathbb{R}(2m)$ & $H^0(D_{\bar{0}}, \mathcal{F}) \otimes \bigwedge \mathbb{C}^{nm}$ \\ \hline
$PSQ(n)$ & $UPSQ(p,q)$ & $ H^0(D_{\bar{0}}, \mathcal{F}) \otimes \bigwedge \mathbb{C}^{pq}$ 

\end{tabular} \medskip

In the mixed case it is assumed that $D_{\bar{0}} = D_1 \times D_2$ where $D_1$ is hermitian holomorphic and $D_2$ is cycle connected and that $D$ is the $G_\mathbb{R}$-orbit through the neutral point of $Z$. Moreover it is again assumed that $\dim (\mathfrak{p} \cap \tau \mathfrak{p})_{\bar{1}} > 0$. Also, $\mathrm{SpO}(2m \vert n)$ is a group isomorphic to $\mathrm{Osp}(n \vert 2m)$, but with the orthogonal and symplectic factors interchanged. Consequently, for a flag domain of an (even) real form of $\mathrm{Osp}(n \vert 2m)$, the $SO$-factor of the base is hermitian holomorphic and in the case of $\mathrm{SpO}(2m \vert n)$, the $Sp$-factor of the base is hermitian holomorphic. 

\medskip

\noindent \begin{tabular}{p{1.7cm} | p{4.1cm} | p{2.7cm} | p{3cm}}
 
$G$ & $G_\mathbb{R}$ & condition & $H^0(D, \mathcal{O})$  \\ \hline

$SL_{n \vert m}(\mathbb{C})$ & $SU(p,q \vert r,s), \newline SU(p,q) \times SU(r,s) \times U(1)$ & $p \vert 0 \in \delta$ & $H^0(D_1, \mathcal{F}) \otimes \bigwedge \mathbb{C}^{pm}$ \\ \cline{3-4}
& & $p \vert m \in \delta$ & $H^0(D_1, \mathcal{F}) \otimes \bigwedge \mathbb{C}^{qm}$ \\ \cline{3-4}
& & $p \vert 0$ and $p \vert m \in \delta$ & $H^0(D_1, \mathcal{F}) \otimes \bigwedge \mathbb{C}^{nm}$ \\ \cline{3-4}
& & otherwise & $H^0(D_1, \mathcal{F})$ \\ \hline
$\mathrm{Osp}(n \vert 2m)$ & $\mathrm{Osp}(2, n-2 \vert 2m)$ & $2 \vert 0 \in \delta$ & $H^0(D_1, \mathcal{F}) \otimes \bigwedge \mathbb{C}^{2m}$ \\ \cline{3-4}
& & otherwise & $H^0(D_1, \mathcal{F})$ \\ \hline
$\mathrm{Osp}(2n \vert 2m)$ & $\mathrm{Osp}^*(2n \vert 2r, 2s) \newline SO^*(2n) \times \mathrm{Sp}_\mathbb{R}(2m)$ & $n \vert 0 \in \delta$ & $H^0(D_1, \mathcal{F}) \otimes \bigwedge \mathbb{C}^{nm}$ \\ \cline{3-4}
& & otherwise & $H^0(D_1, \mathcal{F}) $ \\ \hline
$\mathrm{SpO}(2m \vert n)$ & $\mathrm{SpO}(2m \vert p,q)$ & $n$ odd & $H^0(D_1,\mathcal{F})$ \\ \hline
$\mathrm{SpO}(2m \vert 2n)$ & $\mathrm{SpO}(2m \vert p,q) \newline \mathrm{Sp}_\mathbb{R}(2m) \times SO^*(2n)$ & $m \vert 0 \in \delta$ & $H^0(D_1, \mathcal{F}) \otimes \bigwedge \mathbb{C}^{km}$ \\ \cline{3-4}
& & otherwise & $H^0(D_1, \mathcal{F}) $  \\

\end{tabular} 

\chapter{Cycle spaces and the Double Fibration Transform}

In this chapter the classical notion of the cycle space is generalized to the supersymmetric case and a universal definition covering flag domains of both real forms and even real forms is given. This definition of cycle spaces in the supersymmetric context is not at all trivial, due to the fact that compact real forms and commuting involutions do not exist in abundance as in the classical case. 

Moreover the Double Fibration Transform relating the cohomology of vector bundles on the flag domains to sections of certain associated bundles on the cycle space is constructed in the supersymmetric setting. 
Then the questions of injectivity and image of the Double Fibration Transform are discussed. 

In the cases where the cycles are purely even the classical Bott-Borel-Weil Theorem is used
to obtain injectivity conditions for the Double Fibration Transform and to describe its target space. In order to obtain analogous results in the cases where the cycles have non-zero odd dimension, the current state of Bott-Borel-Weil theory for Lie superalgebras is reviewed. Moreover several results from that theory are used to demonstrate a general technique, which translates BBW type theorems into sufficient conditions for injectivity and a concrete characterization of its target.

\section{The classical setting}


Let $Z_{\bar{0}} = G_{\bar{0}}/P_{\bar{0}}$ be a flag manifold and $G_{\bar{0}\mathbb{R}}/L_{\bar{0}\mathbb{R}} = D_{\bar{0}} \subseteq Z_{\bar{0}}$ a flag domain.
Further let $K_{\bar{0}\mathbb{R}}$ be the maximal compact subgroup subgroup of $G_{\bar{0}\mathbb{R}}$ determined by the 
Cartan involution $\theta$ and $K_{\bar{0}} = K_{\bar{0}\mathbb{R}}^\mathbb{C}$ its complexification. Then $D_{\bar{0}}$ contains a unique
closed $K_{\bar{0}}$-orbit $C_0$, the base cycle. As it is closed $J_{\bar{0}} = \mathrm{Stab}_{G_{\bar{0}}}(C_0)$ is a closed subgroup of $G_{\bar{0}}$.
Let $(\mathcal{M}_Z)_{\bar{0}} = G_{\bar{0}}/J_{\bar{0}}$.

Then $\mathcal{M}_{\bar{0}} = \{ Y \in (\mathcal{M}_Z)_{\bar{0}} : Y \subseteq D_{\bar{0}} \}^\circ$  is the group-theoretic cycle space of $D_{\bar{0}}$.

It fits into a double fibration

\[ \xymatrix{ & \mathfrak{X}_{\bar{0}} \ar[dl]_\mu \ar[dr]^\nu & \\ D_{\bar{0}} & & \mathcal{M}_{\bar{0}} } \]

Given a holomorphic vector bundle $\mathbb{E}$ on $D_{\bar{0}}$ there is a Double Fibration Transform 
relating the cohomology of the sheaf $\mathcal{O}(\mathbb{E})$ with sections of a certain associated
vector bundle $\mathcal{O}(\mathbb{E}^\prime)$ on $\mathcal{M}_{\bar{0}}$. The aim of this procedure is to obtain information about the possibly unknown cohomology groups $H^i(D_{\bar{0}}, \mathcal{O}(\mathbb{E}))$ using the transform and the cohomology groups
$H^0(\mathcal{M}_{\bar{0}}, \mathcal{O}(\mathbb{E})^\prime)$. For this to work the following conditions need to be fulfilled:

\begin{itemize}
 \item The Double Fibration Transforms needs to be injective.
 \item There has to be some characterization of its image inside $H^0(\mathcal{M}_{\bar{0}}, \mathcal{O}(\mathbb{E}^\prime))$.
 \item The fibres of $\mathbb{E}^\prime$ need to be known.
\end{itemize}

It turns out that classically the fibres of $\mathbb{E}^\prime$ are actually the cohomology groups $H^i(C_0, \mathcal{O}(\mathbb{E} \vert_{C_0}))$, so they are given by Bott-Borel-Weyl theory. 
The very same BBW theory also yields conditions for injectivity and the concrete description of the image is obtained using the resolution of $\mu^{-1} \mathcal{O}(\mathbb{E})$ by the relative holomorphic de Rham complex. 

The goal of this chapter is to discuss the notions of cycle space and double Fibration Transform in the superysmmetric setting.
As it turns out already the choices of the groups $G_\mathbb{R}$ and $K$, and consequently the definition of the cycle space,
are not at all obvious.  

\section{The Cycle spaces}

In order to define the cycle space in the supersymmetric case we first recall all the ingredients used in the construction of 
the classical cycle space:

\begin{itemize}
 \item The complex semisimple group $G_{\bar{0}}$
 \item The real form $G_{\bar{0}\mathbb{R}}$, defined by an antiholomorphic involution $\tau: G_{\bar{0}} \rightarrow G_{\bar{0}}$
 \item A Cartan involution $\theta$ compatible with $\tau$, i.e. $\tau\theta = \theta\tau$ and $-\kappa(X,\theta X) > 0$,
       where $\kappa: \mathfrak{g}_{\bar{0}\mathbb{R}} \times \mathfrak{g}_{\bar{0}\mathbb{R}} \rightarrow \mathbb{C}$ denotes the Killing form.
 \item A $G_{\bar{0}}$-flag manifold $Z_{\bar{0}}$, an open $G_{\bar{0}\mathbb{R}}$-orbit $D_{\bar{0}}$ and a closed $K_{\bar{0}}$-orbit $C_0$ in $D_{\bar{0}}$.
\end{itemize}

It turns out that in the supersymmetric case it is almost never possible to have all these available at the same time.
This is related to the fact there are very rarely compact real forms in the supersymmetric case.
Therefore, in order to define cycle spaces in the supersymmetric setting one needs to sacrifice some of the favorable
properties of the classical theory. 

The cases in which there are actually commuting involutions are the following:

\begin{enumerate}
 \item $G = SL_{n \vert m}(\mathbb{C})$ and $G_{\mathbb{R}} = SU(p, n-p \vert q, m-q)$. In that case 
$K = S(GL(p \vert q) \times GL(n-p \vert m-q))$ and $G_u = SU(n \vert m)$ is the compact real form.
 \item  $G = PSL_{n \vert n}(\mathbb{C})$ and $G_{\mathbb{R}} = US\Pi(n)$. Here $K = PSQ(n)$ and
$G_u$ is  $PSU(n \vert n)$.
 \item $G = PSQ(n)$ and $G_\mathbb{R} = UPSQ(p,q)$ with $K = PS(Q(p) \times Q(q))$ and $G_u = UPSQ(n)$ 
\end{enumerate}

For all other cases the requirements need to be weakened. In order to be able to make use of the classical theory for the bases, in particular of the existence of a unique base cycle, it is necessary to require $K_\mathbb{R}$ to be a maximal compact subsupergroup of $G_\mathbb{R}$. This leaves the following two main possibilities:

\begin{enumerate}
 \item The Cartan involution $\theta$ is replaced by the Cartan isomorphism $\theta: \mathfrak{g} \rightarrow \mathfrak{g}$
which coincides with the classical Cartan involution on $\mathfrak{g}_{\bar{0}}$ and satisfies $\theta^2(X) = -X$ for all $X \in \mathfrak{g}_{\bar{1}}$. It exists for all real forms of the basic classical Lie superalgebras, except for the two real forms $\ ^0\mathfrak{pq}(n)$ and $\mathfrak{us}\pi(n)$ of $\mathfrak{psl}_{n \vert n}(\mathbb{C})$, and for the real forms $\mathfrak{psq}_\mathbb{R}(n)$ and $\mathfrak{psq}_\mathbb{H}(k)$ of $\mathfrak{psq}(n)$.
 \item A $\mathbb{C}$-linear involution $\theta$ (an therefore $\mathfrak{k}$) is fixed and one considers an even real form of $\mathfrak{g}$, i.e. a $\mathbb{C}$-antilinear automorphism $\tau$  such that $\tau^2 \vert_{\mathfrak{g}_j} = (-1)^j \mathrm{id}_{\mathfrak{g}_j}$. This is possible for basic classical $\mathfrak{g}$ and all even real forms except for the even real form $\mathfrak{sl}_n(\mathbb{C})$ of $\mathfrak{psl}_{n \vert n}(\mathbb{C})$. This excludes the possibilities $\mathfrak{k} = \mathfrak{sp}(n)$ and $\mathfrak{k} = \mathfrak{psq}(n)$. 
\end{enumerate}

These two cases cover the two cases of flag domains defined in Chapter 1. 
The results on the non-existence of commuting automorphisms rely on the tables in \cite{Ser}.

\subsection{Flag domains of real forms}

In this case a real form $G_\mathbb{R}$ of $G$ and a flag domain $D \subseteq Z = G/P$ are given. Let $\theta$
be a Cartan involution or  a Cartan isomorphism commuting with $\tau$ and $K = \mathrm{Fix}(\theta)^\circ$. Then $D$
contains a unique closed $K$-orbit $C_0$. As it is closed, its stabilizer $J = \mathrm{Stab}_G(C_0)$ is a closed subsupergroup
of $G$.

Let $\mathcal{M}_Z = G/J$ and $\mathcal{M}$ be the open submanifold of $\mathcal{M}_Z$ with base

\[ \mathcal{M}_{\bar{0}} = \{ gC_0 \in (\mathcal{M}_Z)_{\bar{0}} : gC_0 \subseteq D \}^\circ \]

Then $\mathcal{M}$ is the cycle space of $D$.

\begin{rem}
\begin{enumerate}
\item If $ \mathrm{ord} \ \theta = 4$, then $K = K_{\bar{0}}, C_0 = C_{0{\bar{0}}}$ and $J = J_{\bar{0}}$ are purely even and the odd functions
         on $\mathcal{M}$ are given by the trivial bundle with fibre $\mathfrak{g}_{\bar{1}}^*$.
\item The base of the cycle space $\mathcal{M}$ need not equal the cycle space of the base $D_{\bar{0}}$. For example,
         if $G_\mathbb{R} = SU(p,q \vert r,s)$ and $D_{\bar{0}} = D_1 \times D_2$ with $D_1$ hermitian holomorphic and $D_2$ cycle-connected, 
then $\mathcal{M}_{\bar{0}} = G_{{\bar{0}},1}/K_{{\bar{0}},1} \times G_{{\bar{0}},2}/K_{{\bar{0}},2}$, but $\mathcal{M}_{D_{\bar{0}}} = G_{{\bar{0}},1}/(KS_\pm)_{{\bar{0}},1} \times G_{{\bar{0}},2}/K_{{\bar{0}},2}$.  
\end{enumerate}
\end{rem}

\subsection{Flag domains of even real forms}

Now a holomorphic involution $\theta: G \rightarrow G$ is given. Let $K = \mathrm{Fix}(\theta)$ and $G_\mathbb{R}$
be an even real form such that $K \cap G_\mathbb{R}$ is a maximal compact subgroup of $G_\mathbb{R}$.
Let $Z = G/P$ be a flag supermanifold. Then $K$ has finitely many closed orbits in $Z$ and for each of these closed orbits
$C_0$, its base $C_{0\bar{0}}$ is contained in a unique open $G_\mathbb{R}$-orbit $D_{\bar{0}}$. Let $D$ be the open submanifold of $Z$ with base $D_{\bar{0}}$ 
and $J = \mathrm{Stab}_G(C_0)$. It is a closed subsupergroup of $G$. Again let $\mathcal{M}_Z = G/J$
and $\mathcal{M}$ be the open subset with base $\mathcal{M}_{\bar{0}}$ as above.  Note that again the base of the cycle space
need not agree with the cycle space of the base. \newline

Using the notion of the universal domain from [FHW] the two definitions can be put together as follows:

\begin{def1}

Let $Z = G/P$ be a flag supermanifold, $G_\mathbb{R}$ a real form or an even real form of $G$ and $D \subseteq Z$
a flag domain. The group-theoretic cycle space of $D$, denoted $\mathcal{M}$ is the open submanifold of $\mathcal{M}_Z = G/J$ 
which has as its base the universal domain inside $(\mathcal{M}_Z)_{\bar{0}} = G_{\bar{0}}/J_{\bar{0}}$. 

\end{def1}

As the universal domain is Stein, $\mathcal{M}$ will always be a split supermanifold.

\section{The Double Fibration Transform}  

As in the classical case $D$ and $\mathcal{M}$ fit into a double fibration

\[ \xymatrix{ & \mathfrak{X} \ar[dl]_\mu \ar[dr]^\nu & \\ D & & \mathcal{M} } \]

Here $\mathfrak{X}$ is the open submanifold of $\widetilde{\mathfrak{X}} = G/(J \cap P)$ with base
the universal family $\mathfrak{X}_{\bar{0}} = \{ (z,C) \in D_{\bar{0}} \times \mathcal{M}_{\bar{0}} : z \in C \}$.

Let $\mathcal{E}$ be a supervector bundle on $D$.
The Double Fibration Transform relates the cohomology of $\mathcal{E}$ with sections of a certain associated bundle
on $\mathcal{M}$. Its construction is largely analogous to the classical case. However there are some important details
which turn out to be slightly different.

\subsection{Pullback}

The first step is to pull back the cohomology from $D$ to $\mathfrak{X}$ along $\mu$. As in the classical case, for every $r \geq 0$, $\mu$ induces a map $\mu_{\bar{0}}^r : H^r(D,\mathcal{E}) \rightarrow H^r(\mathfrak{X},\mu_{\bar{0}}^{-1}\mathcal{E})$.

The condition for these maps to be injective is the Buchdahl $q$-condition(see \ref{Buchdahl} and \cite{Bu}) which is purely topological. As the fibres of $\mu_{\bar{0}}$ are contractible as in the classical case(see  \cite{FHW}, Theorem 14.5.2 and Proposition 14.6.1 on page 212f.), it will trivially be satisfied.

Now let $\mu^*\mathcal{E} = \mu_{\bar{0}}^{-1}\mathcal{E} \otimes_{\mu_{\bar{0}}^{-1}\mathcal{O}_D} \mathcal{O}_\mathfrak{X}$ denote the pullback sheaf.  It is a holomorphic supervector bundle on $\mathfrak{X}$. The Extension of Scalars induces morphisms 
$i_r: H^r(\mathfrak{X}, \mu_{\bar{0}}^{-1}\mathcal{E}) \rightarrow H^r(\mathfrak{X}, \mu^*\mathcal{E})$. Let $j_r$ be the composition $   i_r \mu_{\bar{0}}^r$. Injectivity of these maps $j_r$ is equivalent to injectivity of the Double Fibration Transform itself as in the classical case. 

\subsection{Pushdown}

The second step is to push $H^r(\mathfrak{X}, \mu^* \mathcal{E})$ down from $\mathfrak{X}$ to $\mathcal{M}$ along $\nu$. 
As in the classical case $\nu_{\bar{0}}$ will always be a proper map and $\mathcal{M}$ a Stein supermanifold. 
Consequently one may make use of a generalization of the Grauert Direct Image Theorem due to Vaintrob:

\begin{satz}(see \cite{Va})

Let $f: X \rightarrow Y$ be a proper map of complex supermanifolds with $Y$ Stein and $\mathcal{S}$ a coherent 
$\mathcal{O}_X$-module. Then the $p^{th}$ direct image $R^p f_* \mathcal{S}$ is a coherent $\mathcal{O}_Y$-module
for all $p \geq 0$. 

\end{satz}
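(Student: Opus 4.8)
The plan is to reduce the statement to the classical Grauert Direct Image Theorem (stated earlier in this text) by exploiting the nilpotence of the odd part of the structure sheaf. Write $\mathcal{J}_X \subseteq \mathcal{O}_X$ and $\mathcal{J}_Y \subseteq \mathcal{O}_Y$ for the ideals generated by the odd functions, so that $\mathcal{O}_X/\mathcal{J}_X = \mathcal{F}_{X_{\bar{0}}}$ and $\mathcal{O}_Y/\mathcal{J}_Y = \mathcal{F}_{Y_{\bar{0}}}$ are the structure sheaves of the underlying complex manifolds and $\mathcal{J}_X^{m+1} = 0$ for $m = \dim_{\bar{1}} X$. First I would record the super analogue of Oka's coherence theorem, namely that $\mathcal{O}_X$ and $\mathcal{O}_Y$ are coherent sheaves of rings; this itself follows from the classical Oka theorem by the very filtration argument used below. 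Consequently $\mathcal{J}_X$, each power $\mathcal{J}_X^p$, and each submodule $\mathcal{J}_X^p\mathcal{S}$ of a given coherent $\mathcal{O}_X$-module $\mathcal{S}$ are again coherent $\mathcal{O}_X$-modules, and the finite filtration
\[ \mathcal{S} \supseteq \mathcal{J}_X\mathcal{S} \supseteq \mathcal{J}_X^2\mathcal{S} \supseteq \cdots \supseteq \mathcal{J}_X^{m+1}\mathcal{S} = 0 \]
has graded pieces $\mathcal{S}_p := \mathcal{J}_X^p\mathcal{S}/\mathcal{J}_X^{p+1}\mathcal{S}$ which are coherent $\mathcal{O}_X$-modules annihilated by $\mathcal{J}_X$, i.e. coherent analytic sheaves on the complex manifold $X_{\bar{0}}$.

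Next I would handle the graded pieces. Properness of $f$ is by definition properness of the underlying holomorphic map $f_{\bar{0}}\colon X_{\bar{0}} \to Y_{\bar{0}}$ of complex manifolds, and $f$ and $f_{\bar{0}}$ coincide as maps of the underlying topological spaces; hence $R^q f_*\mathcal{S}_p = R^q (f_{\bar{0}})_*\mathcal{S}_p$ as sheaves of abelian groups, with the $\mathcal{O}_Y$-module structure on the left factoring through $\mathcal{O}_Y/\mathcal{J}_Y = \mathcal{F}_{Y_{\bar{0}}}$. The classical Grauert theorem applied to $f_{\bar{0}}$ then shows that $R^q (f_{\bar{0}})_*\mathcal{S}_p$ is a coherent $\mathcal{F}_{Y_{\bar{0}}}$-module for all $p$ and $q$. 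Since $\mathcal{F}_{Y_{\bar{0}}} = \mathcal{O}_Y/\mathcal{J}_Y$ with $\mathcal{J}_Y$ locally finitely generated and $\mathcal{O}_Y$ coherent, a coherent $\mathcal{F}_{Y_{\bar{0}}}$-module is also coherent as an $\mathcal{O}_Y$-module; thus each $R^q f_*\mathcal{S}_p$ is a coherent $\mathcal{O}_Y$-module.

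Finally I would assemble the pieces by descending induction on $p$, the base case $p = m+1$ being trivial since $\mathcal{J}_X^{m+1}\mathcal{S} = 0$. For each $p$ the short exact sequence $0 \to \mathcal{J}_X^{p+1}\mathcal{S} \to \mathcal{J}_X^p\mathcal{S} \to \mathcal{S}_p \to 0$ yields a long exact sequence of higher direct images; since the category of coherent $\mathcal{O}_Y$-modules is abelian (closed under kernels, cokernels and extensions, again because $\mathcal{O}_Y$ is a coherent sheaf of rings), coherence of $R^q f_*\mathcal{S}_p$ and of $R^q f_*(\mathcal{J}_X^{p+1}\mathcal{S})$ for all $q$ forces coherence of $R^q f_*(\mathcal{J}_X^p\mathcal{S})$ for all $q$. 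Taking $p = 0$ gives the assertion for $\mathcal{S}$ itself. The Stein hypothesis on $Y$ plays no role in the coherence statement; it is recorded only because in the applications one immediately combines coherence of the sheaves $R^p\nu_*\mu^*\mathcal{E}$ with Cartan's Theorem B on the Stein cycle space $\mathcal{M}$.

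The main obstacle, such as it is, lies not in any individual step but in pinning down the change-of-rings behaviour of coherence — that a coherent module over $\mathcal{O}_Y/\mathcal{J}_Y$ is coherent over $\mathcal{O}_Y$ — and, underneath it, the super version of Oka's theorem; both are proved by the same $\mathcal{J}$-adic filtration, so the genuine mathematical content is entirely carried by the classical Grauert and Oka theorems, with the super generalization amounting to a formal de-nilpotentization. This is in substance the argument of \cite{Va}.
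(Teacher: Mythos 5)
The paper does not prove this statement at all: it is quoted verbatim as a known result of Vaintrob, with the citation \cite{Va} standing in for the argument. So there is no in-paper proof to compare yours against; what can be said is that your reduction is the standard one and, as far as I can check it against the definitions the paper actually uses, it is correct. Each step holds up: $\mathcal{J}_X$ is a nilpotent coherent ideal (locally generated by the odd coordinates, with $\mathcal{J}_X^{m+1}=0$), so the filtration is finite with coherent terms; a coherent $\mathcal{O}_X$-module killed by $\mathcal{J}_X$ is finitely presented over $\mathcal{O}_X/\mathcal{J}_X=\mathcal{F}_{X_{\bar 0}}$ (tensor a local presentation with $\mathcal{O}_X/\mathcal{J}_X$), hence is a classical coherent analytic sheaf; higher direct images depend only on the underlying continuous map, and $f^\sharp(\mathcal{J}_Y)\subseteq\mathcal{J}_X$ because morphisms preserve parity, so the module structure on $R^qf_*\mathcal{S}_p$ does factor through $\mathcal{F}_{Y_{\bar 0}}$; and the descending induction closes via the usual five-term argument in the long exact sequence, where one also needs (and gets, from the induction hypothesis together with coherence of the graded pieces) that the connecting maps have coherent images. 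Your observation that the Stein hypothesis on $Y$ is not needed for coherence is also correct; it is carried in the statement only because the theorem is applied immediately afterwards in tandem with the vanishing $H^q(\mathcal{M},R^p\nu_*\mu^*\mathcal{E})=0$ on the Stein cycle space. The one place where you are slightly glib is the parenthetical claim that super-Oka itself "follows by the very filtration argument used below": the cleaner justification, consistent with the local model $\mathcal{O}_{\mathbb{C}^{n\vert m}}=\mathcal{F}_{\mathbb{C}^n}\otimes\bigwedge\mathbb{C}^m$ given in the paper, is that $\mathcal{O}_X$ is locally a finite free module over the classical coherent sheaf $\mathcal{F}_{X_{\bar 0}}$, whence coherence as a sheaf of rings; this is a remark, not a gap.
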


Moreover, as $\mathcal{M}$ is Stein, $H^q(\mathcal{M},R^p \nu_* \mu^* \mathcal{E}) = 0$ for all $q > 0, p \geq 0$.
Consequently the Leray spectral sequence collapses to yield isomorphisms $R^r \nu_* : H^r(\mathfrak{X}, \mu^* \mathcal{E}) \cong H^0(\mathcal{M}, R^r \nu_* \mu^* \mathcal{E})$.

\begin{def1}

The Double Fibration Transform is the composition

\[ \mathcal{P} = R^r \nu_* j_r : H^r(D,\mathcal{E}) \rightarrow H^0(\mathcal{M}, R^r \nu_* \mu^* \mathcal{E}) \] 

\end{def1}

\subsection{Injectivity of the DFT}

As was shown above, given that the fibres of $\mu_{\bar{0}}$ are contractible and $\nu_{\bar{0}}$ is a proper map onto a Stein space,
injectivity of the double fibration transform $\mathcal{P}$ is equivalent to injectivity of the coefficient map
$i_r: H^r(\mathfrak{X}, \mu^{-1}\mathcal{E}) \rightarrow H^r(\mathfrak{X}, \mu^*\mathcal{E})$. Analogous to the classical
case one may construct a resolution of $\mu^{-1}\mathcal{E}$ by $\mathcal{O}_\mathfrak{X}$-modules using the relative holomorphic de Rham complex:

\[ \xymatrix{ 0 \ar[r] & \mu^{-1}\mathcal{E} \ar[r] & \mu^*\mathcal{E} \ar[r] & \Omega_\mu^1 \mathcal{E} \ar[r] & \ldots } \]

Note that contrary to the classical case this is an unbounded complex as the odd differentials $d\xi_j$ commute.
It does however still give rise to two spectral sequences converging to the hypercohomology. In the classical case
a sufficient condition for these to yield vanishing of cohomology below the $s^{th}$ degree is $H^p(C, (\Omega_\mu^r\mathcal{E})\vert_C ) = 0$ for all $p < s, r > 0$. 
This is not a very strict condition in the classical case as only finitely many of the sheaves $\Omega_\mu^r$ are non-zero. In the supersymmetric case $\Omega_\mu^r$ is non-zero for all $r \geq 0$.
Therefore a slight weakening of the condition is necessary. 

\begin{bsp}

Let $G$ be any Lie supergroup, $G_\mathbb{R}$ a compact real form of $G_{\bar{0}}$ and $\theta: \mathfrak{g} \rightarrow \mathfrak{g},
\theta(X) = (-1)^{\vert X \vert} X$ for homogeneous elements, so $K = G_{\bar{0}}$. Furthermore let $Z = G/P$ be any $G$-flag manifold.
In that case $D = Z, \mathfrak{X} = G/(P \cap G_{\bar{0}}) = G/P_{\bar{0}}$ and $\mathcal{M} = G/G_{\bar{0}} = (\mathrm{pt}, \bigwedge \mathfrak{g}_{\bar{1}}^*)$.
Then $\mathfrak{X}$ is split and $\Omega_{\mu,red}$ is the sheaf of germs of sections of the homogeneous vector bundle
$G_{\bar{0}} \times_{P_{\bar{0}}} \mathfrak{p}_{\bar{1}}^*$. As $\mathfrak{p}_{\bar{1}}^*$ contains the dual of the Levi part $\mathfrak{l}_{\bar{1}}$ of $\mathfrak{p}_{\bar{1}}$, this bundle
is highly positive and vanishing for all $\Omega_\mu^r$ can only be achieved if $\mathfrak{l}_{\bar{1}} = 0$.

\end{bsp}

Recall that the two spectral sequences were given by $^\prime E_2^{p,q} =$ \newline $H^p(\mathfrak{X}, \mathcal{H}^q(\mathfrak{X}, \Omega^\bullet(\mathcal{O}(\mathbb{E}))))$ and  $^{\prime\prime}E_2^{p,q} = H_d^q(H^p(\mathfrak{X},\Omega^\bullet(\mathcal{O}(\mathbb{E}))))$ and the sufficient condition for injectivity was that $^{\prime\prime} E_2^{s,0}$
survives to the $^{\prime\prime}E_\infty$-page. This yields the following theorem:

\begin{satz}

Let $s \geq 0$ and $\Omega_\mu^r\mathcal{E}$ denote the sheaf of $\mu$-relative differential $r$-forms on $\mathfrak{X}$ 
with values in $\mu^* \mathcal{E}$. Let $C$ be a fibre of $\nu$ and suppose $H^p(C, (\Omega_\mu^r\mathcal{E})\vert_C ) = 0$
for all $p < s, r \leq s$. \\ Then $\mathcal{P}: H^s(D,\mathcal{E}) \rightarrow H^0(\mathcal{M}, \mathcal{R}^s \nu_* \mu^* \mathcal{E})$ is injective.

\end{satz}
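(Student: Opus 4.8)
The plan is to transcribe the Wolf--Zierau argument recalled in Section~\ref{BBW} into the super setting, using only structural facts already in hand: the fibres of $\mu_{\bar{0}}$ are contractible (hence cohomologically acyclic, so that $\mu_{\bar{0}}^s$ is an isomorphism by Theorem~\ref{Buchdahl}), $\nu_{\bar{0}}$ is proper onto the Stein space $\mathcal{M}$ (so the Leray spectral sequence for $\nu$ collapses and $R^s\nu_*\colon H^s(\mathfrak{X},\mu^*\mathcal{E})\to H^0(\mathcal{M},R^s\nu_*\mu^*\mathcal{E})$ is an isomorphism), and the $\mu$-relative holomorphic de Rham complex with coefficients in $\mu^*\mathcal{E}$ is a resolution
\[ 0 \to \mu^{-1}\mathcal{E} \to \mu^*\mathcal{E} \to \Omega_\mu^1\mathcal{E} \to \Omega_\mu^2\mathcal{E} \to \cdots \]
of $\mu^{-1}\mathcal{E}$ by $\mathcal{O}_\mathfrak{X}$-modules. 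First I would record that $\mathcal{P}=R^s\nu_*\circ i_s\circ \mu_{\bar{0}}^s$, with the outer two maps isomorphisms by the facts just quoted; hence it suffices to show that the coefficient map $i_s\colon H^s(\mathfrak{X},\mu^{-1}\mathcal{E})\to H^s(\mathfrak{X},\mu^*\mathcal{E})$ is injective.

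Next I would bring in the two spectral sequences ${}'E$, ${}''E$ of the above resolution, both abutting to $\mathbb{H}^\bullet(\mathfrak{X},\Omega_\mu^\bullet\mathcal{E})$, with ${}'E_2^{p,q}=H^p(\mathfrak{X},\mathcal{H}^q(\Omega_\mu^\bullet\mathcal{E}))$ and ${}''E_1^{p,q}=H^p(\mathfrak{X},\Omega_\mu^q\mathcal{E})$, ${}''E_2^{p,q}=H^q_d(H^p(\mathfrak{X},\Omega_\mu^\bullet\mathcal{E}))$. Although the complex is unbounded, for each fixed total degree $n$ only the finitely many terms ${}''E_1^{p,q}$ with $p+q=n$, $p,q\ge 0$ and $p\le \dim_{\bar{0}}\mathfrak{X}$ can be nonzero, so both spectral sequences behave like first-quadrant ones and converge. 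Exactness of the relative de Rham complex forces $\mathcal{H}^q(\Omega_\mu^\bullet\mathcal{E})=0$ for $q>0$, so ${}'E$ degenerates, giving a canonical isomorphism $\mathbb{H}^p(\mathfrak{X},\Omega_\mu^\bullet\mathcal{E})\cong H^p(\mathfrak{X},\mu^{-1}\mathcal{E})$ under which $i_p$ is identified with the edge morphism $\mathbb{H}^p\twoheadrightarrow {}''E_\infty^{p,0}\hookrightarrow {}''E_2^{p,0}\hookrightarrow H^p(\mathfrak{X},\mu^*\mathcal{E})$ of ${}''E$. Consequently $i_s$ is injective exactly when the de Rham-degree-$\ge 1$ part of the filtration on $\mathbb{H}^s$ vanishes, i.e.\ when ${}''E_\infty^{s-b,b}=0$ for all $b\ge 1$.

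Since ${}''E_\infty^{s-b,b}$ is a subquotient of ${}''E_1^{s-b,b}=H^{s-b}(\mathfrak{X},\Omega_\mu^b\mathcal{E})$, it then suffices to show these groups vanish for every $b\ge 1$. For $b>s$ this is automatic because the cohomological degree $s-b$ is negative. For $1\le b\le s$ I would push down along $\nu$: by the Leray collapse $H^{s-b}(\mathfrak{X},\Omega_\mu^b\mathcal{E})\cong H^0(\mathcal{M},R^{s-b}\nu_*\Omega_\mu^b\mathcal{E})$, and $R^{s-b}\nu_*\Omega_\mu^b\mathcal{E}$ is the sheaf of sections of the homogeneous bundle on $\mathcal{M}$ whose fibre over a cycle $C$ is $H^{s-b}\bigl(C,(\Omega_\mu^b\mathcal{E})|_C\bigr)$. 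This fibre vanishes by hypothesis, with $p=s-b<s$ (as $b\ge 1$) and $r=b\le s$; so its space of sections is zero, ${}''E_\infty^{s-b,b}=0$, $i_s$ is injective, and therefore $\mathcal{P}$ is injective.

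The main obstacle, and essentially the only step that needs care beyond a mechanical translation of the classical argument, is the precise identification of $i_s$ with the edge morphism of ${}''E$ together with the comparison of the two spectral sequences: because the complex is unbounded one must check that the decreasing de Rham filtration of the total complex is exhaustive and complete, so that both spectral sequences genuinely converge and the hypercohomology identification transports $i_p$ to the edge map. Finiteness of $\dim_{\bar{0}}\mathfrak{X}$ (bounding sheaf-cohomology degree, hence the number of surviving terms in each total degree) takes care of this; it is also the source of the ``slight weakening'' relative to the classical statement, namely the observation that ${}''E_1^{s-b,b}$ sits in negative cohomological degree — hence vanishes — as soon as $b>s$, so that only the range $r\le s$ of relative form degrees enters the hypothesis.
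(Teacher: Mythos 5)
Your argument is correct and follows essentially the same route as the paper: the two hypercohomology spectral sequences of the relative de Rham resolution of $\mu^{-1}\mathcal{E}$, collapse of $^\prime E$ by exactness, and vanishing of the relevant $^{\prime\prime}E$-terms via pushdown to the Stein space $\mathcal{M}$ and the hypothesis, with the range $r\leq s$ entering exactly as you say through the automatic vanishing in negative cohomological degree. Your write-up is in fact somewhat more careful than the paper's, which emphasizes the survival of $^{\prime\prime}E_2^{s,0}$ rather than isolating the precise injectivity criterion $^{\prime\prime}E_\infty^{s-b,b}=0$ for $b\geq 1$ and the convergence of the unbounded complex.
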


\begin{proof}

Consider the two spectral sequences converging to the hypercohomology of the holomorphic de Rham complex. 
As the de Rham complex is exact $^\prime E_2^{p,q} = H^p(\mathfrak{X},\mu_0^{-1}\mathcal{E})$ for $q = 0$ and zero otherwise. Thus this spectral sequence collapses to yield the hypercohomology. 
Moreover one may identify $^{\prime \prime} E_1^{p,q} = H^p(\mathfrak{X}, \Omega_\mu^q(\mathcal{E}))$ and $^{\prime\prime} E_2^{s,q} = \mathrm{Ker} d_q / \mathrm{Im} d_{q-1}$. 
The vanishing condition ensures that all differentials mapping into $E_2^{s,0}$ or out of it are zero. 
Therefore $E_2^{s,0} = H^s(\mathfrak{X},\mu^*\mathcal{E})$ survives to the $E_\infty$-page and one may identify 
$H^s(\mathfrak{X},\mu_0^{-1}\mathcal{E}) = \mathrm{Ker} \ d_0$, in particular $j_s$ is injective. 

\end{proof}

The cohomology groups $H^p(C, (\Omega_\mu^r\mathcal{E})\vert_C )$ are not always known in the super case,  
it is however possible to obtain weaker conditions for injectivity by using classical vanishing results together
with the spectral sequence of Onishchik and Vishnyakova. Assume a supervector bundle $\mathcal{E}$ on $C$ is given
and let $E$ be the spectral sequence constructed in chapter. Then $E_1 = H^*(C, \mathrm{gr} \mathcal{E})$
and $E_\infty = \mathrm{gr} H^*(C, \mathcal{E})$ so the cohomology of $\mathrm{gr} \mathcal{E}$ is an upper bound
for the cohomology of $\mathcal{E}$, in particular vanishing of cohomology for $\mathrm{gr} \mathcal{E}$ implies 
vanishing of cohomology for $\mathcal{E}$:

\begin{satz}

Let $s \geq 0$ and $\Omega_\mu^r\mathcal{E}$ denote the sheaf of $\mu$-relative differential $r$-forms on $\mathfrak{X}$ 
with values in $\mu^* \mathcal{E}$. Let $C$ be a fibre of $\nu$ and suppose $H^p(C, \mathrm{gr} (\Omega_\mu^r\mathcal{E})\vert_C ) = 0$
for all $p < s, r \leq s$. \\ Then $\mathcal{P}: H^s(D,\mathcal{E}) \rightarrow H^0(\mathcal{M}, \mathcal{R}^s \nu_* \mu^* \mathcal{E})$ is injective.

\end{satz}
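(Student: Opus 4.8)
The statement is the $\mathrm{gr}$-version of the preceding theorem, so the plan is to reduce it to that theorem by replacing the cohomology vanishing hypothesis on $\Omega_\mu^r\mathcal{E}$ by the cohomology vanishing hypothesis on $\mathrm{gr}(\Omega_\mu^r\mathcal{E})$. The key observation is the one isolated in the paragraph immediately before the statement: the Onishchik--Vishnyakova spectral sequence $E$ associated to the filtration of a super vector bundle $\mathcal{F}$ on a compact supermanifold $C$ has $E_1 = H^*(C, \mathrm{gr}\,\mathcal{F})$ and converges to $E_\infty = \mathrm{gr}\,H^*(C,\mathcal{F})$. Hence, for each fixed $r$, if $H^p(C, \mathrm{gr}(\Omega_\mu^r\mathcal{E})\vert_C) = 0$ for all $p < s$, then every graded piece $\mathrm{gr}_p H^p(C, (\Omega_\mu^r\mathcal{E})\vert_C)$ vanishes for $p < s$, and therefore $H^p(C, (\Omega_\mu^r\mathcal{E})\vert_C) = 0$ for all $p < s$ as well. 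This is exactly the hypothesis required by the previous theorem.

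\textbf{Steps, in order.} First I would fix a fibre $C$ of $\nu$ (a translate of the base cycle, hence compact with known $\mathrm{gr}$) and, for each $r \le s$, apply the Onishchik--Vishnyakova spectral sequence to the super vector bundle $(\Omega_\mu^r\mathcal{E})\vert_C$. Second, I would invoke the convergence statement $E_\infty^{p,q} = \mathrm{gr}_p H^{p+q}(C, (\Omega_\mu^r\mathcal{E})\vert_C)$ together with $E_1^{p,q} = H^{p+q}(C, (\mathrm{gr}(\Omega_\mu^r\mathcal{E})\vert_C)^{(p)})$: since each $E_1$ term in total degree $< s$ is a subquotient of a group that is assumed to vanish, all $E_1$ (hence all $E_\infty$) terms contributing to $H^p$ with $p < s$ vanish, so $H^p(C, (\Omega_\mu^r\mathcal{E})\vert_C) = 0$ for $p < s$ and $r \le s$. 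Third, I would feed this conclusion into the preceding theorem, whose output is precisely the injectivity of $\mathcal{P}\colon H^s(D,\mathcal{E}) \to H^0(\mathcal{M}, \mathcal{R}^s\nu_*\mu^*\mathcal{E})$, completing the argument.

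\textbf{Main obstacle.} The only genuinely delicate point is making sure the Onishchik--Vishnyakova spectral sequence applies to the sheaves $(\Omega_\mu^r\mathcal{E})\vert_C$ as stated: one must check that $C$ is compact (so that the dimension count $\dim H^k = \sum_{p+q=k}\dim E_\infty^{p,q}$, and hence the vanishing inheritance, is valid) and that $\Omega_\mu^r\mathcal{E}$ restricted to a $\nu$-fibre is again a coherent super vector bundle to which the construction of Chapter~2 applies. Both are routine in the present setting — $C$ is a translate of the base cycle and therefore a compact complex submanifold, and $\Omega_\mu^r\mathcal{E}$ is locally free — but they are worth recording. The underlying index bookkeeping ($E_1$ indexed by the $\mathcal{J}$-adic filtration degree versus cohomological degree) should be spelled out just enough to see that total degree $< s$ forces vanishing; beyond that the proof is a one-line reduction to the previous theorem.
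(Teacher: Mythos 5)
Your proposal is correct and follows exactly the route the paper intends: the paper derives this theorem from the preceding one by the observation, stated in the paragraph immediately before it, that the Onishchik--Vishnyakova spectral sequence has $E_1 = H^*(C,\mathrm{gr}\,\mathcal{E})$ and $E_\infty = \mathrm{gr}\,H^*(C,\mathcal{E})$, so vanishing for $\mathrm{gr}(\Omega_\mu^r\mathcal{E})\vert_C$ forces vanishing for $(\Omega_\mu^r\mathcal{E})\vert_C$ and the previous injectivity theorem applies. Your additional remarks on compactness of $C$ and local freeness of $\Omega_\mu^r\mathcal{E}$ are sensible bookkeeping but do not change the argument.
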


In view of this theorem the classical technique of tensoring with a sufficiently negative line bundle to achieve vanishing of cohomology below top degree generalizes to the supersymmetric case.

\begin{def1}[Notation]
 
Let $M(\Omega_\mu^{\leq s})$ be the set of all weights $\lambda$ such that the irreducible highest weight module $E_\lambda$
is contained in the fibre $\bigwedge^q (\mathfrak{p}/(\mathfrak{p} \cap \mathfrak{j}))^*$ of $\Omega_\mu^q$ for some
$q \leq s$. Its elements are of the form $\lambda = \sum_{i = 1}^{n_{\bar{0}}} \varepsilon_i \gamma_{0,i} + \sum_{j = 1}^{n_{\bar{1}}} k_j \gamma_{1,j}$
where $\varepsilon_i \in \{ \pm 1 \}, k_j \in \mathbb{Z}, \gamma_{l,i} \in \Sigma_l(\mathfrak{g},\mathfrak{h})$
and $n_{\bar{0}} + \sum_{j = 1}^{n_{\bar{1}}} \vert k_j \vert \leq s$. 

\end{def1}

Recall that in the classical case the total spaces of the sheaves $\Omega_\mu^p$ are actually homogeneous vector bundles.
This allows the characterization of the injectivity condition in terms of Bott-Borel-Weil theory(see Chapter 1).
In the supersymmetric case the sheaves $\Omega_\mu^p$ are homogeneous supervector bundles in the sense of \cite{AH}, but the BBW theory is not as readily available as in the classical case.
In the following sections the classical Bott-Borel-Weil-Theory and several analogous results for Lie superalgebras are used to obtain injectivity conditions for the Double Fibration Transform.

\section{Purely even cycles}

If the cycles within $D$ are purely even the classical BBW theory is available (compare with the results in section \ref{BBW}). This occurs in the following two cases:

\begin{enumerate}
 \item $K = G_{\bar{0}}$ given by the involution $\theta(X) = (-1)^{\vert X \vert} X$ and $G_\mathbb{R}$ is
       a compact even real form of $G$.
 \item $D$ is a flag domain of a real form $G_\mathbb{R}$ with Cartan isomorphism $\theta$.
\end{enumerate}

Moreover in those cases $\mathfrak{X}$ and $\mathcal{M}$ are both split supermanifolds with their global odd functions
given by the trivial bundle with fibre $\mathfrak{g}_{\bar{1}}^*$. 

\subsubsection{The case $K = G_{\bar{0}}$}

Consider the first case, i.e. $K = G_{\bar{0}}$ and $G_\mathbb{R}$ is an arbitrary even real form of $G$. This is an analogue
of the trivial choice of a compact real form in the classical case. The relevant spaces are given by $D_{\bar{0}} = \mathfrak{X}_{\bar{0}} = Z_{\bar{0}}$
and $\mathcal{M}_{\bar{0}} = \{pt\}$. Moreover the $\mu$-relative differential 1-forms are given by the homogeneous supervector bundle
$G \times_{P_{\bar{0}}} \mathfrak{p}_{\bar{1}}^*$. The unique cycle in $Z$ is the base $Z_{\bar{0}}$ and $\Omega_\mu^p \vert_C$ corresponds to the 
classical homogeneous bundle $\mathcal{F}(G_{\bar{0}} \times_{P_{\bar{0}}} S^p\mathfrak{p}_{\bar{1}}^*)$. If $\mathcal{E}$ is a super vector bundle on $Z$,
the injectivity of the pullback map is assured given that

\[ H^q(Z_0, \mathcal{F}(G_{\bar{0}} \times_{P_{\bar{0}}} S^p\mathfrak{p}_{\bar{1}}^*) \otimes \mathcal{E}_{red}) = 0 \ \forall q < \dim Z_{\bar{0}}, p \leq \dim Z_{\bar{0}} \]

If $\mathcal{E}_{red} = \mathcal{E}_\lambda$, then using the classical BBW theory the injectivity condition becomes

\[ \langle \lambda + \beta + \rho_{\bar{0}}, \gamma \rangle < 0 \ \forall \beta \in M(\Omega_\mu^{\leq s}), \gamma \in \Sigma((\mathfrak{r}_+)_{\bar{0}},\mathfrak{h}) \] 

Here $M(\Omega_\mu^{\leq s})$ is actually the set of all roots contributing to the truncated symmetric algebra $S(\mathfrak{p}_{\bar{1}}^*)^{\leq s}$.

\noindent Moreover the target space $H^0(\mathcal{M},\mathbb{R}^s \nu_* \mu^* \mathcal{E})$ can be identified with \\ $H^s(Z_{\bar{0}}, \mathcal{E}_{red}) \otimes \bigwedge \mathfrak{g}_{\bar{1}}^*$
which incidentally is isomorphic to $H^s(\mathfrak{X}, \mu^* \mathcal{E})$. So the Double Fibration transform is actually a map

\[ \mathcal{P} : H^s(Z, \mathcal{E}) \rightarrow H^s(Z_{\bar{0}}, \mathcal{E}_{red}) \otimes \bigwedge \mathfrak{g}_{\bar{1}}^* \]

\begin{bsp}

Let $Z = \mathrm{Gr}_{1 \vert 1}(\mathbb{C}^{2 \vert 2}) = PSL_{2 \vert 2}(\mathbb{C})/P, K = SL_2(\mathbb{C}) \times SL_2(\mathbb{C})$ 
and $G_\mathbb{R} = SU(2) \times SU(2)$. Then $D_{\bar{0}} = Z_{\bar{0}} = \mathfrak{X}_{\bar{0}} = \mathbb{P}^1 \times \mathbb{P}^1$ and $\mathcal{M}_{\bar{0}} = \{pt\}$.
Also let $\mathcal{E} = \mathcal{O}_D$. Its cohomology is given by

\[ H^0(D, \mathcal{O}_D) = \mathbb{C}, H^1(D, \mathcal{O}_D) = 0, H^2(D, \mathcal{O}_D) \cong \mathbb{C} \]  

\noindent where a possible generator of the latter group is given on $V_1$ by $\frac{\xi\eta}{z_1 z_2}$. The pullback map is not injective
as $H^*(\mathfrak{X}, \mathcal{O}_\mathfrak{X}) \cong H^*(\mathbb{P}^1 \times \mathbb{P}^1, \mathcal{F}) \otimes \mathfrak{g}_{\bar{1}}^*$
which is concentrated in degree $0$.

This shows how the pullback map is in a certain sense blind to the odd contributions to the cohomology when purely even cycles are considered.

\end{bsp}

\subsubsection{Flag domains of real forms}

Now consider the second case, i.e. $G_\mathbb{R}$ is a real form of $G$ and $K = K_{\bar{0}}$ is the fixed point set of the Cartan isomorphism.
Then $\mathfrak{X}_Z = G/(J_{\bar{0}} \cap P) = (G_{\bar{0}}/(J_{\bar{0}} \cap P_{\bar{0}}), \mathcal{F} \otimes \bigwedge \mathfrak{g}_{\bar{1}}^*)$ and 
$\mathcal{M}_Z = (G_{\bar{0}}/J_{\bar{0}}, \mathcal{F} \otimes \bigwedge \mathfrak{g}_{\bar{1}}^*)$, so the underlying manifolds of $\mathfrak{X}_{\bar{0}}$
and $\mathcal{M}_{\bar{0}}$ actually agree with the classical universal family and the classical cycle space associated to $D_{\bar{0}}$.

The sheaf $\Omega_\mu^1$ is given by $G \times_{(P_{\bar{0}} \cap J_{\bar{0}})} ((\mathfrak{p}_{\bar{0}}/\mathfrak{j}_{\bar{0}})^* + (\mathfrak{p}_{\bar{1}})^*)$.
Thus, as in the first case, the full symmetric algebra over $(\mathfrak{p}_{\bar{1}})^*$ contributes to the relative de Rham complex. 
The cycles are precisely the classical cycles and the injectivity condition is as follows:

\begin{satz}
 
Let $\mathcal{E}_\lambda$ be the super vector bundle on $D$ whose typical fibre is the irreducible
representation of $P$ with highest weight $\lambda$. Then the pullback map $j_s$ is injective if and only if

\[ \langle \lambda + \beta + \rho_\mathfrak{k}, \gamma \rangle < 0 \ \forall \beta \in M(\Omega_\mu^{\leq s}), \gamma \in \Sigma(\mathfrak{r}_+ \cap \mathfrak{k},\mathfrak{h}) \]

Moreover the fibre of the associated vector bundle $\mathcal{E}^\prime$ on $\mathcal{M}$ is $E_\Lambda$ for $\Lambda = w(\lambda + \rho_\mathfrak{k}) - \rho_{\mathfrak{k}}$, where
$w$ is the unique Weyl group element such that $w(\lambda + \rho_\mathfrak{k})$ is integral dominant.

\end{satz}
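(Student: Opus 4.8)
The plan is to reduce the injectivity statement to a classical Bott--Borel--Weil computation on the base cycle $C_0$, exactly as in the preceding general injectivity theorem, and then to carry out the BBW bookkeeping explicitly in the present setting where the cycles are purely even. First I would recall the general criterion proved above: since the fibres of $\mu_{\bar 0}$ are contractible and $\nu_{\bar 0}$ is a proper map onto a Stein base, injectivity of $\mathcal{P}$ in degree $s$ is equivalent to injectivity of the coefficient map $i_s$, and this follows once $H^p(C,\Omega_\mu^r(\mathcal{E}_\lambda)\vert_C)=0$ for all $p<s$, $r\le s$, where $C$ is a fibre of $\nu$. So the task is to analyse these groups on a single cycle.

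Next I would use that in this second case $K=K_{\bar 0}$ is purely even (the fixed point set of the Cartan isomorphism $\theta$ of order $4$), so the cycle $C$ is the classical base cycle $C_0=K_{\bar 0}/(K_{\bar 0}\cap L_{\bar 0})\cong K/(K\cap P_{\bar 0})$, a classical flag manifold. The sheaf $\Omega_\mu^1$ is the sheaf of sections of the homogeneous bundle $G\times_{(P_{\bar 0}\cap J_{\bar 0})}\big((\mathfrak{p}_{\bar 0}/\mathfrak{j}_{\bar 0})^*+\mathfrak{p}_{\bar 1}^*\big)$, so on restriction to $C_0$ the whole truncated symmetric algebra $S^{\le s}(\mathfrak{p}_{\bar 1}^*)$, tensored with the relevant exterior powers of $(\mathfrak{p}_{\bar 0}/\mathfrak{j}_{\bar 0})^*$, contributes; hence $\Omega_\mu^r(\mathcal{E}_\lambda)\vert_{C_0}$ decomposes (as a $K\cap P_{\bar 0}$-module, hence as a direct sum of homogeneous line bundles after passing to a composition series) into pieces whose highest weights are exactly $\lambda+\beta$ for $\beta\in M(\Omega_\mu^{\le s})$ in the notation fixed above. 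Applying the classical Bott--Borel--Weil theorem (stated in the excerpt) to each such line bundle on the classical flag manifold $C_0$, the vanishing $H^p(C_0,\mathcal{E}_{\lambda+\beta})=0$ for $p<s$ holds provided $\lambda+\beta+\rho_{\mathfrak{k}}$ is either singular or is carried by the Weyl group $W(\mathfrak{k})$ into the dominant chamber with a length $\ge s$; the clean sufficient (and, tracing the ``only if'' direction, necessary) condition is $\langle \lambda+\beta+\rho_{\mathfrak{k}},\gamma\rangle<0$ for every $\gamma\in\Sigma(\mathfrak{r}_+\cap\mathfrak{k},\mathfrak{h})$, since these are precisely the roots whose reflections are needed to make $\lambda+\beta+\rho_{\mathfrak k}$ dominant for $\mathfrak{k}$ and each contributes to the length. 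Ranging $\beta$ over $M(\Omega_\mu^{\le s})$ and quantifying over $r\le s$ yields exactly the displayed inequality.

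Finally, for the identification of the fibre of $\mathcal{E}'$ on $\mathcal{M}$: since $\mathcal{M}$ is Stein and the Leray spectral sequence for $\nu$ collapses, $H^0(\mathcal{M},R^s\nu_*\mu^*\mathcal{E}_\lambda)$ is the space of sections of the homogeneous bundle on $\mathcal{M}$ whose fibre is $H^s(C_0,\mathcal{E}_\lambda\vert_{C_0})$; and by the ``regular'' case of Bott--Borel--Weil applied on $C_0$, $H^s(C_0,\mathcal{E}_\lambda\vert_{C_0})=E_\Lambda$ with $\Lambda=w(\lambda+\rho_{\mathfrak k})-\rho_{\mathfrak k}$, $w$ the unique element of $W(\mathfrak{k})$ making $w(\lambda+\rho_{\mathfrak k})$ integral dominant, and $\ell(w)=s$ is forced by the hypothesis on $\lambda$. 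The main obstacle I expect is the careful bookkeeping at the level of $P_{\bar 0}\cap J_{\bar 0}$-modules: one must check that passing to a composition series of $\Omega_\mu^r(\mathcal{E}_\lambda)\vert_{C_0}$ does not destroy the vanishing (it does not, by the long exact cohomology sequences, since every composition factor satisfies the BBW hypothesis), and that the set of highest weights arising really is $M(\Omega_\mu^{\le s})$ as defined — i.e.\ that the extra exterior factor $\bigwedge(\mathfrak{p}_{\bar 0}/\mathfrak{j}_{\bar 0})^*$ only shifts weights by $\bar 0$-roots with the stated $\pm1$ coefficients and total count bounded by $s$. The rest is a direct transcription of the classical argument in \cite{WZ}.
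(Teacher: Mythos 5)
Your plan reproduces exactly the argument the paper intends here: the paper in fact states this theorem without a written proof, relying on the general vanishing criterion $H^p(C,(\Omega_\mu^r\mathcal{E})\vert_C)=0$ for $p<s$, $r\leq s$ established two theorems earlier, on the identification of $\Omega_\mu^1$ as the homogeneous bundle with fibre $(\mathfrak{p}_{\bar{0}}/\mathfrak{j}_{\bar{0}})^*+(\mathfrak{p}_{\bar{1}})^*$ restricted to the purely even classical cycle, and on the classical Bott--Borel--Weil bookkeeping from \cite{WZ}, which is precisely what you carry out. The one caveat is the ``only if'' direction: the displayed negativity condition is genuinely only sufficient (if some $\lambda+\beta+\rho_{\mathfrak{k}}$ is singular the relevant cohomology vanishes anyway, and even a non-vanishing $E_2$-term need not obstruct injectivity), so your parenthetical claim that necessity follows by ``tracing the only if direction'' is not actually justified --- but this looseness is already present in the paper's own statement and is not a defect specific to your argument.
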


\section{Cycles of positive odd dimension I: The distinguished Borel case}

The Bott-Borel-Weil Theory in the supersymmetric case is not yet fully developed. The only case which is understood to a great extent is that of the distinguished Borel subalgebras:

\begin{def1}

Let $\mathfrak{g}$ be a basic classical simple Lie superalgebra. A Borel subalgebra $\mathfrak{b}^d$ of $\mathfrak{g}$
is called distinguished if and only if one of the following equivalent conditions is satisfied:

\begin{enumerate}
 \item The Dynkin diagram of $\Sigma(\mathfrak{g},\mathfrak{h})$ with respect to a Cartan subalgebra $\mathfrak{h} \subseteq \mathfrak{b}^d$
       contains exactly one odd root.
 \item For every parabolic subalgebra $\mathfrak{p}$ of $\mathfrak{g}$ containing $\mathfrak{b}^d$ one has $\mathfrak{l} \subseteq \mathfrak{g}_{\bar{0}}$.
\end{enumerate}

If $\mathfrak{g}$ is of type $Q$, there is only one conjugacy class of Borel subalgebras. Every representative of it is called a distinguished Borel subalgebra.
 
\end{def1}

\begin{bsp}

One possible distinguished Borel subalgebra of $\mathfrak{sl}_{n \vert m}$ consists of all upper triangular $(n+m) \times (n+m)$-matrices.
The respective flag supermanifold $G/B^d = Z(\delta)$ corresponds to the maximal flag type 

\[ \delta = 0 \vert 0 < 1 \vert 0 < \ldots < n \vert 0 < n \vert 1 < \ldots < n \vert m  \]

\noindent The - up to conjugacy - other possible choice is the set of all lower triangular matrices. It corresponds to the maximal flag type

\[ \delta = 0 \vert 0 < 0 \vert 1 < \ldots < 0 \vert m < 1 \vert m < \ldots < n \vert m \] 

\noindent The following can be said about the flag spaces $G/B^d$ using Vishnyakova's theorems:

\begin{enumerate}
 \item $Z = G/B^d$ is a split homogeneous space.
 \item $\mathcal{O}_{Z}$ is given by the trivial bundle with fibre $\mathfrak{g}_{\pm1}$, one of the two irreducible components of $\mathfrak{g}_{\bar{1}}$, in particular $H^0(Z, \mathcal{O}_{Z}) \cong \bigwedge \mathbb{C}^{nm}$
 \item For all parabolic subgroups $P$ containing $B^d$ one has either $G/P$ split and $H^0(G/P, \mathcal{O}_{G/P}) \cong \bigwedge \mathbb{C}^{nm}$ or $G/P$ non-split and $H^0(G/P, \mathcal{O}_{G/P}) = 0$. 
\end{enumerate} 

Also note that the real forms $SU(p,q \vert r,s)$ and $^0PQ(n)$ for $n = m$ have open orbits in $Z$, whereas the real orbits
of $SL_{n \vert m}(\mathbb{R})$ and $SL_{k \vert l}(\mathbb{H})$ with open base actually have minimal odd dimension. 
 
\end{bsp}

An important notion in the supersymmetric setting is typicality: 

\begin{def1}

Let $\mathfrak{g}$ be a classical reductive Lie superalgebra and $\mathfrak{p}$ a parabolic subalgebra.
The Levi subalgebra $\mathfrak{l}$ of $\mathfrak{p}$ is of typical type if all its finite-dimensional representations
are completely reducible. This implies that $\mathfrak{l}$ is isomorphic to a direct sum of reductive Lie algebras and 
Lie superalgebras isomorphic to $\mathfrak{osp}(1 \vert 2n)$. 

A weight $\lambda$  is called typical if $\langle \lambda, \gamma \rangle \neq 0$ for all anisotropic odd roots $\gamma$.

\end{def1}

The significance of typicality is that, unlike in the classical case,
$\langle \lambda , \alpha \rangle = 0$ for an anisotropic odd root does not imply the vanishing of all cohomology groups
$H^i(G/P, \mathcal{E}_\lambda)$. In fact atypicality is actually a necessary condition for the existence of more than one non-trivial 
cohomology group.

The cohomology groups of BBW theory are known for all basic classical Lie superalgebras of type I with distinguished Borel subalgebra 
and for the Lie superalgebras $\mathfrak{osp}(n \vert 2), D(2,1,\alpha), F(4)$ and $G(3)$ which are basic classical of type II.
So the target space of the Double Fibration Transform is known when the base cycle is $K/M$ where $\mathfrak{k}$ is one of
the Lie superalgebras listed above and $\mathfrak{m}$ is a parabolic subalgebra of $\mathfrak{k}$ with typical Levi part.
The list of possible choices for $K$ comprises the following cases:

\begin{enumerate}
\item $G = SL_{n \vert m}(\mathbb{C}), G_\mathbb{R} = SU(p,q \vert r,s)$ or $G_\mathbb{R} = S(U(p,q) \times U(r,s))$ and $K = S(GL(p \vert r) \times GL(q \vert s))$.  
\item $G = SL_{n \vert 2}(\mathbb{C}), G_\mathbb{R} = SL_n(\mathbb{R}) \times SL_1(\mathbb{H}) \times \mathbb{R}^{>0}$
and $K = \mathrm{Osp}(n \vert 2)$.
\item $G = SL_{2 \vert 2m}(\mathbb{C}), G_\mathbb{R} = SL_2(\mathbb{R}) \times SL_m(\mathbb{H}) \times \mathbb{R}^{>0}$
and $K = \mathrm{Osp}(2 \vert 2m)$.
\item $G = \mathrm{Osp}(2n \vert 2m), G_\mathbb{R} = SO^*(2n) \times Sp_{2m}(\mathbb{R})$ and $K = GL(n \vert m)$.
\item $G = \mathrm{Osp}(n+2 \vert 2m + 2), G_\mathbb{R} = SO(n,2) \times Sp(2,2m)$ and $K = \mathrm{Osp}(n,2) \times 
\mathrm{Osp}(2,2m)$
\item $G = \mathrm{Osp}(n+2 \vert 2m), G_\mathbb{R} = SO(n,2) \times Sp(2m)$ and $K = \mathrm{Osp}(n,2) \times 
\mathrm{Sp}(2m, \mathbb{C})$
\item $G = \mathrm{Osp}(n \vert 2m + 2), G_\mathbb{R} = SO(n,\mathbb{R}) \times Sp(2,2m)$ and $K = SO(n,\mathbb{C}) \times 
\mathrm{Osp}(2,2m)$
\end{enumerate}  

Note that this covers all cycle spaces whose bases are classical hermitian symmetric spaces and products of these(Cases 1,4 and 6).
Moreover this list could be extended in the future when a complete description of the BBW theory for $\mathfrak{g} = \mathfrak{osp}(m \vert 2n)(m > 2, n > 1)$ with distinguished Borel subalgebra is available. The condition that $\mathfrak{m}$ is a parabolic subalgebra of
$\mathfrak{k}$ with typical Levi part allows some freedom in the choice of the flag type $\delta$:

\begin{bsp}

Let $G = SL_{n \vert m}(\mathbb{C}), K = S(GL(p \vert r) \times GL(q \vert s))$ and $Z(\delta)$ a $G$-flag manifold. Then
the neutral point in $Z(\delta)$ has a $\mathfrak{k}$-stabilizer with typical Levi part, if and only if $p \vert r \in \delta, p \vert s \in \delta, q \vert r \in \delta$ or $q \vert s \in \delta$. 
Then the base cycle is isomorphic to $C_0 = (C_{\bar{0}0}, \mathcal{F} \otimes \bigwedge(\mathrm{Hom}(\mathbb{C}^p,\mathbb{C}^r)) \oplus \mathrm{Hom}(\mathbb{C}^q,\mathbb{C}^s))$, 
where $C_{\bar{0}0}$ is a classical base cycle of hermitian holomorphic type. 

\end{bsp}

The basic BBW type theorem that will be used to obtain the injectivity condition and the target for the Double Fibration transform is
the following:

\begin{satz}[\cite{Pen}]
 
Let $\mathfrak{g}$ be a classical Lie super algebra not of type $P$, $\mathfrak{p}$ a parabolic subgroup with Levi part $\mathfrak{l}$ of typical type. Also let $\lambda$ be an integral $\mathfrak{l}$-dominant typical weight.  Denote
$\Gamma_k(G/P, E_\lambda) = H^k(G/P, G \times_P E_\lambda^*)^*$.

\begin{enumerate}
 \item If $\lambda + \rho$ is singular, then $\Gamma_k(G/P, E_\lambda) = 0$ for all $k$.
 \item If $\lambda + \rho$ is regular, then there exists a unique $w \in W$ such that $\Lambda = w \cdot \lambda$ is
       integral dominant and $\Gamma_k(G/P, E_\lambda) = \delta_{k,l(w)} K_\Lambda$,
where $K_\Lambda$ is the Kac-module introduced in \cite{Kac2} .
\end{enumerate}
 
\end{satz}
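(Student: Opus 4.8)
The plan is to deduce this super Bott--Borel--Weil statement from the classical theorem on the even flag manifold $G_{\bar{0}}/P_{\bar{0}}$, using the spectral sequence of Onishchik and Vishnyakova recalled in Chapter~2 to bridge the (possibly non-split) supermanifold $G/P$ and its retract. First I would reduce to the Borel case: pick a Borel subsupergroup $B\subseteq P$. Since $\mathfrak{l}$ is of typical type it is a sum of reductive Lie algebras and copies of $\mathfrak{osp}(1\vert 2n)$, whose category of finite-dimensional representations is semisimple and whose Bott--Borel--Weil theory is governed entirely by the even Weyl group (all odd roots of $\mathfrak{osp}(1\vert 2n)$ being non-isotropic, it behaves like $\mathfrak{so}(2n+1)$). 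Hence, for the $\mathfrak{l}$-dominant weight $\lambda$, Borel--Weil for the Levi gives that the fibre $P/B$ has its cohomology concentrated in degree $0$, equal to $E_\lambda$; so the Leray spectral sequence of the fibration $G/B\to G/P$ collapses and $\Gamma_k(G/P,E_\lambda)\cong\Gamma_k(G/B,\mathbb{C}_\lambda)$ for all $k$, where $\mathbb{C}_\lambda$ is the one-dimensional $B$-module. The dualization built into the definition of $\Gamma_k$ is exactly what makes the \emph{induced} module $K_\Lambda$ appear rather than its coinduced dual, and it is compatible with this reduction.

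Second, I would compute $\Gamma_k(G/B,\mathbb{C}_\lambda)$ through the retract $\mathrm{gr}(G/B)=(G_{\bar{0}}/B_{\bar{0}},\mathrm{gr}\,\mathcal{O})$. By Lemma~\ref{OnFunctions} one has $\mathrm{gr}\,\mathcal{O}\cong\bigwedge\mathcal{O}(G_{\bar{0}}\times_{B_{\bar{0}}}(\mathfrak{g}_{\bar{1}}/\mathfrak{b}_{\bar{1}})^*)$, so the retract of the homogeneous line bundle belonging to $\Gamma_\bullet(G/B,\mathbb{C}_\lambda)$ is a finite direct sum of classical homogeneous line bundles $\mathcal{L}_{\lambda+\sigma_S}$ on $G_{\bar{0}}/B_{\bar{0}}$, indexed by the subsets $S$ of the set $\Delta$ of odd roots occurring in $(\mathfrak{g}_{\bar{1}}/\mathfrak{b}_{\bar{1}})^*$, with $\sigma_S=\sum_{\beta\in S}\beta$. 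Classical Bott--Borel--Weil computes each $H^\bullet(G_{\bar{0}}/B_{\bar{0}},\mathcal{L}_{\lambda+\sigma_S})$, and this gives the $E_1$-page of the Onishchik--Vishnyakova spectral sequence converging to $\mathrm{gr}\,\Gamma_\bullet(G/B,\mathbb{C}_\lambda)$. Taking Euler characteristics, using the identity $\sum_{S\subseteq\Delta}(-1)^{|S|}e^{\sigma_S}=\prod_{\beta\in\Delta}(1-e^{\beta})$ together with the Weyl character formula applied termwise, one identifies $\sum_k(-1)^k\mathrm{ch}\,\Gamma_k(G/B,\mathbb{C}_\lambda)$ with a sign times the numerator of the Kac--Weyl character formula: this is $0$ when $\lambda+\rho$ is $W$-singular, and it equals $(-1)^{l(w)}\,\mathrm{ch}\,K_\Lambda$ with $\Lambda=w\cdot\lambda$ when $\lambda+\rho$ is $W$-regular.

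The third and decisive step is to upgrade this Euler-characteristic identity to a statement of concentration in a single cohomological degree, and here the typicality of $\lambda$ (i.e.\ $\langle\lambda,\gamma\rangle\neq 0$ for every anisotropic odd root $\gamma$) is used in an essential way. When $\lambda$ is typical, the shifted weights $\lambda+\rho+\sigma_S$ for distinct $S$ are never $W_0$-conjugate in a manner that would place two summands of the retract into one common classical cohomological degree with opposite-sign characters; consequently either all classical contributions vanish --- the singular case, in which $\Gamma_k=0$ for every $k$ --- or they are all carried by the single degree $l(w)$, the differentials of the Onishchik--Vishnyakova spectral sequence vanish for degree reasons, and $\Gamma_{l(w)}(G/B,\mathbb{C}_\lambda)$ acquires the character $\mathrm{ch}\,K_\Lambda$. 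That this module is genuinely $K_\Lambda$, and not merely an object sharing its character, follows because $K_\Lambda$ is irreducible for typical $\Lambda$, so no further composition factors can be concealed, together with a Frobenius-reciprocity argument producing a nonzero $\mathfrak{g}$-homomorphism from $K_\Lambda$ into the cohomology. The main obstacle is exactly this degree-concentration claim: a character count alone only bounds the cohomology, and, as emphasized in the introduction, without typicality several $\Gamma_k$ genuinely survive; one therefore has to exploit $\langle\lambda,\gamma\rangle\neq 0$ structurally --- most cleanly through the semisimplicity of the block of $\lambda$ in the category of finite-dimensional $\mathfrak{g}$-modules, which controls the surviving differentials. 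For the type~II superalgebras relevant to this thesis, namely $\mathfrak{osp}(n\vert 2)$, $D(2,1,\alpha)$, $F(4)$ and $G(3)$, the same scheme applies once the big-cell decomposition of $\mathfrak{g}/\mathfrak{b}$ and the odd-root bookkeeping are adapted to the two-step $\mathbb{Z}$-grading, at the cost of some extra case analysis.
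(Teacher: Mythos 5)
The paper offers no proof of this statement: it is imported verbatim from Penkov's paper \cite{Pen} as a black box, so there is no in-text argument to compare yours against, and your proposal has to stand on its own. Its first two steps are the natural ones and are essentially sound in outline: pushing down from $G/B$ to $G/P$ using Borel--Weil for a Levi of typical type, and then computing on the retract, where $\mathrm{gr}\,\mathcal{O}$ splits the line bundle into classical bundles $\mathcal{L}_{\lambda+\sigma_S}$ whose cohomology is given by classical Bott--Borel--Weil, does produce the Kac--Weyl character identity at the level of Euler characteristics.

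The genuine gap is in your third step, and it is not where you locate it. The problem is not that two summands might land in a common classical degree with opposite-sign characters; it is that for a perfectly typical $\lambda$ the terms $H^{\bullet}(G_{\bar{0}}/B_{\bar{0}},\mathcal{L}_{\lambda+\sigma_S})$ genuinely occur in \emph{several different} total degrees. For example in $\mathfrak{gl}(2\vert 2)$ with the distinguished Borel, take $\lambda=(a,a\,\vert\,c,d)$ with the $\delta$-part chosen typical and $\lambda+\rho$ dominant regular: the theorem asserts concentration in degree $0$, yet the summand with $\sigma_S=(\epsilon_2-\delta_1)+(\epsilon_2-\delta_2)$ has $\epsilon$-part $(a,a+2)$, which after the $\rho_{\bar{0}}$-shift is antidominant regular and contributes a nonzero classical $H^1$ to the $E_1$-page. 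So the spectral sequence does \emph{not} degenerate ``for degree reasons''; the theorem forces certain differentials to be nonzero and to cancel these terms, and nothing in your argument produces them. Semisimplicity of the typical block only gives $\Gamma_k\cong K_\Lambda^{\oplus m_k}$ with $\sum_k(-1)^k m_k=(-1)^{l(w)}$, which is strictly weaker than $m_k=\delta_{k,l(w)}$ (it does not exclude, say, $m_{l(w)}=2$, $m_{l(w)+1}=1$). Closing this requires a further input --- in Penkov's treatment essentially an equivalence between the typical block of $\mathfrak{g}$-modules and a block of $\mathfrak{g}_{\bar{0}}$-modules intertwining the geometric induction functors, or a Demazure-style induction on simple reflections --- neither of which your character-plus-semisimplicity count supplies. (A smaller point: the typicality relevant here is $\langle\lambda+\rho,\beta\rangle\neq 0$ for \emph{isotropic} odd $\beta$; you repeat the paper's formulation in terms of anisotropic odd roots, which is not the hypothesis that actually drives the vanishing.)
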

 
Using this theorem the injectivity condition for the Double Fibration Transform and its target can now be expressed in terms of weights. First consider those cases in which $\mathfrak{k}$ is of type I. 
In that case, $\mathfrak{l}$ being of typical type implies that it is purely even. Consequently every $\mathfrak{l}$-weight is automatically typical and Penkov's theorem may be applied to all $\mathfrak{l}$-weights: 

\begin{satz}

Let $G$ be a complex simple Lie supergroup, $G_\mathbb{R}$ an even real form, $Z(\delta) = G/P$ a $G$-flag supermanifold, $D \subseteq Z(\delta)$ a $G_\mathbb{R}$-flag domain and $C = K/M$ be the base cycle. Assume that $K$ is of type I and $M$ has typical Levi part.
Let $\lambda \in \Sigma(\mathfrak{g},\mathfrak{h})$ be an integral weight, $E_\lambda$ be the irreducible $P$-module with highest weight $\lambda$ and $\mathcal{E}_\lambda = \mathcal{O}(G \times_P E_\lambda)$. If $\lambda$ is sufficiently negative,
that is $\langle \lambda + \beta + \rho_\mathfrak{k}, \gamma \rangle < 0$ for all $ \beta \in M(\Omega_\mu^{\leq s})$ 
and $\gamma \in \Sigma(\mathfrak{r}^+ \cap \mathfrak{k})$, then the the double fibration transform 
$\mathcal{P}: H^s(D, \mathcal{E}_\lambda) \rightarrow H^0(\mathcal{M}, \mathcal{E}_\lambda^\prime)$ is injective 
and the fibre of $\mathcal{E}_\lambda^\prime$ is the Kac-module $K_\Lambda, \Lambda = w \cdot \lambda$, 
where $w \in W$ is the unique Weyl group element such that $w \cdot \lambda$ is integral dominant. 
 
\end{satz}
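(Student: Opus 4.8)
The plan is to assemble the statement from three ingredients already developed in this chapter: the general injectivity criterion via the relative de~Rham spectral sequences, the Onishchik--Vishnyakova comparison between $\mathcal{E}$ and $\mathrm{gr}\,\mathcal{E}$, and Penkov's Bott--Borel--Weil theorem for typical weights. First I would set up the double fibration $D \leftarrow \mathfrak{X} \rightarrow \mathcal{M}$ attached to the base cycle $C = K/M$, noting that since $K$ is of type~I and $M$ has typical (hence, by the remark following the definition of typicality, purely even) Levi part, the cycle $C$ is a split homogeneous superspace of the form $(C_{\bar 0}, \mathcal{F}\otimes\bigwedge\mathcal{S})$ with $C_{\bar 0}$ a classical flag manifold of $K_{\bar 0}$. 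The universal family $\mathfrak{X}$ and the cycle space $\mathcal{M}$ are split (the latter because its base is the universal domain, which is Stein), and $\nu_{\bar 0}$ is proper onto the Stein space $\mathcal{M}_{\bar 0}$, so the pushdown $\mathcal{R}^s\nu_*$ is an isomorphism; hence injectivity of $\mathcal{P}$ reduces, exactly as in the two theorems preceding this one, to the vanishing
\[ H^p\bigl(C, \mathrm{gr}\,(\Omega_\mu^r\mathcal{E}_\lambda)\vert_C\bigr) = 0 \quad \text{for all } p < s,\ r \leq s. \]

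Next I would identify the graded sheaves concretely. The sheaf $\Omega_\mu^r\mathcal{E}_\lambda$ is the homogeneous supervector bundle on $\mathfrak{X}$ with fibre $\bigwedge^r(\mathfrak{p}^{op}/(\mathfrak{p}^{op}\cap\mathfrak{j}))^*\otimes E_\lambda$, and restricting to the fibre $C$ of $\nu$ and passing to $\mathrm{gr}$ decomposes this into classical $K_{\bar 0}$-homogeneous bundles on $C_{\bar 0}$ whose highest weights are of the form $\lambda + \beta$ with $\beta \in M(\Omega_\mu^{\leq s})$, tensored with the fixed factors coming from $\bigwedge\mathcal{S}$. Applying the classical Bott--Borel--Weil theorem (or, equivalently here, Penkov's theorem, whose first alternative covers the singular case and whose second alternative gives a single cohomology group $K_\Lambda$ in degree $l(w)$), the hypothesis $\langle\lambda+\beta+\rho_{\mathfrak{k}},\gamma\rangle < 0$ for all such $\beta$ and all $\gamma \in \Sigma(\mathfrak{r}^+\cap\mathfrak{k})$ forces each relevant weight $\lambda+\beta+\rho_{\mathfrak{k}}$ to be either singular or else to have its Weyl chamber representative at length $\geq s$, so the corresponding cohomology lives only in degree $\geq s$ and the displayed vanishing for $p<s$ holds. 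Feeding this into the spectral-sequence argument of the injectivity theorem (the differentials into and out of $^{\prime\prime}E_2^{s,0}$ all vanish, so $H^s(\mathfrak{X},\mu^*\mathcal{E}_\lambda)$ survives to $E_\infty$ and $j_s$ is injective) gives injectivity of $\mathcal{P}$.

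For the target, I would compute the fibre of $\mathcal{E}_\lambda'$ as $H^s(C, \mathcal{E}_\lambda\vert_C)$. Since $C$ is split and $\lambda$ is $\mathfrak{l}$-dominant, Penkov's theorem applies to $\mathfrak{k} = \mathrm{Lie}(K)$: either $\lambda+\rho_{\mathfrak{k}}$ is singular, in which case the relevant degree-$s$ group vanishes and the statement is vacuous, or $\lambda+\rho_{\mathfrak{k}}$ is regular, giving a unique $w \in W$ with $\Lambda = w\cdot\lambda$ integral dominant and $H^{l(w)}(C,\mathcal{E}_\lambda\vert_C) \cong K_\Lambda$, the Kac module; the negativity hypothesis is what pins $l(w) = s$. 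Assembling these and using $\mathcal{R}^s\nu_* : H^s(\mathfrak{X},\mu^*\mathcal{E}_\lambda) \xrightarrow{\sim} H^0(\mathcal{M},\mathcal{E}_\lambda')$ yields the claimed description. \textbf{The main obstacle} I expect is bookkeeping rather than conceptual: one must check that the weights arising in $\mathrm{gr}\,(\Omega_\mu^r\mathcal{E}_\lambda)\vert_C$ really are all of the form $\lambda+\beta$ with $\beta$ in the set $M(\Omega_\mu^{\leq s})$ as defined — i.e.\ that the $\mathbb{Z}$-grading shifts from the Onishchik--Vishnyakova construction and the contributions of the odd exterior factors do not introduce weights outside that set — and that the typicality of $\lambda+\beta$ (needed to invoke Penkov) is automatic, which it is precisely because $\mathfrak{l}\subseteq\mathfrak{g}_{\bar 0}$ when $K$ is type~I with typical Levi part; handling the type~II cases would require the extra typicality hypothesis, which is why the statement restricts to type~I.
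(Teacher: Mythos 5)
Your proposal is correct and follows essentially the same route as the paper, which states this theorem without a separate proof precisely because it is assembled from the three ingredients you name: the spectral-sequence injectivity criterion (in its $\mathrm{gr}$-form via the Onishchik--Vishnyakova comparison), Penkov's theorem, and the observation that a typical Levi part of a type~I subalgebra is purely even, so typicality of the weights $\lambda+\beta$ is automatic. Your closing remark about why the type~II case needs an extra hypothesis matches the paper's subsequent treatment of $\mathfrak{osp}(m\vert 2)$.
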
 
 
Now consider the basic classical Lie superalgebras $\mathfrak{k}$ of type II whose BBW theory for the distinguished Borel subalgebras is known. 
As the exceptional Lie superalgebras do not arise as fixed point sets of involutions of the classical non-exceptional Lie superalgebras,
this restricts to the case $\mathfrak{k} = \mathfrak{osp}(m \vert 2)$. If $m$ is even, then every Levi subalgebra of typical type is again purely even
so the last theorem applies verbatim. However, if $m$ is odd and $\mathfrak{l} \subseteq \mathfrak{m}$ is of typical type, then it may
contain a direct summand isomorphic to $\mathfrak{osp}(1 \vert 2)$, so $\mathfrak{l}$ contains a unique anisotropic odd simple root $\alpha$.
So typicality of a weight $\lambda$ is equivalent to $\langle \lambda, \alpha \rangle \neq 0$. 

The main difference to all cases considered so far is that if $\lambda$ is atypical, there might be two non-trivial cohomology groups
for $\mathcal{E}_\lambda$(compare \cite{C}, Theorem 10.1):

Recall from Chapter 1 that $\mathfrak{g}$ allows a natural $\mathbb{Z}$-grading $\mathfrak{g} = \mathfrak{g}_{-2} \oplus \mathfrak{g}_{-1} \oplus \mathfrak{g}_0 \oplus \mathfrak{g}_1 \oplus \mathfrak{g}_2$
with $\dim \mathfrak{g}_{\pm 2} = 1$ and the Weyl group satisfies $W(\mathfrak{g},\mathfrak{h}) = \mathbb{Z}/2 \times W(\mathfrak{g}_0,\mathfrak{h})$.
Let $\sigma$ be the generator of the $\mathbb{Z}/2$-factor and $w_0$ be the longest element of $W(\mathfrak{g}_0,\mathfrak{h})$. 
If $\Lambda$ is an atypical integral dominant root, it is possible that $\sigma \cdot \Lambda$ is again integral dominant. 
This is the only possible case in which there are two non-trivial cohomology groups $H^0(C,\mathcal{E}_\Lambda)$ and $H^1(C, \mathcal{E}_\Lambda)$.
Now, if $\lambda$ is a negative regular atypical root such that $w_0 \cdot \lambda = \Lambda$, then the two top-most
cohomology groups $H^s(C, \mathcal{E}_\lambda)$ and $H^{s-1}(C, \mathcal{E}_\lambda)$ will be nontrivial. This yields two possibly non-trivial
Double fibration transforms and the fibres of the target spaces $H^0(\mathcal{M}, \mathcal{R}^s\nu_*\mu^* \mathcal{E}_\lambda)$ and 
$H^0(\mathcal{M}, \mathcal{R}^{s-1}\nu_*\mu^* \mathcal{E}_\lambda)$ are twisted duals of each other.

The possible existence of two nontrivial cohomology groups also leads to stronger injectivity conditions:
Apart from the usual condition that $\sigma w_0 \cdot (\lambda + \beta)$ is integral dominant for all $\beta \in M(\Omega^{\leq s})$
one needs to impose the additional condition that $\lambda + \beta$ is either typical or $w_0 \cdot (\lambda + \beta)$ is not
integral dominant:
 
\begin{satz}

Let $G,G_\mathbb{R},Z(\delta)$ and $D$ as above. Assume that $K$ is of type II and $M$ has typical Levi part. Let $\lambda$ be an integral $\mathfrak{l}$-dominant weight. Moreover, assume $\langle \lambda + \beta + \rho_\mathfrak{k}, \gamma \rangle < 0$ for all $\beta \in M(\Omega^{\leq s}), \gamma \in \Sigma(\mathfrak{p}^c \cap \mathfrak{k})$ and either $\lambda + \beta$ typical or $\langle \lambda + \beta + \rho_{\mathfrak{k}}, \gamma_\sigma \rangle \not< 0$ for each $\beta \in M(\Omega^{\leq s})$, where $\gamma_\sigma$ is the unique root such that $\sigma = r_{\gamma_\sigma}$.

\begin{enumerate}
 \item  If $\lambda$ is typical or $\lambda$ is atypical and $w_0 \cdot \lambda$ not integral dominant, then the Double Fibration transform  
$\mathcal{P}: H^s(D, \mathcal{E}_\lambda) \rightarrow H^0(\mathcal{M}, \mathcal{E}_\lambda^\prime)$ is injective and 
the fibre of $\mathcal{E}_\lambda^\prime$ is the Kac module $K_\Lambda, \Lambda = \sigma w_0 \cdot \lambda$.
 \item If $\lambda$ is atypical and $w_0 \cdot \lambda$ is integral dominant, then there are two nontrivial 
       Double Fibration Transforms $\mathcal{P}^\prime: H^s(D, \mathcal{E}_\lambda) \rightarrow H^0(\mathcal{M}, \mathcal{E}_\lambda^\prime)$
       and $\mathcal{P}^{\prime\prime}:H^{s-1}(D, \mathcal{E}_\lambda) \rightarrow H^0(\mathcal{M}, \mathcal{E}_\lambda^{\prime\prime})$.
       Both transforms are injective and the fibres of $\mathcal{E}^\prime$ and $\mathcal{E}^{\prime\prime}$ are the Kac module $K_\Lambda$
       and its twisted dual $K_\Lambda^\vee$ respectively.
\end{enumerate}

\end{satz}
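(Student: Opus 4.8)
The plan is to follow the template already laid down in the preceding purely-even and type-I results, and to feed in the type-II Bott-Borel-Weil theorem for $\mathfrak{osp}(m\vert 2)$ exactly as Penkov's theorem was fed in before. First I would recall the general injectivity criterion established above: the Double Fibration Transform $\mathcal{P}\colon H^s(D,\mathcal{E})\to H^0(\mathcal{M},\mathcal{R}^s\nu_*\mu^*\mathcal{E})$ is injective once $H^p(C,\mathrm{gr}(\Omega_\mu^r\mathcal{E})\vert_C)=0$ for all $p<s$ and all $r\le s$, where $C=K/M$ is the base cycle. Since the fibre of $\Omega_\mu^r$ is a sum of irreducible $M$-modules $E_\beta$ with $\beta\in M(\Omega_\mu^{\le s})$, and $\mathcal{E}=\mathcal{E}_\lambda$, the relevant cohomology is that of the homogeneous bundles $\mathcal{E}_{\lambda+\beta}$ on $C$; so the whole problem reduces to controlling, for each such $\beta$, the cohomology $H^\bullet(C,\mathcal{E}_{\lambda+\beta})$ via the type-II BBW theorem.

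Next I would split into the two cases of the statement. If $\lambda$ is typical, or atypical with $w_0\cdot\lambda$ not $\mathfrak{l}$-dominant, then for each $\beta$ the hypothesis $\langle\lambda+\beta+\rho_\mathfrak{k},\gamma\rangle<0$ for $\gamma\in\Sigma(\mathfrak{p}^c\cap\mathfrak{k})$ forces $\lambda+\beta+\rho_\mathfrak{k}$ to be regular and pushes it, under the unique Weyl element $\sigma w_0$, into the dominant chamber; the supplementary condition (either $\lambda+\beta$ typical, or $\langle\lambda+\beta+\rho_\mathfrak{k},\gamma_\sigma\rangle\not<0$) guarantees that only the single cohomology degree $l(\sigma w_0)=s$ survives, so all lower $H^p$ vanish and injectivity follows; the target fibre is then the Kac module $K_\Lambda$ with $\Lambda=\sigma w_0\cdot\lambda$, read off from Penkov's theorem applied with $w=\sigma w_0$. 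This is word-for-word the argument of the type-I proposition, only with ``Kac module'' already in place of ``irreducible module''.

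For the second case — $\lambda$ atypical with $w_0\cdot\lambda$ $\mathfrak{l}$-dominant — I would invoke the structural fact (cited above from \cite{C}, Theorem 10.1, together with $W(\mathfrak{g},\mathfrak{h})=\mathbb{Z}/2\times W(\mathfrak{g}_0,\mathfrak{h})$ and $\dim\mathfrak{g}_{\pm2}=1$) that here the bundle $\mathcal{E}_\lambda$ on $C$ has exactly the two non-vanishing cohomology groups $H^s(C,\mathcal{E}_\lambda)$ and $H^{s-1}(C,\mathcal{E}_\lambda)$, coming from the two dominant representatives $\sigma w_0\cdot\lambda$ and $w_0\cdot\lambda$. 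Applying the injectivity machinery separately to degree $s$ and degree $s-1$ — in each case the negativity hypothesis kills the strictly lower cohomology of all the twists $\lambda+\beta$ — yields the two injective transforms $\mathcal{P}'$ and $\mathcal{P}''$ with target fibres $K_\Lambda$ and, by the duality between the two atypical Kac modules (twisted contragredient with respect to the $\mathbb{Z}/2$-action $\sigma$), its twisted dual $K_\Lambda^\vee$.

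The main obstacle I anticipate is bookkeeping rather than conceptual: one must check carefully that the supplementary typicality condition imposed on every $\lambda+\beta$ with $\beta\in M(\Omega_\mu^{\le s})$ — not just on $\lambda$ itself — genuinely rules out an unwanted second cohomology degree for each intermediate bundle, since an atypical twist $\lambda+\beta$ could in principle resurrect a lower $H^{p}$ that would break the telescoping of the spectral sequence $^{\prime\prime}E$. Verifying that the stated condition is exactly strong enough, and that in the second case the degrees $s$ and $s-1$ do not interfere with one another across different $\beta$, is where the real work lies; everything else is a transcription of the earlier arguments with Penkov's theorem and the Onishchik–Vishnyakova spectral sequence supplying the inputs.
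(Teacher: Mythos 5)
Your proposal follows exactly the route the paper takes: the paper offers no separate formal proof of this theorem, but derives it from the immediately preceding discussion — the general injectivity criterion via vanishing of $H^p(C,(\Omega_\mu^r\mathcal{E})\vert_C)$ for $p<s$, $r\le s$, Penkov's theorem fed through the twists $\lambda+\beta$ with $\beta\in M(\Omega_\mu^{\leq s})$, and Coulembier's Theorem 10.1 supplying the two-cohomology-group phenomenon for atypical weights with $\sigma\cdot\Lambda$ again dominant — and your reconstruction, including the observation that the supplementary typicality condition must be imposed on every twist $\lambda+\beta$ rather than on $\lambda$ alone, matches this argument point for point.
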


Note that $K_\Lambda^\vee$ is not a highest weight module, so this is the first occurence of a target space, which is not a highest weight space.

For $\mathfrak{k} = \mathfrak{osp}(m \vert 2n)(n > 1)$ with distinguished Borel subalgebra the BBW theory is yet unknown. 
But the results in the case $n = 1$ suggest that there might well be nontrivial cohomology groups arising from a direct summand
of the form $\mathfrak{osp}(1 \vert 2k)$ in the Levi subalgebra of $\mathfrak{m}$ as well. 

This concludes the discussion of the basic classical Lie superalgebras with distinguished Borel subalgebra.

\section{Cycles of positive odd dimension II: The general case}

If an arbitrary Borel subalgebra $\mathfrak{b} \subseteq \mathfrak{g}$ is given little is known about the BBW theory.
The results which are available require strong notions of genericity, so they are only available for weights lying
far away from the walls of the Weyl chambers. The following notion of genericity is introduced in \cite{C}:

\begin{def1}
 
Let $\Gamma^+$ be the set of all formal sums $\sum_{i \in I \subseteq \Sigma_1^+} \gamma_i$ of positive odd roots
and $\tilde{\Gamma}$ be the respective set of all formal sums of arbitrary odd roots.

\begin{enumerate}
 \item A weight $\lambda \in \mathfrak{h}^*$ is $\Gamma^+$-generic if all weights in $\lambda - \Gamma^+$ lie inside the
same Weyl chamber.
 \item A weight $\lambda \in \mathfrak{h}^*$ is $\tilde{\Gamma}$-generic if all weights in $\lambda - \tilde{\Gamma}$ lie inside the
same Weyl chamber.
 \item A weight $\lambda \in \mathfrak{h}^*$ is called generic if every weight in $\lambda - \Gamma^+$ is $\tilde{\Gamma}$-generic.
\end{enumerate} 

\end{def1}

Note that all of these three notions are invariant under the action of the Weyl group and that $\Gamma^+$-generic weights
are necessarily typical. For generic weights the cohomology groups $H^k(G/B,\mathcal{E}_\lambda)$ are described as follows:

\begin{satz}
 
Let $\mathfrak{g}$ be a basic classical Lie superalgebra, $\mathfrak{b}$ an arbitrary Borel subalgebra of $\mathfrak{g}$,
$\Lambda$ integral dominant and $\Gamma^+$-generic and $w \in W$. Then $\Gamma_k(G/B, E_{w \cdot \Lambda}) = K_\Lambda^{(\mathfrak{b})}[w]$
for $k = l(w)$ and zero else.

\end{satz}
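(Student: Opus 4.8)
The statement is a $\mathbb{Z}$-graded (BGG--type) reciprocity/Bott theorem for a basic classical Lie superalgebra with an \emph{arbitrary} Borel and a \emph{generic} highest weight, and the natural strategy is to reduce it, weight chamber by weight chamber, to the combinatorics of the Weyl group and the structure of Kac-modules. First I would recall the set-up from \cite{C}: for a $\Gamma^+$-generic integral dominant weight $\Lambda$, every weight in $\Lambda - \Gamma^+$ lies in the fundamental Weyl chamber, and every weight in $\Lambda - \tilde\Gamma$ is $\tilde\Gamma$-generic; this is exactly what decouples the ``odd'' part of the BGG resolution from the wall-crossing behaviour of $W$. The plan is to compute $\Gamma_k(G/B, E_{w\cdot\Lambda})$ by resolving the finite-dimensional (or parabolically induced) module $E_{w\cdot\Lambda}$ through the relative version of Kempf's vanishing: use the fibration $G/B \to G/B^d$ onto the flag variety attached to the distinguished Borel, so that the fibre cohomology is purely classical Bott--Borel--Weil on $G_{\bar0}/B_{\bar0}$, while the base cohomology is controlled by the superalgebra Penkov-type theorem already cited in the excerpt.

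Concretely, I would carry out the following steps. (1) Fix the distinguished Borel $\mathfrak{b}^d \supseteq \mathfrak{h}$ and factor the flag supermanifold map $\pi: G/B \to G/B^d$; this is a tower of $\mathbb{P}^1$-bundles (odd and even) and the Leray spectral sequence of $\pi$ degenerates for $\Gamma^+$-generic weights because genericity forbids the cancellations that would otherwise make differentials nonzero. (2) On the base $G/B^d$ apply Penkov's theorem (the BBW-type result for the distinguished Borel quoted above, \cite{Pen}): since $\Lambda$ is typical and integral dominant, $\Gamma_k(G/B^d, E_\Lambda) = \delta_{k,0} K_\Lambda$, the Kac-module. (3) For general $w \in W$, twist: the point is that $w\cdot\Lambda = w(\Lambda+\rho)-\rho$ and $l(w)$ counts exactly the number of positive roots sent negative, so that the relative Bott theorem along the fibres of $\pi$ (each step an $SL_2$ or $OSp(1|2)$ computation) contributes a shift by $l(w)$ in cohomological degree and nothing else --- here $\Gamma^+$-genericity of $w\cdot\Lambda$ (which holds by $W$-invariance of the notion) guarantees that no intermediate weight hits a wall. (4) Identify the resulting module as the $\mathbb{Z}$-graded Kac-module $K_\Lambda^{(\mathfrak{b})}[w]$: unwind the definition of $K_\Lambda^{(\mathfrak b)}$ as $\mathrm{Ind}$ from the even part twisted by the odd nilradical of $\mathfrak{b}$, and check that the grading shift $[w]$ matches the filtration induced on cohomology by the tower of bundles. (5) Vanishing in all other degrees follows from the degeneration in (1) together with the single-degree concentration in (2)--(3).

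\textbf{Main obstacle.} The crux is step (3)--(4): controlling the \emph{odd} directions in the fibration $\pi$. Unlike the classical case, the $\mathbb{P}^1$-bundles corresponding to odd simple roots are ``unipotent'' (the odd differentials commute, so $\Omega^p \neq 0$ for all $p$), and a naive Leray argument gives an unbounded spectral sequence. The role of $\Gamma^+$-genericity is precisely to force this spectral sequence to degenerate and to pin down the single surviving graded piece; making this rigorous --- i.e. showing that for a generic weight the ``odd'' cohomology is concentrated and equals the expected graded summand of the Kac-module, with the correct grading label $[w]$ --- is where the real work lies. I expect that this can be extracted from the $\tilde\Gamma$-genericity hypothesis exactly as in \cite{C}: every weight encountered in the odd BGG-type resolution stays in one chamber, so each elementary odd step is an isomorphism up to a degree shift, and the bookkeeping of these shifts assembles into the grading on $K_\Lambda^{(\mathfrak{b})}[w]$. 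The even part is then a routine application of classical BBW and Kempf vanishing, and the typicality of $\Lambda$ (automatic from $\Gamma^+$-genericity) ensures there is no atypical phenomenon producing extra cohomology groups as in the $\mathfrak{osp}(m|2)$ case discussed above.
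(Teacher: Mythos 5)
The paper does not actually prove this statement: it is quoted from Coulembier \cite{C}, where the genericity notions used in its hypothesis are introduced, so there is no internal proof to compare yours against. Judged on its own merits, your proposal has a genuine gap at its foundation. Steps (1) and (3) rest on a $G$-equivariant fibration $\pi: G/B \rightarrow G/B^d$ realized as a tower of even and odd $\mathbb{P}^1$-bundles. No such fibration exists: an arbitrary Borel subsupergroup $B$ and the distinguished one $B^d$ contain the same maximal torus but neither contains the other, so there is no equivariant morphism between the two quotients; indeed both have the same underlying reduced manifold $G_{\bar{0}}/B_{\bar{0}}$, so a fibration with positive-dimensional even fibres is impossible, and the purely odd "fibre directions" do not assemble into odd projective lines but into a nilpotent affine superdomain (this is exactly why the relative de Rham complex is unbounded). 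The tower-of-$\mathbb{P}^1$-bundles picture applies to $G/B \rightarrow G/P$ for a parabolic $P \supseteq B$, which is not the situation here. Consequently the Leray spectral sequence you propose to degenerate in step (1), and the "relative Bott theorem along the fibres" in step (3), have no object to apply to.

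The mechanism actually used in \cite{Pen} and \cite{C} for passing from the distinguished Borel to an arbitrary one is algebraic rather than geometric: any two Borel subalgebras sharing a Cartan are linked by a chain of odd reflections at isotropic odd simple roots, and for a weight non-orthogonal to the reflecting root (which is what typicality, hence $\Gamma^+$-genericity, guarantees at every stage) each odd reflection replaces the highest weight $\lambda$ by $\lambda - \alpha$ without changing the cohomology or its degree. Iterating reduces the computation to the distinguished Borel, where Penkov's theorem gives concentration in degree $l(w)$ with fibre a Kac module; the bookkeeping of the subtracted odd roots is what produces the $\mathfrak{b}$-dependent module $K_\Lambda^{(\mathfrak{b})}$ and the label $[w]$. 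Your instinct that $\tilde{\Gamma}$-genericity keeps every intermediate weight inside one Weyl chamber is correct and is indeed the role it plays in \cite{C}, but it enters through this chain of odd reflections, not through degeneration of a Leray spectral sequence of a nonexistent fibration. The even part of your argument (classical Bott--Borel--Weil contributing the shift by $l(w)$) is unobjectionable.
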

 
Unlike in the classical case, the relation between the BBW theory of a Borel subalgebra $\mathfrak{b}$ and a parabolic subalgebra $\mathfrak{p} \supseteq \mathfrak{b}$ is not yet well understood in general. Some results have been obtained for relatively generic weights (compare \cite{C}, Section 8): Let $\mathfrak{b}$ be a Borel subalgebra and $\mathfrak{n}$ its nilradical. Then there is a parabolic subalgebra $\mathfrak{p}^\mathfrak{b} \subseteq \mathfrak{g}$, maximal among all parabolic subalgebras with typical Levi part that contain  $\mathfrak{b}$. Its Levi part is denoted $\mathfrak{l}^\mathfrak{b}$ and $\mathfrak{p}^\mathfrak{b} = \mathfrak{n} + \mathfrak{l}^\mathfrak{b}$. 

\begin{def1}

Let $\mathfrak{g}$ be a basic classical Lie superalgebra, $\mathfrak{b}$ a Borel subalgebra, $\lambda$ an
integral $\mathfrak{l}^\mathfrak{b}$-dominant weight and $S$ a set of integral weights. Then $\lambda$ is relatively
$S$-generic for $\mathfrak{b}$ if and only if every weight in $\lambda - S$, which is $\mathfrak{l}^\mathfrak{b}$-dominant,
lies in the same Weyl chamber as $\lambda$.

\end{def1}

For relatively generic weights, the BBW theory of $\mathfrak{p}^\mathfrak{b}$ and $\mathfrak{b}$ coincide:
  
\begin{satz}
 
If $\lambda$ is relatively $\Gamma^+$-generic then there is exactly one $w \in W_\mathfrak{b}^1 = W(\mathfrak{g},\mathfrak{h})/W(\mathfrak{l}_{\bar{0}}^\mathfrak{b}, \mathfrak{h})$ such that $w \cdot \lambda$
is integral dominant. \\ Moreover $\Gamma_k(G/B,E_\lambda) \cong \Gamma_k(G/P^\mathfrak{b},E_\lambda) = \delta_{k,l(w)} M$
with $\mathrm{ch} M = \mathrm{ch} K_{w \cdot \lambda}^{(\mathfrak{b})}$, i.e. $M$ has the same character as the Kac module.

\end{satz}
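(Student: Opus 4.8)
The plan is to reduce the statement about an arbitrary Borel subalgebra $\mathfrak{b}$ to the already-available statement about the parabolic $\mathfrak{p}^{\mathfrak{b}}$ with typical Levi part. First I would set up the fibration $\pi\colon G/B \to G/P^{\mathfrak{b}}$ and analyze the relative cohomology along its fibres, which are flag supermanifolds of the Levi group $L^{\mathfrak{b}}$. Since $\mathfrak{l}^{\mathfrak{b}}$ is of typical type, its finite-dimensional modules are completely reducible, so the Levi-fibre cohomology is governed by the classical Bott--Borel--Weil theorem applied to $\mathfrak{l}_{\bar 0}^{\mathfrak{b}}$ (the odd part contributes only through $\mathfrak{osp}(1\vert 2k)$-summands, which behave like $\mathfrak{sl}_2$ here). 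The key point of relative $\Gamma^+$-genericity is that as one runs through the (finitely many) weights of the nilradical quotient $\mathfrak{n} \cap \mathfrak{p}^{\mathfrak{b}}$ twisting $\lambda$, every $\mathfrak{l}^{\mathfrak{b}}$-dominant weight that occurs lies in the same Weyl chamber as $\lambda$; hence on each such relevant weight the fibrewise BBW theorem produces cohomology concentrated in a single degree, with that degree determined by the same Weyl element.

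Second, I would feed this into the Leray (Hochschild--Serre type) spectral sequence for $\pi$, with $E_2^{p,q} = H^p(G/P^{\mathfrak{b}}, R^q\pi_* \mathcal{E}_\lambda)$ converging to $H^{p+q}(G/B, \mathcal{E}_\lambda)$. The genericity hypothesis forces $R^q\pi_*\mathcal{E}_\lambda$ to be concentrated in a single $q$ (coming from the fibrewise calculation) and moreover to be an irreducible homogeneous bundle on $G/P^{\mathfrak{b}}$ whose inducing $\mathfrak{l}^{\mathfrak{b}}$-module has the appropriate dominant weight. The spectral sequence then degenerates for degree reasons, giving $\Gamma_k(G/B, E_\lambda) \cong \Gamma_k(G/P^{\mathfrak{b}}, E_\lambda)$ with a single-degree shift by $l(w)$. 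Applying Penkov's theorem (the displayed BBW statement for parabolics with typical Levi part) to $G/P^{\mathfrak{b}}$ yields the right-hand side: it is $\delta_{k,l(w)}$ times a module $M$ with $\operatorname{ch} M = \operatorname{ch} K^{(\mathfrak{b})}_{w\cdot\lambda}$. The uniqueness of $w \in W_{\mathfrak{b}}^1 = W(\mathfrak{g},\mathfrak{h})/W(\mathfrak{l}_{\bar 0}^{\mathfrak{b}},\mathfrak{h})$ making $w\cdot\lambda$ integral dominant is the standard fact that the longest-coset-representative picks out the dominant chamber, combined with regularity of $\lambda + \rho$ guaranteed by genericity.

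The main obstacle I expect is controlling the \emph{odd} part of the nilradical in the spectral sequence: unlike the classical case, $R^q\pi_*$ involves the symmetric (not exterior) algebra on the odd nilradical directions, so a priori infinitely many $\mathfrak{l}^{\mathfrak{b}}$-weights could occur, and one must use relative $\Gamma^+$-genericity precisely to ensure that only finitely many of them are $\mathfrak{l}^{\mathfrak{b}}$-dominant and that all of these land in the correct chamber — otherwise the concentration-in-one-degree conclusion fails and the character identity becomes a genuine inequality. A secondary technical point is verifying that the resulting module $M$, while having the Kac-module character, need not itself be the Kac module (the statement only claims equality of characters), which reflects the fact that the spectral sequence computes an associated graded rather than the module on the nose; I would phrase the conclusion accordingly and not attempt to upgrade it.
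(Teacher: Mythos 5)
First, note that the paper offers no proof of this statement at all: it is quoted from Coulembier \cite{C}, Section 8, as part of a survey of the known super Bott--Borel--Weil results, so there is no in-paper argument to compare yours against. Judged on its own terms, your proposal contains a genuine gap. The reduction step is fine: the fibration $\pi\colon G/B \rightarrow G/P^{\mathfrak{b}}$ has as fibre the full flag supermanifold of the Levi $L^{\mathfrak{b}}$, which is of typical type, and for an $\mathfrak{l}^{\mathfrak{b}}$-dominant $\lambda$ the fibrewise cohomology concentrates in degree $0$, so the Leray/Grothendieck spectral sequence degenerates and gives $\Gamma_k(G/B,E_\lambda) \cong \Gamma_k(G/P^{\mathfrak{b}},E_\lambda)$. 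But observe that this step uses only $\mathfrak{l}^{\mathfrak{b}}$-dominance (built into the definition of relative genericity) and the typical type of the Levi; relative $\Gamma^+$-genericity plays no role there, contrary to where you locate it. The real work, which your plan skips, is the computation of $\Gamma_k(G/P^{\mathfrak{b}},E_\lambda)$ itself. You propose to ``apply Penkov's theorem,'' but Penkov's theorem as stated requires $\lambda$ to be \emph{typical}, and relative $\Gamma^+$-genericity does not imply typicality (only full $\Gamma^+$-genericity does). If Penkov applied directly one would obtain the Kac module on the nose; the fact that the conclusion here is only a character identity $\mathrm{ch}\, M = \mathrm{ch}\, K^{(\mathfrak{b})}_{w\cdot\lambda}$ is precisely the signature of the atypical case. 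The missing argument is an Euler-characteristic computation (valid without genericity) combined with a separate concentration/vanishing statement: the highest weights of all constituents of $\Gamma_k(G/P^{\mathfrak{b}},E_\lambda)$ lie in the Weyl orbit of $\lambda - \Gamma^+$, and relative $\Gamma^+$-genericity is what forces all the $\mathfrak{l}^{\mathfrak{b}}$-dominant members of that set into a single chamber, pinning down both the unique $w$ and the single nonvanishing degree $l(w)$. Your sketch never engages with this, so the core of the theorem is unproved.

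A secondary confusion: your anticipated obstacle about ``the symmetric algebra on the odd nilradical directions'' producing infinitely many weights in $R^q\pi_*$ does not arise here. The fibre $P^{\mathfrak{b}}/B$ is a compact supermanifold of finite odd dimension, so the direct image of a vector bundle along it involves only a finite exterior algebra; the unbounded symmetric algebra appears in the relative de Rham complex $\Omega_\mu^{\bullet}$ used for the Double Fibration Transform elsewhere in the chapter, not in the BBW fibration argument. Your closing remark that $M$ need not be the Kac module is correct, but for the wrong reason: it is not an artefact of the spectral sequence computing an associated graded, but of the atypicality that the relative-genericity hypothesis is designed to accommodate.
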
 

These results can be used to obtain injectivity conditions for the Double Fibration Transform. They can be applied under
the strong sufficient condition that each $\lambda + \beta, \beta \in M(\Omega_\mu^{\leq s})$ should be generic.

\begin{satz}
 
Let $G,G_\mathbb{R},Z$ and $D$ as before and $\lambda$ be a sufficiently negative integral weight
such that $\lambda + \beta$ is generic for all $\beta \in M(\Omega_\mu^{\leq s})$. Then the Double Fibration 
Transform $\mathcal{P}: H^s(D, \mathcal{E}_\lambda) \rightarrow H^0(\mathcal{M},\mathcal{E}_\lambda^\prime)$ is
injective. If $P = B$, then the fibre of $\mathcal{E}_\lambda^\prime$ is the Kac module $K_\Lambda^{(\mathfrak{b})}$.

\end{satz}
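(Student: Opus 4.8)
The plan is to reduce the assertion to the injectivity criterion already established in the general spectral-sequence argument of this chapter, namely that $\mathcal{P}: H^s(D,\mathcal{E}_\lambda) \to H^0(\mathcal{M},\mathcal{R}^s\nu_*\mu^*\mathcal{E}_\lambda)$ is injective provided $H^p(C,\mathrm{gr}(\Omega_\mu^r\mathcal{E}_\lambda)\vert_C) = 0$ for all $p < s$ and all $r \leq s$, where $C = K/M$ is a fibre of $\nu$. So the whole argument is a BBW-vanishing computation on the cycle $C$, now carried out for an arbitrary Borel rather than a distinguished one. First I would identify $\mathrm{gr}(\Omega_\mu^r\mathcal{E}_\lambda)\vert_C$ as (a sum of) homogeneous vector bundles over $C$ associated to $M$-modules: its typical fibre is $\bigwedge^r(\mathfrak{p}^{op}/(\mathfrak{p}^{op}\cap\mathfrak{j}))^* \otimes E_\lambda$, whose weights, by the definition of $M(\Omega_\mu^{\leq s})$, are exactly $\lambda + \beta$ with $\beta = \sum_i \varepsilon_i\gamma_{0,i} + \sum_j k_j\gamma_{1,j}$ satisfying $n_{\bar 0} + \sum_j|k_j| \leq s$, i.e. $\beta$ ranges over $M(\Omega_\mu^{\leq s})$. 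Thus each irreducible constituent is $E_{\lambda+\beta}$ for some $\beta \in M(\Omega_\mu^{\leq s})$.

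Next I would invoke the BBW result for an arbitrary Borel subalgebra (Theorem on $\Gamma_k(G/B,E_{w\cdot\Lambda}) = K_\Lambda^{(\mathfrak{b})}[w]$ for generic $\Lambda$): since each $\lambda+\beta$ is assumed generic, it is in particular typical and regular, so $\lambda+\beta+\rho_\mathfrak{k}$ lies in the interior of a Weyl chamber and there is a unique $w = w(\beta) \in W$ with $w\cdot(\lambda+\beta)$ integral dominant; then $H^k(C,\mathcal{E}_{\lambda+\beta}) = 0$ for $k \neq l(w(\beta))$, and $H^{l(w(\beta))}(C,\mathcal{E}_{\lambda+\beta})$ is the corresponding (twisted) Kac module. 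To obtain the required vanishing in degrees $p < s$ one must show $l(w(\beta)) = s$ for every $\beta \in M(\Omega_\mu^{\leq s})$; this is precisely where the hypothesis ``$\lambda$ sufficiently negative'' enters: $\langle\lambda+\beta+\rho_\mathfrak{k},\gamma\rangle < 0$ for all $\gamma \in \Sigma(\mathfrak{r}^+\cap\mathfrak{k})$ forces $\lambda+\beta+\rho_\mathfrak{k}$ into the anti-dominant chamber relative to the nilradical roots of $\mathfrak{m}$, so that the unique $w$ carrying it to dominant has maximal length among the relevant coset representatives, i.e. $l(w) = \dim_{\bar 0}C = s$ (one checks this is the same index $s$ governing the truncation, exactly as in the purely even and distinguished-Borel cases treated above). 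Feeding $p < s = l(w(\beta))$ into the vanishing theorem gives $H^p(C,\mathrm{gr}(\Omega_\mu^r\mathcal{E}_\lambda)\vert_C) = 0$ for $p < s$, $r \leq s$, hence injectivity of $\mathcal{P}$.

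Finally, for the identification of the fibre of $\mathcal{E}_\lambda^\prime$ when $P = B$: here $\mu^*\mathcal{E}_\lambda\vert_C$ is just $\mathcal{E}_\lambda\vert_C$ with no relative-form corrections in the relevant degree, and $R^s\nu_*$ computes $H^s(C,\mathcal{E}_\lambda\vert_C)$; by the BBW theorem for $\mathfrak{b}$ applied to $\lambda$ itself (which is generic, being a member of the family $\lambda+\beta$ at $\beta = 0$) this equals $K_\Lambda^{(\mathfrak{b})}$ with $\Lambda = w\cdot\lambda$ integral dominant, as claimed.

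\textbf{Main obstacle.} The delicate point is the length bookkeeping: verifying that the negativity hypothesis indeed pins down $l(w(\beta)) = s$ uniformly for all $\beta \in M(\Omega_\mu^{\leq s})$, rather than merely for $\beta = 0$. One must argue that the perturbations $\beta$ (bounded in the norm $n_{\bar 0} + \sum_j|k_j| \leq s$) are too small to move $\lambda + \beta + \rho_\mathfrak{k}$ out of the chamber singled out by the displayed inequalities, using genericity of each $\lambda+\beta$ to exclude wall-crossings; this is the supersymmetric analogue of the classical estimate in \cite{WZ} but now has to accommodate the infinitely many relative form degrees $r$, which is exactly why the hypothesis is imposed on the finite set $M(\Omega_\mu^{\leq s})$ of bounded perturbations rather than on all of $\Omega_\mu^\bullet$.
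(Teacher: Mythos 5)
Your proposal is correct and follows exactly the route the chapter sets up: reduce injectivity to the vanishing $H^p(C,\mathrm{gr}(\Omega_\mu^r\mathcal{E}_\lambda)\vert_C)=0$ for $p<s$, $r\leq s$ via the hypercohomology spectral sequences, decompose the fibres into constituents $E_{\lambda+\beta}$ with $\beta\in M(\Omega_\mu^{\leq s})$, and apply the generic Bott--Borel--Weil theorem together with sufficient negativity to concentrate the cohomology of each constituent in top degree. The paper does not write this proof out explicitly (it is left implicit after the generic and relatively generic BBW statements), and your reconstruction, including the identification of the length bookkeeping $l(w(\beta))=s$ as the delicate step and the restriction of the fibre identification to $P=B$ (where no comparison between the Borel and a larger parabolic of $\mathfrak{k}$ is needed), is exactly the intended argument.
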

   
If $P \neq B$ in the above theorem it is only known that the characters of the Kac module and the fibre of $\mathcal{E}_\lambda^\prime$
agree. 

This last theorem is the most general result that is available at this point. The reason for this is that for parabolic subalgebras $P$
whose Levi parts are not of typical type the relation between the cohomology groups for $G/B$ and $G/P$ is not yet known.
Moreover in the case of an arbitrary Borel subalgebra little is known about the BBW theory for weights which are not contained in the generic region.
This particularly includes atypical weights and the known results for $\mathfrak{k} = \mathfrak{osp}(n \vert 2)$ indicate that
there are very probably many examples featuring several non-vanishing cohomology groups. Note that the given example was also
the first occurence of a cycle with non-trivial odd structure.

To go on from the results in this thesis a very interesting case to study is that of the minimal $\Pi$-symmetric parabolic
subalgebra $\mathfrak{p}$ of $\mathfrak{g} = \mathfrak{psl}_{n \vert n}(\mathbb{C})$. It is given by

\[ \mathfrak{p} = \left\lbrace \begin{pmatrix} A & B \\ C & D \end{pmatrix} \in \mathfrak{g} ; A,B,C,D \in \mathfrak{b}_{std} \right\rbrace \] 

where $\mathfrak{b}_{std}$ is the standard Borel subalgebra of $\mathfrak{sl}_n(\mathbb{C})$. The corresponding flag space
is $G/P = Z(\delta), \delta = 0 \vert 0 < 1 \vert 1 < \ldots < n-1 \vert n-1 < n \vert n$ and the Levi subalgebra is 
$\mathfrak{l} \subseteq \mathfrak{l}_1 + \ldots + \mathfrak{l}_n$, where each $\mathfrak{l}_i$ is isomorphic to 
$\mathfrak{gl}_{1 \vert 1}(\mathbb{C})$. In particular $\mathfrak{l}$ is solvable which does render the representation
theory of $P$ highly complicated. In particular for a line bundle $\mathcal{L}$ on $G/P$ there is a high possibility of the existence
of several non-trivial cohomology groups.

The BBW theory for this particular parabolic subalgebra and the parabolic subalgebras containing it is of 
particular interest for the following two reasons:

\begin{itemize}
 \item Most examples of weakly measurable open orbits are contained in flag spaces $Z = G/Q$ where $Q$ contains $P$
 \item All open orbits for the real form $US\Pi(n)$ are contained in those flag spaces as well.
\end{itemize}

In particular, these interesting open orbits are directly connected to the BBW theory of the atypical blocks
and further progress on this aspect of BBW theory would greatly increase the understanding of the phenomenon of weak measurability.

\end{document}